\DeclareSymbolFont{rsfs}{U}{rsfs}{m}{n}
\DeclareSymbolFontAlphabet{\mathscrsfs}{rsfs}
\newtheorem{theorem}{Theorem}[section]
\newtheorem{lemma}[theorem]{Lemma}
\newtheorem{proposition}[theorem]{Proposition}
\newtheorem{corollary}[theorem]{Corollary}
\theoremstyle{definition}
\newtheorem{definition}{Definition}
\newtheorem{assumption}{Assumption}
\newtheorem{remark}[theorem]{Remark}
\newtheorem{fact}[theorem]{Fact}
\numberwithin{equation}{section}
\newcommand{\bea}{\begin{eqnarray}}
\newcommand{\eea}{\end{eqnarray}}
\newcommand{\<}{\langle}
\renewcommand{\>}{\rangle}
\newcommand{\wt}{\widetilde}
\newcommand{\op}{\text{op}}
\newcommand{\wh}{\widehat}
\newcommand\eg{{\text{\eg~}}}
\def\iid{\text{i.i.d.~}}
\def\Vol{{\rm Vol}}
\def\Proj{{\sf P}}
\def\oC{{\overline C}}
\def\eps{{\varepsilon}}
\def\bh{\boldsymbol{h}}
\def\supp{{\rm supp}}
\def\spec{{\rm spec}}
\def\ind{{\mathbbm 1}}
\def\btau{{\boldsymbol{\tau}}}
\def\bW{{\boldsymbol{W}}}
\def\bM{{\boldsymbol{M}}}
\def\bV{{\boldsymbol{V}}}
\def\bg{{\boldsymbol{g}}}
\def\bx{{\boldsymbol{x}}}
\def\bzero{{\mathbf 0}}
\def\bone{{\mathbf 1}}
\def\cT{{\mathcal T}}
\def\cC{{\mathcal C}}
\def\cE{{\mathcal E}}
\def\cX{{\mathcal X}}
\def\blambda{{\boldsymbol \lambda}}
\def\sS{{\mathscr S}}
\def\op{{\rm op}}
\def\Band{{\rm Band}}
\def\good{\text{good}}
\def\bad{\text{bad}}
\def\vK{\vec K}
\def\vv{\vec v}
\def\veps{\vec \eps}
\def\vw{\vec w}
\def\vx{\vec x}
\def\vq{\vec q}
\def\vz{\vec z}
\def\vone{\vec 1}
\def\vzero{\vec 0}
\def\bsig{{\boldsymbol {\sigma}}}
\def\vm{{\vec m}}
\def\vu{{\vec u}}
\def\vlambda{{\vec \lambda}}
\def\vlambda{{\vec \lambda}}
\def\vtheta{{\vec \theta}}
\def\va{{\vec a}}
\def\vb{{\vec b}}
\def\ve{{\vec e}}
\def\brho{{\boldsymbol \rho}}
\def\by{{\boldsymbol y}}
\def\vy{{\vec y}}
\def\Cone{{\mathsf{Cone}}}
\def\bv{{\boldsymbol{v}}}
\def\bz{{\boldsymbol{z}}}
\def\bx{{\boldsymbol{x}}}
\def\bs{{\boldsymbol{s}}}
\def\bA{\boldsymbol{A}}
\def\bB{\boldsymbol{B}}
\def\bm{\boldsymbol{m}}
\def\va{\vec{a}}
\def\vb{\vec{b}}
\def\de{{\rm d}}
\def\bX{\boldsymbol{X}}
\def\bW{\boldsymbol{W}}
\def\<{\langle}
\def\>{\rangle}
\def\Tr{{\sf Tr}}
\def\diag{{\rm diag}}
\def\cH{{\cal H}}
\def\cN{{\cal N}}
\def\cL{{\cal L}}
\def\by{{\boldsymbol{y}}}
\def\P{\mathbb{P}}
\def\cE{{\mathcal E}}
\def\tA{{\widetilde A}}
\def\tB{{\widetilde B}}
\def\tD{{\widetilde D}}
\def\blambda{{\boldsymbol{\lambda}}}
\def\cD{{\cal D}}
\def\bu{{\boldsymbol{u}}}
\def\b0{{\boldsymbol{0}}}
\def\rd{{\mathrm {rad}}}
\def\spec{{\mathrm {spec}}}
\def\bG{{\boldsymbol G}}
\DeclareMathOperator*{\plim}{p-lim}
\def\ocD{{\overline {\mathcal D}}}
\def\cD{{{\mathcal D}}}
\def\Ent{{\rm Ent}}
\def\cA{{\mathcal A}}
\def\cI{{\mathcal I}}
\def\cS{{\mathcal S}}
\def\cB{{\mathcal B}}
\def\cK{{\mathcal K}}
\def\star{*}
\def\bzero{\boldsymbol{0}}
\renewcommand{\b}{\mathbf{b}}
\def\fr{\frac}
\def\lt{\left}
\def\rt{\right}
\def\la{\langle}
\def\ra{\rangle}
\def\eps{\varepsilon}
\def\ups{\upsilon}
\def\bbA{{\mathbb{A}}}
\def\bbC{{\mathbb{C}}}
\def\bbE{{\mathbb{E}}}
\def\bbH{{\mathbb{H}}}
\def\bbN{{\mathbb{N}}}
\def\bbP{{\mathbb{P}}}
\def\bbR{{\mathbb{R}}}
\def\bbS{{\mathbb{S}}}
\def\bbW{{\mathbb{W}}}
\def\obbH{{\overline \bbH}}
\def\cA{{\mathcal{A}}}
\def\cB{{\mathcal{B}}}
\def\cJ{{\mathcal{J}}}
\def\cK{{\mathcal{K}}}
\def\cN{{\mathcal{N}}}
\def\cP{{\mathcal{P}}}
\def\cQ{{\mathcal{Q}}}
\def\cR{{\mathcal{R}}}
\def\Crt{{\mathsf{Crt}}}
\def\tot{{\text{tot}}}
\def\sH{{\mathscr{H}}}
\def\one{{\mathbbm{1}}}
\def\bg{{\mathbf{g}}}
\def\bh{{\boldsymbol{h}}}
\def\bM{\mathbf{M}}
\def\ALG{{\mathsf{ALG}}}
\def\nonmax{{\rm nonmax}}
\DeclareMathOperator*{\E}{\bbE}
\def\sph{\mathrm{sp}}
\newcommand{\Gp}[1]{\mathbf{G}^{(#1)}}
\newcommand{\norm}[1]{{\lt\|#1\rt\|}}
\newcommand{\tnorm}[1]{{\|#1\|}}
\newcommand{\Span}{\mathrm{span}}
\def\vchi{\vec{\chi}}
\def\vR{\vec R}
\def\star{*}
\def\hM{\widehat M}
\def\oM{\overline M}
\def\oF{\overline F}
\def\oPsi{\overline \Psi}
\def\omu{\overline \mu}
\def\vDelta{{\vec\Delta}}
\def\scl{\mathsf{scale}}
\author{
    Brice Huang\thanks{Department of Electrical Engineering and Computer Science, Massachusetts Institute of Technology. Email: \texttt{bmhuang@mit.edu}.}
    \and
    Mark Sellke\thanks{Department of Statistics, Harvard University.
    Email: \texttt{msellke@fas.harvard.edu}.}
}
\title{
% \vspace{-1cm}
Strong Topological Trivialization of Multi-Species Spherical Spin Glasses
}
\date{}
\begin{document}

\maketitle

\begin{abstract}
    We study the landscapes of multi-species spherical spin glasses.
    Our results determine the phase boundary for annealed trivialization of the number of critical points, and establish its equivalence with a quenched \emph{strong topological trivialization} property.
    Namely in the ``trivial'' regime, the number of critical points is constant, all are well-conditioned, and all \emph{approximate} critical points are close to a true critical point.
    As a consequence, we deduce that Langevin dynamics at sufficiently low temperature has logarithmic mixing time.

    Our approach begins with the Kac--Rice formula. We characterize the annealed trivialization phase by explicitly solving a suitable multi-dimensional variational problem, obtained by simplifying certain asymptotic determinant formulas from \cite{arous2021exponential,mckenna2021complexity}.
    To obtain more precise quenched results, we develop general purpose techniques to
    avoid sub-exponential correction factors and show non-existence of \emph{approximate} critical points.
    Many of the results are new even in the $1$-species case.
\end{abstract}

\setcounter{tocdepth}{1}
{
\small
\tableofcontents
}

\section{Introduction}

This paper studies the landscapes of certain random, non-convex functions $H_N:\bbR^N\to\bbR$, namely the Hamiltonians of spherical spin glasses.
Mean-field spin glass models were introduced in \cite{sherrington1975solvable} to study disordered magnetic materials, and subsequently studied in many papers including \cite{ruelle1987mathematical,crisanti1992spherical,crisanti1993spherical}.
Of particular note, Parisi predicted the free energy through the phenomenon of replica symmetry breaking in \cite{parisi1979infinite}, which was later proved by \cite{talagrand2006parisi,talagrand2006spherical} following decades of progress.

The spin glasses we focus on will feature $r\ge 1$ species, with the domain of $H_N$ given by a product of $r$ high-dimensional spheres.
Such models include (in the Ising case) the $r=2$ bipartite SK model \cite{kincaid1975phase,korenblit1985spin,fyodorov1987antiferromagnetic,fyodorov1987phase} which has received recent attention due to connections with neural networks \cite{barra2010replica,agliari2012multitasking,hartnett2018replica}.
For $r>1$, a basic understanding of the low-temperature statics is still missing in general: due to a breakdown of the crucial interpolation method \cite{guerra2002thermodynamic}, it is not even known that the limiting ground state energy exists in general despite much recent work \cite{barra2015multi,panchenko2015free,mourrat2021nonconvex,mourrat2020free,baik2020free,bates2021crisanti,subag2021tap1,subag2021tap2,kivimae2021ground}.

We follow the landscape complexity approach pioneered by \cite{fyodorov2004complexity}, studying the set of critical points using techniques such as the Kac--Rice formula.
We focus in particular on \emph{topological trivialization}: the transition of the number of critical points from exponential to constant beyond a critical external field strength.
As further detailed below, precise understanding of this phenomenon faces several challenges.
First, while one expects a dimension-independent number of critical points under a strong external field, the reduced symmetry from multiple species means tools for computing expected counts of critical points are accurate only to leading exponential order.
Second, these leading order terms must be accessed implicitly through the solution to a vector Dyson equation.
Third, even perfect knowledge of the critical points does not suffice to understand \emph{approximate critical points} with \emph{small} gradient (e.g. low temperature Gibbs samples) which might be far from any genuine critical point.
Fourth, the Kac--Rice formula only gives annealed expectations, so the phase boundary it suggests might not correspond to any quenched property.
These challenges will lead us to develop new techniques which enhance the Kac--Rice formula and yield a more complete description of the landscape even in the one-species setting.

\paragraph*{Landscape Complexity}

Our starting point is the Kac--Rice formula introduced in \cite{rice1944mathematical,kac1948average} (see \cite[Chapter 11]{adler2007random} for a textbook treatment).
In general this formula allows one to compute moments for the number of critical points, local optima, and similar quantities for smooth Gaussian processes on manifolds.
It has been employed, usually to obtain annealed counts of critical points, in many settings including spiked tensor models \cite{arous2019landscape,ros2019complex,fan2021tap,celentano2021local,auffinger2022sharp},
non-gradient vector fields \cite{cugliandolo1997glassy,fyodorov2016nonlinear,fyodorov2016topology,garcia2017number,ben2021counting,kivimae2022concentration,subag2023concentration}, polymer models \cite{fyodorov2018exponential}, Euler characteristics \cite{taylor2003euler}, generalized linear models \cite{maillard2020landscape},
and the elastic manifold \cite{arous2021landscape}.
Typically the most complicated term in the Kac--Rice integrand is the expected determinant of a large random matrix.

For spherical spin glasses, the important works \cite{auffinger2013random,auffinger2013complexity} calculated the annealed exponential growth rates for the number of critical points of various indices and energy levels. Matching second moment estimates for pure models were established in \cite{subag2017complexity}, see also \cite{auffinger2020number,subag2021concentration}. These yielded in some cases an elementary proof of the Parisi formula at zero temperature, as well as further geometric results on the Gibbs measures \cite{subag2017extremal,subag2017geometry,arous2020geometry,arous2021shattering}.
Annealed asymptotics for the multi-species setting were obtained in \cite{mckenna2021complexity}, with a matching second moment computation in the pure case by \cite{kivimae2021ground}.

Our primary aim will be to identify the topologically trivial phase where the landscape contains a (dimension-free) constant number of critical points, and to understand it in detail.
This was done for the \emph{annealed} complexity of single-species spin glasses in \cite{fyodorov2014topology,fyodorov2015high,belius2022triviality}, which showed that in the trivial regime the only critical points are the unique global maximum and minimum (with high probability).
In these works and many others mentioned above, the high degree of symmetry is crucial: it ensures the random matrices appearing in Kac--Rice computations are from the Gaussian Orthogonal Ensemble, for which exact determinantal formulas are available.
Recently the work \cite{arous2021exponential} gave broadly applicable tools for random matrix determinants with less symmetry.
Their work enables quite general Kac--Rice computations, with the caveat that the results hold only to leading exponential order (i.e. with an extra $e^{o(N)}$ factor in dimension $N$).

Through the example of multi-species spin glasses, we aimed to study the following three meta-questions on random landscapes which do not seem to have been addressed in the literature. While the first is somewhat tailored to the multi-species setting and the recent work \cite{arous2021exponential}, we are unaware of rigorous results toward the latter two in any of the models above.
\begin{enumerate}[label=(\arabic*)]
    \item
    \label{it:non-invariant}
    % Is it possible to precisely understand the topologically trivial regime for models with non-GOE Hessians, for which the Kac--Rice asymptotics can be evaluated only to leading exponential order? Or must one stop at proving an $e^{o(N)}$ upper bound on the annealed complexity?
    For non-symmetric models with Hessians more complicated than GOE, does the topologically trivial regime still exhibit a dimension-free number of critical points? Or is the $e^{o(N)}$ upper bound on annealed complexity the end of the story?
    \item
    \label{it:quenched}
    % Do annealed phase transitions for topological trivialization have quenched significance? Or do there exist parameter regimes with exponential annealed complexity yet finite quenched complexity?
    With or without symmetry, does the phase boundary of annealed topological trivialization have genuine significance? Or can the regime of \emph{quenched} topological trivialization be strictly larger?
    \item
    \label{it:algorithms}
    Does topological trivialization imply rapid convergence for optimization algorithms such as Langevin dynamics?
    Or might regions with small but non-zero gradient lead to arbitrarily slow convergence?
\end{enumerate}
The last question in particular was highlighted in the recent book chapter
\cite{ros2022high}, which ends:
\[
    \parbox{16cm}{\emph{Finally, we find it appropriate to conclude this chapter by recalling that getting
    a refined information on the landscape topology and geometry can hopefully shed
    light and guide us into the comprehension of the dynamical evolution of the complex
    systems associated to it: establishing quantitatively this connection between landscape
    and dynamics is the underlying goal of the landscape program, and thus the most
    relevant perspective.}}
\]

\paragraph*{Our Results}

We make progress on all three of the above questions.
As the first step, we establish in Theorem~\ref{thm:annealed-crits} the annealed phase boundary for topological trivialization in the sense of leading exponential order.
Already these annealed estimates determine the ground state energy in the topologically trivial phase.
Answering Question~\ref{it:non-invariant}, we go further and show throughout the topologically trivial regime that the number of critical points in an $r$-species spin glass is exactly $2^r$, which is the minimum possible for any Morse function on a product of $r$ spheres.
Moreover the landscape trivializes in a quantitative sense: each of the $2^r$ critical points has dimension-free condition number, and all \emph{approximate} critical points (with small gradient) are close to one of them.
We call this confluence of properties \textbf{strong topological trivialization} (see Definition~\ref{def:STT}), and show that for any landscape satisfying it, the mixing time for low temperature Langevin dynamics is $O(\log N)$. This addresses Question~\ref{it:algorithms} above.
% consequences natural algorithms including Langevin dynamics and ``critical point following'' locate the global optima and remaining critical points of $H_N$, positively answering Question~\ref{it:algorithms} (see below for more details).
Conversely in the topologically non-trivial phase, our companion paper \cite{huang2023optimization} explicitly constructs exponentially many well-separated approximate critical points (see Proposition~\ref{prop:quenched-failure-of-STT} below). This implies \emph{quenched} failure of \emph{strong} topological trivialization whenever the annealed complexity is non-trivial, partially addressing Question~\ref{it:quenched}.

\paragraph*{Proof Techniques}

Our computations using the Kac--Rice formula rely on asymptotics for expectations of random determinants computed in \cite{arous2021exponential} via the vector Dyson equation, in particular Corollary 1.9.A therein.
We determine the expected number of critical points to leading exponential order, in particular identifying the trivial regime of annealed complexity $e^{o(N)}$.

First we discuss several new challenges arising in the Kac--Rice computations as compared to the single-species setting.
Whereas the random determinants arising for one species are of a GOE matrix plus a scalar multiple of the identity, with multiple species the GOE is replaced by a more general Gaussian block matrix.
Before the present work, the exponential-order growth rates of the relevant determinants were only known in the form of an integral $\Psi = \int \log |\gamma| \de \mu(\gamma)$ for a measure $\mu$ whose Stieltjes transform solves a vector Dyson equation (see \eqref{eq:Psi-def}).
While this integral can be explicitly evaluated in the single-species case because $\mu$ is a shift of the semicircle law, in general this representation is far from explicit.
Addressing this challenge, we find a closed-form formula for this integral (Lemma~\ref{lem:oF-oPsi-formulas}), expressed in terms of the solution to the vector Dyson equation.
This formula makes the Kac--Rice calculations reasonably explicit and may be of independent interest.

The annealed complexity is now given by the maximum of an $r$-dimensional complexity functional $F:\bbR^r\to\bbR$ whose main term is this $\Psi$.
Determining the maximum of $F$ is also complicated and requires more than finding its stationary points.
Even for one species, $F$ has both concave and convex regions and is $C^1$ but not $C^2$.
With $r>1$ species, the maximization of $F$ becomes a multi-dimensional optimization problem where the number of concave and nonconcave regions in $\bbR^r$ grows exponentially with $r$; see Subsection~\ref{subsec:proof-discussion} and Figure~\ref{fig:complexity-functional}.
In general, $F$ has about $3^r$ stationary points (see Lemma~\ref{lem:stationary-condition}), $2^r$ of which eventually yield critical points of $H_N$.
To show that the remaining stationary points do not maximize $F$, we construct at each of these points an explicit direction along which the Hessian $\nabla^2 F$ is positive.
%
% how about this instead of the preceding sentence?
The construction of this direction requires an understanding of the \emph{solution} to the vector Dyson equation, which ranges over a non-trivial subset of $\bbC^r$ that we characterize (Lemma~\ref{lem:u-manifold}).
%
% yeah sure seems fine
%
% We are able to achieve this even though
% %
% $F$ depends nontrivially on both sides of the vector Dyson equation.
% % the vector Dyson equation gives only implicit access to one of the terms in $F$.
%
% how about ``one of the terms in F" because above I said Phi is the "main term" of F
% hmm ok I guess it's not clear to me why it is "implicit".
% to me it makes sense to just say like "it depends on both sides so it is complicated" or something?
% yeah that souns good to me
% hm, actually is it possible to say more here
% "complicated" sounds like a complicated expression in v, but the difficulty is the derivative isn't an explicit formula in v, it depends on some inverse problem
% yeah I'm not sure, I think it should be limited to ~ 1 sentence.
% maybe like [the optimization is most naturally parametrized by the solution to the Dyson equation, which ranges over a complicated subset of complex r-dimensional space (that we characterize along the way).] ?
% i like that
To justify the necessary calculations, we also prove (in Theorem~\ref{thm:continuity}) new joint continuity properties of the vector Dyson equation; these extend results of \cite{ajanki2017singularities, alt2020dyson} in the case of finitely many blocks.

Having determined the trivial regime for annealed complexity, we next turn a more precise understanding of this regime, for example aiming to show the number of critical points is exactly $2^r$ with high probability.
It follows from the annealed estimates that all critical points are well-conditioned and must have one of $2^r$ ``types" corresponding to maxima of $F$ (see Definition~\ref{def:type}).
Here the type of a critical point essentially determines its overlap with the external field (which is an $r$-dimensional vector), as well as its energy and Hessian spectrum.
This motivates the following natural strategy. For each of the $2^r$ types we restrict attention to a lower-dimensional band having the correct overlap with the external field (thus containing all critical points of that type), and search for critical points of the restriction of $H_N$ to this band by repeating this process.
The conditional law of $H_N$ on such a band is again a spherical multi-species spin glass in the topologically trivial regime, and the relevant band shrinks in diameter each step. Thus we would hope to eventually show that all critical points of each fixed type are close together. Since the Kac--Rice estimates imply all critical points are well-conditioned, there can only be at most $1$ inside any small region. Hence this would imply a $2^r$ upper bound for the number of critical points.

Unfortunately this approach does not make sense on its face because critical points are \emph{brittle}.
In particular the set of critical points of $H_N$ restricted to a lower dimensional band might be unrelated to the set of critical points on an \emph{open neighborhood} of said band.
To overcome this difficulty, we establish in Theorem~\ref{thm:approx-crits-from-annealed} a way to pass from annealed upper bounds for exact critical points to high-probablity non-existence of \emph{approximate} critical points (with small gradient).
% This proceeds by resampling, arguing that slightly rerandomizing the Gaussian disorder turns an approximate critical point into at least $e^{-o(N)}$ exact critical points in expectation.
% (Note that the expected number of approximate critical points is infinite, so annealed upper bounds are unavailable.)
Because the notion of approximate critical point is more robust, the shrinking bands argument above can be salvaged, thus proving the $2^r$ upper bound.
The fact that all $2^r$ critical points actually exist then follows by the Morse inequalities from differential topology.
The aforementioned non-existence of approximate critical points far from any exact critical point also falls out of the shrinking bands argument.

We note that the ability to control approximate critical points via Kac--Rice estimates seems quite powerful and should have further applications to random landscapes.
In addition to the shrinking bands argument above, we also needed Theorem~\ref{thm:approx-crits-from-annealed} to show strict positivity of the annealed complexity in the complementary ``non-trivial'' regime, see Subsection~\ref{subsec:annealed-complexity-positive}.
Further, in Section~\ref{sec:approx-local-max-E-infty} we use these ideas to derive energy estimates for ``approximate local maxima'' in single-species spherical spin glasses without external field, via the annealed thresholds $E_{\infty}^{\pm}$ of \cite{auffinger2013complexity}.
We have already applied these estimates to obtain bounds on
% ground state energies \cite{frsb-hessian-in-progress},
the energy attained by low temperature Langevin dynamics \cite{sellke2023threshold} and the algorithmic threshold energy for Lipschitz optimization \cite{huang2023algorithmic,huang2023optimization}.

\paragraph*{Connections to Algorithms}

As mentioned previously, our landscape results yield non-asymptotic algorithmic consequences in the ``trivial'' regime.
We show in Theorem~\ref{thm:langevin-abstract} that low temperature Langevin dynamics rapidly enters a small neighborhood of the global maximum and remains there for an exponentially long time, even from disorder-dependent initialization.
The proof relies on recent work by one of us \cite{sellke2023threshold} to ensure the dynamics does not get stuck in saddle points.
Thanks to the local concavity of $H_N$ around its global maximum, we also deduce in this theorem that low temperature Langevin dynamics undergoes total variation mixing within $O(\log N)$ time.
These results follow in a black-box way from the strong topological trivialization property discussed above.

Being an optimization algorithm, Langevin dynamics can find only the global maximum or minimum. However an equally natural ``critical point following'' algorithm explained in Subsection~\ref{subsec:index} suffices to find all $2^r$ critical points. Here one first locates the critical point of the desired ``type'' under an amplified external field, and then follows its movement as the external field strength is gradually decreased. Well-conditioning of critical points ensures that this movement is stable and easy to follow for any model in the ``trivial'' regime.
This can be seen as a variant of ``state following'' \cite{barrat1997temperature,zdeborova2010generalization,sun2012following}; see also \cite[Proposition 9.1]{arous2020geometry} and \cite{song2019surfing} for similar ideas.

Finally, the phase boundary for topological trivialization coincides with a transition in the structure of algorithmically reachable states we recently identified in \cite{huang2023algorithmic,huang2023optimization}. These works study the optimization of $H_N$ using algorithms, viewed as functions of the disorder coefficients, with Lipschitz constant independent of $N$.
Roughly speaking, it is shown that the reachable points for the best such optimization algorithms have the structure of a continuously branching ultrametric tree, and both approximate message passing and a second-order ascent algorithm generalizing that of \cite{subag2018following} (and using a correlated ensemble of Hamiltonians) find these points.
The algorithmic tree is rooted at a random location correlated with the external field (which is simply the origin when the external field vanishes), and branches orthogonally outward until reaching the boundary of the state space.
% When the external field is large enough, the algorithmic tree is degenerate, with root on the surface of the spin glass, and the trivialization phase boundary coincides with the boundary where this occurs.
When the external field is large enough, the algorithmic tree degenerates; the root moves all the way to the boundary of the state space and no branching occurs.
We show in this paper that said degeneracy coincides with topological trivialization.
In the ``nontrivial" regime, the algorithmic tree is non-degenerate and \cite{huang2023optimization} uses the above algorithms to construct $e^{cN}$ well-separated approximate critical points, yielding the quenched non-trivialization discussed previously.
Conversely in the ``trivial" regime, \cite{huang2023optimization} gives a signed generalization of the root-finding approximate message passing iteration which locates all $2^r$ critical points by implementing the previously mentioned recursive-bands argument as an algorithm.

\subsection{Model Description}
\label{subsec:setup}

Fix a finite set $\sS = \{1,\ldots,r\}$ and weights $\vlambda = (\lambda_1,\ldots,\lambda_r) \in\bbR_{>0}^{\sS}$ with $\sum_{s\in\sS} \lambda_s=1$.
For each positive integer $N$, fix a deterministic partition $\{1,\ldots,N\} = \bigsqcup_{s\in\sS}\, \cI_s$ with
$N_s/N=\lambda_{N,s}$ and $\lim_{N\to\infty} \lambda_{N,s} =\lambda_s$ for $N_s=|\cI_s|$.
For $s\in \sS$ and $\bx \in \bbR^N$, let $\bx_s \in \bbR^{\cI_s}$ denote the restriction of $\bx$ to coordinates $\cI_s$.
We consider the product-of-spheres state space
\begin{equation}
\label{eq:def-cS}
    \cS_N = \lt\{
        \bx \in \bbR^N :
        \norm{\bx_s}_2^2 = \lambda_s N
        \quad\forall~s\in \sS
    \rt\}.
\end{equation}
For each $k\ge 1$ fix a symmetric tensor
\[
  \Gamma^{(k)} = (\gamma_{s_1,\ldots,s_k})_{s_1,\ldots,s_k\in \sS} \in (\bbR_{\ge 0}^r)^{\otimes k}
\]
with $\sum_{k \ge 1} 2^k \norm{\Gamma^{(k)}}_\infty < \infty$, and let $\Gp{k} \in (\bbR^N)^{\otimes k}$ be a tensor with \iid standard Gaussian entries.
For $A\in (\bbR^\sS)^{\otimes k}$, $B\in (\bbR^N)^{\otimes k}$, define $A\diamond B \in (\bbR^N)^{\otimes k}$ to be the tensor with entries
\begin{equation}
    \label{eq:def-diamond}
    (A\diamond B)_{i_1,\ldots,i_k} = A_{s(i_1),\ldots,s(i_k)} B_{i_1,\ldots,i_k},
\end{equation}
where $s(i)$ denotes the $s\in \sS$ such that $i\in \cI_s$.
% Let $\bh = \vh \diamond \bone$.
We consider the mean-field multi-species spin glass Hamiltonian
\begin{align}
    \notag
    H_N(\bsig)
    &=
    \sum_{k\ge 1}
    \fr{1}{N^{(k-1)/2}}
    \la \Gamma^{(k)} \diamond \bG^{(k)}, \bsig^{\otimes k} \ra \\
\label{eq:def-hamiltonian}
    &=
  \sum_{k \ge 1}
  \fr{1}{N^{(k-1)/2}}
  \sum_{i_1,\ldots,i_k=1}^N
  \gamma_{s(i_1),\ldots,s(i_k)} \bG^{(k)}_{i_1,\ldots,i_k} \sigma_{i_1}\cdots \sigma_{i_k}
\end{align}
with inputs $\bsig = (\sigma_1,\ldots,\sigma_N) \in \cS_N$.
For $\bsig,\brho\in \cS_N$, define the species $s$ overlap and overlap vector
\begin{equation}
    \label{eq:R}
    R_s(\bsig, \brho)
    =
    \fr{ \la \bsig_s, \brho_s \ra}{\lambda_s N},
    \qquad
    \vR(\bsig, \brho)
    =
    \big(R_1(\bsig, \brho), \ldots, R_r(\bsig, \brho)\big).
\end{equation}
Let $\odot$ denote coordinate-wise product.
For $\vx = (x_1,\ldots,x_r) \in \bbR^\sS$, let
\begin{align*}
    \xi(\vx)
    &= \sum_{k \ge 1} \la \Gamma^{(k)}\odot \Gamma^{(k)}, (\vlambda \odot \vx)^{\otimes k}\ra \\
    &= \sum_{k \ge 1}
  \sum_{s_1\ldots,s_k\in \sS}
  \gamma_{s_1,\ldots,s_k}^2
  (\lambda_{s_1} x_{s_1})
  \cdots
  (\lambda_{s_k} x_{s_k}).
\end{align*}
The random function $H_N$ can also be described as the Gaussian process on $\bbR^N$ with covariance
\[
  \bbE H_N(\bsig)H_N(\brho)
  =
  N\xi(\vR(\bsig, \brho)).
\]
It will be useful to define, for $s\in \sS$,
\begin{align}
\label{eq:def-xi-s}
    \xi^s(\vx)
    &=
    \lambda_s^{-1}
    \partial_{x_s}
    \xi(\vx),
    \\
    \xi'
    &= \nabla \xi(\vone) \in \bbR^r,
    \\
    \xi''
    &=
    \nabla^2 \xi(\vone) \in \bbR^{r\times r}.
\end{align}
We will often write $\diag(\xi')$ for the $r\times r$ matrix with $(s,s)$ entry $\xi'_s$, and similarly for other vectors. Finally, most of our results require the following generic non-degeneracy condition for $\xi$.

\begin{assumption}
    \label{as:nondegenerate}
    $\xi$ is \textbf{non-degenerate} if
    % all quadratic and cubic interactions participate, i.e.
    $\Gamma^{(1)},\Gamma^{(2)},\Gamma^{(3)} > 0$ holds entry-wise.
    For fixed $\vlambda$, a family of mixture functions $\xi$ is \textbf{uniformly non-degenerate} if the sums $\sum_{k\geq 1}2^k \|\Gamma^{(k)}\|_{\infty}$ are uniformly bounded above, and for some $\eps>0$, we have $\Gamma^{(1)},\Gamma^{(2)},\Gamma^{(3)} \geq \eps$ entry-wise for all $\xi$ in the family.
\end{assumption}

\subsection{Basic Notations and Conventions}

Here we detail some notations that will be useful to understand the statements in the next subsection.

\begin{definition}
    \label{dfn:Wasserstein}
    For probability measures $\mu,\nu$ on a metric space $(\cX,d)$, and $p\in [1,\infty]$, we denote by $\bbW_p(\mu,\nu)$ the Wasserstein distance
    \[
    \bbW_p(\mu,\nu) =  \lt(\inf_{\Pi \in \cC(\mu,\nu)}
    \bbE_{\Pi} \Big[ d(\bx, \by)^p\Big]\rt)^{1/p},
    \]
    where the infimum is over all couplings $(\bx,\by) \sim \Pi$ with marginals $\bx \sim \mu$ and $\by \sim \nu$. (For $p=\infty$, the distance is the essential supremum of $d(\bx,\by)$ under the coupling.) Unless otherwise specified, $(\cX,d)$ will always be $\bbR^n$ for some $n\geq 1$ with the standard Euclidean metric.
\end{definition}

\begin{definition}
\label{dfn:Hausdorff}
The Hausdorff distance between sets $S_1,S_2\subseteq \bbR$ is given by
\begin{equation}
\label{eq:hausdorff-distance}
d_{\cH}(S_1,S_2)\equiv \max\lt(\max_{s_1\in S_1}d(s_1,S_2),\max_{s_2\in S_2}d(S_1,s_2)\rt).
\end{equation}
Here $d(s_1,S_2)=\inf_{s_2\in S_2} d(s_1,s_2)$ is the usual point-to-set distance.
\end{definition}

Given a symmetric $n\times n$ matrix $M$, we denote by $\blambda_{\min}(M),\blambda_{\max}(M)$ its minimum and maximum eigenvalue, and by $\blambda_k(M)$ its $k$-th largest eigenvalue. Using $\cP(\bbR)$ to denote the space of probability measures on $\bbR$, denote by
\begin{equation}
\label{eq:spectral-defs}
    \spec(M)=\{\blambda_k(M)\,:\,k\in[n]\}\subseteq\bbR,
    \quad\quad\quad\quad\quad
    \wh\mu(M)=\frac{1}{n}\sum_{k=1}^n \delta_{\blambda_k(M)}\in\cP(\bbR),
\end{equation}
the empirical spectral support and measure of $M$. Let
\begin{equation}
\label{eq:Hessian-spectral-defs}
    \spec_{H_N}(\bx)=
    \spec\big(\nabla_{\sph}^2 H_N(\bx)\big),
    \quad\quad\quad\quad\quad
    \wh\mu_{H_N}(\bx)=\wh\mu\big(\nabla_{\sph}^2 H_N(\bx)\big)
\end{equation}
be the corresponding objects for the Riemannian Hessian defined just below.
We will always use non-bolded $\vlambda = (\lambda_s)_{s\in \sS}$ to denote the species weights as in Subsection~\ref{subsec:setup}.

Next we define the \emph{radial derivative} and \emph{Riemannian gradient and Hessian} $H_N$.
Throughout the paper we assume $\cI_1 = \{1,\ldots,m_1\}$, $\cI_2 = \{m_1+1,\ldots,m_2\}$, and so on.
Let $\cR = \{m_1,\ldots,m_r\}$ and $\cT = [N] \setminus \cR$.
For each $\bsig \in \cS_N$, we pick an orthonormal basis $\big\{e_1(\bsig),\ldots,e_N(\bsig)\big\}$ of $\bbR^N$ so that $\{e_i(\bsig) : i \in \cI_s\}$ constitutes an orthonormal basis of $\bbR^{\cI_s}$, and $\bsig_s = \sqrt{\lambda_s N} e_{m_s}(\bsig)$.
For $S\subseteq [N]$, let $\nabla_S H_N(\bsig) \in\bbR^S$ denote the restriction of $\nabla H_N(\bsig) \in\bbR^N$ to the coordinates in $S$ (in the orthonormal basis $\{e_1(\bsig),\ldots,e_N(\bsig)\}$), and define $\nabla^2_{S\times S} H_N \in \bbR^{S\times S}$ analogously.
The radial derivative is $\nabla_{\cR} H_N(\bsig)$; it will be convenient to define a rescaled radial derivative $\nabla_{\rd} H_N(\bsig)=N^{-1/2}\nabla_{\cR} H_N(\bsig)$ so that the below formulas become dimension-free.
Then, define the matrices
\begin{equation}
\label{eq:Lambda-and-A}
    \Lambda = \diag(\vlambda)\in\bbR^{r\times r},\quad\quad
    A = \diag(\xi') + \xi'' \in \bbR^{r\times r}\,.
\end{equation}
The following standard fact relates the Riemannian gradient and Hessian of $H_N$ to the Euclidean gradient and Hessian and can be taken as a definition.
\begin{fact}
    \label{fac:riemannian-to-euclidean}
    Let $\nabla_{\sph} H_N(\bsig)$, $\nabla_{\sph}^2 H_N(\bsig)$ denote the Riemannian gradient and Hessian of $H_N$ in $\cS_N$. Then,
    \begin{align*}
        \nabla_{\sph} H_N(\bsig) &= \nabla_\cT H_N(\bsig)\,, &
        \nabla^2_{\sph} H_N(\bsig) &= \nabla^2_{\cT \times \cT} H_N(\bsig)
        - \diag(\Lambda^{-1/2} \nabla_{\rd} H_N(\bsig) \diamond \bone_{\cT})\,.
    \end{align*}
    Explicitly, the curvature term $\diag(\Lambda^{-1/2} \nabla_{\rd} H_N(\bsig) \diamond \bone_{\cT})$ is a diagonal matrix $D \in \bbR^{\cT \times \cT}$ where for all $i \in \cT \cap \cI_s$,
    \[
        D_{i,i}
        = \fr{1}{\sqrt{\lambda_s}} (\nabla_\rd H_N(\bsig))_s
        = \fr{1}{\sqrt{\lambda_s N}} \partial_{m_s} H_N(\bsig).
    \]
\end{fact}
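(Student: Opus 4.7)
The plan is to treat this as a standard exercise in Riemannian submanifold geometry. The state space $\cS_N$ is embedded isometrically in $\bbR^N$ as the product of spheres $\prod_{s\in\sS} S^{N_s-1}\big(\sqrt{\lambda_s N}\big)$, and at each $\bsig\in\cS_N$ the outward unit normals are precisely the basis vectors $e_{m_s}(\bsig) = \bsig_s/\sqrt{\lambda_s N}$ for $s\in\sS$. The tangent space $T_\bsig \cS_N$ is therefore spanned by $\{e_i(\bsig):i\in\cT\}$, so the Riemannian gradient $\nabla_\sph H_N(\bsig)$ is the orthogonal projection of $\nabla H_N(\bsig)$ onto $T_\bsig \cS_N$, which in this orthonormal basis reads exactly $\nabla_\cT H_N(\bsig)$.

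For the Hessian I would invoke the standard submanifold identity
\[
    \nabla^2_\sph H_N(\bsig)(u,v) \;=\; \nabla^2 H_N(\bsig)(u,v) + \langle \nabla H_N(\bsig),\, II(u,v)\rangle,
    \qquad u,v\in T_\bsig \cS_N,
\]
where $II$ is the second fundamental form of $\cS_N\hookrightarrow\bbR^N$. For a single sphere of radius $R$, differentiating the constraint $\|\gamma(t)\|^2=R^2$ twice along a curve with $\gamma'(0)=u$ yields $II(u,u) = -R^{-1}\|u\|^2\hat n$, where $\hat n$ is the outward unit normal. Polarizing and using that the normal bundle of the product $\cS_N$ decomposes as a direct sum across species gives
\[
    II(u,v) \;=\; -\sum_{s\in\sS} \frac{\langle u_s,v_s\rangle}{\sqrt{\lambda_s N}}\, e_{m_s}(\bsig).
\]

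Substituting this and using $\langle \nabla H_N(\bsig), e_{m_s}(\bsig)\rangle = \partial_{m_s} H_N(\bsig)$ yields $\langle \nabla H_N(\bsig), II(u,v)\rangle = -\sum_s (\lambda_s N)^{-1/2} \partial_{m_s} H_N(\bsig)\cdot\langle u_s,v_s\rangle$. Expressed as a matrix in the basis $\{e_i(\bsig):i\in\cT\}$, this correction is diagonal with entry $-(\lambda_s N)^{-1/2}\partial_{m_s}H_N(\bsig)$ on each index $i\in\cT\cap\cI_s$. Since $(\Lambda^{-1/2}\nabla_\rd H_N(\bsig))_s = \lambda_s^{-1/2}\cdot N^{-1/2}\partial_{m_s}H_N(\bsig)$ by the definition of $\nabla_\rd$, and the $\diamond$-operation of \eqref{eq:def-diamond} against $\bone_\cT$ broadcasts each species value $s$ to every coordinate in $\cT\cap\cI_s$ (and annihilates $\cR$), the correction matches $-D$ with $D=\diag\big(\Lambda^{-1/2}\nabla_\rd H_N(\bsig)\diamond \bone_\cT\big)$. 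This gives $\nabla^2_\sph H_N(\bsig) = \nabla^2_{\cT\times\cT}H_N(\bsig) - D$ as claimed.

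The derivation is entirely routine, and there is no substantive obstacle: the only care required is fixing the sign convention for the second fundamental form (using the outward normal) and unpacking the $\Lambda^{-1/2}$ and $\diamond\bone_\cT$ operations to confirm that they encode the species-indexed radial derivative in the expected way.
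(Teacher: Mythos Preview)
Your derivation is correct: projecting the Euclidean gradient onto the tangent space gives $\nabla_\cT H_N(\bsig)$, and the submanifold identity $\nabla^2_\sph f(u,v)=\nabla^2 f(u,v)+\langle\nabla f,\,II(u,v)\rangle$ together with $II(u,v)=-\sum_s(\lambda_s N)^{-1/2}\langle u_s,v_s\rangle\,e_{m_s}(\bsig)$ yields exactly the claimed curvature correction once one unwinds the notation $\Lambda^{-1/2}\nabla_\rd H_N(\bsig)\diamond\bone_\cT$.

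The paper itself does not supply a proof here; it simply states the fact as standard and remarks that it ``can be taken as a definition.'' Your argument thus provides a genuine derivation where the paper chose to omit one. The second-fundamental-form route you take is the natural one and is fully rigorous.
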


We now define approximate critical points and ground states. We remark that all $\eps$-approximate ground states are $\delta$-approximate critical points for some $\delta(\eps)$ tending to $0$ with $\eps$, assuming $H_N$ lies in the exponentially high probability set $K_N$ defined in Proposition~\ref{prop:gradients-bounded}.

\begin{definition}
\label{def:approx}
    A point $\bx\in\cS_N$ is an \textbf{$\eps$-approximate critical point} if $\|\nabla_{\sph}H_N(\bx)\|_2 \leq \eps\sqrt{N}$, and an \textbf{$\eps$-approximate ground state} if $H_N(\bx)+\eps N\geq \max_{\bsig\in\cS_N} H_N(\bsig)$.
    We will sometimes abbreviate these by \textbf{$\eps$-critical point} and \textbf{$\eps$-ground state}.
    Moreover, $\bx$ is \textbf{$C$-well conditioned} if $\nabla_{\sph}^2 H_N(\bx)$ has all eigenvalues in $\pm[C^{-1},C]$.
    Finally, $\bx$ is an \textbf{$(\eps,C)$-well conditioned critical point} if it is both an $\eps$-critical point and $C$-well conditioned.
\end{definition}

Finally, we will often say that an event holds with probability $1-e^{-cN}$. In these cases, unless specified otherwise, $c$ is a small constant which may depend on all other relevant $N$-independent constants.

\subsection{Main Results on Strong Topological Trivialization}

In this subsection we state our main results.
Theorem~\ref{thm:annealed-crits} below shows that super-solvable models have $e^{o(N)}$ critical points and identifies their possible asymptotic energies, correlation with $\bG^{(1)}$, and Hessian spectrum.
Theorem~\ref{thm:precise-landscape} considerably refines this statement for strictly super-solvable models, establishing \emph{strong} topological trivialization: the number of critical points equals $2^r$, all are well-conditioned, and all approximate critical points are close to one of them.

\begin{definition}
\label{def:xi'}
    Assume $\xi$ is non-degenerate. We say $\xi$ is \textbf{super-solvable} if $\diag(\xi') \succeq \xi''$, where $\succeq$ denotes the Loewner (positive semi-definite) partial order, \textbf{strictly super-solvable} if  $\diag(\xi') \succ \xi''$.
    Similarly we say $\xi$ is \textbf{strictly sub-solvable} if $\diag(\xi') \not\succeq \xi''$.
    We say $\xi$ is \textbf{solvable} if $\diag(\xi') - \xi'' \succeq 0$ is singular.
\end{definition}

\begin{remark}
    The condition $\diag(\xi') \succeq \xi''$ coincides with the condition for degeneracy of the algorithmic ultrametric trees identified in \cite[Theorem 3]{huang2023algorithmic}.
    In one species, this condition recovers the annealed trivialization condition determined by \cite[Theorem 1.1]{belius2022triviality} and coincides with the condition for zero-temperature replica symmetry \cite[Proposition 1]{chen2017parisi}.
\end{remark}

\begin{remark}
    The super-solvability condition above also appeared naturally in \cite{huang2023algorithmic} via the analysis of an ODE describing the algorithmic threshold for optimizing multi-species spherical spin glass Hamiltonians.
    In that paper, super-solvability is a property of points in $\bbR_{\ge 0}^r$ for fixed $\xi$, and this ODE has different behaviors on super-solvable and sub-solvable regions of $\bbR_{\ge 0}^r$.
    In the present work, $\xi$ is super-solvable if, in the language of \cite{huang2023algorithmic}, $\vone$ is super-solvable for $\xi$.
\end{remark}

\begin{remark}
\label{rem:external-field}
    We note that in both \cite{huang2023algorithmic,huang2023optimization}, the linear external field term in $H_N$ is a deterministic vector $\bh$ rather than the random $\bG^{(1)}$; both papers immediately apply to the model \eqref{eq:def-hamiltonian} by conditioning on $\bG^{(1)}$.
    On the other hand, the upper bounds we obtain on annealed complexity become slightly stronger when the external field is random.
\end{remark}

The $2^r$ critical points of a strictly super-solvable model will correspond naturally to sign patterns $\vDelta \in \{-1,1\}^r$, which determine whether the critical point is positively or negatively correlated with the external field in each species.
For each $\vDelta$, we define its associated energy, overlap, and radial derivative:
\begin{equation}
    \label{eq:ideal-stats}
    \begin{aligned}
    E(\vDelta) &= \sum_{s\in \sS} \Delta_s \sqrt{\lambda_s \xi'_s}\,, \\
    \vR(\vDelta) &= \lt(\fr{\Delta_s\gamma_s}{\sqrt{\xi'_s}}\rt)_{s\in \sS}\,, \\
    \vx(\vDelta) &= \lt(
        \Delta_s \sqrt{\xi'_s} +
        \sum_{s'\in \sS} \Delta_{s'}
        \sqrt{\fr{\lambda_{s'}}{\lambda_s}} \cdot
        \fr{\xi''_{s,s'}}{\sqrt{\xi'_{s'}}}
    \rt)_{s\in \sS}.
    \end{aligned}
\end{equation}
% Below we also refer to certain sets $S(\vDelta)\subseteq \bbR$ which are finite unions of intervals equal to the the limiting spectral supports for certain random block matrices, and are defined later in \eqref{eq:S-vDelta-def} using the vector Dyson equation. Likewise, $\mu(\vx(\vDelta))\in\cP(\bbR)$ is a probability measure defined from the vector Dyson equation in \eqref{eq:mu-def}.
Below we refer to certain probability measures $\mu(\vx(\vDelta))\in\cP(\bbR)$, which are the limiting spectral measures for certain random block matrices, defined using the vector Dyson equation in \eqref{eq:mu-def}.
We also refer to their supports $S(\vDelta)=\supp\big(\mu(\vx(\vDelta))\big)\subseteq \bbR$ which are finite unions of intervals (see \eqref{eq:S-vDelta-def}).
We will use Definitions~\ref{dfn:Wasserstein} and \ref{dfn:Hausdorff} as well as $\nabla_\sph^2$ and $\nabla_\rd$ as defined in Fact~\ref{fac:riemannian-to-euclidean}.

\begin{definition}
    \label{def:good}
    Let $\eps > 0$ and $\vDelta \in \{-1,1\}^r$.
    A point $\bx \in \cS_N$ is \textbf{$(\eps,\vDelta)$-good} if:
    \begin{align}
        \label{eq:energy-good}
        \lt|\fr1N H_N(\bx)-E(\vDelta)\rt| &\le \eps, \\
        \label{eq:1spin-correlation-good}
        \norm{\vR(\bG^{(1)},\bx)-\vR(\vDelta)}_\infty &\le \eps  \\
        \label{eq:radial-derivative-good}
        \norm{\nabla_{\rd}H_N(\bx)-\vx(\vDelta)}_\infty & \le \eps \\
        \label{eq:hessian-good}
        \bbW_2\lt(\wh\mu_{H_N}(\bx),\mu(\vx(\vDelta))\rt)
        &\leq \eps
        \quad\text{and}\quad
        d_{\cH}\lt(\spec(\nabla_\sph^2 H_N(\bx)),S(\vDelta)\rt)
        \leq
        \eps\,.
    \end{align}
    A point is \textbf{$\eps$-good} if it is $(\eps,\vDelta)$-good for some $\vDelta \in \{-1,1\}^r$.
    Let $\cQ(\eps,\vDelta), \cQ(\eps) \subseteq \cS_N$ be the sets of $(\eps,\vDelta)$-good and $\eps$-good points, respectively.
\end{definition}

% In \eqref{eq:hessian-good}, $\spec(M) \subseteq \bbR$ denotes the set of eigenvalues of a symmetric matrix $M$ and
% \begin{equation}
% \label{eq:hausdorff-distance}
% d_{\cH}(S_1,S_2)\equiv \max\lt(\max_{s_1\in S_1}d(s_1,S_2),\max_{s_2\in S_2}d(S_1,s_2)\rt)
% \end{equation}
% is the Hausdorff distance between sets and $d(s_1,S_2)=\inf_{s_2\in S_2} d(s_1,s_2)$ is the usual point-to-set distance, etc.

Define $\Crt_N^{\tot}=\Crt_N^{\tot}(H_N)$ to be the set of critical points of $H_N$, and $\Crt_N^{\good,\eps} = \Crt_N^{\tot} \cap \cQ(\eps)$ and $\Crt_N^{\bad,\eps} = \Crt_N^{\tot} \setminus \cQ(\eps)$.
The following theorem shows that solvability defines the phase boundary for annealed topological trivialization.
It further shows that in strictly super-solvable models, all critical points are $\eps$-good and thus belong to one of $2^r$ types corresponding to $\vDelta \in \{-1,1\}^r$, defined by Definition~\ref{def:good}.

\begin{theorem}
    \label{thm:annealed-crits}
    \begin{enumerate}[label=(\alph*), leftmargin=*]
        \item \label{itm:crt-tot} If $\xi$ is super-solvable, then
        \begin{equation}
            \label{eq:crt-tot}
            \lim_{N\to\infty}\fr{1}{N} \log \bbE|\Crt_N^{\tot}| = 0.
        \end{equation}
    \end{enumerate}\vspace*{-2\partopsep}
    \begin{enumerate}[resume, label=(\alph*)]
        \item \label{itm:crt-bad} If $\xi$ is strictly super-solvable, for all $\eps>0$, there exists $c=c(\xi,\eps) > 0$ such that
        \begin{equation}
            \label{eq:crt-bad}
            \limsup_{N\to\infty}\fr{1}{N} \log \bbE|\Crt_N^{\bad,\eps}| \le -c.
        \end{equation}
        \item \label{itm:crt-tot-sub-solvable} On the other hand, if $\xi$ is strictly sub-solvable, then
        \begin{equation}
            \label{eq:crt-tot-sub-solvable}
            \lim_{N\to\infty}\fr{1}{N} \log \bbE|\Crt_N^{\tot}| > 0.
        \end{equation}
    \end{enumerate}
\end{theorem}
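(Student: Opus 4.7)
The plan is to apply the Kac--Rice formula to compute $\bbE|\Crt_N^{\tot}|$ exactly and then evaluate the resulting integral via Laplace's method. Writing the Kac--Rice formula as an integral over $\bsig \in \cS_N$ of the Gaussian density of $\nabla_\sph H_N(\bsig)$ at $0$ times the conditional expectation of $|\det \nabla_\sph^2 H_N(\bsig)|$, and using isotropy within each species to reduce to the joint law of the overlap vector $\vy = \vR(\bG^{(1)},\bsig)$ and the rescaled radial derivative $\vx = \nabla_\rd H_N(\bsig)$, gives
\begin{equation*}
    \bbE|\Crt_N^{\tot}| \;=\; \int_{\reals^r \times \reals^r} e^{N F(\vy, \vx) + o(N)}\, d\vy\, d\vx,
\end{equation*}
where $F$ combines (i) a Gaussian density for $\vy$ given that the tangential gradient vanishes, (ii) the log-volume of the band of $\bsig \in \cS_N$ with $\vR(\bG^{(1)},\bsig) = \vy$, and (iii) the leading-exponential term $\Psi(\vx)$ for the conditional expected absolute determinant of the Riemannian Hessian, taken from \cite{arous2021exponential}. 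The closed-form representation of $\Psi$ referenced in the introduction (Lemma~\ref{lem:oF-oPsi-formulas}) turns $F$ into an explicit function of $(\vy,\vx)$, $\xi$, $\vlambda$, and $\Gamma^{(1)}$, while the joint continuity result Theorem~\ref{thm:continuity} justifies the limit interchanges needed for Laplace's method.

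This reduces all three parts to the variational problem of computing $\sup_{\vy,\vx} F$. First I would optimize $\vy$ out explicitly via its first-order conditions, reducing to a functional $\tilde F(\vx)$ on $\reals^r$. Its stationary points, characterized by Lemma~\ref{lem:stationary-condition}, form a set of size roughly $3^r$; the $2^r$ indexed by sign patterns $\vDelta \in \{-1,1\}^r$ correspond to the critical-point data $\vx(\vDelta)$ from \eqref{eq:ideal-stats}, and a direct algebraic computation using the vector Dyson-equation solution at $\vx(\vDelta)$ yields $\tilde F(\vx(\vDelta)) = 0$. For part (a) the heart of the proof is to show that the remaining $\approx 3^r - 2^r$ ``spurious'' stationary points all satisfy $\tilde F < 0$, and that $\tilde F \to -\infty$ at the boundary of its domain of validity. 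I would rule out the spurious stationary points by exhibiting at each an explicit direction along which $\nabla^2 \tilde F$ is strictly positive, using the characterization of the Dyson-equation solution manifold (Lemma~\ref{lem:u-manifold}); they therefore cannot be local maxima. Combined with Laplace's method and the trivial lower bound $|\Crt_N^{\tot}| \ge 2^r$ from Morse inequalities on a product of $r$ spheres, this yields $\lim \tfrac{1}{N}\log \bbE|\Crt_N^{\tot}| = 0$.

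For (b), the set $\cS_N \setminus \cQ(\eps)$ of bad points corresponds, after the reduction, to integrating $F$ over $(\vy,\vx)$ at distance at least some $\delta(\eps) > 0$ from every $(\vy(\vDelta), \vx(\vDelta))$. Joint continuity of $F$ together with the classification of stationary points from (a) shows that $\sup F$ on this restricted region is $\le -c(\eps) < 0$, yielding the stated $e^{-cN}$ upper bound once tail decay at large $\|\vx\|$ and $\|\vy\|$ is controlled by crude Gaussian estimates. For (c), the strict sub-solvability condition $\diag(\xi') \not\succeq \xi''$ forces $\nabla^2 \tilde F$ at $\vx(\vDelta)$ to acquire a positive direction (this is precisely where the ruling-out argument from (a) fails), so perturbing slightly along it produces $(\vy,\vx)$ with $F(\vy,\vx) > 0$; evaluating the Kac--Rice integrand at a single such point lower-bounds $\bbE|\Crt_N^{\tot}|$ by $e^{cN}$, as desired.

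The main obstacle will be the variational analysis in the second paragraph: the functional $\tilde F$ is non-concave with a number of stationary points growing like $3^r$, and the values of $\tilde F$ and $\nabla^2 \tilde F$ there are only implicit through the vector Dyson equation. Producing the explicit positive-curvature directions that rule out the spurious stationary points requires both the closed-form simplification of $\Psi$ and a rather fine geometric understanding of the Dyson-equation solution manifold, which is why the paper isolates these as separate lemmas. All other steps are reasonably standard once these ingredients are in place.
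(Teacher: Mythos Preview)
Your outline for parts (a) and (b) matches the paper's argument; the only notable difference is that the paper parametrizes the Kac--Rice integral by the radial derivative $\vx$ and the energy $E$ rather than by $(\vy,\vx)$, arriving at the functional $F(\vx,E)$ of \eqref{eq:def-F-extended} without introducing band volumes. One ingredient missing from your (b): the set $\cQ(\eps)$ in Definition~\ref{def:good} also constrains the empirical Hessian spectrum via \eqref{eq:hessian-good}, and this is not detected by restricting $(\vy,\vx)$ to lie away from the $2^r$ ideal values. The paper handles this ``bulk-atypicality'' term separately, via Cauchy--Schwarz against the random-matrix concentration estimate of Proposition~\ref{prop:MDE-basic}\ref{it:Dyson-solves-RMT}.

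Your proposal for (c) has a genuine gap. The identities $F(\vx(\vDelta))=0$ and $\nabla F(\vx(\vDelta))=\vzero$, as well as the Hessian formula $\nabla^2 \oF=-A^{-1}-M(\vu)^{-1}$, all rely on $\vu(\vDelta)$ being the actual Dyson solution $\vu(0;\vv(\vDelta))$. By Lemma~\ref{lem:u-manifold}\ref{itm:semicircle-flat} this holds exactly when $M(\vu(\vDelta))=\diag(\xi')-\xi''\succeq 0$, i.e.\ in the super-solvable regime. For strictly sub-solvable $\xi$ one has $\vu(0;\vv(\vDelta))\neq\vu(\vDelta)$ (indeed it may be nonreal), so $\vx(\vDelta)$ is no longer a stationary point of $F$, $F(\vx(\vDelta))$ is not known to equal $0$, and there is no ``positive-curvature direction at a known zero'' to perturb along. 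The paper's proof of (c) in Subsection~\ref{subsec:annealed-complexity-positive} is entirely different: it imports from the companion work an AMP construction (Proposition~\ref{prop:IAMP}) of an $\eps$-approximate critical point at an explicit radial derivative $\vv_*$, uses the approximate-to-exact machinery of Theorem~\ref{thm:approx-crits-from-annealed} to conclude $\oF(\vv_*)\ge 0$, and then verifies directly that $\vv_*$ is not a stationary point of $\oF$, which forces $\sup\oF>0$.
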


Our next result states that $H_N$ has \emph{exactly} $2^r$ critical points with high probability when $\xi$ is strictly super-solvable: one for each type $\vDelta\in\{-1,1\}^r$.
Moreover all approximate critical points are near a true critical point, and (as a consequence) all approximate ground states are near the true ground state (recall Definition~\ref{def:approx}).
We find it helpful to abstract some of these results into the following definition, which could easily be extended to other manifolds besides $\cS_N$. (The first requirement, smoothness at the natural scale, holds with high probability by Proposition~\ref{prop:gradients-bounded}.)

\begin{definition}
\label{def:STT}
    We say the function $H_N:\cS_N\to\bbR$ is \textbf{$(C,\eps,\iota)$-strongly topologically trivial} if:
    \begin{enumerate}[label=(\roman*)]
        \item
        \label{it:STT-smooth}
        $\|\nabla^k H_N(\bsig)\|_{\op}\leq CN^{1-\frac{k}{2}}$ for $k\in \{0,1,2,3\}$.
        \item
        \label{it:STT-complexity}
        $|\Crt_N^{\tot}|\leq C$.
        \item
        \label{it:STT-well-conditioned}
        All critical points of $H_N$ are $C$-well-conditioned.
        \item
        \label{it:STT-unique-approx-max}
        All critical points $\bx$ of $H_N$ besides the unique global maximum satisfy $\blambda_{N/C}(\nabla_{\sph}^2 H_N(\bx))\geq 1/C$.
        \item
        \label{it:STT-approx-crits}
        All $\eps$-approximate critical points of $H_N$ are within distance $\iota\sqrt{N}$ of a critical point.
    \end{enumerate}
    We say the sequence of functions $(H_N)_{N\geq 1}$ is \textbf{$C$-strongly topologically trivial} if for any $\iota>0$, for $\eps>0$ sufficiently small, all but finitely many are $(C,\eps,\iota)$-strongly topologically trivial.
    We say the sequence is \textbf{strongly topologically trivial} if the previous condition holds for some finite $C$.
\end{definition}

We note that from conditions~\ref{it:STT-smooth} and \ref{it:STT-well-conditioned}, it follows that if $\bx$ is a critical point for $H_N$ and $\|\wt\bx-\bx\|\geq C^{-4}\sqrt{N}$, then $\wt\bx$ is not a $C^{-10}$-approximate critical point.
Hence if condition~\ref{it:STT-approx-crits} holds for $\iota$ which is small depending on $C$, then one can actually take $\iota=O(\eps)$.
The next main result shows that if $\xi$ is strictly super-solvable then the sequence $(H_N)_{N\geq 1}$ is almost surely topologically trivial. In fact, we precisely describe the $2^r$ critical points of $H_N$.

\begin{theorem}
    \label{thm:precise-landscape}
    If $\xi$ is strictly super-solvable, then the following holds with probability $1-e^{-cN}$.
    For sufficiently small $\eps > 0$, $H_N$ has exactly one critical point $\bx_{\vDelta}$ satisfying \eqref{eq:energy-good} through \eqref{eq:hessian-good} for each $\vDelta\in \{-1,1\}^r$.
    Moreover for all $\iota>0$ there exists $\eps>0$ such that:
    \begin{enumerate}[label=(\alph*)]
        \item
        \label{it:bsig-in-union}
        All $\eps$-critical points of $H_N$ are $C(\vlambda,\xi)$-well conditioned and lie in the disjoint union
        \[
            \bigcup_{\vDelta\in \{-1,1\}^r}
            B_{\iota\sqrt{N}}(\bx_{\vDelta}).
        \]
        \item
        \label{it:index}
        The number of positive eigenvalues of $\nabla_{\sph}^2 H_N(\bx_{\vDelta})$ is exactly $\sum_{s\in\sS} N_s\cdot 1_{\Delta_s=-1}$.
        \item
        \label{it:ground-states}
        All $\eps$-ground states lie in $B_{\iota\sqrt{N}}(\bx_{\vone})$.
    \end{enumerate}
\end{theorem}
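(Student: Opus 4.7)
The plan is to combine the annealed count from Theorem~\ref{thm:annealed-crits}(b) with a recursive shrinking-bands argument, enabled by Theorem~\ref{thm:approx-crits-from-annealed}, and to match the resulting upper bound with a Morse-theoretic lower bound. The first step is to apply Markov's inequality to Theorem~\ref{thm:annealed-crits}(b), yielding that with probability $1-e^{-cN}$ the set $\Crt_N^{\bad,\eps}$ is empty; hence every exact critical point of $H_N$ is $\eps$-good and is classified by some $\vDelta \in \{-1,1\}^r$. Strict super-solvability should furthermore ensure that each support $S(\vDelta)$ is bounded away from the origin, giving the uniform $C(\vlambda,\xi)$-well-conditioning required in part (a).

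The main task is to show each type $\vDelta$ contains at most one critical point. Fix such a $\vDelta$; the correlation condition \eqref{eq:1spin-correlation-good} forces every type-$\vDelta$ critical point to lie in the band $B_\vDelta^{(1)} = \{\bx \in \cS_N : \vR(\bG^{(1)},\bx) \approx \vR(\vDelta)\}$. After conditioning on $\bG^{(1)}$, the restriction of $H_N$ to $B_\vDelta^{(1)}$ is, up to an explicit affine term, again a multi-species spherical spin glass on a product of lower-dimensional spheres, with a new mixture $\wt\xi$ obtained by standard Gaussian conditioning (cf.\ Remark~\ref{rem:external-field}). I would verify that strict super-solvability of $\xi$ is inherited by $\wt\xi$, and then apply Theorem~\ref{thm:annealed-crits}(b) to the restricted Hamiltonian. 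Iterating this, each step further constrains the correlation vector and contracts the effective diameter of the candidate region by a constant factor; after $O(1)$ steps this region shrinks below the well-conditioning scale, ruling out the coexistence of two distinct critical points of the same type.

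The hard part is the brittleness of exact criticality: a critical point of $H_N$ restricted to $B_\vDelta^{(1)}$ is generally \emph{not} a critical point of $H_N$ in the ambient $\cS_N$, because the radial component of $\nabla H_N$ on the band need not vanish. This is exactly where Theorem~\ref{thm:approx-crits-from-annealed} is essential, since approximate criticality is stable under restriction to a band. At each step of the recursion I would invoke Theorem~\ref{thm:approx-crits-from-annealed} on the induced spin glass to exclude approximate critical points outside the shrinking region. Running the iteration down to a target radius $\iota$ simultaneously establishes part~(a): every $\eps$-approximate critical point of $H_N$ lies within $\iota\sqrt{N}$ of some $\bx_\vDelta$. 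A second technical point to verify is that strict super-solvability of $\wt\xi$ is preserved through the iterations, which should follow from continuity of the vector Dyson equation (Theorem~\ref{thm:continuity}) and the open nature of the strict super-solvability condition.

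To upgrade the upper bound $|\Crt_N^{\tot}|\leq 2^r$ to equality, I would invoke Morse theory. Since all critical points are uniformly well-conditioned, $H_N$ is Morse on the good event, and K\"unneth's formula gives $\sum_k b_k(\cS_N)=2^r$ for a product of $r$ spheres; the Morse inequalities then force $|\Crt_N^{\tot}|\geq 2^r$, matching the upper bound and giving exactly one critical point per type. The index formula in part~(b) follows from the block structure of $\mu(\vx(\vDelta))$: strict super-solvability forces each species block of $\nabla_\sph^2 H_N(\bx_\vDelta)$ to have definite sign determined by $\Delta_s$, so the positive-eigenvalue count is as stated (up to the dimension correction coming from the $r$ radial directions). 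For part~(c), the spectral support $S(\vone)$ lies strictly below zero, giving uniform strong concavity of $H_N$ in a neighborhood of $\bx_\vone$; since by part~(a) every $\eps$-approximate ground state is close to some $\bx_\vDelta$, and the energies $E(\vDelta)$ for $\vDelta\neq \vone$ are strictly smaller than $E(\vone)$, all such approximate ground states must in fact cluster near $\bx_\vone$.
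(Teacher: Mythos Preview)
Your overall strategy matches the paper's: annealed estimates via Theorem~\ref{thm:annealed-crits}(b), upgrade to approximate critical points via Theorem~\ref{thm:approx-crits-from-annealed}, a recursive shrinking-bands argument to localize each type, and the Morse inequalities via K\"unneth for the lower bound. The paper carries this out in Section~\ref{sec:2^r}, with the super-solvability of the band mixtures $\xi_k$ verified algebraically (Proposition~\ref{prop:xi-k-supersolvable}) rather than by the continuity argument you sketch; your version would need sharpening there, but the conclusion is correct.

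There is, however, a genuine gap in your argument for part~\ref{it:index}. You assert that ``each species block of $\nabla_\sph^2 H_N(\bx_\vDelta)$ has definite sign determined by $\Delta_s$.'' But the Riemannian Hessian is \emph{not} block-diagonal in the species: by \eqref{eq:tangential-hessian-law} the off-diagonal blocks have nonzero entries whenever $\xi''_{s,s'}>0$, which Assumption~\ref{as:nondegenerate} enforces. Consequently the spectrum does not split species-wise, and neither the limiting measure $\mu(\vx(\vDelta))=\sum_s\lambda_s\mu_s$ nor the Dyson equation gives you, without further work, that exactly $\sum_{s:\Delta_s=-1}N_s$ eigenvalues are positive. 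Knowing only that $S(\vDelta)$ avoids a neighborhood of $0$ (Lemma~\ref{lem:spec-no-0}) tells you the Hessian is well-conditioned but says nothing about the split of mass on either side of zero.

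The paper circumvents this by a \emph{critical point following} (homotopy) argument in Subsection~\ref{subsec:index}: interpolate $H_{N,t}=\la \bG^{(1)},\cdot\ra+t\,\wt H_N$ from $t=0$ to $t=1$, check that each $\xi^{(t)}$ remains strictly super-solvable so that Lemma~\ref{lem:spec-no-0} applies uniformly in $t$, and use well-conditioning plus Proposition~\ref{prop:newton} to track the critical point $\bx_{\vDelta}$ along a fine mesh in $t$ while no eigenvalue crosses zero. At $t=0$ the Hamiltonian is linear and the index is trivially $\sum_{s:\Delta_s=-1}N_s$; invariance along the homotopy gives the result. This step is not a formality and your proposal does not supply a substitute for it.
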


On the other hand, strong topological trivialization becomes false for strictly sub-solvable $\xi$ (with probability $1-e^{-cN}$).
This exhibits a natural \emph{quenched} phase transition coinciding with the annealed transition of $\bbE|\Crt_N^{\tot}|$ in Theorem~\ref{thm:annealed-crits}.
Indeed our companion paper explicitly constructs exponentially many $\sqrt{N}/C$-separated approximate critical points whenever $\xi$ is strictly sub-solvable, which contradicts parts \ref{it:STT-complexity}, \ref{it:STT-approx-crits} of Definition~\ref{def:STT} when $\iota\leq 1/(2C)$. The precise result is quoted below.
(Note however that we give no quenched lower bounds on the number of \emph{exact} critical points when $\xi$ is strictly sub-solvable, which would be interesting to obtain.)

\begin{proposition}[{\cite[Proposition 3.6]{huang2023optimization}}]
\label{prop:quenched-failure-of-STT}
    For any strictly sub-solvable $\xi$ there exists $C(\vlambda,\xi)>0$ such that for any $\eps>0$ there exists $\delta>0$ such that with probability $1-e^{-cN}$ the following holds.
    There exist $M=e^{\delta N}$ distinct $\eps$-approximate critical points $\bx_1,\dots,\bx_M\in\cS_N$ such that $\|\bx_i-\bx_j\|_2 \geq \sqrt{N}/C$ for all $1\leq i<j\leq M$.
\end{proposition}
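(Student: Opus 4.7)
The plan is to construct the $e^{\delta N}$ approximate critical points explicitly, by running a branching variant of the Subag-style second-order ascent algorithm. The algorithm builds a rooted tree of depth $T = \Theta(N)$ discrete steps. The root is a small-norm point correlated with the external field $\bG^{(1)}$, located by a root-finding AMP iteration. At each subsequent step, a node either continues along a single path or spawns multiple children, each taking an increment of size $\Theta(1/\sqrt{N})$ in a distinct direction drawn from the top eigenspace of $\nabla_\sph^2 H_N$ restricted to the tangent subspace orthogonal to previously chosen directions. Each root-to-leaf path is then a sum of approximately orthogonal increments, and its endpoint lies on $\cS_N$ after a final radial correction.

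The reason the tree branches non-trivially is the strict sub-solvability hypothesis. Along a typical trajectory, the per-species radial derivatives and restricted Hessian spectra evolve according to the vector ODE analyzed in \cite{huang2023algorithmic}. In the strictly sub-solvable regime $\diag(\xi') \not\succeq \xi''$, this ODE shows that a multi-dimensional top eigenspace of the restricted Hessian with eigenvalues bounded below persists for a duration $\Omega(1)$. Hence one can afford a small positive branching rate throughout this window; tuning it yields $e^{\delta N}$ leaves for some $\delta = \delta(\xi, \eps) > 0$. The AMP representation of the algorithm provides rigorous state evolution that describes the joint distribution of any fixed collection of paths.

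Two properties of the leaves must then be verified. For separation, paths that diverge at different ancestors accumulate approximately orthogonal increments beyond their split, so the lower bound $\|\bx_i - \bx_j\|_2 \geq \sqrt{N}/C$ follows from a deterministic overlap computation via state evolution. For approximate criticality, the standard Subag-type analysis bounds the final Riemannian gradient by the top eigenvalue of the restricted Hessian at the endpoint, which can be driven below $\eps$ by running the algorithm long enough so that this eigenvalue decays; stopping branching well before the end of the run ensures no residual gradient mass accumulates in the previously explored directions.

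The main obstacle is uniform control over the tree: because all $e^{\delta N}$ paths use the same realization of $H_N$, an independent union bound is unavailable. Instead one needs concentration estimates for the AMP state evolution that hold uniformly over a discretized net of paths, combined with a gradient Lipschitz estimate (in the spirit of Proposition~\ref{prop:gradients-bounded}) to transfer control between nearby paths. Establishing simultaneously for every leaf both the spectral decay of the Hessian (for the gradient bound) and the orthogonality of increments (for separation) is the technical heart of the argument and requires careful propagation of concentration along the branching process.
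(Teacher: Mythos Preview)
This statement is not proved in the present paper; it is quoted from the companion work \cite{huang2023optimization}. The comparison is therefore to the construction sketched in the introduction here and in Remark~\ref{rem:positive-complexity-2}.

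Your high-level picture is correct: a branching algorithmic tree, rooted via AMP, which is non-degenerate precisely when $\xi$ is strictly sub-solvable by the analysis in \cite{huang2023algorithmic}. The substantive difference is the branching device. You branch by selecting distinct directions in the top eigenspace of $\nabla_\sph^2 H_N$ at each node; the companion paper instead branches via a \emph{correlated ensemble of Hamiltonians} (stated explicitly in the introduction). There the randomness separating different branches is external Gaussian disorder interpolated with $H_N$, so any fixed finite collection of leaves is governed by a tree-indexed state evolution that outputs the pairwise overlaps---and hence the $\sqrt{N}/C$ separation---as deterministic limits. This also dissolves your ``main obstacle'' in a different way than you propose: because the branching randomness is independent of $H_N$, one is not forced into a union bound over $e^{\delta N}$ paths that are all $H_N$-measurable, which is precisely the difficulty your net-plus-Lipschitz scheme would have to overcome (the per-path concentration rate for such paths cannot obviously be tuned to beat the tree entropy). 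Finally, your criticality mechanism is inaccurate: the smallness of $\nabla_{\sph} H_N$ at each leaf is not deduced from decay of a Hessian eigenvalue but is computed directly from state evolution, which pins down the full gradient at the output (Proposition~\ref{prop:IAMP} here records the radial part, and Proposition~\ref{prop:tree-descending-approx-maximum} shows the top of the limiting Hessian spectrum sits exactly at zero rather than decaying through it).
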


\subsection{Consequences for Langevin Dynamics}
\label{subsec:langevin}

Here we obtain dynamical consequences from our landscape results, showing that for any strongly topologically trivial landscape, low temperature Langevin dynamics mixes in $O(\log N)$ time.
The main ingredient is a recent result by one of us \cite{sellke2023threshold} showing that, roughly speaking, low temperature Langevin dynamics can get stuck only in approximate local maxima.

\begin{definition}
\label{def:langevin}
    Given a Hamiltonian $H_N:\cS_N\to\bbR$, initial condition $\bX(0)\in\cS_N$, and $\beta\geq 0$, the $\beta$-Langevin dynamics $\bX(t)$ driven by standard $\bbR^N$-valued Brownian motion $\bB(t)$ is the process solving the stochastic differential equation
    \begin{equation}
    \de \bX(t)
    =
    \lt(\beta\nabla_{\sph} H_N(\bX(t))
    -
    \sum_{s\in \sS} \frac{N_s-1}{2\lambda_s N} \bX_s(t) \rt)
    \de t
    +
    P_{\bX(t)}^{\perp}\sqrt{2}~\de \bB(t).
    \end{equation}
    Here $P_{\bX(t)}^{\perp}$ is the rank $N-r$ projection matrix onto the orthogonal complement of $\mathrm{span}\big(\bX_1(t),\dots,\bX_r(t)\big)$.
\end{definition}

\begin{theorem}
\label{thm:langevin-abstract}
    Fix $\vlambda$ and let $(H_N)_{N\geq 1}$ be a $C$-strongly topologically trivial sequence of functions $H_N:\cS_N\to\bbR$ with unique local maximum $\bx_*=\bx_{*,N}$. (E.g. $H_N$ as above, with $\bx_*=\bx(\vone).)$ Then:
    \begin{enumerate}[label=(\alph*)]
    \item
    \label{it:langevin-abstract-1}
    For any $\eps>0$, if $\beta\geq\beta_0(\vlambda,\eps,C)$ and $T\geq T_0(\vlambda,\eps,C)$ and $N$ are sufficiently large, $\beta$-Langevin dynamics started from any $\bX(0)\in\cS_N$ satisfies with probability $1-e^{-cN}$:
    \begin{align}
    \label{eq:langevin-energy}
    \inf_{t\in [T,T+e^{cN}]}
    H_N(\bX(t))
    &\geq
    H_N(\bx_*)-\eps N,
    \\
    \label{eq:langevin-near-crit}
    \inf_{t\in [T,T+e^{cN}]} \norm{\bX(t)-\bx_{*}}_2
    &\leq
    \eps \sqrt{N}.
    \end{align}
    \item
    \label{it:langevin-abstract-2}
    For $\beta\geq \beta_0(\vlambda,C)$, the $\beta$-Langevin dynamics has total variation mixing time $O(\log N)$.
    \end{enumerate}
\end{theorem}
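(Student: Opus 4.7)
The plan is to combine the strong topological trivialization assumption (Definition~\ref{def:STT}) with the main result of \cite{sellke2023threshold}, which asserts that low-temperature Langevin dynamics generically becomes trapped in approximate local maxima of $H_N$. The overall strategy is to first show that after constant time the dynamics enters and remains in a small ball around $\bx_*$, and then exploit the local strong concavity there to obtain $O(\log N)$ mixing via a contraction coupling.

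For part~\ref{it:langevin-abstract-1}, I would begin by invoking the threshold result of \cite{sellke2023threshold} to assert that, for $\beta \ge \beta_0(\eps)$ and some constant time $T=T(\eps)$, the trajectory $\bX(t)$ visits an ``approximate local maximum'' of $H_N$ by time $T$ with probability $1-e^{-cN}$---i.e., a point $\bx$ with $\|\nabla_{\sph} H_N(\bx)\|_2 \le \eps' \sqrt{N}$ whose Hessian has at most $o(N)$ eigenvalues exceeding a small threshold. STT~\ref{it:STT-approx-crits} places such $\bx$ within $\iota \sqrt{N}$ of an honest critical point $\bx'$, and STT~\ref{it:STT-unique-approx-max} combined with the Lipschitz Hessian bound implied by~\ref{it:STT-smooth} forces $\bx' = \bx_*$: any other critical point would have at least $N/C$ eigenvalues $\ge 1/C$, and this property survives the $O(\iota)$ displacement to $\bx$, contradicting its approximate-local-max status. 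Hence $\bX(t) \in B_{\iota\sqrt{N}}(\bx_*)$ for some $t \le T$. To show the dynamics then remains confined for exponential time, I would use~\ref{it:STT-well-conditioned} and~\ref{it:STT-smooth} to produce $r=r(C)>0$ with $\nabla_{\sph}^2 H_N(\bsig) \preceq -\tfrac{1}{2C} \mathbf{I}$ uniformly on $B_{r\sqrt{N}}(\bx_*)$; then $V(\bsig) = H_N(\bx_*) - H_N(\bsig) \ge \tfrac{1}{4C} \|\bsig - \bx_*\|_2^2$ furnishes a convex Lyapunov function, and a Freidlin--Wentzell-type exit estimate yields exit time $\ge e^{c\beta r^2 N} = e^{cN}$, establishing \eqref{eq:langevin-energy} and \eqref{eq:langevin-near-crit}.

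For part~\ref{it:langevin-abstract-2}, I would couple two copies $\bX(t), \bY(t)$ of the dynamics via a common Brownian motion, with $\bY(0) \sim \mu_\beta$ drawn from the stationary measure. Part~\ref{it:langevin-abstract-1} applied to each copy implies that both lie in $B_{r\sqrt{N}}(\bx_*)$ throughout $[T, T+e^{cN}]$ with probability $1-e^{-cN}$; note that stationarity together with \eqref{eq:langevin-near-crit} applied to $\bY$ automatically shows $\mu_\beta(B_{r\sqrt{N}}(\bx_*)) \ge 1-e^{-cN}$. Inside the ball, the $\tfrac{\beta}{2C}$-strong concavity of $\beta H_N$ gives pathwise contraction $\tfrac{d}{dt} \|\bX(t)-\bY(t)\|_2 \le -\tfrac{\beta}{2C} \|\bX(t)-\bY(t)\|_2$ under synchronous coupling, so the initial separation $O(\sqrt{N})$ shrinks to $o(1)$ in time $O(C \log N / \beta)$. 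A final reflection coupling within a neighborhood of scale $O(1)$ then achieves exact total variation agreement in $O(1)$ additional time, delivering the claimed $O(\log N)$ mixing.

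The main technical obstacle is reconciling the precise notion of ``approximate local maximum'' produced by \cite{sellke2023threshold}---which may be stated via operator norm or bulk spectral content of the Hessian---with the specific bulk-eigenvalue condition in STT~\ref{it:STT-unique-approx-max}; one must verify that any trajectory point at which the threshold theorem declares the dynamics to be stuck genuinely cannot lie near a non-maximum critical point, which likely requires a soft continuity argument bridging the two definitions. Secondary subtleties include handling the intrinsic curvature of $\cS_N$ in the synchronous-coupling step (straightforward since both copies live in a locally nearly flat region of diameter $o(\sqrt{N})$) and ensuring the contraction is not disrupted by the sphere-correction drift in Definition~\ref{def:langevin} (which contributes only lower-order terms within a bounded region around $\bx_*$).
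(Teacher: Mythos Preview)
Your approach to part~\ref{it:langevin-abstract-1} is essentially the paper's. The paper also invokes \cite[Theorem~1.2]{sellke2023threshold} and verifies the same landscape condition: no point simultaneously has small gradient, Hessian with at most $\eta N$ eigenvalues above $\eta$, and energy below $H_N(\bx_*)-\eps N/2$. The argument ruling out such points via STT~\ref{it:STT-approx-crits}, \ref{it:STT-unique-approx-max}, \ref{it:STT-smooth} is exactly as you outline, and the ``main technical obstacle'' you identify is precisely the one the paper resolves. One simplification: the cited theorem already delivers the exponential-time confinement (its conclusion is stated for all $t\in[T,T+e^{cN}]$), so your separate Freidlin--Wentzell exit estimate is redundant.

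For part~\ref{it:langevin-abstract-2} the paper takes a somewhat different route. Rather than a direct synchronous-plus-reflection coupling, it restricts to a small geodesically convex cap $\cC$ (a product of spherical caps) around $\bx_*$ and considers the \emph{reflected} Langevin dynamics in $\cC$. The uniformly positive Ricci curvature of $\cS_N$ together with $\nabla_{\sph}^2 H_N\preceq 0$ on $\cC$ give the Bakry--\'Emery condition, whence \cite[Theorem~3.3.2]{wang2014analysis} yields $\bbW_2$-contraction $e^{-ct}$ for the reflected semigroup; this is upgraded to a TV bound via \cite[Lemma~4.2]{bobkov2001hypercontractivity} (short-time regularization from $\bbW_2$ to relative entropy, then Pinsker). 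Part~\ref{it:langevin-abstract-1} then ensures the reflected and unreflected dynamics agree with probability $1-e^{-cN}$ on $[T,T+e^{cN}]$, and a triangle inequality in TV finishes. Your coupling approach is conceptually equivalent---Bakry--\'Emery contraction is exactly what drives synchronous coupling on positively curved manifolds---but the paper's packaging sidesteps the hand-work of setting up parallel-transport coupling on a product of spheres and of converting $o(1)$ distance to TV coalescence, both of which you correctly flag as the delicate steps. What your route buys is that it is more self-contained and avoids the reflected-diffusion and log-Sobolev machinery.
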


\begin{proof}
    \textbf{Part~\ref{it:langevin-abstract-1}:}
    We use \cite[Theorem 1.2]{sellke2023threshold}, the proof of which easily extends to finite products of spheres as considered here (see Remark 2.8 therein, and note that Proposition~\ref{prop:gradients-bounded} below ensures the needed $C$-boundedness).
    The implication is that to prove \eqref{eq:langevin-energy} it suffices to show that for $\eta\leq \eta_0(\xi,\vlambda,\eps)\leq \eps/2$ small enough (playing the role of $\eps$ therein), $E_*^{(\eta)}$ as defined in \cite[Definition 2]{sellke2023threshold} satisfies
    \[
    E_*^{(\eta)} \geq \frac{H_N(\bx_*)}{N}-\frac{\eps}{2}.
    \]
    By said definition, this holds if no $\bsig\in\cS_N$ satisfies all of the following properties:
    \begin{enumerate}[label=(\arabic*)]
        \item \label{itm:approx-crit} $\|\nabla_{\sph}H_N(\bsig)\|_2 \leq \eta\sqrt{N}$.
        \item \label{itm:hessian-mostly-negative} The Hessian $\nabla^2_{\sph} H_N(\bsig)$ satisfies $\blambda_{\lfloor \eta N\rfloor}\big(\nabla^2_{\sph} H_N(\bsig)\big)\leq \eta$.
        % has at most $\eta N$ eigenvalues larger than $\eta$.
        \item \label{itm:energy-opt} $H_N(\bsig)\leq H_N(\bx_*)-\frac{\eps N}{2}$.
    \end{enumerate}

    By the definition of strong topological trivialization, for small $\eta$ condition \ref{itm:approx-crit} implies that $\bsig \in B_{\iota\sqrt{N}}(\bx)$ for some critical point $\bx$, where $\iota\to 0$ as $\eta\to 0$.
    Condition \ref{itm:hessian-mostly-negative} implies $\bx=\bx_*$: otherwise  Definition~\ref{def:STT}\ref{it:STT-unique-approx-max} implies $\blambda_{N/C}(\nabla^2_\sph H_N(\bx)) \ge 1/C$, so Definition~\ref{def:STT}\ref{it:STT-smooth} with $k=3$ implies (for small enough $\iota$) that $\blambda_{N/C}(\nabla^2_\sph H_N(\bx)) \ge 1/(2C)$, which is a contradiction for $\eta_0 < 1/(2C)$.
    Thus $\bsig \in B_{\iota\sqrt{N}}(\bx_*)$, which (for small $\eta$) contradicts condition \ref{itm:energy-opt}.

    % Then the second condition can hold only for $\bx=\bx_*$ by part~\ref{it:STT-unique-approx-max} of Definition~\ref{def:STT}.
    % However this contradicts the last condition (for $\eta$ sufficiently small depending on $\eps$).
    We conclude that \eqref{eq:langevin-energy} holds, and \eqref{eq:langevin-near-crit} follows (with a different choice of $\eps$) thanks to Theorem~\ref{thm:precise-landscape}.

    \textbf{Part~\ref{it:langevin-abstract-2}:}
    Let us choose $\eps$ small enough that $\blambda_{\max}\big(\nabla_{\sph}^2 H_N(\bx)\big)\leq 0$ is negative semi-definite on $B_{\eps\sqrt{N}}(\bx_*)$; this is possible thanks to parts~\ref{it:STT-smooth} (with $k=3$) and \ref{it:STT-well-conditioned} of Definition~\ref{def:STT}. Also choose $\beta$ large enough that Part~\ref{it:langevin-abstract-1} of this theorem applies with this choice of $\eps$.

    Let $\cC\subseteq \cS_N$ be the product of diameter $\eps\sqrt{N}/r$ spherical caps inside each $(\cS_{N,s})_{1\leq s\leq r}$ centered at $\bx_*$. Each of the $r$ factors is a convex Riemannian manifold with boundary (see e.g. \cite{kronwith1979convex}), hence so is $\cC$. We consider the reflected Langevin dynamics with inward normal reflection in $\cC$, as constructed in e.g. \cite[Section 2.1]{cheng2017reflecting}. Let $P_t$ be the transition kernel for ordinary Langevin dyamics, and $\wt P_t$ that of the reflected dynamics.

    Recall that $\cS_N$ has uniformly positive Ricci curvature (as $N$ varies) and $\nabla_{\sph}^2 H_N(\bx)$ is negative semidefinite on all of $\cC$. It follows that the Gibbs measure $\de\nu_{\beta}(\bx)=Z_{N,\beta}^{-1} e^{\beta H_N(\bx)}\de\bx$ also has uniformly positive Ricci curvature within $\cC$ (see e.g. \cite[Proposition 22]{gheissari2019spectral}).
    Let $\wt\nu_{\beta}$ be the Gibbs measure $\nu_{\beta}$ of $H_N$ conditioned to lie in $\cC$.
    By \cite[Theorem 3.3.2]{wang2014analysis} and convexity of the manifold $\cC$, it follows that the reflected Langevin dynamics inside $\cC$ contracts exponentially in (Riemannian) Wasserstein distance: for any probability measures $\rho_0,\rho_0'$ on $\cC$,
    \begin{equation}
    \label{eq:reflected-W2-contraction}
    \bbW_2(\wt P_t \rho_0,\wt P_t\rho_0')\leq e^{-ct} \bbW_2(\rho_0,\rho_0')\leq e^{-ct}\sqrt{N},
    \quad\quad
    c=c(\vlambda,\xi)>0
    \end{equation}
    Finally, we combine \eqref{eq:reflected-W2-contraction} with \cite[Lemma 4.2]{bobkov2001hypercontractivity} for a small time $\delta$ (denoted $T$ therein) to find (using Pinsker's inequality in the first step)
    \begin{equation}
    \label{eq:reflected-entropy-small}
    \|\wt P_{t+\delta}\rho_0-\wt\nu_{\beta}\|_{TV}^2
    \leq
    \Ent\big(\wt P_{t+\delta} \rho_0||\wt \nu_{\beta}\big)
    \leq
    CNe^{-2ct}.
    \end{equation}
    (\cite[Lemma 4.2]{bobkov2001hypercontractivity} is stated for Euclidean space, but all proof ingredients remain available by \cite[Theorem 3.3.2]{wang2014analysis}.)

    On the other hand, it follows from Part~\ref{it:langevin-abstract-1} that from any initial $\bX(0)\in\cS_N$, with probability $1-e^{-cN}$, we have $\bX(0)\in\cC$ for $T\leq t\leq e^{cN}$.
    Taking the convention that $\wt P_t \delta_{\by}=\delta_{\by}$ for $\by\in\cS_N\backslash \cC$, we write for any $\bx\in\cS_N$:
    \begin{align*}
    \|P_{2t}\delta_{\bx}-\nu_{\beta}\|_{TV}
    &\leq
    \|P_{2t}\delta_{\bx}-\wt P_t (P_t \delta_{\bx})\|_{TV}
    +
    \|\wt P_t (P_t \delta_{\bx})-\wt\nu_{\beta}\|_{TV}
    +
    \|\wt\nu_{\beta}-\nu_{\beta}\|_{TV}
    \\
    &\leq
    e^{-cN}
    +
    CNe^{-ct}
    +
    e^{-cN}.
    \end{align*}
    Here the bound on $\|P_{2t}\delta_{\bx}-\wt P_t (P_t \delta_{\bx})\|_{TV}$ follows from the definition of $\cC$, which ensures that with probability $e^{-cN}$, the ordinary and reflected Langevin dynamics (with shared Brownian motion) agree for $t$ units of time when started from $\bX(t)$.
    The bound on $\|\wt\nu_{\beta}-\nu_{\beta}\|_{TV}$ follows from Part~\ref{it:langevin-abstract-1}.

    In particular, for $t/(\log N)$ at least a large constant, we deduce that for $N$ large enough,
    \[
    \sup_{\bx\in\cS_N}\|P_{2t}\delta_{\bx}-\nu_{\beta}\|_{TV}\leq 1/4
    \]
    which concludes the proof.
\end{proof}

\begin{remark}
\label{rem:poincare}
    The above proof established that any $C$-strongly topologically trivial sequence $H_N$ satisfies the Bakry-Emery conditions on a small neighborhood of the global optimum for all $\beta\geq 0$.
    This implies exponential concentration of overlaps (via concentration of Lipschitz functions). Namely for any $\eps>0$, with $\bsig,\wt\bsig\stackrel{i.i.d}{\sim}\nu_{\beta}$ for $\beta$ large, there exists $\vq_*\in [0,1]^r$ depending on $H_N$ such that
    \[
    \bbP[\|\vR(\bsig,\wt\bsig)-\vq_*\|_{\infty}\leq \eps]\geq 1-e^{-cN}.
    \]
    For spin glass Hamiltonians distributed according to some strictly super-solvable $\xi$, the value $\vq_*$ can be chosen deterministically depending only on $(\vlambda,\xi,\beta)$. This is because the restricted free energies
    \[
    \frac{1}{N}
    \log
    \iint
    \limits_{\substack{\bsig,\wt\bsig\in \cS_N,\\
    \|\vR(\bsig,\wt\bsig)-\vq\|_\infty \leq \eps}}
    \exp(\beta(H_N(\bsig)+H_N(\wt\bsig))
    ~\de\bsig\de\wt\bsig
    .
    \]
    concentrate exponentially for any $\vq$ and $\eps>0$.

    To take $\vq$ independent of $N$ one may fix a large $T>0$, and deterministic $\bX(0)\in\cS_N$ for each $N$, and combine the following two observations. First $\bbE\bbW_2(P_T \delta_{\bX(0)},\nu_{\beta})\leq Ce^{-cT}\sqrt{N}+e^{-cN}$ since \eqref{eq:reflected-W2-contraction} holds with exponentially good probability.
    Second, one may show via two-replica multi-species analogs of the Cugliandolo--Kurchan equations \cite{crisanti1993spherical,cugliandolo1994out,ben2006cugliandolo,dembo2007limiting} that $\vR(\bX(T),\wt\bX(T))$ concentrates exponentially around an $N$-independent value $\vq_*(T)$ when $\bX(T),\wt\bX(T)$ are driven by independent Brownian motions for the same $H_N$.
    Indeed from \cite[Lemma 3.1 and Subsection 4.2]{sellke2023threshold} it suffices to prove these Cugliandolo--Kurchan equations for soft spherical Langevin dynamics, which follows mechanically from the approach of \cite{celentano2021high}.
    From these observations, taking $T\to\infty$ after $N\to\infty$, this implies that overlaps concentrate around $\vq_*$.
\end{remark}

\begin{remark}
    We argued above that the following holds for strictly super-solvable $\xi$ with probability $1-e^{-cN}$: the expected hitting time of a radius $\delta\sqrt{N}$ neighborhood $B_{\delta\sqrt{N}}(\bx(\vone))$ of the global optimum of $H_N$ is at most $C(\vlambda,\xi,\delta)$ for $\beta$ sufficiently large, uniformly in $\bX(0)$.
    Combining these two facts, the Lyapunov function technique of \cite{bakry2008simple} implies that the Gibbs measure $\nu_{\beta}$ has Poincar{\'e} constant at most $C(\vlambda,\xi)$ for $\beta$ sufficiently large. The relevant Lyapunov function $L:\cS_N\to\bbR_{\geq 1}$ is essentially an exponential moment of the hitting time of $B_{\delta\sqrt{N}}(\bx(\vone))$; see \cite[Proposition 9.13 and Appendix B.4]{li2020riemannian} for a detailed derivation of this implication.
\end{remark}

\section{Further Preliminaries}
\label{sec:prelim}

Below we provide further notations and background.
Subsection~\ref{subsec:prelim-geo} will be assumed throughout the entire paper.
Subsections~\ref{subsec:prelim-linalg} and \ref{subsec:prelim-rmt} will be used primarily in Section~\ref{sec:variational}.

\subsection{Geometry of $\cS_N$}
\label{subsec:prelim-geo}

\begin{definition}
    \label{dfn:species-aligned}
    A linear subspace $U \subseteq \bbR^N$ is \textbf{species-aligned} if it is the direct sum of subspaces $U_s \subseteq\bbR^{\cI_s}$, for $s\in \sS$.
\end{definition}
For $\bz\in\bbR^N$ or a species-aligned subspace $U\subseteq\bbR^N$, we define
\begin{align}
\label{eq:multi-perp}
    \bz^{\perp}
    &=
    \lt\{
        \bx\in \bbR^N ~:~ \vR(\bz,\bx)=\vzero
    \rt\}\,, &
    U^{\perp}
    &=
    \lt\{
        \bx\in \bbR^N ~:~ \vR(\bu,\bx)=\vzero~\forall \bu\in U
    \rt\}\,.
\end{align}

Recalling the definitions in Fact~\ref{fac:riemannian-to-euclidean}, we now explicitly describe the law of the local behavior of $H_N$ around a given $\bsig\in\cS_N$.

\begin{lemma}
    \label{lem:derivative-laws}
    Fix $\bsig \in \cS_N$.
    The random variables $\nabla_\cT H_N(\bsig)$, $\nabla^2_{\cT \times \cT} H_N(\bsig)$, and $(H_N(\bsig), \vR(\bG^{(1)},\bsig), \nabla_\rd H_N(\bsig))$ are mutually independent Gaussians with the following distributions.
    \begin{enumerate}[label=(\alph*)]
        \item
        \label{it:tangential-derivative-law}
        Tangential derivative: for each $i\in\cT$, $\partial_i H_N(\bsig) \sim \cN(0, \xi^{s(i)})$ and these are independent across $i$.
        \item Tangential Hessian: $\bW = \nabla^2_{\cT \times \cT} H_N(\bsig)$ is a symmetric random matrix with independent centered Gaussian entries on and above the diagonal, where
        \begin{equation}
            \label{eq:tangential-hessian-law}
            \E[W_{i,j}^2] = \fr{(1+\delta_{i,j})\xi''_{s(i),s(j)}}{N\lambda_{s(i)}\lambda_{s(j)}}\,.
        \end{equation}
        % $ \stackrel{d}{=} \bW$ (recall \eqref{eq:def-bW}).
        \item Energy, 1-spin overlap, and radial derivative: $(H_N(\bsig), \vR(\bG^{(1)},\bsig), \nabla_\rd H_N(\bsig))$ is a centered Gaussian vector with covariance satisfying
        \begin{align}
            \label{eq:energy-law}
            \E \lt[H_N(\bsig)^2\rt] &= N\xi(\vone)\,, \\
            \label{eq:1spin-law}
            \E \lt[\vR(\bG^{(1)},\bsig)\vR(\bG^{(1)},\bsig)^\top\rt] &= N^{-1} \Lambda^{-1}\,, \\
            \label{eq:radial-derivative-law}
            \E \lt[
                \nabla_{\rd} H_N(\bsig) \nabla_{\rd} H_N(\bsig)^\top
            \rt] &= N^{-1} \Lambda^{-1/2} A \Lambda^{-1/2}\,, \\
            \E \lt[H_N(\bsig) \nabla_{\rd}H_N(\bsig)\rt] &= \Lambda^{-1/2} \xi'\,, \\
            \label{eq:1spin-radial-derivative-correlation}
            \E \lt[
                \vR(\bG^{(1)},\bsig) \nabla_{\rd} H_N(\bsig)^\top
            \rt] &= N^{-1} \diag(\Gamma^{(1)}) \Lambda^{-1} \,.
        \end{align}
        As a consequence,
        \begin{align}
            \label{eq:conditional-energy-mean}
            \E\lt[H_N(\bsig) | \nabla_{\rd}H_N(\bsig)\rt]
            &= N(\xi')^\top A^{-1} \Lambda^{1/2} \nabla_{\rd}H_N(\bsig)\,, \\
            \label{eq:conditional-1spin-mean}
            \E\lt[\vR(\bG^{(1)},\bsig) | \nabla_{\rd}H_N(\bsig)\rt]
            &= \diag(\Gamma^{(1)}) \Lambda^{-1/2} A^{-1} \Lambda^{1/2} \nabla_{\rd}H_N(\bsig)\,, \\
            \label{eq:conditional-energy-variance}
            \mathrm{Var} \lt[H_N(\bsig) | \nabla_{\rd}H_N(\bsig)\rt]
            &= N \lt(\xi(\vone) - (\xi')^\top A^{-1} \xi'\rt)\,.
        \end{align}
    \end{enumerate}
\end{lemma}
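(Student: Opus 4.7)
The plan is to treat $H_N$ as a centered Gaussian process on $\bbR^N$ (extended off $\cS_N$) with covariance kernel $K(\bx,\by) = N\xi(\vR(\bx,\by))$ and perform a direct covariance calculation. Since all partial derivatives of a smooth centered Gaussian field are jointly centered Gaussian with covariances obtained by differentiating $K$ in each argument, and since $\bG^{(1)}$ enters $H_N$ only through the $1$-spin term $\la \Gamma^{(1)}\diamond \bG^{(1)}, \by\ra$, the full collection $\nabla_\cT H_N(\bsig)$, $\nabla^2_{\cT\times\cT} H_N(\bsig)$, $H_N(\bsig)$, $\vR(\bG^{(1)},\bsig)$, $\nabla_\rd H_N(\bsig)$ is jointly centered Gaussian. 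Every independence assertion therefore reduces to showing that the corresponding cross-covariance vanishes.

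The key identity driving all the computations is the chain-rule relation
\[
\partial_{x_i} R_s(\bx,\by) = \delta_{s(i),s}\,\fr{y_i}{\lambda_s N},
\]
which expresses any mixed partial $\partial^\alpha_{\bx}\partial^\beta_{\by} K$ at $\bx=\by=\bsig$ as a sum of terms of the form (mixed partial of $\xi$ at $\vone$) $\times$ (monomial in the coordinates of $\bsig$). In the local orthonormal basis $\{e_i(\bsig)\}$, the coordinates of $\bsig$ satisfy $\sigma_i=\sqrt{\lambda_{s(i)}N}\,\delta_{i,m_{s(i)}}$; in particular $\sigma_i=0$ for every tangential index $i\in\cT$. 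This vanishing simultaneously produces the diagonal structure of the tangential blocks and the decoupling of the tangential quantities from the radial and scalar ones. For instance, expanding $\partial_{x_i}\partial_{y_j}K$ for $i,j\in\cT$ yields two chain-rule terms: one carries a factor $\sigma_i$ and vanishes at $\bsig$, while the other gives $\delta_{ij}\,\xi^{s(i)}$, proving part~(a). The same mechanism shows that $\bbE[\partial_i H_N(\bsig)\cdot H_N(\bsig)]$, $\bbE[\partial_i H_N(\bsig)\cdot(\nabla_\rd H_N(\bsig))_s]$, and $\bbE[\partial_i H_N(\bsig)\cdot\partial_j\partial_k H_N(\bsig)]$ all vanish for $i,j,k\in\cT$, delivering the mutual independence of the three blocks; the cross-covariance with $\vR(\bG^{(1)},\bsig)$ vanishes for the same reason after extracting the $1$-spin contribution.

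The remaining covariance formulas follow by analogous bookkeeping. For part~(b), computing $\partial_{x_i}\partial_{x_j}\partial_{y_k}\partial_{y_l}K$ with $i,j,k,l\in\cT$ leaves only the pairing $\fr{\xi''_{s(i),s(j)}}{\lambda_{s(i)}\lambda_{s(j)}N}\cdot(\delta_{ik}\delta_{jl}+\delta_{il}\delta_{jk})$ at $\bsig$, yielding \eqref{eq:tangential-hessian-law} (the factor $1+\delta_{ij}$ arising from the symmetrized pairing when $i=j$). For part~(c), $\bbE[H_N(\bsig)^2]=K(\bsig,\bsig)=N\xi(\vone)$ gives \eqref{eq:energy-law}, while $\partial_{x_{m_s}}\partial_{y_{m_{s'}}}K$ together with $\sigma_{m_s}=\sqrt{\lambda_s N}$ produces $\fr{\xi'_s \delta_{s,s'}}{\lambda_s} + \fr{\xi''_{s,s'}}{\sqrt{\lambda_s\lambda_{s'}}}$, which after the $N^{-1/2}$ rescaling in $\nabla_\rd$ equals the $(s,s')$ entry of $N^{-1}\Lambda^{-1/2}A\Lambda^{-1/2}$, giving \eqref{eq:radial-derivative-law}; similarly, $\bbE[H_N(\bsig)\,\partial_{m_s}H_N(\bsig)]$ is a single-derivative evaluation yielding $\sqrt{\lambda_s N}\,\xi'_s/\lambda_s$, which produces $\Lambda^{-1/2}\xi'$ after rescaling. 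For \eqref{eq:1spin-law} and \eqref{eq:1spin-radial-derivative-correlation}, I would use that $\vR(\bG^{(1)},\bsig)_s$ is linear in $\bG^{(1)}$ and that only the $1$-spin summand of $H_N$ is correlated with $\bG^{(1)}$, reducing both to elementary computations with $\Gamma^{(1)}$.

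The conditional identities \eqref{eq:conditional-energy-mean}--\eqref{eq:conditional-energy-variance} then follow from the standard Gaussian conditioning formulas $\bbE[X\mid Y]=\Cov(X,Y)\Cov(Y,Y)^{-1}Y$ and $\Var(X\mid Y)=\Var(X)-\Cov(X,Y)\Cov(Y,Y)^{-1}\Cov(Y,X)$, applied with $Y=\nabla_\rd H_N(\bsig)$ (whose covariance inverts to $N\Lambda^{1/2}A^{-1}\Lambda^{1/2}$) and $X$ either $H_N(\bsig)$ or $\vR(\bG^{(1)},\bsig)$. There is no real analytical obstacle here; the main challenge is purely bookkeeping, namely tracking the factors of $\lambda_s$, $\sqrt{N}$, and $\sqrt{\lambda_s N}$ that enter from the chain rule, the radial rescaling in $\nabla_\rd$, and the local orthonormal basis so as to recover the precise normalizations in the statement.
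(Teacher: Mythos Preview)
Your proposal is correct and follows essentially the same approach as the paper: both exploit the orthonormal frame in which the tangential coordinates of $\bsig$ vanish (equivalently, the paper reduces by spherical symmetry to the ``$r$-tuple north pole''), and then the decoupling and the stated variances fall out by routine calculation. The only cosmetic difference is that the paper expands $H_N$ explicitly in the disorder coefficients $\bG^{(k)}$ and reads off independence from the disjointness of the contributing coefficient sets, whereas you compute by differentiating the covariance kernel $K(\bx,\by)=N\xi(\vR(\bx,\by))$ and verify that the cross-covariances vanish; for a Gaussian process these are equivalent bookkeeping devices.
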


\begin{proof}
    Due to the symmetry of the sphere it suffices to verify these statements for $\bsig$ equal to the ``$r$-tuple north pole," i.e. $\sigma_{m_s} = \sqrt{\lambda_s N}$ for all $m_s \in \cR$, and $\sigma_i=0$ for all $i\in \cT$.
    Then $\nabla_\cT H_N(\bsig)$, $\nabla^2_{\cT \times \cT} H_N(\bsig)$, and $(H_N(\bsig), \vR(\bG^{(1)},\bsig), \nabla_{\rd} H_N(\bsig))$ can be evaluated as explicit linear combinations of disorder coefficients, which readily implies the stated covariance structure.
    In particular, they are mutually independent because the sets of disorder coefficients contributing to them are disjoint.
    As an example calculation (see also \cite[Section 3.3]{mckenna2021complexity}),
    \[
        \partial_{m_s} H_N(\bsig)
        = \lambda_s^{-1/2}
        \sum_{p\geq 1}
        \sum_{s_1,\ldots,s_p}
        n_s(s_1,\ldots,s_p)
        \gamma_{s_1,\ldots,s_p}
        \bG^{(p)}_{m_{s_1},\ldots,m_{s_p}}
        \sqrt{\lambda_{s_1}\lambda_{s_2}\cdots \lambda_{s_p}}\,,
    \]
    where $n_s(s_1,\ldots,s_p)$ is the number of times $s$ appears in $s_1,\ldots,s_p$.
    This readily implies that
    \begin{align*}
        \E\lt[\partial_{m_s} H_N(\bsig)^2\rt]
        &= \lambda_s^{-1}
        \sum_{p\geq 1}
        \sum_{s_1,\ldots,s_p}
        n_s(s_1,\ldots,s_p)^2
        \gamma_{s_1,\ldots,s_p}^2
        = \lambda_s^{-1} (\xi'_s + \xi''_{s,s})\,, \\
        \E\lt[\partial_{m_s} H_N(\bsig)\partial_{m_{s'}} H_N(\bsig)\rt]
        &= \lambda_s^{-1/2}\lambda_{s'}^{-1/2}
        \sum_{p\geq 1}
        \sum_{s_1,\ldots,s_p}
        n_s(s_1,\ldots,s_p)n_{s'}(s_1,\ldots,s_p)
        \gamma_{s_1,\ldots,s_p}^2 \\
        &= \lambda_s^{-1/2}\lambda_{s'}^{-1/2} \xi''_{s,s'}\,,
    \end{align*}
    which implies \eqref{eq:radial-derivative-law}.
    The rest of \eqref{eq:energy-law} through \eqref{eq:1spin-radial-derivative-correlation} are verified similarly.
    The formula \eqref{eq:conditional-energy-mean} is verified from the standard fact
    \begin{align*}
        &\E\lt[H_N(\bsig) \,|\, \nabla_\rd H_N(\bsig)\rt] \\
        &=
        \E\lt[H_N(\bsig) \nabla_\rd H_N(\bsig)\rt]^\top
        \E\lt[\nabla_\rd H_N(\bsig) \nabla_\rd H_N(\bsig)^\top\rt]^{-1}
        \nabla_\rd H_N(\bsig)\,,
    \end{align*}
    and \eqref{eq:conditional-1spin-mean} follows similarly.
    Finally \eqref{eq:conditional-energy-variance} follows from
    \[
        \mathrm{Var} \lt[H_N(\bsig) | \nabla_{\rd}H_N(\bsig)\rt]
        = \E\lt[H_N(\bsig)^2\rt] - \E \lt[
            \E\lt[H_N(\bsig) \,|\, \nabla_\rd H_N(\bsig)\rt]^2
        \rt]\,.
    \qedhere
    \]
\end{proof}

\begin{fact}
    \label{fac:volume}
    The volume of $\cS_N$ w.r.t. the $(N-r)$-dimensional Hausdorff measure $\cH^{N-r}$ satisfies
    \[
        \fr1N \log \cH^{N-r}(\cS_N) = \fr{1+\log (2\pi)}{2} + o_N(1)\,.
    \]
\end{fact}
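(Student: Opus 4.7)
The plan is to reduce this to a direct Stirling computation, since $\cS_N$ factors as a product of scaled round spheres. Precisely, under the Hausdorff measure, $\cS_N = \prod_{s \in \sS} \cS_{N,s}$ where $\cS_{N,s} \subseteq \bbR^{\cI_s}$ is the sphere of radius $\sqrt{\lambda_s N}$ in dimension $N_s$, so
\[
    \cH^{N-r}(\cS_N) = \prod_{s\in\sS} \cH^{N_s-1}(\cS_{N,s}) = \prod_{s\in\sS} \frac{2\pi^{N_s/2} (\lambda_s N)^{(N_s-1)/2}}{\Gamma(N_s/2)}.
\]
The final equality uses the standard formula for the surface area of a sphere of radius $R$ in $\bbR^n$, namely $2\pi^{n/2} R^{n-1}/\Gamma(n/2)$.

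Next I would take logarithms, divide by $N$, and apply Stirling in the form $\log\Gamma(N_s/2) = \tfrac{N_s}{2}\log(N_s/2) - \tfrac{N_s}{2} + O(\log N_s)$ to each factor. After grouping terms, the contribution from species $s$ becomes
\[
    \frac{1}{N}\log \cH^{N_s-1}(\cS_{N,s})
    = \frac{N_s}{2N}\log\!\lt(\frac{2\pi e \lambda_s N}{N_s}\rt) + O\!\lt(\frac{\log N}{N}\rt).
\]
Using the hypothesis $N_s/N = \lambda_{N,s} \to \lambda_s$, the ratio $\lambda_s N/N_s$ tends to $1$, so the bracketed quantity converges to $2\pi e$.

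Summing over $s \in \sS$ and using $\sum_s N_s/N = 1$ yields
\[
    \frac{1}{N}\log \cH^{N-r}(\cS_N) = \frac{1}{2}\log(2\pi e) + o_N(1) = \frac{1+\log(2\pi)}{2} + o_N(1),
\]
which is the claim. There is no real obstacle here; the only care needed is in keeping track of the $N_s$ versus $\lambda_s N$ distinction (which costs only $o(N)$ in the logarithm, absorbed into the error term) and in confirming the power of $\pi$ and the exponent of the radius combine to exactly $2\pi e$ after dividing by $N_s/2$ inside the log.
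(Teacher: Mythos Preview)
Your proof is correct and follows essentially the same approach as the paper: factor $\cS_N$ as a product of scaled spheres, apply the standard surface-area formula together with Stirling, and sum over species. The paper's version is slightly more terse (it first computes the volume of a single sphere $\sqrt{N}\,\bbS^{N-1}$ as $e^{o(N)}(2\pi e)^{N/2}$ and then multiplies), but the content is identical.
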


\begin{proof}
    By Stirling's approximation, the volume of $\sqrt{N} \bbS^{N-1}$ is
    \[
        \fr{2\pi^{N/2}N^{(N-1)/2}}{\Gamma(N/2)}
        = e^{o(N)} \fr{(\pi N)^{N/2}}{(N/2e)^{N/2}}
        = e^{o(N)} (2\pi e)^{N/2}\,.
    \]
    Thus the volume of $\cS_N$ is
    \[
        \Vol(\cS_N)
        = e^{o(N)} \prod_{s\in \sS} (2\pi e)^{\lambda_s N/2}
        = e^{o(N)} (2\pi e)^{N/2}\,. \qedhere
    \]
\end{proof}

Let $\sH_N$ denote the space of possible Hamiltonians $H_N$, which we identify as (infinite-dimensional) vectors consisting of their disorder coefficients $(\bG^{(p)})_{p\ge 1}$ concatenated in an arbitrary but fixed order.
Also let $S_N = \{\bx \in \bbR^N : \tnorm{\bx}_2^2 = N\}$ and for any tensor $\bA \in (\bbR^N)^{\otimes k}$, define the operator norm
\[
    \tnorm{\bA}_\op =
    \max_{\|\bsig^1\|_2,\ldots,\|\bsig^k\|_2\leq 1}
    |\la \bA, \bsig^1 \otimes \cdots \otimes \bsig^k \ra|\,.
\]
\begin{proposition}[{\cite[Proposition 1.13]{huang2023algorithmic}}]
    \label{prop:gradients-bounded}
    For any $\xi$ there exists $c>0$, a sequence $(K_N)_{N\geq 1}$ of symmetric convex sets $K_N\subseteq \sH_N$, and constant $C=C(\xi)$, such that the following holds.
    \begin{enumerate}[label=(\alph*)]
        \item
        \label{it:KN-high-prob}
        $\P[H_N\in K_N]\geq 1-e^{-cN}$;
        \item For all $H_N\in K_N$, $k\leq 3$, and $\bx, \by \in \cS_N$,
        \begin{align}
            \label{eq:gradient-bounded}
            \norm{\nabla^k H_N(\bx)}_{\op}
            &\le
            CN^{1-\frac{k}{2}}, \\
            \norm{\nabla^k H_N(\bx) - \nabla^k H_N(\by)}_{\op}
            &\le
            CN^{\frac{1-k}{2}} \norm{\bx-\by}_2.
        \end{align}
    \end{enumerate}
\end{proposition}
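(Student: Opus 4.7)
The plan is to construct $K_N$ as an intersection of symmetric convex level sets of operator-norm functionals, and then bound the failure probability via Gaussian concentration combined with an $\eps$-net argument. For a large constant $C_0 = C_0(\xi)$ to be fixed later, I would take
\begin{equation*}
    K_N = \bigcap_{k=0}^{4} \lt\{H_N \in \sH_N : \sup_{\bx \in \bbR^N,\,\|\bx\|_2 \leq 2\sqrt{N}} \bnorm{\nabla^k H_N(\bx)}_\op \leq C_0 N^{1-k/2}\rt\}.
\end{equation*}
Since each $\nabla^k H_N(\bx)$ depends linearly on the disorder coefficients $(\bG^{(p)})_{p \ge 1}$, the map $H_N \mapsto \sup_\bx \|\nabla^k H_N(\bx)\|_\op$ is a seminorm, so $K_N$ is convex and symmetric. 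The Lipschitz bound on $\nabla^k$ for $k \leq 3$ then follows immediately from the operator norm bound on $\nabla^{k+1}$, via the fundamental theorem of calculus along the chord from $\bx$ to $\by$, which lies inside the ball of radius $2\sqrt{N}$ by convexity.

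For the probability estimate, the first step is to decompose $H_N = \sum_{p \ge 1} H_N^{(p)}$ into $p$-spin components. For each $p$ and fixed $\bx$, $\nabla^k H_N^{(p)}(\bx)$ is a centered Gaussian tensor of order $p-k$ obtained by symmetrized partial contraction of $\Gamma^{(p)} \diamond \bG^{(p)}$ with $p-k$ copies of $\bx$ and normalization $N^{-(p-1)/2}$. A direct covariance computation (mirroring Lemma~\ref{lem:derivative-laws}) shows that its expected operator norm is of order $c_p \|\Gamma^{(p)}\|_\infty N^{(1-k)/2}$ with $c_p$ polynomial in $p$, and the Lipschitz dependence of the operator norm on the tensor entries yields sub-Gaussian concentration with matching variance proxy.

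The second step is a standard $\eps$-net argument on the ball of radius $2\sqrt{N}$: one combines pointwise Gaussian concentration at an $\eps$-net of cardinality $e^{O(N \log(1/\eps))}$ with a Lipschitz bound inherited from the operator norm bound on $\nabla^{k+1}$. Starting from a crude moment bound on $\nabla^4 H_N^{(p)}$ (via e.g. Schatten-$\infty$ moments) one bootstraps downward through $k = 3,2,1,0$. Summing over $p$ and invoking the hypothesis $\sum_{p \geq 1} 2^p \|\Gamma^{(p)}\|_\infty < \infty$ then assembles a uniform bound with failure probability $e^{-cN}$.

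I expect the main obstacle to be the $p$-dependence of the constants: both the tensor-norm factor $c_p$ and the Lipschitz constants controlling the net step grow polynomially in $p$, so one must choose the deviation thresholds at each $p$ carefully so that $\sum_p (\text{failure probability at degree } p)$ remains $e^{-cN}$. The geometric weight $2^p$ in the model hypothesis is precisely tuned to absorb this polynomial inflation.
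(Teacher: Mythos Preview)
The paper does not prove this proposition; it is imported verbatim from \cite[Proposition~1.13]{huang2023algorithmic}, so there is no argument in the present paper to compare your outline against.

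Your plan is the standard one and is essentially correct, with one quantitative slip: the ball radius $2\sqrt{N}$ is too generous. With that radius the degree-$p$ contribution picks up a factor $2^{p-k}$ from $\|\bx\|^{p-k}$ on top of the polynomial $p^{O(1)}$ coming from differentiation and the tensor-norm constant, and $\sum_p p^{O(1)}\, 2^p\, \|\Gamma^{(p)}\|_\infty$ is \emph{not} implied by the hypothesis $\sum_p 2^p \|\Gamma^{(p)}\|_\infty < \infty$. Since $\cS_N$ sits on the sphere of radius $\sqrt{N}$ and chords between its points stay inside that ball, radius $\sqrt{N}$ (indeed any $R\sqrt{N}$ with $R<2$) already suffices for both the pointwise and Lipschitz statements; then the degree-$p$ term carries only $p^{O(1)}\|\Gamma^{(p)}\|_\infty$, and the geometric weight $2^p$ in the hypothesis absorbs the polynomial exactly as you intended.

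A smaller remark: the downward bootstrap through $k=4,3,2,1,0$ is more intricate than necessary. One can bound $\|\Gamma^{(p)} \diamond \bG^{(p)}\|_\op$ once and for all by a net on $(S^{N-1})^p$---a $p$-tensor estimate independent of $\bx$---after which
\[
    \sup_{\|\bx\|_2 \le \sqrt{N}} \|\nabla^k H_N^{(p)}(\bx)\|_\op
    \;\le\; \fr{p!}{(p-k)!}\, \|\Gamma^{(p)}\diamond\bG^{(p)}\|_\op\, N^{(1-k)/2}
\]
holds deterministically for every $k\le p$, sidestepping the circularity you anticipated.
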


\begin{proposition}
\label{prop:spectral-perturbation}
    For symmetric matrices $M,M' \in \bbR^{r\times r}$ we have (recall \eqref{eq:hausdorff-distance}):
    \[
        d_{\cH}\big(\spec(M),\spec(M')\big)
        \leq \bbW_{\infty}\big(\wh\mu(M),\wh\mu(M')\big)
        \leq
        \|M-M'\|_{\op}.
    \]
    In particular for $H_N\in K_N$ and all $\bx,\by\in\cS_N$:
    \[
        d_{\cH}\lt(
        \spec(\nabla^2_{\sph} H_N(\bx)),\spec(\nabla^2_{\sph} H_N(\by))
        \rt)
        \leq \fr{C}{\sqrt{N}} \|\bx-\by\|_2.
    \]
\end{proposition}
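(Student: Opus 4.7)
The plan is to establish the two matrix inequalities first, then combine them with the Lipschitz bounds from Proposition~\ref{prop:gradients-bounded} to handle the Hessians.

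For the first inequality $d_{\cH}(\spec(M),\spec(M')) \leq \bbW_\infty(\wh\mu(M),\wh\mu(M'))$, I would use that for any symmetric matrix $M$, $\spec(M) = \supp(\wh\mu(M))$. Fix an optimal coupling $(X,X')$ of $\wh\mu(M)$ and $\wh\mu(M')$ achieving the $\bbW_\infty$ cost, so that $|X-X'| \leq \bbW_\infty(\wh\mu(M),\wh\mu(M'))$ almost surely. For any $\lambda \in \supp\wh\mu(M)$, the atom $X=\lambda$ has positive conditional probability, so $X'$ takes some value in $\supp\wh\mu(M')$ within distance $\bbW_\infty(\wh\mu(M),\wh\mu(M'))$ of $\lambda$. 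This controls one side of the Hausdorff distance; symmetry handles the other.

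For the second inequality $\bbW_\infty(\wh\mu(M),\wh\mu(M')) \leq \|M-M'\|_{\op}$, I would apply Weyl's inequality: if the eigenvalues of $M,M'$ are ordered decreasingly as $\blambda_k(M)$ and $\blambda_k(M')$, then $|\blambda_k(M)-\blambda_k(M')| \leq \|M-M'\|_{\op}$ for every $k$. The coupling that pairs the $k$-th ranked eigenvalues of $\wh\mu(M)$ and $\wh\mu(M')$ is then an explicit $\bbW_\infty$-coupling with cost at most $\|M-M'\|_{\op}$.

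For the Hessian consequence, I would apply the two inequalities with $M=\nabla^2_\sph H_N(\bx)$ and $M'=\nabla^2_\sph H_N(\by)$, so the task reduces to bounding $\|\nabla^2_\sph H_N(\bx)-\nabla^2_\sph H_N(\by)\|_{\op} \leq CN^{-1/2}\|\bx-\by\|_2$. By Fact~\ref{fac:riemannian-to-euclidean}, the Riemannian Hessian is the tangential restriction of $\nabla^2 H_N$ minus a diagonal curvature term $\diag(\Lambda^{-1/2}\nabla_{\rd} H_N \diamond \bone_{\cT})$. The first part is Lipschitz in $\bx$ with constant $CN^{-1/2}$ by the $k=3$ case of \eqref{eq:gradient-bounded} in Proposition~\ref{prop:gradients-bounded}; the curvature part is Lipschitz with the same rate because $\nabla H_N$ has Lipschitz constant $C$ by the $k=2$ case. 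The remaining subtlety is that the two Hessians act on different tangent spaces, but after identifying them (via parallel transport, or by extending both to operators on $\bbR^N$ using the orthogonal projections onto the tangent spaces), the extra error term from comparing the frames is of the form $\|P_\bx-P_\by\|_{\op}\cdot\|\nabla^2 H_N\|_{\op}$, which is again at most $CN^{-1/2}\|\bx-\by\|_2$ since $\|P_\bx-P_\by\|_{\op} \lesssim \|\bx-\by\|_2/\sqrt{N}$ and $\|\nabla^2 H_N\|_{\op} \leq C$.

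I expect the two matrix inequalities to be essentially standard, while the only minor obstacle is cleanly handling the fact that the Riemannian Hessians at $\bx$ and $\by$ naturally live on different tangent spaces; identifying these spaces and checking the error from the identification contributes the same $CN^{-1/2}\|\bx-\by\|_2$ rate is the one place where slight care is needed.
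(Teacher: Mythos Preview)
Your proposal is correct and matches the paper's approach: Weyl's inequalities for the two matrix bounds, then Fact~\ref{fac:riemannian-to-euclidean} combined with the Lipschitz estimates of Proposition~\ref{prop:gradients-bounded} for the Hessian consequence. You are in fact more explicit than the paper about the tangent-space identification issue, which the paper's proof leaves implicit; your suggested fix via projections (contributing an extra $\|P_{\bx}-P_{\by}\|_{\op}\cdot\|\nabla^2 H_N\|_{\op}$ term of the same order) is valid.
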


\begin{proof}
    The first part is immediate from the Weyl inequalities.
    For the second part,
    \begin{align*}
        &\norm{\nabla^2_{\sph} H_N(\bx) - \nabla^2_{\sph} H_N(\by)}_\op \\
        &\le
        \norm{\nabla^2_{\cT \times \cT} H_N(\bx) - \nabla^2_{\cT \times \cT} H_N(\by)}_\op \\
        &\qquad + \norm{\diag(\Lambda^{-1/2} (\nabla_{\rd} H_N(\bx) - \nabla_{\rd} H_N(\by)) \diamond \bone_\cT)}_\op \\
        &\le
        \norm{\nabla^2 H_N(\bx) - \nabla^2 H_N(\by)}_\op
        + \fr{1}{\sqrt{N \min \vlambda}} \norm{\diag(\nabla_{\cR} H_N(\bx) - \nabla_{\cR} H_N(\by))}_\op\,.
    \end{align*}
    The final term is bounded by
    \begin{align*}
        \norm{\diag(\nabla_{\cR} H_N(\bx) - \nabla_{\cR} H_N(\by))}_\op
        &\le \|\nabla H_N(\bx) - \nabla H_N(\by)\|_2 \\
        &= \|\nabla H_N(\bx) - \nabla H_N(\by)\|_\op.
    \end{align*}
    The result now follows from the first part and Proposition~\ref{prop:gradients-bounded}.
\end{proof}

\subsection{Elementary Linear Algebra}
\label{subsec:prelim-linalg}

\begin{definition}
    A symmetric matrix $M \in \bbR^{r\times r}$ is \textbf{diagonally signed} if $M_{i,i} \ge 0$ and $M_{i,j}<0$ for all distinct $i, j\in [r]$.
\end{definition}

\begin{lemma}
    \label{lem:diagonally-signed-hs23}
    If $M \in \bbR^{r\times r}$ is diagonally signed, then the minimal eigenvalue $\blambda_{\min}(M)$ has multiplicity $1$, and the corresponding eigenvector $\vw$ has strictly positive entries.
    Moreover,
    \[
        \blambda_{\min}(M)
        = \sup_{\vv \succ \vzero}
        \min_{s\in\sS}
        \fr{(M\vv)_s}{v_s}\,.
    \]
\end{lemma}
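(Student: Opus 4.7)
The plan is to reduce to the classical Perron--Frobenius theorem via a diagonal shift. Pick $c \ge \max_{s\in [r]} M_{s,s}$ and set $N = cI - M$. Then $N_{s,s} = c - M_{s,s} \ge 0$ and $N_{s,t} = -M_{s,t} > 0$ for all $s\ne t$, so $N$ is a symmetric matrix with nonnegative entries and strictly positive off-diagonals; in particular $N$ is irreducible.

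By the Perron--Frobenius theorem applied to $N$, the spectral radius $\rho(N)$ is a simple eigenvalue of $N$ with an eigenvector $\vw$ whose entries are strictly positive. Since $M$ and $N$ share eigenvectors and the eigenvalues of $N$ are $\{c-\blambda_i(M)\}$, the maximal eigenvalue $\rho(N) = c-\blambda_{\min}(M)$ corresponds to $\blambda_{\min}(M)$. This yields the first two claims: $\blambda_{\min}(M)$ has multiplicity $1$ and its eigenvector $\vw$ has strictly positive entries.

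For the variational identity, I would prove the two inequalities separately. For the $\ge$ direction, plug in $\vv = \vw$: then $(M\vw)_s/w_s = \blambda_{\min}(M)$ for every $s$, so $\min_s (M\vw)_s/w_s = \blambda_{\min}(M)$ and the supremum is at least $\blambda_{\min}(M)$. For the $\le$ direction, let $\vv \succ \vzero$ and set $\lambda = \min_s (M\vv)_s/v_s$, so $(M\vv)_s \ge \lambda v_s$ for all $s$. Taking inner product with the positive eigenvector $\vw$ and using symmetry of $M$,
\[
\blambda_{\min}(M)\,\vw^\top \vv = (M\vw)^\top \vv = \vw^\top (M\vv) \ge \lambda\, \vw^\top \vv.
\]
Since $\vw,\vv$ both have strictly positive entries, $\vw^\top \vv > 0$, and we may divide to obtain $\blambda_{\min}(M) \ge \lambda$. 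Taking the supremum over $\vv \succ \vzero$ completes the proof.

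The main conceptual step is recognizing that the sign pattern ``diagonally signed'' becomes Perron--Frobenius friendly after the shift $M \mapsto cI-M$; once that observation is made, both the spectral claims and the Collatz--Wielandt-type variational formula follow from standard facts. I do not foresee any genuine obstacle beyond verifying the irreducibility of $N$, which is immediate from strict positivity of its off-diagonals.
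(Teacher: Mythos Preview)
Your proof is correct. The reduction $N=cI-M$ turns a diagonally signed symmetric matrix into an irreducible nonnegative symmetric matrix, so Perron--Frobenius gives simplicity and strict positivity of the Perron eigenvector, and your Collatz--Wielandt argument for the variational identity (plug in $\vw$ for one inequality, pair with $\vw$ for the other) is clean and complete.

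The paper itself does not prove this lemma from scratch: it cites \cite[Proposition~4.3]{huang2023algorithmic} for the variational identity, notes that the proof there yields strict positivity of any minimal eigenvector, and then observes that orthogonality of eigenvectors of a symmetric matrix forces uniqueness (two strictly positive vectors cannot be orthogonal). Your argument is the standard self-contained route and is almost certainly what underlies the cited proposition; the only structural difference is that you obtain simplicity directly from Perron--Frobenius, whereas the paper deduces it a posteriori from positivity plus orthogonality.
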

\begin{proof}
    \cite[Proposition 4.3]{huang2023algorithmic} shows the final equality, and the proof therein shows that any minimal eigenvector $\vw$ of $M$ must have strictly positive entries.
    Since $M$ is symmetric its eigenvectors are orthogonal, so $\vw$ is unique.
\end{proof}

\begin{lemma}
    \label{lem:psd-sign-flip}
    If $M\in \bbR^{r\times r}$ is diagonally signed, $M \succeq 0$, and $M' \in \bbR^{r\times r}$ is defined by $M'_{i,j} = |M_{i,j}|$, then $M' \succeq 0$.
\end{lemma}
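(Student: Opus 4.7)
My plan is to verify $\vv^\top M' \vv \ge 0$ for every $\vv \in \bbR^r$ directly, by comparing this quadratic form to $|\vv|^\top M |\vv|$ where $|\vv|$ is the entrywise absolute value. This comparison is clean because both $M$ and $M'$ have the same diagonal (namely $M_{i,i} \ge 0$), while their off-diagonal parts differ only by an overall sign: $M'_{i,j} = |M_{i,j}| = -M_{i,j}$ for $i \ne j$, using the diagonally signed hypothesis $M_{i,j} < 0$.

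Concretely, I would expand
\[
    \vv^\top M' \vv = \sum_i M_{i,i} v_i^2 + \sum_{i\ne j} |M_{i,j}|\, v_i v_j,
    \qquad
    |\vv|^\top M |\vv| = \sum_i M_{i,i} v_i^2 - \sum_{i\ne j} |M_{i,j}|\,|v_i||v_j|,
\]
and subtract to obtain
\[
    \vv^\top M' \vv - |\vv|^\top M |\vv| = \sum_{i\ne j} |M_{i,j}| \bigl(v_i v_j + |v_i||v_j|\bigr) \ge 0,
\]
since each summand is non-negative ($v_i v_j + |v_i v_j| \ge 0$). The hypothesis $M \succeq 0$ then gives $|\vv|^\top M |\vv| \ge 0$, which combined with the previous inequality yields $\vv^\top M' \vv \ge 0$. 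Since $\vv$ was arbitrary, $M' \succeq 0$.

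There is no real obstacle: the whole proof is a one-line algebraic manipulation, and the diagonally signed hypothesis is used only to flip the off-diagonal entries consistently. I note that the more appealing ``sign-flip'' approach via a diagonal $\pm 1$ conjugation $M' = DMD$ fails for $r \ge 3$ (one cannot have $D_{i,i}D_{j,j} = -1$ for all pairs), which is why I prefer the direct quadratic-form comparison above.
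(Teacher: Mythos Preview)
Your proof is correct, and it takes a genuinely different route from the paper's. The paper invokes Lemma~\ref{lem:diagonally-signed-hs23} (a Perron--Frobenius type statement) to obtain the minimal eigenvector $\vw \succ \vzero$ with $M\vw = t\vw$ where $t = \blambda_{\min}(M) \ge 0$, uses the eigenvalue equation to rewrite each diagonal entry as $M_{s,s} = t + \sum_{s'\neq s}|M_{s,s'}|\,w_{s'}/w_s$, and then expresses $\la \vx, M'\vx\ra$ as $t\|\vx\|_2^2$ plus an explicit sum of squares. Your argument is more elementary and self-contained: the comparison $\vv^\top M'\vv \ge |\vv|^\top M\,|\vv|$ sidesteps the Perron eigenvector entirely and needs only the sign pattern of $M$. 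Both approaches in fact yield the slightly stronger conclusion $\blambda_{\min}(M') \ge \blambda_{\min}(M)$ (in your argument via $|\vv|^\top M\,|\vv| \ge \blambda_{\min}(M)\|\vv\|_2^2$), but the paper's sum-of-squares decomposition makes this more visibly structural, while yours is shorter and has no external dependencies.
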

\begin{proof}
    By Lemma~\ref{lem:diagonally-signed-hs23}, the minimal eigenvector $\vw$ of $M$ has strictly positive entries.
    Let $\blambda_{\min}(M) = t \ge 0$.
    The equation $M\vw = t\vw$ implies that for any $s\in \sS$,
    \[
        (M_{s,s}-t) w_s + \sum_{s'\neq s} M_{s,s'} w_{s'} = 0
        \quad\implies\quad
        M_{s,s} = t + \sum_{s'\neq s} |M_{s,s'}|\fr{w_{s'}}{w_s}\,.
    \]
    Thus for any $\vx \in \bbR^r$,
    \[
        \la \vx, M'\vx \ra
        = t \norm{\vx}_2^2 +
        \sum_{s\neq s'} |M_{s,s'}|
        \lt(
            \sqrt{\fr{w_{s'}}{w_{s}}}x_{s} +
            \sqrt{\fr{w_{s}}{w_{s'}}}x_{s'}
        \rt)^2 \ge 0\,.
    \qedhere
    \]
\end{proof}

\begin{corollary}
    \label{cor:A-pd}
    If $\xi$ is super-solvable, then (recall \eqref{eq:Lambda-and-A}) $A \succ 0$.
\end{corollary}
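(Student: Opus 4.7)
The plan is to apply Lemma~\ref{lem:psd-sign-flip} to the matrix $M = \diag(\xi') - \xi''$, which is PSD by the super-solvability hypothesis, and then relate the resulting PSD matrix back to $A$.

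First I would unpack the entries of $\xi''$ from the definition of $\xi$. Using the symmetry of $\Gamma^{(k)}$, one computes
\[
    \xi''_{s,s'} = \lambda_s \lambda_{s'} \sum_{k\ge 2} k(k-1) \sum_{s_3,\ldots,s_k} \gamma_{s,s',s_3,\ldots,s_k}^2 \, \lambda_{s_3}\cdots \lambda_{s_k}.
\]
Every term is non-negative, and the $k=2$ contribution alone equals $2\lambda_s \lambda_{s'}\gamma_{s,s'}^2$, which is strictly positive for all $s,s'$ by the non-degeneracy assumption $\Gamma^{(2)}>0$. Hence $\xi''_{s,s'} > 0$ entry-wise, and in particular $\xi''_{s,s} > 0$ for every $s\in\sS$.

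Next I would verify that $M := \diag(\xi') - \xi''$ is diagonally signed. Since $M\succeq 0$ by super-solvability, its diagonal entries $M_{s,s} = \xi'_s - \xi''_{s,s}$ are non-negative. The off-diagonal entries are $M_{s,s'} = -\xi''_{s,s'} < 0$ for $s\neq s'$ by the computation above. Thus $M$ meets the hypotheses of Lemma~\ref{lem:psd-sign-flip}, and the flipped matrix $M'$ (with $M'_{s,s'} = |M_{s,s'}|$) satisfies $M' \succeq 0$. Explicitly, $M'_{s,s} = \xi'_s - \xi''_{s,s}$ and $M'_{s,s'} = \xi''_{s,s'}$ for $s\neq s'$.

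Finally, direct entrywise comparison gives
\[
    A \;=\; \diag(\xi') + \xi'' \;=\; M' \;+\; 2\diag\!\big((\xi''_{s,s})_{s\in\sS}\big).
\]
The first summand is PSD, and the second is a strictly positive diagonal matrix since $\xi''_{s,s} \ge 2\lambda_s^2 \gamma_{s,s}^2 > 0$. Therefore $A \succ 0$, completing the proof.

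This is essentially a routine application of Lemma~\ref{lem:psd-sign-flip}; the only point that must be handled carefully is the strict positivity of the off-diagonal entries of $\xi''$ (needed to ensure $M$ is diagonally signed), which uses the $\Gamma^{(2)}>0$ part of the non-degeneracy assumption.
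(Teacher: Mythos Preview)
Your proof is correct and follows essentially the same approach as the paper's: apply Lemma~\ref{lem:psd-sign-flip} to $M = \diag(\xi') - \xi''$, obtaining $M' = A - 2\diag(\xi'') \succeq 0$, and then use non-degeneracy (specifically $\Gamma^{(2)}>0$) to conclude the diagonal correction $2\diag(\xi'')$ is strictly positive. You have simply filled in more detail than the paper, in particular explicitly verifying that $M$ is diagonally signed and tracing the strict positivity of $\xi''_{s,s'}$ back to the $k=2$ term.
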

\begin{proof}
    Applying Lemma~\ref{lem:psd-sign-flip} to $M = \diag(\xi') - \xi'' \succeq 0$ shows $M' = A - 2\diag(\xi'') \succeq 0$, where $\diag(\xi'') \in \bbR^{r\times r}$ denotes the diagonal matrix with the same diagonal entries as $\xi''$.
    By Assumption~\ref{as:nondegenerate}, $A \succ 0$.
\end{proof}

% \begin{lemma}
%     \label{lem:diagonally-signed-positive-inverse}
%     If $M\in \bbR^{r\times r}$ is diagonally signed and $M\succ 0$, then $M^{-1}$ has only positive entries.
% \end{lemma}
% \begin{proof}
%     Let $D = \diag(M)$; the diagonal entries of $D$ are clearly positive.
%     We can write $M = D^{1/2} (I-B) D^{1/2}$, where $B$ has positive entries.
%     Because $M\succ 0$, we have $I - B \succ 0$.
%     By Lemma~\ref{lem:psd-sign-flip}, this also implies $I + B \succ 0$.
%     Therefore $-I \prec B \prec I$, so $\norm{B}_\op < 1$.
%     Thus the geometric series $I+B+B^2+\dots$ converges and
%     \[
%         M^{-1} = D^{-1/2} (I + B + B^2 + \cdots) D^{-1/2}
%     \]
%     has positive entries.
% \end{proof}

\subsection{Random Matrix Theory}
\label{subsec:prelim-rmt}

Our calculations will involve standard notions from random matrix theory. For a probability measure $\mu\in\cP(\bbR)$, and with $\bbH$ the open complex upper half-space, its Stiejtles transform $m:\bbH\to\bbC$ is the holomorphic function
\begin{equation}
    \label{eq:stieltjes}
    m(z) = \int \fr{\mu(\de \gamma)}{\gamma-z},
    \qquad z\in \bbH.
\end{equation}
If $\mu$ is compactly supported with piecewise smooth density $\rho(x)$, it is well known (see e.g. \cite[Chapter 2.4]{Guionnet}) that $m$ extends continuously to $\bbR$ at all points of smoothness with $m(x) = \Im(\pi \rho(x))$.
Here and throughout, we use $\Re(\cdot)$ and $\Im(\cdot)$ respectively to denote the real and imaginary parts of a complex scalar or vector.
Throughout this paper it will also be understood that $m(x) = \lim_{z\in \bbH, z\to x} m(z)$.

Next, fixing $\vx \in \bbR^r$, let $m_s(z) = m_s(z;\vx) \in \bbH$ solve the vector Dyson equation
\begin{equation}
    \label{eq:dyson-equation}
    1 + \lt(
        z + \fr{x_s}{\sqrt{\lambda_s}} +
        \sum_{s' \in \sS} \fr{\xi''_{s,s'}}{\lambda_s} m_{s'}(z)
    \rt) m_s(z) = 0\,, \qquad z \in \bbH\,.
\end{equation}
For each $s$, let $\mu_s$ be such that $m_s$ is the Stieltjes transform of $\mu_s$, existence and uniqueness of which is guaranteed by Proposition~\ref{prop:MDE-basic} below. Then define
\begin{align}
\label{eq:mu-def}
    \mu &= \sum_{s\in \sS} \lambda_s \mu_s,
    \\
\label{eq:m-def}
    m(z) &= m(z;\vx) = \sum_{s\in \sS} \lambda_s m_s(z).
\end{align}
We will sometimes write $\mu = \mu_{\xi,\vlambda}(\vx)$ to emphasize the dependence on $\xi,\vlambda,\vx$ (or include some arguments but not others). The next proposition details useful properties of $m_s$ and $\mu_s$.
Note that $\mu_{\xi,\vlambda}(\vx)$ depends only on $(\xi'',\vlambda,\vx)$.
In particular this is a finite-dimensional vector (while $\xi$ is in principle infinite-dimensional).
We also let $\vlambda_N^{\circ} = (\lambda_{N,s}^{\circ})_{s\in \sS} \in \bbR^r$, where
\begin{equation}
\label{eq:lambda-N-s}
\lambda_{N,s}^{\circ}
= \frac{N_s-1}{N-r} = \frac{|\cI_s|-1}{N-r}.
\end{equation}
These slightly modified values of $\vlambda_N$ will be useful because they are the exact relative sizes of the species blocks in $M_N$ (see also \cite[Eq. (2.1)]{mckenna2021complexity}).
Of course $\vlambda_N^{\circ}\to\vlambda$ as $N\to\infty$ since we assume $\vlambda_N\to\vlambda$.

\begin{proposition}
    \label{prop:MDE-basic}
    For each $\vx \in \bbR^r$, there exists a unique solution $(m_1,\ldots,m_r)$ to \eqref{eq:dyson-equation} consisting of holomorphic functions $m_s:\bbH\to\bbH$, each given by the Stieltjes transform of some $\mu_s\in\cP(\bbR)$.
    Moreover for any compact set $\cK\subseteq (0,1)^r\times  (0,\infty)^{r\times r}\times \bbR^r$ there exists $C=C(\cK)$ such that the following hold whenever $(\vlambda_N^{\circ},\xi'',\vx)\in\cK$ (and $\sum_s \lambda_{N,s}^{\circ}=1$).
    % Moreover the following hold uniformly in compact $(\xi',\xi'',\vx)$ with $\xi$ uniformly non-degenerate:
    \begin{enumerate}[label=(\alph*)]
        \item
        \label{it:Dyson-compact-supports}
        The support sets $\supp(\mu_s)\subseteq\bbR$ are contained in $[-C,C]$ and do not depend on $s$.
        \item
        \label{it:Dyson-continuous-density}
        Each $\mu_s$ is absolutely continuous, with density $\rho_s$ having $1/3$-H{\"o}lder norm at most $C$, piece-wise smooth on at most $C$ intervals with disjoint interiors, and otherwise zero.
        \item
        \label{it:Dyson-continuous-transform}
        Each $m_s(\,\cdot\,;\cdot)$ extends to a jointly continuous function $\obbH\times\bbR^r\mapsto \obbH$ solving \eqref{eq:dyson-equation}, with $1/3$-H{\"older} norm at most $C$.
        \item
        \label{it:Dyson-not-zero}
        $|m_s(z;\vx)|\geq 1/C$ for all $z\in\obbH$ and $\vx\in\bbR^r$.
        \item
        \label{it:Dyson-solves-RMT}
        For any $\eps>0$ and any fixed $\bx\in\cS_N$, conditionally on $\nabla_{\rd} H_N(\bx)=\vx$, we have the bulk-typicality (recall \eqref{eq:Hessian-spectral-defs}):
        \begin{equation}
        \label{eq:bulk-typicality}
        \begin{aligned}
        \bbW_2\big(\wh\mu_{H_N}(\bx),\mu_{\xi,\vlambda_N^{\circ}}(\vx)\big)
        &\leq \eps,
        \\
        d_{\cH}\big(\supp(\wh\mu_{H_N}(\bx)),\supp(\mu_{\xi,\vlambda_N^{\circ}}(\vx))\big)
        &\leq \eps.
        \end{aligned}
        \end{equation}
        with probability $1-e^{-cN}$ for $c=c(\eps,\cK)>0$ and $N$ large enough.
        (Recall that $\mu_{\vlambda_N^{\circ}}(\vx)$ is defined by \eqref{eq:dyson-equation}, \eqref{eq:mu-def} with $\vlambda_N^{\circ}$ in place of $\vlambda$.)
    \end{enumerate}
\end{proposition}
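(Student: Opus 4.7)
The plan is to view the equation \eqref{eq:dyson-equation} as an instance of the vector/matrix Dyson equation studied in \cite{ajanki2017singularities, alt2020dyson}. Writing $a_s = x_s/\sqrt{\lambda_s}$ and $S_{s,s'} = \xi''_{s,s'}/\lambda_s$, the equation becomes $m_s(z) = -[z + a_s + (Sm)_s]^{-1}$ where $(Sm)_s = \sum_{s'}S_{s,s'}m_{s'}(z)$. Since $\xi''$ has strictly positive entries on $\cK$, the self-energy $S$ is irreducible and uniformly non-degenerate, placing us in the \emph{flat} regime of the cited works. Existence, uniqueness, and the Stieltjes transform characterization follow from \cite[Theorem 2.1]{ajanki2017singularities}; alternatively one can realize $\mu_s$ as the limiting spectral measure of the block Gaussian matrix appearing in part \ref{it:Dyson-solves-RMT}. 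The support bound in \ref{it:Dyson-compact-supports} follows from a standard moment bound in terms of $\|S\|$ and $\|a\|$, while equality of supports across $s$ uses irreducibility: any $x\in \supp(\mu_{s'}) \setminus \supp(\mu_s)$ would, upon taking imaginary parts at $z=x$, give $0=\Im m_s(x) \geq |m_s(x)|^2 \sum_{s''} S_{s,s''}\Im m_{s''}(x) > 0$, a contradiction. The regularity claims in \ref{it:Dyson-continuous-density} --- $1/3$-H\"older density and piecewise smoothness on $O(1)$ intervals --- are then the content of \cite[Theorems 2.6, 2.8]{ajanki2017singularities}, combined with the observation that $(m_s)_{s\in\sS}$ is algebraic over $\bbC(z)$ of bounded degree.

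The new content is the joint continuity statement \ref{it:Dyson-continuous-transform}: the results of \cite{ajanki2017singularities, alt2020dyson} give $1/3$-H\"older continuity of $m_s$ up to the real axis in $z$ alone for a fixed equation, whereas here the H\"older constant must be uniform over $(\vlambda_N^\circ, \xi'', \vx)$ ranging in the compact set $\cK$. The strategy is to revisit the stability analysis of \cite[Sections 2--3]{ajanki2017singularities}: away from spectral edges the linearization $1-CS$ of the Dyson equation is uniformly invertible, so the implicit function theorem yields joint smoothness in all arguments; near edges and cusps of $\supp(\mu)$ one invokes the quantitative stability estimate \cite[Theorem 2.12]{ajanki2017singularities}, and the key point is to track every constant appearing to ensure dependence only on $\|S\|$, $\|a\|$, and a strictly positive lower bound for $\min_{s,s'}S_{s,s'}$, all of which are controlled on $\cK$. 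Part \ref{it:Dyson-not-zero} is then an immediate corollary: on bounded subsets of $\obbH$ the denominator $z + a_s + (Sm)_s$ is uniformly bounded above by a $\cK$-dependent constant (using uniform boundedness of $|m_{s'}|$ from \ref{it:Dyson-continuous-transform}), yielding the uniform lower bound on $|m_s|$.

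Finally for part \ref{it:Dyson-solves-RMT}, Lemma~\ref{lem:derivative-laws} and Fact~\ref{fac:riemannian-to-euclidean} imply that conditionally on $\nabla_\rd H_N(\bx)=\vx$, the matrix $\nabla^2_\sph H_N(\bx)$ is a Gaussian block Wigner matrix with variance profile $\xi''_{s(i),s(j)}/(\lambda_{s(i)}\lambda_{s(j)}N)$ shifted by the explicit diagonal perturbation encoded by $\vx$; the associated self-consistent equation is precisely \eqref{eq:dyson-equation} with $\vlambda$ replaced by $\vlambda_N^\circ$. Convergence of the empirical spectral measure in $\bbW_2$ to $\mu_{\xi,\vlambda_N^\circ}(\vx)$ follows from \cite[Theorem 2.1]{mckenna2021complexity} (which treats exactly this block Gaussian model), while Hausdorff convergence of supports follows from the accompanying edge rigidity / no-outlier bounds in the same framework. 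The exponential concentration rate $1-e^{-cN}$ comes from standard Gaussian concentration applied to the Lipschitz dependence of both $\bbW_2\big(\wh\mu_{H_N}(\bx),\mu\big)$ and the extreme eigenvalues on the disorder coefficients.

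The main obstacle throughout is part \ref{it:Dyson-continuous-transform}: the H\"older constants in the stability analysis of \cite{ajanki2017singularities, alt2020dyson} are stated for a fixed equation, and obtaining joint continuity requires a careful revisit of the local analysis near singular points of $\supp(\mu)$ (edges and cusps where $1-CS$ degenerates) to verify that the relevant constants depend only on the compact parameter set $\cK$.
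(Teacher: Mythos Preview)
Your outline for parts \ref{it:Dyson-compact-supports}, \ref{it:Dyson-continuous-density}, and \ref{it:Dyson-not-zero} matches the paper's: cite the relevant results from \cite{ajanki2017singularities,ajanki2019quadratic} and observe that $m_s\to 0$ is incompatible with the equation. The interesting divergence is in parts \ref{it:Dyson-continuous-transform} and \ref{it:Dyson-solves-RMT}.

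For \ref{it:Dyson-continuous-transform} you propose to revisit the edge/cusp stability analysis of \cite{ajanki2017singularities,alt2020dyson} and track that every constant depends only on $\cK$. The paper does something quite different (Theorem~\ref{thm:continuity} in Appendix~\ref{app:dyson}): it first proves directly that $\Im\vu$ is $1/3$-H\"older on all of $\bbH^r$ by showing $\|M(\vu)^{-1}\|_{\op}\lesssim \Im(u_{s_*})^{-2}$ (Lemma~\ref{lem:AEK-adaptation}), which via an ODE-type argument gives H\"older control of $\Im(u_{s_*})^3$; then it extends to $\Re\vu$ not by stability analysis but by harmonic function theory --- Stieltjes transforms along one-dimensional slices, existence of non-tangential limits (Proposition~\ref{prop:non-tangential-limit}), and the Poisson integral representation (Proposition~\ref{prop:poisson-integral-representation}) to glue the slices together. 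Your plan may be feasible, but the edge/cusp constants in \cite{ajanki2017singularities} are delicate and their uniformity over varying $(a,S)$ is not stated; the paper's route avoids this entirely and yields a self-contained proof.

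For \ref{it:Dyson-solves-RMT} your sketch is in the right spirit but glosses over a step the paper handles carefully. The no-outlier input (from \cite[Theorem 4.7(i)]{alt2019location}) only gives probability $1-O(1/N)$, not exponential; the paper boosts this by coupling two i.i.d.\ copies and using that each eigenvalue $\blambda_k$ is $1$-Lipschitz in the entries, so $\bbP[E_k(\eps)]\cdot(1-O(1/N))\le \bbP[|\blambda_k-\blambda_k'|\ge\eps/2]\le e^{-cN}$. For the Hausdorff bound in the other direction, the paper does not cite edge rigidity but instead uses that $\mu_{\xi,\vlambda_N^\circ}(\vx)$ is \emph{locally dense} (via \cite[Theorem 2.6]{ajanki2019quadratic}), so any missed neighborhood of its support would force a macroscopic $\bbW_2$ gap, contradicting the already-established $\bbW_2$ convergence. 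Your concentration-of-extremes argument would need a separate input locating $\bbE[\blambda_1]$ near the spectral edge, which you have not supplied.
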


\begin{proof}
    The first three statements follow by \cite[Proposition 2.1, Theorem 2.6, Corollary 2.7]{ajanki2017singularities} (see the last sentence of \cite[Theorem 2.4]{ajanki2019quadratic} for relevant local uniformity statements), except for the continuity in $\vx$ in part~\ref{it:Dyson-continuous-transform}.
    This is proved in Appendix~\ref{app:dyson} as Theorem~\ref{thm:continuity} (which also allows $\vv\in\obbH^r$).
    Part~\ref{it:Dyson-not-zero} follows since $m_s\approx 0$ is impossible in \eqref{eq:dyson-equation}.

    We now explain part~\ref{it:Dyson-solves-RMT}, which requires a bit more work.
    Throughout, we argue conditionally on $\nabla_{\rd} H_N(\bx)=\vec x$.
    To start, the random matrix $\nabla_{\sph}^2 H_N(\bx)$ obeys the general conditions of \cite[Theorem 4.7(i)]{alt2019location}; in particular its conditional mean (recall Fact~\ref{fac:riemannian-to-euclidean}) $\bbE[\nabla_{\sph}^2 H_N(\bx) | \nabla_{\rd} H_N(\bx)=\vec x] = -\diag(\Lambda^{-1/2} \vx \diamond \bone_\cT)$
  is bounded in operator norm by a constant (denoted $\kappa_4$ in \cite{alt2019location}), uniformly for all $\vx$ in any given compact set (with the precise value $\kappa_4$ depending on the compact set).
    This result implies\footnote{In translating \cite[Theorem 4.7(i)]{alt2019location}, we use the exact equivalence between size $N-r$ Dyson equations with constant entries on the partitions $(\cI_s-1)\times (\cI_{s'}-1)$, and size $r$ Dyson equations with weights $\lambda_{N,s}^{\circ}$.
    See \cite[Section 11.5]{ajanki2019quadratic} for more details.}
    that with probability at least $1-O(1/N)$ (with implicit constant uniform over compact sets of $\vx$), the set $\supp(\wh\mu_{H_N}(\bx))$ is contained within an $\eps/2$-neighborhood of $\supp(\mu_{\xi,\vlambda_N^{\circ}}(\vx))$.
    We first improve this probability to be exponentially close to $1$.
    By the Hoffman--Wielandt lemma (see e.g. \cite[Lemma 2.1.19]{Guionnet}), the $k$-th eigenvalue of any symmetric matrix is an $1$-Lipschitz function of its entries.
    In our setting, Lemma~\ref{lem:derivative-laws} implies that conditionally on $\vec x$, the entries of $\nabla_{\sph}^2 H_N(\bx)$ are independent Gaussians up to symmetry, each with variance $O(1/N)$ (indeed Lemma~\ref{lem:derivative-laws} shows that this variance is exactly determined by $\xi$ and does not depend on $\vec x$ in any way).
    By concentration of Lipschitz functions of Gaussians, we find that $\lambda_k=\lambda_k(\nabla_{\sph}^2 H_N(\bx))$ satisfies for any $\vec x$:
    \[
    \bbP[|\lambda_k-\bbE[\lambda_k]|\geq \eps/4]
    \leq e^{-c(\eps)N}.
    \]
    In particular, if $\lambda_k,\lambda_k'$ are IID copies, then $\bbP[|\lambda_k-\lambda_k'|\geq \eps/2]
    \leq 2e^{-c(\eps)N}$ by the triangle inequality.
    With $E_k(\eps)$ the event that $d(\lambda_k,\supp(\mu_{\xi,\vlambda_N^{\circ}}(\vx)))\geq \eps$, we thus see that
    \[
    \bbP[E_k(\eps)]\cdot (1-O(1/N))
    \leq
    2e^{-c(\eps)N}.
    \]
    This is because if $E_k(\eps)$ holds for $\lambda_k$ but $\lambda_k'$ obeys the conclusion above from \cite[Theorem 4.7(i)]{alt2019location}, then $|\lambda_k-\lambda_k'|\geq \eps/2$ must hold.
    We thus find that $\bbP[E_k(\eps)]\leq e^{-c'(\eps)N}$ for $N$ large.

    Next, we employ \cite[Corollary 1.10]{ajanki2017universality}, which shows that the bounded Lipschitz distance $d_{BL}$ between $\wh\mu_{H_N}(\bx)$ and $\mu_{\xi,\vlambda_N^{\circ}}(\vx)$ tends to $0$, with probability $1-O(1/N)$ and uniform implicit constant over compact sets of $\vx$.
    (Here we again use the equivalence between size $N-r$ Dyson equations with block sizes $(\cI_s-1)\times (\cI_{s'}-1)$ and size $r$ Dyson equations with weights $\lambda_{N,s}^{\circ}$.)
    We have seen that both probability measures are supported in a fixed compact set of $\bbR$ (with probability $1-e^{-cN}$ in the former case; this compact set can be taken uniform over $\vx$ in a compact set).
    This immediately upgrades convergence in probability within $d_{BL}$ to $\bbW_2$.
    Finally, another application of Hoffman--Wielandt shows that the spectral distribution of a symmetric matrix $M\in\bbR^{d\times d}$ is a jointly $1$-Lipschitz function of the entries, as a map from $\bbR^{d\times d}\to \bbW_2(\bbR)$.
    In particular, it follows that
    \[
    D\equiv \bbW_2(\wh\mu_{H_N}(\bx),\mu_{\xi,\vlambda_N^{\circ}}(\vx))
    \]
    is a $1$-Lipschitz function of the entries of $\nabla_{\sph}^2 H_N(\bx)$, and thus concentrates exponentially.
    We have seen that $D$ converges in distribution to $0$, hence its median $m(D)$ satisfies $|m(D)|\leq \eps/2$ for large $N$.
    Therefore $\bbP[|D|\leq \eps]\geq 1-e^{-c(\eps)N}$ for large $N$, yielding the desired $\bbW_2$ convergence claim.

    Finally, we deduce convergence in $d_{\cH}$.
    By adjusting $\eps$, it remains to argue that with probability $1-e^{-c(\eps)N}$, each $y\in \supp(\mu_{\xi,\vlambda_N^{\circ}}(\vx))$ satisfies $d(y,\supp(\wh\mu_{H_N}(\bx)))\leq 2\eps$, as the opposite direction was shown earlier.
    We claim that $\mu_{\xi,\vlambda_N^{\circ}}(\vx)$ is ``locally dense'' in that for any $\eps>0$ there is $\delta>0$ (independent of $\vx$ within any given compact set) such that for all $y\in \supp(\mu_{\xi,\vlambda_N^{\circ}}(\vx))$, we have
    \[
    \mu_{\xi,\vlambda_N^{\circ}}(\vx)([y-\eps,y+\eps])\geq \delta.
    \]
    Indeed this assertion follows by \cite[Theorem 2.6]{ajanki2019quadratic}, which gives a local description of how the density for $\mu_{\xi,\vlambda_N^{\circ}}(\vx)$ behaves near its singularities. (In particular, the local scaling factor $h_x$ therein is stated to be of constant order $h_x\sim 1$, with implicit constants depending only on norms of model parameters.)
    This completes the proof: if $y\in \supp(\mu_{\xi,\vlambda_N^{\circ}}(\vx))$ satisfied $d(y,\supp(\wh\mu_{H_N}(\bx)))\geq 2\eps$, we would directly obtain $\bbW_2\big(\wh\mu_{H_N}(\bx),\mu_{\xi,\vlambda_N^{\circ}}(\vx)\big)\geq \eps\delta>0$, but this $\bbW_2$ distance has been shown to tend to $0$ with exponentially good probability.
\end{proof}

We also have continuity of vector Dyson equation solutions in the various parameters, which is needed to apply the results of \cite{arous2021exponential}.
Sophisticated stability results for the Dyson equation were established for universality for random matrices in \cite{ajanki2017singularities,ajanki2017universality,ajanki2019quadratic,ajanki2019stability}.

\begin{proposition}
\label{prop:VDE-continuity}
    The map
    \[
    (\xi'',\vlambda,\vx)
    \mapsto
    (\vm(z),\mu(z))
    \]
    is uniformly continuous on compact subsets of its domain.
    (That is, for a general symmetric matrix $\xi''\in (0,\infty)^{r\times r}$, vector $\vlambda\in (0,1)^{r}$ with $\sum_s \lambda_s=1$, and vector $\vx\in\bbR^r$. We equip $\vm$ with the compact-open topology and $\mu$ with the $\bbW_1$ distance.)
\end{proposition}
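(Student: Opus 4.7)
The plan is to establish sequential continuity via Arzel\`a--Ascoli together with uniqueness of Dyson solutions, leveraging the uniform bounds already recorded in Proposition~\ref{prop:MDE-basic}. Fix a compact set $\cK \subseteq (0,1)^r \times (0,\infty)^{r\times r} \times \bbR^r$ (with $\sum_s \lambda_s = 1$) and a sequence $\theta_n = (\vlambda_n, \xi''_n, \vx_n) \to \theta_\infty$ in $\cK$; write $\vm_n, \mu_n$ for the corresponding Dyson solution and spectral measure. It suffices to show $\vm_n \to \vm_\infty$ uniformly on compact subsets of $\obbH$ and $\mu_n \to \mu_\infty$ in $\bbW_1$.

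For the convergence of $\vm_n$, I would argue by Arzel\`a--Ascoli. Parts~\ref{it:Dyson-continuous-transform} and \ref{it:Dyson-not-zero} of Proposition~\ref{prop:MDE-basic} give a $1/3$-H\"older norm bound for each $m_{s,n}:\obbH\to\obbH$ uniform over $\cK$ and hence over $n$; combined with the universal Stieltjes bound $|m_{s,n}(z)| \leq 1/|\Im z|$ on $\bbH$ and its H\"older extension to $\bbR$, this yields uniform equicontinuity and uniform boundedness on every compact subset of $\obbH$. A subsequence $\vm_{n_k}$ then converges uniformly on compacts to some $\tilde\vm$. Continuity of the polynomial coefficients of \eqref{eq:dyson-equation} in $(\vlambda, \xi'', \vx)$ lets me pass to the limit, so $\tilde\vm$ solves \eqref{eq:dyson-equation} at $\theta_\infty$. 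The uniqueness assertion in Proposition~\ref{prop:MDE-basic} then forces $\tilde\vm = \vm_\infty$, and the usual subsequence trick promotes this to convergence of the full sequence.

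For the $\bbW_1$ convergence of $\mu_n$, pointwise convergence of the Stieltjes transforms $m_n \to m_\infty$ on $\bbH$ from the previous step implies weak convergence $\mu_n \to \mu_\infty$ by the classical Stieltjes continuity theorem (equivalently, via Stieltjes--Perron inversion). Proposition~\ref{prop:MDE-basic}\ref{it:Dyson-compact-supports} ensures that all $\mu_n$ are supported in a common compact interval $[-C,C]$ with $C = C(\cK)$. On a uniformly bounded set, weak convergence of probability measures coincides with $\bbW_1$ convergence (via Kantorovich--Rubinstein duality and Arzel\`a--Ascoli applied to $1$-Lipschitz test functions normalized at the origin), completing the argument.

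The main technical obstacle, admittedly mild, is verifying that any Arzel\`a--Ascoli limit $\tilde\vm$ actually maps $\bbH$ into $\bbH^r$, so that the uniqueness clause of Proposition~\ref{prop:MDE-basic} applies rather than being vacuously defeated by a boundary-valued spurious solution. I would handle this by taking imaginary parts in \eqref{eq:dyson-equation}: if $\Im \tilde m_s(z) = 0$ for some $z \in \bbH$ and some $s\in\sS$, then the equation forces
\[
\tilde m_s(z) \cdot \Bigl(\Im z + \sum_{s'} \frac{\xi''_{s,s'}}{\lambda_s} \Im \tilde m_{s'}(z)\Bigr) = 0,
\]
and since the parenthesized quantity is strictly positive (as $\Im z > 0$, $\xi''_{s,s'}\geq 0$, and $\Im \tilde m_{s'} \geq 0$ by passage to the limit), we would need $\tilde m_s(z) = 0$. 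This contradicts the lower bound $|\tilde m_s| \geq 1/C$ inherited from Proposition~\ref{prop:MDE-basic}\ref{it:Dyson-not-zero}. Once this is settled, the remainder of the proof is routine stitching together of Arzel\`a--Ascoli, continuity of the coefficients in \eqref{eq:dyson-equation}, and the classical correspondence between Stieltjes transforms and measures.
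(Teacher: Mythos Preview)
Your proof is correct and follows essentially the same subsequential-limit-plus-uniqueness strategy as the paper. The only differences are in execution: the paper invokes tightness via Montel's theorem (from the Stieltjes bound $|m_s(z)|\le 1/\Im z$) rather than Arzel\`a--Ascoli with the H\"older bounds, and it verifies that the limit maps $\bbH\to\bbH$ by observing that uniformly bounded supports force $\Im m_{n,s}(z)\ge c\,\Im z$, whereas you argue algebraically from the Dyson equation and the lower bound $|m_s|\ge 1/C$; both variants are routine.
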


\begin{proof}
    Suppose $(\xi''_n, \vlambda_n,\vx_n)$ converge to $(\xi'',\vlambda,\vx)$ as $n\to\infty$.
    Let $\vm$ be a subsequential limit of the corresponding Dyson equation solutions $\vm_n$.
    Then $\vm$ solves the limiting Dyson equation for $(\xi'',\vlambda,\vx)$ by continuity of the coefficients.
    Since the coefficients $(\xi''_n, \vlambda_n,\vx_n)$ are uniformly bounded above and below, the supports of the corresponding spectral measures $\mu_{n,s}$ are uniformly bounded by \cite{ajanki2019quadratic}.
    Hence the imaginary parts $\Im(m_{n,s}(z))$ of their Stieltjes transforms are bounded below by $\Omega(\Im(z))$, uniformly on compact sets of $(\xi'',\vlambda,\vx,z)$.
    In particular, the limit $\vm$ is still a function from the strict upper half-plane $\bbH$ to itself.
    By uniqueness in Proposition~\ref{prop:MDE-basic}, we find that $\vm$ is \emph{the} solution to the limiting Dyson equation for $(\xi'',\vlambda,\vx)$.
    Since $\vm$ was an arbitrary subsequential limit, and the $\vm_n$ are clearly tight, we find that $\lim_n \vm_n=\vm$, say uniformly on compact subsets of $\bbH$.
    Continuity of $\vm$ follows; this is equivalent to continuity of $\mu_s$ and thus yields continuity of $\mu$.
\end{proof}

In light of Proposition~\ref{prop:MDE-basic}\ref{it:Dyson-compact-supports} and recalling \eqref{eq:ideal-stats}, for any $\vDelta \in \{-1,1\}^r$ we define
\begin{equation}
\label{eq:S-vDelta-def}
S(\vDelta) = \supp(\mu(\vx(\vDelta))).
\end{equation}
Next we define
\begin{equation}
\label{eq:Psi-def}
    \Psi(\vx) = \int \log |\gamma|~[\mu(\vx)](\de \gamma)\,.
\end{equation}
This will capture the exponential growth rate of
\[
    \bbE\lt[
    |\det\lt(\nabla_{\sph}^2 H_N(\bsig)\rt)
    |
    ~\big|~\nabla_{\rd} H_N(\bsig)=\vx\rt]
\]
which is the main term appearing in the Kac--Rice formula.
We show its continuity in Proposition~\ref{prop:Psi-continuous} below, using the following lemmas which will also be useful later.
\begin{lemma}
\label{lem:Dyson-weakly-continuous}
    For any $(\xi,\vx)$ and $(\wt\xi,\wt\vx)$, and some $C = C(\vlambda)>0$,
    \begin{equation}
        \label{eq:mu-tmu-approx}
        \bbW_{\infty}(\mu_{\xi}(\vx),\mu_{\wt\xi}(\wt\vx))
        \leq
        C\big(
            \|\vx-\wt\vx\|_{\infty}
            + \|\xi''-\wt\xi''\|_{\infty}^{1/2}
        \big).
    \end{equation}
    Moreover for $C=C(\vlambda,\xi)>0$ independent of $\vx$,
    \begin{equation}
    \label{eq:mu-vx-approx}
    \bbW_{\infty}
    \big(\mu(\vx),
    \sum_{s\in\sS}
    \lambda_s
    \delta_{-x_s/\sqrt{\lambda_s}}
    \big)
    \leq C.
    \end{equation}
\end{lemma}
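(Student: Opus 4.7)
The strategy for both parts is to realize $\mu_\xi(\vx)$ and $\mu_{\wt\xi}(\wt\vx)$ as $N\to\infty$ limits of empirical spectral distributions of coupled random block matrices, and then read off the $\bbW_\infty$ bounds from operator-norm estimates via Proposition~\ref{prop:spectral-perturbation}. Fix $\bx\in\cS_N$; by Fact~\ref{fac:riemannian-to-euclidean} and Lemma~\ref{lem:derivative-laws}, conditionally on $\nabla_{\rd}H_N(\bx)=\vx$ we have
\[
\nabla_{\sph}^2 H_N(\bx) = \bB_N(\xi) - \diag(\Lambda^{-1/2}\vx\diamond \bone_\cT),
\]
where $\bB_N(\xi)$ is a symmetric Gaussian block matrix with independent entries (up to symmetry) having variances determined by $\xi''$ as in \eqref{eq:tangential-hessian-law}. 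By Proposition~\ref{prop:MDE-basic}\ref{it:Dyson-solves-RMT}, together with standard edge rigidity for this class of generalized Wigner matrices (as in \cite{ajanki2019quadratic,alt2019location}), with probability $1-e^{-cN}$ the empirical spectral distribution $\wh\mu(\nabla_{\sph}^2 H_N(\bx))$ lies within $\bbW_\infty$-distance $o_N(1)$ of $\mu_{\xi,\vlambda_N^\circ}(\vx)$, and analogously for $(\wt\xi,\wt\vx)$.

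For part (1), I would couple the two Hessians by using a single symmetric standard Gaussian tensor $\bZ$, writing $\bB_N(\xi) = A \odot \bZ$ and $\bB_N(\wt\xi) = \wt A \odot \bZ$ where $A_{i,j},\wt A_{i,j}$ are the standard deviations specified by \eqref{eq:tangential-hessian-law} for $\xi$ and $\wt\xi$ respectively. The difference of the two Hessians then splits into a diagonal piece of operator norm $O(\|\vx-\wt\vx\|_\infty)$ and the Gaussian piece $(A-\wt A)\odot \bZ$, whose entries have variance $O(\|\xi''-\wt\xi''\|_\infty/N)$. A standard Gaussian operator-norm bound gives that this second piece has operator norm at most $C\|\xi''-\wt\xi''\|_\infty^{1/2}$ with probability $1-e^{-cN}$. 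Applying Proposition~\ref{prop:spectral-perturbation}, triangulating against the rigidity bounds above, and sending $N\to\infty$ (invoking Proposition~\ref{prop:VDE-continuity} to replace $\vlambda_N^\circ$ by $\vlambda$) yields \eqref{eq:mu-tmu-approx}.

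For part (2), I would compare $\wh\mu(\nabla_{\sph}^2 H_N(\bx))$ directly to the empirical spectral measure of the diagonal piece $-\diag(\Lambda^{-1/2}\vx\diamond \bone_\cT)$, which has exactly $|\cI_s|-1$ eigenvalues at $-x_s/\sqrt{\lambda_s}$ for each $s$ and hence equals $\sum_s \lambda_{N,s}^\circ \delta_{-x_s/\sqrt{\lambda_s}}$. The Gaussian piece $\bB_N(\xi)$ has operator norm bounded by a constant $C(\vlambda,\xi)$ depending only on $\|\xi''\|_\infty$ and $\min_s \lambda_s$ (and crucially not on $\vx$), with probability $1-e^{-cN}$. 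Proposition~\ref{prop:spectral-perturbation}, followed by rigidity and $N\to\infty$, then produces the $\vx$-uniform bound \eqref{eq:mu-vx-approx}.

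The main obstacle is justifying the upgrade of the $\bbW_2$ statement in Proposition~\ref{prop:MDE-basic}\ref{it:Dyson-solves-RMT} to $\bbW_\infty$, which requires edge rigidity for the specific variance profile arising from \eqref{eq:tangential-hessian-law}. The needed rigidity estimates are available in the Ajanki--Erd\H{o}s--Kr\"uger line of work but would need to be cited or adapted carefully. An alternative route that bypasses random matrices is to prove the $\bbW_\infty$ bound directly from the vector Dyson equation: show Lipschitz dependence of the quantile function $F_{\mu_\xi(\vx)}^{-1}$ on $(\xi'',\vx)$ by perturbing \eqref{eq:dyson-equation}, and convert Stieltjes perturbations into CDF perturbations using the H\"older density bound and support stability in Proposition~\ref{prop:MDE-basic}\ref{it:Dyson-continuous-density}. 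This is cleaner philosophically but more analytically involved than the random-matrix route outlined above.
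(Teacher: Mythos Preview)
Your overall strategy matches the paper's: couple the two block-Gaussian Hessians, bound the operator norm of their difference, apply Proposition~\ref{prop:spectral-perturbation} to get a $\bbW_\infty$ bound on the empirical spectral measures, and pass to the limit. (Minor difference: the paper controls $\|\bW-\wt\bW\|_{\op}$ via Slepian's lemma against a scaled GOE rather than your direct coupling, but either works.)

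The point worth flagging is the ``main obstacle'' you identify. You propose to close it by invoking $\bbW_\infty$ edge rigidity for the empirical-to-limiting comparison, citing the Ajanki--Erd\H{o}s--Kr\"uger results. The paper bypasses this entirely and uses only the $\bbW_2$ statement already in Proposition~\ref{prop:MDE-basic}\ref{it:Dyson-solves-RMT}. The trick is that the empirical $\bbW_\infty$ bound $\bbW_\infty(\wh\mu(\bM),\wh\mu(\wt\bM))\le \oC$ is equivalent to the CDF domination $\wh\mu(\bM)([t+\oC,\infty))\ge \wh\mu(\wt\bM)([t+2\oC,\infty))$ for all $t$. One then transfers these inequalities to the limiting measures using only $\bbW_2$-closeness: if $\bbW_2(\wh\mu(\bM),\mu_\xi(\vx))$ is small enough, any violation of $\mu_\xi(\vx)([t,\infty))\ge \wh\mu(\bM)([t+\oC,\infty))-\eps$ would force a quantile gap of size $\oC$ on a set of measure $\eps$, contradicting the $\bbW_2$ bound. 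Since $\eps>0$ is arbitrary (taken after $N\to\infty$), one gets exact CDF domination $\mu_\xi(\vx)([t,\infty))\ge \mu_{\wt\xi}(\wt\vx)([t+3\oC,\infty))$ and hence $\bbW_\infty\le 3\oC$. This is more elementary than importing edge rigidity and keeps the argument self-contained. For part \eqref{eq:mu-vx-approx} the paper says only ``by similar reasoning'', which is exactly your argument.
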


\begin{proof}
    Let $\bG = (g_{i,j})_{i,j\in \cT} \in \bbR^{\cT \times \cT}$ be a GOE matrix with $\E [g_{i,j}^2] = (1 + \delta_{i,j})/N$.
    Let $\bW, \wt \bW \in \bbR^{\cT \times \cT}$ be defined by
    \begin{align*}
        W_{i,j} &= \sqrt{\fr{\xi''_{s(i),s(j)}}{\lambda_{s(i)}\lambda_{s(j)}}} g_{i,j}, &
        \wt W_{i,j} &= \sqrt{\fr{\wt \xi''_{s(i),s(j)}}{\lambda_{s(i)}\lambda_{s(j)}}} g_{i,j},
    \end{align*}
    and $\bM, \wt \bM \in \bbR^{\cT \times \cT}$ by $\bM = \bW - \diag(\Lambda^{-1/2} \vx \diamond \bone_\cT)$, $\wt \bM = \wt \bW - \diag(\Lambda^{-1/2} \wt \vx \diamond \bone_\cT)$.
    Then, by Proposition~\ref{prop:spectral-perturbation},
    \[
        \bbW_\infty(\wh\mu(\bM), \wh\mu(\wt \bM))
        \le \|\bM - \wt \bM\|_\op
        \le \|\bW - \wt \bW\|_\op + \fr{\|\vx - \wt \vx\|_\infty}{\sqrt{\min \vlambda}}.
    \]
    It is classical that $\|\bG\|_\op \le 3$ with probability $1-e^{-cN}$.
    By Slepian's lemma $\|\bW - \wt \bW\|_\op$ is stochastically dominated by
    \[
        \frac{\|\xi'' - \wt \xi''\|_\infty^{1/2}}{\min \vlambda} \|\bG\|_\op,
    \]
    so with probability $1-e^{-cN}$, for suitable $C$,
    \begin{align*}
        \bbW_\infty(\wh\mu(\bM), \wh\mu(\wt \bM))
        \le \frac{3\|\xi'' - \wt \xi''\|_\infty^{1/2}}{\min \vlambda}
        + \fr{\|\vx - \wt \vx\|_\infty}{\sqrt{\min \vlambda}}
        &\le
        C\lt(
            \|\vx-\wt\vx\|_{\infty} + \|\xi''-\wt\xi''\|_{\infty}^{1/2}
        \rt)
        \equiv \oC
        \\
        \implies
        \wh\mu(\bM)([t+\oC,\infty))
        &\geq
        \wh\mu(\wt \bM)([t+2\oC,\infty)), \quad \forall t\in \bbR.
    \end{align*}
    By Propositions~\ref{prop:MDE-basic}\ref{it:Dyson-solves-RMT} and \ref{prop:VDE-continuity}, for any $\eps > 0$, with probability $1-e^{-cN}$
    \[
        \bbW_2(\wh\mu(\bM), \mu_{\xi}(\vx)),
        \bbW_2(\wh\mu(\wt \bM), \mu_{\wt \xi}(\wt \vx))
        \le \eps\oC.
    \]
    In particular for any $\eps$ (depending on $\xi,\vx,\tilde \xi,\tilde\vx)$ and $t\in\bbR$ we have
    \[
    \mu_{\xi}(\vx)([t,\infty))
    \geq
    \wh\mu(\bM)([t+\oC,\infty))-\eps,
    \quad \quad
    \wh\mu(\wt \bM)([t+2\oC,\infty))
    \geq
    \mu_{\wt \xi}(\wt \vx)([t+3\oC,\infty))-\eps.
    \]
    Combining the above displays gives
    \[
    \mu_{\xi}(\vx)([t,\infty))
    \geq
    \mu_{\wt \xi}(\wt \vx)([t+3\oC,\infty))-2\eps,\quad\forall \eps>0.
    \]
    By similar reasoning the same inequality holds with $(\xi,\vx)$ and $(\widetilde \xi, \widetilde \vx)$ interchanged.
    This completes the proof (with $\oC$ replaced by $3\oC$) since $\eps$ is arbitrary.
    % Suppose for contradiction that \eqref{eq:mu-tmu-approx} does not hold with this $C$.
    % Let $\nu$ denote the monotone coupling of $(\mu_{\xi}(\vx),\mu_{\wt\xi}(\wt\vx))$, so that
    % \[
    %     \bbP_{(z,\wt z) \sim \nu}
    %     (|z - \wt z| \ge C\Delta)
    %     > 0.
    % \]
    % Proposition~\ref{prop:MDE-basic}\ref{it:Dyson-continuous-density} implies $\mu_{\xi}(\vx)$, $\mu_{\wt\xi}(\wt\vx)$ have continuous densities, so for some $c_1 > 0$,
    % \[
    %     \bbP_{(z,\wt z) \sim \nu} \lt(
    %         |z - \wt z| \ge \frac34 C\Delta
    %     \rt) > c_1.
    % \]
    % The first part follows by combining Proposition~\ref{prop:MDE-basic} part~\ref{it:Dyson-solves-RMT} above with Proposition~\ref{prop:spectral-perturbation} for suitably coupled pairs of $N\times N$ Gaussian block matrices with asymptotic spectral distributions $\mu_{\xi}(\vx),\mu_{\wt\xi}(\wt\vx)$ and then taking the $N\to\infty$ limit.
    The second part \eqref{eq:mu-vx-approx} follows by similar reasoning since in the corresponding matrix model, the centered Gaussian contribution has spectral norm at most $C(\xi)$ with probability $1-e^{-cN}$.
\end{proof}

The following definition of distributions with bounded density and support will be convenient to ensure continuity of integrals against singular log potentials; it also reappears in Section~\ref{sec:approximate}.

\begin{definition}
\label{def:C-regular-mu}
    The probability distribution $\mu\in\cP(\bbR)$ is \textbf{$C$-regular} if $\supp(\mu)\subseteq [-C,C]$ and $\mu$ has density at most $C$ with respect to Lebesgue measure.
\end{definition}

\begin{lemma}
\label{lem:log-integral-convergence}
    For any $C,\eps>0$ there exists $\delta>0$ such that if $\mu,\wt\mu$ are $C$-regular and $\bbW_1(\mu,\wt\mu)\leq \delta$ then
    \[
    \lt|
    \int
    \log |\lambda|
    \de\mu(\lambda)
    -
    \int
    \log |\lambda|
    \de\wt\mu(\lambda)
    \rt|
    \leq
    \eps
    .
    \]
\end{lemma}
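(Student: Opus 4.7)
The idea is to truncate the logarithmic singularity at a small scale $\eta > 0$ and use $C$-regularity to control the error on $[-\eta,\eta]$ uniformly, while handling the truncated integrand via the Kantorovich--Rubinstein dual formulation of $\bbW_1$. Concretely, I would define the truncated potential $f_\eta(\lambda) = \log(\max(|\lambda|,\eta))$, which is bounded on $[-C,C]$ and $\eta^{-1}$-Lipschitz on $\bbR$ since $\tfrac{d}{d\lambda}\log|\lambda| = 1/|\lambda|$ is at most $1/\eta$ outside $[-\eta,\eta]$ (and $f_\eta$ is constant inside).

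The first step is the truncation bound. For any $C$-regular $\nu$, the density bound gives
\[
\int \big|\log|\lambda| - f_\eta(\lambda)\big|\,\de\nu(\lambda)
\leq C \int_{-\eta}^{\eta} \big(|\log|\lambda|| + |\log \eta|\big)\,\de\lambda
\leq C'\eta\big(|\log\eta| + 1\big),
\]
where $C'$ depends only on $C$. This tends to $0$ as $\eta\to 0$, uniformly over $C$-regular measures $\nu$.

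The second step uses the dual characterization $\bbW_1(\mu,\wt\mu) = \sup_{f \text{ 1-Lip}} \big|\int f\de\mu - \int f\de\wt\mu\big|$. Since $\eta f_\eta$ is $1$-Lipschitz on $\bbR$,
\[
\lt|\int f_\eta\,\de\mu - \int f_\eta\,\de\wt\mu\rt|
\leq \eta^{-1}\bbW_1(\mu,\wt\mu) \leq \delta/\eta.
\]

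To conclude, given $\eps > 0$, first choose $\eta = \eta(C,\eps)$ small enough that $C'\eta(|\log\eta|+1) \leq \eps/3$. Then choose $\delta = \delta(C,\eps) = \eta\eps/3$. Combining the two steps via the triangle inequality,
\[
\lt|\int \log|\lambda|\,\de\mu - \int \log|\lambda|\,\de\wt\mu\rt|
\leq \fr{\eps}{3} + \fr{\delta}{\eta} + \fr{\eps}{3} \leq \eps,
\]
as required. No real obstacles arise; the only subtlety is balancing the truncation scale $\eta$ against the Wasserstein tolerance $\delta$, and the density bound from $C$-regularity is exactly what is needed to make the near-zero contribution vanish as $\eta\to 0$.
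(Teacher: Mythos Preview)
Your proposal is correct and follows essentially the same approach as the paper: truncate the logarithm to make it Lipschitz, use Kantorovich--Rubinstein duality for the truncated part, and use the density bound from $C$-regularity to control the singular contribution near zero. The only cosmetic difference is that the paper truncates at the output level via $\log_K(x)=\min(K,\max(-K,\log x))$ while you truncate at the input level via $f_\eta(\lambda)=\log(\max(|\lambda|,\eta))$; with $K=-\log\eta$ these coincide on $[-C,C]$ once $K\geq\log C$.
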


\begin{proof}
    Define the truncation $\log_K(x)=\min(K,\max(-K,\log x))$. It is easy to see that $\log_K(x)$ is $L_K$-Lipschitz for some constant $L_K$, so for $\delta\leq \frac{\eps}{2L_K}$ we have
    \[
    \lt|
    \int
    \log_K |\lambda|
    \de\mu(\lambda)
    -
    \int
    \log_K |\lambda|
    \de\wt\mu(\lambda)
    \rt|
    \leq
    L_K\cdot \bbW_1(\mu,\wt\mu)
    \leq
    \eps/2
    .
    \]
    For $K\geq \log(C)$ and $|x|\le C$, we have
    \[
    f_K(x)
    \equiv
    \log(|x|)-\log_K(|x|)
    =
    (K+\log(|x|))\cdot 1_{|x|\leq e^{-K}}
    .
    \]
    $C$-regularity implies
    \begin{align*}
    \lt|
    \int
    f_K(\lambda)
    \de\mu(\lambda)
    -
    \int
    f_K(\lambda)
    \de\wt\mu(\lambda)
    \rt|
    &\leq
    2C
    \lt|
    \int_{-e^{-K}}^{e^{-K}}
    K+\log|x|
    ~\de x
    \rt|
    \\
    &=
    -4C(x\log x-x+Kx)|_{x=0}^{e^{-K}}
    \\
    &=
    4Ce^{-K}
    .
    \end{align*}
    It remains to choose $K$ so $4Ce^{-K}\leq \eps/2$ and then take $\delta\leq \eps/2L_K$ as above.
\end{proof}

\begin{proposition}
    \label{prop:Psi-continuous}
    $\Psi(\vx)$ is continuous in $(\xi'',\vlambda,\vx)$, uniformly on compact sets of $(\xi',\xi'',\vlambda,\vx)$ with $\xi$ non-degenerate.
\end{proposition}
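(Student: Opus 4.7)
The plan is to combine Proposition~\ref{prop:VDE-continuity}, which gives $\bbW_1$-continuity of $(\xi'',\vlambda,\vx)\mapsto\mu(\vx)$ on compact sets, with Lemma~\ref{lem:log-integral-convergence}, which asserts that the singular log-potential $\int \log|\gamma|\,d\mu(\gamma)$ is continuous in $\bbW_1$ for uniformly $C$-regular measures. The key intermediate step is to verify that $\mu(\vx)$ is uniformly $C$-regular (in the sense of Definition~\ref{def:C-regular-mu}) as the parameters range over a compact non-degenerate set.

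Concretely, fix a compact set $\cK$ of tuples $(\xi',\xi'',\vlambda,\vx)$ with $\xi$ non-degenerate. Non-degeneracy forces entrywise strict positivity of $\xi''$ (via $\Gamma^{(2)},\Gamma^{(3)}>0$), so the projection of $\cK$ onto $(\vlambda,\xi'',\vx)$ lies in a compact subset of $(0,1)^r \times (0,\infty)^{r\times r}\times \bbR^r$ of the form required by Proposition~\ref{prop:MDE-basic}. Applying parts \ref{it:Dyson-compact-supports}--\ref{it:Dyson-continuous-density} of that proposition, there exists $C_0=C_0(\cK)$ such that each $\mu_s$ is supported in $[-C_0,C_0]$ and has density $\rho_s$ whose $1/3$-H\"older norm is at most $C_0$. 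Since the H\"older norm controls the $L^\infty$ norm, each $\rho_s$ is bounded by $C_0$, and hence the density $\sum_s \lambda_s \rho_s$ of $\mu=\sum_s \lambda_s \mu_s$ is also bounded by $C_0$ and supported in $[-C_0,C_0]$. Thus $\mu(\vx)$ is $C_0$-regular uniformly over $\cK$.

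Now given $\eps>0$, apply Lemma~\ref{lem:log-integral-convergence} with constant $C_0$ to obtain $\delta>0$ such that any two $C_0$-regular measures within $\bbW_1$-distance $\delta$ have log-potentials differing by at most $\eps$. Since $\cK$ is compact and the map $(\xi'',\vlambda,\vx)\mapsto \mu(\vx)$ is continuous into $(\cP(\bbR),\bbW_1)$ by Proposition~\ref{prop:VDE-continuity}, it is uniformly continuous on $\cK$. Hence there exists $\eta>0$ such that any two parameter tuples in $\cK$ within distance $\eta$ yield measures within $\bbW_1$-distance $\delta$, and therefore values of $\Psi$ differing by at most $\eps$. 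This gives uniform continuity of $\Psi$ on $\cK$.

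The only real obstacle is the uniform $C$-regularity bound, which requires the $L^\infty$ control on the spectral densities built into Proposition~\ref{prop:MDE-basic}\ref{it:Dyson-continuous-density}; the rest is a straightforward composition of continuity statements. One mild subtlety is that Lemma~\ref{lem:Dyson-weakly-continuous} as written gives continuity only in $(\xi'',\vx)$ for fixed $\vlambda$, which is why we instead invoke Proposition~\ref{prop:VDE-continuity} to handle joint continuity in $(\xi'',\vlambda,\vx)$.
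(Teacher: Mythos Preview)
Your proof is correct and follows essentially the same route as the paper's one-line argument: establish uniform $C$-regularity of $\mu(\vx)$ via Proposition~\ref{prop:MDE-basic}\ref{it:Dyson-compact-supports}\ref{it:Dyson-continuous-density}, obtain $\bbW_1$-continuity in all parameters via Proposition~\ref{prop:VDE-continuity}, and conclude using Lemma~\ref{lem:log-integral-convergence}. The only difference is that the paper also cites Lemma~\ref{lem:Dyson-weakly-continuous}, which you correctly observe is not needed here since Proposition~\ref{prop:VDE-continuity} already handles joint continuity in $(\xi'',\vlambda,\vx)$.
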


\begin{proof}
    This is immediate from Propositions~\ref{prop:MDE-basic}\ref{it:Dyson-continuous-density} and \ref{prop:VDE-continuity}, and Lemmas~\ref{lem:Dyson-weakly-continuous}, \ref{lem:log-integral-convergence}.
\end{proof}

\paragraph*{Organization}

The remainder of the paper is structured as follows.
In Section~\ref{sec:annealed-kac-rice} we determine the annealed complexity of critical points (Theorem~\ref{prop:annealed-kac-rice}).
In Section~\ref{sec:variational} we solve the resulting variational problem, identifying the $2^r$ potential \emph{types} of critical points for super-solvable $\xi$ and showing that no others occur (Proposition~\ref{prop:variational-maximum}).
In Section~\ref{sec:approximate} we connect Kac--Rice estimates to non-existence of approximate critical points (Theorem~\ref{thm:approx-crits-from-annealed}).
In Section~\ref{sec:2^r} we complete the proof of strong topological trivialization (Theorem~\ref{thm:precise-landscape}) through the shrinking bands recursion explained in the introduction.
In Section~\ref{sec:approx-local-max-E-infty} we present further implications of Theorem~\ref{thm:approx-crits-from-annealed} to approximate local maxima and marginal states in the single-species case (e.g. Corollary~\ref{cor:zero-eigenvalue-ground-states}).
Finally in Appendix~\ref{app:dyson} we study solutions to the vector Dyson equation, obtaining joint $1/3$-H{\"o}lder continuity (Theorem~\ref{thm:continuity}), a detailed characterization of the boundary behavior (e.g. Theorem~\ref{thm:feasible-region} and Proposition~\ref{prop:edge-vs-cusp}), and an explicit formula for the main determinant term appearing in the Kac--Rice computation (Theorem~\ref{thm:logdet}).

\section{Expected Critical Point Counts}
\label{sec:annealed-kac-rice}

In this section we determine the annealed critical point statistics of $H_N$ to leading exponential order.

\subsection{Formula for the Complexity Functional}

It will be crucial that only an exponentially small fraction of critical points in the annealed sense are atypical in the sense below, which closely resembles the definition of $\Crt_N^{\good,\eps}$.

\begin{definition}
\label{def:typical}
We say $\bx\in\cS_N$ is respectively $\eps$-energy-typical, $\eps$-overlap-typical, and $\eps$-bulk-typical if with $\vx=\nabla_{\rd}H_N(\bx)$ it satisfies the three conditions (recall \eqref{eq:Lambda-and-A}):
\begin{align*}
    \lt|
        \fr1N H_N(\bx) -
        (\xi')^\top A^{-1}\Lambda^{1/2} \vx
    \rt|
    &\leq \eps,
    \\
    \norm{
        \vR(\bG^{(1)},\bx) -
        \Lambda^{-1/2} \diag(\Gamma^{(1)}) A^{-1} \Lambda^{1/2} \vx
    }_\infty
    &\le \eps\,,
    \\
    \bbW_2\lt(\wh\mu_{H_N}(\bx),\mu(\vx)\rt)
    &\leq \eps
    \quad\text{and}\quad
    d_{\cH}\lt(\spec_{H_N}(\bx),\supp(\mu(\vx))\rt)\leq \eps
    .
\end{align*}
If these conditions are not satisfied, $\bx$ is respectively $\eps$-energy-atypical, $\eps$-overlap-atypical, and $\eps$-bulk-atypical.
We say $\bx$ is $\eps$-typical if all three typicality conditions hold, and $\eps$-atypical otherwise.
\end{definition}

Given a set $\cD\subseteq\bbR^r \times \bbR$, let $\Crt_N(\cD)$ denote the set of critical points $\bsig$ for $H_N$ with $(\nabla_{\rd}H_N(\bsig), H_N(\bsig)/N) \in \cD$.
Also, for $\cD \subseteq \bbR^r$, let $\Crt_N^{(\eps)}(\cD)$ the set of critical points with $\nabla_{\rd}H_N(\bsig) \in \cD$ which are $\eps$-atypical.
Recalling \eqref{eq:Psi-def}, define the complexity functionals $F:\bbR^r\to\bbR$ and $F:\bbR^r \times \bbR \to \bbR$ by
\begin{align}
    \label{eq:def-F}
    F(\vx) &=
    \frac{1}{2}
    \Big(
    1
    -
    \sum_{s\in \sS} \lambda_s \log \xi^s(\vone)
    - \tnorm{A^{-1/2} \Lambda^{1/2} \vx}_2^2
    \Big)
    + \Psi(\vx)\,, \\
    \label{eq:def-F-extended}
    F(\vx,E) &= F(\vx) - \fr{(E - (\xi')^\top A^{-1}\Lambda^{1/2} \vx)^2}{2(\xi(\vone) - (\xi')^\top A^{-1} \xi')}\,.
\end{align}
The following main result of this section characterizes the annealed critical point complexity of $H_N$ in terms of these functionals.

\begin{proposition}
\label{prop:annealed-kac-rice}
    Fix $\vlambda$ and non-degenerate $\xi$.
    Let $\cD\subseteq\bbR^r \times \bbR$ be the closure of its non-empty interior. Then,
    \begin{equation}
    \label{eq:annealed-kac-rice}
    \lim_{N\to\infty}
    \frac{1}{N}\log \bbE |\Crt_N(\cD)|
    =
    \sup_{(\vx,E)\in\cD}
    F(\vx,E).
    \end{equation}
    Moreover, for $\cD \subseteq \bbR^r$ equal to the closure of its non-empty interior and $\eps>0$, there exists $c=c(\xi,\eps) > 0$ such that
    \begin{equation}
    \label{eq:annealed-kac-rice-atypical}
    \limsup_{N\to\infty}
    \frac{1}{N}\log \bbE |\Crt_N^{(\eps)}(\cD)|
    \le
    \sup_{\vx\in\cD}
    F(\vx) - c.
    \end{equation}
\end{proposition}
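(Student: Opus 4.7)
The plan is a direct application of the Kac--Rice formula, combined with the determinant asymptotics of \cite{arous2021exponential} and the conditional Gaussian descriptions from Lemma~\ref{lem:derivative-laws}. Starting from
\[
\bbE|\Crt_N(\cD)| = \int_{\cS_N} \bbE\lt[\lt|\det \nabla_\sph^2 H_N(\bsig)\rt| \cdot \ind\{(\nabla_\rd H_N(\bsig),H_N(\bsig)/N)\in \cD\} \,\Big|\, \nabla_\sph H_N(\bsig) = 0\rt] p_{\nabla_\sph H_N(\bsig)}(0) \, \cH^{N-r}(\de \bsig),
\]
I would use species-rotational invariance to fix $\bsig$ to the reference ``north pole" $(\sqrt{\lambda_s N}\, e_{m_s})_{s\in\sS}$, and multiply by $\Vol(\cS_N)$. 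The tangential density factor is evaluated using Lemma~\ref{lem:derivative-laws}\ref{it:tangential-derivative-law}: since $\partial_i H_N(\bsig)\sim\cN(0,\xi^{s(i)}(\vone))$ independently for $i\in\cT$, combining with Fact~\ref{fac:volume} yields the constant contribution $\tfrac12(1-\sum_s \lambda_s \log \xi^s(\vone))$ in \eqref{eq:def-F}. The joint Gaussian density of $(\nabla_\rd H_N(\bsig), H_N(\bsig)/N)$ from \eqref{eq:radial-derivative-law}--\eqref{eq:conditional-energy-variance} yields the quadratic terms $-\tfrac12\|A^{-1/2}\Lambda^{1/2}\vx\|_2^2$ and the energy Gaussian correction in \eqref{eq:def-F-extended}.

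Next I would compute the conditional Hessian determinant. By Lemma~\ref{lem:derivative-laws}, conditionally on $\nabla_\rd H_N(\bsig)=\vx$, the Riemannian Hessian equals $\bW - \diag(\Lambda^{-1/2}\vx\diamond\bone_\cT)$, where $\bW$ is independent of $(\nabla_\cT,\nabla_\rd)$ and is a Gaussian block matrix with variances \eqref{eq:tangential-hessian-law}. By Corollary 1.9.A of \cite{arous2021exponential}, the limiting spectral measure of this block model is precisely $\mu(\vx)$ from \eqref{eq:mu-def}, and
\[
\tfrac{1}{N}\log\bbE\lt[\lt|\det(\bW - \diag(\Lambda^{-1/2}\vx\diamond\bone_\cT))\rt|\rt] = \Psi(\vx) + o(1),
\]
with $o(1)$ uniform on compact sets of $\vx$ thanks to the joint continuity in Propositions~\ref{prop:MDE-basic} and \ref{prop:VDE-continuity} (and Proposition~\ref{prop:Psi-continuous}). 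Assembling all factors produces an integrand of the form $\exp(NF(\vx,E)+o(N))$ for \eqref{eq:annealed-kac-rice}, or $\exp(NF(\vx)+o(N))$ after integrating out $E$ when $\cD\subseteq\bbR^r$. Laplace's method (using that $\cD$ is the closure of its nonempty interior, so $F$ attains its supremum on a set of positive Lebesgue measure) yields the variational formula; continuity of $F$ gives the matching upper and lower bounds.

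For the atypicality estimate \eqref{eq:annealed-kac-rice-atypical}, I would write $\Crt_N^{(\eps)}(\cD)$ as the union of the energy-atypical, overlap-atypical, and bulk-atypical contributions and bound each separately. The energy-atypical contribution is obtained by restricting the Kac--Rice integral to $|E-(\xi')^\top A^{-1}\Lambda^{1/2}\vx|>\eps$; by \eqref{eq:conditional-energy-variance} this costs a Gaussian factor $e^{-cN\eps^2}$ uniformly in $\vx$ on compact sets. The overlap-atypical contribution is treated analogously by inserting the conditional Gaussian deviation probability for $\vR(\bG^{(1)},\bsig)$ around the mean \eqref{eq:conditional-1spin-mean}, which is again $e^{-cN\eps^2}$. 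The bulk-atypical contribution is controlled by Proposition~\ref{prop:MDE-basic}\ref{it:Dyson-solves-RMT}, which bounds by $e^{-cN}$ the conditional probability (given $\nabla_\rd=\vx$) that either $\bbW_2(\wh\mu_{H_N}(\bsig),\mu(\vx))$ or $d_\cH(\spec_{H_N}(\bsig),\supp(\mu(\vx)))$ exceeds $\eps$; to integrate this against the Hessian determinant I would apply Cauchy--Schwarz combined with the trivial uniform bound $|\det\nabla_\sph^2 H_N(\bsig)|\leq e^{CN}$ guaranteed by Proposition~\ref{prop:gradients-bounded}. The main technical obstacle is ensuring that all $o(N)$ and $e^{o(N)}$ errors from \cite{arous2021exponential} are uniform in $\vx$ on compact subsets of $\cD$, which is precisely what the joint continuity of the vector Dyson equation solution (Proposition~\ref{prop:VDE-continuity} and Theorem~\ref{thm:continuity}) is designed to deliver.
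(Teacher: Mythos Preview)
Your strategy for \eqref{eq:annealed-kac-rice} and for the energy- and overlap-atypical parts of \eqref{eq:annealed-kac-rice-atypical} matches the paper. There is, however, a genuine gap in your treatment of bulk-atypicality.

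After Cauchy--Schwarz you propose to control the determinant factor via the ``trivial uniform bound'' $|\det\nabla_\sph^2 H_N(\bsig)|\le e^{CN}$ from Proposition~\ref{prop:gradients-bounded}. Aside from the fact that this bound only holds on the event $K_N$ (not deterministically, so it does not directly bound the conditional second moment), the more serious issue is that it is too crude to yield \eqref{eq:annealed-kac-rice-atypical}. Any bound of the form $\bbE[|\det M_N(\vx)|^2]^{1/2}\le e^{CN}$ with a fixed constant $C$ produces, after reassembly,
\[
\tfrac{1}{N}\log\bbE|\Crt_N^{(\eps,{\rm b})}(\cD)|\;\le\;\tfrac12\Big(1-\sum_s\lambda_s\log\xi^s(\vone)\Big)+\sup_{\vx\in\cD}\Big(-\tfrac12\|A^{-1/2}\Lambda^{1/2}\vx\|_2^2\Big)+C-\tfrac{c}{2}+o(1),
\]
which is \emph{not} of the form $\sup_{\vx\in\cD}F(\vx)-c'$: the constant $C$ (essentially $\log$ of the operator norm) replaces $\Psi(\vx)=\int\log|\gamma|\,\mu(\vx)(\de\gamma)$, and the gap $C-\Psi(\vx)$ can be arbitrarily large relative to the atypicality constant $c=c(\eps)$, which is small for small $\eps$. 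The paper closes this by invoking \cite[Theorem~A.2]{arous2021exponential}, the nontrivial statement that
\[
\bbE\big[|\det M_N(\vx)|^2\big]^{1/2}=e^{o(N)}\,\bbE\big[|\det M_N(\vx)|\big]=e^{o(N)}\exp(N\Psi_N(\vx)),
\]
which recovers $\Psi(\vx)$ and hence $F(\vx)$ in the upper bound, so the $e^{-cN/2}$ from bulk-atypicality indeed yields \eqref{eq:annealed-kac-rice-atypical}.

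A smaller omission: your Laplace step implicitly treats $\cD$ as compact. The paper handles unbounded $\cD$ by exhaustion, using the non-asymptotic estimate $\bbE|\det M_N(\vx)|\le (C\max(\|\vx\|_\infty,1))^N$ (Proposition~\ref{prop:kac-rice-conditions}\ref{it:kac-rice-condition-5}) together with the quadratic decay of $F$ to show the tails $\|\vx\|_\infty>R$ or $|E|>R$ contribute negligibly as $R\to\infty$.
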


\subsection{Proof of Proposition~\ref{prop:annealed-kac-rice}}

Below, we focus on proving \eqref{eq:annealed-kac-rice} and then explain the necessary changes to reach \eqref{eq:annealed-kac-rice-atypical}.
For fixed $\vx\in\bbR^r$, let $M_N=M_N(\vx)\in\bbR^{\cT \times \cT}$ be a Gaussian matrix with distribution
\[
    M_N \sim \cL(\nabla_{\sph}^2 H_N(\bsig)\,|\,\nabla_{\rd} H_N(\bsig)=\vx).
\]
(Here $\cL(\cdot|\cdot)$ denotes a conditional law; since $\nabla_{\rd}H_N(\bsig)$ is a linear function of $H_N$, there is no difficulty in defining regular conditional laws.)
This can be written explicitly as follows.
Let $\bW$ be the random matrix with law given by \eqref{eq:tangential-hessian-law}.
By Fact~\ref{fac:riemannian-to-euclidean},
\begin{equation}
    \label{eq:def-M_N}
    M_N \stackrel{d}{=} \bW - \diag(\Lambda^{-1/2} \vx \diamond \bone_{\cT})\,.
\end{equation}
We let $\wh\mu_{M_N} = \wh\mu(M_N)$ denote the (random) spectral measure of this matrix.
We further define the finite-$N$ vector Dyson equation
\begin{equation}
\label{eq:finite-N-Dyson}
    1 + \lt(
        z + \fr{x_s}{\sqrt{\lambda_s}} +
        \sum_{s' \in \sS} \fr{\lambda_{N,s}^{\circ} \xi''_{s,s'}}{\lambda_s^2} m_{N,s'}(z)
    \rt) m_{N,s}(z) = 0\,, \qquad z \in \bbH\,.
\end{equation}
with unique solution $\vm_N(z) = (m_{N,s}(z))_{s\in \sS}$.
(This is the matrix Dyson equation from e.g. \cite[Section 1.10]{arous2021exponential} with $M(z) = \diag(\vm_N(z) \diamond \bone_\cT)$.)
We let $\mu_{M_N,s}$ be the measure with Stieltjes transform $m_{N,s}$ and
\[
    \mu_{M_N} = \sum_{s\in \sS} \lambda_{N,s}^{\circ} \mu_{M_N,s}.
\]
This $\vm_N(z)$ and $\mu_{M_N}$ exist and are unique by \cite[Proposition 5.1~(i),(ii)]{erdos2019random}.
In the next proposition, we give the required conditions to apply \cite[Theorem 4.1]{arous2021exponential}.
% \mscomment{Define the $N$-dependent MDE for $\mu_{M_N}$ somewhere. Don't worry about the extra $1/N$ terms for elevated diagonal variance, that is handled within the proof.}

% Let $\bW \in \bbR^{N\times N}$ be a symmetric random matrix with independent centered Gaussian entries except for symmetry, where
% \begin{equation}
%     \label{eq:def-bW}
%     \E[W_{i,j}^2]
%     = \fr{(1 + \delta_{i,j}) \xi''_{s(i),s(j)}}{N\lambda_{s(i)}\lambda_{s(j)}}\,.
% \end{equation}
% For $\vz \in \bbR^r$, let $D(\vz)$ be a diagonal matrix with $D(\vz)_{i,i} = z_{s(i)}$. Then it is not hard to verify that
% \[
%     M_N(\vx)\stackrel{d}{=} \bW+D(\Lambda^{-1/2} \vx).
% \]

\begin{proposition}
    \label{prop:kac-rice-conditions}
    Given $\vlambda,\delta$, a bounded family of $\vx\in\bbR^r$ and a uniformly non-degenerate family of $\xi$, the following hold uniformly over the families for some $C,c>0$:
    \begin{enumerate}[label=(\alph*)]
        \item
        \label{it:kac-rice-condition-0}
        $\bbP[\supp(\wh\mu_{M_N})\subseteq [-C,C]]\geq 1-e^{-cN}$.
        \item
        \label{it:kac-rice-condition-1}
        $\bbW_1\big(\bbE[\wh\mu_{M_N}],\mu_{M_N}\big)\leq N^{-c}$.
        \item
        \label{it:kac-rice-condition-2}
        $\bbP\lt[\bbW_1\big(\wh\mu_{M_N},\bbE\wh\mu_{M_N}\big)\geq \delta\rt]\leq e^{-cN}$.
        \item
        \label{it:kac-rice-condition-3}
        $\lim_{N\to\infty} \bbP[\wh{\mu}_{M_N}([-N^{-5},N^{-5}])>0]=0$.
        \item
        \label{it:kac-rice-condition-4}
        There exists an entrywise continuous-in-$\vx$ coupling of the matrices $M_N(\vx)$.
        \item
        \label{it:kac-rice-condition-5}
        For all $\vx\in\bbR^r$,
        \[
        \bbE\lt[\lt|\det(M_N(\vx))\rt|\rt]\leq C^N(\|\vx\|_{\infty}+1)^N.
        \]
    \end{enumerate}
\end{proposition}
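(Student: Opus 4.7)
The plan is to verify each condition in turn, noting that parts \ref{it:kac-rice-condition-0}, \ref{it:kac-rice-condition-2}, \ref{it:kac-rice-condition-4}, and \ref{it:kac-rice-condition-5} are soft consequences of Gaussian concentration and the explicit form \eqref{eq:def-M_N}, while parts \ref{it:kac-rice-condition-1} and \ref{it:kac-rice-condition-3} require invoking random matrix inputs. Throughout we use the coupling where $M_N(\vx)=\bW-\diag(\Lambda^{-1/2}\vx\diamond \bone_\cT)$ for the same Gaussian $\bW$ across all $\vx$; this immediately gives \ref{it:kac-rice-condition-4}. For \ref{it:kac-rice-condition-0}, since $\vx$ ranges over a bounded set, the shift $\diag(\Lambda^{-1/2}\vx\diamond\bone_\cT)$ has operator norm at most $C$, and uniform non-degeneracy together with \eqref{eq:tangential-hessian-law} ensures $\bW$ has entries with variance $O(1/N)$ bounded uniformly in the family, so standard Gaussian matrix concentration (e.g. Gordon/Slepian) gives $\|\bW\|_\op\leq C$ with probability $1-e^{-cN}$. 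For \ref{it:kac-rice-condition-5}, bound $|\det(M_N)|\leq \|M_N\|_\op^{|\cT|}$, use the previous step on the event $\{\|\bW\|_\op\leq C\}$, and use a crude Gaussian tail bound on the complement to absorb the negligible contribution.

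For \ref{it:kac-rice-condition-2}, the Hoffman--Wielandt inequality implies that $\bbW_2(\wh\mu_{M_N},\cdot)$ and hence $\bbW_1(\wh\mu_{M_N},\cdot)$ is a $1/\sqrt{|\cT|}$-Lipschitz function of the matrix entries. Since the entries of $\bW$ are independent centered Gaussians with variance $O(1/N)$, Gaussian concentration for Lipschitz functions gives $\bbP[|\bbW_1(\wh\mu_{M_N},\bbE\wh\mu_{M_N})|\geq \delta]\leq e^{-c\delta^2 N}$. For \ref{it:kac-rice-condition-1}, we invoke the global law for Wigner-type matrices with variance profile $\xi''_{s,s'}/(\lambda_s\lambda_{s'})$: by \cite[Corollary 1.10]{ajanki2017universality} or the local laws in \cite{alt2019location} applied to the block-constant variance pattern (using the equivalence between the $(N-r)$-dimensional and $r$-dimensional Dyson equations noted in the proof of Proposition~\ref{prop:MDE-basic}), the Stieltjes transform of $\bbE\wh\mu_{M_N}$ converges to $m_N(z)$ at a polynomial rate, and integrating gives $\bbW_1(\bbE\wh\mu_{M_N},\mu_{M_N})\leq N^{-c}$. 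Uniformity over the family follows because all constants in these random matrix inputs depend only on boundedness and ellipticity of the variance profile, which the uniform non-degeneracy assumption supplies.

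The main obstacle is \ref{it:kac-rice-condition-3}: ruling out eigenvalues in the microscopic window $[-N^{-5},N^{-5}]$. The limiting measure $\mu_{M_N}$ may well charge a neighborhood of the origin, so we cannot merely transfer a deterministic edge statement. Instead I would reduce to a small-ball estimate for the smallest singular value of $M_N$. Write $M_N=\bW-D$ with $D=\diag(\Lambda^{-1/2}\vx\diamond\bone_\cT)$; since $\bW$ has entries with variance $\geq c/N$ (by uniform non-degeneracy guaranteeing $\xi''\geq \eps$ entrywise), conditioning on all rows but the $i$th and applying standard one-dimensional Gaussian anti-concentration to the Gaussian vector forming row $i$ against the unit normal to the span of the remaining rows yields
\[
\bbP\big[\sigma_{\min}(M_N)\leq t\big]\leq C\sqrt{N}\cdot t + e^{-cN}
\]
by a standard Rudelson--Vershynin-type argument for Gaussian matrices (the $e^{-cN}$ term handles the event that the complementary rows have poorly conditioned Gram matrix, which is controlled by \ref{it:kac-rice-condition-0}). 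Taking $t=N^{-5}$ gives $o(1)$ as desired. An alternative, avoiding the geometric argument, is to apply \cite[Theorem 4.1]{arous2021exponential}'s treatment of the near-zero region directly by noting that eigenvalue rigidity from \cite{alt2019location} together with Gaussian concentration would imply the smallest singular value exceeds a power of $N$; either route works, but the direct small-ball reduction is the most economical.
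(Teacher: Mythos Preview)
Your treatment of parts \ref{it:kac-rice-condition-0}, \ref{it:kac-rice-condition-1}, \ref{it:kac-rice-condition-2}, \ref{it:kac-rice-condition-4} is essentially in line with the paper's own argument, modulo harmless differences in packaging (the paper cites Proposition~\ref{prop:MDE-basic} for \ref{it:kac-rice-condition-0}, uses a test-function plus union-bound version of Gaussian concentration for \ref{it:kac-rice-condition-2}, and invokes \cite{arous2021exponential} for \ref{it:kac-rice-condition-1}). For \ref{it:kac-rice-condition-5} the paper instead uses the deterministic bound $|\det M_N|\le (\|M_N\|_F^2/(N-r))^{(N-r)/2}$ from AM--GM, which avoids any tail-splitting; your operator-norm route also works but is slightly less clean.

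The genuine gap is in \ref{it:kac-rice-condition-3}. The ``condition on all rows but the $i$th and project onto the normal'' argument you describe is the standard Rudelson--Vershynin scheme for matrices with \emph{independent} rows. Here $M_N$ is symmetric, so row $i$ and column $i$ share all off-diagonal entries: conditioning on all rows $j\neq i$ leaves only the single diagonal entry $W_{ii}$ unfixed, and your Gaussian anti-concentration step as written does not go through. There are small-ball results for symmetric random matrices, but they require a genuinely different decoupling argument that you have not supplied. Your fallback to eigenvalue rigidity is also insufficient: if $0$ lies in the bulk of $\mu_{M_N}$ the classical location nearest $0$ is only $O(N^{-1})$ away, and bulk rigidity at the available scale does not exclude an eigenvalue in $[-N^{-5},N^{-5}]$.

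The paper's fix is a one-line trick worth knowing: since $\xi$ is non-degenerate the diagonal variances of $\bW$ are all at least $c/N$, so one can write $M_N\stackrel{d}{=}M_N'+N^{-2}gI_{\cT}$ with a scalar Gaussian $g\sim\cN(0,1)$ independent of $M_N'$. Conditional on $M_N'$, each eigenvalue of $M_N$ is $\lambda_i(M_N')+N^{-2}g$, so it lands in $[-N^{-5},N^{-5}]$ with probability $O(N^{-3})$; union bounding over $N-r$ eigenvalues gives $O(N^{-2})\to 0$. This exploits exactly the anti-concentration you wanted, but applied to a single scalar shift rather than a row vector, sidestepping the symmetry obstruction entirely.
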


\begin{proof}
    Point~\ref{it:kac-rice-condition-0} follows by Proposition~\ref{prop:MDE-basic}\ref{it:Dyson-compact-supports}\ref{it:Dyson-solves-RMT}.
    Point~\ref{it:kac-rice-condition-1} is a standard result on stability of the vector Dyson equation, see \cite[Proof of Corollary 1.9.B]{arous2021exponential}, except that \ref{it:kac-rice-condition-1} is usually shown for a variant of $\mu_{M_N}$ solving a Dyson equation with additional $O(1/N)$ terms on the diagonal entries $\xi''_{i,i}$.
    This discrepancy causes negligible error $N^{-c}$ as shown in \cite[Proposition 3.1]{arous2021exponential} and \cite[Lemma 3.1]{arous2021landscape}, so the claim does follow (see also \cite[Section 3.1]{mckenna2021complexity} for further discussion of this purely technical issue).
    By \cite[Lemma 1.2(b)]{guionnet2000concentration}, for any $1$-Lipschitz test function $f$ the map
    \[
        (W_{i,j})_{1\le i<j\le k} \mapsto \int f(\lambda)\wh\mu_{M_N}(\de \lambda)
    \]
    is $O(1)$-Lipschitz.
    In particular, writing $W_{i,j} = \sqrt{(1+\delta_{i,j})\xi''_{s(i),s(j)} / \lambda_{s(i)} \lambda_{s(j)}} g_{i,j}$ for i.i.d. gaussians $g_{i,j}$, and applying gaussian concentration of measure,
    \[
        \bbP\lt[
            \lt|\int f(\lambda)\wh\mu_{M_N}(\de \lambda) - \int f(\lambda)\E \wh\mu_{M_N}(\de \lambda)\rt| \ge \delta'
        \rt] \le e^{-c(\delta')N}.
    \]
    for any $\delta' > 0$.
    Point \ref{it:kac-rice-condition-2} then follows by union bounding over $O(1)$ test functions $f$.
    Point \ref{it:kac-rice-condition-3} follows by averaging over a small global shift of $M_N$ by the identity matrix.
    Indeed since $\xi$ is non-degenerate, we can express the law of $M_N$ as the sum of two independent matrices, one of which is $N^{-2}gI_{\cT}$ for a scalar Gaussian $g\sim\cN(0,1)$.
    Then point \ref{it:kac-rice-condition-3} holds even after conditioning on the other summand, since each of the $N$ eigenvalues has conditional probability $O(N^{-2})$ to lie in $[-N^{-5},N^{-5}]$.
    Point \ref{it:kac-rice-condition-4} is clear.
    Finally point \ref{it:kac-rice-condition-5} easily follows from the deterministic inequality
    \[
    |\det(M_N)|\leq \lt(\frac{\|M_N\|_F^2}{N-r}\rt)^{(N-r)/2}
    \]
    which is a consequence of the arithmetic mean-geometric mean inequality.
\end{proof}

\begin{proof}[Proof of Proposition~\ref{prop:annealed-kac-rice}]
    Define
    \begin{align*}
        \Psi_N(\vx)
        &=
        \fr{1}{N} \log \E |\det(M_N(\vx))|,
        \\
        F_N(\vx)
        &=
        \fr12
        \Big(
        1-\sum_{s\in \sS} \lambda_s \xi^s(\vone)
        - \tnorm{A^{-1/2} \Lambda^{1/2} \vx}_2^2
        \Big)
        + \Psi_N(\vx)\,.
    \end{align*}
    Let $\varphi_{X}$ be the density of random variable $X$ w.r.t. Lebesgue measure.
    By the Kac--Rice formula (see e.g. \cite[Chapter 11]{adler2007random}),
    \begin{align*}
        \E|\Crt_N(\cD)|
        &=
        \int_{\cS_N}
        \int_{\bbR^r}
        \bigg(
        \bbE\Big[
            |\det \nabla^2_{\sph} H_N(\bsig)|
            \ind\{(\vx, H_N(\bsig)/N) \in \cD\} \\
            &\qquad\qquad ~\Big|~ \nabla_{\sph} H_N(\bsig) = \bzero,\nabla_{\rd}H_N(\bsig) = \vx
        \Big] \\
        &\qquad \times \varphi_{\nabla_{\sph} H_N(\bsig)}(\bzero)
        \varphi_{\nabla_{\rd}H_N(\bsig)}(\vx)
        \bigg)
        ~\de \vx
        ~\de \cH^{N-r}(\bsig)\,,
    \end{align*}
    where $\cH^{N-r}$ denotes the $(N-r)$-dimensional Hausdorff measure on $\cS_N$.
    By spherical invariance, the integrand does not depend on $\bsig$, so the integral over $\cH^{N-r}$ simply contributes a volume factor given by Fact~\ref{fac:volume}.
    By Fact~\ref{fac:riemannian-to-euclidean} and Lemma~\ref{lem:derivative-laws},
    %$\nabla_{\sph} H_N(\bsig)$ and $\nabla^2_{\sph} H_N(\bsig)$ are independent.
    \begin{align}
        \notag
        &\bbE\Big[
            |\det \nabla^2_{\sph} H_N(\bsig)|
            \ind\{(\vx, H_N(\bsig)/N) \in \cD\}
            ~\Big|~ \nabla_{\sph} H_N(\bsig) = \bzero,\nabla_{\rd}H_N(\bsig) = \vx
        \Big] \\
        \label{eq:conditional-independence}
        &= \bbE\lt[|\det M_N(\vx)|\rt]
        \bbP\lt[
            (\vx, H_N(\bsig)/N) \in \cD
            ~\Big|~ \nabla_{\rd}H_N(\bsig) = \vx
        \rt] \\
        \notag
        &= e^{o(N)} \bbE\lt[|\det M_N(\vx)|\rt] \int_{\bbR}
        \ind\{(\vx,E) \in \cD\} \\
        \notag
        &\qquad \qquad \times
        \exp\lt(
            -\fr{N (E - (\xi')^\top A^{-1} \Lambda^{1/2} \vx)^2}{2(\xi(\vone) - (\xi')^\top A^{-1} \xi')}
        \rt) ~\de E.
    \end{align}
    We further have
    \begin{align}
        \nonumber
        \varphi_{\nabla_{\sph} H_N(\bsig)}(\bzero)
        &= \prod_{s\in \sS}
        (2\pi \xi^s(\vone))^{-(|\cI_s|-1)/2} \\
        \label{eq:density-exp-rate}
        \implies \fr{1}{N} \log \varphi_{\nabla_{\sph} H_N(\bsig)}(\bzero)
        &= -\frac{\log(2\pi) +\sum_{s\in \sS} \lambda_s \log \xi^s(\vone)}{2} + o_N(1)\,.
    \end{align}
    and
    \[
        \varphi_{\nabla_{\rd}H_N(\bsig)}(\vx)
        =
        (2\pi)^{-r/2} \sqrt{\fr{\det \Lambda}{\det A}}
        \exp\lt(-\fr N2 \tnorm{A^{-1/2} \Lambda^{1/2}\vx}_2^2 \rt).
    \]
    Thus, up to additive $o_N(1)$ error, using Fact~\ref{fac:volume} in the second step gives
    \begin{align}
        \notag
        \frac{1}{N}\log \E|\Crt_N(\cD)|
        &\approx
        \fr{1}{N} \log \cH^{N-r}(\cS_N)
        + \fr{1}{N} \log \varphi_{\nabla_{\sph} H_N(\bsig)}(\bzero)
        + \frac{1}{N}
        \log
        \int_{\cD}
        \bbE|\det M_N(\vx)| \\
        \notag
        &\quad \times
        \exp\lt(
            -\fr N2 \tnorm{A^{-1/2} \Lambda^{1/2} \vx}_2^2
            -\fr{N (E - (\xi')^\top A^{-1} \Lambda^{1/2} \vx)^2}{2(\xi(\vone) - (\xi')^\top A^{-1} \xi')}
        \rt)
        \de (\vx,E)
        \\
        \label{eq:need-laplace-method}
        &\approx
        \fr12 \Big(1-\sum_{s\in \sS}
        \lambda_s \log \xi^s(\vone)\Big)
        + \frac{1}{N}
        \log
        \int_{\cD}
        \exp(N\Psi_N(\vx))
        \\
        \label{eq:quadratic-term}
        &\quad
        \times
        \exp\lt(
            -\fr N2 \tnorm{A^{-1/2} \Lambda^{1/2} \vx}_2^2
            -\fr{N (E - (\xi')^\top A^{-1} \Lambda^{1/2} \vx)^2}{2(\xi(\vone) - (\xi')^\top A^{-1} \xi')}
        \rt)
        \de (\vx,E)
        .
    \end{align}
    We next analyze the behavior of $\Psi_N$ via \cite[Corollary 1.9.A]{arous2021exponential}, where the necessary conditions hold by Proposition~\ref{prop:kac-rice-conditions}.
    This result expresses the asymptotic value of $\Psi_N$ based on the solution to the finite-$N$ vector Dyson equation \eqref{eq:finite-N-Dyson}.
    In fact \eqref{eq:finite-N-Dyson} is exactly equivalent to the limiting Dyson equation \eqref{eq:dyson-equation} with $\vlambda$ replaced by $\vlambda_N^{\circ}$ from \eqref{eq:lambda-N-s} (see e.g. the discussion in \cite[Section 11.5]{ajanki2019quadratic}).
    Therefore \cite[Corollary 1.9.A]{arous2021exponential} shows that uniformly on compact sets, up to $o_N(1)$ error:
    \[
    \Psi_N(\vx)
    \approx
    \int \log|\gamma|[\mu_{\vlambda_N^{\circ}}(\vx)](\de\gamma).
    \]
    Recalling Propositions~\ref{prop:VDE-continuity} and \ref{prop:Psi-continuous} shows that, again uniformly on compact sets:
    \[
    \int \log|\gamma|[\mu_{\vlambda_N^{\circ}}(\vx)](\de\gamma)
    \approx
    \int \log|\gamma|[\mu(\vx)](\de\gamma)
    =
    \Psi(\vx)
    .
    \]
    We next deduce \eqref{eq:annealed-kac-rice} via Laplace's method similarly to \cite[Theorem 4.1]{arous2021exponential}. (The latter result is not directly applicable as it is stated with additional technical requirements, but the proof can be routinely adapted.)
    For compact $\cD$, local uniformity of the approximations just above allows replacement of $\Psi_N(\vx)$ by its limit $\Psi(\vx)$ in \eqref{eq:need-laplace-method} up to $o_N(1)$ error.
    Since $\Psi(\vx)$ is continuous by Proposition~\ref{prop:Psi-continuous}, Laplace's method then immediately gives \eqref{eq:annealed-kac-rice} for compact $\cD$.
    It remains to show that \eqref{eq:annealed-kac-rice} respects exhaustion by compact sets, both for finite $N$ and after passing to the limit.

    The needed statement at finite $N$ is that
    \begin{equation}
    \label{eq:finite-N-exp-tight}
    \lim_{R\to\infty}
    \limsup_{N\to\infty}
    \frac{1}{N}
    \log\bbE[|\Crt_N(\bbR^{r+1}\backslash\cD_R)|]=-\infty
    \end{equation}
    where $\cD_R=(-R,R)^{r+1}$.
    Similarly to \cite[Lemma 4.3]{arous2021exponential}, this follows from the non-asymptotic bound $\bbE[|\det(M_N(\vx))|]\leq (C\max(\|\vx\|_{\infty},1))^N$, since when either $\|\vx\|_{\infty}\geq R$ or $|E|\geq R$ the quadratic term \eqref{eq:quadratic-term} contributes an overwhelming $e^{-\Omega(NR^2)}$ factor.
    Said bound is shown exactly as in \cite[Lemma 3.7]{mckenna2021complexity}, by writing $M_N(\vx)=W_N+A_N(\vx)$ for deterministic $A_N$ and centered Gaussian $W_N$.
    Namely one can separate $M_N(\vx)$ with the deterministic estimate $|\det(M_N(\vx)|^N\leq 2^N(\|W_N\|_{\op}^N+\|A_N(\vx)\|_{\op}^N)$, and use the simple bound $\bbP[\|W_N\|_{\op}\geq t]\leq e^{-cN(t-C)_+}$ (which follows because $\|W_N\|_{\op}$ is typically $O(1)$ and is $O(1)$-Lipschitz in its independent Gaussian entries) to control the random part.

    We also need to show that $F(\vx,E)$ tends to $-\infty$ as $\max(\|\vx\|_{\infty},|E|)\to\infty$.
    This follows because $\Psi(\vx)\leq C(\max(\|\vx\|_{\infty},1))$ due to \eqref{eq:mu-vx-approx}, which is dominated by the quadratic terms of $F(\vx,E)$.
    Together with \eqref{eq:finite-N-exp-tight}, this allows us to deduce \eqref{eq:annealed-kac-rice} for general $\cD$ from the compact $\cD$ case via exhaustion.
    Namely one restricts to the compact set $\cD\cap [-R,R]^{r+1}$ and sends $R\to\infty$ after $N\to\infty$ (exactly as in e.g. \cite[Proof of Theorem 4.1]{arous2021exponential}).
    This completes the first part of the proof.

    Moving onto \eqref{eq:annealed-kac-rice-atypical}, we separately address the cases of energy, overlap, and bulk-atypicality.
    Energy-atypicality follows directly from \eqref{eq:annealed-kac-rice}, as the term involving $E$ in \eqref{eq:def-F-extended} is nonzero.
    For overlap-atypicality, let $E_{\eps}^{\text{o}}(\bsig)$ denote the event that $\bsig$ is $\eps$-overlap-atypical, and $\Crt_N^{(\eps,\text{o})}(\cD)$ be the set of critical points with $\nabla_{\rd} H_N(\bsig) \in \cD$ which are $\eps$-overlap-atypical.
    (Recall that for \eqref{eq:annealed-kac-rice-atypical}, $\cD$ is a subset of $\bbR^r$ rather than $\bbR^r \times \bbR$.)
    By the Kac--Rice formula,
    \begin{align*}
        \E|\Crt_N^{(\eps,\text{o})}(\cD)|
        &=
        \int_{\cS_N}
        \int_{\cD}
        \bigg(
        \bbE\Big[
            |\det \nabla^2_{\sph} H_N(\bsig)|
            \ind\{E_{\eps}^{\text{o}}(\bsig)\}
            ~\Big|~ \nabla_{\sph} H_N(\bsig) = \bzero,\nabla_{\rd}H_N(\bsig) = \vx
        \Big] \\
        &\qquad \qquad \qquad
        \varphi_{\nabla_{\sph} H_N(\bsig)}(\bzero)
        \varphi_{\nabla_{\rd}H_N(\bsig)}(\vx)
        \bigg)
        ~\de \vx
        ~\de \cH^{N-r}(\bsig).
    \end{align*}
    % By Lemma~\ref{lem:derivative-laws}, the conditional expectation evaluates similarly to \eqref{eq:conditional-independence} as
    % \[
    %     \bbE\lt[|\det M_N(\vx)|\rt]
    %     \bbP\lt[
    %         \norm{\vR(\bG^{(1)},\bsig) - \Lambda^{-1/2} \diag(\Gamma^{(1)}) A^{-1} \Lambda^{1/2} \vx}_\infty \ge \eps
    %         ~\Big|~
    %         \nabla_{\rd}H_N(\bsig) = \vx
    %     \rt].
    % \]
    By calculations similar to above, up to additive $o_N(1)$ error
    \begin{align*}
        &\frac{1}{N}\log \E|\Crt_N^{(\eps,\text{o})}(\cD)| \\
        &\approx
        \fr12 \lt(1-\sum_{s\in \sS} \lambda_s \log \xi^s(\vone)\rt)
        + \frac{1}{N}
        \log
        \int_{\cD}
        \exp\lt(
            N\Psi(\vx)
            -\fr N2 \tnorm{A^{-1/2} \Lambda^{1/2} \vx}_2^2
        \rt) \\
        &\qquad \qquad \qquad \qquad \qquad \qquad \qquad \qquad  \times
        \bbP\lt[
            E_{\eps}^{\text{o}}(\bsig)
            ~\Big|~
            \nabla_{\rd}H_N(\bsig) = \vx
        \rt]
        \de \vx.
    \end{align*}
    By \eqref{eq:1spin-law}, each entry of $\vR(\bG^{(1)},\bsig)$ has variance bounded above by $1/(N \min \vlambda)$.
    Since $\vR(\bG^{(1)},\bsig)$ and $\nabla_{\rd}H_N(\bsig)$ are jointly gaussian, this remains true after conditioning on $\nabla_{\rd}H_N(\bsig)$.
    In light of \eqref{eq:conditional-1spin-mean}, this implies
    \[
        \bbP\lt[
            E_{\eps}^{\text{o}}(\bsig)
            ~\Big|~
            \nabla_{\rd}H_N(\bsig) = \vx
        \rt] \le e^{-cN}.
    \]
    This implies \eqref{eq:annealed-kac-rice-atypical} for overlap-typicality.
    % Overlap-atypicality follows from Lemma~\ref{lem:derivative-laws} by the same calculation as above, where the conditional probability in \eqref{eq:conditional-independence} is replaced by
    % \[
    %     \bbP\lt[
    %         \norm{\vR(\bG^{(1)},\bsig) - \Lambda^{-1/2} \diag(\Gamma^{(1)}) A^{-1} \Lambda^{1/2} \vx}_\infty \ge \eps
    %         ~\Big|~
    %         \nabla_{\rd}H_N(\bsig) = \vx
    %     \rt]\,.
    % \]

    The main case that needs to be addressed is bulk-atypicality.
    Let $E_{\eps}^{\text{b}}(\bsig)$ be the event that $\bsig$ is $\eps$-bulk-atypical and $\Crt_N^{(\eps,\text{b})}(\cD)$ be the set of critical points with $\nabla_{\rd} H_N(\bsig) \in \cD$ which are $\eps$-bulk-atypical.
    Similarly to above,
    \begin{align*}
        &\E|\Crt_N^{(\eps,\text{b})}(\cD)| \\
        &=
        \int_{\cS_N}
        \int_{\cD}
        \bigg(
        \bbE\Big[
            |\det \nabla^2_{\sph} H_N(\bsig)|
            \ind\{E_{\eps}^{\text{b}}(\bsig)\}
            ~\Big|~ \nabla_{\sph} H_N(\bsig) = \bzero,\nabla_{\rd}H_N(\bsig) = \vx
        \Big] \\
        &\qquad \qquad \times
        \varphi_{\nabla_{\sph} H_N(\bsig)}(\bzero)
        \varphi_{\nabla_{\rd}H_N(\bsig)}(\vx)
        \bigg)
        ~\de \vx
        ~\de \cH^{N-r}(\bsig),
    \end{align*}
    and so up to $o_N(1)$ additive error
    \begin{align*}
        \frac{1}{N}\log \E|\Crt_N^{(\eps,\text{b})}(\cD)|
        &\approx
        \fr12 \lt(1-\sum_{s\in \sS} \lambda_s \log \xi^s(\vone)\rt)
        + \frac{1}{N}
        \log
        \int_{\cD}
        \exp\lt(
            -\fr N2 \tnorm{A^{-1/2} \Lambda^{1/2} \vx}_2^2
        \rt) \\
        &\qquad \times
        \E\lt[
            |\det \nabla^2_{\sph} H_N(\bsig)|
            \bone\{E_{\eps}^{\text{b}}(\bsig)\}
            ~\Big|~
            \nabla_{\rd}H_N(\bsig) = \vx
        \rt]
        \de \vx.
    \end{align*}
    Unlike above, $\bone\{E_{\eps}^{\text{b}}(\bsig)\}$ and the Hessian determinant are not independent.
    Instead, by Cauchy--Schwarz, the last expectation is bounded by
    \[
        \E\lt[
            |\det \nabla^2_{\sph} H_N(\bsig)|^2
            ~\Big|~
            \nabla_{\rd}H_N(\bsig) = \vx
        \rt]^{1/2}
        \bbP\lt[
            E_{\eps}^{\text{b}}(\bsig)
            ~\Big|~
            \nabla_{\rd}H_N(\bsig) = \vx
        \rt]^{1/2}.
    \]
    By \cite[Theorem A.2]{arous2021exponential},
    \begin{align*}
        &\E\lt[
            |\det \nabla^2_{\sph} H_N(\bsig)|^2
            ~\Big|~
            \nabla_{\rd}H_N(\bsig) = \vx
        \rt]^{1/2} \\
        &= e^{o(N)}
        \E\lt[
            |\det \nabla^2_{\sph} H_N(\bsig)|
            ~\Big|~
            \nabla_{\rd}H_N(\bsig) = \vx
        \rt]
        = e^{o(N)} \exp(N\Psi_N(\vx)),
    \end{align*}
    and by Proposition~\ref{prop:MDE-basic}\ref{it:Dyson-solves-RMT} and \ref{prop:VDE-continuity},
    \[
        \bbP\lt[
            E_{\eps}^{\text{b}}(\bsig)
            ~\Big|~
            \nabla_{\rd}H_N(\bsig) = \vx
        \rt]^{1/2}
        \le e^{-cN/2}.
    \]
    Combining and arguing as above completes the proof (with $c/2$ in place of $c$).
\end{proof}

\section{Solving the Variational Problem}
\label{sec:variational}

Due to Proposition~\ref{prop:annealed-kac-rice}, in order to establish Theorem~\ref{thm:annealed-crits} it remains to maximize $F(\vx)$ over $\bbR^r$.
In this section we prove parts \ref{itm:crt-tot} and \ref{itm:crt-bad} of this theorem, regarding super-solvable $\xi$.
The strictly sub-solvable case \ref{itm:crt-tot-sub-solvable} will be proved in Subsection~\ref{subsec:annealed-complexity-positive}.
\begin{proposition}
    \label{prop:variational-maximum}
    Assume $\xi$ is strictly super-solvable.
    Then $F(\vx) \le 0$ for all $\vx \in \bbR^r$, with equality at precisely the $2^r$ points $\vx(\vDelta)$ for $\vDelta \in \{-1,1\}^r$.
\end{proposition}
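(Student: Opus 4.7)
\medskip

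\noindent\textbf{Proof plan for Proposition~\ref{prop:variational-maximum}.}
The plan is to reduce the global optimization of $F$ to a finite enumeration of stationary points, evaluate $F$ exactly at the $2^r$ candidate maximizers $\vx(\vDelta)$, and rule out every other stationary point as a possible maximum. First I would show that a global maximum of $F$ exists and must be a stationary point: from \eqref{eq:mu-vx-approx} we have $\Psi(\vx) \le C(1+\log(1+\|\vx\|_\infty))$, while the quadratic term $-\tfrac12\|A^{-1/2}\Lambda^{1/2}\vx\|_2^2$ coerces to $-\infty$ as $\|\vx\|\to\infty$ (using $A\succ 0$, Corollary~\ref{cor:A-pd}). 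Combined with the continuity of $\Psi$ (Proposition~\ref{prop:Psi-continuous}), this gives the existence of $\argmax F$ and confines it to a compact set.

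Next I would compute $\nabla F$ and classify stationary points. Differentiating \eqref{eq:def-F} and using a resolvent identity for $\partial_{x_s}\Psi$ (which can be derived from the Dyson equation \eqref{eq:dyson-equation} by matching the $N\to\infty$ limit of $\partial_{x_s} N^{-1} \log \bbE|\det M_N(\vx)|$ in the Kac--Rice derivation), one obtains a system characterizing stationary points in terms of the real boundary value $\vm(0;\vx)$ of the Dyson solution. Invoking Lemma~\ref{lem:stationary-condition} and Lemma~\ref{lem:u-manifold} for the structure of this solution set, the stationary points are parametrized by sign patterns together with a choice ``super-solvable vs.\ subsolvable'' in each species, giving roughly $3^r$ candidates. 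Among these, exactly $2^r$ correspond precisely to $\vx(\vDelta)$.

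The central computation is then $F(\vx(\vDelta)) = 0$. At $\vx(\vDelta)$ the Dyson equation admits an explicit real solution: guided by \eqref{eq:ideal-stats}, I expect $m_s(0;\vx(\vDelta)) = -\Delta_s/\sqrt{\xi'_s}$ (verifiable by direct substitution into \eqref{eq:dyson-equation}), so that $\mu(\vx(\vDelta))$ is the spectral law of the shifted Gaussian block matrix corresponding to a genuine critical point. Using Lemma~\ref{lem:oF-oPsi-formulas} to obtain a closed-form for $\Psi(\vx(\vDelta))$ in terms of this Dyson solution, the three pieces $\tfrac12 - \tfrac12\sum_s\lambda_s\log\xi^s(\vone)$, $-\tfrac12\|A^{-1/2}\Lambda^{1/2}\vx(\vDelta)\|_2^2$, and $\Psi(\vx(\vDelta))$ telescope to $0$; this telescoping is an algebraic identity that should hold uniformly in $\vDelta$ by the $\Delta_s\mapsto-\Delta_s$ symmetry built into \eqref{eq:ideal-stats}.

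For the remaining stationary points, I would follow the approach previewed in the introduction: at each such $\vx^\ast$, exhibit an explicit direction $\ve\in\bbR^r$ along which $\ve^\top \nabla^2 F(\vx^\ast)\ve > 0$. The natural candidate comes from infinitesimally flipping a ``subsolvable'' species back to the super-solvable branch in the Dyson manifold (Lemma~\ref{lem:u-manifold}); the positivity of the second derivative in this direction then follows from the strict super-solvability hypothesis $\diag(\xi')\succ\xi''$ together with the sign structure in Lemma~\ref{lem:diagonally-signed-hs23}. This rules out any such $\vx^\ast$ as a global maximum. Combining with the previous step, the only possible maximizers are the $\vx(\vDelta)$, each achieving value $0$, which yields $F(\vx)\le 0$ with equality precisely on this $2^r$-point set.

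The main obstacle I anticipate is the exact evaluation $F(\vx(\vDelta))=0$: it hinges on identifying the correct real branch of the Dyson equation at the point $\vx(\vDelta)$ and on the closed-form expression from Lemma~\ref{lem:oF-oPsi-formulas}, and the algebraic cancellation must absorb the species-dependent constants $\xi'_s,\xi''_{s,s'},\lambda_s$ in a non-trivial way. The construction of the positive-curvature direction at non-maximum stationary points is conceptually straightforward but requires careful bookkeeping because $F$ is only $C^1$ on the boundaries between concave and non-concave regions.
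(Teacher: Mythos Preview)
Your overall architecture is exactly the paper's: existence of a maximizer via coercivity (Lemma~\ref{lem:F-regularity}), the stationarity classification of Lemma~\ref{lem:stationary-condition}, exact evaluation $\oF(\vv(\vDelta))=0$ via Lemma~\ref{lem:oF-oPsi-formulas}, and elimination of the remaining stationary points by exhibiting a positive-curvature direction (Proposition~\ref{prop:stationary-point-not-local-max}). Two remarks.

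First, a minor normalization slip: the real Dyson solution at $\vx(\vDelta)$ is $u_s(0;\vv(\vDelta)) = -\Delta_s/\sqrt{\xi^s(\vone)} = -\Delta_s\sqrt{\lambda_s/\xi'_s}$, not $-\Delta_s/\sqrt{\xi'_s}$. This does not affect the cancellation you describe, which goes exactly as in Corollary~\ref{cor:only-maximizers}.

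Second, and more substantively, your heuristic for the positive-curvature direction --- ``infinitesimally flipping a subsolvable species back to the super-solvable branch'' --- does not reflect what actually works, and a single-coordinate perturbation $\ve_s$ in a bad species will not in general make $\ve_s^\top\nabla^2\oF\,\ve_s>0$. The paper's construction (Proposition~\ref{prop:stationary-point-not-local-max}) is genuinely global: with $I=\{s:\Re(u_s)=0\}$ and $\hM(\vu)=\diag(\lambda_s/|u_s|^2)+\xi''$, one takes $\vw=\hM(\vu)\va$ for $\va\in\bbR^I$, uses that $\hM A^{-1}$ preserves $\bbR^I$ to choose $\va$ with $\hM A^{-1}D\va=\ve_s$ for a specific $s\in I$ satisfying $|u_s|>1/\sqrt{\xi^s(\vone)}$, and then reduces $\vw^\top\nabla^2\oF\,\vw$ algebraically to $\ve_s^\top D^\dagger\ve_s+\ve_s^\top\hM^{-1}\ve_s>0$. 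Strict super-solvability enters not through a sign-flip picture but through Lemma~\ref{lem:oM-singular-u-bd}, which guarantees the existence of such an index $s$ whenever $\oM(\vu)$ is singular. This step is the technical core of the proof, and your plan underestimates it.
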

We also state the following regularity property of $F$, which implies that its maximum is attained at a stationary point.
\begin{lemma}
    \label{lem:F-regularity}
    The function $F$ is continuously differentiable in $\vx$, and
    \[
        \lim_{R\to\infty}
        \sup_{\norm{\vx}_\infty \ge R} F(\vx) = -\infty\,.
    \]
    Moreover the latter limit is uniform on bounded, uniformly non-degenerate $\xi$.
\end{lemma}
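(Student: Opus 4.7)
The plan is to handle the two claims separately, as they concern very different aspects of $F$. All of the work will concern the term $\Psi(\vx) = \int \log|\gamma|\,[\mu(\vx)](\de \gamma)$; the remaining contributions to $F$ are constant in $\vx$ or quadratic in $\vx$ with leading coefficient $-\tfrac12\|A^{-1/2}\Lambda^{1/2}\vx\|_2^2$, which is manifestly smooth and coercive.

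For continuous differentiability, the natural route is to differentiate through the Dyson equation~\eqref{eq:dyson-equation}. Applying the implicit function theorem—valid on $\bbH$ because there $\Im m_s > 0$ and the Jacobian of \eqref{eq:dyson-equation} in $\vm$ is invertible—shows that $\vm(z;\vx)$ is real-analytic in $\vx$ for each fixed $z\in\bbH$, uniformly on compact subsets of $\bbH$. I would then apply the closed-form expression for $\Psi$ in terms of $\vm$ provided by Lemma~\ref{lem:oF-oPsi-formulas}; combined with the analyticity just established, this directly yields $\Psi\in C^1(\bbR^r)$ (indeed real-analytic). Alternatively, one can avoid invoking that lemma by writing
\[
    \Psi(\vx) = \lim_{\varepsilon\downarrow 0}\tfrac12\int \log(\gamma^2+\varepsilon^2)\,[\mu(\vx)](\de\gamma),
\]
relating the right side to $\int_\varepsilon^M \Im m(iu;\vx)\,\de u$ modulo explicit boundary pieces, and differentiating inside using analyticity of $m(\,\cdot\,;\vx)$ away from $\bbR$ together with the bounded H\"older density of $\mu(\vx)$ from Proposition~\ref{prop:MDE-basic}\ref{it:Dyson-continuous-density} to pass the limit $\varepsilon\downarrow 0$ through $\nabla_{\vx}$.

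For the decay at infinity, the essential input is \eqref{eq:mu-vx-approx}, which states $\bbW_\infty\big(\mu(\vx),\sum_s\lambda_s\delta_{-x_s/\sqrt{\lambda_s}}\big)\le C(\vlambda,\xi)$. This forces $\supp\mu(\vx)\subseteq\bigcup_s[-x_s/\sqrt{\lambda_s}-C,-x_s/\sqrt{\lambda_s}+C]$, so $|\gamma|\le C+\|\vx\|_\infty/\sqrt{\min_s\lambda_s}$ throughout the support, yielding the crude logarithmic bound $\Psi(\vx)\le \log\bigl(C'(1+\|\vx\|_\infty)\bigr)$. On the other hand, $A=\diag(\xi')+\xi''$ is positive definite for any non-degenerate $\xi$: the $k=1$ contribution to the covariance \eqref{eq:radial-derivative-law} alone is the diagonal matrix $\diag(\gamma_s^2)$, which is strictly positive, while the remaining contributions are PSD. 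Hence $\tfrac12\|A^{-1/2}\Lambda^{1/2}\vx\|_2^2\ge c(\vlambda,\xi)\|\vx\|_\infty^2$, so combining the two estimates gives $F(\vx)\le C_1+\log\bigl(C_2(1+\|\vx\|_\infty)\bigr)-c\|\vx\|_\infty^2\to -\infty$.

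Uniformity on bounded, uniformly non-degenerate families reduces to observing that both the constant in \eqref{eq:mu-vx-approx} (via Proposition~\ref{prop:VDE-continuity}) and the spectral lower bound on $A$ depend continuously on $(\vlambda,\xi',\xi'')$, and hence attain uniform bounds on such families. The main technical obstacle is really the $C^1$ claim: one must carefully handle the possibility that $0\in\supp\mu(\vx)$, where the integrand $\log|\gamma|$ is singular. I expect the H\"older regularity of the density from Proposition~\ref{prop:MDE-basic}\ref{it:Dyson-continuous-density} to suffice to justify exchanging limit and derivative near the singularity, but this is the step requiring the most care.
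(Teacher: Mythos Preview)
Your proposal is correct and matches the paper's own proof: both invoke Lemma~\ref{lem:oF-oPsi-formulas} for the $C^1$ claim and use the support bound~\eqref{eq:mu-vx-approx} to get $\Psi(\vx)\lesssim\log(1+\|\vx\|_\infty)$, which is dominated by the quadratic term since $A\succ 0$. One small caveat: your implicit-function-theorem remark gives analyticity of $\vm(z;\vx)$ only for $z\in\bbH$, whereas the closed-form expression in Lemma~\ref{lem:oF-oPsi-formulas} involves $\vu(0;\vv)$ on the real line---but this doesn't matter, since that lemma already asserts $C^1$ outright (its proof, via Lemma~\ref{lem:oPsi-derivative} and Theorem~\ref{thm:continuity} in the appendix, is precisely where the $z\to 0$ limit and the $0\in\supp\mu(\vx)$ issue you flag are handled).
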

\begin{proof}
    Continuous differentiability follows from Lemma~\ref{lem:oF-oPsi-formulas} below and Lemma~\ref{lem:Dyson-weakly-continuous}.
    For $R = \norm{\vx}_\infty$, we have $\Psi(\vx) \lesssim \log R$ while $\la \Lambda^{1/2} \vx, A^{-1} \Lambda^{1/2} \vx \ra \gtrsim R^2$,  which establishes the decay at infinity.
\end{proof}

\begin{proof}[Proof of Theorem~\ref{thm:annealed-crits} parts~\ref{itm:crt-tot}, \ref{itm:crt-bad}]
    The result is immediate from
    Propositions~\ref{prop:annealed-kac-rice} and \ref{prop:variational-maximum} for strictly super-solvable $\xi$.
    The proof of \eqref{eq:crt-tot} for solvable $\xi$ follows since $\xi\mapsto\sup_{\vx\in\bbR}F_{\xi}(\vx)$ is continuous at any non-degenerate $\xi$. Indeed $F$ is locally uniformly continuous in non-degenerate $\xi$ on compact $\vx$-sets by Proposition~\ref{prop:Psi-continuous}.
\end{proof}

We also prove the following fact which will be useful in later sections.

\begin{lemma}
    \label{lem:spec-no-0}
    If $\xi$ is strictly super-solvable, there exists $\eps > 0$ such that for any $\vDelta \in \{-1,1\}^r$, $[-\eps,\eps] \cap S(\vDelta) = \emptyset$.
\end{lemma}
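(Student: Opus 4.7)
The plan is to exhibit an explicit real solution $\vm^*\in\bbR^r$ of the vector Dyson equation~\eqref{eq:dyson-equation} at $z=0$, $\vx=\vx(\vDelta)$, and use the holomorphic implicit function theorem (IFT) to extend it holomorphically to a complex neighborhood $U$ of $0$. Since the real IFT produces an extension that is real on $U\cap\bbR$, the Stieltjes transform will satisfy $\Im m_s(z)=0$ for $z$ in a real neighborhood of $0$. By the absolute continuity in Proposition~\ref{prop:MDE-basic}\ref{it:Dyson-continuous-density}, the measure $\mu_s$ is then mass-free there, so $0\notin\supp(\mu_s)=S(\vDelta)$. Taking the minimum of the resulting gaps over the $2^r$ sign patterns yields the uniform $\eps>0$.

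The ansatz is $m_s^*=-\Delta_s\sqrt{\lambda_s/\xi'_s}$. First I would check by direct substitution that $\vm^*$ solves~\eqref{eq:dyson-equation} at $(0,\vx(\vDelta))$: using~\eqref{eq:ideal-stats}, the cross-species contributions $(1/\lambda_s)\sum_{s'}\Delta_{s'}\xi''_{s,s'}\sqrt{\lambda_{s'}/\xi'_{s'}}$ inside $x_s(\vDelta)/\sqrt{\lambda_s}$ exactly cancel the corresponding terms in $\sum_{s'}(\xi''_{s,s'}/\lambda_s)m_{s'}^*$, leaving the identity $-1/m_s^*=\Delta_s\sqrt{\xi'_s/\lambda_s}$. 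A short computation shows that the Jacobian of the left-hand side $G$ of~\eqref{eq:dyson-equation} in $\vm$ simplifies to
\[
\frac{\partial G}{\partial\vm}\,\Big|_{(\vm^*,0,\vx(\vDelta))}
= \diag\!\lt(\frac{\Delta_s}{\sqrt{\lambda_s\xi'_s}}\rt)\cdot\bigl(\diag(\xi')-\xi''\bigr),
\]
which is invertible since $\diag(\xi')-\xi''\succ0$ by strict super-solvability. The holomorphic IFT thus produces the required $\wt\vm:U\to\bbC^r$ with $\wt\vm(0)=\vm^*$, unique as a local solution of $G=0$ near $\vm^*$ and real on $U\cap\bbR$.

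The remaining step is to identify $\wt\vm$ with the analytic continuation of $m(\,\cdot\,;\vx(\vDelta))$ near $0$, which reduces to showing $m(0;\vx(\vDelta))=\vm^*$. I would prove this by homotopy: let $\xi_t''=t\xi''$ and let $\vx_t(\vDelta)$ be defined by~\eqref{eq:ideal-stats} with $\xi_t''$. Since $\vm^*$ depends only on $(\xi',\vlambda,\vDelta)$, it is constant along the homotopy; since $\diag(\xi')-t\xi''\succ0$ for all $t\in[0,1]$, the Jacobian above stays invertible at $\vm^*$ uniformly in $t$, so IFT yields local uniqueness of Dyson solutions in a fixed ball around $\vm^*$. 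At $t=0$ the Dyson equation decouples to $m_s(z;\vx_0(\vDelta))=-1/(z+x_{0,s}(\vDelta)/\sqrt{\lambda_s})$, giving $m_s(0;\vx_0(\vDelta))=\vm_s^*$; at $t\in(0,1]$ continuity of $t\mapsto m(0;\vx_t(\vDelta))$ is furnished by Proposition~\ref{prop:VDE-continuity}, and the limit $t\to 0^+$ is handled by Lemma~\ref{lem:Dyson-weakly-continuous}. The set $\{t\in[0,1]\,:\,m(0;\vx_t(\vDelta))=\vm^*\}$ is then closed by continuity and open by local IFT uniqueness, hence equal to $[0,1]$ by connectedness, and evaluating at $t=1$ completes the identification.

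The main obstacle is this last homotopy step, i.e.\ identifying the algebraic ansatz $\vm^*$ as the correct branch of the Stieltjes transform. Strict super-solvability is essential both for invertibility of the Jacobian throughout the homotopy and for avoiding the solvable case, where one expects $0$ to sit at the spectral edge---exactly the obstruction that invertibility of $\diag(\xi')-\xi''$ prevents.
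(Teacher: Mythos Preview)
Your proof follows the same overall strategy as the paper: exhibit the explicit real solution $\vm^* = (-\Delta_s\sqrt{\lambda_s/\xi'_s})_{s\in\sS}$ (denoted $\vu(\vDelta)$ in the paper), verify that the Jacobian $\diag(\Delta_s/\sqrt{\lambda_s\xi'_s}) \cdot (\diag(\xi')-\xi'')$ is invertible by strict super-solvability, and apply the implicit function theorem to extend to a real solution on a neighborhood of $0$. The difference lies entirely in the identification step --- showing that $\vm^*$ is the actual boundary Stieltjes transform $\vm(0;\vx(\vDelta))$ rather than some spurious algebraic branch.

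The paper handles this by invoking Lemma~\ref{lem:u-manifold} (equivalently Theorem~\ref{thm:feasible-region}\ref{itm:feasible-boundary}): since $M(\vm^*)=\diag(\xi')-\xi''\succ 0$, that characterization directly yields $\vm^* = \vu(0;\vv(\vDelta))$. This is clean but leans on the appendix machinery. Your homotopy $\xi_t''=t\xi''$ avoids that machinery and is a legitimate alternative, but the continuity statements you cite do not quite deliver what you need. Proposition~\ref{prop:VDE-continuity} gives convergence of $\vm$ only in the compact-open topology on $\bbH$, not at the boundary point $z=0$; to push through you must combine it with the uniform $1/3$-H\"older bound of Proposition~\ref{prop:MDE-basic}\ref{it:Dyson-continuous-transform} on compact $t$-intervals in $(0,1]$. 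Moreover the endpoint $t=0$ falls outside the standing hypothesis $\xi''\in(0,\infty)^{r\times r}$, so it is safer to anchor the homotopy at some small $t_0>0$ where the matrix $\sqrt{t_0}\,\bW - \diag(\Lambda^{-1/2}\vx_{t_0}\diamond\bone_\cT)$ visibly has spectrum bounded away from $0$, giving $m(0;\vx_{t_0})$ real and near $\vm^*$, rather than invoking Lemma~\ref{lem:Dyson-weakly-continuous} at a degenerate $\wt\xi''$. These are fillable gaps; the strategy is sound.
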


\subsection{Stationarity Condition}
\label{subsec:stationarity}

We next identify all stationary points of $F$.
It will be convenient to perform the below derivative calculations in the variable $\vv = \Lambda^{1/2} \vx$ (recall \eqref{eq:Lambda-and-A}).
To this end, we define:
\begin{align*}
    \oF(\vv) &= F(\vx),
    \\
    \oPsi(\vv) &= \Psi(\vx),
    \\
    \omu(\vv) &= \mu(\vx),
    \\
    u_s(z;\vv) &= m_s(z;\vx),
    \\
    \vu(z;\vv) &= \vm(z;\vx).
\end{align*}
Here we recall $m_s(z;\vx)$ is defined above \eqref{eq:dyson-equation} and define $\vm(z;\vx) = (m_s(z;\vx))_{s\in \sS}$.
Thus,
\begin{equation}
    \label{eq:def-oF}
    \oF(\vv)
    =
    \fr12
    \Big(
    1-\sum_{s\in \sS} \lambda_s \log \xi^s(\vone) - \tnorm{A^{-1/2} \vv}_2^2
    \Big)
    + \oPsi(\vv)\,.
\end{equation}
% Let $\vu(z;\vv) = (u(z;\vv))_{s\in \sS}$.
The Dyson equation \eqref{eq:dyson-equation} is equivalent (after some rearrangement) to
% \begin{equation}
%     \label{eq:dyson-equation-rescaled}
%     \lambda_s + \lt(\lambda_s z + v_s + \sum_{s'\in\sS} \xi''_{s,s'} u_{s'}(z;\vv)\rt) u_s(z;\vv) = 0\,.
% \end{equation}
\begin{equation}
    \label{eq:dyson-equation-1}
    \lambda_s z + v_s = -\fr{\lambda_s}{u_s(z;\vv)} - \sum_{s'\in \sS} \xi''_{s,s'} u_{s'}(z;\vv)\,.
\end{equation}
The next lemma gives exact formulas for $\oPsi,\oF$ and their gradients.
These seem to be new and extend known results in the single-species case (see e.g. \eqref{eq:Theta-formula} and the discussion below). We believe they are of independent interest, and might lead to more explicit thresholds in e.g. \cite[Theorem 2.5]{mckenna2021complexity}.
(This would still require optimization over the complicated set of vectors $\vu$ corresponding to some $\vv\in\bbR^r$; see Lemma~\ref{lem:u-manifold} below.)
The majority of the proof is carried out in Appendix~\ref{app:dyson}.

Note that below and throughout, we always use $\la \va,\vb\ra = \sum_{s=1}^r a_s b_s$ to denote a bilinear form rather than a complex inner product, even when $\va,\vb$ are complex vectors.
Also recall that $\Re(\cdot)$ denotes the real part of a complex number or vector.
\begin{lemma}
    \label{lem:oF-oPsi-formulas}
    The functions $\oPsi,\oF$ are $C^1$ and satisfy, with $\vu = \vu(0;\vv)$,
    \begin{align}
    \label{eq:oPsi-formula}
        \oPsi(\vv)
        &=
        \fr12 \Re(\la \vu,\xi'' \vu\ra)
        -
        \sum_{s\in\sS}\lambda_s \log|u_s|
        ,
        \\
    \label{eq:oF-formula}
        \oF(\vv)
        &=
        \frac{1}{2}
        \lt(1-\sum_{s\in \sS} \lambda_s \log \xi^s(\vone)
        - \la \vv, A^{-1} \vv\ra
        +
        \Re(\la \vu,\xi'' \vu\ra)
        \rt)
        -
        \sum_{s\in\sS}\lambda_s \log|u_s|,
        \\
    \label{eq:oPsi-derivative}
        \nabla \oPsi(\vv) &= -\Re(\vu),
        \\
    \label{eq:oF-derivative}
        \nabla \oF(\vv) &= - A^{-1} \vv - \Re(\vu).
    \end{align}
    % $\nabla \oF(\vv) = - A^{-1} \vv - \Re(\vu(0;\vv))$.
\end{lemma}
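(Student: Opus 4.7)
The plan is to derive \eqref{eq:oPsi-formula} and \eqref{eq:oPsi-derivative} by producing a closed-form expression, valid at $z=0$, for the holomorphic function
$$L(z; \vv) := \int \log(\gamma - z)\, \omu(\vv)(d\gamma)$$
on $z \in \bbH$ (using the principal branch $\arg(\gamma - z) \in (-\pi, 0)$), so that $\oPsi(\vv) = \Re L(0; \vv)$ by continuity down to the real axis. Once \eqref{eq:oPsi-formula} and \eqref{eq:oPsi-derivative} are in hand, \eqref{eq:oF-formula} and \eqref{eq:oF-derivative} follow immediately by adding the quadratic and logarithmic terms from \eqref{eq:def-oF}.

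The key ansatz is
$$\widetilde L(z; \vv) := -\sum_{s \in \sS} \lambda_s \log\bigl(-u_s(z;\vv)\bigr) - \tfrac{1}{2} \la \vu(z;\vv), \xi'' \vu(z;\vv)\ra - z\, m(z;\vv) - \la \vv, \vu(z;\vv) \ra.$$
First I would verify $\partial_z \widetilde L = -m = \partial_z L$ by direct differentiation: each $\partial_z u_s$ appears multiplied by the factor $\lambda_s/u_s + (\xi''\vu)_s + \lambda_s z + v_s$, which vanishes identically by the Dyson equation \eqref{eq:dyson-equation-1}, leaving only the $-m$ term from $\partial_z(-zm)$. Hence $L - \widetilde L$ is constant in $z$ for fixed $\vv$; evaluating both sides along $z = iy$, $y\to\infty$, using the standard asymptotic $u_s(z; \vv) \sim -1/z$, shows that this constant is in fact purely numerical (equal to $-1 - i\pi$).

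Next I would evaluate $\widetilde L$ at $z = 0$ and take real parts; this yields a preliminary formula containing an $-\la \vv, \Re \vu\ra$ term and additive constants. Contracting \eqref{eq:dyson-equation-1} at $z = 0$ against $\vu$ gives the algebraic identity $\la \vv, \vu\ra = -1 - \la \vu, \xi'' \vu\ra$, whose real part lets me eliminate these terms and collapse the expression to \eqref{eq:oPsi-formula}. For the gradient \eqref{eq:oPsi-derivative}, differentiating $\widetilde L(0; \vv)$ in $v_w$ produces a sum in which each $\partial_{v_w} u_s$ is again multiplied by the Dyson factor (now at $z=0$); these all vanish, and the only surviving contribution is $-u_w$, whose real part gives the desired formula. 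Continuous differentiability of $\oPsi$ (and hence $\oF$) then follows from joint continuity of $\vu(0;\vv)$ in $\vv$.

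The main obstacle is justifying the complex-analytic manipulations down to the real axis and verifying that $\widetilde L$ is well-defined there: one needs the boundary Dyson solution $\vu(0;\vv)$ to exist, depend continuously on $\vv$, and satisfy $u_s(0;\vv) \neq 0$ (so that $\log(-u_s)$ is unambiguous), together with the uniform large-$|z|$ expansion. These ingredients are exactly Proposition~\ref{prop:MDE-basic}\ref{it:Dyson-not-zero}--\ref{it:Dyson-continuous-transform}, whose proofs the paper defers to Appendix~\ref{app:dyson} via Theorem~\ref{thm:continuity}; given them, everything above reduces to algebraic manipulation against the Dyson equation.
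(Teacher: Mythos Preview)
Your approach is correct and takes a genuinely different route from the paper. The paper (in Appendix~\ref{app:dyson}) proves \eqref{eq:oPsi-derivative} first, via a non-obvious trick: it writes $\partial_{v_s}\oPsi$ as an integral involving $\partial_{v_s} u_{s'}(\gamma;\vv)$, uses that the Jacobian $\nabla \vu = M(\vu)^{-1}$ is \emph{symmetric} to swap $s \leftrightarrow s'$, thereby converting the $v_s$-derivative into a $\gamma$-derivative, then integrates by parts and finishes with a residue computation. Only afterwards does it establish \eqref{eq:oPsi-formula} (Theorem~\ref{thm:logdet}), by setting $G(\vv) = \oPsi(\vv) - [\text{proposed formula}]$, checking $\nabla G = 0$ off the singular set of $M(\vu)$ using the already-proved derivative, and verifying $G\to 0$ as $\min_s|v_s|\to\infty$.

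Your antiderivative ansatz $\widetilde L(z;\vv)$ is cleaner and more unified: the Dyson-equation cancellation kills the $\partial_z u_s$ and $\partial_{v_w} u_s$ contributions for the \emph{same} algebraic reason, so \eqref{eq:oPsi-formula} and \eqref{eq:oPsi-derivative} drop out together, and you never need the symmetry of $M^{-1}$ or the residue step. Your asymptotic matching at $z=i\infty$ (giving the constant $-1-i\pi$) plays the role of the paper's $\min_s|v_s|\to\infty$ check. One caveat: your differentiation of $\widetilde L(0;\vv)$ in $v_w$ implicitly assumes $\partial_{v_w} u_s$ exists at $z=0$, which can fail when $M(\vu)$ is singular; the clean fix is to carry out that step at $z=i\eta$ (where $\vu$ is holomorphic in $\vv$) and pass to $\eta\downarrow 0$ via the locally uniform convergence of $-u_w(z;\vv)$ from Theorem~\ref{thm:continuity}, exactly as the paper does. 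With that adjustment your argument is complete.
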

\begin{proof}
    The formulas \eqref{eq:oPsi-formula} and \eqref{eq:oPsi-derivative} follow from Theorem~\ref{thm:logdet} and Lemma~\ref{lem:oPsi-derivative}.
    Then \eqref{eq:oF-formula} and \eqref{eq:oF-derivative} follow as straightforward consequences.
\end{proof}

For the rest of this section, we let $\vu = \vu(0;\vv) \in \obbH^r$.
Note that \eqref{eq:dyson-equation-1}, specialized to $z = 0$, gives
\begin{equation}
    \label{eq:dyson-equation-2}
    v_s = -\fr{\lambda_s}{u_s} - \sum_{s'\in \sS} \xi''_{s,s'}u_{s'}\,.
\end{equation}
We next describe the condition for $\vv$ to be a stationary point of $\oF$.
\begin{lemma}
    \label{lem:stationary-condition}
    If $\nabla \oF(\vv) = \vzero$, then for all $s\in \sS$, either $\Re(u_s)=0$ or $|u_s| = \sqrt{\lambda_s/\xi'_s}=1/\sqrt{\xi^s(\vone)}$ (recall \eqref{eq:def-xi-s}).
\end{lemma}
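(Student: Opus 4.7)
The plan is to simply equate two expressions for $v_s$ (both of which are real) and read off the dichotomy by taking real parts.

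First, I would use the gradient formula \eqref{eq:oF-derivative}, which at a stationary point $\vv$ gives $\vv = -A \Re(\vu)$. Componentwise, since $A = \diag(\xi') + \xi''$ by \eqref{eq:Lambda-and-A},
\begin{equation*}
    v_s = -\xi'_s \Re(u_s) - \sum_{s'\in\sS} \xi''_{s,s'}\Re(u_{s'}).
\end{equation*}
On the other hand, since $\vv \in \bbR^r$, the Dyson equation \eqref{eq:dyson-equation-2} yields
\begin{equation*}
    v_s = \Re(v_s) = -\lambda_s \Re\!\lt(\tfrac{1}{u_s}\rt) - \sum_{s'\in\sS} \xi''_{s,s'}\Re(u_{s'})
    = -\lambda_s \frac{\Re(u_s)}{|u_s|^2} - \sum_{s'\in\sS} \xi''_{s,s'}\Re(u_{s'}).
\end{equation*}

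Second, I would equate the two expressions and cancel the common $\sum_{s'} \xi''_{s,s'}\Re(u_{s'})$ contribution, leaving
\begin{equation*}
    \xi'_s \Re(u_s) = \lambda_s \frac{\Re(u_s)}{|u_s|^2}.
\end{equation*}
This forces, for each $s\in\sS$, either $\Re(u_s)=0$ or $|u_s|^2 = \lambda_s/\xi'_s$. Since $\xi^s(\vone) = \lambda_s^{-1}\xi'_s$ by \eqref{eq:def-xi-s}, the latter is exactly $|u_s| = 1/\sqrt{\xi^s(\vone)} = \sqrt{\lambda_s/\xi'_s}$, completing the proof.

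There is no real obstacle here; the content of the lemma is essentially a bookkeeping exercise, made clean by the fact that the $\xi''$-contributions to the two expressions for $v_s$ are identical and drop out. The only subtlety worth flagging in the writeup is that one must use $v_s \in \bbR$ to justify taking the real part of \eqref{eq:dyson-equation-2} (and hence that $\Im\!\big(\lambda_s/u_s + \sum_{s'}\xi''_{s,s'} u_{s'}\big)=0$, which is automatic from $\vv$ being real-valued rather than any stationarity consideration).
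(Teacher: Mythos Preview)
Your proof is correct and is essentially identical to the paper's: both equate the expression for $-v_s$ from the Dyson equation \eqref{eq:dyson-equation-2} with the expression $-v_s = (A\Re(\vu))_s$ coming from stationarity \eqref{eq:oF-derivative}, take real parts, and cancel the common $\sum_{s'}\xi''_{s,s'}\Re(u_{s'})$ term to arrive at $\lambda_s\Re(u_s)/|u_s|^2 = \xi'_s\Re(u_s)$. The paper just compresses this into a single chain of equalities rather than writing out two separate displays.
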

\begin{proof}
    We have
    \[
        \fr{\lambda_s}{u_s} + \sum_{s'\in \sS} \xi''_{s,s'}u_{s'}
        = -v_s
        = \Re(A\vu)_s
        = \Re\lt(\xi'_s u_s + \sum_{s'\in \sS} \xi''_{s,s'}u_{s'}\rt)\,,
    \]
    where the first equality is \eqref{eq:dyson-equation-2}, the second is \eqref{eq:oF-derivative}, and the third is the definition~\eqref{eq:Lambda-and-A} of $A$.
    Taking real parts of both sides implies $\Re(\lambda_s/u_s) = \Re(\xi'_s u_s)$, which implies the conclusion.
\end{proof}
We will see that the maximizers of $F$ described in Proposition~\ref{prop:variational-maximum} correspond to $u_s = \pm 1/\sqrt{\xi^s(\vone)}$.
The primary remaining difficulty is to show all \emph{other} remaining stationary points are \emph{not} local maxima.

\subsection{Non-Maximality of Stationary Points with Pure-Imaginary $u_s$}

The following main result of this subsection rules out the first case identified in Lemma~\ref{lem:stationary-condition} for strictly super-solvable $\xi$.
From it, we will easily conclude (in Corollary~\ref{cor:only-maximizers} below) that all maximizers of $F$ are as in Proposition~\ref{prop:variational-maximum}.
\begin{proposition}
    \label{prop:stationary-point-not-local-max}
    Suppose $\xi$ is strictly super-solvable.
    If $\nabla \oF(\vv) = \vzero$ and $\Re(u_s)=0$ for some $s\in \sS$, then $\vv$ is not a local maximum of $\oF$.
\end{proposition}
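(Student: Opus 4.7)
The plan is to exhibit an explicit real direction $\vw \in \bbR^r$ along which $\vw^{\top}\nabla^2 \oF(\vv)\,\vw > 0$, thereby ruling out $\vv$ as a local maximum of $\oF$.

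First I derive the Hessian. Differentiating $\nabla\oF(\vv) = -A^{-1}\vv - \Re(\vu(\vv))$ from \eqref{eq:oF-derivative}, and using the relation $\partial\vv/\partial\vu = D - \xi''$ with $D = \diag(\lambda_s/u_s^2)$ obtained by differentiating the Dyson equation \eqref{eq:dyson-equation-2}, yields
\[
\nabla^2 \oF(\vv) = -A^{-1} - \Re\bigl(M^{-1}\bigr), \qquad M := D - \xi'',
\]
with $M$ symmetric.

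Next I extract a real tangent direction from $S_0 := \{s \in \sS : \Re(u_s) = 0\}$, which contains $s^*$ by hypothesis. For $s \in S_0$ one has $u_s = iy_s$ with $y_s > 0$, hence $D_{s,s} = -\lambda_s/y_s^2 \in \bbR$; combined with the reality of $\xi''$, this makes each column of $M$ indexed by $S_0$ a real vector in $\bbR^r$. Thus for any real $\vp \in \bbR^r$ supported on $S_0$, the direction $\vw := M\vp$ is real. Geometrically, $\vw$ is the first-order image in $\vv$-space of a real-axis perturbation of $\vu$ on the $S_0$ coordinates. A short computation using the symmetry of $M$ and the reality of $\vw$ gives
\[
\vw^{\top}\nabla^2 \oF(\vv)\,\vw
= \vp^{\top} N\vp - (M\vp)^{\top} A^{-1}(M\vp),
\]
where $N := \diag_{s\in S_0}(\lambda_s/y_s^2) + \xi''\vert_{S_0 \times S_0}$ is a real symmetric matrix.

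The decisive algebraic input is strict super-solvability, which (since $A = \diag(\xi') + \xi''$) is equivalent to $A \succ 2\xi''$. A useful simplification is the identity $A + M = \diag(\xi') + D$ (the off-diagonal $\xi''$ contributions in $A$ and $M$ cancel); writing $I + A^{-1}M = A^{-1}(A+M)$ rearranges the quadratic form above as
\[
\vp^{\top} N\vp - (M\vp)^{\top} A^{-1}(M\vp)
= -\vp^{\top} M A^{-1}\bigl(\diag(\xi') + D\bigr)\,\vp,
\]
and on the $S_0$-block the matrix $\diag(\xi') + D$ reduces to the real diagonal $\diag_{s\in S_0}(\xi'_s - \lambda_s/y_s^2)$. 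The scalar case $r=1$ with $S_0 = \{1\}$ already exhibits the mechanism: stationarity forces $\vv = 0$ and $y^2 = 1/\xi''(1)$, and the expression collapses to $2\xi''(1)\bigl(\xi'(1) - \xi''(1)\bigr)/A$, strictly positive exactly under $\xi'(1) > \xi''(1)$. For general $r$, I will take $\vp$ to be an extremal eigenvector of the generalized eigenvalue problem $(M^{\top} A^{-1} M)\vert_{S_0 \times S_0}\,\vp = \rho\, N\vp$ and verify that its smallest eigenvalue $\rho_{\min}$ is less than $1$, using the strict super-solvability gap.

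The main obstacle will be controlling the contribution of the $S_1 := \sS \setminus S_0$ block to $(M^{\top} A^{-1} M)\vert_{S_0 \times S_0}$: on $S_1$ one has $u_s = \sqrt{\lambda_s/\xi'_s}\,e^{i\theta_s}$ genuinely complex, so $D\vert_{S_1}$ is complex, while the $S_0$–$S_1$ coupling in $M$ goes through the real matrix $\xi''\vert_{S_0 \times S_1}$ sandwiched against the complex blocks of $A^{-1}$. Two further inputs I anticipate using are: (a) the stationarity/reality relation $\lambda_s/y_s = \sum_{s'} \xi''_{s,s'}\,\Im(u_{s'})$ for $s \in S_0$, coming from $\Im(v_s) = 0$ in \eqref{eq:dyson-equation-2}; and (b) the explicit characterization of the solution manifold to the vector Dyson equation given in Lemma~\ref{lem:u-manifold}, which constrains how the complex $S_1$ entries of $\vu$ may interact with the real $S_0$ entries along directions consistent with $\vv \in \bbR^r$.
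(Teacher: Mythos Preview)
Your setup matches the paper's: same Hessian $\nabla^2\oF(\vv) = -A^{-1} - \Re(M^{-1})$, same real test direction $\vw = M\vp$ for $\vp$ supported on $S_0=\{s:\Re u_s=0\}$ (the paper writes this as $\vw=-M\va=\hM\va$ with $\hM:=\diag(\lambda_s/|u_s|^2)+\xi''$), and your reduction to $-\vp^\top M A^{-1}\diag(\xi'_s-\lambda_s/y_s^2)\,\vp$ is correct and equivalent to the paper's intermediate form $\va^\top\wt D(\wt D^\dagger-A^{-1})\wt D\va$, where $\wt D:=\diag(\xi'_s-\lambda_s/|u_s|^2)=A-\hM$. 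One omission: you invoke $\nabla^2\oF$ before justifying that $\vu$ is differentiable at $\vv$, which requires $M(\vu)$ invertible; this is precisely the last clause of Lemma~\ref{lem:u-manifold}\ref{itm:semicircle-dome}, applicable because $\Re(u_{s^*})=0$ forces $\vu\in\bbH^r$.

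The genuine gap is in your completion plan. First, your ``main obstacle'' is illusory: $A^{-1}$ is real, and the $S_0$-indexed rows (hence columns) of $M$ are real, so $(M^\top A^{-1}M)|_{S_0\times S_0}$ is manifestly real with no complex $S_1$ contamination. The actual difficulty is that $\wt D$ is supported on $I:=\{s:|u_s|^2\neq\lambda_s/\xi'_s\}\subseteq S_0$ with entries of a priori mixed sign, so no positivity is automatic and a bare generalized-eigenvalue argument has nothing to bite on. The decisive input you have not identified is Lemma~\ref{lem:oM-singular-u-bd}: Lemma~\ref{lem:u-manifold}\ref{itm:semicircle-dome} also gives $\oM(\vu)$ singular, and then strict super-solvability forces some $s\in I$ with $\wt D_{s,s}>0$. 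The paper exploits the further structural fact that $\hM A^{-1}$ preserves $\bbR^I$ bijectively (since $A$ and $\hM$ agree on rows outside $I$), chooses $\va\in\bbR^I$ with $\hM A^{-1}\wt D\va=\ve_s$, and a direct expansion via $\wt D^\dagger - A^{-1}=A^{-1}(\hM\wt D^\dagger\hM+\hM P_I+P_I\hM-\hM)A^{-1}$ yields $\vw^\top\nabla^2\oF(\vv)\,\vw=\wt D_{s,s}^{-1}+(\hM^{-1})_{s,s}>0$. Your eigenvalue route does not locate this distinguished index $s$ or the invariant-subspace structure, and without the singularity-of-$\oM$/Lemma~\ref{lem:oM-singular-u-bd} input there is no leverage to conclude $\rho_{\min}<1$.
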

We will need as input from Appendix~\ref{app:dyson} the following two lemmas.
For $\vu \in \obbH^r$ define the matrices
\begin{align}
    \label{eq:def-oM}
    M(\vu)   &= \diag\lt(\fr{\lambda_s}{ u_s^2 }\rt)_{s\in \sS} - \xi'', &
    \oM(\vu) &= \diag\lt(\fr{\lambda_s}{|u_s|^2}\rt)_{s\in \sS} - \xi''.
\end{align}
\begin{lemma}
    \label{lem:vu-derivative-application}
    At all $\vv \in \bbR^r$ such that $M(\vu)$ is invertible, the function $\vv\mapsto \vu(0;\vv)$ is differentiable and $\nabla_{\vv} \vu(0;\vv) = M(\vu)^{-1}$.
\end{lemma}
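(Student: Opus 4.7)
The plan is to apply the implicit function theorem to the $z=0$ Dyson equation \eqref{eq:dyson-equation-2}, which expresses $\vv$ as an explicit holomorphic function of $\vu$, and then invert this relation locally.

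First, I would introduce the holomorphic map $G:(\bbC\setminus\{0\})^r\to\bbC^r$ defined by
\[
G_s(\vu) = -\frac{\lambda_s}{u_s} - \sum_{s'\in\sS} \xi''_{s,s'}\, u_{s'},
\]
so that \eqref{eq:dyson-equation-2} is precisely the identity $\vv = G(\vu(0;\vv))$. A direct componentwise differentiation gives
\[
\frac{\partial G_s}{\partial u_{s'}} = \frac{\lambda_s}{u_s^2}\delta_{s,s'} - \xi''_{s,s'},
\]
so the complex Jacobian of $G$ at $\vu$ is exactly $M(\vu)$ as defined in \eqref{eq:def-oM}.

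Second, I would fix $\vv\in\bbR^r$ and set $\vu=\vu(0;\vv)$. By Proposition~\ref{prop:MDE-basic}\ref{it:Dyson-not-zero}, $\vu$ lies in the domain of $G$, and by hypothesis $M(\vu)$ is invertible as a complex matrix (which automatically implies invertibility of its $2r\times 2r$ real realization, since $G$ is holomorphic). The holomorphic implicit function theorem then supplies neighborhoods $U\ni\vu$ in $\bbC^r$ and $V\ni\vv$ in $\bbC^r$, and a unique holomorphic local inverse $H:V\to U$ with $G\circ H=\mathrm{id}_V$, $H(\vv)=\vu$, and $\nabla_{\vv} H(\vv)=M(\vu)^{-1}$.

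The main step, and the only non-routine one, is to identify $H$ with $\vv'\mapsto\vu(0;\vv')$ on a real neighborhood of $\vv$. For this I would use Proposition~\ref{prop:MDE-basic}\ref{it:Dyson-continuous-transform}: by joint continuity of $\vv'\mapsto \vu(0;\vv')$, shrinking the neighborhood ensures $\vu(0;\vv')\in U$ for all nearby real $\vv'$. Since $\vu(0;\vv')$ satisfies $G(\vu(0;\vv'))=\vv'$ by \eqref{eq:dyson-equation-2} and agrees with $\vu$ at $\vv'=\vv$, the uniqueness part of the implicit function theorem forces $\vu(0;\vv')=H(\vv')$ throughout that neighborhood. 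Consequently $\vv'\mapsto\vu(0;\vv')$ inherits the differentiability of $H$, with Jacobian $M(\vu)^{-1}$ at $\vv$, completing the proof.
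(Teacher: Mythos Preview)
Your proof is correct. The paper's own argument (in Lemma~\ref{lem:vu-derivative}, to which Lemma~\ref{lem:vu-derivative-application} defers) takes a slightly different route: it first establishes differentiability on the open set $\bbH^r$, where global uniqueness of solutions (Lemma~\ref{lem:dyson-solution-unique}) makes the identification of the inverse immediate, and then passes to the boundary $\bbR^r$ by a limiting argument---showing that both $\vu^\eps(\vv)=\vu(\vv+i\eps\vone)$ and $\nabla\vu^\eps(\vv)=M(\vu^\eps(\vv))^{-1}$ converge locally uniformly as $\eps\downarrow 0$, and invoking the standard fact that uniform convergence of functions and derivatives implies the limit function is differentiable with the limiting derivative. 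Your approach instead applies the inverse function theorem directly at the real point and uses the \emph{local} uniqueness it furnishes, together with continuity of $\vu(0;\cdot)$, to pin down the identification. Both arguments rest on the same continuity input (Theorem~\ref{thm:continuity}, equivalently Proposition~\ref{prop:MDE-basic}\ref{it:Dyson-continuous-transform}); yours is somewhat more direct in that it avoids the limit-of-derivatives step, while the paper's version has the minor advantage of yielding the result on all of $\obbH^r$ in one stroke.
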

\begin{proof}
    Follows from Lemma~\ref{lem:vu-derivative}.
\end{proof}
The equation \eqref{eq:dyson-equation-2} relates $\vv$ to its associated $\vu$, which is well-defined by Proposition~\ref{prop:MDE-basic}.
Because $\vv \in \bbR^r$ while $\vu \in \obbH^r$, one roughly expects that those $\vu$ corresponding to some $\vv \in \bbR^r$ lie within an $r$-dimensional real submanifold of $\obbH^r$.
The next lemma describes this set of $\vu$.

% The next proposition gives corresponding equations for such $\vu$.
% Analogously to \eqref{eq:def-M}, define the (this time real) symmetric matrices:
% \begin{align}
%     \label{eq:def-oM}
%     \oM(\vu) &= \diag\lt(\fr{\lambda_s}{|u_s|^2}\rt)_{s\in \sS} - \xi''\,, &
%     \hM(\vu) &= \diag\lt(\fr{\lambda_s}{|u_s|^2}\rt)_{s\in \sS} + \xi''\,.
% \end{align}
% We will need the following input from \cite{dyson-in-progress}.
\begin{lemma}
    \label{lem:u-manifold}
    Let $\vu^* \in \obbH^r$.
    There exists $\vv \in \bbR^r$ such that $\vu^* = \vu(0;\vv)$ if and only if one of the following conditions holds.
    \begin{enumerate}[label=(\roman*)]
        \item \label{itm:semicircle-flat} $\vu^* \in \bbR^r$ and $M(\vu^*) \succeq 0$.
        \item \label{itm:semicircle-dome} $\vu^* \in \bbH^r$, $\oM(\vu^*) \succeq 0$, and $\oM(\vu^*) \Im(\vu^*) = 0$.
    \end{enumerate}
    Moreover, in case \ref{itm:semicircle-dome}, $M(\vu^*)$ is invertible.
\end{lemma}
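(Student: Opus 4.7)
The plan is to drive both directions of the equivalence from one central computation: taking imaginary parts of the Dyson equation \eqref{eq:dyson-equation-2} at $z=0$. Using $\Im(-\lambda_s/u_s^*) = \lambda_s \Im(u_s^*)/|u_s^*|^2$, equating imaginary parts on both sides yields exactly $\oM(\vu^*)\,\Im(\vu^*) = 0$. Non-degeneracy forces $\xi''$ to have strictly positive off-diagonal entries, so $\oM(\vu^*)$ has strictly negative off-diagonals. A row-by-row analysis of this kernel relation, combined with $\Im(u_s^*) \geq 0$, produces the stated dichotomy: either $\Im(\vu^*) = 0$ or every $\Im(u_s^*) > 0$; mixed patterns cannot occur, since any row $s$ with $\Im(u_s^*) = 0$ forces $\xi''_{s,s'}\Im(u_{s'}^*) = 0$ for all $s'$, hence $\Im(u_{s'}^*) = 0$ throughout.

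For necessity in case \ref{itm:semicircle-dome}, the strictly positive vector $\Im(\vu^*)$ lies in the kernel of the diagonally signed matrix $\oM(\vu^*)$, so Lemma~\ref{lem:diagonally-signed-hs23} identifies it as the Perron eigenvector with eigenvalue $\blambda_{\min}(\oM(\vu^*)) = 0$, whence $\oM(\vu^*) \succeq 0$. For necessity in case \ref{itm:semicircle-flat}, I apply the Dyson equation at $z = i\eta$, whose imaginary part reads $\oM(\vu(i\eta;\vv))\Im(\vu(i\eta;\vv)) = \eta\vlambda$. When $M(\vu^*)$ is invertible, Lemma~\ref{lem:vu-derivative-application} and continuity yield $\Im(\vu(i\eta;\vv))/\eta \to M(\vu^*)^{-1}\vlambda$, which must be componentwise nonnegative. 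A row-wise argument using the sign pattern of $M(\vu^*)$ rules out zero components, so $\vw := M(\vu^*)^{-1}\vlambda$ is strictly positive. Lemma~\ref{lem:diagonally-signed-hs23} then gives
\[
    \blambda_{\min}(M(\vu^*)) \geq \min_{s\in\sS} \frac{(M(\vu^*)\vw)_s}{w_s} = \min_{s\in\sS}\frac{\lambda_s}{w_s} > 0,
\]
so $M(\vu^*)\succ 0$. The spectral-edge subcase (singular $M(\vu^*)$) is handled by approximating with nearby $\vv$ in the invertible regime and taking limits.

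For sufficiency, starting from $\vu^*$ satisfying the hypothesized conditions, I define $\vv \in \bbC^r$ by the Dyson equation; the condition $\oM(\vu^*)\Im(\vu^*) = 0$ (trivially holding in case \ref{itm:semicircle-flat}) forces $\Im(\vv) = 0$ by reversing the imaginary-part identity. It remains to show $\vu(0;\vv) = \vu^*$, i.e.\ that $\vu^*$ is the correct boundary value of the Stieltjes transform. I extend $\vu^*$ to a holomorphic $\vu(z)$ on a neighborhood of $0$ via the implicit function theorem applied to the Dyson equation, then verify $\Im(\vu(z)) \succ 0$ for $z \in \bbH$ near $0$ by a first-order expansion; by uniqueness in Proposition~\ref{prop:MDE-basic} this extension agrees with $\vu(\cdot\,;\vv)$, so $\vu(0;\vv) = \vu^*$. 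The invertibility of $M(\vu^*)$ needed for the implicit function theorem is the final claim of the lemma in case \ref{itm:semicircle-dome}; it follows from the explicit computation $M(\vu^*)\Im(\vu^*) = (M(\vu^*)-\oM(\vu^*))\Im(\vu^*) \ne 0$ together with the one-dimensionality of $\ker\oM(\vu^*)$, using a real/imaginary decomposition to rule out nontrivial kernel elements of $M(\vu^*)$.

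The main obstacle will be the invertibility of the complex matrix $M(\vu^*)$ in case \ref{itm:semicircle-dome}, which requires careful separation of real and imaginary components. A secondary difficulty is the spectral-edge subcase of case \ref{itm:semicircle-flat}, where $M(\vu^*)$ is singular and both directions need the perturbation/continuity argument above.
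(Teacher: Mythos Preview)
Your overall strategy is sound and in places more direct than the paper's (which defers to the appendix development in Theorem~\ref{thm:feasible-region} and Corollary~\ref{cor:feasible-region-M-inv}): deducing $\oM(\vu^*)\succeq 0$ in case~\ref{itm:semicircle-dome} straight from Lemma~\ref{lem:diagonally-signed-hs23}, and your derivative approach $\Im(\vu(i\eta;\vv))/\eta\to M(\vu^*)^{-1}\vlambda$ in case~\ref{itm:semicircle-flat}, are both nice. But there is a genuine gap at precisely the point you flag as the main obstacle: the invertibility of $M(\vu^*)$ in case~\ref{itm:semicircle-dome}. Knowing $\Im(\vu^*)\notin\ker M(\vu^*)$ and that $\ker\oM(\vu^*)$ is one-dimensional tells you nothing about $\ker M(\vu^*)$ for the \emph{complex} matrix $M(\vu^*)$; splitting a putative kernel vector into real and imaginary parts leads to a coupled system with no evident contradiction. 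The paper's argument (Lemma~\ref{lem:oM-psd-to-M-invertible} via Lemma~\ref{lem:psd-rotate}) works with moduli instead: since each $u_s\in\bbH$, the point $\lambda_s/u_s^2$ lies on the circle of radius $\lambda_s/|u_s|^2$ but strictly farther from $\xi''_{s,s}$ than the real point $\lambda_s/|u_s|^2$, so $|M_{s,s}(\vu^*)|>\oM_{s,s}(\vu^*)$ strictly while off-diagonals coincide. Taking entrywise moduli of any relation $M(\vu^*)\vw=0$ then gives $(\oM(\vu^*)|\vw|)_s<0$ wherever $w_s\neq 0$, and pairing against the strictly positive kernel vector $\Im(\vu^*)$ of the symmetric $\oM(\vu^*)$ yields $0=\langle \oM(\vu^*)\Im(\vu^*),|\vw|\rangle=\langle \Im(\vu^*),\oM(\vu^*)|\vw|\rangle<0$, a contradiction.

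A smaller issue: in the singular subcase of case~\ref{itm:semicircle-flat}, your plan to approximate by nearby $\vv\in\bbR^r$ with invertible $M$ and real $\vu$ is not obviously justified (why do such $\vv$ exist near a given one?). You can bypass this entirely using your own identity $\oM(\vu(i\eta;\vv))\Im(\vu(i\eta;\vv))=\eta\vlambda\succ 0$: since $\Im(\vu(i\eta;\vv))\succ 0$, Lemma~\ref{lem:diagonally-signed-hs23} gives $\oM(\vu(i\eta;\vv))\succ 0$ for every $\eta>0$, and letting $\eta\downarrow 0$ with continuity of $\vu$ yields $M(\vu^*)=\oM(\vu^*)\succeq 0$ directly, with no case split on invertibility. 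This is essentially the paper's closure-from-$\bbH^r$ argument, and it also supplies the positivity $M(\vu^*)^{-1}\vlambda\succ 0$ you need for the first-order check in the sufficiency direction (via Lemma~\ref{lem:psd-rotate}\ref{itm:psd-inv-positive-entries}), avoiding any circularity with your necessity argument.
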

\begin{proof}
    Follows from Theorem~\ref{thm:feasible-region}\ref{itm:feasible-boundary} and Corollary~\ref{cor:feasible-region-M-inv}.
\end{proof}

\begin{lemma}
    \label{lem:oM-singular-u-bd}
    Suppose $\xi$ is strictly super-solvable.
    If $\oM(\vu)$ is singular, then $|u_s| > 1/\sqrt{\xi^s(\vone)}$ for some $s\in \sS$.
\end{lemma}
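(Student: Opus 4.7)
The plan is to prove the contrapositive: we assume that $|u_s| \le 1/\sqrt{\xi^s(\vone)}$ for every $s \in \sS$ and then show that $\oM(\vu)$ is positive definite (hence nonsingular), contradicting the singularity hypothesis.

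First, observe that $1/\sqrt{\xi^s(\vone)} = \sqrt{\lambda_s / \xi'_s}$ by the definition of $\xi^s$ in \eqref{eq:def-xi-s}. Therefore the standing assumption is equivalent to
\[
\frac{\lambda_s}{|u_s|^2} \ge \xi'_s \quad \text{for every } s \in \sS.
\]
Note $u_s \ne 0$ by Proposition~\ref{prop:MDE-basic}\ref{it:Dyson-not-zero}, so $|u_s|^2 > 0$ and these inequalities are well-defined.

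Next I decompose $\oM(\vu)$ into a sum of two symmetric matrices:
\[
\oM(\vu) = \diag\!\lt(\frac{\lambda_s}{|u_s|^2}\rt)_{s\in\sS} - \xi''
= \underbrace{\Big(\diag\!\lt(\tfrac{\lambda_s}{|u_s|^2}\rt)_{s\in\sS} - \diag(\xi')\Big)}_{=: D}
+ \underbrace{\big(\diag(\xi') - \xi''\big)}_{=: A_0}.
\]
By the previous step, $D$ is a diagonal matrix with nonnegative entries, so $D \succeq 0$. By strict super-solvability (Definition~\ref{def:xi'}), $A_0 \succ 0$. Since the sum of a positive semidefinite matrix and a positive definite matrix is positive definite, $\oM(\vu) \succ 0$, so in particular $\oM(\vu)$ is invertible.

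This contradicts the assumption that $\oM(\vu)$ is singular, completing the contrapositive. There is no substantive obstacle here; the argument is essentially a one-line Loewner-order calculation once one rewrites the hypothesis $|u_s| \le 1/\sqrt{\xi^s(\vone)}$ in terms of the diagonal entries of $\oM(\vu)$.
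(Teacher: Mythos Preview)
Your proof is correct and follows essentially the same argument as the paper: both prove the contrapositive by observing that $|u_s|\le 1/\sqrt{\xi^s(\vone)}$ for all $s$ gives $\diag(\lambda_s/|u_s|^2)\succeq \diag(\xi')$, whence $\oM(\vu)\succeq \diag(\xi')-\xi''\succ 0$ by strict super-solvability. The paper's version is simply more terse, omitting the explicit decomposition $\oM(\vu)=D+A_0$ and the remark that $u_s\neq 0$.
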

\begin{proof}
    Suppose otherwise; then $\diag(\lambda_s/|u_s|^2)_{s\in \sS} \succeq \diag(\xi')$, so
    \[
        \oM(\vu) \succeq \diag(\xi') - \xi'' \succ 0\,.
    \]
    However, $\oM(\vu)$ is singular, contradiction.
\end{proof}
% \begin{lemma}
%     \label{lem:psd-sign-flip}
%     Suppose $M \in \bbR^{r\times r}$, $M\succeq 0$, and all off-diagonal entries of $M$ are negative.
%     If $M'$ is the matrix with $M'_{i,j} = |M_{i,j}|$ for all $i,j \in [r]$, then $M' \succeq 0$.
% \end{lemma}
Further define
\[
    \hM(\vu) = \diag\lt(\fr{\lambda_s}{|u_s|^2}\rt)_{s\in \sS} + \xi''.
\]
\begin{lemma}
    \label{lem:hM-pd}
    If $\oM(\vu) \succeq 0$, then $\hM(\vu) \succ 0$.
\end{lemma}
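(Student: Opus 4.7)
The plan is to reduce this to Lemma~\ref{lem:psd-sign-flip} and then get the strict inequality from the diagonal of $\xi''$.

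First I would check that $\oM(\vu)$ is diagonally signed in the sense of Subsection~\ref{subsec:prelim-linalg}. The off-diagonal entries of $\oM(\vu)$ are $-\xi''_{s,s'}$ for $s\neq s'$; by Assumption~\ref{as:nondegenerate} we have $\Gamma^{(2)}>0$ entry-wise, which forces $\xi''_{s,s'}>0$ strictly (this is exactly the $k=2$ calculation: $\xi''_{s,s'}=2\gamma_{s,s'}^2\lambda_s\lambda_{s'}+\text{(nonnegative higher-order terms)}$). The diagonal entries $\oM(\vu)_{s,s}=\lambda_s/|u_s|^2-\xi''_{s,s}$ are automatically nonnegative because $\oM(\vu)\succeq 0$ (diagonal entries of any PSD matrix are nonnegative). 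Hence $\oM(\vu)$ is diagonally signed.

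Next I would apply Lemma~\ref{lem:psd-sign-flip} to $M=\oM(\vu)$. This produces the entry-wise absolute-value matrix $M'$, with $M'_{s,s}=\oM(\vu)_{s,s}$ and $M'_{s,s'}=\xi''_{s,s'}$ for $s\neq s'$, satisfying $M'\succeq 0$. The key observation now is the decomposition
\[
    \hM(\vu)=M'+2\,\diag\big(\xi''_{s,s}\big)_{s\in\sS},
\]
which holds because $\hM(\vu)$ and $M'$ agree off the diagonal while $\hM(\vu)_{s,s}-M'_{s,s}=(\lambda_s/|u_s|^2+\xi''_{s,s})-(\lambda_s/|u_s|^2-\xi''_{s,s})=2\xi''_{s,s}$.

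Finally I would conclude using that $\xi''_{s,s}>0$ for every $s\in\sS$: again by non-degeneracy (the $k=2$ contribution is $2\gamma_{s,s}^2\lambda_s^2>0$, reinforced by the $k=3$ contribution), the matrix $2\diag(\xi''_{s,s})_{s\in\sS}$ is strictly positive definite. Adding it to the PSD matrix $M'$ yields $\hM(\vu)\succ 0$. There is no real obstacle here; the only subtlety worth flagging is making sure one uses the strict positivity of the \emph{diagonal} of $\xi''$ (rather than just its off-diagonal entries), since this is what upgrades the PSD conclusion of Lemma~\ref{lem:psd-sign-flip} to strict positive-definiteness.
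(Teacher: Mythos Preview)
Your proof is correct and essentially identical to the paper's: both apply Lemma~\ref{lem:psd-sign-flip} to $\oM(\vu)$ to obtain that the entry-wise absolute-value matrix $M'=\hM(\vu)-2\,\diag(\xi''_{s,s})$ is positive semidefinite, and then use strict positivity of $\xi''_{s,s}$ (from Assumption~\ref{as:nondegenerate}) to upgrade to $\hM(\vu)\succ 0$. Your additional explicit verification that $\oM(\vu)$ is diagonally signed is a welcome detail that the paper leaves implicit.
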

\begin{proof}
    Let $D_{\xi''}$ be the diagonal matrix with $(s,s)$ entry $\xi''_{s,s}$.
    We will apply Lemma~\ref{lem:psd-sign-flip} with matrices $\hM(\vu) - 2D_{\xi''}$ and $\oM(\vu)$.
    Note that the diagonal entries of both matrices coincide, as
    \[
        (\hM(\vu) - 2D_{\xi''})_{s,s}
        = \oM(\vu)_{s,s}
        = \fr{\lambda_s}{|u_s|^2} - \xi''_{s,s}
    \]
    and the off-diagonal entries are related by
    \[
        (\hM(\vu) - 2D_{\xi''})_{s,s'}
        = \xi''_{s,s'}
        = |\oM(\vu)_{s,s'}|.
    \]
    Lemma~\ref{lem:psd-sign-flip} thus implies $\hM(\vu) - 2D_{\xi''} \succeq 0$, which by Assumption~\ref{as:nondegenerate} implies the result.
\end{proof}

\begin{proof}[Proof of Proposition~\ref{prop:stationary-point-not-local-max}]
    The hypothesis $\Re(u_s) = 0$ for some $s$ implies that $\vu\notin\bbR^r$, since $u_s\neq 0$ by Proposition~\ref{prop:MDE-basic}.
    Therefore Lemma~\ref{lem:u-manifold} case~\ref{itm:semicircle-dome} applies, so $\oM(\vu) \succeq 0$ is singular and $M(\vu)$ is invertible.
    % As explained after \eqref{eq:def-M}, this means $\de \vu / \de \vv = M(\vu)^{-1}$.
    Differentiation of \eqref{eq:oF-derivative} using Lemma~\ref{lem:vu-derivative-application} then gives
    \begin{equation}
        \label{eq:oF''}
        \nabla^2 \oF(\vv) = -A^{-1} - \Re(M(\vu)^{-1})\,.
    \end{equation}

    Let $I\subseteq [r]$ be the set of indices $s$ with $|u_s| \neq 1/\sqrt{\xi^s(\vone)}$, which is nonempty by Lemma~\ref{lem:oM-singular-u-bd}.
    By Lemma~\ref{lem:stationary-condition}, we have $\Re(u_s)=0$ for all $s\in I$.
    Moreover, Lemma~\ref{lem:hM-pd} implies $\hM(\vu) \succ 0$.
    We will construct a vector $\vw \in \bbR^r$ such that $\vw^\top (\nabla^2 \oF(\vv)) \vw > 0$, which implies $\vv$ is not a local maximum.
    We work in the subspace $\bbR^I \subseteq \bbR^r$, consider $\va \in \bbR^I$ to be chosen later, and set
    \[
        \vw = -M(\vu) \va = \hM(\vu) \va.
    \]
    Here the second equality uses that $\va \in \bbR^I$ and that $u_s$ is pure imaginary for $s\in I$.
    Importantly, all entries of $\vw$ are real.

    Abbreviate $M = M(\vu)$, $\oM = \oM(\vu)$, $\hM = \hM(\vu)$ and let $D = A - \hM$.
    Note that $D$ is diagonal and its entry $D_{s,s} = \xi'_s - \fr{\lambda_s}{|u_s|^2}$ is nonzero if and only if $s\in I$.
    Let $D^\dagger$ denote the Moore--Penrose inverse of $M$ and $P_I = \sum_{s\in I} \ve_s \ve_s^\top$ be the projection onto $I$.
    Then
    \begin{align}
        \notag
        \vw^\top (\nabla^2 \oF(\vv)) \vw
        &= -\vw^\top A^{-1} \vw - \Re(\vw^\top M^{-1} \vw) \\
        \notag
        &= -\va^\top \hM A^{-1} \hM \va - \Re(\va^\top M \va) \\
        \notag
        &= -\va^\top \hM A^{-1} \hM \va + \va^\top \hM \va \\
        \notag
        &= -\va^\top (A-D) A^{-1} (A-D) \va + \va^\top (A-D) \va \\
        \notag
        &= \va^\top D(D^\dagger-A^{-1})D\va \\
        \notag
        &= \va^\top D A^{-1} \lt(
                (\hM + D)D^\dagger(\hM + D) - (\hM + D)
        \rt) A^{-1} D \va \\
        \label{eq:w-F-w-calc}
        &= \va^\top D A^{-1} \lt(
            \hM D^\dagger \hM
            + \hM P_I + P_I \hM - \hM
        \rt) A^{-1} D \va\,.
    \end{align}
    Recall that $A,\hM$ agree on rows indexed by $[r]\setminus I$.
    So, if $\vy \in \bbR^I$ and $\vz = A^{-1} \vy$, then for any $s\in\sS\setminus I$,
    \[
        (\hM \vz)_s = (A \vz)_s = y_s = 0.
    \]
    Thus $\bbR^I$ is an invariant subspace for $\hM A^{-1}$, i.e. $\hM A^{-1}\bbR^I\subseteq\bbR^I$.
    Since $A$ and $\hM$ are both full rank (by Corollary~\ref{cor:A-pd} and Lemma~\ref{lem:hM-pd}), in fact $\hM A^{-1}\bbR^I=\bbR^I$ is a bijection on $\bbR^I$, and the same holds for $\hM A^{-1} D$.

    Since we showed earlier in this proof that $\oM$ is singular, Lemma~\ref{lem:oM-singular-u-bd} implies that there exists $s\in I$ such that $D_{s,s} > 0$.
    Using the bijectivity just established, we choose $\va$ such that $\hM A^{-1} D \va = \ve_s$. Then
    \[
        \va^\top D A^{-1} \hM D^\dagger \hM A^{-1} D \va
        = \ve_s^\top D^\dagger \ve_s = D_{s,s}^\dagger > 0\,.
    \]
    Since $s\in I$ we further have
    \begin{align*}
        \va^\top D A^{-1} \lt(
            \hM P_I + P_I \hM - \hM
        \rt) A^{-1} D \va
        &= \ve_s^\top P_I \hM^{-1} \ve_s
        + \ve_s^\top \hM^{-1} P_I \ve_s
        - \ve_s^\top \hM^{-1} \ve_s \\
        &= \ve_s^\top \hM^{-1} \ve_s > 0\,.
    \end{align*}
    % Because $\hM \succ 0$, we also have
    % \[
    %     \va^\top D A^{-1} \hM A^{-1} D \va > 0\,.
    % \]
    Summing and recalling \eqref{eq:w-F-w-calc}, we conclude that $\vw^\top (\nabla^2 \oF(\vv)) \vw > 0$ as desired.
\end{proof}

\begin{corollary}
    \label{cor:only-maximizers}
    If $\vv \in \bbR^r$ maximizes $\oF$, then $\vv = \vv(\vDelta) \equiv \Lambda^{1/2} \vx(\vDelta)$ for some $\vDelta \in \{-1,1\}^r$.
    Conversely, each $\vv(\vDelta)$ is a stationary point of $\oF$ with $\oF(\vv(\vDelta))=0$.
    % Thus any $\vx$ maximizing $F$ is of the form $\vx(\vDelta)$.
\end{corollary}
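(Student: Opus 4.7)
The plan is to combine the dichotomy of Lemma~\ref{lem:stationary-condition}, the exclusion of ``pure-imaginary'' local maxima from Proposition~\ref{prop:stationary-point-not-local-max}, and the feasibility characterization of Lemma~\ref{lem:u-manifold} to pin down all maximizers. First, by Lemma~\ref{lem:F-regularity}, $\oF$ is $C^1$ with $\oF(\vv)\to-\infty$ as $\|\vv\|\to\infty$, so it attains its maximum at a stationary point.

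Next, at any maximizer $\vv$, Proposition~\ref{prop:stationary-point-not-local-max} excludes the alternative $\Re(u_s)=0$ at every index, so by Lemma~\ref{lem:stationary-condition} we must have $|u_s|^2 = 1/\xi^s(\vone) = \lambda_s/\xi'_s$ for all $s\in\sS$. I will then claim that $\vu$ must lie in case~\ref{itm:semicircle-flat} of Lemma~\ref{lem:u-manifold}. Indeed if $\vu\in\bbH^r$, the feasibility requirement $\oM(\vu)\Im(\vu)=\vzero$ with $\Im(\vu)\neq\vzero$ would force $\oM(\vu)$ to be singular; but under our hypothesis on $\vu$,
\[
\oM(\vu) = \diag(\lambda_s/|u_s|^2)_{s\in\sS} - \xi'' = \diag(\xi') - \xi'',
\]
which is strictly positive-definite by strict super-solvability, giving a contradiction. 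Hence $\vu\in\bbR^r$ and $u_s = -\Delta_s\sqrt{\lambda_s/\xi'_s}$ for some sign pattern $\vDelta\in\{-1,1\}^r$. Substituting into~\eqref{eq:dyson-equation-2} and matching with~\eqref{eq:ideal-stats} will then yield $\vv = \Lambda^{1/2}\vx(\vDelta) = \vv(\vDelta)$, establishing the first assertion.

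For the converse, given $\vDelta\in\{-1,1\}^r$ I set $u_s = -\Delta_s\sqrt{\lambda_s/\xi'_s}$. Then $M(\vu) = \diag(\xi')-\xi''\succ 0$ by strict super-solvability, so Lemma~\ref{lem:u-manifold}\ref{itm:semicircle-flat} applies and this $\vu$ arises as $\vu(0;\vv)$ for some $\vv\in\bbR^r$. The key algebraic identity $\lambda_s/u_s = \xi'_s u_s$ (which follows from $u_s^2 = \lambda_s/\xi'_s$) turns~\eqref{eq:dyson-equation-2} into $\vv = -A\vu$, and the same substitution as before identifies $\vv = \vv(\vDelta)$. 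Stationarity is then immediate from~\eqref{eq:oF-derivative}, since $\vu\in\bbR^r$ gives $\nabla\oF(\vv(\vDelta)) = -A^{-1}\vv(\vDelta) - \Re(\vu) = \vu - \vu = \vzero$.

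Finally, I will evaluate $\oF(\vv(\vDelta))$ via~\eqref{eq:oF-formula}. The logarithmic contributions cancel since $-2\sum_{s}\lambda_s\log|u_s| = \sum_s\lambda_s\log\xi^s(\vone)$, and the quadratic part collapses using $\la\vv(\vDelta),A^{-1}\vv(\vDelta)\ra = \la\vu,A\vu\ra = \la\vu,\diag(\xi')\vu\ra + \la\vu,\xi''\vu\ra$ together with $\sum_s\xi'_s u_s^2 = \sum_s \lambda_s = 1$, producing $\oF(\vv(\vDelta)) = \tfrac{1}{2}(1-1) = 0$. The one substantive obstacle is the step ruling out $\vu\in\bbH^r$ at a maximizer; once strict super-solvability handles that via $\oM(\vu)$, everything else reduces to direct substitution.
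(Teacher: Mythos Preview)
Your proposal is correct and follows essentially the same route as the paper's proof: combine Lemma~\ref{lem:F-regularity}, Lemma~\ref{lem:stationary-condition}, and Proposition~\ref{prop:stationary-point-not-local-max} to force $|u_s|^2=\lambda_s/\xi'_s$ for every $s$, then use strict super-solvability to compute $\oM(\vu)=\diag(\xi')-\xi''\succ 0$ and invoke Lemma~\ref{lem:u-manifold} to conclude $\vu\in\bbR^r$, after which both the identification $\vv=\vv(\vDelta)$ and the verification $\oF(\vv(\vDelta))=0$ are direct substitutions. Your justification for why nonsingularity of $\oM(\vu)$ rules out case~\ref{itm:semicircle-dome} is in fact slightly more explicit than the paper's one-line appeal to Lemma~\ref{lem:u-manifold}.
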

\begin{proof}
    Lemma~\ref{lem:F-regularity} implies that if $\vv$ maximizes $\oF$ then it is a stationary point.
    Lemma~\ref{lem:stationary-condition} and Proposition~\ref{prop:stationary-point-not-local-max} imply that $|u_s| = 1/\sqrt{\xi^s(\vone)}$ for all $s\in \sS$.
    Thus $\oM(\vu) = \diag(\xi') - \xi'' \succ 0$ is not singular.
    By Lemma~\ref{lem:u-manifold}, we have $\vu \in \bbR^r$, so $u_s = \pm 1/\sqrt{\xi^s(\vone)}$.
    The $2^r$ possible choices of $\vu$ are indexed by $\vDelta \in \{-1,1\}^r$ and given by
    \begin{equation}
        \label{eq:def-u-Delta}
        u(\vDelta)_s = -\Delta_s / \sqrt{\xi^s(\vone)}\,.
    \end{equation}
    Substituting $\vu(\vDelta)$ into \eqref{eq:dyson-equation-2} shows that $\vv = \vv(\vDelta)$.
    For the converse, note that $M(\vu(\vDelta)) = \diag(\xi') - \xi'' \succ 0$, so Lemma~\ref{lem:u-manifold} case \ref{itm:semicircle-flat} implies that $\vu(\vDelta) = \vu(0;\vv(\vDelta))$.
    We can verify from the formulas for $\vu(\vDelta)$ and $\vv(\vDelta)$ that
    \begin{equation}
        \label{eq:vv-to-vu}
        \vv(\vDelta) = -A\vu(\vDelta)\,.
    \end{equation}
    Thus, by Lemma~\ref{lem:stationary-condition},
    \[
        \nabla \oF(\vv(\vDelta))
        = -A^{-1} \vv(\vDelta)-\Re(\vu(\vDelta)) = 0\,,
    \]
    so $\vv(\vDelta)$ is a stationary point.

    Finally, we verify that $\oF(\vv(\vDelta))=0$ for all $\vDelta$ by directly using \eqref{eq:oF-formula}. Since $\vu(\vDelta)\in\bbR^r$, the quadratic terms combine to give:
    \begin{align*}
    \frac{1}{2}\lt(\la \vu(\vDelta),\xi''\vu(\vDelta)\ra
    -
    \la \vv(\vDelta),A^{-1}\vv(\vDelta)\ra\rt)
    &\stackrel{\eqref{eq:vv-to-vu}}{=}
    \la \vu(\vDelta),(\xi''-A)\vu(\vDelta)\ra/2
    \\
    &\stackrel{\eqref{eq:Lambda-and-A}}{=}
    -\la \vu(\vDelta),\diag(\xi')\vu(\vDelta)\ra/2
    \\
    &\stackrel{\eqref{eq:def-u-Delta}}{=}
    -1/2.
    \end{align*}
    This cancels the first term in \eqref{eq:oF-formula}.
    Meanwhile recalling \eqref{eq:def-u-Delta}, the logarithmic terms give
    \[
    -\sum_{s\in\sS}
    \lambda_s
    \log\big( |u_s|\sqrt{\xi^s(\vone)}\big)
    =
    0.
    \]
    Combining completes the proof.
\end{proof}

\begin{proof}[Proof of Proposition~\ref{prop:variational-maximum}]
    Lemma~\ref{lem:F-regularity} implies that $\oF$ possesses at least one global maximizer. The preceding results imply that the only possibilities are the $2^r$ points $\vv(\vDelta)$, and we have just computed $\oF(\vv(\vDelta))=0$ for all $\vDelta$. This completes the proof.
\end{proof}

\begin{proof}[Proof of Lemma~\ref{lem:spec-no-0}]
    By Proposition~\ref{prop:MDE-basic} and e.g. \cite[Chapter 2.4]{Guionnet}, $\mu(\vx(\vDelta))$ has piecewise smooth density given by
    \[
        \rho(\gamma)
        = \fr{1}{\pi} \Im(m(\gamma;\vx(\vDelta)))
        = \fr{1}{\pi} \Im(u(\gamma;\vv(\vDelta)))\,,
    \]
    where $u(\gamma;\vv(\vDelta)) = \sum_s \lambda_s u_s(\gamma;\vv(\vDelta))$.
    So, it suffices to show $\vu(\gamma;\vv(\vDelta))$ is real for all $|\gamma|\le \eps$.
    It is clear from \eqref{eq:dyson-equation-1} that
    \[
        \vu(\gamma;\vv(\vDelta)) = \vu(0;\vv(\vDelta)+\gamma\vlambda)\,.
    \]
    Recall $M(\vu(\vDelta)) = \diag(\xi') - \xi'' \succ 0$.
    Thus, $M(\vu) \succ 0$ for $\vu$ in an open neighborhood $\cN \subseteq \bbR^r$ of $\vu(\vDelta)$. Hence solving \eqref{eq:dyson-equation-2} near $\vu(\vDelta)$ via inverse function theorem bijectively maps $\cN$ to an open neighborhood $\cN' \subseteq \bbR^r$ of $\vv(\vDelta)$.
    By Lemma~\ref{lem:u-manifold} case \ref{itm:semicircle-flat}, if $\vu \in \cN$ maps to $\vv \in \cN'$ under \eqref{eq:dyson-equation-2}, then $\vu = \vu(0;\vv)$.
    In particular, for suitably small $\eps > 0$, we have $\vv(\vDelta) + \gamma\vlambda \in \cN'$ for all $|\gamma| \le \eps$.
    Thus $\vu(0;\vv(\vDelta)+\gamma\vlambda)$ is real.
\end{proof}

\begin{figure}[h!]
    \centering
    \begin{framed}
    \begin{subfigure}[b]{.5\textwidth}
        \includegraphics[width=.9\linewidth]{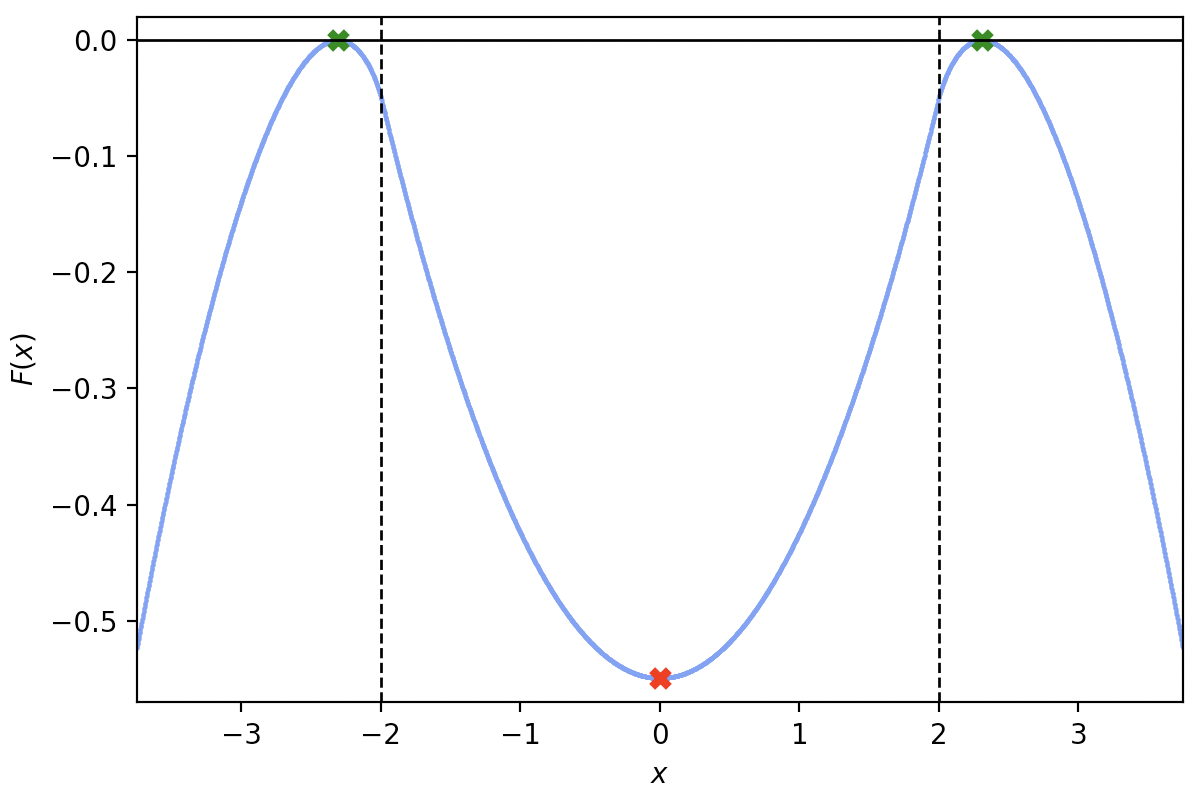}
        \caption{One species, $\xi' = 3$, $\xi''=1$}
        \label{subfig:one-species}
    \end{subfigure} \\
    \begin{subfigure}[b]{.47\textwidth}
        \includegraphics[width=.9\linewidth]{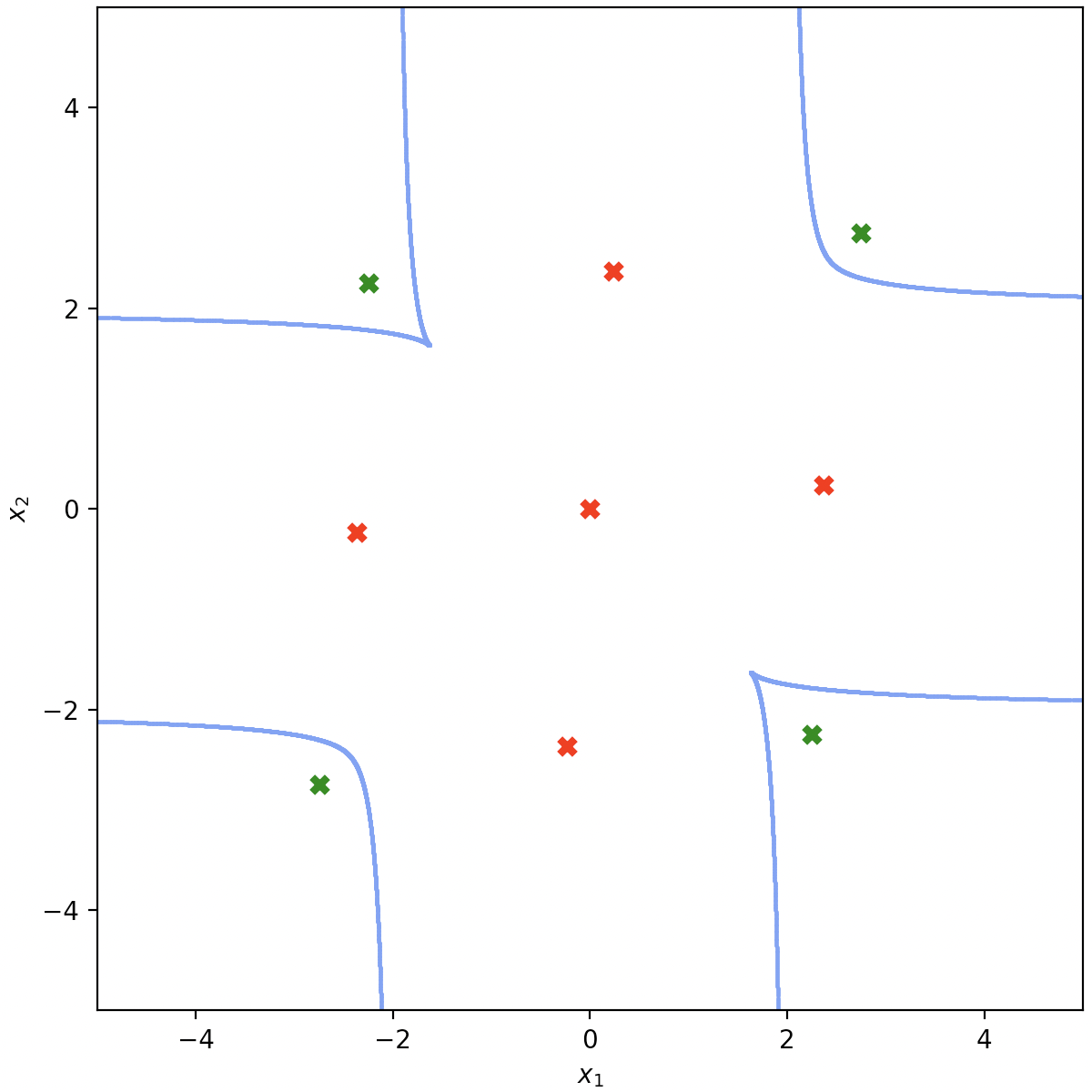}
        \caption{Two species, $\vlambda = (0.5,0.5)$, $\xi' = (4,4)$, $\xi'' = \lt(\begin{smallmatrix}1 & 0.5 \\ 0.5 & 1\end{smallmatrix}\rt)$}
        \label{subfig:two-species}
    \end{subfigure}
    \qquad
    \begin{subfigure}[b]{.47\textwidth}
        \includegraphics[width=.9\linewidth]{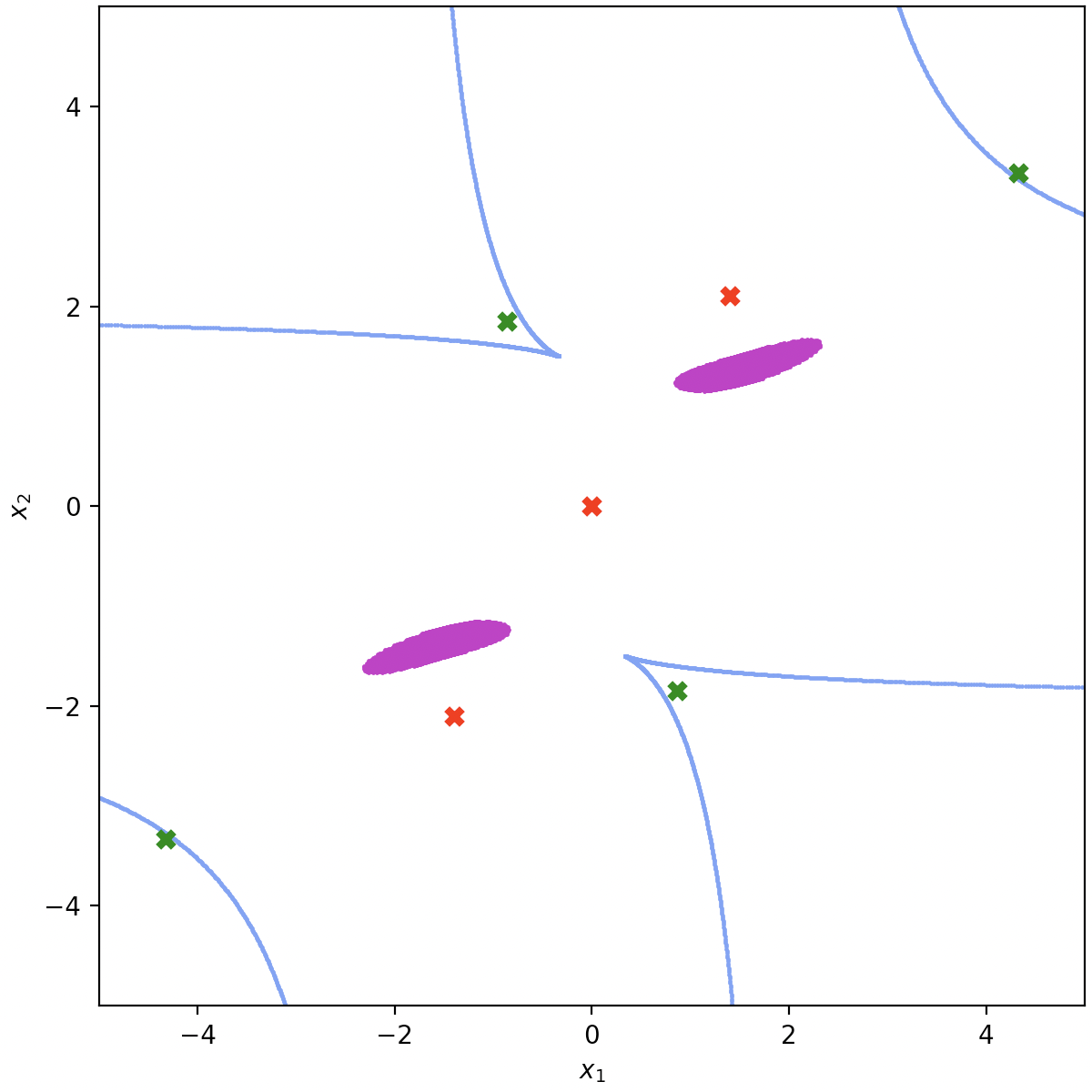}
        \caption{Two species, $\vlambda = (0.3,0.7)$, $\xi'=(4.5,4.5)$, $\xi'' = \lt(\begin{smallmatrix}1 & 2.4 \\ 2.4 & 1\end{smallmatrix}\rt)$}
        \label{subfig:two-species-weird}
    \end{subfigure}
    \caption{Figure~\ref{subfig:one-species}: the complexity functional $F$ of a $1$-species model is shown. $F$ is tangent to the $x$-axis at two global maxima marked by green X's. The red X is a local minimum. The two dashed vertical lines mark the transition from local convexity to concavity, and $F''$ is discontinuous at these points. \\~\\
    Figures~\ref{subfig:two-species} and \ref{subfig:two-species-weird}: points of interest are shown in the domain $\bbR^2$ of the complexity functionals $F$ for two different $2$-species models. The green X's are global maxima where $F$ equals $0$, while the red X's are stationary points that are not local maxima. \\
    The blue boundary is analogous to the dashed vertical lines in Figure~\ref{subfig:one-species}, and is where $\vm(0;\vx)$ transitions from real and nonreal.
    In the four regions outside this boundary, $\vm(0;\vx)$ is real, and in the region inside it $\vm(0;\vx)$ is non-real.
    By Lemma~\ref{lem:u-manifold} and continuity of $\vx \mapsto \vm(0;\vx)$ (see Theorem~\ref{thm:continuity}), this boundary is also the set of $\vx$ for which $\vm(0;\vx)$ is real and $M(\vm(0;\vx))$ is singular. \\
    In Figure~\ref{subfig:two-species}, $F$ is locally non-concave inside this boundary, but in Figure~\ref{subfig:two-species-weird} $F$ is also locally concave in the shaded purple regions. Note also that in Figure~\ref{subfig:two-species-weird}, there are only three red X's instead of five; the $3^r$ stationary points identified by Lemma~\ref{lem:stationary-condition} do not necessarily all exist.
    }
    \label{fig:complexity-functional}
    \end{framed}
\end{figure}

\subsection{Discussion of Proof Technique}
\label{subsec:proof-discussion}

Lemma~\ref{lem:stationary-condition} identifies approximately $3^r$ stationary points of $F$: for each species $s$, we may choose whether $\Re(u_s)=0$, $|u_s| = 1/\sqrt{\xi^s(\vone)}$ has positive real part, or $|u_s| = 1/\sqrt{\xi^s(\vone)}$ has negative real part (though these do not always all exist, see Figure~\ref{subfig:two-species-weird}).
As we saw in the above proof, the $2^r$ stationary points where $u_s = \pm 1/\sqrt{\xi^s(\vone)}$ are global maximizers and the rest are saddle points of index at least $1$.
The main task in the proof of Proposition~\ref{prop:variational-maximum} was to rule out the extraneous critical points; in this subsection we motivate our method for doing so.

In the single-species case $r=1$, in the topologically trivial regime $F$ is convex on the interval $[-2\sqrt{\xi''}, 2\sqrt{\xi''}]$ and concave on its complement, and the maximum $0$ is attained in the latter set; see Figure~\ref{subfig:one-species}.
The boundary points $\pm 2\sqrt{\xi''}$ correspond to the radial derivative values where $0$ enters or exits the limiting bulk spectrum of $\nabla^2_\sph H(\bx)$, and can be detected by the Stieltjes transform $m(0;x)$ becoming non-real.
(In fact, $F''$ is discontinuous at these points.)
This characterization of the convexity and concavity of $F$ allows us to easily identify which of the critical points given by Lemma~\ref{lem:stationary-condition} are local maximizers.

However, a similar ``region by region" convexity analysis will not work with multiple species.
Similarly to the one-species case, $\nabla^2 F$ is discontinuous on a surface of radial derivative vectors $\vx$ where $0$ enters or exits the limiting bulk spectrum of $\nabla^2_\sph H(\bx)$, and this can be detected by $\vm(0;\vx)$ (or equivalently $\vu(0;\vv)$) becoming non-real.
This boundary divides $\bbR^r$ into several regions and is depicted in Figures~\ref{subfig:two-species} and \ref{subfig:two-species-weird} as the blue curve.
A natural approach to ruling out the extraneous critical points would be to show that, analogously to above, $F$ is locally concave outside this boundary (in the regions containing the $2^r$ true maxima) and locally nonconcave inside it.
However, this characterization is surprisingly not true.
While $F$ is indeed locally concave outside the boundary --- if $M(\vu) \succ 0$, then \eqref{eq:oF''} implies $\nabla^2 \oF(\vv) \preceq 0$ --- it is possible for $F$ to also be locally concave inside it, for example in the purple regions in Figure~\ref{subfig:two-species-weird}.

This counterexample rules out attempts to argue globally about convexity. This led us to the more direct approach of finding, at each extraneous critical point, a direction along which $\nabla^2 F$ is positive.

\section{Approximate Critical Point Control from Kac--Rice Estimates}
\label{sec:approximate}

By Markov's inequality, negativity of the annealed Kac--Rice estimate \eqref{eq:annealed-kac-rice-atypical} implies that $H_N$ has no $\eps$-atypical critical points (with high probability).
The following main result of this section shows this implication is robust in some sense: the same Kac--Rice estimate also implies non-existence of certain \emph{approximate} critical points.

\begin{proposition}
\label{prop:approx-crits}
    For strictly super-solvable $\xi$ and any $\ups>0$ there exists $\eps=\eps(\xi,\ups)$ such that with probability $1-e^{-cN}$, all $\eps$-approximate critical points are $\ups$-good (recall Definition~\ref{def:good}).
    % satisfy \eqref{eq:energy-good}, \eqref{eq:1spin-correlation-good}, \eqref{eq:radial-derivative-good}, \eqref{eq:hessian-good} with constant $\ups$.
\end{proposition}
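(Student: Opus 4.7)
The plan is to establish non-existence of $\ups$-bad approximate critical points by a perturbation argument that transfers the annealed Kac--Rice bounds of Proposition~\ref{prop:annealed-kac-rice}, together with the variational result Proposition~\ref{prop:variational-maximum}, from exact to approximate critical points. For deterministic $\bh\in\bbR^N$ the shifted Hamiltonian $\tH_N^{\bh}(\bsig):=H_N(\bsig)-\la\bh,\bsig\ra$ differs from $H_N$ only by a deterministic linear term, so the Kac--Rice analysis of Section~\ref{sec:annealed-kac-rice} applies verbatim to $\tH_N^{\bh}$ with a complexity functional $F_{\bh}$ satisfying $|F_{\bh}(\vx)-F(\vx)|=O(\|\bh\|_2/\sqrt{N})$ uniformly on bounded $\vx$. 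Crucially, an $\eps$-approximate critical point $\bx$ of $H_N$ is an exact critical point of $\tH_N^{\bh}$ whenever $\bh$ extends $\nabla_{\sph}H_N(\bx)$ from $T_\bx\cS_N$ to $\bbR^N$, and $\|\bh\|_2\le \eps\sqrt{N}$ in this case.

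The failure of $\ups$-goodness splits into two cases. First, $\nabla_{\rd}H_N(\bx)$ may lie in the closed set $\cD$ obtained as the complement of $\ups$-neighborhoods of $\{\vx(\vDelta):\vDelta\in\{-1,1\}^r\}$, on which Proposition~\ref{prop:variational-maximum} and Lemma~\ref{lem:F-regularity} give $\sup_{\cD}F\le -2c<0$ for some $c=c(\xi,\ups)>0$. Second, $\nabla_{\rd}H_N(\bx)$ may be close to some $\vx(\vDelta)$ while the observed conditional statistics--energy, 1-spin overlap, or Hessian bulk and edge--fail to match their conditional-mean predictions from Lemma~\ref{lem:derivative-laws} and Proposition~\ref{prop:MDE-basic}\ref{it:Dyson-solves-RMT}, making $\bx$ $\ups$-atypical in the sense of Definition~\ref{def:typical}. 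Both cases reduce to the following transfer principle: for any closed $\cD$ on which either \eqref{eq:annealed-kac-rice} or \eqref{eq:annealed-kac-rice-atypical} gives expected count $\le e^{-cN}$, with probability $1-e^{-c'N}$ no $\eps$-approximate critical point of $H_N$ has the corresponding bad property.

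For the transfer principle I would cover the ball $\{\bh\in\bbR^N:\|\bh\|_2\le\eps\sqrt{N}\}$ by a net $\cN$ at scale $\delta\sqrt{N}:=\eps e^{-c/4}\sqrt{N}$, giving $|\cN|\le e^{cN/4}$. For each $\bh\in\cN$, since $F_{\bh}$ is within $O(\eps)$ of $F$ on a sufficiently large compact set, Proposition~\ref{prop:annealed-kac-rice} and Markov's inequality give that $\tH_N^{\bh}$ has no exact critical point in a slight enlargement $\cD'$ of $\cD$ with probability $1-e^{-3cN/2}$; a union bound retains $1-e^{-cN}$. If $\bx$ is an $\eps$-approximate critical point of $H_N$ with $\nabla_{\rd}H_N(\bx)\in\cD$, then taking $\bh\in\cN$ nearest to an extension of $\nabla_{\sph}H_N(\bx)$ makes $\bx$ a $\delta\sqrt{N}$-approximate critical point of $\tH_N^{\bh}$; Proposition~\ref{prop:gradients-bounded} supplies Lipschitz control on the Hessian, and a single Newton iteration produces a nearby exact critical point of $\tH_N^{\bh}$ in $\cD'$, contradicting the net bound. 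The main obstacle is ensuring Hessian invertibility at $\bx$ so that the Newton step is valid: this follows from Lemma~\ref{lem:spec-no-0} together with Proposition~\ref{prop:MDE-basic}\ref{it:Dyson-solves-RMT} once $\bx$ is known to have bulk-typical Hessian, which is itself a conclusion of the transfer principle. This circularity is resolved by treating the cases in order of decreasing robustness: bulk-atypicality is handled first by a direct version of the covering argument that does not require Newton refinement, because bulk atypicality is a pointwise spectral property that perturbs only by an $O(\eps)$ diagonal curvature correction in $\bh$ and thus transfers continuously from exact to approximate critical points at the level of the net; the remaining cases are then handled by the full covering/Newton argument in the regime where invertibility is guaranteed.
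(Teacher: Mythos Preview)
Your approach is genuinely different from the paper's, and the circularity you identify at the end is real and is \emph{not} resolved by your proposed workaround.

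The paper does not use a deterministic net over linear shifts $\bh$. Instead it adds an \emph{independent Gaussian} perturbation: with $H'_N$ an i.i.d.\ copy of $H_N$, it sets $H_{N,\delta}=\sqrt{1-\delta}\,H_N+\sqrt{\delta}\,H'_N$, and proves (Lemma~\ref{lem:approx-to-exact-computation}) that conditionally on any $H_N\in K_N(\eps)$ possessing an $\eps$-approximate critical point $\bx$, the \emph{conditional} Kac--Rice integral gives
\[
\bbE\big[\,|\Crt_{N,\delta}\cap B_{\iota\sqrt N}(\bx)|\ \big|\ H_N\,\big]\ \ge\ e^{-o_\delta(N)}.
\]
Averaging over $H_N$ and using the annealed bound \eqref{eq:crt-bad} for $H_{N,\delta}$ (which has the \emph{same} law as $H_N$) finishes the proof. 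The point is that the randomization \emph{replaces} both the net and the Newton step: one never needs to match $\bx$ to an exact critical point of a fixed perturbed Hamiltonian, because the conditional Kac--Rice count already produces exact critical points of the random $H_{N,\delta}$ in expectation. The price is the delicate determinant cancellation in Lemma~\ref{lem:approx-to-exact-computation}, which is the technical heart of Section~\ref{sec:approximate}.

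Your scheme breaks precisely where you say it might. After rounding $\bh$ to the net, $\bx$ is only a $\delta$-approximate critical point of $\tH_N^{\bh_0}$, and the annealed Kac--Rice bound for $\tH_N^{\bh_0}$ says nothing about approximate critical points. Your proposed fix---handle bulk-atypicality first without Newton---does not escape this: showing that no $\tH_N^{\bh_0}$, $\bh_0\in\cN$, has a bulk-atypical \emph{exact} critical point still tells you nothing about the $\delta$-approximate critical point $\bx$ of $\tH_N^{\bh_0}$, and there is no a priori reason for an exact critical point of $\tH_N^{\bh_0}$ to sit near $\bx$. Refining the net until ``$\delta$-approximate'' becomes ``exact by continuity'' would require mesh $o(N^{-1/2})$ and hence super-exponential net size, destroying the union bound. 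A secondary inaccuracy: Proposition~\ref{prop:annealed-kac-rice} does \emph{not} apply verbatim to $\tH_N^{\bh}$, since the deterministic shift breaks the spherical symmetry that reduces the Kac--Rice integral to a volume factor; one can salvage an upper bound by maximizing the $\bsig$-dependent density over $\bsig$ (picking up only an $e^{O(\eps^2)N}$ factor), but this should be said rather than asserted.
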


Our approach proceeds as follows.
Given $H_N$ and $\delta>0$, define the rerandomization
\begin{equation}
\label{eq:H-N-delta}
    H_{N,\delta}(\bx)
    =
    \sqrt{1-\delta}\, H_N(\bx)
    +
    \sqrt{\delta}\, H_N'(\bx)
\end{equation}
for $H_N'$ an independent copy of $H_N$.
Let $\Crt_{N,\delta}$ be the set of critical points for $H_{N,\delta}$.
Our goal will be to show that if $\|\nabla_{\sph} H_N(\bx)\|_2 \leq \eps\sqrt{N}$ and $H_N$ lies in a typical set (of probability $1-e^{-cN}$), then for suitable $\delta,\iota$ tending to $0$ with $\eps$,
\[
    \bbE\lt[|\Crt_{N,\delta}\cap B_{\iota\sqrt{N}}(\bx)|~\big|~H_N\rt]\geq e^{-o_{\delta}(N)}
\]
This implies that if $H_N$ has an $\eps$-critical point that is not $\ups$-good (for some $\ups$ also tending to $0$ with $\eps$), then the rerandomized Hamiltonian $H_{N,\delta}$ has on average at least $e^{-o_{\delta}(N)}$ critical points which are not $\ups/2$-good.
Combining with Theorem~\ref{thm:annealed-crits}, which shows the number of such critical points is exponentially small, will then yield Proposition~\ref{prop:approx-crits}.

In fact, this argument proves the following much more general result, which we believe is of significant independent interest.
Let $\cJ$ consist of all compact subsets of $\bbR$, equipped with the Hausdorff metric. We consider a non-empty subset
\[
% \ocD\subseteq \bbR^r\times \bbR\times \cJ,
\ocD\subseteq \bbR^r\times \bbR^r\times \bbR\times \cJ\times \bbW_1(\bbR),
\]
where the right-hand product is equipped with the supremum metric over its five factors.

Given $\iota,\eps\geq 0$, we define the set $\Crt_N^{\ocD,\eps,\ups}(H_N)\subseteq\cS_N$ of $\eps$-approximate critical points $\bx\in\cS_N$ for $H_N$ which are $\ups$-far from being described by an element of $\ocD$, i.e. which satisfy
\begin{equation}
\label{eq:ups-far-from-ocD}
    d\Big(
    % \big(\nabla_{\rd}H_N(\bx),H_N(\bx)/N,\spec_{H_N}(\bx)\big)
    \Big(
    \nabla_{\rd}H_N(\bx),
    \vR(\bx,\bG^{(1)}),
    \frac{H_N(\bx)}{N},
    \spec_{H_N}(\bx),
    \wh\mu_{H_N}(\bx)
    \Big)
    ,\ocD
    \Big)
    \geq \ups.
\end{equation}
Recall \eqref{eq:Hessian-spectral-defs} for definitions of $\spec_{H_N}(\cdot)$ and $\wh\mu_{H_N}(\cdot)$.
As usual, the distance from a point to a set is the infimal point-to-point distance; recall also the definition of $\nabla_{\rd}$ near Fact~\ref{fac:riemannian-to-euclidean}.
Note that $\Crt_N^{\ocD,\eps,\iota}(H_N)$ is an infinite set with positive probability unless $\eps=0$.

\begin{theorem}
\label{thm:approx-crits-from-annealed}
    Suppose $\xi,\ocD,\eps,\ups,c_0$ are such that for $N$ large enough,
    \begin{equation}
    \label{eq:annealed-crits-close-to-ocD}
        \bbE \big|\Crt_N^{\ocD,0,\ups/2}(H_N)\big|\leq e^{-c_0 N}.
    \end{equation}
    Then for some small $\eps>0$ depending only on $(\xi,\ups,c_0)$, for some $c>0$ and all $N$ large enough:
    \[
        \bbP\big[\big|\Crt_N^{\ocD,\eps,\ups}(H_N)\big|\geq 1\big]\leq e^{-cN}.
    \]
\end{theorem}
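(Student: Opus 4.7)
The approach, flagged just before the theorem statement, is via the rerandomization $H_{N,\delta}=\sqrt{1-\delta}\,H_N+\sqrt{\delta}\,H_N'$ from \eqref{eq:H-N-delta}, which satisfies $H_{N,\delta}\stackrel{d}{=}H_N$. The plan is to show that if $H_N\in K_N$ (Proposition~\ref{prop:gradients-bounded}) possesses any $\eps$-approximate critical point $\bx_0$ that is $\ups$-far from $\ocD$, then the rerandomized $H_{N,\delta}$ has, conditionally on $H_N$, at least $e^{-c_0 N/2}$ expected exact critical points in a small ball around $\bx_0$, each of which is $\ups/2$-far from $\ocD$ when evaluated under $H_{N,\delta}$'s own data. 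Given a measurable selection $\bx_0(H_N)\in\Crt_N^{\ocD,\eps,\ups}(H_N)$ on the event this set is nonempty, Fubini against the independent randomness of $H_N'$ then yields
\[
\bbE\big|\Crt_N^{\ocD,0,\ups/2}(H_{N,\delta})\big|\geq e^{-c_0 N/2}\cdot \bbP\big[|\Crt_N^{\ocD,\eps,\ups}(H_N)|\geq 1,\,H_N\in K_N\big].
\]
Since the left-hand side equals $\bbE|\Crt_N^{\ocD,0,\ups/2}(H_N)|\leq e^{-c_0 N}$ by the hypothesis, and $\bbP[H_N\notin K_N]\leq e^{-cN}$, this yields the desired bound.

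The core step is a conditional Kac--Rice lower bound. Conditional on $H_N$, the process $H_{N,\delta}$ is Gaussian with mean $\sqrt{1-\delta}\,H_N$ and covariance $\delta$ times that of $H_N$, so by Lemma~\ref{lem:derivative-laws} the conditional covariance of $\nabla_\sph H_{N,\delta}(\bsig)$ is $\delta$ times a uniformly well-conditioned matrix, and its density at $\bzero$ is at least $(C\delta)^{-(N-r)/2}\exp(-C(\eps+C\iota)^2 N/\delta)$ uniformly for $\bsig\in B_{\iota\sqrt{N}}(\bx_0)$ and $H_N\in K_N$, using the Lipschitz bound on $\nabla_\sph H_N$. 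Integrating over $B_{\iota\sqrt{N}}(\bx_0)$ contributes a spherical volume factor of $(c\iota)^{N-r}e^{O(N)}$. Taking $\iota=\sqrt{\delta}$, the $\delta^{-(N-r)/2}$ density factor and $(\sqrt\delta)^{N-r}$ volume factor cancel. The conditional expected Hessian determinant (discussed in the next paragraph) is $e^{O_{\xi,\delta}(N)}$ uniformly for $H_N\in K_N$, so the conditional expected count is at least $\exp(-C(\eps+C\iota)^2 N/\delta + O_{\xi,\delta}(N))$. Choosing $\delta$ small depending on $(\xi,c_0,\ups)$ to ensure that (i) the implicit constant in $O_{\xi,\delta}(N)$ exceeds $-c_0/4$, and (ii) $\iota=\sqrt{\delta}$ is small enough that by Lipschitz continuity of all functionals appearing in \eqref{eq:ups-far-from-ocD} (via Propositions~\ref{prop:gradients-bounded} and \ref{prop:spectral-perturbation}), the $\ocD$-data of any critical point of $H_{N,\delta}$ in $B_{\iota\sqrt{N}}(\bx_0)$ stays within $\ups/2$ of the $\ocD$-data of $\bx_0$ under $H_N$; and then $\eps$ small enough that $C\eps^2/\delta\leq c_0/4$, yields the claimed $e^{-c_0 N/2}$ lower bound.

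The main obstacle is the uniform $e^{O_{\xi,\delta}(N)}$ lower bound on the conditional expected Hessian determinant $\bbE\big[|\det\nabla_\sph^2 H_{N,\delta}(\bsig)| \,\big|\, \nabla_\sph H_{N,\delta}(\bsig)=\bzero,\,H_N\big]$ for $H_N\in K_N$ and fixed $\delta>0$. The matrix is the sum of a fixed matrix $\sqrt{1-\delta}\,\nabla_\sph^2 H_N(\bsig)$ of operator norm $O_\xi(1)$, a deterministic correction from conditioning, and a centered Gaussian perturbation of scale $\sqrt{\delta}$ with the covariance from Lemma~\ref{lem:derivative-laws}. Although the fixed part can be badly conditioned, the $\sqrt{\delta}$-scaled GOE-like noise regularizes the spectrum: the conditional empirical spectral measure is described, to leading exponential order, by a deformed vector Dyson equation in which the fixed part enters continuously (cf.\ Proposition~\ref{prop:MDE-basic}). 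The resulting limiting density is bounded above by $O(1/\sqrt{\delta})$ and compactly supported, so its log-potential $\int\log|\gamma|\,\mu_*(\de\gamma)$ is finite with a $(\xi,\delta)$-uniform lower bound. Applying \cite[Corollary 1.9.A]{arous2021exponential} to this conditional matrix law then supplies the required $e^{O_{\xi,\delta}(N)}$ bound uniformly on $K_N$.
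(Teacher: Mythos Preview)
Your overall strategy matches the paper's: use the rerandomization $H_{N,\delta}$ and a conditional Kac--Rice lower bound to manufacture genuine critical points of $H_{N,\delta}$ near any approximate critical point of $H_N$, then compare to the annealed hypothesis. The gap is in the quantitative estimate. You bound the gradient density and the Hessian determinant \emph{separately}, and then assert that for small $\delta$ ``the implicit constant in $O_{\xi,\delta}(N)$ exceeds $-c_0/4$.'' This assertion is not justified and is in fact false in the relevant worst case.

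The problem is visible when $\nabla_{\sph}^2 H_N(\bx_0)$ has a macroscopic fraction of eigenvalues near zero (which is not excluded by $H_N\in K_N$ or by $\bx_0$ being an approximate critical point). Then the conditional Hessian is essentially $\sqrt{\delta}$ times a GOE-like matrix on that eigenspace, so $\bbE|\det\nabla_{\sph}^2 H_{N,\delta}(\bsig)|$ is only of order $\delta^{cN}$ for some $c>0$, i.e.\ your determinant lower bound carries an exponent $c\log\delta$ that tends to $-\infty$ as $\delta\downarrow 0$. Meanwhile your density--times--volume factor with $\iota=\sqrt\delta$ and the crude bound $\|\nabla_{\sph}H_N(\bsig)\|\le(\eps+C\iota)\sqrt N$ contributes at best $e^{-C''N}$ with $C''>0$ independent of $\delta$ (from the term $(C\iota)^2/\delta=C^2$). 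So the total exponent is bounded above by $-C''+c\log\delta+o_\eps(1)$, and no choice of small $\delta$ can push this above $-c_0/4$ for small $c_0$. Taking $\delta$ large does not help either, since then $\iota=\sqrt\delta$ is not small enough to guarantee the $\ups/2$-closeness of the $\ocD$-data.

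What is missing is exactly the cancellation the paper engineers in Lemma~\ref{lem:approx-to-exact-computation}. There the gradient is Taylor-expanded to second order, $\|\nabla_{\sph}H_N(\by)\|\approx\|\nabla_{\sph}^2 H_N(\bx_0)\cdot(\by-\bx_0)\|$ (Lemma~\ref{lem:grad-y-taylor}), and the integral of the Gaussian density is performed in the eigenbasis of the (distorted) Hessian $J_N(\bx_0)$, separating the small-eigenvalue subspace $T_1$ (where the Gaussian is essentially flat) from its complement $T_2$ (where a change of variables produces a factor $|\det(J_N(\bx_0)|_{T_2})|^{-1}$). This inverse determinant is precisely what cancels the expected Hessian determinant, yielding a conditional count $\ge e^{-o_\delta(N)}$ uniformly over $K_N(\eps)$. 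Your uniform bound $\|\nabla_{\sph}H_N(\bsig)\|\le(\eps+C\iota)\sqrt N$ discards this structure, and without it the argument cannot close for arbitrary $c_0>0$.
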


\begin{proof}[Proof of Proposition~\ref{prop:approx-crits} from Theorem~\ref{thm:approx-crits-from-annealed}:]
    Let $\ocD$ be the size $2^r$ set:
    \[
    \ocD
    =
    \ocD(\xi)
    =
    \lt\{
    \Big(
    \vx(\vDelta),
    \vR(\vDelta),
    E(\vDelta),
    S(\vDelta),
    \mu(\vx(\vDelta))
    \Big)
    ~:~\vDelta\in \{-1,1\}^r\rt\}.
    \]
    Then Theorem~\ref{thm:annealed-crits}\ref{itm:crt-bad} implies the condition \eqref{eq:annealed-crits-close-to-ocD} for any $\ups>0$, for some correspondingly small $c>0$.
    Therefore we may apply Theorem~\ref{thm:approx-crits-from-annealed} which completes the proof.
\end{proof}

We note that although $\ocD$ is defined in near-maximal generality above, examining just one of its $5$ components yields interesting consequences.
For instance Section~\ref{sec:approx-local-max-E-infty} considers only the radial derivative.

\subsection{Technical Properties of the Conditional Vector Dyson Equation}

In this somewhat technical subsection, we study the vector Dyson equation corresponding to \eqref{eq:H-N-delta}.
Note that when both $H_N$ and $H_N'$ are treated as random, the relevant Dyson equation is exactly as in our usual setting explained in Subsection~\ref{subsec:prelim-rmt}.
However if one first conditions on $H_N$, then \eqref{eq:H-N-delta} yields a different vector Dyson equation with an apriori somewhat different solution that depends on $H_N$.

We show below that with extremely high probability over $H_N$, the latter ``conditional'' solution is uniformly close to that of the unconditional equation.
This is crucial because, since we do wish to condition on $H_N$ in our main argument, we need to apply the asymptotic determinant results from \cite{arous2021exponential} to this conditioned vector Dyson equation.
The main idea is that with or without conditioning, concentration of the empirical spectrum of $H_{N,\delta}$ implies it is well-described by the Dyson equation's solution with very high probability.
Hence these solutions must in fact be similar, with high probability over $H_N$.

% We consider probability measures and random matrix distributions with the following regularity properties.
Recall the definition of $C$-regular probability measure from Definition~\ref{def:C-regular-mu}.
We next define a class of $C$-regular random matrix models, which have $C'$-regular spectral measure in some sense; see Lemma~\ref{lem:local-law-boosted} just below.

\begin{definition}
\label{def:C-regular-M}
    The (law of the) random symmetric matrix $M_N\in\bbR^{N\times N}$ is \textbf{$C$-regular} if it is given by $M_N=W_N+A_N$ where:
    \begin{enumerate}[label=(\alph*)]
        \item $A_N$ is deterministic and $\|A_N\|_{\op}\leq C$.
        \item $W_N$ is a centered Gaussian matrix with independent entries on and above the diagonal.
        \item Each entry of $W_N$ has variance in $\lt[\frac{1}{CN},\frac{C}{N}\rt]$.
    \end{enumerate}
\end{definition}

% Given a symmetric matrix $M$, we denote by $\wh{\mu}_M$ its empirical spectral distribution.
Given a $C$-regular random matrix $M_N$, for each $z\in\bbH$ we let $G_N(z)\in\bbC^{N\times N}$ be the unique solution to the equation
\begin{equation}
\label{eq:MDE}
    I_N + (zI_N - A_N + \bbE[W_N G_N(z)W_N])G_N(z)=0
\end{equation}
with the constraint that the imaginary part $\Im (G_N(z))$ is a strictly positive definite matrix.
We let $\mu_{M_N}\in\cP(\bbR)$ be the (unique) probability measure with Stieltjes transform $\Tr(G_N(z))/N$.
Such $G_N(z)$ and $\mu_{M_N}$ exist and are unique by e.g. \cite[Proposition 5.1~(i),(ii)]{erdos2019random}.

\begin{lemma}
\label{lem:local-law-boosted}
    If $M_N$ is $C_1$-regular, then $\mu_{M_N}$ is $C_2$-regular for $C_2$ depending only on $C_1$.
    Further, for any event $E$ with $\bbP[E]\geq 1/2$,
    \begin{align}
    \label{eq:W1-approx-MDE}
    \bbW_1\big(
    \mu_{M_N},
    \bbE\big[
    \wh{\mu}_{M_N}
    \big]
    \big)
    &\leq
    \delta_N,
    \\
    \label{eq:shifted-determinant-formula}
    \lt|
    \fr1N \log \bbE \lt[
        1_E\cdot
        |\det M_N|
    \rt]
    % - \log \bbP(E)
    -
    \int \log|\lambda| \de\mu_{M_N}(\lambda)
    \rt|
    &\leq \delta_N
    \end{align}
    for a sequence $\delta_N\to 0$ depending only on $C_1$.
\end{lemma}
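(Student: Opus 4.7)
The first assertion, that $\mu_{M_N}$ is $C_2$-regular, is immediate from matrix Dyson equation theory: by \cite{ajanki2017singularities, ajanki2019quadratic}, the $C_1$-regularity of $M_N$ (boundedness of $\|A_N\|_\op$ together with two-sided variance bounds) forces $\mu_{M_N}$ to be supported in $[-C_2,C_2]$ and to have density bounded by $C_2$, for some $C_2 = C_2(C_1)$. The Wasserstein bound \eqref{eq:W1-approx-MDE} is a consequence of the global law for the matrix Dyson equation (cf.\ \cite{ajanki2017universality, erdos2019random}): one has $\bbW_1(\wh\mu_{M_N},\mu_{M_N}) \leq N^{-c}$ with probability $\geq 1-e^{-cN}$, and $\supp(\wh\mu_{M_N})$ is contained in a fixed compact interval with probability $\geq 1-e^{-cN}$ by standard Gaussian concentration of the operator norm. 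Integrating these high-probability bounds yields \eqref{eq:W1-approx-MDE}.

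For the identity \eqref{eq:shifted-determinant-formula}, let $\Psi = \int \log|\lambda|\,\de\mu_{M_N}(\lambda)$. The upper bound $\bbE[1_E |\det M_N|] \leq \bbE[|\det M_N|] = e^{N\Psi + o(N)}$ is immediate from \cite[Corollary 1.9.A]{arous2021exponential}. For the lower bound, my plan is to establish the quenched concentration
\[
\bbP\big[\tfrac{1}{N}\log|\det M_N| \leq \Psi - \delta_N\big] \longrightarrow 0
\]
for some $\delta_N \to 0$ depending only on $C_1$. Given this, the event $E \cap \{\tfrac{1}{N}\log|\det M_N| \geq \Psi - \delta_N\}$ has probability $\geq 1/3$ for $N$ large, by the assumption $\bbP[E] \geq 1/2$, so $\bbE[1_E|\det M_N|] \geq \tfrac{1}{3} e^{N(\Psi - \delta_N)}$, yielding the required lower bound.

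The main obstacle is the lower-tail concentration of $\tfrac{1}{N}\log|\det M_N|$, which does not follow from \eqref{eq:W1-approx-MDE} alone because $\lambda \mapsto \log|\lambda|$ is unbounded near zero. To circumvent this, I would regularize: for $\eta_N = N^{-1/4}$, set $L_{\eta_N} = \tfrac{1}{2N}\sum_i \log(\lambda_i(M_N)^2 + \eta_N^2)$. For fixed $\eta$, $L_\eta$ is an $O(1/\eta)$-Lipschitz function of the Gaussian entries of $W_N$, so Gaussian concentration \cite{guionnet2000concentration} gives $\bbP[|L_{\eta_N} - \bbE L_{\eta_N}| \geq t] \leq 2e^{-cN\eta_N^2 t^2}$, which is $o(1)$ for any fixed $t>0$. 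By \eqref{eq:W1-approx-MDE} applied to the bounded Lipschitz function $\lambda \mapsto \log\sqrt{\lambda^2 + \eta_N^2}$, we have $\bbE L_{\eta_N} \to \int \log\sqrt{\lambda^2 + \eta_N^2}\,\de\mu_{M_N}(\lambda)$, which in turn converges to $\Psi$ by Lemma~\ref{lem:log-integral-convergence} and the bounded density of $\mu_{M_N}$. Since $L_{\eta_N} \geq \tfrac{1}{N}\log|\det M_N|$ always, the remaining task is to bound the gap $L_{\eta_N} - \tfrac{1}{N}\log|\det M_N| = \tfrac{1}{2N}\sum_i \log(1 + \eta_N^2/\lambda_i(M_N)^2)$ from above with probability $1-o(1)$. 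This reduces to a mild anti-concentration estimate of the form $\bbP[\sigma_{\min}(M_N) \leq N^{-A}] \leq N^{-B}$ for appropriate $A,B>0$, which follows from standard Gaussian estimates given the Gaussian entries of $W_N$ and the bounded $\|A_N\|_\op$. Combining these ingredients with an optimized choice of $\eta_N$ yields the required concentration and completes the proof.
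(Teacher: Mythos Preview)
Your approach is the same as the paper's at the structural level. The paper observes that \eqref{eq:shifted-determinant-formula} is \cite[Corollary 1.9.A]{arous2021exponential} with the extra factor $1_E$, and that this is harmless because the lower bound in that result is obtained by restricting to a ``good event'' (the intersection $\cE_{\text{Lip}}\cap\cE_{\text{gap}}\cap\cE_b$ in \cite[Proof of Theorem~1.2]{arous2021exponential}) whose probability tends to~$1$; intersecting further with $E$ of probability $\ge 1/2$ costs only $O(1/N)$ in the exponent. Your quenched concentration statement $\bbP[\tfrac1N\log|\det M_N|\le \Psi-\delta_N]\to 0$ is precisely this good-event statement, so your reduction is identical.

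Where you go further than the paper is in sketching a direct proof of the quenched concentration rather than citing \cite{arous2021exponential}. That sketch is correct in outline but has one incomplete step. You claim the gap
\[
L_{\eta_N}-\tfrac1N\log|\det M_N|=\tfrac{1}{2N}\sum_i\log\bigl(1+\eta_N^2/\lambda_i(M_N)^2\bigr)
\]
``reduces to'' anti-concentration of $\sigma_{\min}(M_N)$. A lower bound $\sigma_{\min}\ge N^{-A}$ controls the size of each summand but not how many are large: nothing prevents $\Omega(N)$ eigenvalues from lying in $[-\eta_N,\eta_N]$, which would make the gap $\Omega(1)$. You also need a bound of the form $\#\{i:|\lambda_i|\le \eta_N^{1/2}\}=o(N/\log N)$ with probability $1-o(1)$, which follows from \eqref{eq:W1-approx-MDE} together with the bounded density of $\mu_{M_N}$; this is exactly the role of the bulk event $\cE_b$ in \cite{arous2021exponential}. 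With that extra ingredient (and a dyadic split in $|\lambda_i|$) your argument closes.
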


\begin{proof}
    The first assertion follows from \cite[Theorem 2.6]{ajanki2017singularities}.
    Next, \eqref{eq:W1-approx-MDE} follows from
    \cite[Theorem 2.1(4b)]{erdos2019random} and \cite[Proposition 3.1]{arous2021exponential} as in \cite[Proof of Corollary 1.9.B]{arous2021exponential}.
    The second part \eqref{eq:shifted-determinant-formula} is a rewriting of \cite[Corollary 1.9.A]{arous2021exponential} except for the presence of the event $E$. This additional ingredient follows by the same proof since e.g. at the end of \cite[Proof of Theorem 1.2]{arous2021exponential}, the probabilities of all good events $\cE_{\text{Lip}},\cE_{\text{gap}},\cE_{\text{b}}$ are shown to tend to $1$.
    Indeed as $\bbP[E]\geq 1/2$, the factor $1_E$ only affects said lower bound by an additive $O(1/N)$.
\end{proof}

% \begin{lemma}
% \label{lem:log-integral-convergence}
%     For any $C,\eps>0$ there exists $\delta>0$ such that if $\mu,\wt\mu$ are $C$-regular and $\bbW_1(\mu,\wt\mu)\leq \delta$ then
%     \[
%     \lt|
%     \int
%     \log |\lambda|
%     \de\mu(\lambda)
%     -
%     \int
%     \log |\lambda|
%     \de\wt\mu(\lambda)
%     \rt|
%     \leq
%     \eps
%     .
%     \]
% \end{lemma}

% \begin{proof}
%     Define the truncation $\log_K(x)=\min(K,\max(-K,\log x))$. It is easy to see that $\log_K(x)$ is $L_K$-Lipschitz for some constant $L_K$, so for $\delta\leq \frac{\eps}{2L_K}$ we have
%     \[
%     \lt|
%     \int
%     \log_K |\lambda|
%     \de\mu(\lambda)
%     -
%     \int
%     \log_K |\lambda|
%     \de\wt\mu(\lambda)
%     \rt|
%     \leq
%     L_K\cdot \bbW_1(\mu,\wt\mu)
%     \leq
%     \eps/2
%     .
%     \]
%     For $K\geq \log(C)$ and $|x|\le C$, we have
%     \[
%     f_K(x)
%     \equiv
%     \log(|x|)-\log_K(|x|)
%     =
%     K+\log(|x|)\cdot 1_{|x|\leq e^{-K}}
%     .
%     \]
%     $C$-regularity implies
%     \begin{align*}
%     \lt|
%     \int
%     f_K(\lambda)
%     \de\mu(\lambda)
%     -
%     \int
%     f_K(\lambda)
%     \de\wt\mu(\lambda)
%     \rt|
%     &\leq
%     2C
%     \lt|
%     \int_{-e^{-K}}^{e^{-K}}
%     K+\log|x|
%     ~\de x
%     \rt|
%     \\
%     &=
%     -4C(x\log x-x+Kx)|_{x=0}^{e^{-K}}
%     \\
%     &=
%     4Ce^{-K}
%     .
%     \end{align*}
%     It remains to choose $K$ so $4Ce^{-K}\leq \eps/2$ and then take $\delta\leq \eps/2L_K$ as above.
% \end{proof}

\begin{lemma}
\label{lem:hoffman-wielandt}
    Suppose $M_N=W_N+A_N$ and $M_N'=W_N'+A_N'$ are $C_1$-regular and $W_N\stackrel{d}{=}W_N'$.
    Then
    \[
    \bbW_1\lt(
        \bbE[\wh{\mu}_{M_N}],
        \bbE[\wh{\mu}_{M_N'}]
    \rt)
    \leq
    \fr{1}{\sqrt{N}} \tnorm{A_N-A_N'}_F.
    \]
\end{lemma}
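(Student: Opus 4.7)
The plan is to exploit the shared Gaussian structure: since $W_N\stackrel{d}{=}W_N'$, we can couple the two matrix laws so that $W_N=W_N'$ almost surely. Under this coupling $M_N-M_N'=A_N-A_N'$ is deterministic, and by the Hoffman--Wielandt inequality (see e.g. \cite[Lemma 2.1.19]{Guionnet}) the sorted eigenvalues of $M_N$ and $M_N'$ satisfy
\[
    \sum_{i=1}^N \big(\lambda_i(M_N)-\lambda_i(M_N')\big)^2
    \leq
    \|M_N-M_N'\|_F^2
    =
    \|A_N-A_N'\|_F^2.
\]

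Next I would pass from this Frobenius bound to a bound on $\bbW_1$ between empirical measures. Pairing eigenvalues in sorted order provides an explicit coupling of $\wh\mu_{M_N}$ and $\wh\mu_{M_N'}$ whose cost is $\frac{1}{N}\sum_i |\lambda_i(M_N)-\lambda_i(M_N')|$, so by Cauchy--Schwarz and the above,
\[
    \bbW_1\big(\wh\mu_{M_N},\wh\mu_{M_N'}\big)
    \leq
    \frac{1}{N}\sum_{i=1}^N |\lambda_i(M_N)-\lambda_i(M_N')|
    \leq
    \sqrt{\frac{1}{N}\sum_{i=1}^N (\lambda_i(M_N)-\lambda_i(M_N'))^2}
    \leq
    \frac{1}{\sqrt{N}}\|A_N-A_N'\|_F.
\]

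Finally I would use convexity of Wasserstein distance under averaging: for any coupling of two random measures, $\bbW_1(\bbE[\rho_1],\bbE[\rho_2])\leq \bbE[\bbW_1(\rho_1,\rho_2)]$ (this follows immediately from dualizing against $1$-Lipschitz test functions). Applying this to the coupling above yields the claim. No step here is an obstacle since each ingredient is classical; the only subtlety is confirming that the sorted-eigenvalue coupling is a valid transport plan between $\wh\mu_{M_N}$ and $\wh\mu_{M_N'}$, which is immediate because both measures place mass $1/N$ at each of their $N$ eigenvalues.
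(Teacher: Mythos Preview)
Your proof is correct and follows essentially the same approach as the paper: couple $W_N=W_N'$, apply the Hoffman--Wielandt inequality, and use Cauchy--Schwarz. The only cosmetic difference is that the paper first obtains the bound $\bbW_2(\bbE[\wh\mu_{M_N}],\bbE[\wh\mu_{M_N'}])^2\le N^{-1}\|A_N-A_N'\|_F^2$ and then invokes $\bbW_1\le\bbW_2$, whereas you apply Cauchy--Schwarz at the level of the random empirical measures and then pass to expectations via convexity of $\bbW_1$; both orderings are equivalent.
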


\begin{proof}
    By coupling $W_N=W_N'$ and then using the Hoffman--Wielandt lemma (see e.g. \cite[Lemma 2.1.19]{Guionnet}), one finds:
    \[
    \bbW_2\lt(
        \bbE[\wh{\mu}_{M_N}],
        \bbE[\wh{\mu}_{M_N'}]
    \rt)^2
    \leq
    \fr{1}{N} \tnorm{A_N-A_N'}_F^2.
    \]
    The Cauchy--Schwarz inequality implies that $\bbW_1$ distance is smaller than $\bbW_2$ distance, completing the proof.
\end{proof}

Given $\mu\in\cP(\bbR)$, let $\mu^{(C)}$ be the pushforward of $\mu$ under $x\mapsto \min(C,\max(-C,x))$.

\begin{lemma}
\label{lem:spectral-concentration}
    Suppose $M_N=W_N+A_N$ is $C_1$-regular. Then for some $\eps_0>0$ and any $C_2,\delta>0$, there exists $C_3=C_3(C_1,C_2,\delta)$ such that with $N$ sufficiently large:
    \[
    \bbP\lt[
    \bbW_1\big(\wh{\mu}_{M_N}^{(C_2)}
    ,\bbE[\wh{\mu}_{M_N}^{(C_2)}]\big)
    \geq\delta
    \rt]
    \leq
    C_3 e^{-c(C_1,C_2,\delta)N^{1+\eps_0}}
    .
    \]
\end{lemma}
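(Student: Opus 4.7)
The plan is a standard three-piece concentration argument: Kantorovich--Rubinstein duality to reduce to a supremum over $1$-Lipschitz test functions, Gaussian concentration of each spectral linear statistic, and a sup-norm net reducing the supremum to a finite union bound. Since $\wh\mu_{M_N}^{(C_2)}$ and $\bbE[\wh\mu_{M_N}^{(C_2)}]$ are both probability measures supported in $[-C_2,C_2]$, duality gives
\[
\bbW_1\big(\wh\mu_{M_N}^{(C_2)},\bbE[\wh\mu_{M_N}^{(C_2)}]\big)
= \sup_{f}\int f\,d\big(\wh\mu_{M_N}^{(C_2)}-\bbE[\wh\mu_{M_N}^{(C_2)}]\big),
\]
where the supremum ranges over $1$-Lipschitz $f:[-C_2,C_2]\to\bbR$ with $f(0)=0$ (so $\|f\|_\infty\leq C_2$). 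By a standard entropy estimate, this family has a sup-norm $\eta$-net $\cF_\eta$ of cardinality at most $\exp(K(C_2)/\eta)$, and approximating an arbitrary $f$ by a net element changes $\int f\,d(\mu-\nu)$ by at most $2\eta$.

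Next, I would obtain concentration for each fixed $f\in\cF_\eta$. Extending $f$ to $\bbR$ while preserving its $1$-Lipschitz property, and noting that the truncation map $\pi_{C_2}(x)=\min(C_2,\max(-C_2,x))$ is also $1$-Lipschitz, set $L_f(M)=\frac1N\sum_i(f\circ\pi_{C_2})(\blambda_i(M))$. The Hoffman--Wielandt inequality combined with Cauchy--Schwarz gives
\[
|L_f(M)-L_f(M')|\leq \frac{1}{N}\sum_i|\blambda_i(M)-\blambda_i(M')|
\leq \frac{1}{\sqrt{N}}\|M-M'\|_F,
\]
so writing $M_N=W_N+A_N$ with $A_N$ deterministic, the map $W_N\mapsto L_f(W_N+A_N)$ is $N^{-1/2}$-Lipschitz in Frobenius norm. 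The entries of $W_N$ are independent Gaussians of variance at most $C_1/N$, so Gaussian concentration for Lipschitz functions yields
\[
\bbP\big[|L_f(M_N)-\bbE L_f(M_N)|\geq \delta/4\big]\leq 2\exp\big(-c(C_1)N^2\delta^2\big).
\]

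Finally I would union bound over the $\eta$-net with $\eta=\delta/8$ (so that the approximation error contributes at most $\delta/4$ to the $\bbW_1$ distance). The event $\{\bbW_1\geq \delta\}$ is contained in the union over $\cF_{\delta/8}$ of the bad events above, so
\[
\bbP\big[\bbW_1\big(\wh\mu_{M_N}^{(C_2)},\bbE[\wh\mu_{M_N}^{(C_2)}]\big)\geq\delta\big]
\leq |\cF_{\delta/8}|\cdot 2\exp(-cN^2\delta^2)
\leq 2\exp\big(8K(C_2)/\delta-cN^2\delta^2\big),
\]
which for $N$ large (depending on $C_1,C_2,\delta$) is bounded by $C_3\exp(-c'N^{1+\eps_0})$ for any fixed $\eps_0\in(0,1)$, giving the claim. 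I do not anticipate a substantive obstacle: the argument is textbook, and the only care needed is in tracking the $N^{-1/2}$ Lipschitz constant of the linear spectral statistic through Hoffman--Wielandt and matching it against the $O(1/N)$ entry variance of $W_N$, to obtain the $N^2\delta^2$ rate that comfortably dominates the $e^{K/\delta}$ entropy of the test-function class.
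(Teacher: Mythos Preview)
Your argument is correct and follows essentially the same template as the paper: concentration for a single $1$-Lipschitz test function (you via Hoffman--Wielandt plus Gaussian concentration, the paper via a cited Herbst argument from \cite{arous2021exponential}) combined with a finite net to handle the supremum. The only cosmetic differences are that you net the test-function class directly while the paper nets $\cP([-C_2,C_2])$ in $\bbW_1$ and picks a witness $f_{i,j}$ for each pair, and your self-contained step gives the sharper $e^{-cN^2}$ rate rather than the $e^{-cN^{1+\eps_0}}$ quoted in the paper.
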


\begin{proof}
    For any $1$-Lipschitz test function $f$, it is shown in condition (L) in \cite[Equation (1.11)]{arous2021exponential} that the concentration
    \begin{equation}
    \label{eq:the-concentration}
    \bbP\lt[
    \lt|
    \bbE^{\wh\mu_{M_N}}[f]
    -
    \bbE\big[\bbE^{\wh\mu_{M_N}}[f]\big]
    \rt|\geq \delta/10
    \rt]
    \leq
    C_3 e^{-c(C_1,C_2,\delta)N^{1+\eps_0}}
    .
    \end{equation}
    Here our $\eps_0$ is their $\eps_0-\zeta$; in our setting this statement follows by a Herbst argument as in \cite[Proof of Corollary 1.9.B]{arous2021exponential}.
    Since $x\mapsto \min(C_2,\max(-C_2,x))$ is $1$-Lipschitz, the composition $f^{(C_2)}(x)\equiv f\big(\min(C_2,\max(-C_2,x))\big)$ is as well.
    To obtain the claimed Wasserstein bound, recall that the $\bbW_1$ distance makes $\cP([-C_2,C_2])$ a compact metric space, and coincides with the bounded Lipschitz metric:
    \begin{equation}
    \label{eq:kantorovich-rubenstein}
    \bbW_1(\mu,\wt\mu)=\sup_{\substack{f:\bbR\to\bbR\\ Lip(f)\leq 1}} \big(\bbE^{\mu}[f]-\bbE^{\wt\mu}[f]\big).
    \end{equation}
    Hence for any $\delta>0$ and $C_2$, we may choose a finite $\delta/10$-net $\cN\subseteq \cP([-C_2,C_2])$ with respect to $\bbW_1$.
    Then for each distinct pair $\mu_i,\mu_j\in\cN$, we may choose a $1$-Lipschitz $f_{i,j}:\bbR\to\bbR$ such that $|\bbE^{\mu_i}[f_{i,j}]-\bbE^{\mu_j}[f_{i,j}]|=\bbW_1(\mu_i,\mu_j)$.
    Since $|\cN|$ is independent of $N$, the event in \eqref{eq:the-concentration} holds for all $f_{i,j}$ simultaneously with probability $1-C_3' e^{-c(C_1,C_2,\delta)N^{1+\eps_0}}$.
    Finally, we choose $i,j$ so that
    $\bbW_1(\mu_i,\wh\mu_{M_N}^{(C_2)})\leq \delta/10$ and $\bbW_1(\mu_j,\bbE \wh\mu_{M_N}^{(C_2)})\leq \delta/10$.
    On the event that \eqref{eq:the-concentration} applies to $f_{i,j}$, we thus find:
    \begin{align*}
    \bbW_1\big(\wh{\mu}_{M_N}^{(C_2)}
    ,\bbE[\wh{\mu}_{M_N}^{(C_2)}]\big)
    &\leq
    \bbW_1(\wh\mu_{M_N}^{(C_2)},\mu_i)
    +
    \bbW_1\big(\mu_i
    ,\mu_j\big)
    +
    \bbW_1(\mu_j,\bbE\wh\mu_{M_N}^{(C_2)})
    \\
    &\leq
    |\bbE^{\mu_i}[f_{i,j}]-\bbE^{\mu_j}[f_{i,j}]|
    +
    \frac{\delta}{5}
    \\
    &\stackrel{\eqref{eq:kantorovich-rubenstein}}{\leq}
    \big|\bbE^{\wh\mu_{M_N}^{(C_2)}}[f_{i,j}]-\bbE\big[\bbE^{\wh\mu_{M_N}^{(C_2)}}[f_{i,j}]\big]\big|
    +
    \frac{2\delta}{5}
    \\
    &\stackrel{\eqref{eq:the-concentration}}{\leq}
    \delta/2.
    \end{align*}
    This completes the proof, since the value $|\cN|$ depends only on $C_2$ and $\delta$, hence can be absorbed into the value $C_3$.
\end{proof}

For our Kac--Rice application, we will fix some $H_N\in K_N$ (recall Proposition~\ref{prop:gradients-bounded}) and condition also on $\va(\bx)=\nabla_{\rd} H_{N,\delta}(\bx)$, where $H_{N,\delta}(\bx)$ is as in equation \eqref{eq:H-N-delta}.
Let us assume $\tnorm{\va(\bx)}_\infty \le C$, which holds with probability $1-e^{-cN}$ for some constant $C$ by Proposition~\ref{prop:gradients-bounded}.
Then the law of $\nabla^2_{\sph}H_{N,\delta}(\bx)$ conditionally on $\va(\bx)$ is a $C_1$-regular matrix
\[
    \nabla^2_{\sph}H_{N,\delta}(\bx)
    =
    \underbrace{\sqrt{\delta}\, \nabla^2_{\cT \times \cT} H_N'(\bx)}_{W_N}
    +
    \underbrace{
    \sqrt{1-\delta}\,\nabla^2_{\cT \times \cT} H_N(\bx)
    -
    \diag(\Lambda^{-1/2} \va(\bx) \diamond \bone_\cT)
    % \diag\big(\va(\bx)\diamond \by\big)
    }
    _{A_N}
    .
\]
We write $\mu_{H_N,\va,\bx}^{\delta}\in\cP(\bbR)$ for the probability measure with Stieltjes transform the corresponding solution to \eqref{eq:MDE} for $\nabla_{\sph}^2 H_{N,\delta}(\bx)$.
Also let $\mu_{\xi,\va}=\mu_{H_N,\va,\bx}^{1}$.
However, we emphasize that $\mu_{H_N,\va,\bx}^{\delta}$ makes sense even for $\va\neq \nabla_{\rd} H_{N,\delta}(\bx)$. In fact in the arguments below, we will obtain control on $\mu_{H_N,\va,\bx}^{\delta}$ by estimating it locally uniformly in $\va$, and only substituting $\va=\va(\bx)$ at the end.
First we show that for \textbf{fixed} $\va$, the measure $\mu_{H_N,\va,\bx}^{\delta}$ concentrates sharply around $\mu_{\xi,\va}$. The idea is to apply Lemma~\ref{lem:spectral-concentration} both before and after conditioning on $H_N$: since it yields concentration of the spectral measure in both cases, they must concentrate around approximately the same measure.

\begin{lemma}
\label{lem:high-prob-MDE}
    There is some $\eps_0>0$ such that for any $C,\delta>0$ there is $C',c>0$ such that the following holds. For each fixed $\bx\in\cS_N$ and fixed $C$-bounded $\va\in \bbR^r$,
    \[
    \bbP\lt[
    H_N\in K_N~~\text{and}~~
    \bbW_1\big(
    \mu_{H_N,\va,\bx}^{\delta}
    ,
    \mu_{\xi,\va}
    \big)
    \geq \delta
    \rt]
    \leq
    C'e^{-cN^{1+\eps_0}}
    \]
    for $N$ sufficiently large.
\end{lemma}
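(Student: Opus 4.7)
The plan is to apply Lemma~\ref{lem:spectral-concentration} twice --- once to the marginal law of $M_N = \nabla_{\sph}^2 H_{N,\delta}(\bx)$ over $(H_N, H_N')$, and once to its law conditional on $H_N$ --- and convert the resulting concentration of truncated empirical spectra into closeness of the two Dyson solutions via Lemma~\ref{lem:local-law-boosted}\eqref{eq:W1-approx-MDE}.

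First I would fix $\bx$ and $\va$, and observe, as already noted in the text just above the lemma, that on $\{H_N \in K_N\}$ the matrix $M_N$ is $C_1$-regular both under its marginal law and under its law conditional on any such $H_N$, with a single constant $C_1 = C_1(\xi, C, \delta)$; the conditional variance bound is why we need $\delta > 0$. Proposition~\ref{prop:gradients-bounded} further produces $C_2 = C_2(C_1)$ such that $\|M_N\|_{\op} \le C_2$ except on an event of probability $e^{-cN}$, uniformly over $H_N \in K_N$ and also under the marginal law. Lemma~\ref{lem:local-law-boosted} then guarantees (after possibly enlarging $C_2$) that both Dyson solutions $\mu_{\xi,\va}$ and $\mu_{H_N,\va,\bx}^{\delta}$ are supported in $[-C_2, C_2]$, so we may freely work with the truncated empirical measures $\wh\mu_{M_N}^{(C_2)}$.

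Write $\mu^{\mathrm{full}} := \bbE[\wh\mu_{M_N}^{(C_2)}]$ and $\mu^{\mathrm{cond}}(H_N) := \bbE[\wh\mu_{M_N}^{(C_2)} \mid H_N]$. Lemma~\ref{lem:spectral-concentration} applied marginally and conditionally yields, respectively,
\[
\bbP\lt[\bbW_1(\wh\mu_{M_N}^{(C_2)}, \mu^{\mathrm{full}}) \ge \delta/8\rt] \le C_3 e^{-cN^{1+\eps_0}}, \qquad
\bbP\lt[\bbW_1(\wh\mu_{M_N}^{(C_2)}, \mu^{\mathrm{cond}}(H_N)) \ge \delta/8 \,\Big|\, H_N\rt] \le C_3 e^{-cN^{1+\eps_0}},
\]
with the latter bound uniform over $H_N \in K_N$. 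Letting $f(H_N)$ denote the conditional version of the first left-hand side and applying Markov to $\bbE[f(H_N)] \le C_3 e^{-cN^{1+\eps_0}}$, one has $f(H_N) \le e^{-cN^{1+\eps_0}/2}$ outside an $H_N$-set of measure $\le C_3 e^{-cN^{1+\eps_0}/2}$. On the intersection of that set with $\{H_N \in K_N\}$, both concentration statements hold simultaneously with conditional probability at least $1/2$, so a single realization of $H_N'$ witnesses both, and the triangle inequality yields $\bbW_1(\mu^{\mathrm{cond}}(H_N), \mu^{\mathrm{full}}) \le \delta/4$.

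Finally, Lemma~\ref{lem:local-law-boosted}\eqref{eq:W1-approx-MDE} gives $\bbW_1(\mu_{\xi,\va}, \bbE[\wh\mu_{M_N}]) \le \delta_N$ and, for each fixed $H_N \in K_N$, $\bbW_1(\mu_{H_N,\va,\bx}^{\delta}, \bbE[\wh\mu_{M_N} \mid H_N]) \le \delta_N$, with $\delta_N \to 0$ uniformly in the relevant parameters. Swapping $\wh\mu_{M_N}$ for $\wh\mu_{M_N}^{(C_2)}$ costs only $e^{-cN}$ in $\bbW_1$ by the operator-norm control. For $N$ large, each of these ancillary errors is below $\delta/4$, and one last triangle inequality delivers the desired bound $\bbW_1(\mu_{H_N,\va,\bx}^{\delta}, \mu_{\xi,\va}) \le \delta$. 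The only genuine obstacle is bookkeeping of uniformity: the conditional mean of $M_N$, namely $\sqrt{1-\delta}\,\nabla^2_{\cT\times\cT}H_N(\bx) - \diag(\Lambda^{-1/2}\va \diamond \bone_{\cT})$, depends on $H_N$, and one must invoke the operator-norm bound from Proposition~\ref{prop:gradients-bounded} (through $K_N$) to ensure the constants $C_1, C_2, C_3$ and the rate exponent $\eps_0$ in Lemma~\ref{lem:spectral-concentration} can all be chosen uniformly over $H_N \in K_N$.
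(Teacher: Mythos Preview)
Your proposal is correct and follows essentially the same route as the paper: apply Lemma~\ref{lem:spectral-concentration} both conditionally on $H_N$ and unconditionally, bridge the two expected (truncated) empirical spectra through a common realization of $H_N'$, and then invoke Lemma~\ref{lem:local-law-boosted}\eqref{eq:W1-approx-MDE} on each side. The only cosmetic difference is in the marginalization step: the paper intersects with the auxiliary event $\{H_{N,\delta}/2\in K_N\}$ and removes it at the end via the convexity and symmetry of $K_N$ (which guarantees $\bbP[H_{N,\delta}/2\in K_N\mid H_N]\ge 1/2$ on $K_N$), whereas you keep truncated measures throughout and use Markov's inequality on the conditional bad-event probability; both accomplish the same reduction from a joint bound to an $H_N$-measurable one.
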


\begin{proof}
    Lemma~\ref{lem:local-law-boosted} shows $\mu_{\xi,\va}$ is $C_2$-regular. Moreover if $H_N\in K_N$ and $\|\va\|_\infty \leq C$, it implies that $\mu_{H_N,\va,\bx}^{\delta}$ is $C_2$-regular.
    Let $M_N^{\delta}\equiv\nabla_{\sph}^2 H_{N,\delta}(\bx)$; then applying \eqref{eq:W1-approx-MDE} conditionally on $H_N$ shows
    \[
    \bbW_1\big(
    \bbE[\wh\mu_{M_N^{\delta}}\,|\,H_N],
    \mu_{H_N,\va,\bx}^{\delta}
    \big)
    \leq \delta/4.
    \]
    Applying Lemma~\ref{lem:spectral-concentration} also conditionally on $H_N$ gives:
    \[
    \bbP\lt[H_N,\frac{H_{N,\delta}}{2}\in K_N~~\text{and}~~
    \bbW_1\big(\wh{\mu}_{M_N^{\delta}},
    \bbE[\wh\mu_{M_N^{\delta}}\,|\,H_N]\big)\geq\delta/4\rt]
     \leq
    C'e^{-cN^{1+\eps_0}}/4
    .
    \]
    Here $\frac{H_{N,\delta}}{2}\in K_N$ was used to imply $\wh{\mu}_{M_N^{\delta}}=\wh{\mu}_{M_N^{\delta}}^{(C_2)}$.
    Lemma~\ref{lem:spectral-concentration} applied without conditioning on $H_N$ similarly yields
    \[
    \bbP\lt[H_N,\frac{H_{N,\delta}}{2}\in K_N~~\text{and}~~
    \bbW_1\big(
    \wh{\mu}_{M_N^{\delta}},
    % \mu_{\xi,\va}
    \bbE[\wh{\mu}_{M_N^{\delta}}]
    \big)\geq\delta/4\rt]
     \leq
    C'e^{-cN^{1+\eps_0}}/4
    .
    \]
    Finally since $\bbE[\wh{\mu}_{M_N^{\delta}}]=\bbE[\wh{\mu}_{M_N}]$, it follows from \eqref{eq:W1-approx-MDE} (now applied unconditionally) that
    \[
    \bbW_1\big(
    \bbE[\wh{\mu}_{M_N}],
    \mu_{\xi,\va}
    \big)
    \leq
    \delta/4.
    \]
    % (Proposition~\ref{prop:MDE-basic}\ref{it:Dyson-solves-RMT} shows $\bbW_1\big(
    % \wh{\mu}_{M_N},
    % \mu_{\xi,\va}
    % \big)\leq \delta/8$ with probability $1-e^{-cN}$; this quantity is also an $O(1/N)$-Lipschitz function of independent standard Gaussians corresponding to the rescaled entries of $M_N$, which controls the tail contribution.)

    Combining using the triangle inequality yields
    \[
    \bbP\lt[
    H_N,\frac{H_{N,\delta}}{2}\in K_N~~\text{and}~~
    \bbW_1\big(
    \mu_{H_N,\va,\bx}^{\delta}
    ,
    \mu_{\xi,\va}
    \big)
    \geq \delta
    \rt]
    \leq
    C'e^{-cN^{1+\eps_0}}/2
    .
    \]
    It remains to observe that the event $\bbW_1\big(
    \mu_{H_N,\va,\bx}^{\delta}
    ,
    \mu_{\xi,\va}
    \big)
    \geq \delta$ is determined by $H_N$, and that
    \begin{equation}
    \label{eq:two-inequalities}
    \bbP\lt[\frac{H_{N,\delta}}{2}\in K_N\,|\,H_N\rt]\geq \bbP[H_N'\in K_N]\geq 1/2
    \end{equation}
    for any $H_N\in K_N$.
    The former inequality in \eqref{eq:two-inequalities} follows by writing $H_{N,\delta}/2=\frac{\sqrt{1-\delta}}{2}H_N + \frac{\sqrt{\delta}}{2}H_N'$ because $\frac{\sqrt{1-\delta}}{2}+\frac{\sqrt{\delta}}{2}\leq 1$.
    Indeed since $K_N$ is a symmetric convex set it contains the origin, so if $H_N,H_N'\in K_N$ then also $H_{N,\delta}/2\in K_N$.
    The latter inequality in \eqref{eq:two-inequalities} is one of the defining properties of $K_N$ in Proposition~\ref{prop:gradients-bounded}, since $H_N'$ is an \iid copy of $H_N$.
\end{proof}

Next in Lemma~\ref{lem:MDE-uniform-bound-final} and Proposition~\ref{prop:KN-eps-high-prob}, we exhaust all bounded $\va$ in Lemma~\ref{lem:high-prob-MDE}. Combined with the continuity shown in Lemma~\ref{lem:hoffman-wielandt}, this shows validity of Lemma~\ref{lem:high-prob-MDE} uniformly in bounded $\va$.

\begin{lemma}
\label{lem:MDE-uniform-bound-final}
    For any $C,\eps > 0$,
    with probability $1-e^{-cN}$, $H_N\in K_N$ and
    \[
    \sup_{\bx\in\cS_N,\|\va\|_{\infty}\leq C}
    \bbW_1\big(\mu_{H_N,\bx,\va}^{\delta}
    ,
    \mu_{\xi,\va}
    \big)
    \leq
    \eps.
    \]
\end{lemma}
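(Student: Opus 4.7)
The plan is to combine the pointwise concentration in Lemma~\ref{lem:high-prob-MDE} with a net argument, crucially exploiting that the sub-Gaussian rate $e^{-cN^{1+\eps_0}}$ beats the exponential metric entropy of $\cS_N\times\{\va:\|\va\|_\infty\leq C\}$. Fix $\eta>0$ small to be chosen depending on $\eps$ and the continuity constants below. Take $\cN_\bx\subseteq\cS_N$ a Euclidean $\eta\sqrt{N}$-net of cardinality $|\cN_\bx|\leq (C'/\eta)^N=e^{O(N)}$, and $\cN_\va\subseteq\{\|\va\|_\infty\leq C\}$ an $\eta$-net of cardinality $O(\eta^{-r})$.

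Applying Lemma~\ref{lem:high-prob-MDE} pointwise at each $(\bx^*,\va^*)\in\cN_\bx\times\cN_\va$ with tolerance $\eps/3$, a union bound gives failure probability at most $|\cN_\bx||\cN_\va|\cdot C'e^{-cN^{1+\eps_0}}\leq e^{O(N)}\cdot e^{-cN^{1+\eps_0}}\leq e^{-c''N}$ for $N$ large. Thus with probability $1-e^{-c''N}$, $H_N\in K_N$ and $\bbW_1(\mu_{H_N,\bx^*,\va^*}^{\delta},\mu_{\xi,\va^*})\leq\eps/3$ simultaneously for all $(\bx^*,\va^*)\in\cN_\bx\times\cN_\va$.

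To extend to arbitrary $(\bx,\va)$, let $(\bx^*,\va^*)$ be the nearest net point and apply the triangle inequality
\[
\bbW_1(\mu_{H_N,\bx,\va}^{\delta},\mu_{\xi,\va})
\leq
\bbW_1(\mu_{H_N,\bx,\va}^{\delta},\mu_{H_N,\bx^*,\va^*}^{\delta})
+\bbW_1(\mu_{H_N,\bx^*,\va^*}^{\delta},\mu_{\xi,\va^*})
+\bbW_1(\mu_{\xi,\va^*},\mu_{\xi,\va}).
\]
The middle term is $\leq\eps/3$ by the net bound, and the third term is $\leq C_0\eta$ by Lemma~\ref{lem:Dyson-weakly-continuous} (since $\bbW_1\leq\bbW_\infty$). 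For the first term, observe that both matrices $M_N(\bx,\va)$ and $M_N(\bx^*,\va^*)$ are $C_1$-regular uniformly in $\bx$ when $H_N\in K_N$ (by Proposition~\ref{prop:gradients-bounded}), and their Gaussian parts $W_N(\cdot)=\sqrt{\delta}\nabla^2_{\cT\times\cT}H_N'(\cdot)$ have the same covariance structure in the species-aligned orthonormal bases of Fact~\ref{fac:riemannian-to-euclidean} (by \eqref{eq:tangential-hessian-law}), hence can be coupled to agree in distribution. Their deterministic parts differ by $\|A_N(\bx,\va)-A_N(\bx^*,\va^*)\|_\op\leq C_2\eta$, by Proposition~\ref{prop:gradients-bounded} applied to $\sqrt{1-\delta}\nabla^2_{\cT\times\cT}H_N$ and the diagonal shift. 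Hence $\|A_N(\bx,\va)-A_N(\bx^*,\va^*)\|_F\leq C_2\eta\sqrt{N}$, and Lemma~\ref{lem:hoffman-wielandt} gives $\bbW_1(\bbE\wh\mu_{M_N(\bx,\va)},\bbE\wh\mu_{M_N(\bx^*,\va^*)})\leq C_2\eta$. Applying \eqref{eq:W1-approx-MDE} twice yields
\[
\bbW_1(\mu_{H_N,\bx,\va}^{\delta},\mu_{H_N,\bx^*,\va^*}^{\delta})\leq 2\delta_N+C_2\eta.
\]
Choosing $\eta$ small enough that $C_0\eta,C_2\eta\leq\eps/6$ and $N$ large enough that $\delta_N\leq\eps/12$ completes the proof.

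The main technical subtlety is that the tangent space $\cT$ and hence the Gaussian matrix $W_N(\bx)$ formally depend on $\bx$; this is resolved by working in species-aligned orthonormal bases, where the covariance profile is intrinsic to $\xi$ and independent of $\bx$, enabling the Hoffman--Wielandt coupling. A secondary point is verifying that the $C_1$-regularity constants in Lemma~\ref{lem:hoffman-wielandt} can be taken uniformly in $\bx\in\cS_N$ and $\|\va\|_\infty\leq C$, which follows from the deterministic bounds on $K_N$ in Proposition~\ref{prop:gradients-bounded}.
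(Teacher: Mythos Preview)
Your proof is correct and follows essentially the same approach as the paper: union-bound Lemma~\ref{lem:high-prob-MDE} over a net (exploiting the $e^{-cN^{1+\eps_0}}$ rate to beat the entropy), then extend to all $(\bx,\va)$ via Lemma~\ref{lem:hoffman-wielandt} and \eqref{eq:W1-approx-MDE}. The only cosmetic difference is that the paper takes an $N^{-10}$-net (so the Lipschitz extension is automatically $o(1)$ without tracking constants) whereas you use an $\eta\sqrt{N}$-net and explicitly split the error via the triangle inequality and Lemma~\ref{lem:Dyson-weakly-continuous}; both choices work.
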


\begin{proof}
    Let $\cN_N$ be an $N^{-10}$-net for $\cS_N$ and $\cA_{N}$ an $N^{-10}$-net for the $C$-bounded vectors $\va$. Note that $|\cN_{N}\times \cA_N|\leq N^{O(CN)}$.
    Since $\bbP[H_N\in K_N]\geq 1-e^{-cN}$, union-bounding over the events in Lemma~\ref{lem:high-prob-MDE} over $\cN_N\times \cA_N$ implies that with probability $1-e^{-c'N}$,
    \begin{equation}
    \label{eq:MDE-bound-on-net}
    \sup_{\bx\in\cN_N,\va\in\cA_N}
    \bbW_1\big(\mu_{H_N,\bx,\va}^{\delta}
    ,
    \mu_{\xi,\va}
    \big)
    \leq
    \eps/2.
    \end{equation}
    Next assuming again that $H_N\in K_N$, for any $\hat\bx\in \cS_N$ and $C$-bounded $\hat\va$, let $\bx,\va$ be the nearest points in $\cN_N,\cA_N$. Let
    $M_N^{\delta}=M_N^{\delta}(\bx),
    \wh M_N^{\delta}=M_N^{\delta}(\hat\bx)$
    be the associated random Riemannian Hessians of $H_{N,\delta}$ given $H_N$. Then Lemma~\ref{lem:hoffman-wielandt} implies
    \[
    \bbW_1\big(\bbE[\wh{\mu}_{M_N^{\delta}}]
    ,
    \bbE[\wh{\mu}_{\wh M_N^{\delta}}]
    \big)
    \leq
    N^{-3}
    .
    \]
    Recalling the deterministic bound \eqref{eq:W1-approx-MDE}, we have
    \[
    \bbW_1\big(\mu_{M_N^{\delta}},\mu_{\wh M_N^{\delta}}\big)
    \leq 2\delta_N+N^{-3}
    \]
    whenever $H_N\in K_N$.
    Combining with \eqref{eq:MDE-bound-on-net} completes the proof, as $\mu_{M_N^\delta} = \mu^\delta_{H_N,\bx,\va}$.
\end{proof}

For $\bx\in\cS_N$, define the distorted Hessian
\begin{align}
    \label{eq:wt-Hessian}
    % \wt{\nabla^2_{\sph} H_N(\bx)}
    J_N(\bx)
    &\equiv
    \Xi \diamond \nabla^2_{\sph} H_N(\bx)\,, &
    \Xi = (\xi')^{-1/2} \otimes (\xi')^{-1/2} \in \bbR^{r\times r}.
    % (\xi')^{-1/2}\diamond\nabla^2_{\sph} H_N(\bx)\diamond(\xi')^{-1/2}.
\end{align}
This is again $C$-regular conditionally on $\va=\nabla_{\rd}H_N(\bx)$, and we let $\wt{\mu}_{\xi,\va}$ be the corresponding solution to \eqref{eq:MDE}.

\begin{definition}
\label{def:KN-eps}
    The set $K_N(\eps)\subseteq K_N$ consists of all $H_N\in\sH_N$ satisfying:
\begin{enumerate}[label=(\alph*)]
    \item
    \label{it:eps-C-bounded}
    $H_N\in K_N$.
    \item
    \label{it:actual-hessian-close}
    For all $\bx\in\cS_N$, with $M_N=\nabla^2_{\sph}H_N(\bx)$,
    \[
    \bbW_1(\wh{\mu}_{M_N},
    \mu_{\xi,\nabla_{\rd}H_N(\bx)})
    \leq \eps.
    \]
    \item
    \label{it:deformed-hessian-close}
    For all $\bx\in\cS_N$, with
    $\wt M_N=J_N(\bx)$,
    \[
    \bbW_1(\wh{\mu}_{\wt M_N},\wt\mu_{\xi,
    \nabla_{\rd}H_N(\bx)})
    \leq \eps.
    \]
    \item
    \label{it:MDE-bulk-close}
    For all $\bx\in\cS_N$ and $\|\va\|_{\infty}\leq C$ we have
    \[
    \bbW_1\big(
    \mu_{H_N,\bx,\va}^{\delta}
    ,
    \mu_{\xi,\va}
    \big)
    \leq
    \eps.
    \]
\end{enumerate}
\end{definition}

\begin{proposition}
\label{prop:KN-eps-high-prob}
    For any $\eps>0$, we have $\bbP[H_N\in K_N(\eps)]\geq 1-e^{-c(\eps)N}$.
\end{proposition}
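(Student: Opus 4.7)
The plan is to verify each of the four defining conditions of $K_N(\eps)$ holds with exponentially high probability, and then union-bound. Condition~\ref{it:eps-C-bounded} is immediate from Proposition~\ref{prop:gradients-bounded}\ref{it:KN-high-prob}, and condition~\ref{it:MDE-bulk-close} is precisely Lemma~\ref{lem:MDE-uniform-bound-final} (after relabeling $\eps$).

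For conditions~\ref{it:actual-hessian-close} and \ref{it:deformed-hessian-close}, which are uniform-in-$\bx$ statements, I would first establish pointwise-in-$\bx$ concentration and then upgrade to a uniform bound via a net-plus-continuity argument. Fix $\bx \in \cS_N$ and let $\va = \nabla_\rd H_N(\bx)$. Conditionally on the event $\|\va\|_\infty \leq C$, which holds on $K_N$, Lemma~\ref{lem:derivative-laws} and Fact~\ref{fac:riemannian-to-euclidean} imply that the conditional law of $M_N := \nabla^2_\sph H_N(\bx)$ given $\va$ is $C_1$-regular in the sense of Definition~\ref{def:C-regular-M}, with the corresponding solution of \eqref{eq:MDE} having spectral measure $\mu_{\xi,\va}$. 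Applying Lemma~\ref{lem:spectral-concentration} conditionally on $\va$, combined with the deterministic local-law estimate \eqref{eq:W1-approx-MDE} from Lemma~\ref{lem:local-law-boosted}, then yields
\[
\bbP\!\left[\bbW_1\!\left(\wh\mu_{M_N}, \mu_{\xi,\va}\right) \geq \eps/2 \,\big|\, \va\right] \leq C' e^{-c N^{1+\eps_0}}.
\]
The super-exponential rate $e^{-cN^{1+\eps_0}}$ is the crucial ingredient, because it dominates the $e^{O(N\log N)}$ cost of the net introduced next.

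Now let $\cN_N \subseteq \cS_N$ be an $N^{-10}$-net of cardinality $|\cN_N| \leq e^{O(N\log N)}$. Union-bounding the above estimate over $\bx \in \cN_N$ retains probability $1-e^{-cN}$. To pass from the net to all $\hat\bx \in \cS_N$, let $\bx \in \cN_N$ be a nearest net point: on $K_N$, Proposition~\ref{prop:spectral-perturbation} gives $\bbW_1(\wh\mu_{\nabla_\sph^2 H_N(\hat\bx)}, \wh\mu_{\nabla_\sph^2 H_N(\bx)}) \leq C N^{-10.5}$, while Proposition~\ref{prop:gradients-bounded} gives $\|\nabla_\rd H_N(\hat\bx) - \nabla_\rd H_N(\bx)\|_\infty \leq C N^{-10.5}$, which by Lemma~\ref{lem:Dyson-weakly-continuous} implies $\bbW_\infty(\mu_{\xi,\nabla_\rd H_N(\hat\bx)}, \mu_{\xi,\nabla_\rd H_N(\bx)}) \leq C N^{-5}$. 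Combining yields condition~\ref{it:actual-hessian-close} uniformly in $\bx$. Condition~\ref{it:deformed-hessian-close} follows by the identical argument, noting that $J_N(\bx) = \Xi \diamond \nabla_\sph^2 H_N(\bx)$ is again $C$-regular conditional on $\va$---entry-wise rescaling by the bounded factors $(\xi'_s \xi'_{s'})^{-1/2}$ preserves the variance-profile structure by Assumption~\ref{as:nondegenerate}---with its own Dyson equation having solution $\wt\mu_{\xi,\va}$. A final union bound over the four events completes the proof; the main technical point to verify carefully is just that $J_N(\bx)$ fits Definition~\ref{def:C-regular-M} with constants depending only on $\xi$ and $\vlambda$.
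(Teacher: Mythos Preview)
Your proof is correct and follows essentially the same approach as the paper. The paper's proof is terse---it simply cites Proposition~\ref{prop:gradients-bounded} for~\ref{it:eps-C-bounded}, Lemma~\ref{lem:MDE-uniform-bound-final} for~\ref{it:MDE-bulk-close}, and says ``a similar argument implies parts~\ref{it:actual-hessian-close} and~\ref{it:deformed-hessian-close}''---and what you have written is exactly the similar argument the paper has in mind: super-exponential pointwise concentration via Lemmas~\ref{lem:spectral-concentration} and~\ref{lem:local-law-boosted}, a union bound over an $N^{-10}$-net, and a continuity step (Proposition~\ref{prop:spectral-perturbation} and Lemma~\ref{lem:Dyson-weakly-continuous}) to extend from the net to all of $\cS_N$.
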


\begin{proof}
    Part~\ref{it:eps-C-bounded} follows from Proposition~\ref{prop:gradients-bounded}.
    Part~\ref{it:MDE-bulk-close} follows by Lemma~\ref{lem:MDE-uniform-bound-final}.
    A similar argument implies parts \ref{it:actual-hessian-close} and \ref{it:deformed-hessian-close}.
\end{proof}

\subsection{Main Argument}

We fix a small constant $\delta$ and set
\begin{equation}
\label{eq:small-constants}
    (\eps,\alpha,\eta,\iota)
    =
    \big(\delta^{10},
    \delta^{1/3},
    \delta^{1/10},
    \delta^{1/100}
    \big)
    .
\end{equation}

\begin{lemma}
\label{lem:determinant-of-pertubation}
    Fix $H_N\in K_N(\eps)$. For each $\by\in\cS_N$ and $C$-bounded $\va$, with $N$ sufficiently large, let $E_{\by}$ be an event satisfying
    \[
    \inf_{\by\in\cS_N}
    \bbP
    \Big[E_{\by}~\Big|~
    \big(
    \nabla_{\sph}
    H_{N,\delta}(\by)
    ,
    \nabla_{\rd}
    H_{N,\delta}(\by)
    ,
    H_N
    \big)\Big]
    \geq
    1/2.
    \]
    Then including $1_{E_{\by}}$ within the determinant expectation for $\nabla^2_{\sph}
    H_{N,\delta}(\by)$ has a negligible effect, in the sense that uniformly in $\by$:
    \begin{align}
    \label{eq:determinant-of-pertubation}
    % \sup_{\by\in\cS_N}
    &\Bigg|
    \frac{1}{N}
    \log
    \bbE
    \lt[
    1_{E_{\by}}\cdot
    \big|
    \det
    \nabla^2_{\sph}
    H_{N,\delta}(\by)
    \big|
    ~\Big|~
    \big(
    \nabla_{\sph}
    H_{N,\delta}(\by)
    =
    0,
    \nabla_{\rd}
    H_{N,\delta}(\by)
    =
    \va,
    H_N
    \big)
    \rt] \\
    \notag
    &-
    \int \log|\lambda|\de\mu_{\xi,\va}(\lambda)
    \Bigg|
    \leq
    o_{\delta}(1).
    \end{align}
\end{lemma}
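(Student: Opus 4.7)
The plan is to identify the conditional law of $\nabla^2_\sph H_{N,\delta}(\by)$ as a $C_1$-regular random matrix model, apply Lemma~\ref{lem:local-law-boosted} to reduce the log-determinant to the log-potential of the associated Dyson-equation measure, and then transfer this to $\mu_{\xi,\va}$ via the uniform estimate granted by $H_N\in K_N(\eps)$.

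First I would unpack the conditional law. By Lemma~\ref{lem:derivative-laws} applied to $H_N'$, the tangential Hessian $\nabla^2_{\cT\times\cT} H_N'(\by)$ is independent of $\nabla_\cT H_N'(\by)$ and of $\nabla_\rd H_N'(\by)$. Since $H_N$ is fixed, conditioning on $(\nabla_\sph H_{N,\delta}(\by),\nabla_\rd H_{N,\delta}(\by))$ is equivalent to conditioning on $(\nabla_\cT H_N'(\by),\nabla_\rd H_N'(\by))$, and hence leaves the law of $\nabla^2_{\cT\times\cT} H_N'(\by)$ unchanged. Applying Fact~\ref{fac:riemannian-to-euclidean}, the conditional law of $\nabla^2_\sph H_{N,\delta}(\by)$ is therefore
\[
A_N + \sqrt{\delta}\, W_N',
\qquad
A_N \equiv \sqrt{1-\delta}\,\nabla^2_{\cT\times\cT} H_N(\by) - \diag(\Lambda^{-1/2}\va\diamond\bone_\cT),
\]
with $W_N'\stackrel{d}{=}\nabla^2_{\cT\times\cT}H_N'(\by)$. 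Since $H_N\in K_N$ controls $\|\nabla^2 H_N(\by)\|_{\op}$ and $\|\va\|_\infty\leq C$, this matrix is $C_1$-regular for some $C_1=C_1(\xi,C)$, uniformly in $\by$ and $\va$, and the unique solution to \eqref{eq:MDE} for this law is (by definition) $\mu^\delta_{H_N,\by,\va}$.

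Second, I would apply Lemma~\ref{lem:local-law-boosted} to this conditional law together with the event $E_\by$ (whose conditional probability is $\geq 1/2$ by hypothesis), obtaining
\[
\frac{1}{N}\log \bbE\!\lt[1_{E_\by}\cdot \big|\det \nabla^2_\sph H_{N,\delta}(\by)\big|\,\Big|\,\cdots\rt]
= \int \log|\lambda|\,\de\mu^\delta_{H_N,\by,\va}(\lambda) + \delta_N,
\]
with $\delta_N\to 0$ depending only on $C_1$, hence uniformly in $\by$. By Definition~\ref{def:KN-eps}\ref{it:MDE-bulk-close} (since $H_N\in K_N(\eps)$ with $\eps=\delta^{10}$), we have $\bbW_1(\mu^\delta_{H_N,\by,\va},\mu_{\xi,\va})\leq \delta^{10}$ uniformly in $(\by,\va)$. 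Both measures are $C_2$-regular by Lemma~\ref{lem:local-law-boosted}, so Lemma~\ref{lem:log-integral-convergence} gives
\[
\lt|\int \log|\lambda|\,\de\mu^\delta_{H_N,\by,\va}(\lambda) - \int \log|\lambda|\,\de\mu_{\xi,\va}(\lambda)\rt| = o_\delta(1),
\]
and combining with the previous display yields \eqref{eq:determinant-of-pertubation}.

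The main subtle point is the first step: I want to make sure that conditioning on the gradient does not disturb the variance structure of the tangential Hessian block, so that the conditional law remains exactly a shifted Gaussian block matrix satisfying Definition~\ref{def:C-regular-M} with the same variance profile encoded by \eqref{eq:tangential-hessian-law} (rescaled by $\sqrt{\delta}$). The cleanest route is to invoke the joint independence from Lemma~\ref{lem:derivative-laws} applied to the fresh copy $H_N'$, after which the remaining steps are fairly mechanical applications of the random matrix estimates already set up in Subsection~4.1.
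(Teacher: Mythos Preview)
Your proof is correct and follows essentially the same route as the paper's: identify the conditional law via Lemma~\ref{lem:derivative-laws} applied to $H_N'$, recognize it as a $C_1$-regular matrix, invoke Lemma~\ref{lem:local-law-boosted} with the event $E_{\by}$, and then pass from $\mu^{\delta}_{H_N,\by,\va}$ to $\mu_{\xi,\va}$ via Definition~\ref{def:KN-eps}\ref{it:MDE-bulk-close} and Lemma~\ref{lem:log-integral-convergence}. The only minor slip is that the regularity constant $C_1$ must depend on $\delta$ as well (the entrywise variance of $\sqrt{\delta}\,W_N'$ has lower bound proportional to $\delta/N$), as the paper also notes; this is harmless since $\delta$ is fixed and the conclusion only requires uniformity in $\by$.
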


\begin{proof}
    By Lemma~\ref{lem:derivative-laws}, $\nabla^2_{\cT \times \cT} H_{N,\delta}(\by)$ and $\nabla H_{N,\delta}(\by)$ are independent.
    Recalling \eqref{eq:H-N-delta} the conditional law of $\nabla^2_{\sph}H_{N,\delta}(\by)$ agrees with that of
    \[
    \sqrt{1-\delta}\nabla^2_{\cT \times \cT} H_N(\by)
    +
    \sqrt{\delta} \nabla^2_{\cT \times \cT} H_N'(\by)
    -
    \diag(\Lambda^{-1/2} \va \diamond \bone_\cT)
    \]
    which is $C(\delta)$-regular for $H_N\in K_N$.
    Upper-bounding the left-hand side of \eqref{eq:determinant-of-pertubation} by
    \begin{align*}
    &\Bigg|
    \frac{1}{N}
    \log
    \bbE
    \lt[
    1_{E_{\by}}\cdot
    \big|
    \det
    \nabla^2_{\sph}
    H_{N,\delta}(\by)
    \big|
    ~\Big|~
    \big(
    \nabla_{\sph}
    H_{N,\delta}(\by)
    =
    0,
    \nabla_{\rd}
    H_{N,\delta}(\by)
    =
    \va,
    H_N
    \big)
    \rt] \\
    &\qquad -
    \int \log|\lambda|\de\mu_{H_N,\bx,\va}^{\delta}(\lambda)
    \Bigg|
    +
    \lt|
    \int \log|\lambda|\de\mu_{H_N,\bx,\va}^{\delta}(\lambda)
    -
    \int \log|\lambda|\de\mu_{\xi,\va}(\lambda)
    \rt|,
    \end{align*}
    we can apply Lemma~\ref{lem:local-law-boosted} to bound the first term and Lemma~\ref{lem:log-integral-convergence} (using part~\ref{it:MDE-bulk-close} of Definition~\ref{def:KN-eps}) to the second.
\end{proof}

The next lemma gives a Taylor expansion estimate for $\|\nabla_{\sph} H_N(\by)\|_2$.
In it, we let $\gamma_{\bx\to \by}:[0,1]\to \cS_N$ be the shortest path geodesic with $\gamma_{\bx\to \by}(0)=\bx$ and $\gamma_{\bx\to \by}(1)=\by$ (say, whenever $\by\in B_{\alpha\sqrt{N}}(\bx)$ so the shortest path is clear). Note that $\gamma_{\bx\to\by}'(0)$ is approximately $\by-\bx$; the result below holds with this replacement as well, but for the application $\gamma_{\bx\to\by}'(0)$ will be more convenient since it is in the tangent space $T_{\bx} \cS_N$.

\begin{lemma}
\label{lem:grad-y-taylor}
    Let $H_N \in K_N$. For $\by\in B_{\alpha\sqrt{N}}(\bx)\cap \cS_N$ and bounded $\vv\in\bbR^{r}$, we have
    \begin{equation}
    \label{eq:claim-grad-y-taylor}
        \tnorm{\vv\diamond\nabla_{\sph} H_N(\by)}_2
        \le
        \tnorm{
            \vv\diamond\nabla_{\sph} H_N(\bx) +
            \vv\diamond\nabla^2_{\sph} H_N(\bx)\cdot \gamma_{\bx\to\by}'(0)
        }_2
        + C\alpha^2 \sqrt{N}\,.
    \end{equation}
\end{lemma}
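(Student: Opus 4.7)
The plan is to prove \eqref{eq:claim-grad-y-taylor} via a second-order Taylor expansion of $\nabla_\sph H_N$ along the minimizing geodesic $\gamma=\gamma_{\bx\to\by}$, reconciling the two different tangent spaces $T_\bx\cS_N$ and $T_\by\cS_N$ (both viewed inside $\bbR^N$) by parallel transport along $\gamma$. Since $\|\gamma'(0)\|_2=d(\bx,\by)\le\alpha\sqrt{N}$, Taylor's theorem should produce a remainder of order $\|\nabla_\sph^3 H_N\|_\op\cdot\|\gamma'(0)\|_2^2\asymp N^{-1/2}\cdot\alpha^2 N=\alpha^2\sqrt{N}$, which matches the target.

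Let $P_t:T_\bx\cS_N\to T_{\gamma(t)}\cS_N$ denote parallel transport along $\gamma$. Because $\cS_N$ is a product of spheres and $\gamma$ decomposes as a product of great-circle geodesics, $P_t$ acts as an orthogonal transformation on each species block $\bbR^{\cI_s}$ separately; since $\vv\diamond$ is scalar multiplication on each block, $\vv\diamond$ commutes with $P_t$, so $\|\vv\diamond P_t^{-1}V\|_2=\|\vv\diamond V\|_2$ for every $V\in T_{\gamma(t)}\cS_N$. Set $W(t):=P_t^{-1}(\nabla_\sph H_N(\gamma(t)))\in T_\bx\cS_N$. Using the defining property of the Riemannian Hessian as the covariant derivative of the gradient (together with $\tfrac{D}{dt}\gamma'=0$), we get $W(0)=\nabla_\sph H_N(\bx)$ and $W'(0)=\nabla_\sph^2 H_N(\bx)\cdot\gamma'(0)$, while $\|\vv\diamond W(1)\|_2=\|\vv\diamond\nabla_\sph H_N(\by)\|_2$. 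It therefore suffices to bound $\|W(1)-W(0)-W'(0)\|_2\le\tfrac12\sup_{t\in[0,1]}\|W''(t)\|_2$ and then invoke the triangle inequality after multiplying by $\vv\diamond$.

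Differentiating once more and using the geodesic equation gives $W''(t)=P_t^{-1}\bigl(\nabla^3_\sph H_N|_{\gamma(t)}(\gamma'(t),\gamma'(t),\,\cdot\,)\bigr)$, hence $\|W''(t)\|_2\le\|\nabla^3_\sph H_N\|_\op\cdot\|\gamma'(t)\|_2^2$. Here $\|\gamma'(t)\|_2\equiv\|\gamma'(0)\|_2\le\alpha\sqrt{N}$ along a geodesic, so the remaining point is the bound $\|\nabla_\sph^3 H_N\|_\op\le CN^{-1/2}$. This follows from the ambient smoothness estimates $\|\nabla^k H_N\|_\op\le CN^{1-k/2}$ for $k\le 3$ in Proposition~\ref{prop:gradients-bounded}, together with the Gauss formula on each sphere factor $\cS_{N,s}$ of radius $\sqrt{\lambda_s N}$: the correction terms converting Euclidean to Riemannian third derivatives involve lower-order Euclidean derivatives multiplied by inverse-radius factors, all of which combine to at most $O(N^{-1/2})$. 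I expect this curvature-bookkeeping to be the only nontrivial step; modulo it, the claim is a clean Taylor estimate.
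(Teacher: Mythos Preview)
Your proposal is correct and follows essentially the same approach as the paper: both parallel-transport $\nabla_{\sph}H_N(\gamma(t))$ back to $T_{\bx}\cS_N$, observe that parallel transport is a product of species-wise isometries (so it commutes with $\vv\diamond$), identify the first derivative at $t=0$ as $\nabla_{\sph}^2 H_N(\bx)\cdot\gamma'(0)$, and then invoke Taylor's theorem with the $k=3$ bound from Proposition~\ref{prop:gradients-bounded}. The paper's proof is simply terser, citing Proposition~\ref{prop:gradients-bounded} directly for the remainder without spelling out the $W''(t)=P_t^{-1}\nabla_{\sph}^3 H_N(\gamma'(t),\gamma'(t),\cdot)$ computation or the Gauss-formula bookkeeping you (rightly) flag.
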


\begin{proof}
    Let $P_t:T_{\gamma_{\bx\to\by}(t)}\cS_N\to T_{\bx}\cS_N$ be the associated parallel transport map on tangent spaces.
    This is a product of isometries on subspaces of each $\bbR^{\cI_s}$, so
    \[
    \tnorm{\vv\diamond\nabla_{\sph} H_N(\gamma(t))}_2
    =\tnorm{\vv\diamond P_t\big(\nabla_{\sph} H_N(\gamma(t))\big)}_2.
    \]
    Moreover,
    \[
        \frac{\de}{\de t}
        \lt[
        \vv\diamond
        P_t\big(\nabla_{\sph} H_N(\gamma_{\bx\to\by}(t))\big)
        \rt]\big|_{t=0}
        =
        \vv\diamond
        \nabla^2_{\sph}
        H_N(\gamma_{\bx\to\by}(t)) \gamma_{\bx\to\by}'(t)\,.
    \]
    It remains to apply Taylor's theorem to $\vv\diamond P_t\big(\nabla_{\sph} H_N(\gamma(0)))$ with derivative bounds from Proposition~\ref{prop:gradients-bounded} as $H_N\in K_N$.
\end{proof}

\begin{lemma}
\label{lem:approx-to-exact-computation}
    Fix $H_N\in K_N(\eps)$ and $\bx$ such that
    \begin{equation}
    \label{eq:grad-H-small}
        \tnorm{\nabla_{\sph} H_N(\bx)}_2
        \leq
        \eps\sqrt{N}\,.
    \end{equation}
    Then in expectation over $H_N'$, there are at least $e^{-o_{\delta}(N)}$ points $\by\in B_{\alpha\sqrt{N}}(\bx)\cap \cS_N$ such that
    \begin{align}
    \label{eq:H-y-good}
        |H_N(\bx)-H_{N,\delta}(\by)|&\leq \iota^2 N,
        \\
    \label{eq:grad-y-zero}
        \nabla_{\sph} H_{N,\delta}(\by)
        &=0,
        \\
    \label{eq:rad-y-good}
        \tnorm{\nabla_{\rd} H_N(\bx)-\nabla_{\rd} H_{N,\delta}(\by)}_\infty
        &\leq\iota^2,
        \\
    \label{eq:Hess-y-good}
        \|\nabla^2_{\sph}H_N(\bx)-\nabla^2_{\sph}H_{N,\delta}(\by)\|_{\op}
        &\leq \iota
        .
    \end{align}
\end{lemma}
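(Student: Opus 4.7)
The plan is to apply the Kac--Rice formula, conditionally on $H_N$, to the set of critical points $\by\in B=B_{\alpha\sqrt N}(\bx)\cap\cS_N$ of $H_{N,\delta}$ at which the additional conditions \eqref{eq:H-y-good}, \eqref{eq:rad-y-good}, \eqref{eq:Hess-y-good} hold. Writing $E_\by$ for that three-event and $\va_\by=\nabla_\rd H_{N,\delta}(\by)$, the expected count in question equals
\[
\int_B \bbE\bigl[|\det\nabla^2_\sph H_{N,\delta}(\by)|\mathbf 1_{E_\by}\,\bigm|\,\nabla_\sph H_{N,\delta}(\by)=0,\,H_N\bigr]\,\varphi_\by(0)\,\de\by,
\]
where $\varphi_\by$ is the conditional Gaussian density (given $H_N$) of $\nabla_\sph H_{N,\delta}(\by)$. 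The overall strategy is to show the two integrand factors combine as $e^{N\Psi(\nabla_\rd H_N(\bx))+o_\delta(N)}\cdot e^{-N\Psi(\nabla_\rd H_N(\bx))-o_\delta(N)}=e^{-o_\delta(N)}$.

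I first check that $\bbP[E_\by\mid\nabla_\sph H_{N,\delta}(\by)=0,H_N]\geq 1/2$ for every $\by\in B$. By Lemma~\ref{lem:derivative-laws} the blocks $\nabla_\sph H_{N,\delta}(\by)$, $\nabla^2_\sph H_{N,\delta}(\by)$, and $(H_{N,\delta}(\by),\nabla_\rd H_{N,\delta}(\by))$ are mutually independent given $H_N$, each equal to the $\sqrt{1-\delta}$-rescaled $H_N$-quantity plus a Gaussian piece of variance $\delta$ times that of the corresponding $H_N$-quantity. Proposition~\ref{prop:gradients-bounded} (applicable since $H_N\in K_N$) bounds the deterministic shifts $H_N(\by)-H_N(\bx)$, $\nabla_\rd H_N(\by)-\nabla_\rd H_N(\bx)$, and $\nabla^2_\sph H_N(\by)-\nabla^2_\sph H_N(\bx)$ by $O(\alpha N)$, $O(\alpha)$, $O(\alpha)$ respectively, each much smaller than the tolerances $\iota^2 N$, $\iota^2$, $\iota$ since $\alpha=\delta^{1/3}$ and $\iota=\delta^{1/100}$. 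The Gaussian fluctuations of typical scale $\sqrt\delta$ and the operator-norm concentration $\|\sqrt\delta\,\nabla^2_\sph H_N'(\by)\|_{\op}=O(\sqrt\delta)$ (holding with probability $1-e^{-cN}$) are likewise dominated by the tolerances. Thus $\bbP[E_\by^c\mid\cdots]\leq e^{-cN}$. Lemma~\ref{lem:determinant-of-pertubation} then produces the conditional determinant bound $\tfrac12 e^{N\Psi(\va_\by)+o_\delta(N)}$, which by Proposition~\ref{prop:Psi-continuous} and $\|\va_\by-\nabla_\rd H_N(\bx)\|_\infty=o_\delta(1)$ on $E_\by$ reduces to $e^{N\Psi(\nabla_\rd H_N(\bx))+o_\delta(N)}$ uniformly in $\by\in B$.

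For the density-volume integral, set $\tilde\vv=(1/\sqrt{\xi^s(\vone)})_s$ and $D=\diag(\tilde\vv\diamond\bone_\cT)$. Then
\[
\varphi_\by(0)=(2\pi\delta)^{-(N-r)/2}\!\prod_s\!\xi^s(\vone)^{-(N_s-1)/2}\exp\Bigl(-\tfrac{1-\delta}{2\delta}\|\tilde\vv\diamond\nabla_\sph H_N(\by)\|_2^2\Bigr).
\]
Apply Lemma~\ref{lem:grad-y-taylor} with $\vv=\tilde\vv$ and the sharp squaring $(a+b)^2\leq(1+\eta)a^2+(1+\eta^{-1})b^2$ at $\eta=\delta^{1/6}$: one gets $\|\tilde\vv\diamond\nabla_\sph H_N(\by)\|_2^2\leq(1+\eta)\|D\nabla_\sph H_N(\bx)+D\nabla^2_\sph H_N(\bx)\bu\|_2^2+O(\eta\,N)$ for $\bu=\gamma'_{\bx\to\by}(0)\in T_\bx\cS_N$, and the $O(\delta^{1/6})N$ excess in the exponent contributes only a harmless $e^{-o_\delta(N)}$ factor. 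Parametrizing $\by$ by $\bu$ via the exponential map at $\bx$ and performing the standard Gaussian integral over $T_\bx\cS_N$, the normalization $(2\pi\delta)^{-(N-r)/2}$ and the factor $|\det D|=\prod_s\xi^s(\vone)^{-(N_s-1)/2}$ cancel against $(2\pi\delta/(1-\delta))^{(N-r)/2}$ and $|\det M|=|\det D|\,|\det\nabla^2_\sph H_N(\bx)|$ (with $M=D\nabla^2_\sph H_N(\bx)|_{T_\bx\cS_N}$) produced by the Gaussian integral, leaving
\[
\int_B \varphi_\by(0)\,\de\by \;\geq\; \frac{e^{-o_\delta(N)}}{|\det\nabla^2_\sph H_N(\bx)|}.
\]
A one-sided logarithmic estimate, obtained by Lipschitz-truncating $\log|\lambda|$ at level $|\lambda|=\sqrt\eps$ and using item~\ref{it:actual-hessian-close} of Definition~\ref{def:KN-eps} together with the $C$-regularity of $\mu_{\xi,\nabla_\rd H_N(\bx)}$ (Proposition~\ref{prop:MDE-basic}\ref{it:Dyson-continuous-density}), yields $(1/N)\log|\det\nabla^2_\sph H_N(\bx)|\leq\Psi(\nabla_\rd H_N(\bx))+O(\sqrt\eps|\log\eps|)=\Psi(\nabla_\rd H_N(\bx))+o_\delta(1)$ since $\eps=\delta^{10}$. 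Multiplying by the determinant factor produces the claimed $e^{-o_\delta(N)}$ lower bound.

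The main obstacle is that $\nabla^2_\sph H_N(\bx)$ may be poorly conditioned, so the Gaussian integration above implicitly treats $M$ as invertible and ignores truncation to $\|\bu\|_2\leq\alpha\sqrt N$. To handle this, split $T_\bx\cS_N$ into the ``flat'' eigendirections of $\nabla^2_\sph H_N(\bx)$ with eigenvalue magnitude below $\sqrt\eps$ and the complementary ``non-flat'' ones. The $\bbW_1$ bound in $K_N(\eps)$ combined with the bounded density of $\mu_{\xi,\va}$ bounds the number $k$ of flat directions by $O(\sqrt\eps\,N)=O(\delta^5 N)$, so truncating them to the ball yields a volume factor $(\alpha\sqrt N)^k$ together with a $(2\pi\delta)^{-k/2}$ normalization, whose net contribution $(\alpha^2N/\delta)^{k/2}=e^{O(\delta^5\log(1/\delta))N}$ is $e^{o_\delta(N)}$. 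In the non-flat directions the Gaussian is centered at a point of norm $\leq\|\nabla_\sph H_N(\bx)\|_2/\sqrt\eps\leq\sqrt\eps\sqrt N\ll\alpha\sqrt N$ with effective width $O(\sqrt{\delta/\eps})\ll\alpha\sqrt N$ for $N$ large, so ball truncation changes that sub-integral by a factor $\geq 1-e^{-cN}$. These corrections are all absorbed into the final $e^{-o_\delta(N)}$ bound.
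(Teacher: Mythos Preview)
Your overall strategy is the same as the paper's: apply Kac--Rice conditionally on $H_N$, use Lemma~\ref{lem:determinant-of-pertubation} for the determinant, Taylor-expand the gradient via Lemma~\ref{lem:grad-y-taylor}, and split the tangent space into ``flat'' and ``non-flat'' eigendirections to handle possible ill-conditioning. The determinant/density cancellation you identify is exactly the mechanism the paper exploits.

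However, there is a genuine gap in your treatment of the non-flat directions. You choose the flat/non-flat threshold at $\sqrt\eps=\delta^5$ and then assert that on the non-flat subspace the Gaussian has ``effective width $O(\sqrt{\delta/\eps})\ll\alpha\sqrt N$ for $N$ large, so ball truncation changes that sub-integral by a factor $\geq 1-e^{-cN}$.'' This reasoning conflates the per-coordinate standard deviation with the radius at which an $(N{-}r{-}k)$-dimensional Gaussian concentrates. The covariance of the implied Gaussian in $\bu_2$ is $\delta(A_2^\top A_2)^{-1}$ with $A_2=D\nabla^2_\sph H_N(\bx)|_{V_2}$, and since eigenvalues of $\nabla^2_\sph H_N(\bx)$ can sit just above $\sqrt\eps$ (indeed $O(\sqrt\eps\,N)$ of them can, by the same density argument you use for $k$), one has $\mathrm{tr}\,\mathrm{Cov}\asymp \delta N/\sqrt\eps=\delta^{-4}N$, so the Gaussian concentrates at radius $\asymp\delta^{-2}\sqrt N\gg\alpha\sqrt N=\delta^{1/3}\sqrt N$. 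The mass inside $\{\|\bu_2\|\le\alpha\sqrt N\}$ is therefore \emph{not} $1-e^{-cN}$; in fact it is exponentially small in $N$ for each fixed $\delta$. (A careful large-deviations estimate shows it is still $\ge e^{-o_\delta(N)}$, which would rescue your conclusion, but that is a nontrivial argument you have not given.)

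The paper avoids this issue in two ways. First, it works with the \emph{distorted} Hessian $J_N(\bx)=\Xi\diamond\nabla^2_\sph H_N(\bx)$ (see \eqref{eq:wt-Hessian}), which is symmetric and exactly diagonalizes the quadratic form after the change of variables $\wt\gamma'=(\xi')^{1/2}\diamond\gamma'$; this sidesteps the awkward product-of-singular-values issues arising from $D$ and $\nabla^2_\sph H_N(\bx)$ not commuting, and is why Definition~\ref{def:KN-eps} includes item~\ref{it:deformed-hessian-close}. Second, and decisively, the paper takes the flat threshold at $\eta=\delta^{1/10}$ rather than $\sqrt\eps=\delta^5$. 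With this choice the image of the $\alpha\sqrt N$-ball under $J_N|_{T_2}$ contains a ball of radius $\asymp\eta\alpha\sqrt N=\delta^{13/30}\sqrt N\gg\sqrt{\delta N}$, so the standard Gaussian integral over that image genuinely captures at least half its mass. The price is a larger flat subspace of dimension $O(\eta N)$, but the paper shows its volume contribution is still $e^{o_\delta(N)}$. If you simply replace your threshold $\sqrt\eps$ by $\eta$ (and verify $\eta\alpha\gg\sqrt\delta$), your argument goes through with the truncation factor honestly $\ge 1/2$.
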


\begin{proof}
    Throughout the proof we fix and condition on $H_N\in K_N(\eps)$. Then with high conditional probability, the events \eqref{eq:H-y-good}, \eqref{eq:rad-y-good}, and \eqref{eq:Hess-y-good} all occur (recall from \eqref{eq:small-constants} that $\iota$ is much larger than $\alpha$ and $\delta$).
    Let $E_1$ be the event that \eqref{eq:H-y-good} and \eqref{eq:rad-y-good} hold, and $E_{2,\by}$ the event that \eqref{eq:Hess-y-good} holds.

    % Recalling the start of Subsection~\ref{subsec:prelim-geo}, let $\cT_s=\cT\cap \cI_s$.
    By considering only the contribution from $E_1\cap E_{2,\by}$ and recalling Lemma~\ref{lem:derivative-laws} part~\ref{it:tangential-derivative-law}, we find that
    the expected number of such critical points is at least
    \begin{align}
    \label{eq:conditional-Kac--Rice-integral}
    &(2\pi\delta)^{-\frac{N-r}{2}}
    \prod_{s\in\sS} (\xi^s)^{\frac{N_s-1}{2}}
    \int_{\by\in B_{\alpha\sqrt{N}}(\bx)\cap \cS_N}
    \Bigg(
    \exp\lt(
    \frac{-(1-\delta)}
    {2\delta}
    % \sum_{s\in\sS}
    % \frac{\|\nabla_{\cT_s} H_N(\by)\|^2}{\xi^s}
    \|(\xi')^{-1/2}\diamond \nabla_{\sph} H_N(\by)\|_2^2
    \rt)
    1_{E_1}
    \\
    \nonumber
    &\qquad
    \times\min_{\|\va-\nabla_{\rd}H_N(\bx)\|_\infty\leq\iota^2}
    \bbE
    \bigg[
    1_{E_{2,\by}}\cdot
    \big|
    \det
    \nabla^2_{\sph}
    H_{N,\delta}(\by)
    \big| \\
    \notag
    &\qquad \qquad \qquad
    \,\Big|\,
    \big(
    \nabla_{\sph}
    H_{N,\delta}(\by)
    =0,
    \nabla_{\rd}
    H_{N,\delta}(\by)
    =\va
    ,
    H_N
    \big)
    \bigg]
    \Bigg)
    ~\de \by
    \end{align}
    If in addition to $H_N$ one conditions on any $(H_{N,\delta}(\by),\nabla_{\rd}H_{N,\delta}(\by))$ satisfying $E_1$, we claim the conditional probability of $E_{2,\by}$ (i.e. \eqref{eq:Hess-y-good}) is at least $1/2$.
    Indeed, the conditional mean of $\nabla_{\sph}^2 H_{N,\delta}(\by)$ is given by $\sqrt{1-\delta}\nabla_{\sph}^2 H_{N,\delta}$, plus an additive shift of operator norm $O(\iota^2)\ll \iota$ (coming from linear regression via Lemma~\ref{lem:derivative-laws}). The conditionally random part of $\nabla_{\sph}^2 H_{N,\delta}(\by)$ consists of an additive $\sqrt{\delta}\nabla_{\cT\times\cT}^2 H_N'(\by)$, which has operator norm $O(\sqrt{\delta})\ll\iota$ with high probability.

    Hence Lemma~\ref{lem:determinant-of-pertubation} (which holds uniformly in $\by$) implies that the latter part of the integrand is, for each $\va$ such that $\|\va-\nabla_{\rd}H_N(\bx)\|_\infty \leq\iota^2$:
    \begin{equation}
        \label{eq:determinant-of-perturbation-application}
        \exp\lt(
        N
        \int \log |\lambda|\,
        [\mu_{\xi,\nabla_{\rd}H_N(\bx)}](\de\lambda)
        \pm o_{\iota}(N)
        \rt)
        .
    \end{equation}
    Indeed, \eqref{eq:shifted-determinant-formula} shows it is suitably close to
    $\exp\lt(N\int \log |\lambda|\,[\mu_{\xi,\va}](\de\lambda)\pm o_{\iota}(N)\rt)$
    and combining Lemmas~\ref{lem:hoffman-wielandt} and \ref{lem:log-integral-convergence} allows us to replace $\va$ by $\nabla_{\rd}H_N(\bx)$.
    It remains to integrate the other term, namely the origin density of $\nabla_{\sph}H_{N,\delta}(\by)$ conditional on $H_N$:
    \begin{equation}
    \label{eq:origin-density-y}
    (2\pi\delta)^{-\frac{N-r}{2}}
    \prod_{s\in\sS} (\xi^s)^{\frac{N_s-1}{2}}
    \exp\lt(
    \frac{-(1-\delta)}
    {2\delta}
    \|(\xi')^{-1/2}\diamond \nabla_{\sph} H_N(\by)\|_2^2
    % \sum_{s\in\sS}
    % \frac{\|\nabla_{\cT_s} H_N(\by)\|^2}{\xi^s}
    \rt)
    .
    \end{equation}
    Using \eqref{eq:claim-grad-y-taylor} with $\vv=(\xi')^{-1/2}$ and $\tnorm{\bu+\bv}_2^2\le (1+\iota)\tnorm{\bu}_2^2 + (C/\iota)\tnorm{\bv}_2^2$ we get:
    \begin{align*}
        \|(\xi')^{-1/2}\diamond \nabla_{\sph} H_N(\by)\|_2^2
        \leq
        (1+\iota)
        \tnorm{(\xi')^{-1/2}\diamond\nabla^2_{\sph} H_N(\bx)\cdot \gamma_{\bx\to\by}'(0)}_2^2
        +
        \frac{C(\eps^2+\alpha^4)}{\iota} N\,.
    \end{align*}
    % \begin{align*}
    %     \tnorm{\nabla_{\sph} H_N(\by)}_2^2
    %     \leq
    %     (1+\iota)\tnorm{\nabla^2_{\sph} H_N(\bx)\cdot \gamma_{\bx\to\by}'(0)}_2^2 +
    %     \frac{C(\eps^2+\alpha^4)}{\iota} N\,.
    % \end{align*}
     Recalling \eqref{eq:origin-density-y}, we integrate over a subset of $\by\in B_{\alpha\sqrt{N}}(\bx)$ chosen as follows.
     Let $T_1(\bx)$ be the span of the eigenvectors of
     $J_N(\bx)$
     (recall \eqref{eq:wt-Hessian})
     with eigenvalues inside $[-\eta,\eta]$, and $T_2(\bx)$ the orthogonal complement in the tangent space $T_x \cS_N$.
     Since $H_N\in K_N(\eps)$ and $\mu_{\xi,\va}$ is $C_1$-regular uniformly in $C$-bounded $\va$, we have $\dim(T_1(\bx))\leq O(N\eta)$.

     Write $\wt\gamma_{\bx\to\by}'(0)=(\xi')^{1/2}\diamond \gamma_{\bx\to\by}'(0)$.
     Let $S(\bx)$ be the Minkowski sum of a radius $\alpha^2\sqrt{N/2}$ ball $S_1(\bx)\subseteq T_1(\bx)$ and a radius $\alpha\sqrt{N/2}$ ball $S_2(\bx)\subseteq T_2(\bx)$.
     We consider $\by$ for which $\wt\gamma_{\bx\to\by}'(0)\in S(\bx)$ and accordingly write $\wt\gamma_{\bx\to\by}'(0)=\bs_1+\bs_2$ for $\bs_i\in S_i(\bx)$.
     It is easy to see that the map $\by\to \gamma_{\bx\to\by}'(0)$ is a diffeomorphism on $\by\in B_{\alpha\sqrt{N}}(\bx)$ with Jacobian determinant $1\pm o_{\alpha}(1)$ uniformly.
     We will thus freely switch to integration over $\gamma_{\bx\to\by}'(0)$, which is equivalent to integration over $\wt\gamma_{\bx\to\by}'(0)$ after picking up a factor
     \[
     \prod_{s\in\sS}
     (\xi^s)^{(|\cI_s|-1)/2}
     =
     e^{o(N)}
     \cdot
     \prod_{s\in\sS}
     (\xi^s)^{\lambda_s N/2}.
     \]
     Continuing,
     \begin{align*}
         \|J_N(\bx)
         \cdot\wt\gamma_{\bx\to\by}'(0)\|_2^2
         &=
         \|J_N(\bx)\cdot(\bs_1+\bs_2)\|_2^2
         \\
         &=
         \|J_N(\bx)\cdot\bs_1\|_2^2
         +
         \|J_N(\bx)\cdot\bs_2\|_2^2
         \\
         &\leq
         C\eta^2\alpha^4N
         +
         \|J_N(\bx)\cdot\bs_2\|_2^2
        .
     \end{align*}
    From \eqref{eq:small-constants}, we have $\eta^2\alpha^4\ll \delta$ so the first term will be negligible below.
     By definition of $S_2(\bx)$, the vector
     $\bv_2=J_N(\bx)\cdot\bs_2$
     ranges over a superset of the ball of radius $\alpha\eta\sqrt{N}/C$ in $S_2(\bx)$.
     Since $\alpha\eta\gg \sqrt{\delta}$ from \eqref{eq:small-constants}, this captures at least $1/2$ of the Gaussian integral mass for \eqref{eq:origin-density-y}. Writing $S_{1,2}$ for the product $S_1(\bx)\times S_2(\bx)$, we find that
     \begin{align}
        \notag
         &
         (2\pi\delta)^{-\frac{N-r}{2}}
        \prod_{s\in\sS} (\xi^s)^{-\frac{N_s-1}{2}}
         \int\limits_{B_{\alpha\sqrt{N}}(\bx)}
         \exp\lt(
        \frac{-(1-\delta)}{2\delta}
        \|(\xi')^{-1/2}\diamond \nabla_{\sph} H_N(\by)\|_2^2
        \rt)
        \de \by
        \\
        \notag
        &\geq
        (2\pi\delta)^{-\frac{N-r}{2}}
        \prod_{s\in\sS}(\xi^s)^{-\lambda_s N} \\
        \notag
        &\qquad \times
        \int\limits_{S_{1,2}}
         \exp\bigg(
        \frac{-(1+\iota)\|J_N(\bx)\cdot(\bs_1+\bs_2)
        \|_2^2
        -
        C(\eps^2+\alpha^4)N/\iota
        }
        {2\delta}
        \bigg)
        \,\de\bs_2\de \bs_1
        \\
        \notag
        &=
        (2\pi\delta)^{-\frac{N-r}{2}}
        \prod_{s\in\sS}(\xi^s)^{-\lambda_s N}
        \exp\bigg(
        -\frac{C(\eps^2+\alpha^4)N}{2\delta\iota}
        \bigg) \\
        \notag
        &\qquad \times
        \int\limits_{S_{1,2}}
         \exp\bigg(
        \frac{-\|(1+\iota)
        J_N(\bx)\cdot(\bs_1+\bs_2)
        \|_2^2
        }
        {2\delta}
        \bigg)
        \,\de\bs_2\de \bs_1
        \\
        \notag
        &\geq
        (2\pi\delta)^{-\frac{N-r}{2}}
        \prod_{s\in\sS}(\xi^s)^{-\lambda_s N}
        \exp\bigg(
        -\frac{C(\eps^2+\alpha^4)N}{2\delta\iota}
        -
        \frac{(1+\iota)
        C\eta^2\alpha^4 N}
        {\delta}
        \bigg) \\
        \notag
        &\qquad \times
        % \\
        % \notag
        % &\quad\quad\quad
        % \times
        \int\limits_{S_{1,2}}
         \exp\bigg(
        \frac{-\|(1+\iota)
        J_N(\bx)\cdot\bs_2
        \|_2^2
        }
        {2\delta}
        \bigg)
        \,\de\bs_2\de \bs_1
        \\
        \notag
        &\geq
        \prod_{s\in\sS}(\xi^s)^{-\lambda_s N}
        \exp(-\iota N)
        \Vol(S_1(\bx))
        ~\Big|
        \det\lt(J_N(\bx)\big|_{S_2(\bx)}\rt)^{-1}
        \Big| \\
        \notag
        &\qquad \times
        % \\
        % \notag
        % &\quad\quad\quad
        % \times
        \int\limits_{
            \substack{\bv_2\in S_2(\bx),
                \\
            \|\bv_2\|_2\leq \alpha\eta\sqrt{N}/C}
        }
        (2\pi\delta)^{-\frac{N-r}{2}}
         \exp\lt(
        \frac{
        -\|(1+\iota)
        \bv_2
        \|_2^2
        }
        {2\delta}
        \rt)
        ~\de\bv_2
        \\
        \label{eq:zero-density-integral}
        &\geq
        \prod_{s\in\sS}(\xi^s)^{-\lambda_s N}
        \exp(-\iota N)
        \Vol(S_1(\bx))
        ~\Big|
        \det\lt(J_N(\bx)\big|_{S_2(\bx)}\rt)^{-1}
        \Big|~
        (1+\iota)^{-\frac{N-r}{2}}
        /2.
    \end{align}
    Recalling \eqref{eq:small-constants} and that $\dim(T_1(\bx))\leq O(N\eta)$, we find $\Vol(S_1(\bx))\geq \alpha^{O(N\eta)}\geq e^{-\iota N}$.
    Uniformly over $H_N\in K_N(\eps)$ and $\bx\in\cS_N$, from Definition~\ref{def:KN-eps} part~\ref{it:deformed-hessian-close} we have
    \[
    \lim_{\delta\to 0}
    \lim_{N\to\infty}
    \bigg|
    \frac{1}{N}\log\det\lt(J_N(\bx)\big|_{S_2(\bx)}\rt)
    -
    \lt(
    \int \log|\lambda|\,
    [\mu_{\xi,\nabla_{\rd} H_N(\bx)}](\de\lambda)
    -
    \sum_{s\in\sS}
    \lambda_s
    \log(\xi^s)
    \rt)
    \bigg|
    =0
    .
    \]
    Indeed up to factors of $e^{o_{\delta}(N)}$, uniformly over these sets we have
    \begin{align*}
    \det\lt(J_N(\bx)\big|_{S_2(\bx)}\rt)
    &\approx
    \bbE
    \lt[
    \det\lt(J_N(\bx)\rt)
    ~\big|~
    \nabla_{\rd}H_N(\bx)
    \rt]
    \\
    &=
    \bbE
    \lt[
    \det\lt(\nabla^2_{\sph}(H_N(\bx))\rt)
    ~\big|~
    \nabla_{\rd}H_N(\bx)
    \rt]
    \prod_{s\in\sS}(\xi^s)^{-\lambda_s N}
    \\
    &\approx
    \exp\lt(N\int \log|\lambda|\,
    [\mu_{\xi,\nabla_{\rd} H_N(\bx)}](\de\lambda)\rt)
    \prod_{s\in\sS}(\xi^s)^{-\lambda_s N}.
    \end{align*}
    Thus \eqref{eq:zero-density-integral} equals
    \[
        e^{-o_\delta(N)}
        \exp\lt(-N\int \log|\lambda|\,
        [\mu_{\xi,\nabla_{\rd} H_N(\bx)}](\de\lambda)\rt)\,.
    \]
    This cancels the expected determinant given approximately by \eqref{eq:determinant-of-perturbation-application}, leaving $e^{-o_{\delta}(N)}$ and completing the proof.
\end{proof}

\begin{remark}
We restricted attention to small $\|\bs_1\|_2$ above because this causes $\|\nabla^2_{\sph} H_N(\bx)\cdot \bs_1\|_2^2$ to be negligible. The trade-off is that $\Vol(S_1(\bx))$ becomes smaller. However since $\dim(T_1(\bx))\leq O(N\eta)$, this volumetric factor is also irrelevant because all small parameters were polynomially related.
\end{remark}

Using Lemma~\ref{lem:approx-to-exact-computation}, we now deduce Theorem~\ref{thm:approx-crits-from-annealed} from the start of this section.

\begin{proof}[Proof of Theorem~\ref{thm:approx-crits-from-annealed}]
    Let $K_N(\eps,\ups)$ consist of those $H_N\in K_N(\eps)$ such that $|\Crt_N^{\ocD,\eps,\ups}(H_N)|\geq 1$.
    % that is not $\ups$-good.
    Let $H_N \in K_N(\eps,\ups)$ and let $\bx \in \Crt_N^{\ocD,\eps,\ups}(H_N)$ be an $\eps$-critical point satisfying \eqref{eq:ups-far-from-ocD}.
    For $\eps$ small enough that (recalling \eqref{eq:small-constants}) $\iota\leq \ups/C$, we claim that Lemma~\ref{lem:approx-to-exact-computation} implies
    \begin{equation}
    \label{eq:previous-display-averaged}
    \bbE
    \lt[
    \big|\Crt_N^{\ocD,0,\ups/2}(H_N)\big|
    \,|\,
    H_N
    \rt]
    \geq e^{-o_{\eps}(N)}
    \cdot \bone\{H_N\in K_N(\eps,\ups)\}.
    \end{equation}
    Indeed \eqref{eq:rad-y-good} and \eqref{eq:H-y-good} handle the radial derivative and energy errors between $\bx$ and $\by$.
    Proposition~\ref{prop:spectral-perturbation} is used exploit the hypothesis \eqref{eq:Hess-y-good}; the resulting $\bbW_{\infty}$ estimate controls both the distances in $\cJ$ (Hausdorff distance between the spectral supports) and in $\bbW_1(\bbR)$.
    Finally the overlap with $\bG^{(1)}$ is controlled by the simple bound
    \[
    \by\in B_{\alpha\sqrt{N}}(\bx)\cap \cS_N
    \quad\implies\quad
    \tnorm{\vR(\bx,\bG^{(1)})-\vR(\by,\bG^{(1)})}\leq O(\alpha)\ll \upsilon,
    \]
    where we used that $H_N\in K_N(\eps)\subseteq K_N$ implies $\|\bG^{(1)}\|\leq O(\|\nabla H_N(\bzero)\|)\leq O(\sqrt{N})$.

    Averaging \eqref{eq:previous-display-averaged} over $H_N$ and applying
    % \eqref{eq:crt-bad},
    the hypothesis~\eqref{eq:annealed-crits-close-to-ocD},
    we find
    \[
        e^{-c_0 N}
        \geq
        \bbE
        \big|\Crt_N^{\ocD,0,\ups/2}(H_N)\big|
        \geq
        e^{-o_{\eps}(N)}\cdot
        \bbP[H_N\in K_N(\eps,\ups)].
    \]
    Choosing $\eps$ to also be sufficiently small depending on $c_0$ and rearranging yields $\bbP[H_N\in K_N(\eps,\ups)]\leq e^{-cN}$.
    Recalling that $\bbP[H_N\in K_N(\eps)]\geq 1-e^{-cN}$ by Proposition~\ref{prop:KN-eps-high-prob} completes the proof.
\end{proof}

\subsection{Failure of Annealed Topological Trivialization for Sub-Solvable $\xi$}
\label{subsec:annealed-complexity-positive}

We showed in Section~\ref{sec:variational} that annealed topological trivialization occurs for super-solvable $\xi$ (recall Definition~\ref{def:xi'}).
In this subsection we prove the strictly sub-solvable case \ref{itm:crt-tot-sub-solvable} of Theorem~\ref{thm:annealed-crits}, which is equivalent by Proposition~\ref{prop:annealed-kac-rice} to the following.
Recall the reparameterized form $\oF$ of the complexity functional $F$ defined in Subsection~\ref{subsec:stationarity}.
\begin{proposition}
\label{prop:sub-solvable-complexity-positive}
    If $\xi$ is strictly sub-solvable, then $\sup_{\vv\in\bbR^r} \oF(\vv)>0$.
\end{proposition}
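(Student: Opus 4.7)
The plan is to argue contrapositively via the rerandomization machinery of Lemma~\ref{lem:approx-to-exact-computation}, combined with the quenched abundance of approximate critical points from Proposition~\ref{prop:quenched-failure-of-STT}. The rough idea: approximate critical points of $H_N$ can be ``upgraded'' to genuine critical points of a rerandomized Hamiltonian $H_{N,\delta'}$ without losing much count, and since $H_{N,\delta'} \stackrel{d}{=} H_N$, this forces the annealed critical point count of $H_N$ to be exponentially large.

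Concretely, fix strictly sub-solvable $\xi$ and let $C_0 = C(\vlambda,\xi)$ be the constant from Proposition~\ref{prop:quenched-failure-of-STT}. We choose parameters in the following order. First, pick the rerandomization scale $\delta' > 0$ small enough so that (a) $\alpha := (\delta')^{1/3} \leq 1/(2C_0)$ (this will make the balls $B_{\alpha\sqrt{N}}(\bx_i)$ disjoint across the distinct approximate critical points supplied below), and (b) the error exponent $o_{\delta'}(N)$ appearing in the conclusion of Lemma~\ref{lem:approx-to-exact-computation} satisfies a bound we fix momentarily. Set the approximation tolerance $\eps_{\mathrm{lem}} := (\delta')^{10}$, which is the $\eps$-parameter in Lemma~\ref{lem:approx-to-exact-computation} (see \eqref{eq:small-constants}). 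Now apply Proposition~\ref{prop:quenched-failure-of-STT} with $\eps := \eps_{\mathrm{lem}}$ to obtain $\delta_q = \delta_q(\eps_{\mathrm{lem}}) > 0$ such that, with probability $1 - e^{-cN}$, there exist $M = e^{\delta_q N}$ distinct $\eps_{\mathrm{lem}}$-approximate critical points $\bx_1,\dots,\bx_M \in \cS_N$ with pairwise separation $\geq \sqrt{N}/C_0$. Finally, shrink $\delta'$ if necessary so that the $o_{\delta'}(N)$ error in Lemma~\ref{lem:approx-to-exact-computation} is at most $\delta_q N/3$ (possible since this error is a fixed positive power of $\delta'$).

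Consider $H_{N,\delta'}(\bx) = \sqrt{1-\delta'}\,H_N(\bx) + \sqrt{\delta'}\,H_N'(\bx)$ as in \eqref{eq:H-N-delta}, with $H_N'$ an independent copy of $H_N$. Since $H_{N,\delta'}$ is a centered Gaussian process with covariance $N\xi(\vR(\cdot,\cdot))$, it has the same law as $H_N$. Condition on the event $\cE$ that both Proposition~\ref{prop:quenched-failure-of-STT} applies and $H_N \in K_N(\eps_{\mathrm{lem}})$ (the latter holds with probability $1 - e^{-c'N}$ by Proposition~\ref{prop:KN-eps-high-prob}), so $\bbP[\cE] \geq 1 - e^{-c''N}$. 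On $\cE$, apply Lemma~\ref{lem:approx-to-exact-computation} to each $\bx_i$: taking expectation over $H_N'$, it yields at least $e^{-o_{\delta'}(N)} \geq e^{-\delta_q N/3}$ genuine critical points of $H_{N,\delta'}$ inside $B_{\alpha\sqrt{N}}(\bx_i)$. Because the balls are disjoint (by choice of $\alpha$), summing gives
\[
    \bbE\Big[\big|\Crt_N(H_{N,\delta'})\big| \,\Big|\, H_N\Big] \cdot \bone_{\cE} \;\geq\; M \cdot e^{-\delta_q N/3} \;=\; e^{2\delta_q N/3} \cdot \bone_{\cE}.
\]
Averaging over $H_N$ and using $\bbP[\cE] \to 1$, we conclude $\bbE|\Crt_N(H_{N,\delta'})| \geq e^{\delta_q N/2}$ for $N$ large. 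Since $H_{N,\delta'} \stackrel{d}{=} H_N$, the same lower bound holds for $\bbE|\Crt_N(H_N)|$, and Proposition~\ref{prop:annealed-kac-rice} with $\cD = \bbR^r \times \bbR$ (together with the observation that $\sup_{(\vx,E)} F(\vx,E) = \sup_\vv \oF(\vv)$ after optimizing the quadratic in $E$) yields $\sup_\vv \oF(\vv) \geq \delta_q/2 > 0$.

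The main obstacle is the parameter juggling: we need the $o_{\delta'}(N)$ slack in Lemma~\ref{lem:approx-to-exact-computation} to be dominated by the complexity exponent $\delta_q$ produced by Proposition~\ref{prop:quenched-failure-of-STT} at the approximation tolerance $\eps_{\mathrm{lem}} = (\delta')^{10}$. This is where the subsection's invocation of Theorem~\ref{thm:approx-crits-from-annealed} machinery matters: the quantitative polynomial-in-$\delta'$ form of the error in Lemma~\ref{lem:approx-to-exact-computation} (rather than an unspecified $o(1)$) ensures that $\delta'$ can be chosen sufficiently small while keeping $\delta_q(\eps_{\mathrm{lem}})$ strictly positive, so that the exponential lower bound $e^{\delta_q N/2}$ survives after subtracting the error. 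Beyond this matching step, the argument is mechanical.
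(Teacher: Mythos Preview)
Your argument has a genuine circular parameter dependence that cannot be resolved as written. You first fix $\delta'$, set $\eps_{\mathrm{lem}}=(\delta')^{10}$, then invoke Proposition~\ref{prop:quenched-failure-of-STT} to obtain $\delta_q=\delta_q(\eps_{\mathrm{lem}})$, and \emph{afterwards} propose to ``shrink $\delta'$ if necessary'' so that the $o_{\delta'}(N)$ error is at most $\delta_q N/3$. But shrinking $\delta'$ shrinks $\eps_{\mathrm{lem}}$, which in turn changes $\delta_q$; Proposition~\ref{prop:quenched-failure-of-STT} only guarantees that $\delta_q(\eps)>0$ for each fixed $\eps$, with no control on how fast $\delta_q(\eps)\to 0$ as $\eps\to 0$. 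Even granting that the error in Lemma~\ref{lem:approx-to-exact-computation} is polynomially bounded in $\delta'$, you would still need $C(\delta')^a\le \delta_q((\delta')^{10})/3$ for some small $\delta'$, and there is no reason this inequality should be satisfiable. The paper itself makes exactly this point in Remark~\ref{rem:positive-complexity-2}: the construction of $e^{\delta N}$ separated approximate critical points ``only guarantees $\delta>0$ is positive for each $\eps$, which does not yield strict inequality.''

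The paper circumvents this by a different route: it uses a \emph{single} approximate critical point $\bx_*$ (Proposition~\ref{prop:IAMP}) with explicitly computed radial derivative $\vv_*$, applies Theorem~\ref{thm:approx-crits-from-annealed} to deduce $\oF(\vv_*)\ge 0$, and then shows by direct computation (using the stationary-point characterization of Lemma~\ref{lem:stationary-condition}) that $\vv_*$ is \emph{not} a stationary point of $\oF$. Hence $\sup_\vv \oF(\vv)>\oF(\vv_*)\ge 0$. This decouples the two issues---non-negativity at a point and strict improvement---so that no quantitative matching between the approximation tolerance and the complexity exponent is needed.
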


We will require a computation from our concurrent paper \cite{huang2023optimization} as well as Theorem~\ref{thm:approx-crits-from-annealed}.
The point is that in \cite{huang2023optimization}, we gave an explicit algorithm to construct approximate critical points for $H_N$ whenever $\xi$ is strictly sub-solvable.
Applying Theorem~\ref{thm:approx-crits-from-annealed} then shows $\oF$ is non-negative at the radial derivative of such points (which is computed explicitly therein).
While we cannot show this input makes $\oF$ strictly positive, we do show it is not a stationary point of $\oF$, which suffices.

The algorithm from \cite{huang2023optimization} relies on a certain coordinate-wise increasing $C^1$ path $\Phi:[0,1]\to [0,1]^r$.
In short, it proceeds by outward exploration in $\bbR^N$ starting from $\bzero$, greedily optimizing $H_N$ in each step similarly to \cite{subag2018following} (though implemented with approximate message passing as in \cite{montanari2021optimization,ams20,sellke2024optimizing}).
The function $\Phi$ determines the schedule at which exploration occurs in the $r$ species (which is trivial in the single-species setting).
The optimal choice of $\Phi$ obeys stationarity conditions, which were established in \cite{huang2023algorithmic} and exploited in \cite{huang2023optimization}.
In particular the optimal $\Phi$ exhibits a phase transition when $\xi$ shifts from super-solvable to sub-solvable (which in fact motivated the present paper).

$\Phi$ does not seem to have an explicit formula in the sub-solvable case, but is given by any maximizer of a $(\xi,\vlambda)$-dependent functional $\bbA$ defined in \cite[Equation (1.6)]{huang2023algorithmic}.
(As explained in Remark~\ref{rem:external-field}, the fact that these companion results technically use deterministic external field instead of Gaussian $\bG^{(1)}$ is inconsequential.)
$\Phi$ satisfies the normalization $\la\vlambda,\Phi'(q)\ra=1$ for all $q\in [0,1]$, and moreover $\Phi_s'(1)> 0$ for all $s\in\sS$.
The input from \cite{huang2023optimization} is as follows.

\begin{proposition}[{\cite[Proposition 3.3]{huang2023optimization}}]
\label{prop:IAMP}
    For non-degenerate and strictly super-solvable $\xi$, and $\Phi$ as above, and any $\eps>0$, with probability $1-e^{-cN}$ there exists an $\eps$-approximate critical point $\bx_*\in \cS_N$ such that
    \begin{equation}
        \label{eq:amp-gives-approx-crit}
        \|\Lambda^{1/2} \nabla_{\rd} H_N(\bx_*)
        -
        \vv_*(\Phi)\|_2
        \leq \eps,
    \end{equation}
    where $\vv_*(\Phi) = (v_{*,s}(\Phi))_{s\in \sS}$ is given by
    \begin{align}
        \label{eq:v-*-def}
        v_{*,s}(\Phi)
        &=
        \lambda_s f_s^{-1}+\sum_{s'\in\sS}
        \xi''_{s,s'}f_{s'}\,;
        \\
        \label{eq:f-s-def}
        f_s
        &=
        \sqrt{\frac{\Phi_s'(1)}{(\xi^s\circ\Phi)'(1)}}.
    \end{align}
\end{proposition}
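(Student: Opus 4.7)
The plan is to construct $\bx_*$ via an iterative greedy ascent scheme that generalizes the algorithm of \cite{subag2018following} to the multi-species setting, implemented as an approximate message passing (AMP) iteration in the spirit of \cite{montanari2021optimization,ams20,sellke2024optimizing}. The path $\Phi:[0,1]\to[0,1]^r$ provides the schedule for outward growth, dictating the rate at which mass is accumulated in each species.

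Concretely, I would first discretize $[0,1]$ into $K$ substeps and write $\bx_* = \sum_{k=1}^K \bu^{(k)}$, where at step $k$ the increment $\bu^{(k)}\in\bbR^N$ is chosen so that its species-$s$ block has squared norm approximately $\lambda_s(\Phi_s(k/K)-\Phi_s((k-1)/K))N$, is orthogonal within each species to the preceding increments $\bu^{(1)},\ldots,\bu^{(k-1)}$, and is aligned with the gradient of $H_N$ at $\bx^{(k-1)}=\sum_{j<k}\bu^{(j)}$ projected onto this orthogonal complement. Because $\Phi_s(1)=1$ and $\la\vlambda,\Phi'(q)\ra=1$, summing the blocks yields $\bx_*\in\cS_N$. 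This procedure can be expressed as a genuine AMP iteration, and standard state evolution (adapted to the product-of-spheres geometry) yields deterministic joint limits for $(\bu^{(k)})_{k\le K}$, from which one reads off limits for $H_N(\bx_*)/N$, $\nabla_{\sph}H_N(\bx_*)$, and $\nabla_{\rd}H_N(\bx_*)$.

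The key structural input is that $\Phi$ is a maximizer of the functional $\bbA$ from \cite{huang2023algorithmic}. Its Euler--Lagrange conditions translate, after passing $K\to\infty$, into two facts: first, that no "leftover" tangential gradient remains at the final iterate, so $\|\nabla_{\sph}H_N(\bx_*)\|_2\le\eps\sqrt N$ for any prescribed $\eps$ and $K$ large enough; and second, that the radial derivative has the specific asymptotic form $\vv_*(\Phi)$. The quantity $f_s=\sqrt{\Phi_s'(1)/(\xi^s\circ\Phi)'(1)}$ arises naturally from state evolution as the limiting ratio, in the final substep, between the per-step increment norm in species $s$ and the associated conditional gradient magnitude. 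The two summands in $v_{*,s}(\Phi)=\lambda_s f_s^{-1}+\sum_{s'}\xi''_{s,s'}f_{s'}$ then correspond respectively to the direct radial contribution from the species-$s$ block of $\nabla H_N$ and to the induced coupling through the off-diagonal Hessian interactions $\xi''_{s,s'}$.

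The main obstacle is setting up the AMP iteration so that its state evolution correctly tracks a multi-species greedy ascent on $\cS_N$: one must handle the species-wise block orthogonalization, close the state evolution recursions against the mixture function $\xi$, and control the discretization error as $K\to\infty$. Equally delicate is verifying that the stationarity conditions for $\Phi$ under $\bbA$ produce both the vanishing tangential gradient and the precise coefficients in $\vv_*(\Phi)$; this is exactly the analysis carried out in \cite{huang2023algorithmic,huang2023optimization}, whose results the proof ultimately invokes.
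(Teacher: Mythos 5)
This proposition is quoted verbatim from the companion paper \cite{huang2023optimization} and is not proved in the present paper; the text surrounding the statement merely summarizes the cited argument as an outward greedy-ascent exploration scheduled by $\Phi$, implemented via approximate message passing in the style of \cite{subag2018following,montanari2021optimization,ams20,sellke2024optimizing}, with the Euler--Lagrange conditions for $\bbA$ from \cite{huang2023algorithmic} supplying the stationarity that makes the tangential gradient vanish. Your sketch reconstructs exactly this approach — the species-wise block orthogonalization, the $\Phi$-driven increment schedule, state evolution yielding the limiting radial derivative $\vv_*(\Phi)$, and the role of $f_s$ as a limiting increment-to-gradient ratio — so it matches the route taken in the cited work rather than offering an alternative; no gap to flag, given that the full details (state evolution on products of spheres, discretization limits, and the stationarity bookkeeping) are correctly identified as the content of the companion papers.
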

Proposition~\ref{prop:sub-solvable-complexity-positive} is a direct consequence of the following result.

\begin{theorem}
\label{thm:annealed-complexity-positive}
    If $\xi$ is non-degenerate and strictly sub-solvable, then for any $\Phi$ as above, we have $\oF(\vv_*)\geq 0$ and $\nabla \oF(\vv_*)\neq \vzero$. Hence $\sup_{\vv\in\bbR^r}\oF(\vv)>0$, i.e. the annealed complexity is strictly positive.
\end{theorem}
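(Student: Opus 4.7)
The plan is to establish the two assertions $\oF(\vv_*)\geq 0$ and $\nabla\oF(\vv_*)\neq \vzero$ separately; together they yield $\sup_\vv \oF(\vv) > 0$ because $\oF$ is $C^1$ by Lemma~\ref{lem:oF-oPsi-formulas}, so a first-order step from $\vv_*$ in the direction of $\nabla\oF(\vv_*)$ strictly increases $\oF$ above the non-negative value $\oF(\vv_*)$.

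For $\oF(\vv_*)\geq 0$, I would argue by contradiction using Theorem~\ref{thm:approx-crits-from-annealed} together with Proposition~\ref{prop:IAMP}. Suppose instead that $F(\vx_*)=\oF(\vv_*)<0$, where $\vx_*=\Lambda^{-1/2}\vv_*$. By joint continuity of $F(\vx,E)$ (from Proposition~\ref{prop:Psi-continuous} and the explicit Gaussian quadratic in $E$), there is an open neighborhood $U$ of $\vx_*$ and $c_0>0$ with $\sup_{\vx\in U}F(\vx)\leq -2c_0$. Applying Proposition~\ref{prop:annealed-kac-rice} with $\cD=U\times\bbR$ yields $\bbE|\Crt_N(U\times\bbR)|\leq e^{-c_0 N}$ for large $N$. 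Taking $\ocD$ to be the set of tuples whose first coordinate is at least $\ups$ away in $\infty$-norm from $\vx_*$, any critical point with radial derivative $\ups/2$-close to $\vx_*$ lies in $U$, so the hypothesis \eqref{eq:annealed-crits-close-to-ocD} of Theorem~\ref{thm:approx-crits-from-annealed} is satisfied. The theorem then furnishes some $\eps>0$ such that with probability $1-e^{-cN}$ there are no $\eps$-approximate critical points with radial derivative $\ups/2$-close to $\vx_*$. For $\eps$ small this contradicts Proposition~\ref{prop:IAMP}.

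For $\nabla\oF(\vv_*)\neq\vzero$, I would first check that $\vu=-\vf$ solves the Dyson equation \eqref{eq:dyson-equation-2} at $\vv=\vv_*$: direct substitution recovers $v_{*,s}=\lambda_s/f_s+(\xi''\vf)_s$. To identify $\vu(0;\vv_*)=-\vf$ via Lemma~\ref{lem:u-manifold}\ref{itm:semicircle-flat}, one must verify $M(-\vf)=\diag(\lambda_s/f_s^2)-\xi''\succeq 0$. Using $(\xi^s\circ\Phi)'(1)=\lambda_s^{-1}(\xi''\Phi'(1))_s$ and \eqref{eq:f-s-def}, one computes $\lambda_s/f_s^2=(\xi''\Phi'(1))_s/\Phi_s'(1)$, so $M(-\vf)\Phi'(1)=\vzero$, and positive semi-definiteness reduces to an optimality/stability property of $\Phi$ from \cite{huang2023algorithmic} (morally, marginal spectral stability at the edge of the algorithmic threshold). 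Granting this, \eqref{eq:oF-derivative} gives $\nabla\oF(\vv_*)=-A^{-1}\vv_*+\vf$, and this vanishes iff $\vv_*=A\vf$, i.e.\ iff $\xi'_s f_s=\lambda_s/f_s$ for all $s$, equivalently $(\diag(\xi')-\xi'')\Phi'(1)=\vzero$. But by non-degeneracy (Assumption~\ref{as:nondegenerate}) the matrix $\diag(\xi')-\xi''$ has strictly negative off-diagonals, so a shifted Perron--Frobenius argument (apply Lemma~\ref{lem:diagonally-signed-hs23} to $(\diag(\xi')-\xi'')+cI$ for large $c$) shows its minimum eigenvector $\vw_-$ has strictly positive entries; strict sub-solvability makes the corresponding minimum eigenvalue strictly negative. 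A kernel vector of $\diag(\xi')-\xi''$ must be orthogonal to $\vw_-$, which is incompatible with $\Phi'(1)$ being strictly positive. Hence $\nabla\oF(\vv_*)\neq\vzero$.

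The main obstacle is the positive semi-definiteness of $M(-\vf)$ needed to identify $\vu(0;\vv_*)=-\vf$; this is where input from the variational characterization of $\Phi$ in \cite{huang2023algorithmic} is essential, and is the delicate technical step. The Kac--Rice contradiction for $\oF(\vv_*)\geq 0$ and the final Perron--Frobenius contradiction for non-stationarity are both clean applications of tools already available in the paper.
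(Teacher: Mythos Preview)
Your approach is essentially identical to the paper's: both steps use the same ingredients (Theorem~\ref{thm:approx-crits-from-annealed} with Proposition~\ref{prop:IAMP} for non-negativity; identifying $\vu(0;\vv_*)=-\vf$ via Lemma~\ref{lem:u-manifold}, then deducing $(\diag(\xi')-\xi'')\Phi'(1)=\vzero$ from stationarity and deriving a contradiction with strict sub-solvability via Lemma~\ref{lem:diagonally-signed-hs23}).

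The one place you hesitate is unnecessary. You flag the verification of $M(-\vf)\succeq 0$ as ``the delicate technical step'' requiring external input from the variational characterization of $\Phi$, but in fact you have already done all the work. You computed $M(-\vf)\Phi'(1)=\vzero$ with $\Phi'(1)\succ\vzero$. From your own identity $\lambda_s/f_s^2=(\xi''\Phi'(1))_s/\Phi'_s(1)$, the diagonal entries of $M(-\vf)$ equal $\sum_{s'\neq s}\xi''_{s,s'}\Phi'_{s'}(1)/\Phi'_s(1)>0$, so $M(-\vf)$ is diagonally signed. Then Lemma~\ref{lem:diagonally-signed-hs23} gives $\blambda_{\min}(M(-\vf))=\sup_{\vv\succ\vzero}\min_s(M(-\vf)\vv)_s/v_s\geq 0$ by taking $\vv=\Phi'(1)$. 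This is exactly the Perron--Frobenius argument you invoke later for the final contradiction, and it is exactly how the paper handles this step. No input from \cite{huang2023algorithmic} beyond the stated properties $\Phi'(1)\succ\vzero$ and the definition of $f_s$ is needed.
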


\begin{proof}
    We apply Theorem~\ref{thm:approx-crits-from-annealed} with
    \[
    \ocD=\{\Lambda^{-1/2} \vv_*(\Phi)\}\times \bbR^r\times\bbR\times \cJ\times \bbW_1(\bbR).
    \]
    (I.e. we consider only the radial derivative and ignore the remaining components of $\ocD$.)
    The high-probability existence of $\bx_*$ obeying \eqref{eq:amp-gives-approx-crit} for arbitrarily small $\eps$, combined with continuity of $\oF$, yields $\oF(\vv_*)\geq 0$.

    Let $\vu_* = -\vec f$.
    We claim that $\vu(0;\vv_*) = \vu_*$.
    Due to the formula \eqref{eq:v-*-def}, $\vu_*$ satisfies \eqref{eq:dyson-equation-2}, so by Lemma~\ref{lem:u-manifold} case \ref{itm:semicircle-flat} it suffices to check $M(\vu_*) \succeq 0$.
    This follows by Lemma~\ref{lem:diagonally-signed-hs23}, as $\Phi'(1) \succ 0$ and $M(\vu_*)\Phi'(1) = \vzero$ by inspection.

    Next, Lemma~\ref{lem:stationary-condition} implies that stationary points with $\vu\in\bbR^r$ satisfy $u_s^2 = \frac{1}{\xi^s(\vone)}$. Hence if $\vv_*$ were stationary, rearranging using the definition \eqref{eq:f-s-def} of $f_s$ would directly yield
    \begin{align*}
        f_s^2
        =
        \frac{\Phi_s'(1)}{(\xi^s\circ\Phi)'(1)}
        &=
        \frac{1}{\xi^s(\vone)}\quad\forall s\in\sS
        \\
        \implies
        \big(\diag(\xi') - \xi''\big)\Phi'(1) &= 0.
    \end{align*}
    % \[
    % \big(\xi''-\diag(\xi')\big)\Phi'(1) = 0.
    % \]
    Since $\Phi'(1)\succ 0$, Lemma~\ref{lem:diagonally-signed-hs23} implies $\diag(\xi') - \xi'' \succeq 0$, contradicting that $\xi$ is strictly sub-solvable.
    % This is impossible by Lemma~\ref{lem:diagonally-signed-hs23} since $\Phi'(1)\succ 0$.

    We conclude that $\nabla\oF(\vv_*)\neq \vzero$. Combined with the fact that $\oF(\vv_*)\geq 0$ immediately yields $\sup_{\vv\in\bbR^r}\oF(\vv)>0$ as desired.
\end{proof}

\begin{remark}
\label{rem:positive-complexity}
We expect that for all (or at least almost all) strictly sub-solvable $\xi$ one has
\[
    \oF(\vv_*)>\oF(\vv_*,E_*)>0,
\]
where $E_*=\bbA(\Phi)\approx H_N(\bx_*)/N$ is the associated energy of $\Phi$ as described in \cite[Equation (1.6)]{huang2023algorithmic}.\footnote{As written therein $E_*=\bbA(p,\Phi;q_0)$; both $p:[0,1]\to [0,1]$ and $q_0\in [0,1]$ are implicitly determined by $\Phi$ via \cite[Theorem 3]{huang2023algorithmic}.} However both inequalities seem much more involved to prove.
Given the branching tree construction of algorithmic maximizers in \cite{huang2023optimization}, it is natural to speculate that $\oF(\vv_*,E_*)$ strictly increases along the tree-descending part $q\in [q_0,1]$ of any $\Phi$ satisfying the conclusions of \cite[Theorem 3]{huang2023algorithmic}.
\end{remark}

\begin{remark}
\label{rem:positive-complexity-2}
As we recalled in Proposition~\ref{prop:quenched-failure-of-STT}, our work \cite{huang2023optimization} actually constructs $\exp(\delta N)$ points $\bx_*$ satisfying the conditions of Proposition~\ref{prop:IAMP}, with all pairwise distances at least $\sqrt{N}/C(\xi)$. If one had $\eps$ sufficiently small given $\delta$, then Theorem~\ref{thm:approx-crits-from-annealed} would imply that $\oF(\vv_*)>0$. However \cite{huang2023optimization} only guarantees $\delta>0$ is positive for each $\eps$, which does not yield strict inequality.
On the other hand as explained in the introduction, it does imply \emph{quenched} failure of \emph{strong} topological trivialization.
\end{remark}

Finally we show that $\vv_*$ above corresponds to the top of the bulk spectrum of $\nabla_{\sph}^2 H_N(\bx_*)$ equalling zero. Informally, this means $\bx_*$ is on the verge of being a local maximum. More formally, it is an $\eps$-marginal local maximum as defined in Section~\ref{sec:approx-local-max-E-infty}.

\begin{proposition}
\label{prop:tree-descending-approx-maximum}
    For $\vv_*$ as above, $\max \supp(\omu_{\xi}(\vv_*))=0$.
\end{proposition}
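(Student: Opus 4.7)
The plan is to exploit the structural properties of $\vu_*$ already obtained in the proof of Theorem~\ref{thm:annealed-complexity-positive}, combined with a second-order perturbative analysis of the vector Dyson equation around $(\vv_*,\vu_*)$. Recall from that proof that $\vu_* = -\vec f \in \bbR^r_{<0}$ satisfies $\vu(0;\vv_*) = \vu_*$ (via Lemma~\ref{lem:u-manifold}\ref{itm:semicircle-flat}), and that $M(\vu_*) \succeq 0$ is singular with one-dimensional kernel $\bbR\cdot\Phi'(1)$, where $\Phi'(1) \succ \vzero$ and $\la\vlambda,\Phi'(1)\ra = 1$ (this last identity is the normalization built into $\Phi$ in \cite{huang2023algorithmic}; one-dimensionality of the kernel follows from $M(\vu_*)$ being diagonally signed and Lemma~\ref{lem:diagonally-signed-hs23}).

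First, since $\vu_*$ is real we have $\Im(m(0;\vv_*))=0$, so $0 \notin \mathrm{int}(\supp(\omu_\xi(\vv_*)))$ (else Proposition~\ref{prop:MDE-basic}\ref{it:Dyson-continuous-density} would force positive density at $0$). The singularity of $M(\vu_*)$ rules out $0$ lying strictly outside the support, since in that case Theorem~\ref{thm:feasible-region} (also used in Lemma~\ref{lem:spec-no-0}) would give $M(\vu_*)\succ 0$. Hence $0$ is a boundary point of $\supp(\omu_\xi(\vv_*))$, and only the edge/bottom distinction remains.

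To identify the side, I would use \eqref{eq:dyson-equation-1} in the form $\vu(z;\vv_*) = \vu(0;\vv_*+z\vlambda)$ and perturbatively solve $\Theta(\vu_*+\vw)=\vv_*+z\vlambda$, where $\Theta_s(\vu)=-\lambda_s/u_s - (\xi''\vu)_s$. Taylor expansion yields
\begin{equation*}
\Theta(\vu_*+\vw) - \vv_* = M(\vu_*)\vw - \mathrm{diag}(\lambda_s/u_{*,s}^3)\vw^{\odot 2} + O(|\vw|^3).
\end{equation*}
First-order solvability fails since $\la\vlambda,\Phi'(1)\ra = 1 \neq 0$ while $\Phi'(1)\in\ker M(\vu_*)$; this forces an ansatz $\vw = |z|^{1/2}\big(t\Phi'(1) + |z|^{1/2}\vw_1\big)$ with $\vw_1\perp\Phi'(1)$. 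Pairing the resulting order-$|z|$ equation against $\Phi'(1)$ and using $M(\vu_*)\Phi'(1)=0$ produces the scalar condition
\begin{equation*}
\mathrm{sgn}(z) + t^2\sum_{s\in\sS} \frac{\lambda_s\,\Phi'(1)_s^3}{u_{*,s}^3} = 0.
\end{equation*}
Since $u_{*,s}<0$, the sum is strictly negative, so a real $t$ exists exactly when $\mathrm{sgn}(z)=+1$. Thus for small $z>0$ there is a real solution $\vu$ near $\vu_*$ (meaning $z\notin\supp(\omu_\xi(\vv_*))$), while for small $z<0$ no real solution exists and $\vu$ must enter $\bbH^r$ via case \ref{itm:semicircle-dome} of Lemma~\ref{lem:u-manifold} (meaning $z\in\supp(\omu_\xi(\vv_*))$). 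This gives $\max\supp(\omu_\xi(\vv_*))=0$.

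The main delicacy is confirming that the $t>0$ real branch coincides with the true Dyson solution (rather than a spurious root of the cubic expansion). I would verify this via Lemma~\ref{lem:u-manifold}\ref{itm:semicircle-flat} by checking $M(\vu)\succeq 0$ along the branch: the near-zero eigenvalue of $M(\vu)$ perturbs in the direction $\Phi'(1)$ by approximately $-2|z|^{1/2}t\,\|\Phi'(1)\|_2^{-2}\sum_s \lambda_s\Phi'(1)_s^3/u_{*,s}^3$, which is positive for $t>0$ since the sum is negative. Uniqueness of the analytic continuation from $\bbH$ (Proposition~\ref{prop:MDE-basic}) then selects the $t>0$ branch, completing the argument.
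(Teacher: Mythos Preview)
Your argument is correct and takes a genuinely different route from the paper. The paper instead uses a global path $\vu(t)=(1+t)^{-1}\vu_*$ in $\vu$-space: since shrinking $\vu$ only increases the diagonal of $M$, one has $M(\vu(t))\succeq 0$ for all $t\ge 0$, hence (by Lemma~\ref{lem:u-manifold}) $\vu(t)=\vu(0;\vv(t))$ is always real and the density of $\omu(\vv(t))$ vanishes at $0$. For large $t$ the support lies strictly below $0$ by \eqref{eq:mu-vx-approx}, so if $\max\supp(\omu(\vv_*))>0$ one could find $t$ with $\max\supp=\delta$ arbitrarily small; but then the universal square-root edge behavior of \cite{ajanki2019quadratic} forces positive density at $0$, a contradiction. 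The lower bound uses Proposition~\ref{prop:edge-vs-cusp} (or Lemma~\ref{lem:M-singular-edge-cusp}): $\max\supp<0$ would give $M(\vu_*)$ invertible, contradicting $M(\vu_*)\Phi'(1)=0$. Your approach is more explicit: the sign condition $\sum_s \lambda_s\Phi'(1)_s^3/u_{*,s}^3<0$ (all $u_{*,s}<0$) directly determines the side of the edge, which is more informative and avoids invoking the AEK edge classification as a black box. Two small points: (i) your citation for ``singularity rules out $0$ outside the support'' should be Proposition~\ref{prop:edge-vs-cusp} rather than Theorem~\ref{thm:feasible-region}, though in fact your own first-order obstruction ($\la\vlambda,\Phi'(1)\ra\neq 0$ while $\Phi'(1)\in\ker M(\vu_*)$) already gives this; (ii) the branch selection is cleaner phrased via Lemma~\ref{lem:u-manifold}\ref{itm:semicircle-flat} directly: $M\succ 0$ on the $t>0$ branch means it \emph{is} the Dyson solution, while $M\not\succeq 0$ on the $t<0$ branch excludes it---no analytic continuation is needed. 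For $z<0$, use Theorem~\ref{thm:continuity} to place $\vu(0;\vv_*+z\vlambda)$ near $\vu_*$, then your Case analysis ($|t|^2\gtrless|z|$) rules out realness.
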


\begin{proof}
    Consider a path $\vu(t)=(1+t)^{-1}\vu_*$ for $t\geq 0$ and with $u_{*,s}=-f_s$ as above.
    We verified above that $M(\vu_*)\succeq 0$, and from the definition \eqref{eq:def-oM} of $M$ it follows that $M(\vu(t))\succeq 0$ for all $t\geq 0$.
    By Lemma~\ref{lem:u-manifold} this means that for all $t\ge 0$, $\vu(t) = \vu(0;\vv(t))$ for some $\vv(t) \in \bbR^r$.

    For $t$ sufficiently large, \eqref{eq:dyson-equation-1} yields $v_s(t)\leq -\lambda_s/u_s \leq -C$ for large $C=C(\xi)>0$.
    % Moreover, Lemma~\ref{lem:u-manifold} implies that $\vu$ is the true corresponding solution to \eqref{eq:dyson-equation-1} for this $\vv$.
    At this point, \eqref{eq:mu-vx-approx} implies
    \[
    \bbW_{\infty}
    \Big(\omu(\vv(t)),
    \sum_{s\in\sS} \lambda_s \delta_{-v_s/\lambda_s}
    \Big)
    \leq C
    \]
    whence $\max\supp(\omu(\vv(t))<0$.

    Since $\vv(t)\in\bbR^r$, it follows that $\omu(\vv(t))$ always has density $0$ at $0$.
    By a continuity argument via Lemma~\ref{lem:Dyson-weakly-continuous}, if $\max\supp(\omu(\vv_*))>0$ held, then there would exist $t$ such that $\max\supp(\omu(\vv(t))=\delta$ for arbitrarily small $\delta>0$.
    It follows by \cite[Eq. (2.15)]{ajanki2019quadratic} that $\omu(\vv(t))$ must have positive density at $0$ for such $t$ when $\delta$ is taken sufficiently small, a contradiction.

    For the opposite direction, suppose that $\max\supp(\omu(\vv_*))<0$.
    Then $0\notin \supp(\omu(\vv_*))$, which implies that $M(\vu_*)$ is invertible by Proposition~\ref{prop:edge-vs-cusp} or Lemma~\ref{lem:M-singular-edge-cusp}.
    However $M(\vu_*)\Phi'(1)=\vzero$ so this cannot hold.
\end{proof}

\section{Locating the Critical Points}
\label{sec:2^r}

In this section, we complete the proof of Theorem~\ref{thm:precise-landscape} by combining the description of $\eps$-approximate critical points from Proposition~\ref{prop:approx-crits} with a recursive argument that localizes all approximate critical points.
Throughout this section we assume $\xi$ is strictly super-solvable.

In Subsection~\ref{subsec:6-1}, we define the \emph{type} $\vDelta\in\{-1,1\}^r$ of an approximate critical point based on its radial derivative.
It follows by the previous section that with high probability, all critical points have a well-defined type.
Next in Subsection~\ref{subsec:6-2} we explain the conditional law of $H_N$ on subspherical bands.
This lets us analyze the recursive algorithm of Subsection~\ref{subsec:6-3}.
In Subsection~\ref{subsec:6-4} we deduce that all approximate critical points of each type $\vDelta$ are localized inside a single small (random) subset of $\cS_N$.
Subsection~\ref{subsec:6-5} uses this to deduce existence and uniqueness of type of (exact) critical point.
Finally Subsection~\ref{subsec:index} determines the exact index of each critical point by gradually perturbing $\xi$ and arguing that eigenvalues do not cross $0$.

% to show that (for sufficiently small $\eps$) there is exactly one critical point satisfying \eqref{eq:}
% $\vDelta$-good $(\eps,\vDelta)$ critical point for all $\vDelta \in \{-1,1\}^r$.

\subsection{Critical Points of Type $\vDelta$}
\label{subsec:6-1}

Our argument will separately localize each critical point of type $\vDelta \in \{-1,1\}^r$, defined as follows.
\begin{definition}
    \label{def:type}
    Let $\ups = o_\eps(1)$ be given by Proposition~\ref{prop:approx-crits}.
    Say $\bx \in \cS_N$ is a \textbf{$\eps$-critical point of type $\vDelta$}, or alternatively a \textbf{$(\eps,\vDelta)$-critical point}, if it is an $\eps$-critical point (recall Definition~\ref{def:approx}) and
    \begin{equation}
        \label{eq:type-radial-deriv}
        \tnorm{\nabla_{\rd} H_N(\bx) - \vx(\vDelta)}_\infty \le \ups\,.
    \end{equation}
\end{definition}
\begin{fact}
    \label{fac:type-partition}
    There exists $\eps_0 = \eps_0(\xi)$ such that with probability $1-e^{-cN}$ the following holds.
    For all $\eps \le \eps_0$, all $\eps$-critical points of $H_N$ are $(\eps,\vDelta)$-critical points for a unique $\vDelta \in \{-1,1\}^r$.
\end{fact}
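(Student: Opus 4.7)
The plan is to combine the approximate-critical-point description from Proposition~\ref{prop:approx-crits} with a simple separation estimate on the finite set of target vectors $\{\vx(\vDelta):\vDelta\in\{-1,1\}^r\}\subseteq\bbR^r$. Existence of a valid type will fall out of Proposition~\ref{prop:approx-crits} almost verbatim, and uniqueness will follow once I show that the $2^r$ vectors $\vx(\vDelta)$ are pairwise separated in $\ell_\infty$ by a $\xi$-dependent constant; this is essentially a pigeonhole once one chooses $\eps_0$ small enough that the corresponding $\ups$ from Proposition~\ref{prop:approx-crits} falls well below this separation.

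For existence: fixing $\ups>0$, Proposition~\ref{prop:approx-crits} provides $\eps_1=\eps_1(\xi,\ups)$ such that with probability $1-e^{-cN}$ every $\eps_1$-approximate critical point of $H_N$ is $\ups$-good. In particular, condition \eqref{eq:radial-derivative-good} furnishes some $\vDelta\in\{-1,1\}^r$ with $\|\nabla_\rd H_N(\bx)-\vx(\vDelta)\|_\infty\leq \ups$, which is precisely the definition of an $(\eps_1,\vDelta)$-critical point.

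For uniqueness of $\vDelta$: I claim there exists $\delta_0=\delta_0(\xi,\vlambda)>0$ with $\|\vx(\vDelta)-\vx(\vDelta')\|_\infty\geq \delta_0$ for all distinct $\vDelta,\vDelta'\in\{-1,1\}^r$. Setting $u(\vDelta)_s=-\Delta_s/\sqrt{\xi^s(\vone)}$, Corollary~\ref{cor:only-maximizers} identifies $\vv(\vDelta)=-A\,\vu(\vDelta)$, and by definition $\vx(\vDelta)=\Lambda^{-1/2}\vv(\vDelta)$, so
\[
    \vx(\vDelta)-\vx(\vDelta')=-\Lambda^{-1/2}A\bigl(\vu(\vDelta)-\vu(\vDelta')\bigr).
\]
When $\vDelta\neq\vDelta'$, the right-hand vector $\vu(\vDelta)-\vu(\vDelta')$ is a nonzero real vector (each nonzero coordinate equals $\pm 2/\sqrt{\xi^s(\vone)}$), while $A\succ 0$ by Corollary~\ref{cor:A-pd} and $\Lambda^{-1/2}$ is a strictly positive diagonal matrix. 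Hence $\vx(\vDelta)\neq \vx(\vDelta')$, and since $\{-1,1\}^r$ is finite the infimum $\delta_0$ of $\ell_\infty$-distances over the finitely many pairs is strictly positive.

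Combining the two steps: because Proposition~\ref{prop:approx-crits} gives $\ups=o_\eps(1)$ as $\eps\to 0$, I can choose $\eps_0\leq \eps_1$ small enough that the corresponding $\ups$ satisfies $2\ups<\delta_0$. On the high-probability event from Proposition~\ref{prop:approx-crits}, every $\eps$-critical point $\bx$ with $\eps\leq \eps_0$ satisfies $\|\nabla_\rd H_N(\bx)-\vx(\vDelta)\|_\infty\leq \ups$ for some $\vDelta$; any other $\vDelta'\neq\vDelta$ would then yield $\|\nabla_\rd H_N(\bx)-\vx(\vDelta')\|_\infty\geq \delta_0-\ups>\ups$ by the triangle inequality, ruling out $\vDelta'$ as a valid type. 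I expect no real obstacle here: Proposition~\ref{prop:approx-crits} already supplies the delicate input, and the remaining ingredient is the deterministic separation bound, which reduces to invertibility of $A$ (Corollary~\ref{cor:A-pd}) and hence to strict super-solvability.
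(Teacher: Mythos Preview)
Your proof is correct and follows the same approach as the paper: existence comes directly from Proposition~\ref{prop:approx-crits} via \eqref{eq:radial-derivative-good}, and uniqueness from disjointness of the $\ups$-balls around the $\vx(\vDelta)$ for small $\eps$. The paper leaves the distinctness of the $\vx(\vDelta)$ implicit; you supply the natural justification through the identity $\vx(\vDelta)=-\Lambda^{-1/2}A\,\vu(\vDelta)$ and invertibility of $A$ (Corollary~\ref{cor:A-pd}), which is exactly the intended reasoning.
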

\begin{proof}
    Immediate from Proposition~\ref{prop:approx-crits}.
    The signs $\vDelta$ are unique since for small $\eps$, the $\ups$-balls around the $\vx(\vDelta)$ are disjoint.
\end{proof}
\begin{definition}[Species-wise rescaling]
    Let $\bv \in \bbR^N$ such that $\bv_s \neq \bzero$ for all $s\in \sS$.
    For $\vq \in [0,1]^r$, $\vDelta \in \{-1,1\}^r$, define $\scl(\bv;\vDelta,\vq)$ to be the vector $\bu \in \bbR^N$ with
    \[
         \bu_s = \Delta_s \sqrt{q_s \lambda_s N} \fr{\bv_s}{\norm{\bv_s}_2}\,.
    \]
\end{definition}
That is, $\bu$ is the vector parallel to $\bv$ in each species with $\vR(\bu,\bu) = \vq$, whose species-$s$ component is correlated (resp. anti-correlated) with that of $\bv$ if $\Delta_s=1$ (resp. $-1$).
The following corollary of Proposition~\ref{prop:approx-crits} shows that $(\eps,\vDelta)$-critical points have nearly constant correlation with $\nabla H_N(\bzero) = \Gamma^{(1)} \diamond \bG^{(1)}$, the $1$-spin part of $H_N$.
\begin{corollary}
    \label{cor:approx-crits}
    For any $\eps>0$, there exists $\ups = o_\eps(1)$ such that with probability $1-e^{-cN}$ the following holds.
    For any $(\eps,\vDelta)$-critical point $\bx$, let $\by$ be its species-wise projection onto $\nabla H_N(\bzero)$, i.e.
    \[
        \by_s
        = \fr{\la \bx_s, (\nabla H_N(\bzero))_s \ra}{\tnorm{(\nabla H_N(\bzero))_s}_2^2} (\nabla H_N(\bzero))_s
    \]
    for all $s\in \sS$. Then
    \[
        \norm{\by - \scl\lt(\nabla H_N(\bzero); \vDelta, \nabla \xi(\vzero) / \nabla \xi(\vone)\rt)}_2
        \le \ups \sqrt{N}\,.
    \]
\end{corollary}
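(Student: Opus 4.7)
The plan is to unpack the conclusion of Proposition~\ref{prop:approx-crits} (goodness of $\eps$-critical points) into the specific geometric statement about the $\nabla H_N(\bzero)$-projection, using only an elementary calculation together with Gaussian concentration for $\|\bG^{(1)}_s\|_2$. No new analytic machinery is needed.

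\textbf{Step 1: Reduction to goodness.} Let $\ups_0 = o_\eps(1)$ be the quantity supplied by Proposition~\ref{prop:approx-crits}, so that with probability $1-e^{-cN}$ every $\eps$-approximate critical point $\bx$ is $(\ups_0, \vDelta')$-good for some $\vDelta' \in \{-1,1\}^r$. Since the vectors $\{\vx(\vDelta)\}_{\vDelta}$ of \eqref{eq:ideal-stats} are pairwise separated by a positive $\xi$-dependent constant (using that $\Gamma^{(1)} > 0$ from Assumption~\ref{as:nondegenerate}), for $\eps$ (and hence $\ups_0$) small enough, the radial-derivative condition \eqref{eq:type-radial-deriv} forces $\vDelta' = \vDelta$. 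In particular, the $1$-spin correlation bound \eqref{eq:1spin-correlation-good} yields
\[
\Big| \vR(\bG^{(1)},\bx)_s - \Delta_s \gamma_s / \sqrt{\xi'_s} \Big| \le \ups_0, \qquad s \in \sS.
\]

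\textbf{Step 2: Explicit formulas for $\by_s$ and $\scl(\cdot)_s$.} Writing $(\nabla H_N(\bzero))_s = \gamma_s \bG^{(1)}_s$ (here $\gamma_s$ is the single-species entry of $\Gamma^{(1)}$, which is strictly positive by non-degeneracy), a direct calculation using \eqref{eq:R} gives
\[
\by_s = \frac{\la \bx_s, \bG^{(1)}_s \ra}{\tnorm{\bG^{(1)}_s}_2^2} \bG^{(1)}_s = \frac{\lambda_s N\, \vR(\bG^{(1)},\bx)_s}{\tnorm{\bG^{(1)}_s}_2^2} \bG^{(1)}_s.
\]
Differentiating the explicit formula for $\xi$ yields $\partial_{x_s} \xi(\vzero) = \lambda_s \gamma_s^2$ and $\partial_{x_s} \xi(\vone) = \lambda_s \xi'_s$, so the argument $\vq = \nabla \xi(\vzero)/\nabla\xi(\vone)$ satisfies $q_s = \gamma_s^2 / \xi'_s$ and hence
\[
\scl(\nabla H_N(\bzero); \vDelta, \vq)_s = \Delta_s \frac{\gamma_s}{\sqrt{\xi'_s}} \sqrt{\lambda_s N}\, \frac{\bG^{(1)}_s}{\tnorm{\bG^{(1)}_s}_2}.
\]
Both vectors are scalar multiples of $\bG^{(1)}_s / \tnorm{\bG^{(1)}_s}_2$, so
\[
\tnorm{\by_s - \scl(\nabla H_N(\bzero); \vDelta, \vq)_s}_2 = \left| \frac{\lambda_s N\, \vR(\bG^{(1)},\bx)_s}{\tnorm{\bG^{(1)}_s}_2} - \Delta_s \frac{\gamma_s \sqrt{\lambda_s N}}{\sqrt{\xi'_s}} \right|.
\]

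\textbf{Step 3: Closing the estimate.} Standard Gaussian concentration gives an event of probability $1-e^{-cN}$ on which $\big|\tnorm{\bG^{(1)}_s}_2/\sqrt{\lambda_s N} - 1\big| \le \ups_1$ for all $s \in \sS$, with $\ups_1 \to 0$ (any $\ups_1 \ge N^{-1/4}$ will do). On the intersection of this event with the event from Step~1, the quantity inside the absolute value above differs from $\sqrt{\lambda_s N}\big(\vR(\bG^{(1)},\bx)_s - \Delta_s \gamma_s/\sqrt{\xi'_s}\big)$ by at most $C(\xi,\vlambda) \ups_1 \sqrt{N}$, which together with Step~1 gives $\tnorm{\by_s - \scl_s}_2 \le C(\xi,\vlambda)(\ups_0 + \ups_1)\sqrt{N}$. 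Summing over the $r$ species yields the desired conclusion with $\ups \equiv Cr(\ups_0 + \ups_1) = o_\eps(1)$.

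The proof is essentially an arithmetic unpacking, with no real obstacle; the only subtle point is identifying the $\vDelta$ coming from Definition~\ref{def:type} with the $\vDelta$ coming from goodness, which is handled in Step~1 by separation of the points $\vx(\vDelta)$.
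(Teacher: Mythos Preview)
Your proposal is correct and takes essentially the same approach as the paper: invoke Proposition~\ref{prop:approx-crits} to extract the 1-spin overlap condition \eqref{eq:1spin-correlation-good}, combine with Gaussian concentration for $\|\bG^{(1)}_s\|_2$, and unwind the definition of $\scl$. The paper's proof is terser but otherwise identical; you are simply more explicit about matching the $\vDelta$ from goodness with the $\vDelta$ from Definition~\ref{def:type}, which the paper handles via Fact~\ref{fac:type-partition}.
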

\begin{proof}
    We can write
    \[
        \by_s
        = \fr{\la \bx_s, \bG^{(1)}_s \ra}{\tnorm{\bG^{(1)}_s}_2^2} \bG^{(1)}_s
        = \fr{\la \bx_s, \bG^{(1)}_s \ra}{\tnorm{\bG^{(1)}_s}_2}
        \cdot \fr{\bG^{(1)}_s}{\tnorm{\bG^{(1)}_s}_2}\,.
    \]
    By Proposition~\ref{prop:approx-crits}, with probability $1-e^{-cN}$ all $(\eps,\vDelta)$-critical point $\bx$ satisfies $\la \bx_s, \bG^{(1)}_s \ra = \fr{\Delta_s \gamma_s}{\sqrt{\xi'_s}} \cdot \lambda_s N (1 + o_\eps(1))$, and by a standard concentration bound $\tnorm{\bG^{(1)}_s}_2 = \sqrt{\lambda_s N} (1 + o_\eps(1))$.
    So up to $1+o_\eps(1)$ multiplicative error
    \[
        \by_s
        = \fr{\Delta_s\gamma_s}{\sqrt{\xi'_s}} \cdot \sqrt{\lambda_s N} \fr{\bG^{(1)}_s}{\tnorm{\bG^{(1)}_s}_2}
        = \Delta_s \sqrt{\fr{\partial_s \xi(\vzero)}{\partial_s \xi(\vone)} \lambda_s N}
        \fr{\bG^{(1)}_s}{\tnorm{\bG^{(1)}_s}_2}\,.
    \]
    The result follows because
    \[
      \scl\lt(\nabla H_N(\bzero); \vDelta, \nabla \xi(\vzero) / \nabla \xi(\vone)\rt)
      = \scl\lt(\bG^{(1)}; \vDelta, \nabla \xi(\vzero) / \nabla \xi(\vone)\rt)\,.\qedhere
  \]
\end{proof}

\subsection{Conditional Band Models}
\label{subsec:6-2}

Our arguments rely on a self-similarity in law obtained by restriction to a band.
The point is that bands inside $\cS_N$ are still of the same form as our original model, with $N_s$ replaced by $N_s -1$ and $\xi$ replaced by a new mixture function. This lets us apply the preceding results of this paper to said bands.
This idea has been used extensively in recent work by Subag, e.g. \cite{subag2018free,subag2021tap1,subag2021tap2}.

For the below definitions, $U$ is a species-aligned subspace (recall Definition~\ref{dfn:species-aligned}), which in our applications will always be of dimension $O(1)$.
Let
\[
    \cB_N = \lt\{
        \bx \in \bbR^N :
        \tnorm{\bx_s}_2^2 \le \lambda_s N \quad
        \forall s\in \sS
    \rt\}
\]
be the convex hull of $\cS_N$.
Over the course of the recursive argument, we will be interested in the landscape of various Hamiltonians in the following domains where we project out the subspace $U$.
The original model corresponds to $U = \emptyset$.
\begin{definition}
    Let $\cS_N^U = \cS_N \cap U^\perp$ and $\cB_N^U = \cB_N \cap U^\perp$ (recalling the notation \eqref{eq:multi-perp}).
\end{definition}
\begin{definition}
    Let $\bm \in \cB_N^U$ such that $\bm_s \neq \bzero$ for all $s\in \sS$.
    The band of $\cS_N^U$ centered at $\bm$ is
    \[
        \Band^U(\bm) = \lt\{
            \bsig \in \cS_N^U : \vR(\bsig - \bm, \bm) = \vzero
        \rt\}\,.
    \]
    % \begin{align}
    %     \Band^U(\bm) &= \lt\{
    %         \bsig \in \cS_N^U : \vR(\bsig - \bm, \bm) = \vzero
    %     \rt\}\,, &
    %     \wt\Band_U(\bm) &= (1-\vq)^{-1/2} \diamond (\Band(\bm) - \bm)
    %     = \cS_N^U \cap \bm^{\perp}\,.
    % \end{align}
\end{definition}
Note that for $\vq = \vR(\bm,\bm) \in [0,1]^r$ and $U \bowtie \bm = \Span(U,\bm_1,\ldots,\bm_s)$,
\begin{equation}
    \label{eq:band-affine}
    \Band^U(\bm) = (\vone - \vq)^{1/2} \diamond \cS_N^{U \bowtie \bm} + \bm\,.
\end{equation}
% Note that $\wt\Band(\bm)$ is a set with the same structure as $\cS_N$: it is a product of $r$ spheres, where the sphere corresponding to species $s$ has dimension $N_s-1$ and radius $\sqrt{\lambda_s N}$.
We will be interested in the following bands, whose centers are rescalings of $\nabla H_N(\bzero)$ (projected to $U^\perp$).
\begin{definition}
    \label{def:gradient-aligned-band}
    For $\vq \in [0,1]^r$ and $\vDelta \in \{-1,1\}^r$, define
    \begin{align*}
        \bm^U_{\vDelta,\vq}(H_N) &= \scl(\Proj_{U^\perp} \nabla H_N(\bzero);\vDelta,\vq)\,, &
        \Band^U_{\vDelta,\vq}(H_N) &= \Band^U(\bm^U_{\vDelta,\vq}(H_N))\,.
    \end{align*}
    % \[
    %     (\bm_{\vDelta,\vq})_s
    %     = \Delta_s \sqrt{q_s \lambda_s N} \fr{(\nabla H_N(\bzero))_s}{\tnorm{(\nabla H_N(\bzero))_s}_2}\,,
    %     \qquad
    %     s\in \sS
    % \]
    % and
    % \begin{align*}
    %     \Band_{\vDelta,\vq} &= \Band(\bm_{\vDelta,\vq})\,, &
    %     \wt\Band_{\vDelta,\vq} &= \wt\Band(\bm_{\vDelta,\vq})\,.
    % \end{align*}
\end{definition}
% Thus $\bm_{\vDelta,\vq}$ satisfies $\vR(\bm_{\vDelta,\vq},\bm_{\vDelta,\vq}) = \vq$ and its species-$s$ part $(\bm_{\vDelta,\vq})_s$ is parallel with $(\nabla H_N(\bzero))_s$.
We will abbreviate these $\bm^U_{\vDelta,\vq}$ and $\Band^U_{\vDelta,\vq}$ when $H_N$ is clear.
The following corollary shows that all critical points of $H_N$ in $\cS_N^U$ lie near one of these bands, given by a specific $\vq$.
\begin{corollary}
    \label{cor:crit-pt-band-depth}
    There exists $\ups = o_\eps(1)$ such that the following holds.
    Let $U$ be a species-aligned subspace of dimension $O(1)$.
    With probability $1-e^{-cN}$, all $(\eps,\vDelta)$-critical points of the restriction of $H_N$ to the manifold $\cS_N^U$
    % (w.r.t. the geometry of $\cS_N^U$)
    lie within $\ups \sqrt{N}$ of $\Band^U_{\vDelta,\vq}(H_N)$, where $\vq = \nabla \xi(\vzero)/ \nabla \xi(\vone)$.
\end{corollary}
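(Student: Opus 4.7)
The plan is to apply Corollary~\ref{cor:approx-crits} to the restriction of $H_N$ to the lower-dimensional state space $\cS_N^U$ and then translate the resulting species-wise projection bound into a distance bound to $\Band^U_{\vDelta,\vq}$.

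First I would verify that $H_N|_{\cS_N^U}$ is itself a multi-species spherical spin glass of the form \eqref{eq:def-hamiltonian}. Since $U$ is species-aligned and $\dim(U)=O(1)$, the manifold $\cS_N^U$ is a product of spheres of radius $\sqrt{\lambda_s N}$ in the $(N_s-\dim(U_s))$-dimensional subspaces $\bbR^{\cI_s}\cap U_s^\perp$. A straightforward rescaling and choice of species-aligned orthonormal basis turns the restriction into a genuine $r$-species spherical spin glass of total dimension $N'=N-\dim(U)$, with mixture function $(N/N')\xi=\xi+O(1/N)$ and species weights within $O(1/N)$ of $\vlambda$; these corrections can be absorbed into $\ups$. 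Crucially, the $1$-spin gradient at the origin of the restricted model is $\Proj_{U^\perp}\nabla H_N(\bzero)$.

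Next I would apply Corollary~\ref{cor:approx-crits} to this restricted model. With probability $1-e^{-cN}$, every $(\eps,\vDelta)$-critical point $\bx$ of $H_N|_{\cS_N^U}$ has species-wise projection $\by$ onto $\Proj_{U^\perp}\nabla H_N(\bzero)$ satisfying
\[
    \norm{\by-\scl(\Proj_{U^\perp}\nabla H_N(\bzero);\vDelta,\vq)}_2 \le \ups\sqrt{N},
\]
where $\vq=\nabla\xi(\vzero)/\nabla\xi(\vone)$. By Definition~\ref{def:gradient-aligned-band}, the argument of $\scl$ is exactly $\bm^U_{\vDelta,\vq}$, so $\norm{\by-\bm^U_{\vDelta,\vq}}_2\le \ups\sqrt{N}$.

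To convert this into a distance-to-band bound, I would decompose $\bx_s=\by_s+\bz_s$ with $\bz_s\perp \bm^U_{\vDelta,\vq,s}$ and set $\tilde\bx_s=\bm^U_{\vDelta,\vq,s}+c_s\bz_s$, where $c_s>0$ is chosen so $\|\tilde\bx_s\|_2^2=\lambda_s N$; by construction $\tilde\bx\in\Band^U_{\vDelta,\vq}$. Pythagoras gives $\|\bx-\tilde\bx\|_2^2=\sum_s\big(\|\by_s-\bm^U_{\vDelta,\vq,s}\|_2^2+(1-c_s)^2\|\bz_s\|_2^2\big)$. The first summand is $O(\ups^2 N)$, and $|\|\by_s\|_2^2-\|\bm^U_{\vDelta,\vq,s}\|_2^2|=O(\ups N)$ forces $c_s=1+O(\ups)$, making the second summand $O(\ups^2 N)$ too (using that $q_s<1$ by Assumption~\ref{as:nondegenerate}, so $\|\bz_s\|_2=\Theta(\sqrt{N})$). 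The main (mild) obstacle is the self-similarity step above: verifying that restricting $H_N$ to a species-aligned subsphere yields a model of the same form up to negligible corrections. This is a direct computation from $\bbE H_N(\bsig)H_N(\brho)=N\xi(\vR(\bsig,\brho))$ together with the fact that $\vR$ depends only on species-wise inner products, which are preserved by species-aligned isometries.
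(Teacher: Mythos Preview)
Your proposal is correct and follows essentially the same approach as the paper: both reduce to Corollary~\ref{cor:approx-crits} after observing that $H_N$ restricted to $\cS_N^U$ is again a multi-species spherical spin glass with species dimensions $N_s - O(1)$ and 1-spin gradient $\Proj_{U^\perp}\nabla H_N(\bzero)$. The paper's proof is a two-sentence ``Immediate from Corollary~\ref{cor:approx-crits}'' that leaves the final geometric step implicit; you have simply spelled out the projection-to-band conversion in detail, and your Pythagorean decomposition for this step is fine.
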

\begin{proof}
    Immediate from Corollary~\ref{cor:approx-crits}.
    Since $U$ has dimension $O(1)$, $H_N$ restricted to $\cS_N^U$ is a multi-species spin glass whose species dimensions $N'_s = N_s - O(1)$ still satisfy $N'_s / N \to \lambda_s$.
    Thus restricting the model to $\cS_N^U$ does not affect the result.
\end{proof}

Finally let $U' = U \bowtie \bm^U_{\vDelta,\vq}$~.
We define the centered band Hamiltonian $H^U_{N,\vDelta,\vq}(\bsig) : \cS_N^{U'} \to \bbR$ by
\begin{equation}
    \label{eq:band-hamiltonian}
    H^U_{N,\vDelta,\vq}(\bsig)
    = H_N\lt( (\vone-\vq)^{1/2} \diamond \bsig + \bm^U_{\vDelta,\vq}\rt)
    - \lt\la \Proj_{U^\perp} \nabla H_N(\bzero),
        (\vone-\vq)^{1/2} \diamond \bsig + \bm^U_{\vDelta,\vq}
    \rt\ra\,.
\end{equation}

The following lemma shows that conditional on $\Proj_{U^\perp} \nabla H_N(\bzero)$, $H^U_{N,\vDelta,\vq}$ is itself a multi-species spin glass.
Note that the last term of \eqref{eq:band-hamiltonian} is constant for $\bsig \in \cS_N^{U'}$, and that $(\vone-\vq)^{1/2} \diamond \bsig + \bm^U_{\vDelta,\vq}$ ranges over $\Band_{\vDelta,\vq}^U$ as $\bsig$ ranges over $\cS_N^{U'}$.
Thus $H^U_{N,\vDelta,\vq}$ is the remaining randomness of $H_N$ on this band.
\begin{lemma}
    \label{lem:conditional-formula}
    % As processes on $\cS_N^U$, we have
    % \[
    %     H_N(\bsig)
    % \]
    Conditionally on $\Proj_{U^\perp} \nabla H_N(\bzero)$, $H^U_{N,\vDelta,\vq}$ is a centered Gaussian process on $\cS_N^{U'}$ with covariance
    \begin{align*}
        \E[H^U_{N,\vDelta,\vq}(\bsig) H^U_{N,\vDelta,\vq}(\brho)]
        &=
        N\xi_{\vq}(\vR(\bsig,\brho))\,,
        \qquad \text{where} \\
        \xi_{\vq}(\vx)
        &= \xi\lt((\vone-\vq)\odot \vx + \vq\rt)
        - \lt\la \nabla \xi(\vzero), (\vone - \vq) \odot \vx + \vq \rt\ra\,.
    \end{align*}
\end{lemma}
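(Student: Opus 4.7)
\textbf{Proof plan for Lemma~\ref{lem:conditional-formula}.}

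My approach is to decompose $H_N$ into its $1$-spin and higher-spin parts, which are independent Gaussian processes, and observe that the subtraction in the definition of $H^U_{N,\vDelta,\vq}$ exactly removes the $1$-spin contribution. First I write $H_N = H_N^{(1)} + H_N^{\geq 2}$ where $H_N^{(1)}(\bx) = \la \Gamma^{(1)} \diamond \bG^{(1)}, \bx\ra = \la \nabla H_N(\bzero), \bx\ra$ collects the $k=1$ terms in \eqref{eq:def-hamiltonian}, and $H_N^{\geq 2}$ collects $k\geq 2$. Since the $\bG^{(k)}$ are independent across $k$, and $\nabla H_N(\bzero)$ depends only on $\bG^{(1)}$ while $H_N^{\geq 2}$ depends only on $(\bG^{(k)})_{k\geq 2}$, these two processes are independent, and in particular $H_N^{\geq 2}$ is independent of $\Proj_{U^\perp}\nabla H_N(\bzero)$.

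Next I verify the identification $H^U_{N,\vDelta,\vq}(\bsig) = H_N^{\geq 2}(\by)$ where $\by = (\vone-\vq)^{1/2}\diamond\bsig + \bm^U_{\vDelta,\vq}$. This holds because $\bm^U_{\vDelta,\vq}\in U^\perp$ by construction and $\bsig \in \cS_N^{U'} \subseteq (U')^\perp \subseteq U^\perp$ (species-wise orthogonality is preserved under the diagonal rescaling $(\vone-\vq)^{1/2}\diamond\cdot$ since $U$ is species-aligned), so $\by \in U^\perp$ and thus $\la \Proj_{U^\perp}\nabla H_N(\bzero),\by\ra = \la\nabla H_N(\bzero),\by\ra = H_N^{(1)}(\by)$. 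Substituting into \eqref{eq:band-hamiltonian} yields the claim. Since $\bm^U_{\vDelta,\vq}$ is a deterministic function of the conditioning variable $\Proj_{U^\perp}\nabla H_N(\bzero)$, so is $U'$, and therefore conditionally on $\Proj_{U^\perp}\nabla H_N(\bzero)$ the map $\bsig \mapsto H_N^{\geq 2}(\by)$ is (by the independence above) an unconditioned centered Gaussian process on $\cS_N^{U'}$.

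It remains to compute the covariance. By the covariance of $H_N$ and the explicit formula for $H_N^{(1)}$,
\[
\bbE[H_N^{\geq 2}(\by)H_N^{\geq 2}(\bz)]
= N\xi(\vR(\by,\bz)) - \bbE[H_N^{(1)}(\by)H_N^{(1)}(\bz)],
\]
and $\bbE[H_N^{(1)}(\by)H_N^{(1)}(\bz)] = \sum_s \gamma_s^2 \la\by_s,\bz_s\ra = N\la\nabla\xi(\vzero),\vR(\by,\bz)\ra$, using $\partial_{x_s}\xi(\vzero) = \lambda_s\gamma_s^2$. Finally I compute $\vR(\by,\bz)$ species-wise: expanding
\[
\la\by_s,\bz_s\ra = (1-q_s)\la\bsig_s,\brho_s\ra + \sqrt{1-q_s}\bigl(\la\bsig_s,(\bm^U_{\vDelta,\vq})_s\ra + \la\brho_s,(\bm^U_{\vDelta,\vq})_s\ra\bigr) + \|(\bm^U_{\vDelta,\vq})_s\|_2^2,
\]
the two cross terms vanish since $\bsig,\brho \in (U')^\perp$ and $(\bm^U_{\vDelta,\vq})_s \in U'_s$, while $\|(\bm^U_{\vDelta,\vq})_s\|_2^2 = q_s\lambda_s N$ by the definition of $\scl$. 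Dividing by $\lambda_s N$ gives $\vR(\by,\bz) = (\vone-\vq)\odot\vR(\bsig,\brho) + \vq$, and substituting into the previous display yields $N\xi_\vq(\vR(\bsig,\brho))$ exactly as claimed.

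This proof is essentially a direct computation; the only point requiring care is confirming that $\by \in U^\perp$ so that the conditioning on $\Proj_{U^\perp}\nabla H_N(\bzero)$ (rather than on all of $\nabla H_N(\bzero)$) suffices to remove the $1$-spin contribution, but this follows from species-alignment of $U$ and the explicit form of $\bm^U_{\vDelta,\vq}$.
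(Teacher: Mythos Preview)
Your proof is correct and follows essentially the same approach as the paper: decompose $H_N$ into its degree-one part $\la \nabla H_N(\bzero),\cdot\ra$ and the independent degree-$\ge 2$ part $H_{N,\ge 2}$, observe that the subtraction in \eqref{eq:band-hamiltonian} cancels the former (using $\by\in U^\perp$), and then compute the covariance of $H_{N,\ge 2}$ together with the overlap identity $\vR(\by,\bz)=(\vone-\vq)\odot\vR(\bsig,\brho)+\vq$. Your write-up makes a couple of points (e.g.\ why $\by\in U^\perp$ via species-alignment of $U$, and the explicit formula $\partial_{x_s}\xi(\vzero)=\lambda_s\gamma_s^2$) slightly more explicit than the paper, but the argument is the same.
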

\begin{proof}
    For $\bsig \in \cS_N^U$, we have
    \[
        H_N(\bsig) = \la \Proj_{U^\perp} \nabla H_N(\bzero), \bsig \ra + H_{N,\ge2}(\bsig)\,,
    \]
    where $H_{N,\ge2}$ consists of the interactions of degree at least $2$.
    These two constituent functions are, respectively, $\Proj_{U^\perp} \nabla H_N(\bzero)$-measurable and independent of $\Proj_{U^\perp} \nabla H_N(\bzero)$, while $\bm^U_{\vDelta,\vq}$ and $\Band^U_{\vDelta,\vq}$ are both $\Proj_{U^\perp} \nabla H_N(\bzero)$-measurable.
    Since
    \[
        H^U_{N,\vDelta,\vq}(\bsig)
        = H_{N,\ge2} \lt( (\vone-\vq)^{1/2} \diamond \bsig + \bm^U_{\vDelta,\vq}\rt),
    \]
    this is a centered Gaussian process.
    Moreover, $H_{N,\ge2}$ has covariance
    \begin{align*}
        \E[H_{N,\ge2}(\bsig) H_{N,\ge2}(\brho)] &= N\xi_{\ge2}(\vR(\bsig,\brho))\,,
        &&\text{where}&
        \xi_{\ge2}(\vx) &= \xi(\vx) - \la \nabla \xi(\vzero), \vx\ra\,.
    \end{align*}
    % Moreover, because $H_{N,\ge2}$ and $\nabla H_N(\bzero)$ are independent, the set $\Band_{\vDelta,\vq}$ is independent of $H_{N,\ge2}$, while $H_{N,\vDelta,\vq}$ depends only on $\Band_{\vDelta,\vq}$ and $H_{N,\ge 2}$.
    The covariance formula for $H^U_{N,\vDelta,\vq}$ now follows because for $\bsig,\brho \in \cS_N^{U'}$, we have $\bsig,\brho \in (\bm^U_{\vDelta,\vq})^\perp$, and so
    \[
        \vR\lt(
            (\vone-\vq)^{1/2} \diamond \bsig + \bm^U_{\vDelta,\vq}\,,
            (\vone-\vq)^{1/2} \diamond \brho + \bm^U_{\vDelta,\vq}
        \rt)
        = (\vone - \vq) \diamond \vR(\bsig,\brho) + \vq\,.
    \qedhere
    \]
\end{proof}

\subsection{Recursive Algorithm}
\label{subsec:6-3}

We now consider a recursive \emph{critical point finding} algorithm.
Roughly speaking, Corollary~\ref{cor:crit-pt-band-depth} shows that all $(\eps,\vDelta)$-critical points of $H_N$ lie near a band $\Band_1(\vDelta) = \Band^\emptyset_{\vDelta,\vq}$, for a deterministic $\vq$.
Lemma~\ref{lem:conditional-formula} shows that $H_N$ restricted to this band is conditionally another multi-species spin glass.
So, Corollary~\ref{cor:crit-pt-band-depth} implies all $(\eps,\vDelta)$-critical points lie near a sub-band $\Band_2(\vDelta) \subseteq \Band_1(\vDelta)$.
Repeating this argument, all $(\eps,\vDelta)$-critical points of $H_N$ lie near a nested sequence of bands $\cS_N \supseteq \Band_1(\vDelta) \supseteq \Band_2(\vDelta) \cdots$, and we will show these bands' diameters shrink to $0$.
After a large constant number of recursions, this shows all $(\eps,\vDelta)$-critical points lie in a region of diameter $o_\eps(\sqrt{N})$, and the well-conditionedness of $\nabla^2_\sph H_N$ (by Proposition~\ref{prop:approx-crits}) shows there is at most one critical point in this region.

We now define the recursive bands, starting with sequence of radii of their centers.
Define $\vR^0 = \vzero$ and recursively
\[
    \vR^{k+1} = \nabla \xi(\vR^k) / \nabla \xi(\vone)\,.
\]
Because $\xi$ is coordinate-wise increasing, the sequence $\vR^k$ is coordinate-wise increasing up to some limit in $[0,1]^r$.
The following lemma, which relies on super-solvability of $\xi$, shows this limit is $\vone$.
That is, the band centers approach the surface $\cS_N$ of $\cB_N$ and the band diameters limit to zero.

\begin{lemma}[{\cite[Lemma 2.3]{huang2023optimization}}]
    \label{lem:recursion-termination}
    We have that $\lim_{k\to\infty} \vR^k=\vone$.
\end{lemma}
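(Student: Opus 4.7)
The approach is to show that $(\vR^k)_{k\geq 0}$ is a coordinate-wise increasing sequence in $[0,1]^r$, identify its limit $\vR^{\star}$ as a fixed point of the iteration, and then use non-negativity properties of $\nabla\xi$ together with non-degeneracy to rule out any fixed point other than $\vone$.

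First I would verify monotonicity. Because $\xi$ is a power series with non-negative coefficients in the variables $(\lambda_s x_s)$, each $\partial_s \xi$ is coordinate-wise non-decreasing on $[0,1]^r$, and $\nabla\xi(\vone) \succ \vzero$ by Assumption~\ref{as:nondegenerate}. Hence the map $\vR \mapsto \nabla\xi(\vR)/\nabla\xi(\vone)$ is coordinate-wise monotone and sends $[0,1]^r$ into itself. Since $\vR^0 = \vzero \preceq \vR^1$, induction gives $\vR^k \preceq \vR^{k+1}$ for all $k$, and $\vR^k \preceq \vone$ is preserved by $\nabla\xi(\vR^k) \preceq \nabla\xi(\vone)$. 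Thus $\vR^k$ increases coordinate-wise to some limit $\vR^{\star} \in [0,1]^r$, which by continuity satisfies the fixed point equation
\[
\nabla\xi(\vR^{\star}) = \nabla\xi(\vone).
\]

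The key step is to deduce $\vR^{\star} = \vone$. Setting $\vec D = \vone - \vR^{\star} \succeq \vzero$, the fundamental theorem of calculus yields
\[
\vzero \;=\; \nabla\xi(\vone) - \nabla\xi(\vR^{\star}) \;=\; \int_0^1 \nabla^2\xi\bigl(\vR^{\star} + t\vec D\bigr)\,\vec D\,\de t.
\]
For every $\vec y \in [0,1]^r$, the Hessian $\nabla^2\xi(\vec y)$ has all entries non-negative (again because $\xi$ has non-negative series coefficients). Therefore the integrand has non-negative entries for all $t\in[0,1]$, and an integral of a non-negative continuous integrand that vanishes must vanish identically. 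Evaluating at $t=1$ gives $\xi''\vec D = \vzero$. By Assumption~\ref{as:nondegenerate}, the coefficients from $\Gamma^{(2)}$ already force $\xi''_{s,s'} > 0$ for every pair $(s,s')$; so if some $D_{s'} > 0$, then $(\xi''\vec D)_s \geq \xi''_{s,s'} D_{s'} > 0$ for every $s$, a contradiction. Hence $\vec D = \vzero$ and $\vR^{\star} = \vone$.

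I do not expect any serious obstacle here; the main (mild) subtlety is ensuring the monotonicity and non-negativity structure of $\nabla\xi$ and $\nabla^2\xi$ on $[0,1]^r$ is correctly read off from the defining series, and that the positivity of the entries of $\xi''$ actually follows from non-degeneracy rather than requiring super-solvability. Note in particular that super-solvability itself is not used in this proof; the convergence $\vR^k \to \vone$ reflects only non-degeneracy of $\xi$.
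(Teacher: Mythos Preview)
Your argument is correct. The paper does not actually prove this lemma; it simply cites the companion work \cite{huang2023optimization}, so there is no in-paper proof to compare against. Your monotone-limit plus fundamental-theorem-of-calculus argument is clean and complete: the integrand $\nabla^2\xi(\vR^\star+t\vec D)\,\vec D$ has nonnegative entries, so a zero integral forces it to vanish at $t=1$, and strict positivity of all entries of $\xi''$ (from $\Gamma^{(2)}>0$) then forces $\vec D=\vzero$.

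One point worth flagging: the paper's surrounding text says this lemma ``relies on super-solvability of $\xi$,'' whereas your proof uses only non-degeneracy (specifically $\Gamma^{(1)},\Gamma^{(2)}>0$, to ensure $\nabla\xi(\vone)\succ\vzero$ and $\xi''$ entrywise positive). Your observation that super-solvability is not needed is correct and slightly sharpens what the text suggests; in the context of Section~\ref{sec:2^r} super-solvability is ambient anyway, so this makes no practical difference, but it is a genuine clarification.
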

Fix $\vDelta\in\{-1,1\}^r$.
Let $\bm^0(\vDelta)=\bzero$ and $\Band_0(\vDelta)=\cS_N$.
Recursively for $k\geq 1$ define
\begin{equation}
    \label{eq:def-recursive-algorithm}
    \begin{aligned}
    \bm^k(\vDelta)
    &=
    \bm^{k-1}(\vDelta) + \scl(\bg^{k-1}(\vDelta); \vDelta, \vR^k-\vR^{k-1} ) \,,
    % \Delta_s \sqrt{(R^k_s-R^{k-1}_s) \lambda_s N} \fr{\bg^{k-1}_s(\vDelta)}{\tnorm{\bg^{k-1}_s(\vDelta)}_2} \,,
    % \frac{\Delta_s \bg_{k-1,s}\sqrt{R^{k-1}_s-R^k_s}}
    % {\sqrt{R_s(\bg_{k-1}(\vDelta),\bg_{k-1}(\vDelta))}};
    \\
    \bg^k(\vDelta)
    &=
    \Proj_{U_k(\vDelta)^{\perp}} \nabla H_N(\bm^k(\vDelta))\,,
    \\
    U_k(\vDelta)
    &=
    \Span(\bm^j_s(\vDelta))_{1\le j\le k, s\in \sS}\,,
    \\
    \Band_k(\vDelta)
    &=
    \cS_N\cap \big(\bm^k(\vDelta) + U_k(\vDelta)^{\perp}\big)\,.
\end{aligned}
\end{equation}
Note that because $\bg^k_s(\vDelta) \in U_k(\vDelta)^{\perp}$ for all $s\in \sS$, we have $\bm^k(\vDelta) \in (\bm^{k-1}(\vDelta) + U_{k-1}(\vDelta)^{\perp})$, so $\Band_k(\vDelta) \subseteq \Band_{k-1}(\vDelta)$.
Also, by induction $\vR(\bm^k(\vDelta),\bm^k(\vDelta))=\vR^k$ for each $k\ge 0$, so analogously to \eqref{eq:band-affine},
\begin{align}
    \label{eq:recursive-band}
    \Band_k(\vDelta) &= \phi_{k,\vDelta}(\cS_N^{U_k(\vDelta)})\,,
    &\text{where}&&
    \phi_{k,\vDelta}(\bsig) = (\vone - \vR^k)^{1/2} \diamond \bsig + \bm^k(\vDelta)\,.
\end{align}
Define the band Hamiltonian $\wt H_{N,\vDelta,k}: \cS_N^{U_k(\vDelta)} \to \bbR$ by
\begin{equation}
    \label{eq:recursive-band-hamiltonian}
    \wt H_{N,\vDelta,k}(\bsig)
    = H_N\lt(\phi_{k,\vDelta}(\bsig)\rt)
    - \lt\la
        \bg^{k-1}(\vDelta),
        % \nabla H_N(\bm^{k-1}(\vDelta)),
        \phi_{k,\vDelta}(\bsig)
    \rt\ra
    + \sum_{i=1}^{k-1}
    \lt\la
        \bg^i(\vDelta)-\bg^{i-1}(\vDelta),
        % \nabla H_N(\bm^i(\vDelta)) -
        % \nabla H_N(\bm^{i-1}(\vDelta)),
        \bm^i(\vDelta)
    \rt\ra\,,
\end{equation}
whose meaning is explained in Lemma~\ref{lem:recursive-conditional-formula} below.
We first show that the bands $\Band_k(\vDelta)$ can be constructed by applying the construction from Definition~\ref{def:gradient-aligned-band} recursively.
The radius $\vq^k$ of the band center in the next lemma is chosen to be near all $(\eps,\vDelta)$-critical points of the Hamiltonian restricted to $\Band_k(\vDelta)$, as will be explained in Corollary~\ref{cor:recursive-crit-pt-band-depth} below.
\begin{lemma}
    \label{lem:bands-are-recursive}
    Let $\vq^k = (\vR^{k+1} - \vR^k) / (\vone - \vR^k)$ and note that $\phi_{k,\vDelta}^{-1}$ maps $\Band_k(\vDelta)$ to $\cS_N^{U_k(\vDelta)}$.
    Then $\phi_{k,\vDelta}^{-1}(\Band_{k+1}(\vDelta)) = \Band^{U_k(\vDelta)}_{\vDelta,\vq^k}(\wt H_{N,\vDelta,k})$, where the latter band is defined in Definition~\ref{def:gradient-aligned-band}.
\end{lemma}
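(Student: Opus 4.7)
The plan is to verify the equality by direct computation, showing that both sides are defined by the same orthogonality and norm conditions on $\bsig \in \cS_N^{U_k(\vDelta)}$. The key observation is that $\phi_{k,\vDelta}$ is an affine map from $\cS_N^{U_k(\vDelta)}$ onto $\Band_k(\vDelta)$ (a bijection, as follows from \eqref{eq:recursive-band} and the fact that $\bm^k(\vDelta) \in U_k(\vDelta)$ is perpendicular to the tangent directions). So it suffices to verify that the band centers agree and that the defining linear constraints transport correctly.

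First I would compute $\Proj_{U_k(\vDelta)^\perp} \nabla \wt H_{N,\vDelta,k}(\bzero)$. Differentiating \eqref{eq:recursive-band-hamiltonian} with the chain rule and noting that the last sum is constant gives
\[
\nabla \wt H_{N,\vDelta,k}(\bzero) = (\vone - \vR^k)^{1/2} \diamond \lt(\nabla H_N(\bm^k(\vDelta)) - \bg^{k-1}(\vDelta)\rt).
\]
Since $\bm^k(\vDelta) - \bm^{k-1}(\vDelta)$ is species-wise parallel to $\bg^{k-1}(\vDelta)$, the subspace $U_k(\vDelta)$ contains $\bg^{k-1}(\vDelta)$, so $\Proj_{U_k(\vDelta)^\perp} \bg^{k-1}(\vDelta) = \bzero$. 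Since $U_k(\vDelta)$ is species-aligned, projection commutes with the species-wise scaling $(\vone - \vR^k)^{1/2} \diamond (\cdot)$, yielding
\[
\Proj_{U_k(\vDelta)^\perp} \nabla \wt H_{N,\vDelta,k}(\bzero) = (\vone - \vR^k)^{1/2} \diamond \bg^k(\vDelta).
\]
Because $\scl(\cdot;\vDelta,\vq^k)$ only depends on its input direction species-wise, positive rescaling has no effect, so
\[
\bm^{U_k(\vDelta)}_{\vDelta,\vq^k}(\wt H_{N,\vDelta,k}) = \scl\lt(\bg^k(\vDelta); \vDelta, \vq^k\rt).
\]
Applying $\phi_{k,\vDelta}$ (which acts coordinate-wise as multiplication by $\sqrt{1-R^k_s}$) and using $\sqrt{q^k_s(1-R^k_s)} = \sqrt{R^{k+1}_s - R^k_s}$, this matches $\bm^{k+1}(\vDelta) - \bm^k(\vDelta)$ by \eqref{eq:def-recursive-algorithm}, so $\phi_{k,\vDelta}\lt(\bm^{U_k(\vDelta)}_{\vDelta,\vq^k}(\wt H_{N,\vDelta,k})\rt) = \bm^{k+1}(\vDelta)$.

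Given that the band centers correspond, it remains to check that the orthogonality defining each band transports correctly. For $\bsig \in \cS_N^{U_k(\vDelta)}$, I would compute
\[
\phi_{k,\vDelta}(\bsig) - \bm^{k+1}(\vDelta) = (\vone - \vR^k)^{1/2} \diamond \lt(\bsig - \scl(\bg^k(\vDelta);\vDelta,\vq^k)\rt),
\]
and observe that membership in $\Band_{k+1}(\vDelta)$ additionally requires the species-wise inner product of this quantity with $\bm^{k+1}_s(\vDelta) = \bm^k_s(\vDelta) + c_s \bg^k_s(\vDelta)$ to vanish. Because $\bsig$ and $\scl(\bg^k(\vDelta);\vDelta,\vq^k)$ both lie in $U_k(\vDelta)^\perp$ (hence are species-wise orthogonal to $\bm^k_s(\vDelta)$), the $\bm^k_s$ contribution drops out, and the condition reduces to $\langle \bsig_s, \bg^k_s(\vDelta)\rangle = \Delta_s \sqrt{q^k_s \lambda_s N}\,\|\bg^k_s(\vDelta)\|_2$ for each $s$. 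An identical calculation shows this is exactly the orthogonality condition $\vR(\bsig - \bm^{U_k(\vDelta)}_{\vDelta,\vq^k}, \bm^{U_k(\vDelta)}_{\vDelta,\vq^k}) = \vzero$ defining $\Band^{U_k(\vDelta)}_{\vDelta,\vq^k}(\wt H_{N,\vDelta,k})$, completing the equality.

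There is no genuine obstacle here; the argument is an unpacking of definitions. The only place requiring care is tracking which projections are trivial — specifically, that $\bg^{k-1}(\vDelta) \in U_k(\vDelta)$ (needed to identify the projected gradient of $\wt H_{N,\vDelta,k}$ with $\bg^k(\vDelta)$) and that both $\bsig$ and $\scl(\bg^k(\vDelta);\vDelta,\vq^k)$ are orthogonal to $U_k(\vDelta)$ (needed to reduce the defining constraint of $\Band_{k+1}(\vDelta)$ to a single scalar condition per species).
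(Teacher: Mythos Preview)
Your proposal is correct and follows essentially the same route as the paper: identify the band center $\bm^{U_k(\vDelta)}_{\vDelta,\vq^k}(\wt H_{N,\vDelta,k}) = \scl(\bg^k(\vDelta);\vDelta,\vq^k)$ via the computation of $\Proj_{U_k(\vDelta)^\perp}\nabla\wt H_{N,\vDelta,k}(\bzero)$, then verify the bands agree. The only cosmetic difference is that the paper packages the second step through the affine description \eqref{eq:band-affine} together with the identity $U_{k+1}(\vDelta) = U_k(\vDelta)\bowtie \bm$, whereas you check the species-wise orthogonality condition by hand; these are the same verification.
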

\begin{proof}
    Since $\phi_{k,\vDelta}^{-1}(\bsig) = (1-\vR^k)^{-1/2} \diamond (\bsig - \bm_k(\vDelta))$, we have
    \begin{align}
        \notag
        \phi_{k,\vDelta}^{-1}(\bm^{k+1}(\vDelta))
        &= (\vone-\vR^k)^{-1/2} \diamond \scl(\bg^k(\vDelta); \vDelta,\vR^{k+1}-\vR^k) \\
        \label{eq:band-step}
        &= \scl\lt(
            \bg^k(\vDelta);
            \vDelta,
            \vq^k
        \rt)
        = \bm^{U_k(\vDelta)}_{\vDelta,\vq^k}(\wt H_{N,\vDelta,k})\,.
    \end{align}
    Let us denote this point $\bm$.
    Moreover,
    \begin{equation}
        \label{eq:bowtie-equivalence}
        \phi_{k,\vDelta}^{-1}(\Band_{k+1}(\vDelta))
        = \phi_{k,\vDelta}^{-1}(\phi_{k+1,\vDelta}(\cS_N^{U_{k+1}(\vDelta)}))
        = \lt(\fr{\vone - \vR^k}{\vone - \vR^{k+1}}\rt)^{1/2} \diamond \cS_N^{U_{k+1}(\vDelta)}
        + \bm\,.
    \end{equation}
    Also,
    \[
        U_{k+1}(\vDelta)
        = U_k(\vDelta) \bowtie \bm^k(\vDelta)
        = U_k(\vDelta) \bowtie \bg^k(\vDelta)
        = U_k(\vDelta) \bowtie \bm\,,
    \]
    so elements of $\cS_N^{U_{k+1}(\vDelta)}$ are orthogonal to $\bm$.
    This implies the conclusion.
\end{proof}
% Further define the renormalized band
% \begin{equation}
%     \label{eq:def-wt-band-k}
%     \wt \Band_k(\vDelta)
%     = (1-\vR^k)^{-1/2}\diamond (\Band_k(\vDelta)-\bm^k(\vDelta))
%     = \cS_N \cap U_k(\vDelta)^\perp\,.
% \end{equation}
% Define the band Hamiltonian $\wt H_{N,\vDelta,k}:\wt\Band_k(\vDelta) \to \bbR$ via
% \begin{align*}
%     \wt H_{N,\vDelta,k}(\bsig)
%     &= H_N\lt(\sqrt{1-\vR^k} \diamond \bsig + \bm^k(\vDelta)\rt)
%     - \lt\la
%         \nabla H_N(\bm^{k-1}(\vDelta)),
%         \sqrt{\vone - \vR^k} \diamond \bsig + \bm^k(\vDelta)
%     \rt\ra \\
%     &\quad + \sum_{i=1}^{k-1}
%     \lt\la
%         \nabla H_N(\bm^i(\vDelta)) -
%         \nabla H_N(\bm^{i-1}(\vDelta)),
%         \bm^i
%     \rt\ra\,.
%     % - \lt\la
%     %     \nabla H_N(\bzero),
%     %     \sqrt{\vone - \vR^k} \diamond \bsig
%     %     + \bm^k(\vDelta)
%     % \rt\ra \\
%     % &\quad - \sum_{i=1}^{k-1}
%     % \lt\la
%     %     \nabla H_N(\bm^i(\vDelta))
%     %     - \nabla H_N(\bm^{i-1}(\vDelta)),
%     %     \sqrt{\vone - \vR^k} \diamond \bsig + \bm^k - \bm^i
%     % \rt\ra\,.
% \end{align*}
% Note that all terms except the first are constant on $\wt\Band_k(\vDelta) \subseteq U_k(\vDelta)^\perp$.
% Let $F_k(\vDelta)=(\nabla H_N(\bm^0(\vDelta)),\dots,\nabla H_N(\bm^{k-1}(\vDelta)))$.
Let $F_k(\vDelta)=(\bg^0(\vDelta), \bg^1(\vDelta), \ldots \bg^{k-1}(\vDelta))$.
The following lemma shows that conditional on $F_k(\vDelta)$, $\wt H_{N,\vDelta,k}$ is a multi-species spin glass.
Moreover, all terms on the right-hand side of \eqref{eq:recursive-band-hamiltonian} except $H_N(\phi_{k,\vDelta}(\bsig))$ are constant on $\cS_N^{U_k(\vDelta)}$, so $\wt H_{N,\vDelta,k}$ is the remaining randomness of $H_N$ on $\cS_N^{U_k(\vDelta)}$.
\begin{lemma}
    \label{lem:recursive-conditional-formula}
    Conditional on $F_k(\vDelta)$, $\wt H_{N,\vDelta,k}$ is a centered Gaussian process on $\cS_N^{U_k(\vDelta)}$ with covariance
    \[
        \E\lt[
            \wt H_{N,\vDelta,k} (\bsig)
            \wt H_{N,\vDelta,k} (\brho)
        \rt]
        = N\xi_k(\vR(\bsig,\brho))\,,
    \]
    where
    \begin{align*}
        \xi_k(\vx)
        &= \xi\lt((\vone - \vR^k) \odot \vx + \vR^k\rt)
        - \lt\la
            \nabla \xi(\vR^{k-1}),
            (\vone - \vR^k)\odot \vx + \vR^k
        \rt\ra \\
        &\qquad + \sum_{i=1}^{k-1}
        \lt\la
            \nabla \xi(\vR^i) - \nabla \xi(\vR^{i-1}),
            \vR^i
        \rt\ra\,.
    \end{align*}
\end{lemma}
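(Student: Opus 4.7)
The plan is to prove this by induction on $k \geq 1$, bootstrapping from the single-band version in Lemma~\ref{lem:conditional-formula} and using the recursive identification in Lemma~\ref{lem:bands-are-recursive}. The base case $k=1$ is immediate: $F_1(\vDelta)=(\bg^0(\vDelta))=(\nabla H_N(\bzero))$, the subspace $U_0(\vDelta)=\{0\}$, and with $\vq=\vR^1$ one directly matches $\wt H_{N,\vDelta,1}$ against $H^{\emptyset}_{N,\vDelta,\vR^1}$ defined in \eqref{eq:band-hamiltonian}. Since $\vR^0=\vzero$ the sum in $\xi_1$ is empty, so $\xi_1(\vx)=\xi_{\vR^1}(\vx)$ in the sense of Lemma~\ref{lem:conditional-formula}, and the base case follows.

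For the inductive step, assume the claim for some $k\geq 1$. Then conditionally on $F_k(\vDelta)$, the lemma makes $\wt H_{N,\vDelta,k}$ itself a multi-species spin glass on $\cS_N^{U_k(\vDelta)}$ with mixture $\xi_k$. I would apply Lemma~\ref{lem:conditional-formula} to this conditional model with base subspace $U=U_k(\vDelta)$ and depth $\vq=\vq^k=(\vR^{k+1}-\vR^k)/(\vone-\vR^k)$, which by Lemma~\ref{lem:bands-are-recursive} produces precisely $\Band_{k+1}(\vDelta)$ after pulling back through $\phi_{k,\vDelta}$. To close the induction, I need to verify that conditioning on the species-wise projected gradient of $\wt H_{N,\vDelta,k}$ at the origin of $U_k(\vDelta)^{\perp}$ is the same as augmenting $F_k$ to $F_{k+1}$ by $\bg^k(\vDelta)$. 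Differentiating \eqref{eq:recursive-band-hamiltonian} and recalling $\phi_{k,\vDelta}(\bzero)=\bm^k(\vDelta)$ gives
\[
\nabla \wt H_{N,\vDelta,k}(\bzero) = (\vone-\vR^k)^{1/2}\diamond \bigl(\nabla H_N(\bm^k(\vDelta))-\bg^{k-1}(\vDelta)\bigr).
\]
The key algebraic observation is that $\bg^{k-1}(\vDelta)\in U_k(\vDelta)$: indeed, by \eqref{eq:def-recursive-algorithm}, $\bm^k_s(\vDelta)-\bm^{k-1}_s(\vDelta)$ is parallel to $\bg^{k-1}_s(\vDelta)$ in every species, so $\bg^{k-1}_s(\vDelta)\in U_k(\vDelta)\cap \bbR^{\cI_s}$. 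Hence $\Proj_{U_k(\vDelta)^{\perp}}\bg^{k-1}(\vDelta)=\bzero$ and $\Proj_{U_k(\vDelta)^{\perp}}\nabla\wt H_{N,\vDelta,k}(\bzero) = (\vone-\vR^k)^{1/2}\diamond \bg^k(\vDelta)$, a species-wise rescaling of $\bg^k(\vDelta)$. Consequently conditioning adds exactly the new gradient $\bg^k(\vDelta)$, and unpacking \eqref{eq:band-hamiltonian} applied to $\wt H_{N,\vDelta,k}$ reproduces \eqref{eq:recursive-band-hamiltonian} at level $k+1$ (the telescoping of constants matches).

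The remaining purely algebraic step is to check $(\xi_k)_{\vq^k}=\xi_{k+1}$. Using the identities $(\vone-\vR^k)\odot(\vone-\vq^k)=\vone-\vR^{k+1}$ and $(\vone-\vR^k)\odot\vq^k+\vR^k=\vR^{k+1}$, the substitution $\vz=(\vone-\vq^k)\odot \vx+\vq^k$ inside $\xi_k$ directly converts the leading term of $(\xi_k)_{\vq^k}$ to $\xi((\vone-\vR^{k+1})\odot\vx+\vR^{k+1})$ and the $\nabla\xi(\vR^{k-1})$-term to $-\la \nabla\xi(\vR^{k-1}),(\vone-\vR^{k+1})\odot\vx+\vR^{k+1}\ra$ (modulo a constant). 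Then $\nabla\xi_k(\vzero)=(\vone-\vR^k)\odot(\nabla\xi(\vR^k)-\nabla\xi(\vR^{k-1}))$, so the subtraction $\la\nabla\xi_k(\vzero),(\vone-\vq^k)\odot \vx+\vq^k\ra$ combines with the prior term to yield precisely the $-\la\nabla\xi(\vR^k),(\vone-\vR^{k+1})\odot \vx+\vR^{k+1}\ra$ appearing in $\xi_{k+1}$, and the leftover constant $\la\nabla\xi(\vR^k)-\nabla\xi(\vR^{k-1}),\vR^k\ra$ extends the sum to $\sum_{i=1}^k$, matching $\xi_{k+1}$.

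No individual step is difficult. The main obstacle is bookkeeping: keeping track of which terms are $F_k(\vDelta)$-measurable (hence constant) versus genuinely random on the band, and cleanly identifying the Gaussian process obtained by conditioning $\wt H_{N,\vDelta,k}$ on an additional linear functional of its own external field with the multi-species spin glass $\wt H_{N,\vDelta,k+1}$. The observation $\bg^{k-1}(\vDelta)\in U_k(\vDelta)$ is the structural point that makes the induction go through, since it guarantees that each round of conditioning adds exactly one new coordinate to $F_k$ and does not over-condition.
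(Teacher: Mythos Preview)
Your proof is correct and follows the same inductive route as the paper: apply Lemma~\ref{lem:conditional-formula} to the level-$k$ band model, identify the new conditioning with $\bg^k(\vDelta)$, and verify $(\xi_k)_{\vq^k}=\xi_{k+1}$. Your observation $\bg^{k-1}(\vDelta)\in U_k(\vDelta)$ is exactly the right structural point; to close the bookkeeping you also use it once more to see that $\langle\bg^{k-1}(\vDelta),\phi_{k+1,\vDelta}(\bsig)\rangle=\langle\bg^{k-1}(\vDelta),\bm^k(\vDelta)\rangle$ is constant on $\cS_N^{U_{k+1}(\vDelta)}$, which is what makes the telescoping of constants go through exactly rather than only up to an additive constant.
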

\begin{proof}
    We induct on $k$.
    Assume the claim holds for $k$ and let $\bm = \scl(\bg^k(\vDelta);\vDelta,\vq^k)$ as in \eqref{eq:band-step}.
    Lemma~\ref{lem:conditional-formula} implies that conditional on $(F_k(\vDelta), P_{U_k(\vDelta)^\perp} \nabla \wt H_{N,\vDelta,k}(\bzero))$ the function $\wt H_{N,\vDelta,k}^\circ : \cS_N^{U_k(\vDelta) \bowtie \bm}\to\bbR$ given by
    \begin{align*}
        \wt H_{N,\vDelta,k}^\circ(\bsig)
        &=
        \wt H_{N,\vDelta,k}\lt((\vone - \vq^k)^{1/2} \diamond \bsig + \bm\rt) \\
        &\qquad - \lt\la
            P_{U_k(\vDelta)^\perp} \nabla \wt H_{N,\vDelta,k}(\bzero),
            (\vone - \vq^k)^{1/2} \diamond \bsig + \bm
        \rt\ra
    \end{align*}
    is a centered Gaussian process.
    We calculate that
    \begin{align*}
        \Proj_{U_k(\vDelta)^\perp} \nabla \wt H_{N,\vDelta,k}(\bzero)
        &= \Proj_{U_k(\vDelta)^\perp}
        (\vone - \vR^k)^{1/2} \diamond \lt(
            \nabla H_N(\bm^k(\vDelta)) -
            \bg^{k-1}(\vDelta)
        \rt) \\
        &= (\vone - \vR^k)^{1/2} \diamond (\bg^k(\vDelta) - \bg^{k-1}(\vDelta))\,.
    \end{align*}
    Thus conditioning on $(F_k(\vDelta), P_{U_k(\vDelta)^\perp} \nabla \wt H_{N,\vDelta,k}(\bzero))$ is equivalent to conditioning on $F_{k+1}(\vDelta)$.
    Also, \eqref{eq:bowtie-equivalence} shows $U_k(\vDelta) \bowtie \bm = U_{k+1}(\vDelta)$.
    For all $\bsig \in \cS_N^{U_{k+1}(\vDelta)}$,
    \[
        \phi_{k,\vDelta}\lt((\vone - \vq^k)^{1/2} \diamond \bsig + \bm\rt)
        = (\vone - \vR^{k+1})^{1/2} \diamond \bsig + (\vone - \vR^k)^{1/2} \diamond \bm + \bm^k(\vDelta)\,,
    \]
    and this equals $\phi_{k+1,\vDelta}(\bsig) = (\vone - \vR^{k+1})^{1/2} \diamond \bsig + \bm^{k+1}(\vDelta)$ because
    \[
        (\vone - \vR^k)^{1/2} \diamond \bm
        = \scl(\bg^k(\vDelta);\vDelta,\vR^{k+1}-\vR^k)
        = \bm^{k+1}(\vDelta) - \bm^k(\vDelta)\,.
    \]
    Moreover
    \begin{align*}
        &\lt\la
            P_{U_k(\vDelta)^\perp} \nabla \wt H_{N,\vDelta,k}(\bzero),
            (\vone - \vq^k)^{1/2} \diamond \bsig + \bm
        \rt\ra \\
        &=
        \lt\la
            (\vone - \vR^k)^{1/2} \diamond (\bg^k(\vDelta) - \bg^{k-1}(\vDelta)),
            (\vone - \vq^k)^{1/2} \diamond \bsig + \bm
        \rt\ra \\
        &= \lt\la
            \bg^k(\vDelta) - \bg^{k-1}(\vDelta),
            (\vone - \vR^{k+1})^{1/2} \diamond \bsig + \bm^{k+1}(\vDelta) - \bm^k(\vDelta)
        \rt\ra \\
        &= \lt\la
            \bg^k(\vDelta) - \bg^{k-1}(\vDelta),
            \phi_{k+1,\vDelta}(\bsig)
        \rt\ra
        - \lt\la
            \bg^k(\vDelta) - \bg^{k-1}(\vDelta),
            \bm^k(\vDelta)
        \rt\ra.
    \end{align*}
    Combining the above,
    \begin{align*}
        \wt H_{N,\vDelta,k}^\circ(\bsig)
        &= H_N(\phi_{k+1,\vDelta}(\bsig))
        - \lt\la
            \bg^{k-1}(\vDelta),
            \phi_{k+1,\vDelta}(\bsig)
        \rt\ra
        + \sum_{i=1}^{k-1} \lt\la
            \bg^i(\vDelta) - \bg^{i-1}(\vDelta),
            \bm^i
        \rt\ra \\
        &\qquad - \lt\la
            \bg^k(\vDelta) - \bg^{k-1}(\vDelta),
            \phi_{k+1,\vDelta}(\bsig)
        \rt\ra
        + \lt\la
            \bg^k(\vDelta) - \bg^{k-1}(\vDelta),
            \bm^k(\vDelta)
        \rt\ra \\
        &= \wt H_{N,\vDelta,k+1}(\bsig)\,.
    \end{align*}
    This proves that conditional on $F_{k+1}(\vDelta)$, $\wt H_{N,\vDelta,k}$ is a centered Gaussian process on $\cS_N^{U_{k+1}(\vDelta)}$.
    The covariance formula is shown by similarly verifying that
    \[
        \xi_k\lt((\vone - \vq^k)\odot \vx + \vq^k\rt)
        - \lt\la
            \nabla \xi_k(\vzero),
            (\vone - \vq^k)\odot \vx + \vq^k
        \rt\ra
        = \xi_{k+1}\lt((\vone - \vq^{k+1}) \odot \vx + \vq^{k+1}\rt)
    \]
    This completes the induction.
\end{proof}

We next verify that if $\xi$ is strictly super-solvable, then so is $\xi_k$.
This fact is key for our recursion.

\begin{proposition}
\label{prop:xi-k-supersolvable}
    For each $k$, the model $\xi_k$ is strictly super-solvable.
\end{proposition}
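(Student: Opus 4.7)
The plan is to reduce the super-solvability of $\xi_k$ directly to that of $\xi$ via an explicit factorization of $\diag(\xi_k') - \xi_k''$ through a positive diagonal scaling. Since $\xi_k(\vx)$ differs from $\xi\big((\vone - \vR^k)\odot \vx + \vR^k\big)$ by a linear-plus-constant term, the higher derivatives of $\xi_k$ at $\vone$ are just those of $\xi$ at $\vone$, transformed by the diagonal substitution Jacobian $D := \diag(\vone - \vR^k)$. Explicitly, a direct differentiation will give
\begin{align*}
\xi_k''{}_{s,s'} &= (1-R^k_s)(1-R^k_{s'})\,\xi''_{s,s'}, & \text{i.e.,}\quad \xi_k'' &= D\,\xi''\,D,
\end{align*}
and, using the defining recursion $R^k_s = \partial_s \xi(\vR^{k-1})/\xi'_s$ to simplify the subtracted linear term, also
\[
\xi_k'{}_s = (1-R^k_s)\big[\xi'_s - \partial_s\xi(\vR^{k-1})\big] = (1-R^k_s)^2\,\xi'_s,
\]
so $\diag(\xi_k') = D\,\diag(\xi')\,D$. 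Combining, the key identity is
\[
\diag(\xi_k') - \xi_k'' \;=\; D\big(\diag(\xi') - \xi''\big)D.
\]
Thus provided $D \succ 0$, strict super-solvability of $\xi$ immediately transfers to strict super-solvability of $\xi_k$.

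The main ingredient is therefore to verify $D \succ 0$, i.e.\ that $R^k_s < 1$ for each $s$ and each finite $k$ (positivity $R^k_s \geq 0$ is automatic from $\xi$ having non-negative coefficients). I will prove this by induction on $k$ using Lemma~\ref{lem:recursion-termination}, or more directly from the recursion $\vR^{k+1} = \nabla \xi(\vR^k)/\nabla\xi(\vone)$: if $\vR^k \in [0,1)^r$, then strict coordinate-wise monotonicity of $\partial_s\xi$ (guaranteed by $\Gamma^{(2)} > 0$ entry-wise in Assumption~\ref{as:nondegenerate}) yields $\partial_s\xi(\vR^k) < \partial_s\xi(\vone) = \xi'_s$, hence $R^{k+1}_s < 1$; the base case $\vR^0 = \vzero$ is trivial.

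Finally I need to confirm $\xi_k$ is itself non-degenerate in the sense of Assumption~\ref{as:nondegenerate}, so that ``strictly super-solvable'' is well-defined for it. Writing the affine substitution $y_s = (1-R^k_s)x_s + R^k_s$ into the monomial expansion of $\xi$, every coefficient of the resulting polynomial in $\vx$ of degree $\geq 2$ is a sum of positive contributions from the degree-$p$ terms of $\xi$ with $p\geq 2$, and all these contributions are strictly positive because $(1-R^k_s) > 0$ and the original $\Gamma^{(2)},\Gamma^{(3)}$ are positive entry-wise. For the degree-$1$ coefficient, the linear subtraction leaves $(1-R^k_s)\big[\partial_s\xi(\vR^k) - \partial_s\xi(\vR^{k-1})\big]$, which is positive since $\vR^k \succ \vR^{k-1}$ strictly coordinate-wise (again using monotonicity of $\partial_s\xi$) and $\vR^{k-1} \prec \vone$. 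Summability of the coefficient tensors transfers from $\xi$ as well, completing the verification.

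The main conceptual step---and the only one requiring any insight---is spotting the conjugation identity $\diag(\xi_k') - \xi_k'' = D(\diag(\xi') - \xi'')D$; everything else is bookkeeping. I do not anticipate any substantial obstacles beyond carefully handling the $p=1$ terms, where the explicit subtraction of $\langle \nabla\xi(\vR^{k-1}),\cdot\rangle$ in the definition of $\xi_k$ is precisely what makes the identity clean.
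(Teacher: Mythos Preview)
Your proposal is correct and takes essentially the same approach as the paper: both compute $\nabla\xi_k(\vone) = (\vone-\vR^k)^2 \odot \nabla\xi(\vone)$ and $\nabla^2\xi_k(\vone) = (\vone-\vR^k)^{\otimes 2}\odot \nabla^2\xi(\vone)$ via the chain rule and the recursion identity, then read off the conjugation $\diag(\xi_k')-\xi_k'' = D(\diag(\xi')-\xi'')D$. Your extra care in verifying $R^k_s<1$ by induction and checking non-degeneracy of $\xi_k$ fills in details the paper leaves implicit.
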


\begin{proof}
    The definition $\vR^k = \nabla \xi(\vR^{k-1}) / \nabla \xi(\vone)$ rearranges to
    \begin{equation}
        \label{eq:amp-identity}
        \nabla \xi(\vone) - \nabla \xi(\vR^{k-1})
        = (\vone - \vR^k) \odot \nabla \xi(\vone)\,.
    \end{equation}
    Thus,
    \begin{align}
        \label{eq:xik-deriv-vone}
        \nabla \xi_k(\vone)
        &=
        (\vone-\vR^k) \odot
        (\nabla \xi(\vone) - \nabla \xi(\vR^{k-1}))
        = (\vone-\vR^k)^2 \odot \nabla \xi(\vone)\,, \\
        \label{eq:xik-2deriv-vone}
        \nabla^2 \xi_k(\vone)
        &=
        (\vone-\vR^k)^{\otimes 2} \odot \nabla^2 \xi''(\vone)\,.
    \end{align}
    Combining the above gives
    \[
        \diag(\nabla \xi_k(\vone))
        = (\vone-\vR^k)^{\otimes 2} \odot \diag(\nabla \xi(\vone))
        \succ (\vone-\vR^k)^{\otimes 2} \odot \nabla^2 \xi''(\vone)
        = \nabla^2 \xi_k(\vone)\,.
    \qedhere
    \]
\end{proof}
Finally the next corollary explains the choice of radius $\vq^k$.
Combined with Lemma~\ref{lem:bands-are-recursive}, this explains the choice of radii $\vR^k$ in the construction of the bands $\Band_k(\vDelta)$: $\vR^{k+1}$ is chosen so that $\Band_{k+1}(\vDelta)$ is the sub-band of $\Band_k(\vDelta)$ orthogonal to $\bg^k$ which lies near all approximate critical points of type $\vDelta$.
\begin{corollary}
    \label{cor:recursive-crit-pt-band-depth}
    With probability $1-e^{-cN}$, all $(\eps,\vDelta)$-critical points of $\wt H_{N,\vDelta,k}$ on $U_k(\vDelta)$ lie within $\ups \sqrt{N}$ of $\Band^{U_k(\vDelta)}_{\vDelta,\vq^k}(\wt H_{N,k,\vDelta})$ for some $\ups = o_\eps(1)$.
\end{corollary}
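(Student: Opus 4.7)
The plan is a direct reduction to Corollary~\ref{cor:crit-pt-band-depth}, applied conditionally on the $\sigma$-algebra $\cF_k$ generated by $F_k(\vDelta)=(\bg^0(\vDelta),\ldots,\bg^{k-1}(\vDelta))$. By Lemma~\ref{lem:recursive-conditional-formula}, the conditional law of $\wt H_{N,\vDelta,k}$ given $\cF_k$ is that of a multi-species spherical spin glass on the manifold $\cS_N^{U_k(\vDelta)}$ with mixture function $\xi_k$. Because $U_k(\vDelta)$ has dimension at most $rk=O(1)$ (since $k$ is a fixed constant) and is species-aligned, the manifold $\cS_N^{U_k(\vDelta)}$ is of exactly the form to which Corollary~\ref{cor:crit-pt-band-depth} applies, with species-block sizes $N_s' = N_s - O(1)$ still satisfying $N_s'/N\to\lambda_s$. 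By Proposition~\ref{prop:xi-k-supersolvable}, the conditional mixture $\xi_k$ is strictly super-solvable (and non-degenerate since $\xi$ is), so the hypotheses of Corollary~\ref{cor:crit-pt-band-depth} are satisfied.

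Applying Corollary~\ref{cor:crit-pt-band-depth} to the conditional model gives, with conditional probability $1-e^{-cN}$, that all $(\eps,\vDelta)$-critical points of $\wt H_{N,\vDelta,k}$ lie within $\ups\sqrt{N}$ of $\Band^{U_k(\vDelta)}_{\vDelta,\vq^\star}(\wt H_{N,\vDelta,k})$, where $\vq^\star = \nabla \xi_k(\vzero)/\nabla \xi_k(\vone)$. Crucially, the constant $c$ and the modulus $\ups=o_{\eps}(1)$ depend only on $\xi_k$, which is deterministic (depending on $\xi$ and $k$), so the bound holds uniformly in the realization of $\cF_k$ and we may integrate out to obtain the unconditional statement.

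It remains to verify the algebraic identity $\vq^\star = \vq^k$. From \eqref{eq:xik-deriv-vone} we already have $\nabla\xi_k(\vone) = (\vone-\vR^k)^{\odot 2}\odot \nabla\xi(\vone)$. Differentiating the formula for $\xi_k$ in Lemma~\ref{lem:recursive-conditional-formula} and setting $\vx=\vzero$ yields
\[
    \nabla\xi_k(\vzero) = (\vone-\vR^k)\odot\bigl(\nabla\xi(\vR^k) - \nabla\xi(\vR^{k-1})\bigr).
\]
Using the recursion $\vR^{j+1}=\nabla\xi(\vR^j)/\nabla\xi(\vone)$ twice,
\[
    \nabla\xi(\vR^k) - \nabla\xi(\vR^{k-1}) = (\vR^{k+1}-\vR^k)\odot\nabla\xi(\vone),
\]
so $\nabla\xi_k(\vzero) = (\vone-\vR^k)\odot(\vR^{k+1}-\vR^k)\odot \nabla\xi(\vone)$ and hence
\[
    \vq^\star
    = \frac{\nabla\xi_k(\vzero)}{\nabla\xi_k(\vone)}
    = \frac{\vR^{k+1}-\vR^k}{\vone-\vR^k}
    = \vq^k,
\]
as required.

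This proof is essentially mechanical once the preceding infrastructure is in place; there is no serious obstacle. The only thing to guard against is claiming the rate $c$ depends on $F_k(\vDelta)$, but since the conditional model's law depends on $\cF_k$ only through the (deterministic) mixture $\xi_k$, uniformity is automatic, and the recursion terminates successfully because $k$ is fixed.
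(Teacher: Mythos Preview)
Your proof is correct and follows essentially the same approach as the paper: apply Corollary~\ref{cor:crit-pt-band-depth} to the conditional model with mixture $\xi_k$ (strictly super-solvable by Proposition~\ref{prop:xi-k-supersolvable}), then verify the algebraic identity $\nabla\xi_k(\vzero)/\nabla\xi_k(\vone)=\vq^k$. Your derivation of $\nabla\xi(\vR^k)-\nabla\xi(\vR^{k-1})=(\vR^{k+1}-\vR^k)\odot\nabla\xi(\vone)$ directly from the recursion is in fact slightly more streamlined than the paper's route via \eqref{eq:amp-identity}, and your explicit remark that $c,\ups$ depend only on the deterministic $\xi_k$ (hence are uniform over realizations of $\cF_k$) is a helpful clarification the paper leaves implicit.
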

\begin{proof}
    Since $\xi_k$ is strictly super-solvable by Proposition~\ref{prop:xi-k-supersolvable}, Corollary~\ref{cor:crit-pt-band-depth} applies.
    Recalling \eqref{eq:amp-identity}, we have
    \begin{align*}
        \nabla \xi_k(\vzero)
        &= (\vone-\vR^k) \odot (\nabla \xi(\vR^k) - \nabla \xi(\vR^{k-1})) \\
        &= (\vone-\vR^k) \odot \lt(
            (\nabla \xi(\vone) - \nabla \xi(\vR^{k-1})) -
            (\nabla \xi(\vone) - \nabla \xi(\vR^{k}))
        \rt) \\
        &= (\vone-\vR^k) \odot (\vR^{k+1} - \vR^k) \odot \nabla \xi(\vone)\,.
    \end{align*}
    Recalling \eqref{eq:xik-deriv-vone}, this implies $\nabla \xi_k(\vzero) / \nabla \xi_k(\vone) = (\vR^{k+1}-\vR^k)/(\vone - \vR^k) = \vq^k$.
    The result follows from Corollary~\ref{cor:crit-pt-band-depth}.
\end{proof}

\subsection{Localization of Approximate Critical Points}
\label{subsec:6-4}

For $S\subseteq \bbR^N$ and $\iota > 0$, let $B_\iota(S) \subseteq \bbR^N$ denote the set of points whose distance to $S$ is at most $\iota$.
The following proposition localizes all $(\eps,\vDelta)$-critical points of $H_N$.
Note that by Fact~\ref{fac:type-partition}, all $\eps$-critical points of $H_N$ are described by this proposition.
\begin{proposition}
    \label{prop:crit-pts-localization}
    For any constant $k \in \bbN$, $\eps > 0$, there exists $\iota_k = o_\eps(1)$ (depending on $k$) such that with probability $1-e^{-cN}$, all $(\eps,\vDelta)$-critical points of $H_N$ lie in $B_{\iota_k \sqrt{N}}(\Band_k(\vDelta))$.
\end{proposition}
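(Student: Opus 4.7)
The natural approach is induction on $k$. The base case $k=0$ is trivial since $\Band_0(\vDelta)=\cS_N$. For the inductive step, assume the statement holds with constant $\iota_k=o_\eps(1)$. Given an $(\eps,\vDelta)$-critical point $\bx$ of $H_N$, fix $\bsig\in\Band_k(\vDelta)$ with $\|\bx-\bsig\|_2\leq \iota_k\sqrt{N}$ and write $\bsig=\phi_{k,\vDelta}(\btau)$ for a unique $\btau\in\cS_N^{U_k(\vDelta)}$. The core step is to establish the following claim (conditionally on $F_k(\vDelta)$, which is the relevant $\sigma$-algebra):

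\textbf{Core claim.} With probability $1-e^{-cN}$, $\btau$ is an $(\eps',\vDelta)$-critical point of the band Hamiltonian $\wt H_{N,\vDelta,k}$ on $\cS_N^{U_k(\vDelta)}$ (with respect to the mixture $\xi_k$), for some $\eps' = o_\eps(1) + C\iota_k$.

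Granted the core claim, the proof concludes quickly. By Proposition~\ref{prop:xi-k-supersolvable}, $\xi_k$ is strictly super-solvable, so Corollary~\ref{cor:recursive-crit-pt-band-depth} applies conditionally on $F_k(\vDelta)$: $\btau$ lies within $\ups\sqrt{N}$ of $\Band^{U_k(\vDelta)}_{\vDelta,\vq^k}(\wt H_{N,\vDelta,k})$ for some $\ups = o_{\eps'}(1) = o_\eps(1)$. By Lemma~\ref{lem:bands-are-recursive}, $\phi_{k,\vDelta}$ carries this set into $\Band_{k+1}(\vDelta)$, and since $\phi_{k,\vDelta}$ contracts distances by a bounded factor, $\bsig$ (hence $\bx$) is within $C\ups\sqrt{N}+\iota_k\sqrt{N}$ of $\Band_{k+1}(\vDelta)$. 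Setting $\iota_{k+1}=C\ups+\iota_k$ preserves the $o_\eps(1)$ rate because $k$ is a fixed constant.

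The gradient part of the core claim rests on a clean algebraic observation. By construction $\bm^k_s(\vDelta)$ is parallel to $\bg^{k-1}(\vDelta)_s$, so $\bg^{k-1}(\vDelta)\in U_k(\vDelta)$. Differentiating \eqref{eq:recursive-band-hamiltonian} yields $\nabla \wt H_{N,\vDelta,k}(\btau)=(\vone-\vR^k)^{1/2}\diamond\big(\nabla H_N(\bsig)-\bg^{k-1}(\vDelta)\big)$, and projecting to $T_\btau\cS_N^{U_k(\vDelta)}$ annihilates the $\bg^{k-1}(\vDelta)$ term. Since $T_\btau\cS_N^{U_k(\vDelta)}\subseteq T_\bsig \cS_N$ (after the species-wise scaling), one obtains $\|\nabla_\sph \wt H_{N,\vDelta,k}(\btau)\|_2\leq \|\nabla_\sph H_N(\bsig)\|_2$, which is $\leq(\eps+C\iota_k)\sqrt{N}$ by proximity of $\bx$ to $\bsig$ and Proposition~\ref{prop:gradients-bounded}.

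The type part requires matching radial derivatives. A direct computation using $\la\btau_s/\|\btau_s\|,\bsig_s/\|\bsig_s\|\ra=\sqrt{1-R^k_s}$ and $\la\btau_s,\bg^{k-1}(\vDelta)_s\ra=0$ gives $(\nabla_\rd \wt H_{N,\vDelta,k}(\btau))_s=(1-R^k_s)(\nabla_\rd H_N(\bsig))_s+O(\eps+\iota_k)$. Using \eqref{eq:xik-deriv-vone} and \eqref{eq:xik-2deriv-vone}, one verifies the scaling identity $\vx_k(\vDelta)_s=(1-R^k_s)\vx(\vDelta)_s$, where $\vx_k$ denotes the $\xi_k$-analogue of \eqref{eq:ideal-stats}. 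Combined with the $(\eps,\vDelta)$-typicality of $\bx$, this produces $\nabla_\rd \wt H_{N,\vDelta,k}(\btau)=\vx_k(\vDelta)+o_\eps(1)$, so $\btau$ has type $\vDelta$ in the new model.

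\textbf{Main obstacle.} The essential bookkeeping lies in the type-correspondence step: carefully rescaling radial derivatives across the $\phi_{k,\vDelta}$ reparameterization and checking the identity $\vx_k(\vDelta)=(\vone-\vR^k)\odot\vx(\vDelta)$, while ensuring that all high-probability events (the Lipschitz bounds from Proposition~\ref{prop:gradients-bounded}, Corollary~\ref{cor:recursive-crit-pt-band-depth}, Proposition~\ref{prop:approx-crits} applied to $\xi_k$) hold conditionally on $F_k(\vDelta)$ with probability $1-e^{-cN}$ uniformly in typical realizations of $F_k(\vDelta)$. The parameter $\xi_k$ varies across $k$, but since $k$ is a constant and $\xi_k$ is strictly super-solvable with non-degenerate parameters (inherited from $\xi$), the quantitative bounds in the previously established propositions apply with uniform constants.
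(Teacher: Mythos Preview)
Your proposal is correct and follows essentially the same route as the paper. The paper isolates your ``Core claim'' as a separate lemma (Lemma~\ref{lem:Delta-consistency}) and proves it by packaging both the gradient and type conditions into the single inequality $\|\nabla H_N(\bsig)-\Lambda^{-1/2}\vx(\vDelta,\xi)\diamond\bsig\|_2\le\ups\sqrt N$ before projecting to $U_k(\vDelta)^\perp$, which is a slightly slicker bookkeeping than treating the tangential and radial parts separately, but the content---including the key scaling identity $\vx(\vDelta,\xi_k)=(\vone-\vR^k)\odot\vx(\vDelta,\xi)$---is identical.
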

We begin by relating the $(\eps,\vDelta)$-critical points of $H_N$ to those of $\wt H_{N,\vDelta,k}$.
% show that the signs $\vDelta$ must stay the same over the course of the recursion.
\begin{lemma}
    \label{lem:Delta-consistency}
    For any $k\ge 1$, $\eps > 0$, $\vDelta \in \{-1,1\}^r$ the following holds.
    % there exists $\eps_0 = \eps_0(\xi,k)$ such that for all $\eps \le \eps_0$ the following holds. % with probability $1-e^{-cN}$.
    If $\bsig\in \Band_k(\vDelta)$ is an $(\eps,\vDelta)$-critical point of $H_N$, then $\phi_{k,\vDelta}^{-1}(\bsig) \in \cS_N^{U_k(\vDelta)}$ is a $(\eps,\vDelta)$-critical point of $\wt H_{N,\vDelta,k}$.
\end{lemma}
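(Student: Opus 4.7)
The plan is to verify directly that $\btau := \phi_{k,\vDelta}^{-1}(\bsig)$ satisfies both conditions of being an $(\eps,\vDelta)$-critical point for the band model $\wt H_{N,\vDelta,k}$: a small Riemannian gradient, and a radial derivative close to $\vx_{\xi_k}(\vDelta)$. Both are computations using the explicit form \eqref{eq:recursive-band-hamiltonian} and the recursion for $\xi_k$. A useful preliminary observation, which I would record first, is that $\bg^{k-1}_s(\vDelta)\in U_k(\vDelta)$ for each $s$: by construction $\bm^k(\vDelta)-\bm^{k-1}(\vDelta)=\scl(\bg^{k-1}(\vDelta);\vDelta,\vR^k-\vR^{k-1})$ is species-wise parallel to $\bg^{k-1}(\vDelta)$ and lies in $U_k(\vDelta)$. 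Hence $\la\bu_s,\bg^{k-1}_s(\vDelta)\ra=\la\btau_s,\bg^{k-1}_s(\vDelta)\ra=0$ for every $\bu\in U_k(\vDelta)^\perp$, which makes the correction term in \eqref{eq:recursive-band-hamiltonian} invisible to both derivatives we compute.

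For the Riemannian gradient, I apply the chain rule: for $\bu\in T_{\btau}\cS_N^{U_k(\vDelta)}$, the differential equals $\la\nabla H_N(\bsig),(\vone-\vR^k)^{1/2}\diamond \bu\ra$ (the $\bg^{k-1}(\vDelta)$ piece drops out by the preliminary observation). Since $\bu\in U_k(\vDelta)^\perp$ and $\la\bu_s,\btau_s\ra=0$, the image $(\vone-\vR^k)^{1/2}\diamond\bu$ lies in $T_{\bsig}\Band_k(\vDelta)\subseteq T_{\bsig}\cS_N$. So the differential equals $\la\nabla_{\sph}H_N(\bsig),(\vone-\vR^k)^{1/2}\diamond\bu\ra$, bounded in absolute value by $\|\nabla_{\sph}H_N(\bsig)\|_2\|\bu\|_2\le \eps\sqrt{N}\|\bu\|_2$. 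This gives $\|\nabla_\sph \wt H_{N,\vDelta,k}(\btau)\|_2\le \eps\sqrt{N}$. For the radial derivative, the same reasoning reduces things to $(\nabla_\rd \wt H_{N,\vDelta,k}(\btau))_s = \tfrac{\sqrt{1-R^k_s}}{\sqrt{\lambda_s}N}\la\btau_s,\nabla_s H_N(\bsig)\ra$. Using $\bsig_s=\bm^k_s(\vDelta)+\sqrt{1-R^k_s}\,\btau_s$ to write $\la\btau_s,\nabla_s H_N(\bsig)\ra=(1-R^k_s)^{-1/2}\big(\la\bsig_s,\nabla_s H_N(\bsig)\ra-\la\bm^k_s(\vDelta),\nabla_s H_N(\bsig)\ra\big)$, and then decomposing $\bm^k_s(\vDelta)=R^k_s\bsig_s+\bw_s$ with $\bw_s\perp\bsig_s$ and $\|\bw_s\|_2\le\sqrt{\lambda_s N}$ (since $\|\bm^k_s(\vDelta)\|_2^2=\lambda_s N R^k_s$), yields
\[
(\nabla_\rd \wt H_{N,\vDelta,k}(\btau))_s = (1-R^k_s)(\nabla_\rd H_N(\bsig))_s - \tfrac{1}{\sqrt{\lambda_s}N}\la\bw_s,\nabla_s H_N(\bsig)\ra.
\]
Since $\bw_s\perp\bsig_s$, the last bracket is the action of $\nabla_{\sph}H_N(\bsig)$ on a tangent vector, hence bounded by $\eps\sqrt{N}\cdot\sqrt{\lambda_s N}$; the error term is $O(\eps)$.

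It remains to match $(1-R^k_s)\vx(\vDelta)_s$ against the type-$\vDelta$ target for the $\xi_k$-model. Using \eqref{eq:xik-deriv-vone}--\eqref{eq:xik-2deriv-vone}, namely $\partial_s \xi_k(\vone)=(1-R^k_s)^2\xi'_s$ and $\partial^2_{s,s'}\xi_k(\vone)=(1-R^k_s)(1-R^k_{s'})\xi''_{s,s'}$, the definition \eqref{eq:ideal-stats} directly gives $\vx_{\xi_k}(\vDelta)=(\vone-\vR^k)\odot\vx(\vDelta)$. Combining this with $\|\nabla_\rd H_N(\bsig)-\vx(\vDelta)\|_\infty\le \ups$ and the above display produces $\|\nabla_\rd \wt H_{N,\vDelta,k}(\btau)-\vx_{\xi_k}(\vDelta)\|_\infty \le (1-\min_s R^k_s)\ups + O(\eps)$. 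For $\eps$ small enough this is $\le \ups$, so together with the gradient bound $\btau$ is an $(\eps,\vDelta)$-critical point of $\wt H_{N,\vDelta,k}$ on $\cS_N^{U_k(\vDelta)}$, with respect to the ``type'' radial-derivative target defined by $\xi_k$ (which by Proposition~\ref{prop:xi-k-supersolvable} is again strictly super-solvable, so Definition~\ref{def:type} applies). The only subtle step is the bookkeeping that transports the type condition across the rescaling, which hinges on the identity $\vx_{\xi_k}(\vDelta)=(\vone-\vR^k)\odot\vx(\vDelta)$; everything else is a chain-rule computation plus the orthogonality $\bg^{k-1}(\vDelta)\in U_k(\vDelta)$.
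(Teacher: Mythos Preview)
Your proof is correct and follows essentially the same route as the paper: both hinge on the identity $\vx_{\xi_k}(\vDelta)=(\vone-\vR^k)\odot\vx(\vDelta)$ from \eqref{eq:xik-deriv-vone}--\eqref{eq:xik-2deriv-vone} and the observation that $\bg^{k-1}_s(\vDelta)\in U_k(\vDelta)$. The paper packages the two conditions into the single bound $\|\nabla H_N(\bsig)-\Lambda^{-1/2}\vx(\vDelta,\xi)\diamond\bsig\|_2\le\ups\sqrt{N}$ and then projects to $U_k(\vDelta)^\perp$, whereas you handle the tangential and radial parts separately (which in fact gives the sharper gradient bound $\eps\sqrt{N}$ rather than $\ups\sqrt{N}$); your closing inequality ``$(1-\min_s R^k_s)\ups+O(\eps)\le\ups$'' is at the same level of informality as the paper's own treatment of these constants.
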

\begin{proof}
    Let $\vx(\vDelta,\xi) \in \bbR^r$ be the radial derivative defined in \eqref{eq:ideal-stats}, where we make the dependence on $\xi$ explicit.
    By definition of $(\eps,\vDelta)$-approximate critical point,
    \[
        \tnorm{\nabla H_N(\bsig) - \Lambda^{-1/2}\vx(\vDelta,\xi) \diamond \bsig}_2 \le \ups \sqrt{N}\,.
    \]
    We write $\bsig = \phi_{k,\vDelta}(\brho)$ for $\brho \in \cS_N^{U_k(\vDelta)}$.
    Let $\nabla_{U_k(\vDelta)^\perp}$ denote the Euclidean gradient projected into the subspace $U_k(\vDelta)^\perp$.
    Because this projection is $1$-Lipschitz, we also have
    \begin{equation}
        \label{eq:Delta-consistency-1}
        \tnorm{
            \nabla_{U_k(\vDelta)^\perp} H_N(\bsig)
            - \Lambda^{-1/2}\vx(\vDelta,\xi) \diamond \Proj_{U_k(\vDelta)^\perp} \bsig
        }_2 \le \ups \sqrt{N}\,.
    \end{equation}
    Taking this gradient of \eqref{eq:recursive-band-hamiltonian} yields
    \[
        \nabla_{U_k(\vDelta)^\perp}
        \wt H_{N,\vDelta,k}(\brho)
        = (\vone - \vR^k)^{1/2} \diamond
        \nabla_{U_k(\vDelta)^\perp}
        H_N(\phi_{k,\vDelta}(\brho))
        = (\vone - \vR^k)^{1/2} \diamond
        \nabla_{U_k(\vDelta)^\perp}
        H_N(\bsig)\,,
    \]
    as the gradient contribution from $\bg^{k-1}(\vDelta)$ projects to zero.
    Moreover,
    \[
        \Proj_{U_k(\vDelta)^\perp} \bsig
        = \Proj_{U_k(\vDelta)^\perp} \phi_{k,\vDelta}(\brho)
        = (\vone - \vR^k)^{1/2} \diamond \brho\,.
    \]
    From \eqref{eq:xik-deriv-vone} and \eqref{eq:xik-2deriv-vone}, it readily follows that
    \[
        \vx(\vDelta,\xi_k) = (\vone - \vR^k) \odot \vx(\vDelta,\xi)\,.
    \]
    Thus \eqref{eq:Delta-consistency-1} implies
    \begin{align*}
        \ups \sqrt{N}
        &\ge
        \tnorm{
            (\vone - \vR^k)^{-1/2} \diamond
            \nabla_{U_k(\vDelta)^\perp} \wt H_{N,\vDelta,k}(\brho)
            - (\Lambda^{-1/2} \vx(\vDelta,\xi) \odot
            (\vone - \vR^k)^{1/2}) \diamond \brho
        }_2 \\
        &= \tnorm{
            (\vone - \vR^k)^{-1/2} \diamond
            (\nabla_{U_k(\vDelta)^\perp} \wt H_{N,\vDelta,k}(\brho)
            - \Lambda^{-1/2} \vx(\vDelta,\xi_k) \diamond \brho)
        }_2 \\
        &\ge \tnorm{
            \nabla_{U_k(\vDelta)^\perp} \wt H_{N,\vDelta,k}(\brho)
            - \Lambda^{-1/2} \vx(\vDelta,\xi_k) \diamond \brho
        }_2\,.
    \end{align*}
    So, $\brho = \phi_{k,\vDelta}^{-1}(\bsig)$ is an $(\eps,\vDelta)$-critical point of $\wt H_{N,\vDelta,k}$.
\end{proof}

\begin{proof}[Proof of Proposition~\ref{prop:crit-pts-localization}]
    Throughout we assume $H_N \in K_N$, which holds with probability $1-e^{-cN}$ by Proposition~\ref{prop:gradients-bounded}.
    We induct on $k$.
    Suppose the claim holds for $k$, so all $(\eps,\vDelta)$-critical points of $H_N$ lie in $B_{\iota_k\sqrt{N}} (\Band_k(\vDelta))$.
    Let $\bsig$ be one such critical point, and let $\brho \in \Band_k(\vDelta)$ be its projection in $\Band_k(\vDelta)$.
    Because $H_N \in K_N$, $\brho$ is a $(\eps',\vDelta)$-critical point of $H_N$ for some $\eps' = o_k(1)$.

    By Lemma~\ref{lem:Delta-consistency}, $\btau = \phi_{k,\vDelta}^{-1}(\brho) \in \cS_N^{U_k(\vDelta)}$ is a $(\eps',\vDelta)$-critical point of $\wt H_{N,k,\vDelta}$.
    By Corollary~\ref{cor:recursive-crit-pt-band-depth}, (with probability $1-e^{-cN}$)
    \[
        \btau \in B_{\ups \sqrt{N}}(\Band^{U_k(\vDelta)}_{\vDelta,\vq^k}(\wt H_{N,k,\vDelta}))
    \]
    for some $\ups = o_{\eps'}(1) = o_\eps(1)$.
    By Lemma~\ref{lem:bands-are-recursive},
    \[
        \phi_{k,\vDelta}(\Band^{U_k(\vDelta)}_{\vDelta,\vq^k}(\wt H_{N,k,\vDelta})) = \Band_{k+1}(\vDelta)\,,
    \]
    so (letting $C_k$ be the Lipschitz constant of $\phi_{k,\vDelta}$)
    \[
        \brho = \phi_{k,\vDelta}(\btau) \in B_{C_k \ups\sqrt{N}}(\Band_{k+1}(\vDelta))\,.
    \]
    It follows that $\bsig \in B_{\iota_{k+1}\sqrt{N}}$ for $\iota_{k+1} = C_k \ups + \iota_k$.
    Over this argument we union bounded over $k+1=O(1)$ events with probability $1-e^{-cN}$, so the conclusion holds with probability $1-e^{-cN}$.
\end{proof}

\subsection{Existence and Uniqueness of Exact Critical Points}
\label{subsec:6-5}

So far we have established that all $(\eps,\vDelta)$-critical points of $H_N$ are close together.
This easily implies that each $\vDelta$ has at most $1$ associated (exact) critical point.

\begin{definition}
    Let $\eps>0$ be a sufficiently small constant independent of $N$.
    A $\vDelta$-critical point $\bx_\vDelta$ of $H_N$ is a critical point that is also a $(\eps,\vDelta)$-critical point (i.e. whose radial derivative $\nabla_\rd H_N(\bx_\vDelta)$ satisfies \eqref{eq:type-radial-deriv}).
\end{definition}

\begin{proposition}
\label{prop:crits-unique}
    With probability $1-e^{-cN}$, for each $\vDelta\in\{-1,1\}^r$ there is at most one $\vDelta$-critical point of $H_N$.
\end{proposition}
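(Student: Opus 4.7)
The plan is to combine the shrinking-bands localization already established in Proposition~\ref{prop:crit-pts-localization} with the uniform invertibility of the Hessian at $(\eps,\vDelta)$-good approximate critical points.

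First I would argue that all $\vDelta$-critical points can be confined to a single region of arbitrarily small diameter. By \eqref{eq:recursive-band} the diameter of $\Band_k(\vDelta)$ is at most $2\sqrt{\max_s(1-R^k_s)}\cdot\sqrt{N}$, and by Lemma~\ref{lem:recursion-termination} this quantity tends to $0$ as $k\to\infty$. Hence for any prescribed $\delta>0$ I can choose $k=k(\delta)$ so that $\diam(\Band_k(\vDelta))\leq \delta\sqrt{N}/2$, and then fix $\eps=\eps(\delta,k)$ small enough that $\iota_k\leq \delta/4$ in Proposition~\ref{prop:crit-pts-localization}. On the corresponding event of probability $1-e^{-cN}$, every $(\eps,\vDelta)$-critical point, and in particular every $\vDelta$-critical point, lies in a set of Euclidean diameter at most $\delta\sqrt{N}$.

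Second I would invoke well-conditioning. By Proposition~\ref{prop:approx-crits}, on an additional event of probability $1-e^{-cN}$ each $\vDelta$-critical point $\bx_\vDelta$ is $(\ups,\vDelta)$-good for some $\ups=o_\eps(1)$; combining \eqref{eq:hessian-good} with Lemma~\ref{lem:spec-no-0} then forces $\spec(\nabla_\sph^2 H_N(\bx_\vDelta))$ to avoid $[-\eps_0/2,\eps_0/2]$ for some $\eps_0=\eps_0(\xi)>0$ as soon as $\ups$ is small enough. Thus $\nabla_\sph^2 H_N(\bx_\vDelta)$ is invertible with operator-norm bound on the inverse uniform in $N$.

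Finally I would conclude by a standard Taylor argument. Assume for contradiction that $\bx^{(1)}\neq\bx^{(2)}$ are both $\vDelta$-critical. Write $\bx^{(2)}=\exp_{\bx^{(1)}}(\bv)$ with $\bv\in T_{\bx^{(1)}}\cS_N$ and $\|\bv\|_2\leq \delta\sqrt{N}$. Applying Proposition~\ref{prop:gradients-bounded} (with $k=3$) together with parallel transport of $\nabla_\sph H_N(\bx^{(2)})=\bzero$ back to $T_{\bx^{(1)}}\cS_N$ yields
\[
\bzero \;=\; \nabla_\sph^2 H_N(\bx^{(1)})\,\bv \;+\; \cE,\qquad \|\cE\|_2 \;\leq\; C\|\bv\|_2^2/\sqrt{N} \;\leq\; C\delta\,\|\bv\|_2.
\]
The leading term satisfies $\|\nabla_\sph^2 H_N(\bx^{(1)})\,\bv\|_2\geq (\eps_0/2)\|\bv\|_2$, so choosing $\delta=\delta(\eps_0,C)$ small forces $\bv=\bzero$, contradicting $\bx^{(1)}\neq\bx^{(2)}$. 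The only real obstacle is bookkeeping the order of quantifiers: $\delta$ is chosen small depending on $\eps_0$ and $C$, then $k$ depending on $\delta$, and finally $\eps$ small depending on $k$ so that $\iota_k$ is small enough and $\ups$ is small enough; none of these choices depend on $N$, so the proposition follows once the three $1-e^{-cN}$ events are intersected.
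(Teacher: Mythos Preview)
Your proposal is correct and follows essentially the same approach as the paper's proof: localize all $\vDelta$-critical points to a region of diameter $\delta\sqrt{N}$ via Proposition~\ref{prop:crit-pts-localization} and the shrinking of $\Band_k(\vDelta)$, use Lemma~\ref{lem:spec-no-0} to get uniform invertibility of the Hessian at such points, and conclude by a geodesic Taylor expansion (as in Lemma~\ref{lem:grad-y-taylor}) that two well-conditioned critical points cannot be $\delta\sqrt{N}$-close for small $\delta$. The paper is simply more terse; your explicit bookkeeping of the quantifier order is a welcome addition, and your route to well-conditioning via Proposition~\ref{prop:approx-crits} and \eqref{eq:hessian-good} is equivalent to the paper's direct appeal to \eqref{eq:annealed-kac-rice-atypical}.
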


\begin{proof}
    Let $\bx_{\vDelta},\bx_{\vDelta}'$ be two such critical points of $H_N$.
    Then they are both $(\eps,\vDelta)$ critical points for small $\eps>0$.
    By Fact~\ref{fac:type-partition} and Proposition~\ref{prop:crit-pts-localization}, we find that for any $\delta>0$ independent of $N$,
    \[
        \|\bx_{\vDelta}-\bx_{\vDelta}'\|_2 \leq \delta \sqrt{N}
    \]
    holds with probability $1-e^{-cN}$.
    Moreover \eqref{eq:annealed-kac-rice-atypical} and Lemma~\ref{lem:spec-no-0} together imply that with the same probability, the spherical Hessians of $H_N$ at both points are $C(\xi)$ well-conditioned, with all eigenvalues inside $\pm [C^{-1},C]$.
    For $\delta$ small enough and $H_N\in K_N$, this is impossible since $C$-well-conditioned critical points cannot be arbitrarily close together (as can be shown by Taylor expanding along a geodesic as in Lemma~\ref{lem:grad-y-taylor}).
\end{proof}

To show existence we appeal to Morse theory, which shows the total number of critical points is almost surely at least $2^r$ just from the geometry of $\cS_N$.

\begin{proposition}
\label{prop:morse}
    Almost surely, $H_N$ has at least $2^r$ critical points on $\cS_N$. Hence by Fact~\ref{fac:type-partition} and Proposition~\ref{prop:crits-unique}, with probability $1-e^{-cN}$, $H_N$ has a unique $\vDelta$-critical point of each type $\vDelta\in \{-1,1\}^r$, and no other critical points.
\end{proposition}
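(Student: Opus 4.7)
The plan is to apply classical Morse theory: establish almost surely that $H_N$ is a Morse function on $\cS_N$, then invoke the Morse inequalities, and finally compute that the sum of Betti numbers of $\cS_N$ equals $2^r$.

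First, I would argue that $H_N$ is almost surely a Morse function on $\cS_N$, i.e. that all critical points of $H_N$ are non-degenerate (have invertible Riemannian Hessian). This is a standard genericity property for smooth Gaussian random fields, typically established via a Bulinskaya-type lemma: one shows that for any $\bsig\in\cS_N$, the joint density of $(\nabla_{\sph}H_N(\bsig), \det \nabla^2_{\sph}H_N(\bsig))$ at $(\bzero,0)$ exists and is bounded, which combined with the Kac--Rice formula implies that the expected number of degenerate critical points is zero. The required non-degeneracy of the joint Gaussian distribution of gradient and Hessian follows from Lemma~\ref{lem:derivative-laws} together with non-degeneracy of $\xi$ (Assumption~\ref{as:nondegenerate}), which ensures $\Gamma^{(2)}>0$ entrywise and hence that the tangential Hessian law \eqref{eq:tangential-hessian-law} has positive variance in every coordinate.

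Second, I would apply the (weak) Morse inequality: for any Morse function $f$ on a smooth compact manifold $\cM$,
\[
|\Crt(f)| \;\geq\; \sum_{k\geq 0} b_k(\cM),
\]
where $b_k(\cM)$ is the $k$-th Betti number (with, say, $\bbQ$-coefficients). Since $\cS_N$ is compact and $H_N$ is almost surely Morse by the previous step, this applies.

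Third, I would compute the total Betti number of $\cS_N = \prod_{s\in\sS} \sqrt{\lambda_s N}\,\bbS^{N_s-1}$. Each sphere $\bbS^{N_s-1}$ has cohomology of total dimension $2$ (classes in degrees $0$ and $N_s-1$), and the K{\"u}nneth formula gives
\[
\sum_k b_k(\cS_N) \;=\; \prod_{s\in\sS} \sum_k b_k(\bbS^{N_s-1}) \;=\; 2^r.
\]
Combining the three steps yields $|\Crt_N^{\tot}|\geq 2^r$ almost surely. The second assertion of the proposition is then immediate: Fact~\ref{fac:type-partition} assigns every critical point a unique type $\vDelta\in\{-1,1\}^r$ with probability $1-e^{-cN}$, Proposition~\ref{prop:crits-unique} gives at most one critical point per type, and the Morse bound of $2^r$ forces every type to be realized exactly once, with no other critical points present.

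The only non-routine step is the almost-sure Morse property, but since $\xi$ is non-degenerate and all relevant Gaussian densities are smooth and non-degenerate by Lemma~\ref{lem:derivative-laws}, this follows from a direct Kac--Rice computation of the expected number of degenerate critical points, which vanishes. Everything else is a textbook application of Morse theory and K{\"u}nneth.
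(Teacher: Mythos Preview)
Your proposal is correct and follows essentially the same approach as the paper: compute the sum of Betti numbers of $\cS_N$ via K{\"u}nneth to get $2^r$, then invoke the Morse inequalities. The only difference is that you spell out the almost-sure Morse property via a Bulinskaya-type argument, whereas the paper simply asserts it in passing (``in particular $H_N$''); your extra care here is appropriate but does not change the method.
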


\begin{proof}
    It suffices to show the first claim. Recall that any sphere has two non-zero homology groups, each of dimension $1$. Hence by the K{\"u}nneth formula, the sum of the dimensions of the homology groups for $\cS_N$ is $2^r$. Finally by the Morse inequalities (see e.g. \cite{milnor1963morse}), this sum lower bounds the number of critical points of any Morse function, in particular $H_N$.
\end{proof}

Putting everything together, we obtain most of Theorem~\ref{thm:precise-landscape}.

\begin{proof}[Proof of Theorem~\ref{thm:precise-landscape} except for part~\ref{it:index}]
    Existence and uniqueness of each $\bx_{\vDelta}$ have just been shown.
    As above, \eqref{eq:annealed-kac-rice-atypical} and Lemma~\ref{lem:spec-no-0} imply the well-conditioning.
    Proposition~\ref{prop:crit-pts-localization} implies part~\ref{it:bsig-in-union}.
    Part~\ref{it:ground-states} is immediate from part~\ref{it:bsig-in-union} since all approximate ground states of $H_N\in K_N$ are approximate critical points.
\end{proof}

\begin{remark}
\label{rem:no-morse}
    As an alternative to the Morse inequalities, we could instead use \cite[Proposition 3.2]{huang2023optimization} which, for each $\vDelta\in\{-1,1\}^r$ and $\eps>0$, explicitly constructs a $(\vDelta,\eps)$-approximate critical point $\wt\bx_{\vDelta}\in\cS_N$ (with probability $1-e^{-cN}$).
    Since the limiting spectral support $S(\vDelta)$ is bounded away from $0$ by Proposition~\ref{lem:spec-no-0}, Proposition~\ref{prop:approx-crits} implies that for $\eps$ small enough, each $\wt\bx_{\vDelta}$ has well-conditioned Hessian.
    Then Newton's method can be used to locate a nearby exact critical point $\bx_{\vDelta}$. This route is more cumbersome than the one taken above, but has a chance to work in situations where the number of critical points in the trivial regime is larger than the lower bound from the Morse inequalities.
\end{remark}

\subsection{The Index of Each Critical Point}
\label{subsec:index}

Finally we compute the index of each critical point, which is the only remaining part of Theorem~\ref{thm:precise-landscape}.
We use a ``critical point following'' argument, showing that critical points move stably as $H_N$ is gradually deformed into a linear function, while their indices remain fixed.
This can easily be turned into an efficient algorithm to locate each $\bx_{\vDelta}$ as mentioned in the introduction, as the proof of \cite[Lemma 3.1]{montanari2023solving} used below is via projected gradient descent on $\|\nabla H_N(\cdot)\|_2^2$ (i.e. Newton's method).

\begin{proposition}
\label{prop:newton}
    For any $(\vlambda,\xi)$ and $\iota>0$ there is $\eps>0$ such that the following holds.
    Suppose $H_N\in K_N$, and $\nabla_{\sph}^2 H_N(\bx)$ is an $\iota$-well-conditioned $\eps$-approximate critical point. Then there exists an exact critical point $\by\in \cS_N$ such that $\|\bx-\by\|_2 \leq C(\vlambda,\xi,\iota)\eps \sqrt{N}$.
\end{proposition}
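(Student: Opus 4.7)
The plan is to run Newton's method on the manifold $\cS_N$ starting from $\bx$, exploiting that well-conditioning of $\nabla_{\sph}^2 H_N(\bx)$ makes the Newton step well-defined and the bound $H_N \in K_N$ (Proposition~\ref{prop:gradients-bounded}) gives uniform Lipschitz control on $\nabla_{\sph}^2 H_N$ along geodesics. This is essentially the standard quantitative inverse function theorem adapted to the product-of-spheres geometry, and closely parallels the projected-gradient argument in \cite[Lemma 3.1]{montanari2023solving}.

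First I would set up the iteration. Define $\bx_0 = \bx$ and, given $\bx_k$, let $\bv_k \in T_{\bx_k}\cS_N$ be the unique tangent vector with $\nabla^2_{\sph} H_N(\bx_k) \bv_k = -\nabla_{\sph} H_N(\bx_k)$; then set $\bx_{k+1}$ to be the endpoint of the unit-time geodesic from $\bx_k$ with initial velocity $\bv_k$ (projected back to $\cS_N$ componentwise). Because $\nabla^2_{\sph} H_N(\bx_0)$ is $\iota$-well-conditioned, $\|\bv_0\|_2 \le \iota \eps \sqrt{N}$. The smoothness bound $\|\nabla^3 H_N(\bz)\|_{\op} \le C N^{-1/2}$ on $\cS_N$ from Proposition~\ref{prop:gradients-bounded}, translated to Riemannian derivatives using Fact~\ref{fac:riemannian-to-euclidean}, implies that along any geodesic of length $L$ the spherical Hessian changes by at most $C' L / \sqrt{N}$ in operator norm (cf.\ Lemma~\ref{lem:grad-y-taylor} and the second bound of Proposition~\ref{prop:gradients-bounded}).

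Next I would carry out the quadratic-convergence step. A Taylor expansion along the geodesic $\gamma_k$ from $\bx_k$ to $\bx_{k+1}$ gives, by parallel transport of $\nabla_{\sph} H_N$ exactly as in Lemma~\ref{lem:grad-y-taylor},
\[
\|\nabla_{\sph} H_N(\bx_{k+1})\|_2 \le \|\nabla_{\sph} H_N(\bx_k) + \nabla^2_{\sph} H_N(\bx_k) \bv_k\|_2 + C'' \|\bv_k\|_2^2 / \sqrt{N} = C'' \|\bv_k\|_2^2/\sqrt{N},
\]
where the first term vanishes by definition of $\bv_k$. As long as $\bx_{k+1}$ remains in a region where $\nabla^2_{\sph} H_N$ is, say, $(2\iota)$-well-conditioned, this yields $\|\bv_{k+1}\|_2 \le 2\iota C'' \|\bv_k\|_2^2/\sqrt{N}$. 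For $\eps$ small enough depending on $(\vlambda,\xi,\iota)$, the contraction factor $2\iota C''\|\bv_0\|_2/\sqrt{N} \le 2\iota^2 C'' \eps < 1/2$, so $\|\bv_k\|_2 \le 2^{-k} \iota \eps \sqrt{N}$ and the total displacement is $\sum_k \|\bv_k\|_2 \le 2\iota \eps \sqrt{N}$.

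Finally, I would verify the invariant that the iterates stay $(2\iota)$-well-conditioned. Since the total displacement is at most $2\iota \eps\sqrt{N}$, the operator-norm change in $\nabla^2_{\sph} H_N$ is at most $2C'\iota\eps$, which is smaller than $\iota^{-1}/2$ once $\eps \le c(\iota)$; this is where I would choose $\eps$ sufficiently small depending on $(\vlambda,\xi,\iota)$. With the invariant maintained, $(\bx_k)$ is Cauchy and converges to some $\by \in \cS_N$ with $\nabla_{\sph} H_N(\by) = 0$ and $\|\bx - \by\|_2 \le C(\vlambda,\xi,\iota)\eps\sqrt{N}$. The main (minor) obstacle is merely the bookkeeping of going between Euclidean and Riemannian quantities on the product of spheres, but Fact~\ref{fac:riemannian-to-euclidean} together with Proposition~\ref{prop:gradients-bounded} already packages exactly the bounds needed.
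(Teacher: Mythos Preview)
Your proposal is correct and takes essentially the same approach as the paper: the paper simply invokes \cite[Lemma~3.1]{montanari2023solving} (noting it extends to products of spheres and that $H_N\in K_N$ bounds the relevant constants), and that lemma is proved by exactly the Newton/projected-gradient iteration you spell out. Your write-up just unpacks the cited argument; the only cosmetic point is that the ``$(2\iota)$-well-conditioned'' invariant is really only needed as a lower bound on $|\blambda_i|$ (the upper bound can grow slightly without affecting the contraction), which your displacement bound already secures.
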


\begin{proof}
    This follows by \cite[Lemma 3.1]{montanari2023solving} applied to $\nabla H_N$; the constants $J_n, L_n, M_n$ are bounded as $H_N\in K_N$. (The stated result is for a single sphere, but the extension to a finite products of spheres poses no issues.)
\end{proof}

\begin{proof}[Proof of Theorem~\ref{thm:precise-landscape}\ref{it:index}]
Fix $(\vlambda,\xi)$.
% Writing $H_N(\bx)=\la \bG^{(1)},\bx\ra +t\wt H_N(\bx)$ for $\wt H_N$ the degree two and higher terms, for $t\in [0,1]$ we write
Writing $\wt H_N$ for the degree two and higher terms in $H_N$, for $t\in [0,1]$ we set
\[
    H_{N,t}=\la \bG^{(1)},\bx\ra +t\wt H_N(\bx).
\]
The marginal distribution of $H_{N,t}$ thus corresponds to the mixture $\xi^{(t)}(\vx)=(1-t^2) \xi'(0) \odot\vx + t^2 \xi(\vx)$.
It is easy to see that if $\xi$ is strictly super-solvable then so is $\xi^{(t)}$ for each $t\in [0,1]$.
Moreover the proof of Lemma~\ref{lem:spec-no-0} holds uniformly on $(\xi^{(t)})_{t\in [0,1]}$, implying that for some $c>0$,
\[
    S_t(\vDelta)\cap [-c,c]=\emptyset
\]
holds simultaneously for all $t\in [0,1]$ and $\vDelta\in \{-1,1\}^r$.
In particular, our results then imply the following. Fix a small unit fraction $\delta>0$ depending on $(\vlambda,\xi,c)$, and let $\bx_{\vDelta,k\delta}$ be the corresponding critical point for $H_{N,k\delta}$ (which exists with probability $1-e^{-cN}$). Then for $k\geq 0$, if $H_N,\wt H_N\in K_N$:
\begin{align}
\nonumber
    \|\nabla_{\sph} H_{N,(k+1)\delta}(\bx_{\vDelta,k\delta})\|_2 \leq C\delta\sqrt{N},
    \\
\label{eq:uniformly-well-conditioned}
    \spec\big(\nabla_{\sph}^2 H_{N,(k+1)\delta}(\bx_{\vDelta,k\delta})\big)\cap [-c/2,c/2]=\emptyset.
\end{align}
For $H_N,\wt H_N\in K_N$, the above two estimates imply via Proposition~\ref{prop:newton} the existence of a nearby critical point $\by_{\vDelta,(k+1)\delta}$ for $H_{N,(k+1)\delta}$ such that, for a constant $C_1=C_1(\vlambda,\xi,c)$:
\begin{align}
\label{eq:critical-point-slow-movement}
    |\la \by_{\vDelta,(k+1)\delta}
    -
    \bx_{\vDelta,k\delta},\bG^{(1)}\ra|
    &\leq
    C_1\delta,
    \\
\label{eq:critical-point-spectral-movement}
    \|\nabla_{\sph}^2 H_{N,(k+1)\delta}(\by_{\vDelta,(k+1)\delta})-\nabla_{\sph}^2 H_{N,k\delta}(\bx_{\vDelta,k\delta})\|_{\op}
    &\leq
    C_1\delta.
\end{align}
Recalling \eqref{eq:ideal-stats} and \eqref{eq:1spin-correlation-good}, it follows from \eqref{eq:critical-point-slow-movement} that $\by_{\vDelta,(k+1)\delta}=\bx_{\vDelta,(k+1)\delta}$ is a critical point of the same $\vDelta$.
Combining \eqref{eq:uniformly-well-conditioned} and \eqref{eq:critical-point-spectral-movement}, we see that $\nabla_{\sph}^2 H_{N,(k+1)\delta}(\bx_{\vDelta,(k+1)\delta})$ and $\nabla_{\sph}^2 H_{N,k\delta}(\bx_{\vDelta,k\delta})$ have the same number of positive eigenvalues for each $\delta$. Taking $k=0$, it is easy to see that this number is $\sum_{s:\vDelta_s=-1} N_s$. Taking $k\delta=1$ shows that the same holds for $\nabla_{\sph}^2 H_N(\bx_{\vDelta})$ as desired.
\end{proof}

\section{Estimates for Approximate Critical Points in Single-Species Models}
\label{sec:approx-local-max-E-infty}

In this section we detail further consequences of Lemma~\ref{lem:approx-to-exact-computation} which are of independent interest, and of relevance for several concurrent works.
Below, we restrict our attention to single-species models without external field (i.e. $r=1$, $\xi'(0)=0$) for which the relevant Kac--Rice estimates are known from previous work.
In particular we consider mixture functions of the form $\xi(t)=\sum_{p=2}^P \gamma_p^2 t^p$ for $\gamma_2,\dots,\gamma_P\geq 0$.
We assume for sake of normalization that $\xi(1)=1$ and similarly to Definition~\ref{def:xi'}, we write $\xi'=\xi'(1),\xi''=\xi''(1)$ and $\alpha^2=\xi''+\xi'-(\xi')^2$ (unrelated to \eqref{eq:small-constants}).
Recall from \cite{auffinger2013complexity} the thresholds:
\[
    E_{\infty}^{\pm}(\xi)
    \equiv
    \frac{2 \xi' \sqrt{\xi''} \pm \sqrt{4 \xi'' (\xi')^2-\left(\xi''+\xi'\right)\left(2\left(\xi''-\xi'+(\xi')^2\right)-\alpha^2 \log \frac{\xi''}{\xi'}\right)}}{\xi'+\xi''}
    .
\]
One always has $\alpha\geq 0$, with equality exactly in the pure case $\xi(t)=t^p$ for some $p$. In this case, the thresholds $E_{\infty}^{\pm }$ agree at the value $E_{\infty}(p)=2\sqrt{\frac{p-1}{p}}$ from \cite{auffinger2013random}.

We give the relevant Kac--Rice result in Proposition~\ref{prop:kac-rice-E-infinity} below after recalling some definitions and results from \cite{auffinger2013complexity}. For open $\cD,\cD_{\rd}\subseteq \bbR$ we let $\Crt_{N}(\cD;\cD_{\rd})\subseteq \cS_N$ consist of all critical points with
\[
    H_N(\bx)/N\in \cD,
    \quad\quad
    \nabla_{\rd} H_N(\bx) \in \cD_{\rd}.
\]
As in Equation (1.21) therein, for $\gamma \in (0,1)$ define $s_{\gamma}\in (-\sqrt{2},\sqrt 2)$ as the rescaled semicircular law quantile satisfying:
\[
    \gamma
    =
    \frac{1}{\pi}
    \int_{-\sqrt{2}}^{-s_{\gamma}} \sqrt{2-x^2}
    ~\de x.
\]
Moreover, define the function
\begin{equation}
\label{eq:Theta-formula}
    \Theta(s)=\lt(
    -\frac{|s|\sqrt{s^2-2}}{2}
    +
    \log
    \lt(
    \frac{|s|+\sqrt{s^2-2}}{\sqrt{2}}
    \rt)
    \rt) 1_{|s|\geq \sqrt 2}
    \leq 0.
\end{equation}
The critical point complexity functional at $\big(H_N(\bx)/N,\nabla_{\rd}H_N(\bx)\big)\approx \big(y,s\sqrt{2\xi''}\big)$ and its quadratic upper bound are given by:
\begin{equation}
\label{eq:single-species-complexity-function}
\begin{aligned}
    F(s,y)
    &=
    \frac{1}{2}
    \lt(
    \log\frac{\xi''}{\xi'}
    +
    s^2-y^2
    -
    \frac{2\xi''}{\alpha^2}
    \lt(
    s-\frac{y\xi'}{\sqrt{2\xi''}}
    \rt)^2
    +
    \Theta(s)
    \rt),
    \\
    \wt F(s,y)
    &=
    \frac{1}{2}
    \lt(
    \log\frac{\xi''}{\xi'}
    +
    s^2-y^2
    -
    \frac{2\xi''}{\alpha^2}
    \lt(
    s-\frac{y\xi'}{\sqrt{2\xi''}}
    \rt)^2
    \rt).
\end{aligned}
\end{equation}
(If $\alpha=0$, we interpret $-0/0=0$ and $-x/0=-\infty$ for $x>0$.)

Indeed the following holds as a direct consequence of Proposition~\ref{prop:annealed-kac-rice}, see also the proof of \cite[Theorem 1.3]{auffinger2013complexity}. (In fact our scaling of $\nabla_{\rd}H_N(\bx)$ by $\sqrt{2\xi''}$ is chosen to enforce agreement with the latter formula).

\begin{proposition}
\label{prop:auffinger-ben-arous-rewrite}
    For any $\gamma\in (0,1)$ and open $\cD,\cD_{\rd}\subseteq\bbR$:
    \[
    \lim_{N\to\infty}
    \frac{1}{N}
    \log
    \bbE
    \big|\Crt_{N}\big(
    \cD~;\cD_{\rd}
    \big)\big|
    =
    \sup_{
    \substack{y\in \cD,
    \\
    s\sqrt{2\xi''}\in\cD_{\rd}}
    }
    F(s,y).
    \]
\end{proposition}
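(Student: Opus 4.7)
The plan is to apply Proposition~\ref{prop:annealed-kac-rice} directly to the single-species case $r=1$, $\xi'(0)=0$, and to simplify the resulting complexity functional $F(\vx,E)$ of \eqref{eq:def-F-extended} explicitly using the fact that the vector Dyson equation becomes scalar. For $r=1$ and $\lambda=1$, we have $\Lambda=1$, $A=\xi'+\xi''$, $\xi(\vone)=1$, and the identity $\alpha^2+(\xi')^2 = A$ gives $\xi(\vone)-(\xi')^\top A^{-1}\xi' = \alpha^2/A$. The vector Dyson equation \eqref{eq:dyson-equation} collapses to the scalar quadratic $\xi''\,m(z)^2 + (z+x)m(z) + 1 = 0$, whose branch satisfying $m(z)\sim -1/z$ at infinity is the Stieltjes transform of the semicircle measure $\mu(x)$ of radius $2\sqrt{\xi''}$ centered at $-x$.

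Next I would compute $\Psi(x) = \int \log|\gamma|\,\mu(x)(\de\gamma)$ in closed form. Translating $\gamma \mapsto \gamma + x$ reduces this to the log potential at $x$ of a centered semicircle of radius $2\sqrt{\xi''}$. Differentiating in $x$ yields the negative real part of the Stieltjes transform, which is an explicit elementary function both inside and outside the bulk; integrating from $x=0$ using the classical boundary value $\Psi(0) = \tfrac{1}{2}\log\xi'' - \tfrac{1}{2}$ and setting $x = s\sqrt{2\xi''}$ (to match the rescaling specified in the proposition's statement) produces an explicit expression for $\Psi$ in terms of the function $\Theta$ of \eqref{eq:Theta-formula}, with the $s^2/2$ contribution arising from the interior quadratic part of the potential and $\Theta$ arising from the exterior correction.

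Finally, substituting $x = s\sqrt{2\xi''}$ and $E = y$ into $F(\vx,E)$ from \eqref{eq:def-F-extended} and inserting the formula for $\Psi$, a direct algebraic simplification (using $\alpha^2+(\xi')^2 = A$ to match the coefficients of $s^2$, $y^2$, and the cross term $sy$) identifies the resulting expression with \eqref{eq:single-species-complexity-function}. The supremum in Proposition~\ref{prop:annealed-kac-rice} over open $\cD \times \cD_\rd$ then transfers directly to the stated one after the change of variables. No substantial obstacle arises; the entire argument is bookkeeping around the explicit semicircle log potential, of the type carried out in \cite{auffinger2013complexity}.
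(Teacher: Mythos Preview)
Your proposal is correct and takes exactly the approach the paper intends: the paper states only that the result is ``a direct consequence of Proposition~\ref{prop:annealed-kac-rice}, see also the proof of \cite[Theorem 1.3]{auffinger2013complexity},'' and your outline simply fills in that single-species specialization and the explicit semicircle log-potential computation. One small technicality to be aware of is that Proposition~\ref{prop:annealed-kac-rice} is stated under Assumption~\ref{as:nondegenerate} (in particular $\Gamma^{(1)}>0$), whereas here $\xi'(0)=0$; the paper sidesteps this by pointing to \cite{auffinger2013complexity}, and indeed the relevant parts of the Kac--Rice argument go through in the single-species case without the external-field assumption.
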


Define the set
\[
    S=S_{\xi}=\{(s,y)\in \bbR^2~:~ \wt F(s,y)\geq 0\}.
\]
We now make an important observation on the function $\wt F$.

\begin{proposition}
\label{prop:ellipsoid}
    The function $\wt F$ is negative definite, i.e. $S$ is a centered ellipsoid (which degenerates to a line-segment if $\alpha=0$).
    Moreover the major axis of $S$ has ``positive'' slope in $[0,\pi/2]$.
\end{proposition}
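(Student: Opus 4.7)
The plan is to observe that $\wt F$ is a degree-two polynomial in $(s,y)$ plus a constant, so its Hessian $H = \nabla^2 \wt F$ is independent of $(s,y)$ and the negative definiteness of $\wt F$ reduces to negative definiteness of $H$. Directly differentiating \eqref{eq:single-species-complexity-function} gives
\begin{align*}
H = \frac{1}{\alpha^2}\begin{pmatrix} \alpha^2 - 2\xi'' & \sqrt{2\xi''}\,\xi' \\ \sqrt{2\xi''}\,\xi' & -(\alpha^2 + (\xi')^2) \end{pmatrix}.
\end{align*}

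Assume first the non-pure case, so that $\alpha > 0$. The trace $\mathrm{tr}(H) = -(2\xi'' + (\xi')^2)/\alpha^2 < 0$ is immediate, and after substituting $\alpha^2 = \xi'' + \xi' - (\xi')^2$ the off-diagonal contributions telescope to give the clean identity $\det(H) = (\xi'' - \xi')/\alpha^2$. The key combinatorial input is
\[
\xi'' - \xi' = \sum_{p\geq 2} p(p-2)\gamma_p^2,
\]
which is strictly positive whenever some $\gamma_p > 0$ with $p > 2$; this is guaranteed in the non-pure case, and the same assumption ensures $\alpha^2 > 0$ via Cauchy--Schwarz with weights $\gamma_p^2$ (summing to $\xi(1)=1$). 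Hence $H \prec 0$, so $\wt F$ is strictly concave. Since $\nabla \wt F(s,y) = H(s,y)^\top$ by the constant-Hessian observation, $\wt F$ attains its unique global maximum at the origin with value $\tfrac{1}{2}\log(\xi''/\xi') > 0$, and therefore $S = \{\wt F \geq 0\}$ is a centered ellipsoid.

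For the major-axis direction, one identifies it with the eigenvector of $H$ corresponding to the largest (least negative) eigenvalue $\lambda_+$. Solving $(H - \lambda_+ I)\vv = 0$ yields $\vv \propto (H_{12},\, \lambda_+ - H_{11})$; since $H_{12} = \sqrt{2\xi''}\,\xi'/\alpha^2 > 0$ and
\[
\lambda_+ - H_{11} = \tfrac{1}{2}\bigl(\sqrt{(H_{11}-H_{22})^2 + 4H_{12}^2} - (H_{11} - H_{22})\bigr) > 0,
\]
the latter being strict because $H_{12} \neq 0$, the slope of $\vv$ lies in $(0,\infty)$, so the angle is in $(0,\pi/2)$. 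The pure case $\alpha = 0$ must be handled via the conventions stated after \eqref{eq:single-species-complexity-function}: $\wt F$ equals $-\infty$ off the line $s = y\xi'/\sqrt{2\xi''}$ and on that line reduces to $\tfrac{1}{2}(\log(p-1) + s^2(2-p)/p)$, so $S$ degenerates to a line segment for $p > 2$ and to the whole line for $p = 2$. This line itself has slope $\sqrt{2(p-1)/p} > 0$, giving the claim in the degenerate case. The only real obstacle is spotting the algebraic simplification $\det(H) = (\xi''-\xi')/\alpha^2$ together with its combinatorial interpretation; once that is in hand, both assertions follow essentially from signs of $\mathrm{tr}(H)$, $\det(H)$, and $H_{12}$.
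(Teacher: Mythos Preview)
Your proof is correct. For negative definiteness you and the paper end up at the same reduction: the determinant of the Hessian equals $(\xi''-\xi')/\alpha^2$, and $\xi''>\xi'$ follows from $\xi''-\xi'=\sum_p p(p-2)\gamma_p^2$. The paper phrases the first step slightly differently (observing that $s^2-y^2$ has signature $(1,1)$ and subtracting a positive semidefinite form cannot make it positive definite, so only the determinant needs checking), while you compute the trace directly; these are equivalent.

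For the major axis the approaches genuinely differ. You compute the eigenvector of $H$ for the larger eigenvalue and check both components are positive. The paper instead gives a symmetry argument: if $(s,y)\in S$ with $sy\le 0$, then reflecting to $(s,-y)$ or $(-s,y)$ only increases $\wt F$ (the cross term in the squared bracket changes sign favorably), so these reflections stay in $S$. Hence the farthest point of $S$ from the origin cannot lie strictly in the second or fourth quadrant, forcing the major axis into the closed first quadrant. Your computation is more explicit and in fact yields the slightly sharper conclusion that the slope is strictly in $(0,\pi/2)$; the paper's argument is coordinate-free and requires no eigenvalue formula. Both are short and valid.
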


\begin{proof}
    We begin with negative definiteness, assuming $\alpha>0$ as the pure case is easy. Note the first three terms of $\wt F(s,y)$ are a quadratic of type $(1,1)$, while the last term subtracts a positive-semidefinite quadratic. Hence $\wt F$ cannot be positive definite, so it suffices to prove it has positive discriminant.
    After some easy computation, the discriminant's positivity reduces to proving that
    \[
    (2\xi'' - \alpha^2)((\xi')^2 +\alpha^2) \stackrel{?}{>} 2\xi'' (\xi')^2.
    \]
    Dividing by $\alpha^2$, this reduces to showing $\alpha^2 > 2\xi''-(\xi')^2$. This in turn rearranges to $\xi''>\xi'$ which is clear.

    The latter assertion holds as if $(s,y)\in S$ with $sy\leq 0$ then also $(s,-y),(-s,y)\in S$.
\end{proof}

In the next proposition, we use the notation $GS(\xi)=\plim_{N\to\infty} \max_{\bx\in\cS_N} H_N(\bx)/N$ for the ground state energy.

\begin{proposition}
\label{prop:kac-rice-E-infinity}
    For any $\ups>0$, and for $\iota$ small enough depending on $(\xi,\ups)$:
    \begin{equation}
    \label{eq:kac-rice-E-infinity-minus}
    \lim_{N\to\infty}
    \frac{1}{N}
    \log
    \bbE
    \big|\Crt_{N}\big(
    (-\infty,E_{\infty}^- -\ups)~;~(\sqrt{2\xi''(1)}-\iota,\infty)
    \big)\big|
    <-c(\xi,\ups)<0.
    \end{equation}
    Furthermore, either $GS(\xi)\leq E_{\infty}^+$ or
    \begin{equation}
    \label{eq:kac-rice-E-infinity-plus}
    \lim_{N\to\infty}
    \frac{1}{N}
    \log
    \bbE
    \big|\Crt_{N}\big(
    (E_{\infty}^+ +\ups,\infty)~;~(-\infty,\sqrt{2\xi''(1)}+\iota)
    \big)\big|
    <-c(\xi,\ups)<0.
    \end{equation}
\end{proposition}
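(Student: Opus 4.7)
The plan is to invoke Proposition~\ref{prop:auffinger-ben-arous-rewrite} to reduce each claim to the deterministic inequality $\sup_{(s,y) \in R} F(s,y) < -c(\xi,\ups)$ on the relevant constrained region $R$, and then verify this using the decomposition $F = \wt F + \Theta/2$. Recall that $\wt F$ is a negative-definite quadratic (Proposition~\ref{prop:ellipsoid}), so $S = \{\wt F \geq 0\}$ is a compact centered ellipse, while $\Theta(s) \leq 0$ with equality for $|s| \leq \sqrt 2$.

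The key preliminary step would be to identify $E_\infty^\pm$ geometrically. Substituting $s = \sqrt 2$ into $\wt F(\sqrt 2, y) = 0$ and simplifying yields a quadratic in $y$ whose two roots are precisely $E_\infty^\pm$, so $(\sqrt 2, E_\infty^\pm) \in \partial S$. A short calculation gives
\[
\partial_y \wt F(\sqrt 2, E_\infty^\pm) = \pm \frac{\sqrt{\mathrm{disc}}}{\alpha^2} \neq 0,
\]
where $\mathrm{disc}$ denotes the positive quantity under the square root in the formula for $E_\infty^\pm$. By the implicit function theorem, $\partial S$ is locally a smooth curve $y = f_\pm(s)$ near these points with $f_\pm(\sqrt 2) = E_\infty^\pm$. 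Explicitly, the upper and lower branches extend globally as $f_\pm(s) = (s\xi'\sqrt{2\xi''} \pm \sqrt{\mathrm{disc}(s)})/(\xi'+\xi'')$ with $\mathrm{disc}(s) = Ks^2 + L$ for appropriate constants $K,L$ depending on $\xi$.

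For the first claim I would argue that for $\iota$ small enough (depending on $\ups$), the constrained region $R_1 = \{y \leq E_\infty^- - \ups,\, s > 1 - \iota/\sqrt{2\xi''}\}$ misses $S$ in such a way that $F$ is uniformly strictly negative. On the portion $|s| \leq \sqrt 2$ (where $\Theta = 0$), continuity of $f_-$ combined with $y \leq E_\infty^- - \ups < f_-(s) - \ups/2$ (valid for small $\iota$) places $(s,y)$ strictly below the lower boundary of $S$, so $F = \wt F < 0$. On the portion $s > \sqrt 2$, the additional strict negativity $\Theta(s)/2 < 0$ handles the remaining cases. Since $F(s,y) \to -\infty$ as $|s|,|y| \to \infty$ (by negative-definiteness of $\wt F$ combined with $\Theta \leq 0$), the supremum over the closure of $R_1$ is attained at a finite point where continuity yields a strictly negative value, giving the claimed uniform bound.

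The second claim is symmetric, using the upper branch $f_+$ near $(\sqrt 2, E_\infty^+)$; the disjunction ``$GS(\xi) \leq E_\infty^+$ or \ldots'' handles the global maximum, which is itself a critical point and could a priori contribute to the constrained count. In the case $GS(\xi) \leq E_\infty^+$, concentration of $\max H_N/N$ around $GS$ makes the event $\{\exists \bx : H_N(\bx)/N > E_\infty^+ + \ups\}$ have probability $e^{-\Omega(N)}$, so the expected count is trivially negligible; in the complementary case the geometric argument proceeds as above. The main obstacle I anticipate is obtaining \emph{global} rather than merely local disjointness of each constrained region from $S$, which requires careful monotonicity analysis of $f_\pm$ away from $s = \sqrt 2$ using the closed form of $\mathrm{disc}(s)$; near the tangent-vertical points $s = \pm\sqrt{-L/K}$ (when $K<0$) one must combine the ellipse's compactness with the strict decay of $\Theta$ for $|s| > \sqrt 2$ to rule out any residual intersection.
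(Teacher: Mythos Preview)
Your reduction via Proposition~\ref{prop:auffinger-ben-arous-rewrite} to $\sup F < -c$ and the decomposition $F = \wt F + \Theta/2$ are the paper's approach, as is the identification of $(E_\infty^\pm, \sqrt 2)$ as the intersections of $\{s = \sqrt 2\}$ with $\partial S$. However both parts have gaps.

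For \eqref{eq:kac-rice-E-infinity-minus}, you correctly flag the main obstacle: your implicit-function argument only gives disjointness of the constraint region from $S$ for $s$ \emph{near} $\sqrt 2$, and for $s > \sqrt 2$ you invoke $\Theta(s) < 0$, but this vanishes as $s \downarrow \sqrt 2$ and does not by itself control $F$ when $(s,y) \in S$. The paper's resolution avoids any explicit analysis of $f_\pm$: by Proposition~\ref{prop:ellipsoid} the major axis of $S$ has positive slope, which forces the tangent line to $\partial S$ at $(E_\infty^-, \sqrt 2)$ to have positive slope; by convexity $S$ lies entirely on one side of this tangent, giving $S \cap \big((-\infty, E_\infty^-) \times (\sqrt 2, \infty)\big) = \emptyset$ in one stroke.

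For \eqref{eq:kac-rice-E-infinity-plus}, your handling of the disjunction is off. The case $GS(\xi) \le E_\infty^+$ requires no argument at all (the disjunction is already satisfied), and your concentration argument there is in any case invalid: it bounds $\bbP[\exists\,\bx : H_N(\bx)/N > E_\infty^+ + \ups]$ but not the expected \emph{count}, which can be exponentially large on that rare event. More importantly, the ``symmetric geometric argument'' you invoke in the complementary case can simply fail: unlike at the lower point, the tangent to $\partial S$ at $(E_\infty^+, \sqrt 2)$ may have \emph{negative} slope, and then for small $\ups$ the constraint region meets $S$ inside $\{|s| \le \sqrt 2\}$ where $\Theta = 0$, so $\sup F > 0$ and \eqref{eq:kac-rice-E-infinity-plus} is false. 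The paper's dichotomy is on this tangent slope, not on $GS$: if the slope is nonnegative, the convexity argument above gives \eqref{eq:kac-rice-E-infinity-plus}; if it is negative, convexity instead yields $S \cap \{s > \sqrt 2\} \subseteq \{y < E_\infty^+\}$, and since every local maximum of $H_N$ has $s \ge \sqrt 2 - o_N(1)$ by Proposition~\ref{prop:radial-deriv-eigenvalues}, the global maximum in particular forces $GS(\xi) \le E_\infty^+$.
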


\begin{proof}
    We assume $\alpha>0$ as otherwise the statement is easy. Note that $\sqrt{2\xi''(1)}$ in the statement corresponds to $s=\sqrt{2}$.
    By inspection, the line $s=\sqrt{2}$ intersects the boundary of $S$ at $(E_{\infty}^-,\sqrt 2), (E_{\infty}^+,\sqrt{2})$.
    The remainder of the proof is an elementary two-dimensional geometry argument depicted in Figure~\ref{fig:ellipsoid}.\footnote{In many cases $E_{\infty}^-\geq 0$, but the picture is drawn to emphasize that we do not require it. The red region is non-empty only when $GS(\xi)\leq E_{\infty}^+$.
    % While this is far from true for pure $\xi$, in \cite{frsb-hessian-in-progress} we show this inequality is possible, and in fact implied by a full RSB condition at zero temperature.
    }
    \begin{figure}[h!]
    \centering
    \includegraphics[width=0.8\textwidth]{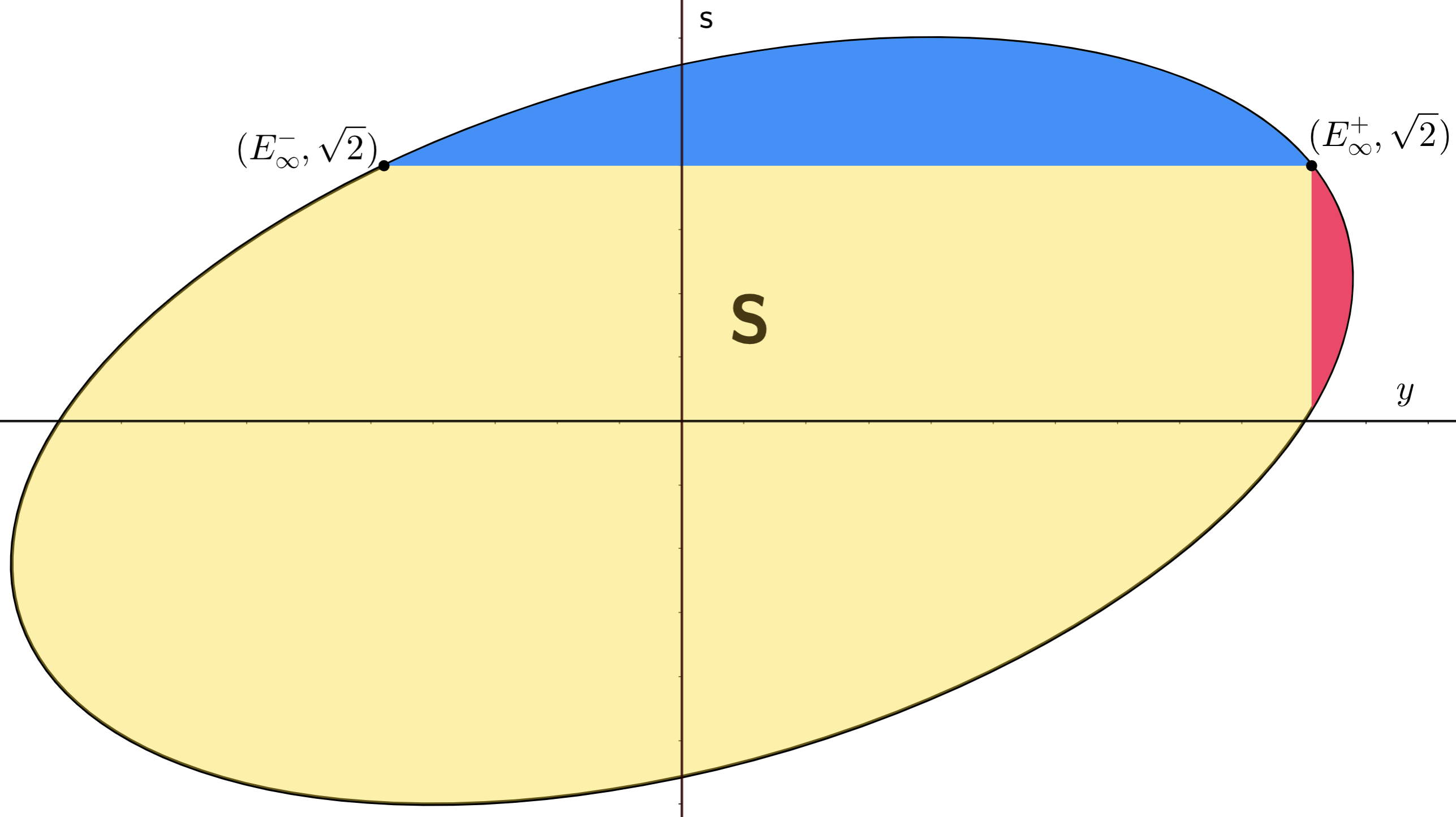}
    \caption{A diagram of the ellipsoid $S$, used in the proof of Proposition~\ref{prop:kac-rice-E-infinity}. If the tangent line to $S$ at $(E_{\infty}^+,\sqrt 2)$ has positive slope, then the red region is empty and we conclude \eqref{eq:kac-rice-E-infinity-plus}. If not, all local maxima correspond to the blue region, hence have energy at most $E_{\infty}^+ + o_N(1)$. This implies that $GS(\xi)\leq E_{\infty}^+$.}
    \label{fig:ellipsoid}
    \end{figure}
    First, because the major axis of $S$ has positive slope in $[0,\pi/2]$, the point in $S$ with minimal $y$ coordinate must have negative $s$ coordinate, while the point in $S$ with maximal $s$ coordinate must have positive $y$ coordinate. For all points on the boundary of $S$ between these two points, in particular $(E_{\infty}^-,\sqrt 2)$, the tangent line to $S$ has positive slope. Therefore
    \[
    S\cap \Big((-\infty,E_{\infty}^-)\times (\sqrt{2},\infty)\Big)=\emptyset,
    \]
    which easily implies the first claim.

    For the second claim, suppose in the first case that the the tangent line to $(E_{\infty}^+,\sqrt 2)$ has slope in $[0,\pi/2]$. Then the result follows similarly to the first part of the proof.
    However as shown in the diagram, it may be that this tangent slope is strictly negative, in $(\pi/2,\pi)$. In this case, we observe (see e.g. Proposition~\ref{prop:radial-deriv-eigenvalues} below) that with probability at least $1-e^{-cN}$, all local maxima of $H_N$ have $s\geq \sqrt{2}-o_N(1)$.
    And if the tangent slope at $(E_{\infty}^+,\sqrt 2)$ is negative,
    \[
    S\cap \Big( \bbR\times (\sqrt 2,\infty)\Big) \subseteq (-\infty,E_{\infty}^+)\times \bbR.
    \]
    Since the global maximum of $H_N$ is a local maximum, we conclude $GS(\xi)\leq E_{\infty}^+$, completing the proof.
\end{proof}

The following fact was used above.

\begin{proposition}[{\cite[Lemma 3]{subag2018following}}]
\label{prop:radial-deriv-eigenvalues}
    For any $\eps>0$, there exists $c,\delta$ such that
    \[
    \bbP\lt[\sup_{\bx\in\cS_N}
    \big|\blambda_{\lfloor \delta N\rfloor}(\nabla^2_{\sph} H_N(\bx))-\sqrt{2\xi''(1)} \nabla_{\rd}H_N(\bx) \big|\leq \eps
    \rt]
    \geq 1-e^{-cN}.
    \]
\end{proposition}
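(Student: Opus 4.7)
The plan is to reduce to the spectral behavior of the tangential Euclidean Hessian via the decomposition in Fact~\ref{fac:riemannian-to-euclidean}, which in the single-species case ($r=1$, so $\lambda_s=1$) reads
\[
\nabla^2_{\sph} H_N(\bx) = W_N(\bx) - \nabla_{\rd} H_N(\bx)\,I_\cT,
\qquad
W_N(\bx):=\nabla^2_{\cT\times\cT}H_N(\bx).
\]
By Lemma~\ref{lem:derivative-laws}, $W_N(\bx)$ is independent of $\nabla_{\rd} H_N(\bx)$ and has the law of a Gaussian Wigner matrix on $\cT$ with off-diagonal variance $\xi''(1)/N$. Its empirical spectral measure therefore converges to the semicircle law supported on $[-2\sqrt{\xi''(1)},\,2\sqrt{\xi''(1)}]$, and for $\delta=\delta(\eps)>0$ sufficiently small the classical location of $\blambda_{\lfloor\delta N\rfloor}$ lies within $\eps/3$ of the upper edge $2\sqrt{\xi''(1)}$. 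Adding back the deterministic shift $-\nabla_{\rd} H_N(\bx)$ yields the target asymptotic value of $\blambda_{\lfloor\delta N\rfloor}(\nabla^2_{\sph} H_N(\bx))$ predicted by the proposition.

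Next, at any fixed $\bx$, I would establish the pointwise concentration
\[
\bbP\big[\,|\blambda_{\lfloor\delta N\rfloor}(W_N(\bx))-2\sqrt{\xi''(1)}|\geq \eps/3\,\big]\leq e^{-cN^{1+\eps_0}}
\]
for some $\eps_0>0$. The naive Gaussian concentration for the $1$-Lipschitz eigenvalue functional of the Gaussian entries only yields tails of order $e^{-c\eps^2 N}$, which will turn out to be insufficient for the net argument below; instead one uses the sharper spectral concentration of Lemma~\ref{lem:spectral-concentration} applied to suitable Lipschitz cutoffs of the CDF, combined with the observation that for $\delta$ fixed the $\lfloor\delta N\rfloor$-th eigenvalue can be recovered from the truncated empirical measure near the upper edge (equivalently, a quantile-type estimate from a Wasserstein concentration bound).

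The main obstacle is promoting the pointwise bound to a uniform one over $\bx\in\cS_N$. I would take an $\iota\sqrt{N}$-net $\cN\subseteq \cS_N$ of cardinality $e^{O(N\log(1/\iota))}$, chosen with $\iota=\iota(\eps)>0$ small enough that, on the event $\{H_N\in K_N\}$ from Proposition~\ref{prop:gradients-bounded}, Proposition~\ref{prop:spectral-perturbation} and the Lipschitz continuity of $\nabla_{\rd} H_N$ both perturb eigenvalues and radial derivatives by at most $\eps/3$ as $\bx$ varies within an $\iota\sqrt{N}$-ball of any net point. Union-bounding the pointwise estimate over $\cN$ goes through because the $e^{-cN^{1+\eps_0}}$ tail comfortably dominates the net entropy $e^{O(N\log(1/\iota))}$. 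A triangle inequality then upgrades to the uniform bound on $\blambda_{\lfloor\delta N\rfloor}(W_N(\bx))$, and re-adding the curvature term $-\nabla_{\rd}H_N(\bx)$ completes the proof with probability $1-e^{-cN}$.
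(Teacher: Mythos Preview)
The paper does not supply its own proof of this proposition; it is quoted verbatim from \cite[Lemma 3]{subag2018following}. So there is no in-paper argument to compare against. Your outline---split the Riemannian Hessian into a GOE-type tangential Hessian minus a scalar shift, locate the $\lfloor\delta N\rfloor$-th eigenvalue near the upper semicircle edge, and upgrade to a uniform statement by combining the super-exponential Wasserstein concentration of Lemma~\ref{lem:spectral-concentration} with an $\iota\sqrt{N}$-net and the Lipschitz estimate of Proposition~\ref{prop:spectral-perturbation}---is the standard route and is sound. You are also correct that naive $e^{-cN\eps^2}$ Gaussian concentration for a single eigenvalue does not beat the net entropy $e^{O(N\log(1/\eps))}$; the sharper $e^{-cN^{1+\eps_0}}$ rate is genuinely needed.

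One point you should not gloss over: your computation gives $\blambda_{\lfloor\delta N\rfloor}(\nabla^2_\sph H_N(\bx))\approx 2\sqrt{\xi''(1)}-\nabla_\rd H_N(\bx)$, which is a \emph{difference} with edge constant $2\sqrt{\xi''(1)}$, whereas the displayed statement reads $\sqrt{2\xi''(1)}\,\nabla_\rd H_N(\bx)$, a \emph{product} with constant $\sqrt{2\xi''(1)}$. These are not the same expression. Your formula is the correct one (and is consistent with the threshold $s=\sqrt{2}$, i.e.\ $\nabla_\rd H_N=2\sqrt{\xi''}$, used in the proof of Proposition~\ref{prop:kac-rice-E-infinity}); the displayed statement appears to contain a typographical slip. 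Rather than asserting that your computation ``yields the target asymptotic value predicted by the proposition,'' you should flag this discrepancy explicitly.
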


Proposition~\ref{prop:radial-deriv-eigenvalues} also justifies the following definitions. The last is motivated in part by \cite{franz2021surfing}, which suggests that optimization algorithms for general mean-field disordered systems ought to get stuck in $\eps$-marginal local maxima.

\begin{definition}
\label{def:approx-local-max}
We say the $\eps$-critical point $\bx\in\cS_N$ is:
\begin{itemize}
    \item An \textbf{$\eps$-approximate local maximum} if $\nabla_{\rd} H_N(\bx)\geq \sqrt{2\xi''(1)}-\eps$.
    \item An \textbf{$\eps$-approximate local non-maximum} if $\nabla_{\rd} H_N(\bx)\leq \sqrt{2\xi''(1)}+\eps$.
    \item An \textbf{$\eps$-marginal local maximum} if both preceding estimates hold: $|\nabla_{\rd} H_N(\bx)-\sqrt{2\xi''(1)}|\leq \eps$.
\end{itemize}
\end{definition}

We now use Lemma~\ref{lem:approx-to-exact-computation} to control the energy levels at which such $\eps$-critical points can exist.
(One could also directly apply Theorem~\ref{thm:approx-crits-from-annealed}, but this leads to some notational burden.)

\begin{corollary}
\label{cor:zero-eigenvalue-ground-states}
    Fix any $\ups>0$. For sufficiently small $\eps$, with probability $1-e^{-cN}$ all $\eps$-marginal local maxima satisfy
    \begin{equation}
    \label{eq:zero-eigenvalue-ground-states}
    E_{\infty}^- -\ups
    \leq H_N(\bx)/N
    \leq
    E_{\infty}^+ +\ups.
    \end{equation}
    In fact the lower bound holds for all $\eps$-approximate local maxima, while the upper bound holds for all $\eps$-approximate local non-maxima.
\end{corollary}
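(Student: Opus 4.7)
The proof applies Theorem~\ref{thm:approx-crits-from-annealed}, using Proposition~\ref{prop:kac-rice-E-infinity} as the annealed input. Write the constraint tuple as $T(\bx)=(\nabla_{\rd}H_N(\bx),\vR(\bx,\bG^{(1)}),H_N(\bx)/N,\spec_{H_N}(\bx),\wh\mu_{H_N}(\bx))$ and choose ``good sets'' $\ocD_\pm$ whose only nontrivial constraints involve the first (radial derivative) and third (energy) coordinates, leaving the other three factors unconstrained.

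For the lower bound on $\eps$-approximate local maxima, fix $\ups>0$ and pick $\iota=\iota(\xi,\ups)>0$ that is both small enough to apply Proposition~\ref{prop:kac-rice-E-infinity} with energy separation $\ups/2$ and satisfies $\iota\leq \ups/4$. (This is consistent because the proof of Proposition~\ref{prop:kac-rice-E-infinity} shows the admissible $\iota$ scales linearly in $\ups$; geometrically, the tangent to the ellipsoid $S$ at $(E_{\infty}^-,\sqrt{2})$ has strictly positive slope depending only on $\xi$.) Define
\[
    \ocD_- = \lt\{(r,\vR,E,\spec,\wh\mu) : r \leq \sqrt{2\xi''(1)} - \iota \text{ or } E \geq E_{\infty}^- - \ups/2\rt\}.
\]
Any exact critical point that is $\iota/4$-far from $\ocD_-$ must satisfy both $\nabla_{\rd}H_N(\bx) > \sqrt{2\xi''(1)}-\iota$ and $H_N(\bx)/N < E_{\infty}^- - \ups/2$, so by Proposition~\ref{prop:kac-rice-E-infinity} the expected number of such points is at most $e^{-cN}$. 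Applying Theorem~\ref{thm:approx-crits-from-annealed} with separation parameter $\iota/2$ then yields, for sufficiently small $\eps$ depending on $(\xi,\ups)$, that with probability $1-e^{-cN}$ no $\eps$-approximate critical point is $\iota/2$-far from $\ocD_-$. Any $\eps$-approximate local maximum $\bx$ with $\eps<\iota/2$ has $\nabla_{\rd}H_N(\bx)\geq \sqrt{2\xi''(1)}-\eps$, so its distance to $\{r\leq\sqrt{2\xi''(1)}-\iota\}$ is at least $\iota-\eps>\iota/2$. Hence $\bx$ must be $\iota/2$-close to $\{E\geq E_{\infty}^--\ups/2\}$, giving $H_N(\bx)/N\geq E_{\infty}^--\ups/2-\iota/2\geq E_{\infty}^--\ups$.

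The upper bound on $\eps$-approximate local non-maxima splits into cases. If $GS(\xi)\leq E_{\infty}^+$, standard Gaussian concentration of $\max_{\bx}H_N(\bx)$ (an $O(\sqrt{N})$-Lipschitz function of the disorder) gives $H_N(\bx)/N \leq E_{\infty}^+ + \ups$ for \emph{all} $\bx\in\cS_N$ with probability $1-e^{-cN}$, so the bound is automatic without any reference to the local-non-max condition. Otherwise, the second conclusion of Proposition~\ref{prop:kac-rice-E-infinity} supplies the required annealed estimate, and the same strategy runs symmetrically with
\[
    \ocD_+ = \lt\{(r,\vR,E,\spec,\wh\mu) : r \geq \sqrt{2\xi''(1)} + \iota \text{ or } E \leq E_{\infty}^+ + \ups/2\rt\},
\]
now using the $\eps$-approximate local non-max condition $\nabla_{\rd}H_N(\bx)\leq\sqrt{2\xi''(1)}+\eps$ to rule out the first alternative. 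The $\eps$-marginal local maximum statement then follows immediately, since such points are simultaneously $\eps$-approximate local maxima and $\eps$-approximate local non-maxima.

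The main subtlety to watch is the simultaneous calibration of $\iota$, the Theorem's separation parameter, and $\eps$: Proposition~\ref{prop:kac-rice-E-infinity} forces $\iota$ small relative to $\ups$, while the Theorem's separation must both exceed $\eps$ and be smaller than $\iota$ so that the ``low-$r$'' piece of $\ocD_\pm$ does not rescue approximate local maxima from the energy constraint. The linear-in-$\ups$ dependence of $\iota$ coming from the positive slope of the ellipsoid boundary at $(E_{\infty}^\pm,\sqrt{2})$ is what makes these requirements mutually consistent.
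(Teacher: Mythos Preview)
Your proof is correct and follows essentially the same strategy as the paper: feed the annealed estimate of Proposition~\ref{prop:kac-rice-E-infinity} into the approximate-to-exact machinery of Section~\ref{sec:approximate}, and handle the case $GS(\xi)\le E_\infty^+$ trivially. The only difference is packaging: you invoke the black-box Theorem~\ref{thm:approx-crits-from-annealed} with a carefully designed $\ocD_\pm$, whereas the paper instead unpacks the theorem and applies Lemma~\ref{lem:approx-to-exact-computation} directly (indeed, the paper remarks just before the proof that one could equally well use Theorem~\ref{thm:approx-crits-from-annealed} at the cost of some notational burden, which is exactly what you have done). One small comment: your discussion of the ``linear-in-$\ups$'' dependence of $\iota$ is unnecessary, since Proposition~\ref{prop:kac-rice-E-infinity} already allows $\iota$ to be taken arbitrarily small once $\ups$ is fixed, so $\iota\le\ups/4$ is automatic.
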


\begin{proof}
    We proceed in two similar cases, first showing the left-hand side of \eqref{eq:zero-eigenvalue-ground-states} for $\eps$-approximate local maxima.

    \paragraph*{Case $1$: Lower Bound}
    Let $K_N^{\max}(\eps,\ups)$ consist of those $H_N\in K_N(\eps)$ for which there exists an $\eps$-approximate local maximum $\bx\in\cS_N$ with
    \[
    H_N(\bx)/N \leq E_{\infty}^- -\ups.
    \]
    We apply Lemma~\ref{lem:approx-to-exact-computation} as in the proof of Proposition~\ref{prop:approx-crits}.
    We find that for some $\iota=o_{\eps}(1)$,
    \begin{equation}
    \label{eq:same-resampling}
    \bbE
    \big|\,
    \Crt_{N}\big(
    (-\infty,E_{\infty}^- -\ups/2)~;~
    (\sqrt{2\xi''(1)}-\iota,\infty)
    \big)
    \,\big|
    \geq
    e^{-o_{\eps}(N)}
    \cdot
    \bbP[H_N\in K_N^{\max}(\eps,\ups)].
    \end{equation}
    Again using Proposition~\ref{prop:KN-eps-high-prob}, it suffices to show the left-hand side above is at most $e^{-c(\xi,\ups)N}$ for $\eps$ sufficiently small, which is the statement of \eqref{eq:kac-rice-E-infinity-minus}. This proves the left-hand inequality of \eqref{eq:zero-eigenvalue-ground-states} in the claimed sense.

    \paragraph*{Case $2$: Upper Bound}
    Let $K_N^{\nonmax}(\eps,\ups)$ consist of those $H_N\in K_N(\eps)$ for which some $\eps$-local non-maximum $\bx\in\cS_N$ satisfies
    \[
    H_N(\bx)/N \geq E_{\infty}^+ +\ups.
    \]
    Again using Lemma~\ref{lem:approx-to-exact-computation}, we find
    \begin{equation}
    \label{eq:same-resampling-2}
    % \bbE |\Crt_{N}^{\bord,\iota}(\cD_{\ups/2}^+)|
    \bbE
    \big|
    \Crt_{N}\big(
    (E_{\infty}^+ +\ups/2,\infty)~;~
    (-\infty,\sqrt{2\xi''(1)}+\iota)
    \big)
    \big|
    \geq
    e^{-o_{\eps}(N)}
    \cdot
    \bbP[H_N\in K_N^{\nonmax}(\eps,\ups)].
    \end{equation}
    In the case that \eqref{eq:kac-rice-E-infinity-plus} holds, the proof is as in the first case.
    If $GS(\xi)\leq E_{\infty}^+$, then the upper bound holds trivially.
\end{proof}

We briefly summarize the applications of Corollary~\ref{cor:zero-eigenvalue-ground-states} in our concurrent works. See the individual papers for more detail. As partially mentioned in Remark~\ref{rem:positive-complexity}, our work \cite{huang2023optimization} uses approximate message passing to construct $\eps$-marginal local maxima at the algorithmic threshold energy $\ALG$ for any strictly sub-solvable $\xi$.
For $r=1$, Corollary~\ref{cor:zero-eigenvalue-ground-states} thus implies that $\ALG\in [E_{\infty}^-,E_{\infty}^+]$, generalizing the fact that $\ALG=E_{\infty}$ for pure models.
% (A multi-species extension presumably holds for suitably defined $E_{\infty}^{\pm}$.)
%
% In \cite{frsb-hessian-in-progress}, we show that when $\xi$ exhibits a certain form of infinite step replica symmetry breaking, most low temperature Gibbs samples are $\eps$-borderline local maximizers.
% Corollary~\ref{cor:zero-eigenvalue-ground-states} thus implies that the ground state energy lies in $[E_{\infty}^-,E_{\infty}^+]$ for such $\xi$.
% (In fact the lower bound holds for all $\xi$, but the upper bound does not.)
%
Separately, \cite{sellke2023threshold} proves that spherical Langevin dynamics at large inverse temperature $\beta$ rapidly climbs to and stays above the energy of the lowest lying $\eps$-approximate local maximum, up to error $o_{\beta\to\infty}(1)$.
Corollary~\ref{cor:zero-eigenvalue-ground-states} thus gives $E_{\infty}^- - o_{\beta}(1)$ as an explicit energy lower bound.

Finally we present a consequence for approximate critical points of finite index. Recall from \cite{auffinger2013complexity} the positive thresholds $(E_k)_{k\geq 0}$, defined so that $E_k$ is the larger of two zeros for the index $k$ critical point complexity function $\theta_{k,\xi}$ (where we have implicitly negated $H_N$ to make all energies positive).
\cite{auffinger2013complexity} deduced from Markov's inequality that $H_N$ has no index $k$ critical points at energies strictly above $E_k$, and we extend this to approximate critical points.
Note that we consider positive energy values, so our signs are switched.

\begin{corollary}
\label{eq:strongly-concave}
    Let $E>E_k$ for fixed $k$ and let $\eps(E,k)$ be sufficiently small.
    Then with probability $1-e^{-cN}$, all $\eps$-approximate critical points $\bx$ with $H_N(\bx)/N\geq E$ have index at most $k$.
    Furthermore if $k=1$, then all such $\bx$ are within distance $\eta\sqrt{N}$ from a local maximum where $\eta=\eta(E,k,\eps)\to 0$ as $\eps\to 0$ for each fixed $E,k$.
\end{corollary}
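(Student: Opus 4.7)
The plan is to adapt the strategy of Corollary~\ref{cor:zero-eigenvalue-ground-states}, combining Lemma~\ref{lem:approx-to-exact-computation} (resampling) with the index-specific annealed complexity results of \cite{auffinger2013complexity}. The key annealed input is that the thresholds satisfy $E_0\geq E_1\geq\cdots\geq E_\infty$ and the index-$j$ complexity $\Sigma_j(y)$ is strictly negative for $y>E_j$, uniformly in $j\geq k+1$ and $y\geq E$ whenever $E>E_k$. Summing the index-$j$ Kac--Rice bounds over the at-most-$N$ relevant values of $j$ yields
\[
\bbE\big|\{\bx\in\Crt_N : H_N(\bx)/N\geq E-\ups,\ \mathrm{index}(\bx)\geq k+1\}\big|\leq e^{-c_0 N}
\]
for sufficiently small $\ups>0$ and some $c_0=c_0(E,k)>0$.

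For the first statement, I would proceed much as in Corollary~\ref{cor:zero-eigenvalue-ground-states}. Suppose $\bx$ is an $\eps$-approximate critical point with $H_N(\bx)/N\geq E$ and $\blambda_{k+1}(\nabla_{\sph}^2 H_N(\bx))\geq \iota$ for some fixed small $\iota>0$. Lemma~\ref{lem:approx-to-exact-computation} then produces, on average, at least $e^{-o_\eps(N)}$ exact critical points $\by$ of the resampled Hamiltonian $H_{N,\delta}$ with energy $\geq E-\iota^2$ and with $\nabla_{\sph}^2 H_{N,\delta}(\by)$ within $\iota/2$ of $\nabla_{\sph}^2 H_N(\bx)$ in operator norm; by Weyl's inequalities we then have $\blambda_{k+1}(\nabla_{\sph}^2 H_{N,\delta}(\by))\geq \iota/2>0$, so $\by$ has index $\geq k+1$. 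Averaging this $e^{-o_\eps(N)}$ lower bound against the annealed upper bound above forces such $\bx$ to exist with probability at most $e^{-cN}$ for $\eps$ small enough.

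The remaining sub-case is $\blambda_{k+1}(\nabla_{\sph}^2 H_N(\bx))\in[0,\iota)$, i.e.\ a Hessian eigenvalue very close to $0$. To rule this out I would first apply Theorem~\ref{thm:approx-crits-from-annealed} with $\ocD=\{y\leq E-\ups\ \text{or}\ s\geq s^*(E,k)\}$, where the threshold $s^*>\sqrt 2$ is chosen so that by \eqref{eq:single-species-complexity-function} any exact critical point at energy $\geq E$ with standardized radial derivative $s<s^*$ must have index $\geq k+1$ and hence negative annealed density. This yields radial-derivative control for $\eps$-approximate critical points at energy $\geq E$, and via Proposition~\ref{prop:MDE-basic}\ref{it:Dyson-solves-RMT} the Hessian spectrum then concentrates near a shifted semicircle with top bounded above by $-c_1(E,k)<0$. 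Because the single-species no-external-field Hessian is (approximately) a shifted GOE with no finite-rank spike producing sub-bulk outliers, with probability $1-e^{-cN}$ no Hessian eigenvalue lies in $[-c_1/2,\infty)$, contradicting $\blambda_{k+1}\in[0,\iota)$.

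For the second statement ($k=1$), running the same two-step argument with the bad event ``$\blambda_1(\nabla_{\sph}^2 H_N(\bx))\geq 0$'' in place of ``$\blambda_{k+1}\geq 0$'' shows that $\blambda_1(\nabla_{\sph}^2 H_N(\bx))\leq -c_1/2<0$ for every $\eps$-approximate critical point at energy $\geq E$: here the annealed bound uses only $E>E_1\geq E_j$ for all $j\geq 1$, which is given. Each such $\bx$ is then $c_1/2$-well-conditioned with negative definite Hessian, so by Proposition~\ref{prop:newton} it lies within $C\eps\sqrt{N}$ of an exact critical point, which must be a local maximum by well-conditioning; this gives $\eta=O(\eps)$. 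The main technical obstacle is the near-zero eigenvalue sub-case, whose resolution relies critically on combining the radial-derivative control afforded by Theorem~\ref{thm:approx-crits-from-annealed} with the absence of sub-bulk outliers in the single-species no-external-field setting.
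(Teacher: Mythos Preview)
Your high-level strategy (resample via Lemma~\ref{lem:approx-to-exact-computation}, then contradict an annealed Kac--Rice upper bound) is exactly what the paper does. However, your two-case split is unnecessary, and your Case~2 argument has a genuine gap.

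The problem with Case~2 is the choice of $s^*$. You claim one can pick $s^*>\sqrt{2}$ so that every exact critical point at energy $\ge E$ with $s<s^*$ has index $\ge k+1$, and hence negative annealed density. But index is not a deterministic function of $(s,y)$: for $s$ slightly above $\sqrt{2}$ the Hessian bulk lies just below $0$ and the \emph{typical} index is $0$, not $\ge k+1$. When $E<E_0$ (which $E>E_k$ certainly allows), the index-$0$ complexity $\theta_{0,\xi}(E)$ is positive, and those local maxima sit at $s$ values that can be arbitrarily close to $\sqrt{2}$. So for no $s^*>\sqrt{2}$ is the total annealed density on $\{y\ge E-\ups,\ s<s^*\}$ negative, and the hypothesis~\eqref{eq:annealed-crits-close-to-ocD} of Theorem~\ref{thm:approx-crits-from-annealed} fails. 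Your subsequent use of Proposition~\ref{prop:MDE-basic}\ref{it:Dyson-solves-RMT} is also pointwise in $\bx$, not uniform over approximate critical points, though this is a secondary issue.

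The paper avoids all of this by strengthening the annealed input rather than splitting cases. Instead of bounding only critical points with index $\ge k+1$ (i.e.\ $\blambda_{k+1}>0$), it bounds critical points at energy $\ge E-\ups$ with $\blambda_{k}\ge -2\delta$. By the smoothness and monotonicity of the index-$k$ complexity functions $\theta_{k,\xi}$ from \cite[Proposition~1]{auffinger2013complexity}, this relaxed count is still $\le e^{-cN}$ for small $\delta$ once $E>E_k$. Now run your Case~1 resampling argument once: if an $\eps$-approximate critical point $\bx$ at energy $\ge E$ had $\blambda_{k}(\nabla^2_{\sph}H_N(\bx))\ge -\delta$, the Hessian control~\eqref{eq:Hess-y-good} in Lemma~\ref{lem:approx-to-exact-computation} yields exact critical points $\by$ of $H_{N,\delta}$ with $\blambda_{k}\ge -2\delta$ at energy $\ge E-\ups$, contradicting the annealed bound. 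This single step gives $\blambda_{k}\le -\delta$ directly, which is stronger than ``index $\le k$'' and also furnishes the spectral gap $[-\delta,\delta]\cap\spec(\nabla^2_{\sph}H_N(\bx))=\emptyset$ needed for the $k=1$ statement and Proposition~\ref{prop:newton}. No separate near-zero-eigenvalue analysis, radial-derivative control, or GOE no-outlier input is required.
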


\begin{proof}
    It follows from the smoothness and monotonicity properties for $\theta_{k,\xi}$ in \cite[Proposition 1]{auffinger2013complexity} that for some $\delta(E,k)>0$, the expected number of critical points $\bx\in\cS_N$ with $\blambda_k(\nabla_{\sph}^2 H_N(\bx))\geq -2\delta$ is at most $e^{-cN}$.
    Another similar application of Lemma~\ref{lem:approx-to-exact-computation} for sufficiently small $\eps$ implies with probability $1-e^{c' N}$, all $\eps$-approximate critical points $\bx\in\cS_N$ satisfy $\blambda_k(\nabla_{\sph}^2 H_N(\bx))\leq -\delta$.
    This completes the proof.
     For the second claim, note that when $k=1$, we have just showed $\supp\big(\nabla_{\sph}^2 H_N(\bx)\big)\cap [-\delta,\delta]=\emptyset$.
     Hence for $\eps$ small compared to $\delta$, Proposition~\ref{prop:newton} yields the result.
\end{proof}

\subsection*{Acknowledgements}

Thanks to Paul Bourgade, Ji\v{r}\'{i} \v{C}ern\'{y}, Sinho Chewi, L{\'a}szl{\'o} Erd{\H o}s, Mufan Li, David Benjamin Lim, Benjamin McKenna, Eliran Subag, and the anonymous referees for helpful discussions, suggestions and comments.
B.H. was supported by an NSF Graduate Research Fellowship, a Siebel scholarship, NSF awards CCF-1940205 and DMS-1940092, and NSF-Simons collaboration grant DMS-2031883.

\appendix

\footnotesize
% \small
\bibliographystyle{alpha}
\bibliography{bib}

\normalsize

\section{Properties of Solutions to the Vector Dyson Equation}
\label{app:dyson}

In this appendix, we establish properties of the vector Dyson equation \eqref{eq:dyson-equation-1} that we use in the paper.
It will be useful to rename $\vv + z \vlambda$ to $\vv$ and allow $\vv$ to vary in all of $\obbH^r$.
Thus we study the equation
% For $\vv \in \bbH^r$, we study solutions $\vu \in \bbH^r$ to
\begin{equation}
    \label{eq:dyson-equation-complex}
    v_s = -\fr{\lambda_s}{u_s} - \sum_{s'\in \sS} \xi''_{s,s'} u_s.
\end{equation}
The right-hand side of \eqref{eq:dyson-equation-complex} is a function of $\vu$, which we will denote $\vv(\vu)$.
There will be no confusion with the notations $\vv(\vDelta)$, $\vu(\vDelta)$ (defined in Corollary~\ref{cor:only-maximizers} and equation \eqref{eq:def-u-Delta}), which do not appear in this appendix.
\begin{lemma}[{\cite[Section 3]{helton2007operator}}]
    \label{lem:dyson-solution-unique}
    For any $\vv \in \bbH^r$, there exists a unique solution $\vu = \vu(\vv) \in \bbH^r$ to \eqref{eq:dyson-equation-complex}.
\end{lemma}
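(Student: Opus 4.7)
The plan is to recast~\eqref{eq:dyson-equation-complex} as a fixed-point equation $\vu = T(\vu)$ on $\bbH^r$ and then apply the Earle--Hamilton fixed-point theorem. For this I would define $T:\bbH^r\to\bbH^r$ componentwise by
\[
T_s(\vu) \;=\; -\frac{\lambda_s}{v_s + \sum_{s'\in\sS} \xi''_{s,s'}\,u_{s'}},\qquad s\in\sS.
\]
The first observation will be that $T$ is a well-defined holomorphic self-map of $\bbH^r$: since $\xi''_{s,s'}\geq 0$ by Assumption~\ref{as:nondegenerate} and $\Im(v_s)>0$, the denominator has strictly positive imaginary part for every $\vu\in\bbH^r$, and the map $w\mapsto -1/w$ sends $\bbH$ to $\bbH$. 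Fixed points of $T$ are exactly the solutions of~\eqref{eq:dyson-equation-complex}.

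Next I would establish two quantitative bounds. Writing $w_s=v_s+(\xi''\vu)_s$, the identities $|w_s|\geq\Im(w_s)\geq\Im(v_s)$ and $\Im(-\lambda_s/w_s)=\lambda_s\Im(w_s)/|w_s|^2$ give
\[
|T_s(\vu)|\;\leq\;\frac{\lambda_s}{\Im(v_s)},\qquad \Im(T_s(\vu))\;=\;\frac{\lambda_s\Im(w_s)}{|w_s|^2}.
\]
The first estimate shows $T(\bbH^r)$ is uniformly bounded. Iterating once with $\vu'=T(\vu)$, the corresponding denominator satisfies $|w'_s|\leq C_s:=|v_s|+\sum_{s'}\xi''_{s,s'}\lambda_{s'}/\Im(v_{s'})$, so that $\Im(T_s(\vu'))\geq \lambda_s\Im(v_s)/C_s^2$. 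Consequently $T^2(\bbH^r)$ is contained in a convex set $K\subset\bbH^r$ that is bounded in absolute value and uniformly bounded away from $\bbR^r$, so $\overline{K}$ is compact inside $\bbH^r$.

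With this in hand, existence and uniqueness follow simultaneously from the Earle--Hamilton theorem. Via the coordinatewise Cayley transform $w\mapsto(w-i)/(w+i)$, the space $\bbH^r$ is biholomorphic to the polydisc $\{(z_s)_{s\in\sS}:|z_s|<1\}\subset\bbC^r$, which is a bounded domain in a complex Banach space. Applied to the holomorphic map $T^2$, which sends $\bbH^r$ into the relatively compact subset $K$, Earle--Hamilton yields a unique fixed point $\vu_*$ of $T^2$. Since $T(\vu_*)$ is then also a fixed point of $T^2$, uniqueness forces $T(\vu_*)=\vu_*$, so $T$ itself has a unique fixed point, which we name $\vu(\vv)$.

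The main subtlety I expect is in verifying the Earle--Hamilton compactness hypothesis, since $T(\bbH^r)$ itself is bounded but \emph{not} relatively compact inside $\bbH^r$ --- its imaginary parts can approach $0$ as $|\vu|\to\infty$. Passing to $T^2$ cures this, because the first application of $T$ bounds $\vu'$ in absolute value, which in turn gives the second application a uniform lower bound on the imaginary part. An alternative route, if one wished to avoid Earle--Hamilton altogether, would be to use Brouwer's theorem on the compact convex set $K$ for existence together with a direct contraction argument in the Carath\'eodory pseudo-metric on $\bbH^r$ for uniqueness; Earle--Hamilton simply packages both steps at once.
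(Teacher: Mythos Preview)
Your proof is correct. The paper itself gives no proof of this lemma, citing \cite{helton2007operator} directly; your Earle--Hamilton argument is the standard route for such Dyson-type fixed-point equations on upper half-planes, and the two-step iteration to force relative compactness of $T^2(\bbH^r)$ inside $\bbH^r$ is exactly the needed refinement. One cosmetic remark: the appeal to Assumption~\ref{as:nondegenerate} for $\xi''_{s,s'}\ge 0$ is unnecessary, since nonnegativity of $\xi''$ follows directly from the definition $\xi(\vx)=\sum_k\langle\Gamma^{(k)}\odot\Gamma^{(k)},(\vlambda\odot\vx)^{\otimes k}\rangle$.
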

As a result, for $z \in \bbH$ the solution $\vu(z;\vv)$ to the Dyson equation \eqref{eq:dyson-equation} is given by
\begin{equation}
    \label{eq:u-identification}
    \vu(z;\vv) = \vu(\vv + z \vlambda).
\end{equation}
The first result of this Appendix establishes continuity of $\vu(\cdot)$ in $\vv$.
This extends the $1/3$-H{\"o}lder continuity in $z$ proved in \cite{ajanki2017singularities}, which corresponds for us to varying $\vv$ along certain $1$-dimensional subspaces.
See also \cite[Section 10]{alt2020dyson} for continuity properties in $\xi$.
We note that, importantly, these works treat extremely general models with a continuum of ``species'' parametrized by a probability measure. By contrast we will not hesitate to use the assumption that $r$ is finite (e.g. in \eqref{eq:V-S-def}).

\begin{theorem}
    \label{thm:continuity}
    The solution $\vu(\vv)$ to \eqref{eq:dyson-equation-complex} identified by Lemma~\ref{lem:dyson-solution-unique} extends to a $1/3$-H\"older continuous function $\vu : \obbH^r \to \obbH^r$.
\end{theorem}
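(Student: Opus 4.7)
The plan is to prove Theorem~\ref{thm:continuity} in three stages: establish smooth dependence on $\vv$ in the interior $\bbH^r$, prove a quantitative cubic stability estimate, and then pass to the boundary.

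For the interior, the map $\vu \mapsto \vv(\vu)$ defined by \eqref{eq:dyson-equation-complex} has Jacobian equal to $M(\vu)$ from \eqref{eq:def-oM}. For $\vu \in \bbH^r$, each diagonal entry $\lambda_s/u_s^2$ has strictly negative imaginary part, so $\Im M(\vu) \prec 0$ is negative definite and $M(\vu)$ is invertible. The implicit function theorem then promotes Lemma~\ref{lem:dyson-solution-unique} to smooth dependence of $\vu(\vv)$ on $\vv \in \bbH^r$, with derivative $\nabla_{\vv}\vu = M(\vu)^{-1}$.

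The heart of the argument is a quantitative stability bound of the form
\[
    \tnorm{\vu(\vv_1) - \vu(\vv_2)}^3 \leq C\tnorm{\vv_1 - \vv_2}
\]
valid uniformly on compact sets of $\obbH^r$. The natural starting point is the identity obtained by subtracting the two Dyson equations: with $\vd = \vu(\vv_1) - \vu(\vv_2)$ and $\vu_i = \vu(\vv_i)$, one has $T\vd = \vv_1 - \vv_2$ where $T_{s,s'} = \frac{\lambda_s}{u_{1,s}u_{2,s}}\delta_{s,s'} - \xi''_{s,s'}$. In the bulk region where $\Im\vu$ is bounded away from $0$, $T$ is uniformly invertible and one gets Lipschitz continuity; the $1/3$-Hölder exponent is forced by boundary behavior, where eigenvalues of (a symmetrization of) $T$ can vanish linearly, but the associated eigendirection couples to $\vd$ only quadratically via the self-consistency built into the Dyson equation. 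To make this precise I plan to import the cubic stability machinery of Ajanki--Erd\H{o}s--Kr\"uger \cite{ajanki2017singularities,ajanki2019quadratic,ajanki2019stability}, which is set up exactly for perturbations of the constant term. Although those works formulate stability in terms of $z$-perturbations, the structural input is insensitive to which direction in $\obbH^r$ the constant is perturbed in, so the same cubic estimate will apply.

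Finally, I would extend to $\obbH^r$ as follows. Proposition~\ref{prop:MDE-basic}\ref{it:Dyson-continuous-transform} already gives a continuous extension in the scalar direction $z \in \obbH$ at fixed $\vv$, with locally uniform $1/3$-H\"older constant. Given $\vv_1, \vv_2 \in \obbH^r$, set $\vv_i^\eta = \vv_i + i\eta\vlambda \in \bbH^r$ and use
\[
    \tnorm{\vu(\vv_1) - \vu(\vv_2)} \leq \tnorm{\vu(\vv_1) - \vu(\vv_1^\eta)} + \tnorm{\vu(\vv_1^\eta) - \vu(\vv_2^\eta)} + \tnorm{\vu(\vv_2^\eta) - \vu(\vv_2)}.
\]
The outer terms are $O(\eta^{1/3})$ by the scalar bound, and by the identification \eqref{eq:u-identification} the cubic stability from the previous paragraph controls the middle term. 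Choosing $\eta \sim \tnorm{\vv_1-\vv_2}$ yields the joint $1/3$-H\"older bound on compact subsets of $\obbH^r$, which extends $\vu$ continuously to the boundary.

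The main obstacle is the second stage: verifying that the cubic stability estimate holds with constants that are locally uniform up to the boundary of $\obbH^r$, and in particular at cusp-type singular points of the density $\mu$ where the analysis in \cite{ajanki2019quadratic} becomes most delicate. I expect this to reduce to a careful bookkeeping exercise in transcribing the finite-rank block-structured stability bounds of \cite{ajanki2017singularities,ajanki2019stability}, since our non-degeneracy assumption guarantees the flatness and boundedness hypotheses required by their framework.
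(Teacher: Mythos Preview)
Your outline contains a concrete error and a substantive gap, and the overall route differs from the paper's.

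\textbf{The error.} Your claim that $\Im M(\vu) \prec 0$ because $\lambda_s/u_s^2$ has strictly negative imaginary part is false: if $u_s = re^{i\theta}$ with $\theta \in (\pi/2,\pi)$ then $\Im(1/u_s^2) = -r^{-2}\sin 2\theta > 0$. Invertibility of $M(\vu)$ for $\vu \in \bbH^r$ does hold, but via the chain $\oM(\vu)\succ 0 \Rightarrow M(\vu)$ invertible (Lemmas~\ref{lem:oM-psd} and \ref{lem:oM-psd-to-M-invertible}), not by a sign argument on $\Im M$.

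\textbf{The gap.} Your stage~2 asserts that the AEK cubic stability transfers verbatim from $z$-perturbations to arbitrary $\vv$-perturbations because ``the structural input is insensitive to which direction the constant is perturbed in.'' This is the entire content of the theorem, and is not bookkeeping. The AEK $1/3$-H\"older bound is stated for the scalar spectral parameter, and the multi-dimensional extension requires handling the regime where different coordinates of $\Im \vv$ live at different scales (some on the boundary, some not). The paper addresses this explicitly by decomposing $\bbH^r$ into regions $V_S$ (see \eqref{eq:V-S-def}) indexed by which $|v_s|$ are large, and proving the key bound $\|M(\vu)^{-1}\|_{\op} \le C\,\Im(u_{s_*})^{-2}$ region by region (Lemma~\ref{lem:AEK-adaptation}); this is where the cubic mechanism enters, and it is only established for $\Im \vu$.

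\textbf{How the paper differs.} Rather than proving a full cubic stability estimate for $\vu$, the paper only proves it for $\Im\vu$ via the bound above. It then recovers $\Re\vu$ by a different mechanism: on each line $\vv + \gamma\vchi$ with $\vchi \succ 0$, $\Re\vu$ is the Hilbert transform of the already-controlled $\Im\vu$, inheriting the $1/3$-H\"older regularity; consistency of these line-wise extensions at their intersection points in $\bbR^r$ is obtained from existence of non-tangential limits and the Poisson integral representation (Propositions~\ref{prop:non-tangential-limit}, \ref{prop:poisson-integral-representation}); and the final uniform $C^{1/3}$ bound on all of $\obbH^r$ comes from a short martingale argument using that $\vu$ is bounded holomorphic, stopping coupled Brownian motions when they hit $\bbR^r$. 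Your proposed route through a global cubic difference estimate is plausible in principle but would require reproving a substantial piece of the AEK analysis in the vector-perturbation setting; the paper sidesteps this by combining the scalar cubic bound with harmonic-function tools.
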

Thus the identification \eqref{eq:u-identification} remains true as $z$ tends to the real line. I.e. as $z \to \gamma \in \bbR$, the limit $\vu(\gamma;\vv)$ of $\vu(z;\vv)$ (well-defined by Proposition~\ref{prop:MDE-basic}) equals $\vu(\vv + \gamma \vlambda)$.

The proof of Theorem~\ref{thm:continuity} consists of two steps. We first show $\Im\vu$ is $1/3$-H\"older, and then we extend this to $\Re \vu$.
The first step is handled similarly to \cite{ajanki2017singularities}, though care must be used to handle $\vv$ with imaginary parts of very different sizes.
In the second step, we start by deducing via Stieltjes transforms that $\vu$ can be extended in a H{\"o}lder continuous way within certain $1$-dimensional subspaces.
To glue these extensions together, we employ results from harmonic analysis on the boundary behavior of harmonic functions.
In particular the consistency of these extensions on different lines intersecting at a common point $\vv\in\bbR^r$ follows from the existence of non-tangential limits.

\begin{remark}
    By continuity of $\vv(\cdot)$, for any $\vv \in \obbH^r$ the point $\vu = \vu(\vv)$ defined by Theorem~\ref{thm:continuity} is a solution to the Dyson equation $\vv = \vv(\vu)$.
    However, for $\vv \in \obbH^r \setminus \bbH^r$, this solution is not necessarily the unique preimage of $\vv$ in $\obbH^r$.
\end{remark}
For $\vu \in \obbH^r$, recall from \eqref{eq:def-oM} the definitions:
\begin{align*}
    M(\vu) &= \diag\lt(\fr{\lambda_s}{u_s^2}\rt) - \xi'', &
    \oM(\vu) &= \diag\lt(\fr{\lambda_s}{|u_s|^2}\rt) - \xi''.
\end{align*}
\begin{lemma}
    \label{lem:vu-derivative}
    For any $\vv \in \obbH^r$ such that $M(\vu(\vv))$ is invertible, $\vu(\cdot)$ is differentiable at $\vv$ and $\nabla \vu(\vv) = M(\vu(\vv))^{-1}$.
\end{lemma}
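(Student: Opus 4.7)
The plan is to prove this via the holomorphic inverse function theorem applied to the map $\vv(\cdot): \bbC^r \setminus \{\prod_s u_s = 0\} \to \bbC^r$ defined by the right-hand side of \eqref{eq:dyson-equation-complex}. A direct entrywise computation gives
\[
    \frac{\partial v_s}{\partial u_{s'}} = \delta_{s,s'} \frac{\lambda_s}{u_s^2} - \xi''_{s,s'},
\]
so the complex Jacobian of $\vv(\cdot)$ at $\vu$ is exactly $M(\vu)$. Setting $\vu_0 = \vu(\vv_0)$ and assuming $M(\vu_0)$ invertible, the holomorphic inverse function theorem produces open neighborhoods $U \ni \vu_0$ and $V \ni \vv_0$ in $\bbC^r$ on which $\vv: U \to V$ is a biholomorphism. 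Let $\psi: V \to U$ denote the resulting holomorphic inverse; then $\psi(\vv_0) = \vu_0$ and $\nabla \psi(\vv_0) = M(\vu_0)^{-1}$.

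The remaining task is to identify $\psi$ with $\vu$ on $V \cap \obbH^r$, so that the differentiability and derivative of $\psi$ transfer to $\vu$. By Theorem~\ref{thm:continuity}, the function $\vu(\cdot)$ is continuous on $\obbH^r$, so for $\vv \in V \cap \obbH^r$ sufficiently close to $\vv_0$ we have $\vu(\vv) \in U$. Moreover, if $\vv \in V \cap \bbH^r$, then $\vu(\vv) \in \bbH^r$ by Lemma~\ref{lem:dyson-solution-unique}, and $\vu(\vv)$ solves $\vv(\vu(\vv)) = \vv$ by construction. Since $\vv: U \to V$ is injective and $\psi(\vv)$ also solves this equation, we must have $\psi(\vv) = \vu(\vv)$. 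Passing to the limit via continuity of both $\psi$ and $\vu$, equality extends to all of $V \cap \obbH^r$.

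Combining the two steps, $\vu$ coincides with the holomorphic function $\psi$ in a $\obbH^r$-neighborhood of $\vv_0$, and hence is (real-)differentiable at $\vv_0$ with $\nabla \vu(\vv_0) = \nabla \psi(\vv_0) = M(\vu_0)^{-1}$, as claimed. The only genuinely delicate point is matching $\psi$ with $\vu$ at boundary points $\vv_0 \in \partial \bbH^r$: the inverse function theorem produces a holomorphic inverse on a full complex neighborhood, but $\vu$ is only a priori defined on $\obbH^r$, so one needs both Lemma~\ref{lem:dyson-solution-unique} (to pin down $\vu$ uniquely on the interior) and Theorem~\ref{thm:continuity} (to control the boundary values by passage to the limit).
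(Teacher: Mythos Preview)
Your proof is correct and takes a somewhat different route from the paper's. The paper first establishes the interior case $\vv\in\bbH^r$ as a separate lemma (Lemma~\ref{lem:u-derivative}), then extends to boundary points via the shifted functions $\vu^\eps(\vv)=\vu(\vv+\eps i\vone)$: since both $\vu^\eps$ and $\nabla\vu^\eps=M(\vu^\eps)^{-1}$ converge locally uniformly as $\eps\downarrow 0$ (by Theorem~\ref{thm:continuity} and the assumed invertibility of $M(\vu(\vv))$), Rudin's theorem on uniform convergence of derivatives gives the result. Your argument instead applies the holomorphic inverse function theorem directly in the ambient $\bbC^r$ at $\vu_0$, obtaining a holomorphic local inverse $\psi$, and then identifies $\psi$ with $\vu$ on $\obbH^r$ via uniqueness on $\bbH^r$ (Lemma~\ref{lem:dyson-solution-unique}) plus continuity (Theorem~\ref{thm:continuity}). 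Your route is a bit more direct in that it does not invoke a separate interior lemma or the uniform-convergence-of-derivatives theorem; the paper's route keeps the boundary extension purely real-analytic and makes the role of the approximation explicit. One minor imprecision: the identification $\psi=\vu$ is only established on a possibly smaller neighborhood $V'\subseteq V$ of $\vv_0$ (where continuity forces $\vu(\vv)\in U$), not on all of $V\cap\obbH^r$ as stated, but this is harmless since only a neighborhood of $\vv_0$ is needed.
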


Our next result determines the images $\vu(\obbH^r)$ and $\vu(\bbR^r)$.
In particular, this characterizes which $(\vv,\vu)$ pairs solving \eqref{eq:dyson-equation-complex} for $\vv\in\obbH^r$ are genuine solutions obtainable as limits of solutions with $\vv\in\bbH^r$.

\begin{theorem}
    \label{thm:feasible-region}
    Let $\vu^* \in \obbH^r$.
    \begin{enumerate}[label=(\alph*)]
        \item \label{itm:feasible-interior} There exists $\vv \in \obbH^r$ such that $\vu^* = \vu(\vv)$ if and only if $\oM(\vu^*) \succeq 0$ and $\oM(\vu^*) \Im(\vu^*) \succeq \vzero$.
        \item \label{itm:feasible-boundary} There exists $\vv \in \bbR^r$ such that $\vu^* = \vu(\vv)$ if and only if one of the following conditions holds.
        \begin{enumerate}[label=(\roman*)]
            \item \label{itm:feasible-boundary-real} $\vu^* \in \bbR^r$ and $M(\vu^*) \succeq 0$.
            \item \label{itm:feasible-boundary-imag} $\vu^* \in \bbH^r$, $\oM(\vu^*) \succeq 0$, and $\oM(\vu^*) \Im(\vu^*) = 0$.
        \end{enumerate}
    \end{enumerate}
\end{theorem}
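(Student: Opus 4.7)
Proof proposal. My approach is driven by taking imaginary parts of the Dyson equation~\eqref{eq:dyson-equation-complex}. Using $\Im(-1/u_s) = \Im u_s / |u_s|^2$, this yields the identity
\[
\Im\vv \;=\; \oM(\vu^*)\,\Im\vu^*,
\]
which encodes all four conditions in the theorem via $\Im\vv \succeq \vzero$ (for part~\ref{itm:feasible-interior}) and $\Im\vv = \vzero$ (for part~\ref{itm:feasible-boundary}).

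For necessity in part~\ref{itm:feasible-interior}, the identity immediately gives $\oM(\vu^*)\Im\vu^* = \Im\vv \succeq \vzero$. To deduce $\oM(\vu^*) \succeq 0$, I first treat the interior case $\vv \in \bbH^r$: Lemma~\ref{lem:dyson-solution-unique} gives $\vu^* \in \bbH^r$, so $\Im\vu^* \succ \vzero$; Assumption~\ref{as:nondegenerate} makes $\oM(\vu^*)$ diagonally signed, so Lemma~\ref{lem:diagonally-signed-hs23} applied with witness $\Im\vu^*$ yields
\[
\blambda_{\min}(\oM(\vu^*)) \;\geq\; \min_{s\in\sS} \frac{(\oM(\vu^*)\Im\vu^*)_s}{\Im u_s^*} \;=\; \min_{s\in\sS} \frac{\Im v_s}{\Im u_s^*} \;\geq\; 0.
\]
For $\vv$ on the boundary of $\obbH^r$, I approximate via $\vv^{(n)} := \vv + (i/n)\bone \in \bbH^r$ and pass both conditions to the limit of the interior preimages $\vu^{(n)} := \vu(\vv^{(n)}) \to \vu^*$ guaranteed by Theorem~\ref{thm:continuity}.

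For sufficiency in part~\ref{itm:feasible-interior}, given $\vu^*$ satisfying the conditions I define $\vv := \vv(\vu^*)$ via \eqref{eq:dyson-equation-complex}; the identity gives $\Im\vv \succeq \vzero$, so $\vv \in \obbH^r$. The crux is to verify that $\vu^*$ equals the extension $\vu(\vv)$ furnished by Theorem~\ref{thm:continuity}, rather than some competing algebraic solution of the Dyson equation. I approach $\vv$ from above via $\vv^{(n)} := \vv + (i/n)\bone \in \bbH^r$, $\vu^{(n)} := \vu(\vv^{(n)}) \in \bbH^r$, so that $\vu^{(n)} \to \vu(\vv)$. When $M(\vu^*)$ is invertible, Lemma~\ref{lem:vu-derivative} furnishes a local preimage $\wt\vu^{(n)} = \vu^* + (i/n) M(\vu^*)^{-1}\bone + o(1/n)$ of $\vv^{(n)}$ near $\vu^*$; since $M(\vu^*)$ is a symmetric positive-definite matrix whose off-diagonal entries $-\xi''_{s,s'}$ are strictly negative by Assumption~\ref{as:nondegenerate} (so $M(\vu^*)$ is an M-matrix), its inverse has strictly positive entries, hence $\Im\wt\vu^{(n)} \succ \vzero$ for large $n$; uniqueness from Lemma~\ref{lem:dyson-solution-unique} forces $\wt\vu^{(n)} = \vu^{(n)}$, and sending $n \to \infty$ gives $\vu^* = \vu(\vv)$. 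In the remaining singular case (which forces $\vu^* \in \bbR^r$), I first perturb $\vu^*$ along a real direction $\vec\delta \in \bbR^r$ with $\delta_s u_s^* < 0$ for all $s$; this strictly decreases $|u_s^*|^2$ coordinatewise and so raises $\oM(\vu^*)$ to be strictly positive definite. The invertible case then applies to the perturbed $\wt\vu^*$, and sending the perturbation to zero while invoking Theorem~\ref{thm:continuity} recovers $\vu^* = \vu(\vv)$.

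Part~\ref{itm:feasible-boundary} follows from part~\ref{itm:feasible-interior} by enforcing $\Im\vv = \vzero$, which via the identity is equivalent to $\oM(\vu^*)\Im\vu^* = \vzero$. If $\vu^* \in \bbR^r$, then $\Im\vu^* = \vzero$ makes this automatic, and since $(u_s^*)^2 = |u_s^*|^2$ on $\bbR^r$ we have $M(\vu^*) = \oM(\vu^*)$, so the condition $\oM \succeq 0$ from part~\ref{itm:feasible-interior} recovers case~\ref{itm:feasible-boundary-real}. If $\vu^* \in \bbH^r$, then $\Im\vu^* \succ \vzero$, and the conditions from part~\ref{itm:feasible-interior} together with $\oM\Im\vu^* = \vzero$ give case~\ref{itm:feasible-boundary-imag}. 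The main obstacle is the sufficiency in part~\ref{itm:feasible-interior}: selecting the correct branch of the Dyson equation on the boundary of $\obbH^r$, where multiple algebraic solutions (e.g.\ reflections through the real axis when $\vv \in \bbR^r$) coexist. The approach-from-above device $\vv^{(n)} = \vv + (i/n)\bone$ combined with the M-matrix inverse-positivity of $M(\vu^*)$ is what selects $\vu^*$; the singular subcase is reduced to the invertible one by real perturbations of $\vu^*$ together with joint continuity from Theorem~\ref{thm:continuity}.
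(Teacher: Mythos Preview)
Your argument is essentially the paper's: necessity via the identity $\Im\vv = \oM(\vu^*)\Im\vu^*$ plus continuity from the interior, and sufficiency by setting $\vv = \vv(\vu^*)$, producing a local preimage of $\vv + i\eps\vone$ near $\vu^*$ via the inverse function theorem, and invoking uniqueness in $\bbH^r$; the singular real case is handled in both by shrinking $\vu^*$ toward the origin.

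Two wrinkles in your write-up are worth flagging. First, citing Lemma~\ref{lem:vu-derivative} to obtain the local preimage is circular, since that lemma already presupposes $\vu^* = \vu(\vv)$; the correct tool (used in the paper) is the inverse function theorem applied directly to $\vv(\cdot)$, whose Jacobian is $M(\vu)$ by \eqref{eq:vv-deriv}. Second, your assertion that $M(\vu^*)$ is a symmetric positive-definite M-matrix is only valid when $\vu^* \in \bbR^r$; when $\vu^* \in \bbH^r$ the matrix $M(\vu^*)$ is complex and the M-matrix reasoning does not apply, though in that case $\Im\wt\vu^{(n)} \succ \vzero$ follows trivially from $\wt\vu^{(n)} \to \vu^* \in \bbH^r$. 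The paper treats this sub-case separately, rescaling $\vu^{(\eps)} = (1-\eps)\vu^*$ and applying Lemma~\ref{lem:feasible-interior} directly rather than the inverse function theorem.
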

\begin{corollary}
    \label{cor:feasible-region-M-inv}
    If $\vu^* \in \bbH^r$ and there exists $\vv \in \obbH^r$ such that $\vu^* = \vu(\vv)$, then $M(\vu^*)$ is invertible.
\end{corollary}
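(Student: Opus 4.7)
The plan is to argue by contradiction. Suppose $M(\vu^*)\vw = \vzero$ for some nonzero $\vw \in \bbC^r$; I aim to derive a contradiction with $\vu^* \in \bbH^r$. The two ingredients I rely on are Theorem~\ref{thm:feasible-region}\ref{itm:feasible-interior} (applied to the hypothesized $\vv \in \obbH^r$), which gives $\oM(\vu^*) \succeq 0$ and $\oM(\vu^*)\Im(\vu^*) \succeq \vzero$ componentwise, and Assumption~\ref{as:nondegenerate}, which implies $\xi''$ has strictly positive entries (one extracts the degree-$2$ contribution $\xi''_{s,s'} \geq 2\gamma_{s,s'}^2 \lambda_s\lambda_{s'} > 0$ from $\Gamma^{(2)}>0$).

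The first step is to establish that $\oM(\vu^*)|\vw| = \vzero$. Rewriting $M(\vu^*)\vw = \vzero$ as $\lambda_s w_s/(u_s^*)^2 = (\xi''\vw)_s$, taking absolute values, and using the entrywise nonnegativity of $\xi''$ yields
\[
    \frac{\lambda_s |w_s|}{|u_s^*|^2} = |(\xi''\vw)_s| \leq (\xi''|\vw|)_s,
\]
i.e.\ $\oM(\vu^*)|\vw| \leq \vzero$ componentwise. Pairing this against $\Im(\vu^*) > \vzero$ and using the symmetry of $\oM(\vu^*)$,
\[
    0 \geq \la \Im(\vu^*), \oM(\vu^*)|\vw|\ra = \la \oM(\vu^*)\Im(\vu^*), |\vw|\ra \geq 0,
\]
so both inequalities are tight. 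Since $\Im(\vu^*) > \vzero$ is strictly positive coordinate-wise, the sandwich forces $\oM(\vu^*)|\vw| = \vzero$, which is precisely the equality case $|(\xi''\vw)_s| = (\xi''|\vw|)_s$ of the displayed triangle inequality for every $s$.

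The second step is to use this triangle-inequality equality to reduce to the real case. Because $\xi''$ has strictly positive entries, equality in the triangle inequality forces all nonzero entries of $\vw$ to share a common complex argument. Factoring out a global phase, I may therefore assume $\vw \geq \vzero$ is real, nonneg, and nonzero with $M(\vu^*)\vw = \vzero$. If some $w_s = 0$, the $s$-th equation reads $0 = (\xi''\vw)_s$, but strict positivity of $\xi''$ and nonneg nonzero $\vw$ give $(\xi''\vw)_s > 0$, a contradiction; so every $w_s > 0$. Then $\lambda_s w_s/(u_s^*)^2 = (\xi''\vw)_s$ is strictly positive and real, forcing $(u_s^*)^2 \in \bbR_{>0}$ and hence $u_s^* \in \bbR$ for every $s$. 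This contradicts $\vu^* \in \bbH^r$, completing the proof.

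The main obstacle is the first step: without the equality $\oM(\vu^*)|\vw| = \vzero$ one cannot trigger the triangle-inequality rigidity, and its derivation is precisely where the hypothesis $\vu^* = \vu(\vv)$ for some $\vv \in \obbH^r$ is used, via the positivity $\oM(\vu^*)\Im(\vu^*)\succeq \vzero$ from Theorem~\ref{thm:feasible-region}\ref{itm:feasible-interior}. Once this equality is in hand, strict positivity of $\xi''$ supplied by non-degeneracy makes the remainder of the argument essentially automatic.
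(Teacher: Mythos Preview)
Your proof is correct but takes a different route from the paper's.

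The paper's argument is a one-liner: Theorem~\ref{thm:feasible-region}\ref{itm:feasible-interior} gives $\oM(\vu^*)\succeq 0$, and then Lemma~\ref{lem:oM-psd-to-M-invertible} (whose proof goes through the comparison Lemma~\ref{lem:psd-rotate}\ref{itm:inv-by-strict}) shows that for $\vu^*\in\bbH^r$ the diagonal of $M(\vu^*)$ strictly dominates that of $\oM(\vu^*)$ in absolute value, forcing invertibility. Your approach instead works directly with a hypothetical null vector $\vw$, pushes the triangle inequality to equality via the pairing with $\Im(\vu^*)$, and then exploits the rigidity of that equality together with strict positivity of $\xi''$ to force $\vu^*\in\bbR^r$. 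This is a genuinely different and self-contained argument that avoids the general matrix comparison machinery; in effect you reprove Lemma~\ref{lem:oM-psd-to-M-invertible} by a Perron--Frobenius-flavored null-vector analysis rather than by diagonal domination.

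One small observation: your first step can be streamlined. You pair $\oM(\vu^*)|\vw|\le\vzero$ against $\Im(\vu^*)$ and invoke the extra constraint $\oM(\vu^*)\Im(\vu^*)\succeq\vzero$ from Theorem~\ref{thm:feasible-region}\ref{itm:feasible-interior}. But you could instead pair against $|\vw|$ itself: since $\oM(\vu^*)\succeq 0$ and $\oM(\vu^*)|\vw|\le\vzero$ componentwise with $|\vw|\ge\vzero$, one gets $0\le\langle|\vw|,\oM(\vu^*)|\vw|\rangle\le 0$, hence $\oM(\vu^*)|\vw|=\vzero$ directly from positive semidefiniteness. This shows that, like the paper's argument, your approach really only needs $\oM(\vu^*)\succeq 0$ and $\vu^*\in\bbH^r$; the condition $\oM(\vu^*)\Im(\vu^*)\succeq\vzero$ is not essential.
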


Next in Proposition~\ref{prop:edge-vs-cusp}, we show that singularity of $M(\vu(\vv))$ (which by the previous corollary requires $\vv \in \bbR^r$) corresponds to $0$ being an edge/cusp of associated spectral measures, and give a precise description of each case.
For any $\vchi \in \bbR_{>0}^r$ with $\|\vchi\|_1 = 1$, the restriction of $\vu(\cdot)$ onto the line $\vv + z \vchi$, $z\in \obbH$ is a rescaled Stieltjes transform of a suitable random matrix.
Indeed, this restriction solves
\[
    v_s + \chi_s z = - \fr{\lambda_s}{u_s(\vv + z \vchi)} - \sum_{s'\in\sS} \xi''_{s,s'} u_{s'}(\vv + z \vchi).
\]
We set
\begin{align}
    \label{eq:dyson-change-of-variables}
    \tilde \xi''_{s,s'} &= \fr{\lambda_s\lambda_{s'} \xi''_{s,s'}}{\chi_s\chi_{s'}}, &
    \tilde m_s(z;\vv) &= \fr{\chi_s}{\lambda_s} u_s(\vv + z \vchi), &
    \tilde x_s &= \fr{v_s \sqrt{\lambda_s}}{\chi_s},
\end{align}
which we note match $\xi''_{s,s'}$, $m_s$, $x_s$ when $\vchi = \vlambda$.
Then the Dyson equation rearranges to
\[
    \fr{\tilde x_s}{\sqrt{\lambda_s}} + z
    = - \fr{1}{\tilde m_s(z;\vv)} - \sum_{s'\in \sS} \fr{\tilde \xi''_{s,s'}}{\lambda_s} \tilde m_{s'}(z;\vv).
\]
Comparing with \eqref{eq:dyson-equation}, we find that $\tilde \vm(z;\vv)$ is the limiting Stieltjes transform of the random matrix
\begin{equation}
    \label{eq:def-M_N-tilted}
    \wt M_N(\tilde \vx) = \wt \bW - \diag(\Lambda^{-1/2} \tilde \vx \diamond \bone_{\cT}),
\end{equation}
where $\wt \bW$ has law \eqref{eq:tangential-hessian-law} but with $\tilde \xi''$ in place of $\xi''$.
Denote the associated limiting spectral measure (cf. \eqref{eq:mu-def})
by
\begin{equation}
\label{eq:wt-mu-def}
    \wt\mu_{\vchi}(\vv)\equiv \mu_{\vchi}( \tilde \vx)
\end{equation}

We recall from \cite[Theorem 2.6]{ajanki2017singularities} that $\wt\mu_{\vchi}(\vv)$ is supported on a finite union of intervals, which is the closure of $\{\gamma \in \bbR : \vu(\vv + \gamma \vchi) \in \bbH^r\}$, with \emph{edges} at the boundary of its support and finitely many \emph{cusps} within the support at which $\vu(\vv + \gamma \vchi)\in\bbR^r$.
We say $0$ is a \emph{left edge} of the support of $\mu_{\vchi}(\tilde \vx)$ if it is an edge and $(0,c)\subseteq \supp \,\mu_{\vchi}(\tilde \vx)$ for small enough $c>0$. A \emph{right edge} is defined similarly.
For $\gamma \in \bbR$ and $\vchi$ as above, we set $\vu^{\gamma}_{\vchi} = \vu(\vv + \gamma \vchi)$.

\begin{proposition}
\label{prop:edge-vs-cusp}
    Suppose $\vv \in \bbR^r$, $\vu = \vu(\vv) \in \bbR^r$, and $M(\vu)$ is singular. Fix $\vchi \in \bbR_{>0}^r$ with $\|\vchi\|_1 = 1$.
    Then $0$ is an edge or cusp of $\wt\mu_{\vchi}(\vv)$.
    In more detail, there exists $\gamma_0>0$ such that for each $\Delta \in \{\pm 1\}$, one of the following holds.
    \begin{enumerate}[label=(\roman*)]
        \item \label{itm:M-singular-real}
        For all $\gamma \in (0,\gamma_0]$, $\vu^{\gamma\Delta}_{\vchi} \in \bbR^r$ and $M(\vu^{\gamma\Delta}_{\vchi}) \succ 0$.
        \item \label{itm:M-singular-imag}
        For all $\gamma \in (0,\gamma_0]$, $\vu^{\gamma\Delta}_{\vchi} \in \bbH^r$.
    \end{enumerate}
    Moreover case \ref{itm:M-singular-imag} holds for at least one $\Delta \in \{\pm 1\}$, and:
    \begin{enumerate}[label=(\alph*)]
    \item
    \label{it:right-edge}
    If case~\ref{itm:M-singular-real} holds for $\Delta=1$, then $0$ is a right edge of $\wt\mu_{\vchi}(\vv)$.
    \item
    \label{it:left-edge}
    If case~\ref{itm:M-singular-real} holds for $\Delta=-1$, then $0$ is a left edge of $\wt\mu_{\vchi}(\vv)$.
    \item
    \label{it:cusp}
    If case~\ref{itm:M-singular-imag} holds for both $\Delta\in \{\pm 1\}$, then $0$ is a cusp of $\wt\mu_{\vchi}(\vv)$.
    \end{enumerate}
    Finally, which of \ref{it:right-edge},\ref{it:left-edge},\ref{it:cusp} occurs for a given $\vv$ does not depend on $\vchi$.
\end{proposition}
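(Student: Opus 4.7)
The plan is to reduce to a one-dimensional analysis of the spectrum $\wt\mu_\vchi(\vv)$, extract the edge/cusp classification via the null vector of $M(\vu^*)$, and then establish $\vchi$-independence by a Taylor expansion of the Dyson equation.

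\textbf{Reduction.} For each fixed $\vchi > 0$, the path $\gamma \mapsto \vu^\gamma_\vchi = \vu(\vv + \gamma\vchi)$ coincides, via the rescaling~\eqref{eq:dyson-change-of-variables}, with (a coordinatewise multiple of) the Stieltjes transform of $\wt\mu_\vchi(\vv)$ evaluated at $z = \gamma \in \bbR$. Thus $\vu^\gamma_\vchi \in \bbH^r$ precisely when the density of $\wt\mu_\vchi(\vv)$ is positive at $\gamma$, and $\vu^\gamma_\vchi \in \bbR^r$ precisely when it vanishes. Since this density is piecewise smooth on at most $C$ intervals (Proposition~\ref{prop:MDE-basic}\ref{it:Dyson-continuous-density}), for sufficiently small $\gamma_0$ each of $(0,\gamma_0]$ and $[-\gamma_0,0)$ lies entirely inside or outside $\supp\wt\mu_\vchi(\vv)$, giving the dichotomy between cases~\ref{itm:M-singular-real} and~\ref{itm:M-singular-imag}. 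Outside the support $\tilde m$ extends analytically and $\vu^\gamma_\vchi$ is $C^1$ in $\gamma$; differentiating the Dyson equation gives $M(\vu^\gamma_\vchi)\cdot\partial_\gamma \vu^\gamma_\vchi = \vchi$, forcing $M$ invertible. Combined with Lemma~\ref{lem:u-manifold}\ref{itm:feasible-boundary-real} (giving $M \succeq 0$), this upgrades to $M \succ 0$, completing case~\ref{itm:M-singular-real}.

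\textbf{At least one $\Delta$ falls into case~\ref{itm:M-singular-imag}.} Next I would rule out the possibility that both half-intervals lie outside the support. If they did, the chain-rule argument above would extend across $\gamma = 0$, yielding $M(\vu^*)\cdot\partial_\gamma \vu^\gamma_\vchi|_0 = \vchi$, hence $\vchi \in \mathrm{range}(M(\vu^*))$. Since $M(\vu^*)$ is diagonally signed with $M(\vu^*)\succeq 0$ (by non-degeneracy of $\xi$), Lemma~\ref{lem:diagonally-signed-hs23} provides a strictly positive null vector $\vw$, and symmetry of $M$ identifies $\mathrm{range}(M(\vu^*)) = \vw^\perp$. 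Then $\langle\vchi,\vw\rangle>0$ contradicts $\vchi\in\vw^\perp$. Consequently $0$ lies in $\overline{\supp\wt\mu_\vchi(\vv)}$, and the three sub-cases—exactly one half-interval inside (right or left edge), or both inside (cusp)—correspond to~\ref{it:right-edge}, \ref{it:left-edge}, and~\ref{it:cusp}, respectively.

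\textbf{$\vchi$-independence.} This is the main obstacle, since $\wt\mu_\vchi(\vv)$ genuinely depends on $\vchi$. I would extract a $\vchi$-free criterion by Taylor-expanding around $\vu^*$ along the null direction. Writing $\vu = \vu^* + i\epsilon\vw + \epsilon^2\vz$ with $\vz\perp\vw$, direct expansion of~\eqref{eq:dyson-equation-complex} together with $M(\vu^*)\vw=0$ yields
\[
    \vv(\vu) = \vv^* + \epsilon^2\bigl(M(\vu^*)\vz + \vd_0\bigr) + O(\epsilon^3), \qquad \vd_0 = \bigl(\lambda_s w_s^2/(u_s^*)^3\bigr)_{s\in\sS}.
\]
Matching $\vv(\vu) = \vv^* + \gamma\vchi$ and projecting onto $\vw$ (which annihilates $M(\vu^*)\vz$ by symmetry of $M$) yields the leading-order relation $\gamma\langle\vchi,\vw\rangle = \epsilon^2\langle\vd_0,\vw\rangle + O(\epsilon^3)$. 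Since $\langle\vchi,\vw\rangle>0$ and $\epsilon^2\geq 0$, the sign of $\gamma$ admitting entry is $\sign(\langle\vd_0,\vw\rangle)$, a quantity depending only on $\vu^*$: case~\ref{it:left-edge} corresponds to $\langle\vd_0,\vw\rangle>0$, case~\ref{it:right-edge} to $\langle\vd_0,\vw\rangle<0$, and the cusp case~\ref{it:cusp} to $\langle\vd_0,\vw\rangle=0$. In the last case the leading-order equation degenerates; combined with the preceding paragraph guaranteeing entry on at least one side and the absence of a distinguished sign at leading order, entry on both sides follows from a higher-order (cubic) expansion in $\epsilon$. Thus the classification depends only on $\vu^*$, giving the desired $\vchi$-independence.
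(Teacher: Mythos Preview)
Your first two steps (the dichotomy via the finite-interval support structure, and the contradiction argument that $\vchi \in \vw^\perp$ rules out both sides being outside the support) are correct and essentially match the paper's Lemma~\ref{lem:M-singular-edge-cusp}.

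The $\vchi$-independence argument, however, takes a genuinely different route from the paper and has a gap in the cusp case. The paper proceeds via a random-matrix comparison: by the Weyl inequalities applied to the finite-$N$ matrices $\wt M_N$, shifting $\vv$ by any small $\veps \succ 0$ shifts all spectral quantiles of $\wt\mu_{\vchi}(\vv)$ by a positive amount (Lemma~\ref{lem:quantile-continuous}). This gives the manifestly $\vchi$-free characterization ``$0$ is a left edge $\Leftrightarrow$ $\vu(\vv - \veps) \in \bbR^r$ for all small $\veps \succ 0$'' (Lemma~\ref{lem:edge-characterization}), and the right-edge and cusp cases fall out by elimination. Your Taylor-expansion criterion $\sign\langle\vd_0,\vw\rangle$ is an appealing explicit alternative, and for the edge case it can be made rigorous with an implicit-function bootstrap (showing $\Re(\vu - \vu^*) = O(\epsilon^2)$). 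But the cusp treatment does not work as written: with your ansatz $\vu = \vu^* + i\epsilon\vw + \epsilon^2\vz$ and $\epsilon > 0$, a cubic leading term $\gamma \sim c\,\epsilon^3$ still has a single sign, so it would predict an edge, not a cusp. The correct cusp picture requires the perturbation $\zeta\vw$ with \emph{complex} $\zeta$ of argument near $\pm\pi/3$ (so that $\Re(\vu - \vu^*)$ and $\Im(\vu - \vu^*)$ are comparable), for which the equation $\zeta^3 \approx \gamma\langle\vchi,\vw\rangle/c$ has a root with $\Im\zeta > 0$ for \emph{each} sign of $\gamma$; one must also verify the effective cubic coefficient $c = \langle\ve,\vw\rangle - 2\langle\vd_0, M|_{\vw^\perp}^{-1}\vd_0\rangle$ is nonzero and handle the feasibility check via Theorem~\ref{thm:feasible-region}. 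This is substantially more than ``a higher-order expansion in $\epsilon$,'' and the paper's Weyl-inequality route bypasses it entirely.
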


\begin{remark}
    The $\vchi$-independence of being an edge or cusp is consistent with Figures~\ref{subfig:two-species} and \ref{subfig:two-species-weird}.
    Recall that in these plots, $\vu(0;\vv)$ is real in the four regions outside the blue boundary and nonreal in the region inside it.
    An edge corresponds to a point on the blue boundary where a positive-slope line in direction $\vchi$ through $\vv$ crosses from a real region to a nonreal region.
    A cusp (two in each plot) corresponds to a point where such a line remains in the nonreal region on either side of $\vv$.
    In both pictures, this property is independent of the slope $\vchi$.
\end{remark}

Our final result computes the annealed exponential growth rate of the determinant of the deformed Gaussian band matrix $M_N(\vx)$ defined in \eqref{eq:def-M_N}.
Namely we give an explicit formula for $\Psi(\vx)$, defined in \eqref{eq:Psi-def}.
Recall from the proof of Proposition~\ref{prop:annealed-kac-rice} that this equals
\[
    \lim_{N\to\infty} \fr{1}{N} \log \bbE |\det M_N(\vx)|.
\]
Recall that $\la \va, \vb \ra = \sum_{i=1}^r a_ib_i$ denotes a bilinear form rather than a complex inner product, even when $\va,\vb$ are complex vectors.
\begin{theorem}
    \label{thm:logdet}
    Let $\vx \in \bbR^r$ and $\vv = \Lambda^{1/2} \vx$.
    Then
    \[
        \Psi(\vx) = \fr12 \Re(\la \vu(\vv), \xi'' \vu(\vv) \ra)
        - \sum_{s\in \sS} \lambda_s \log |u_s(\vv)|.
    \]
\end{theorem}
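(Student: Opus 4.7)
The strategy is to show the identity by matching gradients on $\bbR^r$ (as functions of $\vv = \Lambda^{1/2}\vx$) and then fixing the additive constant via an asymptotic expansion in a distinguished direction. Let $G(\vv)$ denote the right-hand side of the theorem. First, I would invoke Lemma~\ref{lem:oPsi-derivative} (stated and proved independently in the appendix) for the identity $\nabla \oPsi(\vv) = -\Re\vu(\vv)$, valid at all $\vv\in\bbR^r$ where $M(\vu(\vv))$ is invertible. The core task is then to verify $\nabla G(\vv) = -\Re\vu(\vv)$ at the same points.

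For this gradient computation, set $\vw = (\lambda_s/u_s)_{s\in\sS}$, so that the Dyson equation \eqref{eq:dyson-equation-complex} becomes $\vv = -\vw - \xi''\vu$, and by definition \eqref{eq:def-oM}, $M(\vu)\vu = \vw - \xi''\vu = \vv + 2\vw$. Using Lemma~\ref{lem:vu-derivative} (so $\nabla_\vv \vu = M(\vu)^{-1}$, symmetric) and being careful that $\vu$ is complex while $\vv$ is real, the chain rule yields
\[
\nabla G(\vv) = \Re\big(M(\vu)^{-1}(\xi''\vu - \vw)\big).
\]
The Dyson identity above rearranges to $\xi''\vu - \vw = -\vv - 2\vw = -M(\vu)\vu$, whence $\nabla G(\vv) = -\Re\vu(\vv) = \nabla \oPsi(\vv)$. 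Therefore $\oPsi - G$ is locally constant on the open set where $M(\vu)$ is invertible. Since both $\oPsi$ (Proposition~\ref{prop:Psi-continuous}) and $G$ (Theorem~\ref{thm:continuity}) are continuous on $\bbR^r$, and the ``bad'' set is the zero locus of the continuous function $\vv\mapsto \det M(\vu(\vv))$ which is non-vanishing for $\|\vv\|$ large (where $\|\vu\|$ is small), $\oPsi - G$ is constant on all of $\bbR^r$.

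To pin down the constant, I would translate along $-\vlambda$. Using the identification \eqref{eq:u-identification}, $\vu(\vv-z\vlambda) = \vm(-z;\vx)$, so $\omu(\vv-z\vlambda)$ is the pushforward of $\omu(\vv)$ under $\gamma \mapsto \gamma+z$; hence
\[
\oPsi(\vv - z\vlambda) = \int \log|\gamma + z|\,\de\omu(\vv)(\gamma) = \log z + O(1/z)
\]
as $z\to+\infty$, using compact support of $\omu(\vv)$ from Proposition~\ref{prop:MDE-basic}\ref{it:Dyson-compact-supports}. On the other hand, expanding the Dyson equation at $\vv - z\vlambda$ yields $u_s(\vv - z\vlambda) = 1/z + O(1/z^2)$ for each $s$, so $\xi''\vu = O(1/z)$ and therefore
\[
G(\vv - z\vlambda) = \tfrac{1}{2}\Re\la\vu,\xi''\vu\ra - \sum_{s\in\sS}\lambda_s \log|u_s| = O(1/z^2) + \log z + O(1/z).
\]
The two asymptotics agree to leading order, forcing the constant to vanish.

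The main obstacle is the chain-rule computation and the algebraic simplification $\xi''\vu - \vw = -M(\vu)\vu$; this is the only step that genuinely uses the Dyson equation and cleanly explains why the two gradients coincide. The asymptotic matching and the extension across the singular locus of $M(\vu)$ are routine by comparison, once the shift identity $\oPsi(\vv - z\vlambda) = \int \log|\gamma+z|\,\de\omu(\vv)$ is in hand.
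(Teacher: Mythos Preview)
Your approach is essentially the paper's: match $\nabla\oPsi$ with the gradient of the right-hand side via Lemma~\ref{lem:vu-derivative} and the Dyson relation $M(\vu)\vu=\vw-\xi''\vu$, then fix the additive constant by an asymptotic along the ray $\vv-z\vlambda$. The gradient identity and the asymptotics are handled correctly; your use of the shift identity $\omu(\vv-z\vlambda)=$ (translate of $\omu(\vv)$ by $z$) is a clean variant of the paper's $\bbW_\infty$ argument.

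There is one step that is not justified as stated. From ``$\oPsi-G$ is continuous on $\bbR^r$ and locally constant off the zero set $\cT$ of $\vv\mapsto\det M(\vu(\vv))$, and $\cT$ is bounded'' you cannot conclude that $\oPsi-G$ is constant on all of $\bbR^r$: a continuous function that is locally constant on a dense open set need not be constant (the Cantor staircase is the standard counterexample). Since $\vu$ is only $1/3$-H\"older, $\det M(\vu(\cdot))$ is merely continuous, so $\cT$ is not a priori thin in any useful sense. The paper closes this gap by invoking Proposition~\ref{prop:MDE-basic}\ref{it:Dyson-continuous-density}: along each line $t\mapsto\vv+t\vlambda$ the set $\cT$ is finite (it consists of edges/cusps of $\omu(\vv)$), so $\oPsi-G$ is constant on each open segment and by continuity constant on the whole line; combined with your asymptotic $\oPsi-G\to0$ along that same direction, this gives $\oPsi\equiv G$ on every such line and hence on $\bbR^r$. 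Adding this one observation completes your argument.
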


\subsection{Preliminaries}

\begin{lemma}
    \label{lem:psd-rotate}
    Suppose $A \in \bbR^{r\times r}$ is diagonally signed, $A \succeq 0$, and $A' \in \bbC^{r\times r}$ satisfies:
    \begin{equation}
    \label{eq:psd-rotate}
    \begin{aligned}
    |A'_{s,s}|&\geq A_{s,s},\quad\forall s\in \sS;
    \\
    |A'_{s,s'}|&\leq |A_{s,s'}|,\quad\forall s\neq s'\in \sS.
    \end{aligned}
    \end{equation}
    Then:
    \begin{enumerate}[label=(\alph*)]
        % \item \label{itm:psd-sign-flip} If $A'\in\bbR^{r\times r}$ has non-negative diagonal entries, then $A'\succeq 0$.
        \item \label{itm:psd-inv-positive-entries} If $A$ is invertible, $A^{-1}$ has only positive entries.
        \item \label{itm:inv-domination} If $A$ is invertible, then so is $A'$ and $\|(A')^{-1}\|_{\op}\leq \|A^{-1}\|_{\op}$.
        \item \label{itm:inv-by-strict} If at least one inequality in \eqref{eq:psd-rotate} holds strictly, then $A'$ is invertible.
    \end{enumerate}
\end{lemma}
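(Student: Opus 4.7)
The plan is as follows. For part~\ref{itm:psd-inv-positive-entries}, I invoke the M-matrix structure of $A$. First, the diagonal entries $A_{ii}$ must be strictly positive: if $A_{ii}=0$, then testing $\ve_i+t\ve_j$ for any $j\neq i$ gives $2tA_{ij}+t^2A_{jj}<0$ for small $t>0$ (using $A_{ij}<0$), contradicting $A\succeq 0$. Writing $A=D-B$ with $D=\diag(A_{ii})_i\succ 0$ and $B\geq 0$ entrywise with zero diagonal, I verify via Perron--Frobenius that $\rho(D^{-1}B)<1$: if the Perron eigenvalue $\rho\geq 1$ had a non-negative eigenvector $\vw$, then $A\vw=(1-\rho)D\vw$ would either vanish (contradicting invertibility of $A$) or yield $\langle \vw,A\vw\rangle<0$ (contradicting $A\succeq 0$). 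Hence $A^{-1}=\sum_{k\geq 0}(D^{-1}B)^{k}D^{-1}$ has non-negative entries, and every entry is strictly positive by inspecting the first two terms of the series: $D^{-1}$ bounds the diagonal from below, and $D^{-1}BD^{-1}$ has strictly positive off-diagonal entries.

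The key ingredient for parts~\ref{itm:inv-domination} and~\ref{itm:inv-by-strict} is the elementary componentwise inequality
\begin{equation*}
|(A'\vv)_i|\;\geq\;|A'_{ii}|\,|v_i|-\sum_{j\neq i}|A'_{ij}|\,|v_j|\;\geq\;A_{ii}\,|v_i|-\sum_{j\neq i}|A_{ij}|\,|v_j|\;=\;(A|\vv|)_i,
\end{equation*}
valid for every $\vv\in\bbC^r$, where $|\vv|$ denotes the entrywise absolute value and the final equality uses the diagonal sign structure of $A$ (i.e.\ $A_{ij}=-|A_{ij}|$ for $i\neq j$ and $A_{ii}\geq 0$). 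For part~\ref{itm:inv-domination}, setting $A'\vv=\vu$, the inequality becomes $A|\vv|\leq|\vu|$ componentwise; applying $A^{-1}$, which has non-negative entries by part~\ref{itm:psd-inv-positive-entries}, preserves this inequality, so $|\vv|\leq A^{-1}|\vu|$ componentwise and hence $\|\vv\|_2\leq\|A^{-1}|\vu|\|_2\leq\|A^{-1}\|_{\op}\|\vu\|_2$. Invertibility of $A'$ is the $\vu=0$ case.

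For part~\ref{itm:inv-by-strict}, if $A$ is invertible then part~\ref{itm:inv-domination} already yields invertibility of $A'$, so assume $A$ is singular. Given $A'\vv=0$, the componentwise inequality gives $A|\vv|\leq 0$; combined with $A\succeq 0$ and $|\vv|\geq 0$, this forces $\langle|\vv|,A|\vv|\rangle=0$ and hence $A|\vv|=0$. By Lemma~\ref{lem:diagonally-signed-hs23}, the kernel of $A$ is one-dimensional and spanned by a strictly positive vector, so either $\vv=0$ or $|v_i|>0$ for every $i$. In the latter case, the strict version of one of the inequalities in \eqref{eq:psd-rotate}, together with positivity of every $|v_i|$, propagates through the componentwise bound at the relevant coordinate $s$ to yield $|(A'\vv)_s|>0$, contradicting $A'\vv=0$. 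The main obstacle is really only part~\ref{itm:psd-inv-positive-entries}, which genuinely requires the M-matrix / Neumann series argument to combine the sign structure of $A$ with PSD-ness; parts~\ref{itm:inv-domination} and~\ref{itm:inv-by-strict} then follow cleanly from the componentwise domination, with the extra wrinkle in (c) being the need to identify $|\vv|$ with a Perron-type kernel vector of $A$ supplied by Lemma~\ref{lem:diagonally-signed-hs23}.
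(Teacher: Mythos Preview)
Your proof is correct. Part~\ref{itm:psd-inv-positive-entries} is essentially the paper's argument with a cosmetic difference: the paper factors $A=D(I-B)D$ with $D=\diag(A)^{1/2}$ and bounds the spectral radius of $B$ via Lemma~\ref{lem:psd-sign-flip}, whereas you factor $A=D-B$ with $D=\diag(A)$ and invoke Perron--Frobenius directly; both yield the same Neumann series with nonnegative terms.

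Parts~\ref{itm:inv-domination} and~\ref{itm:inv-by-strict} take a genuinely different route. The paper continues the Neumann-series theme: it writes $A'=D\tD(I-\tB)\tD D$ and compares $(A')^{-1}$ to $A^{-1}$ term by term in the expansion, then for \ref{itm:inv-by-strict} introduces the auxiliary real diagonally signed matrix $\tA$ with $\tA_{s,s}=|A'_{s,s}|$, $\tA_{s,s'}=-|A'_{s,s'}|$, shows $\tA\succ 0$ by testing against its own Perron eigenvector, and reduces to part~\ref{itm:inv-domination}. Your single componentwise inequality $|(A'\vv)_i|\ge (A|\vv|)_i$ bypasses all of this: for \ref{itm:inv-domination} you simply apply the entrywise-nonnegative $A^{-1}$ to both sides, and for \ref{itm:inv-by-strict} you identify $|\vv|$ with the one-dimensional kernel of $A$ and read off a strict inequality at the distinguished coordinate. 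Your argument is shorter and more elementary (no second Neumann series, no auxiliary matrix); the paper's version has the minor advantage of giving an explicit entrywise domination $|((A')^{-1}\vx)_s|\le (A^{-1}|\vx|)_s$, slightly stronger than the operator-norm bound, though this is not used elsewhere.
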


\begin{proof}
    % By Lemma~\ref{lem:diagonally-signed-hs23}, the minimal eigenvector $\vv$ of $A$ has strictly positive entries.
    % Let $\blambda_{\min}(A) = t \ge 0$.
    % The equation $A\vv = t\vv$ implies that for any $s\in \sS$,
    % \[
    %     (A_{s,s}-t) v_s + \sum_{s'\neq s} A_{s,s'} v_{s'} = 0,
    % \]
    % so
    % \[
    %     A_{s,s} = t + \sum_{s'\neq s} |A_{s,s'}|\fr{v_{s'}}{v_s}.
    % \]
    % Suppose $A' \in \bbR^{r\times r}$ has non-negative diagonal entries, so $A'_{s,s} \ge A_{s,s}$ for all $s\in \sS$.
    % Then there exist $c_s \ge 0$ such that
    % \[
    %     A'_{s,s} = t + c_s + \sum_{s'\neq s} |A'_{s,s'}| \fr{v_{s'}}{v_s}.
    % \]
    % Thus for any $\vx \in \bbR^r$,
    % \[
    %     \la \vx, A'\vx \ra
    %     = \sum_{s\in \sS}
    %     (t + c_s) x_s^2
    %     + \sum_{\substack{s,s'\in \sS \\ s\neq s'}} |A_{s,s'}|
    %     \lt(
    %         \sqrt{\fr{v_{s'}}{v_{s}}}x_{s} + \sign(A_{s,s'}) \sqrt{\fr{v_{s}}{v_{s'}}}x_{s'}
    %     \rt)^2 \ge 0\,,
    % \]
    % so $A' \succeq 0$.
    % This proves part \ref{itm:psd-sign-flip}.
    Suppose $A$ is invertible.
    Then $A \succ 0$, so $A_{s,s} > 0$ for all $s\in \sS$.
    Let $D = \diag(A)^{1/2}$, so $A = D (I-B) D$ for some $B \in \bbR^{r\times r}$ with zero diagonal and positive entries off the diagonal, and with $I-B \succ 0$.
    Let $t = \blambda_{\min}(I-B) \in (0,1)$, so $(1-t)I - B \succeq 0$.
    By Lemma~\ref{lem:psd-sign-flip} (applied to $(1-t)I - B$), $(1-t)I + B \succeq 0$.
    Thus $(1-t)I \succeq B \succeq -(1-t)I$.
    So,
    \[
        A^{-1} = \lt(D(I-B)D\rt)^{-1} = D^{-1} (I + B + B^2 + \cdots) D^{-1},
    \]
    as the geometric series converges.
    Since $D$ and $B$ have positive entries, part \ref{itm:psd-inv-positive-entries} follows.

    There exists diagonal $\tD \in \bbC^{r\times r}$ such that $D^2 \tD^2 = \diag(A')$, and \eqref{eq:psd-rotate} implies $|\tD_{s,s}| \ge 1$ for all $s\in \sS$.
    Then $A' = D\tD (I - \tB) \tD D$.
    Note that for all $s,s'\in \sS$, $|\tB_{s,s'}| \le B_{s,s'}$, and therefore for all $k\ge 1$, $|(\tB^k)_{s,s'}| \le (B^k)_{s,s'}$.
    Thus
    \[
        (A')^{-1} = D^{-1} \tD^{-1} (I +  \tB + \tB^2 + \cdots) \tD^{-1} D^{-1},
    \]
    as the geometric series converges.
    For any $\vx \in \bbC^r$, consider $\vy \in \bbR^r$ defined by $y_s = |x_s|$.
    Then it is clear that for all $s\in \sS$, $|((A')^{-1}\vx)_s| \le (A^{-1}\vy)_s$, so $\norm{(A')^{-1}}_\op \le \norm{A^{-1}}_\op$.
    This proves part \ref{itm:inv-domination}.

    Finally consider the setting of part \ref{itm:inv-by-strict}, where $A\succeq 0$ is not necessarily invertible and at least one inequality in \eqref{eq:psd-rotate} is strict.
    Let $\tA \in \bbR^{r\times r}$ be the diagonally signed matrix with $\tA_{s,s} = |A'_{s,s}|$ and $\tA_{s,s'} = -|A'_{s,s'}|$ for $s\neq s'$.
    Let $\vw$ be the minimal (unit) eigenvector of $\tA$, which by Lemma~\ref{lem:diagonally-signed-hs23} has all positive entries.
    Then
    \[
        \blambda_{\min}(\tA)
        = \la \tA \vw, \vw \ra
        > \la A \vw, \vw \ra
        \ge \blambda_{\min}(A) \ge 0,
    \]
    so $\tA \succ 0$.
    Thus $\tA$ is invertible, and by part \ref{itm:inv-domination} (with $\tA$ for $A$) so is $A'$.
    This proves part \ref{itm:inv-by-strict}.
\end{proof}
The following part of the proof of Theorem~\ref{thm:feasible-region} will be used repeatedly, so we prove it first.
It is related to \cite[Lemma 4.3]{ajanki2017singularities} (namely the operator $F$ appearing there is similar to $\oM$).

\begin{lemma}
    \label{lem:oM-psd}
    For any $\vv \in \bbH^r$, with $\vu = \vu(\vv)$ we have $\oM(\vu) \succ 0$.
\end{lemma}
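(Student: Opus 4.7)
The plan is to derive the identity $\Im(\vv) = \oM(\vu)\,\Im(\vu)$ by taking imaginary parts of the Dyson equation \eqref{eq:dyson-equation-complex} coordinate-wise. Since $1/u_s = \bar u_s/|u_s|^2$, we have $-\Im(\lambda_s/u_s) = \lambda_s\Im(u_s)/|u_s|^2$, and $\xi''_{s,s'}$ is real, so the imaginary part of the right-hand side of \eqref{eq:dyson-equation-complex} is exactly $(\oM(\vu)\Im(\vu))_s$. Since $\vv\in\bbH^r$ by hypothesis and $\vu\in\bbH^r$ by Lemma~\ref{lem:dyson-solution-unique}, both $\Im(\vv)$ and $\Im(\vu)$ have strictly positive entries.

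Next I would use the off-diagonal structure of $\oM(\vu)$ together with this identity to conclude positive definiteness. The off-diagonal entries $-\xi''_{s,s'}$ of $\oM(\vu)$ are strictly negative by Assumption~\ref{as:nondegenerate}, so one can apply a variant of Lemma~\ref{lem:diagonally-signed-hs23}. Although that lemma is stated for diagonally signed matrices (which require non-negative diagonal), the eigenvalue formula
\[
    \blambda_{\min}(M) = \sup_{\vw\succ 0}\min_{s\in\sS}\frac{(M\vw)_s}{w_s}
\]
is trivially invariant under shifting $M$ by a multiple of the identity. Hence the formula extends to any symmetric $M$ with strictly negative off-diagonals by replacing $M$ with $M+tI$ for $t$ large enough to make the diagonal non-negative and then subtracting $t$. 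Plugging in $M=\oM(\vu)$ and $\vw = \Im(\vu)\succ 0$ gives
\[
    \blambda_{\min}(\oM(\vu))
    \;\geq\;
    \min_{s\in\sS}\frac{(\oM(\vu)\Im(\vu))_s}{\Im(u_s)}
    \;=\;
    \min_{s\in\sS}\frac{\Im(v_s)}{\Im(u_s)}
    \;>\;0,
\]
which proves the lemma.

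The main content is the algebraic identity $\Im(\vv) = \oM(\vu)\Im(\vu)$; once it is in hand, the positive definiteness is a routine Stieltjes/$M$-matrix argument via the existing Lemma~\ref{lem:diagonally-signed-hs23} with a harmless shift to absorb the diagonal-sign hypothesis. I do not anticipate any serious obstacle.
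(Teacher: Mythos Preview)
Your proposal is correct and follows essentially the same route as the paper: take imaginary parts of \eqref{eq:dyson-equation-complex} to obtain $\oM(\vu)\Im(\vu)=\Im(\vv)\succ\vzero$, then invoke Lemma~\ref{lem:diagonally-signed-hs23} to conclude $\oM(\vu)\succ 0$. Your shift argument to accommodate the diagonal-sign hypothesis of Lemma~\ref{lem:diagonally-signed-hs23} is a nice bit of extra care that the paper leaves implicit.
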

\begin{proof}
    Taking imaginary parts of \eqref{eq:dyson-equation-complex} yields
    \begin{equation}
        \label{eq:dyson-imaginary}
        \fr{\lambda_s}{|u_s|^2} \Im(u_s)
        - \sum_{s'\in\sS} \xi''_{s,s'} \Im(u_{s'})
        = \Im(v_s).
    \end{equation}
    Since $\Im(v_s) > 0$, we have $\oM(\vu) \Im(\vu) \succ 0$.
    This implies $\oM(\vu) \succ 0$ by Lemma~\ref{lem:diagonally-signed-hs23}.
\end{proof}
\begin{lemma}
    \label{lem:oM-psd-to-M-invertible}
    If $\vu \in \bbH^r$ and $\oM(\vu) \succeq 0$, then $M(\vu)$ is invertible.
\end{lemma}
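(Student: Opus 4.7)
The plan is to reduce both $M(\vu)$ and $\oM(\vu)$ to a common form by conjugating out the diagonal factor, and then apply Lemma~\ref{lem:psd-rotate}\ref{itm:inv-by-strict}. Writing $U = \diag(u_s) \in \bbC^{r\times r}$ and $|U| = \diag(|u_s|)$, and noting $u_s \neq 0$ since $\Im(u_s) > 0$, I observe that
\[
    M(\vu) = U^{-1}(\Lambda - U\xi''U)U^{-1}, \qquad \oM(\vu) = |U|^{-1}(\Lambda - |U|\xi''|U|)|U|^{-1}.
\]
In particular $M(\vu)$ is invertible iff $A' := \Lambda - U\xi''U$ is invertible, while the hypothesis $\oM(\vu)\succeq 0$ is equivalent to $A := \Lambda - |U|\xi''|U| \succeq 0$.

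The goal is to check the hypotheses of Lemma~\ref{lem:psd-rotate}\ref{itm:inv-by-strict} for the pair $(A,A')$. The matrix $A$ is diagonally signed: the off-diagonal entries $-|u_s||u_{s'}|\xi''_{s,s'}$ are strictly negative since $\xi''_{s,s'} > 0$ by non-degeneracy, and the diagonal entries $\lambda_s - |u_s|^2\xi''_{s,s}$ are non-negative by $A\succeq 0$. Off-diagonal moduli agree exactly: $|A'_{s,s'}| = |u_s u_{s'}|\xi''_{s,s'} = |A_{s,s'}|$ for $s\neq s'$. The crucial point is the diagonal, where the strict inequality required by part \ref{itm:inv-by-strict} comes from the positivity of $\Im(u_s)$: a direct computation gives
\[
    |A'_{s,s}|^2 - A_{s,s}^2 = 4\lambda_s \xi''_{s,s}\, \Im(u_s)^2 > 0,
\]
so $|A'_{s,s}| > A_{s,s}$ for every $s$. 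Lemma~\ref{lem:psd-rotate}\ref{itm:inv-by-strict} then yields that $A'$ is invertible, and hence so is $M(\vu)$.

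There is no real obstacle; this is essentially a bookkeeping argument. The only point requiring care is recognizing that the off-diagonal entries only provide equality $|A'_{s,s'}| = |A_{s,s'}|$, so the invertibility must be driven entirely by the strict gain on the diagonal — which is exactly what $\Im(u_s)>0$ supplies via the identity $|u_s|^2 = \Re(u_s)^2 + \Im(u_s)^2 > \Re(u_s^2)$.
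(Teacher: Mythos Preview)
Your proof is correct and follows essentially the same approach as the paper: both apply Lemma~\ref{lem:psd-rotate}\ref{itm:inv-by-strict} with the strict diagonal gain coming from $\Im(u_s)>0$, and off-diagonal moduli matching exactly. The only cosmetic difference is that the paper applies the lemma directly to the pair $(\oM(\vu),M(\vu))$ via the inequality $\big|\lambda_s/u_s^2-\xi''_{s,s}\big|>\lambda_s/|u_s|^2-\xi''_{s,s}$, whereas you first conjugate by the diagonal $U$ and work with $(\Lambda-|U|\xi''|U|,\ \Lambda-U\xi''U)$; the content is identical.
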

\begin{proof}
    Since $\oM(\vu) \succeq 0$, $\lambda_s/|u_s|^2 > \xi''_{s,s}$.
    So, for any $s\in \sS$,
    \[
        \lt|\fr{\lambda_s}{u_s^2} -\xi''_{s,s}\rt| > \fr{\lambda_s}{|u_s|^2} -\xi''_{s,s},
    \]
    where the inequality is strict because $\vu\in\bbH^r$.
    Taking $(A,A')=(\oM(\vu),M(\vu))$ in Lemma~\ref{lem:psd-rotate}\ref{itm:inv-by-strict} yields the claim.
\end{proof}

\begin{corollary}
    \label{cor:M-invertible}
    For any $\vv\in\bbH^r$, with $\vu=\vu(\vv)$, $M(\vu)$ is invertible.
\end{corollary}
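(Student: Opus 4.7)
The statement follows by directly chaining the two immediately preceding lemmas, so the plan is essentially a one-step composition. First I would recall from Lemma~\ref{lem:dyson-solution-unique} that for $\vv\in\bbH^r$ the unique solution $\vu=\vu(\vv)$ lies in $\bbH^r$ (strict upper half-plane in every coordinate). This ensures we are in the hypothesis regime of both auxiliary lemmas.

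Next I would apply Lemma~\ref{lem:oM-psd}, which gives $\oM(\vu)\succ 0$, and in particular $\oM(\vu)\succeq 0$. Combined with $\vu\in\bbH^r$, the hypothesis of Lemma~\ref{lem:oM-psd-to-M-invertible} is satisfied, and that lemma directly yields invertibility of $M(\vu)$.

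There is no real obstacle here: the work has already been done in Lemmas~\ref{lem:oM-psd} and \ref{lem:oM-psd-to-M-invertible}. The only thing to verify is the chain of hypotheses, namely that $\vu(\vv)\in\bbH^r$ (which is part of the characterization in Lemma~\ref{lem:dyson-solution-unique}) and that strict positive-definiteness of $\oM(\vu)$ supplies the $\succeq 0$ hypothesis of Lemma~\ref{lem:oM-psd-to-M-invertible}. The proof is therefore a single line combining these three facts.
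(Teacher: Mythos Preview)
Your proposal is correct and matches the paper's proof exactly: the paper simply writes ``Follows from Lemmas~\ref{lem:oM-psd} and \ref{lem:oM-psd-to-M-invertible},'' and your chain of hypotheses (including $\vu(\vv)\in\bbH^r$ from Lemma~\ref{lem:dyson-solution-unique}) is precisely what makes that one-line citation work.
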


\begin{proof}
    Follows from Lemmas~\ref{lem:oM-psd} and \ref{lem:oM-psd-to-M-invertible}.
\end{proof}

\begin{lemma}
    \label{lem:u-bounded}
    There exists $C_0>0$ depending on $(\vlambda,\xi'')$ such that for all $\vv\in\bbH^r$, $\|\vu(\vv)\|_{\infty}\leq C_0$.
\end{lemma}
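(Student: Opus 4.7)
The proof is short and will rely on Lemma~\ref{lem:oM-psd}. My plan is as follows.

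First, I would invoke Lemma~\ref{lem:oM-psd}, which for any $\vv \in \bbH^r$ gives $\oM(\vu) \succ 0$, where $\vu = \vu(\vv)$. Since $\oM(\vu)$ is positive semi-definite, all of its diagonal entries must be non-negative. Reading off the diagonal of $\oM(\vu) = \diag(\lambda_s/|u_s|^2)_{s\in\sS} - \xi''$, this yields
\[
\frac{\lambda_s}{|u_s|^2} - \xi''_{s,s} \geq 0, \qquad \forall s \in \sS,
\]
which rearranges to $|u_s|^2 \leq \lambda_s/\xi''_{s,s}$.

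Second, I would invoke Assumption~\ref{as:nondegenerate}, under which $\Gamma^{(2)} > 0$ entry-wise, so in particular $\xi''_{s,s} > 0$ for each $s\in\sS$. Setting
\[
C_0 = \max_{s\in\sS} \sqrt{\lambda_s/\xi''_{s,s}}
\]
(which depends only on $(\vlambda,\xi'')$) then gives $\|\vu(\vv)\|_\infty \leq C_0$ uniformly in $\vv\in\bbH^r$, as desired.

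There is no real obstacle here: the bound is essentially a direct consequence of positivity of $\oM(\vu)$ established earlier. The only thing to note is that the constant $C_0$ depends on $(\vlambda,\xi'')$ through the entries of $\xi''$ alone (not on $\vv$), which is exactly what is claimed.
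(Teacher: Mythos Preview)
Your proof is correct and essentially identical to the paper's own proof: invoke Lemma~\ref{lem:oM-psd} to get $\oM(\vu) \succ 0$, read off the diagonal entries to obtain $|u_s| \le \sqrt{\lambda_s/\xi''_{s,s}}$, and take $C_0 = \max_s \sqrt{\lambda_s/\xi''_{s,s}}$.
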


\begin{proof}
    Let $\vu = \vu(\vv)$.
    By Lemma~\ref{lem:oM-psd}, $\oM(\vu) \succ 0$, so $\oM(\vu)_{s,s} > 0$.
    Thus $|u_s| \le \sqrt{\lambda_s / \xi''_{s,s}}$.
\end{proof}

\subsection{Joint Continuity of the Vector Dyson Equation}

In this subsection, we let $C_0$ be as in Lemma~\ref{lem:u-bounded}, let $C_1$ be a sufficiently large constant depending on $(\vlambda,\xi'',C_0)$, and similarly take large $C_2$ depending on $(\vlambda,\xi'',C_0,C_1)$ and $C_3$ large depending on $(\vlambda,\xi'',C_0,C_1,C_2)$.
Given any $S\subseteq\sS$, define
\begin{equation}
\label{eq:V-S-def}
    V_S
    =
    \{
    \vv\in\bbH^r
    ~:~
    |v_s|\leq C_1~\forall s\in S
    \text{ and }
    |v_s|\geq C_1~\forall s\notin S
    \}
    \subseteq\bbH^r.
\end{equation}
We will show H\"older continuity of the restriction of $\Im (\vu)$ to each set $V_S$.

\begin{lemma}
\label{lem:non-degen-per-S}
    For each $S\subseteq \sS$, the restriction of $\vu:\bbH^r\to \bbH^r$ to $V_S$ satisfies
    $\|\Im (\vu)|_{V_S}\|_{C^{1/3}}<\infty$.
\end{lemma}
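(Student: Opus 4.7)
My plan is to reduce to an effective vector Dyson equation on the ``bounded'' coordinates $s\in S$. First, I establish an a priori estimate: on $V_S$, the $S^c$ coordinates of $\vu$ are uniformly small. Indeed, rearranging \eqref{eq:dyson-equation-complex} gives $u_s=-\lambda_s/\bigl(v_s+\sum_{s'}\xi''_{s,s'}u_{s'}\bigr)$, and combining $|v_s|\geq C_1$ for $s\notin S$ with $|u_{s'}|\leq C_0$ (Lemma~\ref{lem:u-bounded}) yields $|u_s|\leq \lambda_s/(C_1-r C_0\|\xi''\|_\infty)\leq 1/C_2$ for $s\notin S$, provided $C_1$ is sufficiently large.

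Second, because $u_s\mapsto v_s+\lambda_s/u_s+\sum_{s'}\xi''_{s,s'}u_{s'}$ has derivative $-\lambda_s/u_s^2$ of magnitude at least $\lambda_s C_2^2$ by the first step, the holomorphic implicit function theorem lets me solve the $S^c$-block of the Dyson equation for $\vu_{S^c}$ as a holomorphic function $\vu_{S^c}(\vv_{S^c},\vu_S)$, with derivatives of order $O(1/C_2^2)$ on $V_S$. Substituting back into the $S$-block yields a reduced system
\[
v_s+g_s(\vv_{S^c})=-\frac{\lambda_s}{u_s}-\sum_{s'\in S}\tilde\xi''_{s,s'}(\vv_{S^c})\,u_{s'}+R_s(\vv_{S^c},\vu_S),\quad s\in S,
\]
where $g_s$ and $\tilde\xi''(\vv_{S^c})$ depend analytically on $\vv_{S^c}$, and $R_s$ is a higher-order correction of size $O(\|\vu_S\|^2/C_2^2)$. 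The matrix $\tilde\xi''(\vv_{S^c})$ is a perturbation of $\xi''|_{S\times S}$ of size $O(1/C_2^2)$, hence remains symmetric positive definite uniformly on $V_S$.

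Third, I treat the reduced system as a vector Dyson equation on $|S|$ species with $\vv_{S^c}$ playing the role of an analytic parameter. On the set where $|v_s|\leq C_1$ for $s\in S$, the classical 1/3-H\"older theory of \cite{ajanki2017singularities,ajanki2019stability,ajanki2019quadratic} (already invoked in Proposition~\ref{prop:MDE-basic}\ref{it:Dyson-continuous-density}) gives joint 1/3-H\"older continuity of $\Im(\vu_S)$ in $\vv_S$, with constants locally uniform over model parameters. Since $\tilde\xi''(\vv_{S^c})$ varies in a compact set of nondegenerate symmetric positive matrices as $\vv_{S^c}$ ranges over $V_S$, these constants are uniform. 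Combined with the (Lipschitz) dependence on $\vv_{S^c}$ through $g_s$ and $\tilde\xi''$, this yields joint 1/3-H\"older continuity of $\Im(\vu_S)$ on $V_S$. The analogous claim for $\Im(\vu_{S^c})$ follows from the smoothness of $\vu_{S^c}(\vv_{S^c},\vu_S)$.

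The main obstacle is handling the nonlinear correction $R_s$: the effective system is not exactly a standard Dyson equation but a small perturbation thereof. I expect this to be absorbed by the stability theory of \cite{ajanki2019stability}, which treats precisely such perturbations. As an alternative, one may circumvent reduction entirely by using Lemma~\ref{lem:vu-derivative} to write $\nabla\vu=M(\vu)^{-1}$ and block-inverting $\oM(\vu)$ on $V_S$ (exploiting its large $S^c\times S^c$ block from the first step) to control $\|M(\vu)^{-1}\|_{\op}$ in terms of a reduced $|S|\times|S|$ stability matrix, to which the 1/3-H\"older argument of \cite{ajanki2017singularities} then applies directly.
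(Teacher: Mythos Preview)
Your primary approach has a genuine circularity in the third step. The 1/3-H\"older theory of \cite{ajanki2017singularities,ajanki2019quadratic} (and the part of Proposition~\ref{prop:MDE-basic}\ref{it:Dyson-continuous-density} that comes from those papers) establishes H\"older regularity in a \emph{scalar} spectral parameter $z$, i.e.\ along the 1-dimensional slices $\vv+z\vlambda$. It does not give joint $C^{1/3}$ continuity in a vector shift $\vv_S\in\bbH^{|S|}$; that multi-dimensional statement is exactly what this appendix is proving (Theorem~\ref{thm:continuity}), and Lemma~\ref{lem:non-degen-per-S} is the main step toward it. So you cannot invoke it as a black box on the reduced $S$-system. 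The nonlinear correction $R_s$ you flag is a second obstacle: the stability results of \cite{ajanki2019stability} concern perturbations of the right-hand side of a genuine MDE, not a structurally modified equation with $\vv_{S^c}$-dependent coupling $\tilde\xi''(\vv_{S^c})$ plus quadratic remainders in $\vu_S$; absorbing these uniformly over $V_S$ would require redoing the key quantitative estimate rather than quoting it.

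Your alternative approach at the end is the right one and is essentially what the paper does. The paper proves directly (Lemma~\ref{lem:AEK-adaptation}) that on $V_S$,
\[
\|M(\vu)^{-1}\|_{\op}\le C_3\,\Im(u_{s_*})^{-2}\quad\text{for each }s_*\in S,
\]
not via a block Schur complement but via a comparison matrix $M^\dagger$ (diagonal entries $|M_{s,s}|$, off-diagonal $-\xi''_{s,s'}$): one shows $M^\dagger\succeq \Omega(\eps^2)I_r$ with $\eps\asymp\Im(u_{s_*})$ by a two-case argument (on $S$, the gain $\Omega(\eps^2)$ comes from $|\lambda_s/u_s^2-\xi''_{s,s}|-(\lambda_s/|u_s|^2-\xi''_{s,s})\gtrsim \eps^2$ since $|u_s|\asymp 1$; on $S^c$, the diagonal is large because $|u_s|\le 2\lambda_s/C_1$), and then Lemma~\ref{lem:psd-rotate}\ref{itm:inv-domination} transfers the bound to $M$. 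The cubic trick then gives that $\Im(u_{s_*})^3$ is Lipschitz on $V_S$, hence $\Im(u_{s_*})$ is $C^{1/3}$. Finally, your first two steps (smallness of $u_{S^c}$ and the Lipschitz implicit function for $u_{S^c}$ in terms of $(\vv,\vu_S)$) are used exactly as you describe to pass from the $S$-coordinates to all coordinates.
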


Lemma~\ref{lem:non-degen-per-S} readily implies that $\|\Im(\vu)\|_{C^{1/3}}<\infty$ holds on all of $\bbH^r$, thus establishing ``half of'' Theorem~\ref{thm:continuity}. Namely given $\vv,\vv'\in \bbH^r$, along the path $(\vv+t(\vv'-\vv))_{t\in [0,1]}$ the $s$-th coordinate's norm switches between $[0,C_1]$ and $[C_1,\infty)$ at most twice. Hence $\Im(\vu(\vv)) -\Im(\vu(\vv'))$ can be bounded by applying Lemma~\ref{lem:non-degen-per-S} at most $2r+1$ times along this path.

We prove Lemma~\ref{lem:non-degen-per-S} after establishing some helpful intermediate results.

\begin{lemma}
    \label{lem:non-degen-u}
    For $C_1$ as described above (i.e. sufficient large depending on $(\vlambda,\xi'',C_0)$), the following holds.
    \begin{enumerate}[label=(\alph*)]
        \item \label{itm:non-degen-u} For all $s\in S$, $|u_s| \ge \lambda_s/2C_1$.
        \item \label{itm:degen-u} For all $s\not\in S$, $|u_s| \le 2\lambda_s / C_1$.
    \end{enumerate}
\end{lemma}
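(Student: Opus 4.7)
The proof is essentially a direct manipulation of the Dyson equation \eqref{eq:dyson-equation-complex}, combined with the uniform bound $\|\vu\|_{\infty} \leq C_0$ from Lemma~\ref{lem:u-bounded}. Rearranging \eqref{eq:dyson-equation-complex} isolates $\lambda_s/u_s$ on one side:
\[
    \frac{\lambda_s}{u_s} = -v_s - \sum_{s'\in\sS} \xi''_{s,s'} u_{s'}.
\]
Applying the triangle inequality and invoking Lemma~\ref{lem:u-bounded} to bound the sum by $r\|\xi''\|_{\infty} C_0$ gives a two-sided control:
\[
    |v_s| - r\|\xi''\|_{\infty} C_0 \;\leq\; \frac{\lambda_s}{|u_s|} \;\leq\; |v_s| + r\|\xi''\|_{\infty} C_0.
\]

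For part \ref{itm:non-degen-u}, when $s\in S$ we have $|v_s|\leq C_1$, so the upper bound becomes $\lambda_s/|u_s| \leq C_1 + r\|\xi''\|_{\infty} C_0 \leq 2C_1$, provided $C_1$ is chosen larger than $r\|\xi''\|_{\infty} C_0$ (which is a condition depending only on $(\vlambda,\xi'',C_0)$ as allowed by the lemma). Rearranging yields $|u_s|\geq \lambda_s/(2C_1)$.

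For part \ref{itm:degen-u}, when $s\notin S$ we have $|v_s|\geq C_1$, so the lower bound becomes $\lambda_s/|u_s| \geq C_1 - r\|\xi''\|_{\infty} C_0 \geq C_1/2$, again for $C_1$ sufficiently large depending on $(\vlambda,\xi'',C_0)$. Rearranging gives $|u_s|\leq 2\lambda_s/C_1$.

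There is no real obstacle here; the only choice is to take $C_1 \geq 2r\|\xi''\|_{\infty} C_0$ so that both estimates go through with the stated constants, which is consistent with the convention that $C_1$ depends on $(\vlambda,\xi'',C_0)$.
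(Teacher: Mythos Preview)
Your proof is correct and essentially identical to the paper's own argument: both rearrange the Dyson equation \eqref{eq:dyson-equation-complex}, apply the triangle inequality together with the uniform bound $\|\vu\|_\infty \le C_0$ from Lemma~\ref{lem:u-bounded} to sandwich $\lambda_s/|u_s|$ between $|v_s| \pm \sum_{s'} \xi''_{s,s'}|u_{s'}|$, and then choose $C_1$ large enough (relative to the sum) so that the two cases $s\in S$ and $s\notin S$ yield the stated bounds.
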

\begin{proof}
    Equation \eqref{eq:dyson-equation-complex} implies
    \begin{equation}
        \label{eq:triangle-ineq}
        |v_s| - \sum_{s'\in \sS} \xi''_{s,s'} |u_{s'}|
        \le
        \fr{\lambda_s}{|u_s|}
        \le
        |v_s| + \sum_{s'\in \sS} \xi''_{s,s'} |u_{s'}|.
    \end{equation}
    In light of Lemma~\ref{lem:u-bounded}, we have
    \[
        \sum_{s'\in \sS} \xi''_{s,s'} |u_{s'}| \le C_1/2
    \]
    for suitably large $C_1$ depending only on $(\vlambda,\xi'')$.
    For $s\in \sS$, the right inequality of \eqref{eq:triangle-ineq} implies $\lambda_s/|u_s| \le 2C_1$, which implies part \ref{itm:non-degen-u}.
    For $s\not\in \sS$, the left inequality implies $\lambda_s/|u_s| \ge C_1/2$, which implies part \ref{itm:degen-u}.
\end{proof}

\begin{lemma}
\label{lem:imaginary-one-scale}
    For $s,s'\in S$,
    \[
    C_2^{-1}\Im(u_s)\leq \Im(u_{s'})\leq C_2 \Im(u_s).
    \]
\end{lemma}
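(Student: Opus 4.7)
The plan is to start from the imaginary part of the Dyson equation \eqref{eq:dyson-equation-complex}, which by \eqref{eq:dyson-imaginary} takes the matricial form $\oM(\vu)\Im(\vu) = \Im(\vv)$. Componentwise, for any $s' \in S$ this reads
\[
    \frac{\lambda_{s'}}{|u_{s'}|^2}\Im(u_{s'})
    \;=\;
    \Im(v_{s'}) + \sum_{s''\in\sS}\xi''_{s',s''}\Im(u_{s''}).
\]
Since $\vv \in V_S \subseteq \bbH^r$ forces $\vu \in \bbH^r$ by Lemma~\ref{lem:dyson-solution-unique}, and all coefficients $\xi''_{s',s''}$ are nonnegative, every summand on the right is positive. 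I would then discard all but one term on the right, keeping only the single contribution $\xi''_{s',s}\Im(u_s)$ coming from an arbitrary fixed $s \in S$, which yields the one-sided bound
\[
    \Im(u_{s'})
    \;\geq\;
    \frac{|u_{s'}|^2}{\lambda_{s'}}\,\xi''_{s',s}\,\Im(u_s).
\]

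To finish, I would convert the prefactor on the right into an explicit constant. Lemma~\ref{lem:non-degen-u}\ref{itm:non-degen-u} supplies $|u_{s'}| \geq \lambda_{s'}/(2C_1)$ whenever $s' \in S$, so that $|u_{s'}|^2/\lambda_{s'} \geq \lambda_{s'}/(4C_1^2)$. The non-degeneracy hypothesis (Assumption~\ref{as:nondegenerate}) provides $\xi''_{s',s} \geq c_\xi > 0$ for a constant depending only on $(\vlambda, \Gamma^{(2)})$. Combining these inputs yields $\Im(u_{s'}) \geq \bigl(c_\xi \min_t \lambda_t/(4 C_1^2)\bigr)\Im(u_s)$, which is the desired lower bound with $C_2 = 4C_1^2/(c_\xi \min_t \lambda_t)$. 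Interchanging $s$ and $s'$—the argument is perfectly symmetric since both indices lie in $S$—produces the matching inequality in the other direction and completes the proof. There is no real obstacle here: the only conceptual input is that \emph{strict} positivity of the off-diagonal entries $\xi''_{s',s}$ (rather than merely nonnegativity or diagonal dominance of $\oM(\vu)$) is used, and this is exactly what non-degeneracy guarantees. No analysis of $\oM(\vu)^{-1}$ or Schur complement is required.
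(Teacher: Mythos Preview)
Your proposal is correct and follows essentially the same approach as the paper's proof: both take imaginary parts of the Dyson equation to obtain \eqref{eq:dyson-imaginary}, drop all but one positive summand to get $\frac{\lambda_{s'}}{|u_{s'}|^2}\Im(u_{s'}) \ge \xi''_{s',s}\Im(u_s)$, invoke Lemma~\ref{lem:non-degen-u}\ref{itm:non-degen-u} to bound $|u_{s'}|$ from below, and conclude by symmetry. The paper's writeup is slightly terser and presents the upper-bound direction first, but the argument is the same.
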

\begin{proof}
    Taking imaginary parts of \eqref{eq:dyson-equation-complex} yields \eqref{eq:dyson-imaginary}.
    In light of Lemma~\ref{lem:non-degen-u}\ref{itm:non-degen-u}, this implies
    \[
        \fr{4C_1^2}{\lambda_s} \Im(u_s)
        \ge \fr{\lambda_s}{|u_s|^2} \Im(u_s)
        \ge \xi''_{s,s'} \Im(u_{s'}).
    \]
    Since such an inequality holds for all $s,s'\in S$ the conclusion follows.
\end{proof}

\begin{lemma}
    \label{lem:u-derivative}
    The function $\vu$ is differentiable on $\bbH^r$ with Jacobian $\nabla \vu(\vv) = M(\vu(\vv))^{-1}$ (which is invertible by Corollary~\ref{cor:M-invertible}).
\end{lemma}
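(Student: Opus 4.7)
The plan is a direct application of the holomorphic inverse function theorem, using the invertibility of $M(\vu(\vv))$ already established in Corollary~\ref{cor:M-invertible}. The map $\vv(\cdot): (\bbC\setminus\{0\})^r \to \bbC^r$ defined by the right-hand side of \eqref{eq:dyson-equation-complex} is holomorphic in each coordinate, and an immediate computation of partial derivatives gives
\[
\frac{\partial v_s}{\partial u_{s'}} = \frac{\lambda_s}{u_s^2}\delta_{s,s'} - \xi''_{s,s'} = M(\vu)_{s,s'}\,,
\]
so the complex Jacobian of $\vv(\cdot)$ at any $\vu$ is exactly $M(\vu)$.

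Fix $\vv_0 \in \bbH^r$ and set $\vu_0 = \vu(\vv_0)\in\bbH^r$, which is well-defined by Lemma~\ref{lem:dyson-solution-unique}. By Corollary~\ref{cor:M-invertible}, the complex matrix $M(\vu_0)$ is invertible. The holomorphic inverse function theorem then provides an open neighborhood $U \subseteq \bbH^r$ of $\vu_0$ that maps biholomorphically under $\vv(\cdot)$ onto an open neighborhood $V \subseteq \bbH^r$ of $\vv_0$, and the resulting local inverse $W: V \to U$ is holomorphic with derivative $\nabla W(\vv_0) = M(\vu_0)^{-1}$.

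To identify $W$ with the restriction of $\vu(\cdot)$ to $V$, one uses global uniqueness: for every $\vv \in V$, the point $W(\vv) \in U \subseteq \bbH^r$ is a solution of \eqref{eq:dyson-equation-complex} lying in $\bbH^r$, and Lemma~\ref{lem:dyson-solution-unique} asserts that such a solution is unique. Hence $W(\vv) = \vu(\vv)$ throughout $V$, so $\vu$ is (complex) differentiable at $\vv_0$ with Jacobian $M(\vu(\vv_0))^{-1}$ as claimed. Since $\vv_0 \in \bbH^r$ was arbitrary, this yields the lemma. The whole argument is short and contains no real obstacle; everything is inherited from the algebraic identity $\nabla_{\vu}\vv(\vu) = M(\vu)$ together with the already-proven invertibility in Corollary~\ref{cor:M-invertible}.
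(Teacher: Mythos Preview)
Your proof is correct and follows essentially the same approach as the paper's: compute $\nabla_{\vu}\vv(\vu)=M(\vu)$, invoke Corollary~\ref{cor:M-invertible} for invertibility, apply the inverse function theorem, and use the uniqueness statement of Lemma~\ref{lem:dyson-solution-unique} to identify the local inverse with $\vu(\cdot)$.
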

\begin{proof}
    Let $\vv \in \bbH^r$ and $\vu = \vu(\vv) \in \bbH^r$.
    Then $\vv(\vu) = \vv$.
    The function $\vv(\cdot)$ is clearly continuous, so it maps an open neighborhood $\cN \subset \bbH^r$ of $\vu$ into $\vv(\cN) \subset \bbH^r$.
    By Lemma~\ref{lem:dyson-solution-unique}, this is a bijective map with inverse $\vu(\cdot)$.
    Moreover $\vv(\cdot)$ is differentiable, with Jacobian
    \begin{equation}
        \label{eq:vv-deriv}
        \nabla \vv(\vu) = M(\vu),
    \end{equation}
    and this is invertible by Corollary~\ref{cor:M-invertible}.
    The result follows by the inverse function theorem.
\end{proof}

\begin{lemma}
\label{lem:AEK-adaptation}
    For each $s_*\in S$, and distinct $\vv,\wt\vv\in V_S$,
    \begin{equation}
    \label{eq:AEK-adaptation}
    \frac{\|\Im (u_{s_*}(\vv)) - \Im (u_{s_*}(\wt\vv))\|_2}
    {\|\vv-\wt\vv\|_2^{1/3}}
    \leq
    C_3.
    \end{equation}
\end{lemma}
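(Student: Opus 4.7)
The proof follows the stability analysis of Ajanki--Erd\H{o}s--Kr\"uger \cite{ajanki2017singularities}, adapted to our multi-parameter setting. Write $\rho(\vv) := \Im u_{s_*}(\vv)$; by Lemma~\ref{lem:imaginary-one-scale}, $\rho(\vv)$ is comparable (within the factor $C_2$) to $\min_{s\in S}\Im u_s(\vv)$ uniformly on $V_S$. The heart of the proof is a pointwise operator-norm bound
\begin{equation}
\label{eq:proposal-key-bound}
\|M(\vu(\vv))^{-1}\|_{\op} \;\le\; C\,\rho(\vv)^{-2},\qquad \vv\in V_S,
\end{equation}
after which the H\"older continuity follows by path integration. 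Indeed, by Lemma~\ref{lem:u-derivative} and \eqref{eq:proposal-key-bound}, along any smooth path $(\vv_t)_{t\in[0,1]}\subset V_S$, writing $\rho(t):=\Im u_{s_*}(\vv_t)\ge 0$ one has
\[
|(\rho(t)^3)'| \;=\; 3\rho(t)^2\,|\partial_t\Im u_{s_*}(\vv_t)|
\;\le\; 3\rho(t)^2\,\|M(\vu(\vv_t))^{-1}\|_{\op}\,\|\dot\vv_t\|_2
\;\le\; 3C\,\|\dot\vv_t\|_2.
\]
Choosing a piecewise-linear path from $\vv$ to $\wt\vv$ of total length $\lesssim\|\vv-\wt\vv\|_2$ staying inside $V_S$ (the slice $\{w\in\obbH:|w|\le C_1\}$ for $s\in S$ is convex and admits linear interpolation, and for $s\in S^c:=\sS\setminus S$ one detours around the excluded disk at comparable cost) and integrating yields $|\rho(1)^3-\rho(0)^3|\le 3C\|\vv-\wt\vv\|_2$, which implies \eqref{eq:AEK-adaptation} by the elementary inequality $|a-b|\le|a^3-b^3|^{1/3}$ for $a,b\ge 0$.

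To prove \eqref{eq:proposal-key-bound} we decompose $M=M(\vu)$ in $2\times 2$ block form with blocks indexed by $S$ and $S^c$. Lemma~\ref{lem:non-degen-u}\ref{itm:degen-u} gives $|\lambda_s/u_s^2|\ge C_1^2/(4\lambda_s)$ for $s\in S^c$, so for $C_1$ sufficiently large the block $M|_{S^c\times S^c}$ is strongly diagonally dominant with $\|M|_{S^c\times S^c}^{-1}\|_{\op}\lesssim C_1^{-2}$. A Schur complement argument then reduces \eqref{eq:proposal-key-bound} to a bound on $\|M|_{S\times S}^{-1}\|_{\op}$, up to an $O(C_1^{-2})$ additive perturbation that is absorbed for $C_1$ large. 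To control the $S$-block we compare with the real PSD matrix $\oM|_{S\times S}$: the triangle inequality together with $\oM_{s,s}\ge 0$ gives $|M_{s,s}|\ge \lambda_s/|u_s|^2-\xi''_{s,s}=\oM_{s,s}$, while $|M_{s,s'}|=\xi''_{s,s'}=|\oM_{s,s'}|$ for $s\ne s'$, so by Lemma~\ref{lem:psd-rotate}\ref{itm:inv-domination} one obtains $\|M|_{S\times S}^{-1}\|_{\op}\le\|\oM|_{S\times S}^{-1}\|_{\op}$ whenever the latter is invertible. The problem thus reduces to a quantitative lower eigenvalue bound for the real PSD matrix $\oM|_{S\times S}$.

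The principal technical obstacle is establishing $\lambda_{\min}(\oM(\vu)|_{S\times S})\gtrsim\rho(\vv)^2$ uniformly on $V_S$. Qualitative strict positivity follows from Lemma~\ref{lem:oM-psd} combined with Cauchy interlacing, but extracting the $\rho^2$ rate requires exploiting the nonlinear identity $\oM\Im\vu=\Im\vv$ from \eqref{eq:dyson-imaginary} together with the structure of $V_S$. The plan is to apply the variational principle of Lemma~\ref{lem:diagonally-signed-hs23} to $\oM|_{S\times S}$ with a test vector of the form $\vx = \Im\vu|_S + \eta\vw$, for a carefully chosen $\vw\succ\vzero$ and $\eta\asymp\rho$, so that the ratios $(\oM|_{S\times S}\vx)_s/x_s$ admit a uniform lower bound of order $\rho^2$: the contribution from $\Im\vu|_S$ is $\Im v_s -(\oM|_{S,S^c}\Im\vu|_{S^c})_s\ge -O(C_1^{-1})$ (using $\Im\vv\succeq\vzero$ and $|\Im u_{s'}|\le 2\lambda_{s'}/C_1$ for $s'\in S^c$ from Lemma~\ref{lem:non-degen-u}\ref{itm:degen-u}), while the $\eta\vw$ contribution produces a net positive residual of the correct order. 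An alternative route exploits the cubic equation satisfied by the density $\rho$ near edges and cusps, in the spirit of \cite[Section~4]{ajanki2017singularities} and \cite{ajanki2019quadratic}, which yields the $\rho^{-2}$ blow-up rate for $\|M^{-1}\|_{\op}$ directly.
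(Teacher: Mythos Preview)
Your overall architecture coincides with the paper's proof: establish the pointwise bound
$\|M(\vu(\vv))^{-1}\|_{\op}\le C\,\rho(\vv)^{-2}$ on $V_S$, and then integrate $\rho^3$ along a path of length $O(\|\vv-\wt\vv\|_2)$ inside $V_S$. The path-integration step is correct.

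The genuine gap is in your derivation of the operator-norm bound. You reduce (via Schur complement and Lemma~\ref{lem:psd-rotate}\ref{itm:inv-domination}) to proving $\blambda_{\min}(\oM(\vu)|_{S\times S})\gtrsim\rho^2$, and then sketch a variational argument for this inequality. That inequality is \emph{false}. Take $S=\sS$ and let $\vv\in\bbH^r$ approach a point $\vv_0\in\bbR^r$ with $\vu(\vv_0)\in\bbH^r$ (case~\ref{itm:feasible-boundary-imag} of Theorem~\ref{thm:feasible-region}). Along this approach $\rho=\Im u_{s_*}$ stays bounded \emph{below} by a positive constant, while the identity $\oM(\vu)\,\Im\vu=\Im\vv\to\vzero$ forces $\blambda_{\min}(\oM(\vu))\to 0$. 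Hence no choice of test vector in Lemma~\ref{lem:diagonally-signed-hs23} can succeed, and the alternative ``cubic equation near edges'' route is inapplicable precisely because this regime is the \emph{bulk}, not an edge.

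The missing idea is that the non-strict comparison $|M_{s,s}|\ge\oM_{s,s}$ throws away exactly the $\rho^2$ you need. The paper compares $M$ instead to the real diagonally-signed matrix $M^{\dagger}$ with $M^{\dagger}_{s,s}=|M_{s,s}|$ and $M^{\dagger}_{s,s'}=-\xi''_{s,s'}$. For $s\in S$ one has $\lambda_s/u_s^2=(\lambda_s/|u_s|^2)e^{-2i\arg u_s}$, a point on a circle of radius $\asymp 1$ rotated by an angle $\asymp\rho$ away from the positive real axis; since $\xi''_{s,s}\asymp 1$, an elementary distance computation gives
\[
|M_{s,s}|^2-\oM_{s,s}^2 = 2\,\tfrac{\lambda_s}{|u_s|^2}\,\xi''_{s,s}\bigl(1-\cos(2\arg u_s)\bigr)\;\gtrsim\;\rho^2,
\]
hence $M^{\dagger}_{s,s}\ge\oM_{s,s}+\Omega(\rho^2)$. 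This quantitative diagonal gain on $S$, together with the large diagonals on $S^c$ (your Lemma~\ref{lem:non-degen-u}\ref{itm:degen-u} argument, used here without Schur complement), yields $M^{\dagger}\succeq\Omega(\rho^2)I_r$ directly via a two-case quadratic-form estimate. Lemma~\ref{lem:psd-rotate}\ref{itm:inv-domination} applied to $(A,A')=(M^{\dagger},M)$ then gives $\|M^{-1}\|_{\op}\le\|(M^{\dagger})^{-1}\|_{\op}\le O(\rho^{-2})$.
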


\begin{proof}
    Write $\vu = \vu(\vv)$.
    By Lemma~\ref{lem:u-derivative}, $\nabla \vu(\vv) = M^{-1}(\vu)$.
    % Recall that $\de\vu/\de\vv = M^{-1}$ on $\vv\in\bbH^r$.
    We show that for $\vv \in V_S$,
    \begin{equation}
    \label{eq:AEK-bound-op}
    \|M^{-1}(\vu)\|_{\op}
    \leq
    C_3
    \Im(u_{s_*})^{-2}.
    \end{equation}
    To deduce \eqref{eq:AEK-adaptation} from this, first note that for any smooth path $\gamma:[0,1]\to V_S$, \eqref{eq:AEK-bound-op} implies
    \[
    \lt|\frac{\de}{\de t}
    \big[\Im\big(u_{s_*}(\gamma(t))\big)^3\big]
    \rt|
    \leq
    C_3 \gamma'(t).
    \]
    This implies $\Im(u_{s_*})^3$ is Lipschitz on $V_S$ because for any $\vv,\wt\vv\in V_S$ there exists $\gamma$ as above with $\big(\gamma(0),\gamma(1)\big)=(\vv,\wt\vv)$ and $\int_0^1 |\gamma'(t)|\de t\leq 10r \|\vv-\wt\vv\|_2$.
    Since $\Im(u_{s_*})$ is uniformly bounded by Lemma~\ref{lem:u-bounded}, the fact that $\Im(u_{s_*})^3$ is Lipschitz immediately yields \eqref{eq:AEK-adaptation}.

    To show \eqref{eq:AEK-bound-op}, with $\eps=C_2 \Im(u_{s_*})>0$, we have $\Im(u_s)\geq \eps$ for all $s\in S$ by Lemma~\ref{lem:imaginary-one-scale}.
    % Let $D(\vu)$ be the $r\times r$ diagonal matrix with entries $D_{s,s}=\frac{\lambda_s}{|u_s|^2}-\xi''_{s,s}>0$.
    Define the matrix
    \[
    M^{\dagger}_{s,s'}
    =
    \begin{cases}
        |M_{s,s}|=|\lambda_s/u_s^2 - \xi''_{s,s}|, & s=s' \in\sS,
        \\
        -\xi''_{s,s'}, & s\neq s'\in \sS.
    \end{cases}.
    \]
    Thus $M^{\dagger}$ agrees with $M$ off of the diagonal. On the diagonal, we claim that
    \[
        M^{\dagger}_{s,s}
        \geq
        \oM_{s,s}
        +
        \Omega(\eps^2)
        \cdot \one_{s\in S}.
    \]
    This is easy to see geometrically: given $|u_s|$, the entry $M^{\dagger}_{s,s}$ varies on a circle, and its radius is $\sqrt{\lambda_s}/|u_s|\asymp 1$ since $s\in S$, and its distance from the center is also $\xi''_{s,s}\asymp 1$.
    By Lemma~\ref{lem:psd-rotate}, it follows that $M^{\dagger}$ is strictly positive definite since $\oM\succeq 0$.

    We claim that in fact
    \begin{equation}
        \label{eq:increase-from-good-coords}
        M^{\dagger} \succeq \Omega(\eps^2)I_r.
    \end{equation}
    Indeed let $\vy\in \bbR^r$ be a unit vector and let $\vy_S\in\bbR^r$ agree with $\vy$ on coordinates in $S$ and have zero coordinates otherwise. If $\|\vy_S\|_2^2 \geq 1/2$, then
    \[
    \la \vy, M^{\dagger}\vy\ra
    \geq
    \la \vy, \oM \vy\ra
    +
    \Omega(\eps^2 \|\vy_S\|_2^2)
    \geq \Omega(\eps^2).
    \]
    Otherwise, suppose $\|\vy_{S^c}\|_2^2 \geq 1/2$.
    Define
    \begin{align*}
        C'_0 &= \max_{s,s'\in \sS} \xi''_{s,s'}, &
        C'_1 &= \min_{s\in \sS} \lt\{
            \fr{C_1^2}{4\lambda_s} - \xi''_{s,s}\rt
        \}.
    \end{align*}
    Lemma~\ref{lem:non-degen-u}\ref{itm:degen-u} implies $M^{\dagger}_{s,s}\ge C'_1$ for all $s\notin S$ while $M^{\dagger}_{s,s'}\geq -C'_0$ for all $s,s'\in\sS$.
    So
    \[
        \la \vy, M^{\dagger}\vy\ra
        \geq
        C'_1 \|\vy_{S^c}\|_2^2
        -
        rC'_0
        \geq 1
    \]
    if $C_1$ is suitably large.
    Combining cases proves \eqref{eq:increase-from-good-coords} since $\vy$ was an arbitrary unit vector.

    Thus $\norm{(M^{\dagger})^{-1}}_\op \le O(\eps^{-2})$.
    Applying Lemma~\ref{lem:psd-rotate}\ref{itm:inv-domination} with $(A,A')=(M^{\dagger},M)$ shows that $\|M^{-1}\|_{\op}\leq  \|(M^{\dagger})^{-1}\|_{\op}$, establishing \eqref{eq:AEK-bound-op} as desired.
\end{proof}

\begin{proof}[Proof of Lemma~\ref{lem:non-degen-per-S}]
    Let $B(R)\subseteq\bbC$ denote the radius $R$ ball.
    Note that given \emph{any} $(\vv,\vu_S)\in \bbH^r\times B(C_0)^{|S|}$, the complementary vector $\vu_{S^c}=\vu-\vu_S$ may be defined by the equations in \eqref{eq:dyson-equation-complex} for $s\in S^c$.
    Restricting the domain slightly to $\cD_S=(\bbC\backslash B(C_1))^r\times B(C_0)^{|S|}$, this defines a map
    \[
    \varphi_S:\cD_S\to B(C_0)^{|S^c|}.
    \]
    Note that the restriction $M_{S^c}$ of $M$ to coordinates $S^c\times S^c$ satisfies $\|M_{S^c}(\vu)^{-1}\|_{\op}\geq 1$ on the domain of $\varphi_S$ since $C_1$ is large compared to $C_0$.
    It follows that $\|\nabla\varphi_S\|\leq O(1)$ holds everywhere on $\cD_S$.

    Finally just as in the proof of Lemma~\ref{lem:AEK-adaptation}, for any pair of points in $\cD_S$, there is a smooth path $\gamma:[0,1]\to\cD_S$ connecting them with total length at most the Euclidean distance between them.
    Therefore $\varphi_S$ is $O(1)$-Lipschitz on $\cD_S$, and so using Lemma~\ref{lem:AEK-adaptation}, for any $\vv,\wt\vv\in V_S$
    \begin{align*}
        \|\vu(\vv)-\vu(\wt\vv)\|_2
        &\leq
        \|\vu_S(\vv)-\vu_S(\wt\vv)\|_2
        +
        \|\vu_{S^c}(\vv)-\vu_{S^c}(\wt\vv)\|_2
        \\
        &\lesssim
        \|\vu_S(\vv)-\vu_S(\wt\vv)\|_2
        +
        \big(
        \|\vu_S(\vv)-\vu_S(\wt\vv)\|_2
        +
        \|\vv-\wt\vv\|_2
        \big)
        \\
        &\lesssim
        \|\vv-\wt\vv\|_2^{1/3}\cdot (1+\|\vv-\wt\vv\|_2).
    \end{align*}
    Recalling that $\vu$ is uniformly bounded now completes the proof.
\end{proof}

It follows immediately from the preceding result that $\Im\vu$ extends to a $C^{1/3}$ function on $\obbH^r$.
It remains to show the same for $\Re \vu$. Similarly to \cite[Proposition 5.1]{ajanki2017singularities}, along any given $1$-dimensional subspace of $\bbR^r$, $\Re\vu$ can be obtained via the Stieltjes transform of the continuous boundary extension of $\Im \vu$, which automatically inherits the $1/3$-H{\"o}lder continuity of $\Im \vu$.
Since we aim to show continuity in $\vv\in\obbH^r$, these $1$-dimensional Stieltjes transforms must be patched together.
Consistency of the extensions to intersecting lines in $\bbR^r$ will follow from the existence of non-tangential limits as recalled below.

\begin{definition}
    Given $v\in\bbR$ and $\theta\in (0,\pi/2)$, define the cone
    \[
    \Cone_{\theta}(v)=
    \big\{y\in \bbH~:~\arg(y-v)\in [\pi/2-\theta,\pi/2+\theta]\big\}
    \subseteq \bbH.
    \]
    Given $\vv\in\bbR^r$ and $\theta_1,\dots,\theta_r\in (0,\pi/2)$, define the product cone
    \[
    \Cone_{\vtheta}(\vv)=\prod_{s=1}^r \Cone_{\theta_s}(v_s)\subseteq\bbH^r.
    \]
\end{definition}

\begin{proposition}[{\cite[Special Case of Theorem 3.24 in Chapter 2]{stein1971introduction}}]
\label{prop:non-tangential-limit}
    Let $\vu:\bbH^r\to\bbH^{r}$ be a bounded harmonic function. Then for almost every $\vv\in\bbR^r$, the following \textbf{non-tangential limit} exists and is uniformly bounded for all $\vtheta\in (0,\pi/2)^r$:
    \begin{equation}
    \label{eq:non-tangential-limit}
    \vu^{(\bbR)}(\vv)\equiv \lim_{\substack{\vy\to \vv\\ \vy\in \Cone_{\vtheta}(\vv)}}
    \vu(\vy).
    \end{equation}
    We call those $\vv\in\bbR^r$ with this property \textbf{regular points} for $\vu$.
\end{proposition}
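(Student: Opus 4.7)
The plan is to reduce this multi-parameter Fatou-type statement to the classical one-dimensional Fatou theorem combined with a multi-parameter strong maximal inequality. Working component-wise reduces the problem to a bounded scalar harmonic function $u:\bbH^r\to\bbC$ with $\|u\|_\infty\le C$.

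The first step is to establish a product Poisson representation: any such $u$ can be written as
\[
u(\vy) \;=\; \int_{\bbR^r} \prod_{s=1}^{r} P_{\Im(y_s)}(\Re(y_s)-t_s)\, f(\vec{t})\,d\vec{t},
\]
where $P_y(t)=y/[\pi(y^2+t^2)]$ is the one-dimensional Poisson kernel and $f\in L^\infty(\bbR^r)$ satisfies $\|f\|_\infty \le \|u\|_\infty$. I would derive this by iterating the one-variable Herglotz--Poisson representation one coordinate at a time: each bounded harmonic function on $\bbH$ is the Poisson integral of a bounded boundary function, and iterating over coordinates while applying Banach--Alaoglu to extract weak-$*$ limits of the traces as $\Im(y_s)\downarrow 0$ produces a joint $L^\infty$ density $f$.

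Fix $\vtheta\in(0,\pi/2)^r$. Standard pointwise estimates on $P_y$ yield
\[
(M_{\vtheta}u)(\vv)
\;\equiv\;
\sup_{\vy\in\Cone_{\vtheta}(\vv)} |u(\vy)|
\;\le\;
C(\vtheta)\,(M_1\cdots M_r)|f|(\vv),
\]
where $M_s$ is the one-dimensional Hardy--Littlewood maximal operator acting in the $s$-th coordinate. By the Jessen--Marcinkiewicz--Zygmund strong differentiation theorem (applicable since $f\in L^\infty$ is locally in $L\log^{r-1}L$), the product Poisson integral converges to $f(\vv)$ as $\vy\to\vv$ through $\Cone_{\vtheta}(\vv)$, for Lebesgue almost every $\vv\in\bbR^r$. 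This identifies $\vu^{(\bbR)}(\vv)$ with $f(\vv)$ on a full-measure set that a priori depends on $\vtheta$.

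The main obstacle is to produce a single full-measure set of $\vv$ on which the non-tangential limit exists for every $\vtheta\in(0,\pi/2)^r$ with the same value. Here I would exploit the monotonicity of the approach regions: if $\vtheta\preceq\vtheta'$ coordinate-wise then $\Cone_{\vtheta}(\vv)\subseteq\Cone_{\vtheta'}(\vv)$, so existence of the limit along the larger cone forces the same limit along the smaller cone. Taking any countable sequence $\vtheta^{(k)}$ with $\vtheta^{(k)}\to(\pi/2,\ldots,\pi/2)$ coordinate-wise from below and intersecting the full-measure convergence sets from the previous step yields one full-measure set of $\vv$ at which the limit exists for every $\vtheta\in(0,\pi/2)^r$, with common value $f(\vv)$. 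The uniform boundedness in $\vtheta$ then follows from $|\vu^{(\bbR)}(\vv)|\le \|f\|_\infty\le \|\vu\|_\infty$, independently of $\vtheta$.
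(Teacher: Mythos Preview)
The paper does not actually prove this proposition: it is quoted verbatim as a special case of Theorem~3.24 in Chapter~2 of Stein's \emph{Singular Integrals and Differentiability Properties of Functions}, and is used as a black box. So there is no ``paper's own proof'' to compare against.

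Your sketch is essentially the standard route to the multi-parameter Fatou theorem, and the overall structure (product Poisson representation, domination by the iterated Hardy--Littlewood maximal operator, strong differentiation for $L^\infty$ data, then a countable exhaustion of cone apertures) is sound. One point deserves care: your first step claims a product Poisson representation for ``any bounded harmonic $u:\bbH^r\to\bbC$.'' This requires $u$ to be \emph{multi-harmonic} (harmonic in each complex variable separately), not merely harmonic for the full Laplacian on $\bbR^{2r}$; a generic $\bbR^{2r}$-harmonic function need not be a product Poisson integral. In the paper's application $\vu$ is holomorphic in $\vv$, hence multi-harmonic, so this is harmless---but you should state the hypothesis precisely or note that the holomorphy in the paper's setting supplies it. With that caveat, your argument recovers the cited result.
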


It is well-known that a bounded harmonic function on $\obbH$ can be recovered as the Poisson integral of its boundary values, see e.g. \cite[Theorem 11.30(b)]{rudin1987real}.
Below we give an extension to higher dimensions which suffices for our purposes.
Define for $y\in\bbH,\vy\in\bbH^r$ the univariate and multivariate Poisson kernels:
\begin{equation}
\label{eq:poisson-kernel}
    K(y)
    =
    \frac{1}{\pi\|y\|^2},
    \quad\quad
    K(\vy)
    =
    \prod_{s=1}^r
    \vK(y_s).
\end{equation}
Note that $K(y)$ is a probability density on each shift $\bbR+it$ for $t>0$. We view $K(y)$ as a point mass at $y$ if $y\in\bbR$, and $\vK$ as the corresponding product measure for $\vy\in\obbH^r$.

\begin{proposition}
\label{prop:poisson-integral-representation}
    Suppose $\vu:\bbR^r\to\bbR$ as defined in Proposition~\ref{prop:non-tangential-limit} agrees with a continuous bounded function $\vu^{(\bbR)}:\bbR^r\to\bbR$ almost everywhere. Then $\vu$ extends to a bounded continuous function on $\obbH^r$ agreeing with $\vu^{(\bbR)}$ on $\bbR^r$ and admitting the Poisson integral representation
    \begin{equation}
    \label{eq:poisson-representation}
    \vu(\vy)
    =
    \int_{\bbR^r}
    \vK(\vy-\vv) \vu^{(\bbR)}(\vv)
    ~\de \vv,
    \quad\vy\in\obbH^r.
    \end{equation}
\end{proposition}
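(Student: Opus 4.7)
The plan is to define $\tilde{\vu}:\obbH^r\to\bbR$ by the right-hand side of \eqref{eq:poisson-representation} and show $\vu=\tilde{\vu}$ on $\bbH^r$; the continuous extension of $\vu$ to $\obbH^r$ and the formula \eqref{eq:poisson-representation} both follow immediately from this identity. Since $\vu^{(\bbR)}$ is continuous and bounded, standard properties of the univariate Poisson kernel $K$ (applied one coordinate at a time via Fubini, using that $\int_\bbR K(y-t)\,\de t=1$ for each $y\in\bbH$ and the approximate-identity property of $K$ as $\Im y\downarrow 0$) show that $\tilde{\vu}$ is bounded by $\|\vu^{(\bbR)}\|_\infty$, separately harmonic in each complex variable on $\bbH^r$, continuous on $\obbH^r$, satisfies $\tilde{\vu}|_{\bbR^r}=\vu^{(\bbR)}$, and has $\lim_{\vy\to\vv,\,\vy\in\Cone_\vtheta(\vv)}\tilde{\vu}(\vy)=\vu^{(\bbR)}(\vv)$ for every $\vv\in\bbR^r$ and $\vtheta\in(0,\pi/2)^r$.

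Setting $\vw=\vu-\tilde{\vu}$, the proposition reduces to the following auxiliary claim, which I would prove by induction on $r$: any bounded separately harmonic function on $\bbH^r$ whose non-tangential limit equals $0$ at a.e.\ $\vv\in\bbR^r$ vanishes identically. The base case $r=1$ is the classical uniqueness theorem for bounded harmonic functions on $\bbH$ with a.e.-vanishing non-tangential boundary values (see e.g.\ \cite[Chapter 11]{rudin1987real}). For the inductive step, fix $(v_2,\ldots,v_r)\in\bbH^{r-1}$ and apply the $1$-D Poisson representation to the bounded harmonic function $v_1\mapsto\vw(v_1,v_2,\ldots,v_r)$ on $\bbH$ to obtain
\[
\vw(v_1,\ldots,v_r)=\int_\bbR K(v_1-t_1)\,b(t_1;v_2,\ldots,v_r)\,\de t_1,
\]
where $b(t_1;v_2,\ldots,v_r)$ denotes its $1$-D non-tangential boundary limit in $v_1$ (which exists for a.e.\ $t_1$). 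Differentiating under the integral sign, $(v_2,\ldots,v_r)\mapsto b(t_1;v_2,\ldots,v_r)$ inherits boundedness and separate harmonicity on $\bbH^{r-1}$ for a.e.\ $t_1$. A Fubini argument on the full-measure set of regular points of $\vw$, combined with a Lindel{\"o}f-type identification of iterated $1$-D non-tangential limits with product-cone limits for bounded separately harmonic functions, then gives that for a.e.\ $t_1\in\bbR$, $b(t_1;\cdot)$ has vanishing non-tangential limit at a.e.\ $(t_2,\ldots,t_r)\in\bbR^{r-1}$. The inductive hypothesis yields $b(t_1;\cdot)\equiv 0$ for a.e.\ $t_1$, whence $\vw\equiv 0$ by the displayed formula.

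The principal technical obstacle is the compatibility step in the induction: verifying that at a.e.\ $\vt\in\bbR^r$, the iterated non-tangential limit of $\vw$ (first in $v_1$, then jointly in $(v_2,\ldots,v_r)$) agrees with the product-cone limit at $\vt$ from \eqref{eq:non-tangential-limit}. This requires both a Fubini-type extraction of a.e.\ information from the full-measure regular set of $\vw$, and a multivariate Lindel{\"o}f theorem ensuring that for bounded separately harmonic functions such iterated and joint non-tangential limits coincide wherever the latter exists. Once this coincidence is in place the induction closes and \eqref{eq:poisson-representation} follows.
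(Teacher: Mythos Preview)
Your reduction to showing $\vw=\vu-\tilde\vu\equiv 0$ is the same starting point as the paper, but the paper establishes this identity by a much simpler device that sidesteps your inductive argument and the Lindel\"of-type compatibility issue you flag. Namely, for fixed $\vy\in\bbH^r$ and $\eps\in(0,\min_s\Im y_s)$, both $\vu$ and $\tilde\vu$ admit the \emph{shifted} Poisson representation
\[
f(\vy)=\int_{\bbR^r}\vK(\vy-\vv-\eps i\vone)\,f(\vv+\eps i\vone)\,\de\vv,
\qquad f\in\{\vu,\tilde\vu\},
\]
which is unproblematic because both functions are bounded harmonic on $\bbH^r$ and extend continuously to the shifted boundary $\bbR^r+i\eps\vone\subset\bbH^r$. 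One then sends $\eps\downarrow 0$: the kernels $\vK(\vy-\cdot-\eps i\vone)$ converge to $\vK(\vy-\cdot)$ in total variation, while both boundary functions $\vu(\vv+\eps i\vone)$ and $\tilde\vu(\vv+\eps i\vone)$ are uniformly bounded and converge for a.e.\ $\vv$ to $\vu^{(\bbR)}(\vv)$ (the first by the hypothesis, since vertical approach lies in every product cone; the second by the continuity of $\tilde\vu$ you already established). Hence the two integrals have the same limit, giving $\vu(\vy)=\tilde\vu(\vy)$.

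Your route, by contrast, attempts to reprove the multivariate Fatou/uniqueness theorem from the one-dimensional case, and you correctly identify the gap: verifying that iterated one-dimensional non-tangential limits of $\vw$ coincide a.e.\ with its product-cone limits is a genuine technical step (related to restricted versus unrestricted convergence on polydisks) that you have not carried out. It can be made to work, but it is neither elementary nor short, and the paper's $\eps$-shift trick bypasses it entirely by never needing to take non-tangential limits at the distinguished boundary directly.
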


\begin{proof}
    First, the above definition of the Poisson integral agrees (in the case that $\vu^{(\bbR)}$ is uniformly bounded) with that of \cite[Chapter 2 page 67]{stein1971introduction} as an iterated application of univariate Poisson integrals.
    By $r$-fold application of \cite[Chapter 2 Theorem 2.1(b)]{stein1971introduction}, it follows that the right-hand side of \eqref{eq:poisson-representation} is continuous and bounded on $\obbH^r$.
    Call this right-hand side $\wt\vu$.
    Since each probability measure $K(y_s-v_s)\de v_s$ converges weakly to a point mass at $\Re y_s$ as $\Im y_s\downarrow 0$, it follows that $\wt\vu$ is continuous on $\obbH^r$.

    It remains to show that $\vu$ and $\wt\vu$ agree on $\bbH^r$.
    Hence fix $\vy\in\bbH^r$.
    Since both functions are harmonic and bounded on $\bbH^r$, we have the upward-shifted Poisson integral representations for $\eps\in (0,\min_s \Im y_s)$:
    \begin{equation}
    \label{eq:shifted-poisson}
    \begin{aligned}
        \vu(\vy)
        &=
        \int_{\bbR^r}
        \vK\big(\vy-\vv-\eps i \vone\big)
        \vu(\vv+ \eps i \vone)
        ~\de\vv,
        \\
        \wt\vu(\vy)
        &=
        \int_{\bbR^r}
        \vK\big(\vy-\vv-\eps i \vone\big)
        \wt\vu(\vv+ \eps i \vone)
        ~\de\vv.
    \end{aligned}
    \end{equation}
    The functions $\vu(\vv+ \eps i \vone)$ and $\wt\vu(\vv+ \eps i \vone)$ on $\bbR^r$ are uniformly bounded and converge almost everywhere to the same limit $\vu^{(\bbR)}$ as $\eps\downarrow 0$.
    Moreover for each fixed $\vy\in\bbH^r$, the kernel densities $\vK(\vy-\vv-\eps i \vone)$ are all probability measures, and they converge in total variation to $\vK(\vy-\vv)$ as $\eps\downarrow 0$.
    It follows that the $\eps\downarrow 0$ limits of the right-hand sides in \eqref{eq:shifted-poisson} agree. Hence the left-hand sides also agree as desired.
\end{proof}

\begin{proof}[Proof of Theorem~\ref{thm:continuity}]
    It follows by Lemma~\ref{lem:non-degen-per-S} and the following discussion that $\vv \mapsto \Im(\vu(\vv))$ is a uniformly $1/3$-H\"older continuous function on $\bbH^r$.
    We use \cite[Theorem 3.5]{garnett2007bounded} which states that if a bounded holomorphic function $\varphi:\bbH\to\bbH$ satisfies
    \begin{equation}
    \label{eq:stieltjes-condition}
    \lim_{A\to\infty} -iA\varphi(iA)=W>0,
    \end{equation}
    then $\varphi$ is the Stieltjes transform of a positive measurable density on $\bbR$ with total integral $W$, which is given as as almost-everywhere limit of functions $\Im(\varphi(x+i\eta))$ as $\eta\downarrow 0$.
    We consider for each $\vy\in [1/2,2]^r$ and $\vz_*\in\bbR^r$ and $s\in\sS$ the function
    \[
    \varphi_{\vy,s}(z)
    =
    \vu_s(\vz_*+z\vy),\quad z\in\bbH.
    \]
    Then it is easy to see that the condition~\eqref{eq:stieltjes-condition} holds with $W=\lambda_s/y_s$.

    Consider now the lines $\ell(\vz_*,\vy)=\{\vz_*+z\vy\}_{z\in\bbR}$ for $\vy\in [1/2,2]^r$ and $\vz_*\in\bbH^r$.
    For each $\ell(\vz_*,\vy)$, taking the Stieltjes transform of $\varphi_{\vy,s}$ gives a function $u_s(\cdot\,;\vz_*,\vy):\ell(\vz_*,\vy)\to\obbH$.
    Recall that Stieltjes transforms increase $C^{1/3}$ norms by at most a constant factor (see e.g. \cite[Section 22]{muskhelishvili2008singular}).
    Since $\Im \vu$ is uniformly $1/3$-H{\"o}lder, it follows that each
    $\vu(\cdot\,;\vz_*,\vy)$ and in particular its real part has uniformly bounded $C^{1/3}$ norm on its corresponding domain $\ell(\vz_*,\vy)$.

    Next whenever $\vz_*+z\vy\in\bbR^r$ is regular, we have
    \begin{equation}
    \label{eq:consistent-def}
    \vu(z;\vz_*,\vy)
    =
    \lim_{\eps\downarrow 0}
    \varphi_{\vy,s}(z+i\eps)
    =
    \vu^{(\bbR)}(\vz_*+z\vy).
    .
    \end{equation}
    (The first equality holds by continuity properties of ordinary Stieltjes transforms, and the second by definition of a regular point.)
    Thus let $\vv,\vv'\in\bbR^r$ be regular points for $\vu$, and let $\wt\vv$ be another regular point such that with $\|\vv-\vv'\|_{\infty}=M$, we have $\wt v_s-\vv_s\in [3M,4M]$ for each $s$.
    (Such $\wt\vv$ exists by Proposition~\ref{prop:non-tangential-limit}.)
    Then $\frac{v_s-\wt v_s}{v_{s'}-\wt v_{s'}}\in [1/2,2]$ for each $s,s'\in\sS$, which means there is some $\ell(\vz_*,\vy)$ passing through $(\vv,\wt\vv)$, and similarly a $\ell(\vz'_*,\vy')$ passing through $(\vv',\wt\vv)$.
    Using $1/3$-H{\"o}lder continuity on these lines together with \eqref{eq:consistent-def} in the latter step, we thus obtain:
    \begin{align*}
    \|\vu^{(\bbR)}(\vv)-\vu^{(\bbR)}(\vv')\|
    &\leq
    \|\vu^{(\bbR)}(\vv)-\vu^{(\bbR)}(\wt\vv)\|
    +
    \|\vu^{(\bbR)}(\vv')-\vu^{(\bbR)}(\wt\vv)\|
    \\
    &\leq
    O(M^{1/3})
    =
    O(\|\vv-\vv'\|_{\infty}).
    \end{align*}
    Hence the restriction of $\vu^{(\bbR)}$ to regular $\vv\in\bbR^r$ is uniformly $C^{1/3}$.
    By Proposition~\ref{prop:non-tangential-limit}, it follows that $\vu^{(\bbR)}$ admits a bounded continuous extension to all of $\bbR^r$.
    By Proposition~\ref{prop:poisson-integral-representation}, $\vu$ extends to a bounded continuous function on $\obbH^r$.

    Finally we show $\vu:\obbH^r\to\obbH^r$ as just defined is uniformly $C^{1/3}$ on its full domain.
    Fix $\vv,\vv'\in\obbH^r$.
    For each $s\in\sS$, let $\bB_s(t)$ be a standard complex Brownian motion. Define the processes
    \[
    \bV_s(t)=v_s + \Im(v_s)\bB_s(t\wedge \tau_s)
    \text{ and }
    \bV'_s(t)=v'_s+\Im(v_s')\bB_s(t\wedge \tau_s)
    \]
    where $\tau_s$ denotes the first time that $\Im \bB_s(t)=-1$;  thus $\Im(\bV_s(\tau_s))=\Im(\bV_s'(\tau_s))=0$.
    Note $\tau\equiv \max_s \tau_s<\infty$ almost surely.

    Since $\vu$ is bounded and holomorphic, it follows that $\vu(\vec\bV(t))$ and $\vu(\vec\bV'(t))$ are both $\bbC$-valued martingales.
    (This also follows directly from \eqref{eq:poisson-kernel}.)
    Using the triangle inequality followed by $C^{1/3}$-boundedness of $\vu$ on $\bbR^r$ gives
    \begin{align*}
        |\vu(\vv)-\vu(\vv')|
        &\leq
        \bbE |\vu(\vec\bV(\tau)))-\vu(\vec\bV'(\tau))|
        \\
        &\lesssim
        \bbE \sum_{s=1}^r
        |\Re(\bV_s(\tau))-\Re(\bV_s'(\tau))|^{1/3}
        \\
        &\lesssim
         \bbE
         \Big[
        \sum_{s=1}^r
        |\Re(v_s)-\Re(v'_s)|^{1/3}
        +
        \sum_{s=1}^r
        |\Im(v_s)-\Im(v'_s)|^{1/3}
        |\Re\bB_s(\tau_s)|^{1/3}
        \Big]
        .
    \end{align*}
    The law of $\Re\bB_s(\tau_s)$ is well known to be a standard symmetric Cauchy random variable (and does not depend on $\vv$ or $\vv'$). In particular it has finite $1/3$ moment.
    Hence we find that $\|\vu(\vv)-\vu(\vv')\|\leq O\big(\|\vv-\vv'\|_{\infty}^{1/3}\big)$, completing the proof.
\end{proof}

% It follows immediately that $\vu$ extends to a $C^{1/3}$ function from $\obbH^r\to\obbH^r$.

% \subsection{Alternate Proof via Non-Tangential Limits}

\begin{proof}[Proof of Lemma~\ref{lem:vu-derivative}]
    Consider a sequence of functions $\vu^\eps(\vv) = \vu(\vv + \eps i \vone)$.
    % Consider a sequence $\vv^n \in \bbH^r$ limiting to $\vv \in \obbH^r$.
    By Lemma~\ref{lem:u-derivative}, $\nabla \vu^{\eps}(\vv) = M(\vu^{\eps}(\vv))^{-1}$.
    By Theorem~\ref{thm:continuity} and invertibility of $M(\vu(\vv))$, as $\eps \downarrow 0$ both $\vu^{\eps}(\cdot)$ and $M(\vu^{\eps}(\cdot))^{-1}$ converge locally uniformly to $\vu(\cdot)$ and $M(\vu(\cdot))^{-1}$.
    The result now follows by e.g. \cite[Theorem 7.17]{rudin1976principles}, which states that if a family of functions and their derivatives each converge uniformly, then the derivative of the limiting function is the limit of the derivatives.
\end{proof}

\subsection{Solution Space of the Vector Dyson Equation}

In this subsection we prove Theorem~\ref{thm:feasible-region} and Proposition~\ref{prop:edge-vs-cusp}.
We first establish Theorem~\ref{thm:feasible-region} in the setting $\vu^*,\vv \in \bbH^r$.

\begin{lemma}
    \label{lem:feasible-interior}
    Let $\vu^* \in \bbH^r$.
    There exists $\vv \in \bbH^r$ such that $\vu^* = \vu(\vv)$ if and only if $\oM(\vu^*) \succ 0$ and $\oM(\vu^*) \Im(\vu^*) \succ \vzero$.
\end{lemma}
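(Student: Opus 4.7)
The plan is to prove both directions by directly manipulating the Dyson equation \eqref{eq:dyson-equation-complex} and computing its imaginary part, which yields the key identity
\[
    \Im(v_s) = \fr{\lambda_s}{|u_s|^2}\Im(u_s) - \sum_{s'\in\sS}\xi''_{s,s'}\Im(u_{s'}) = \big(\oM(\vu)\Im(\vu)\big)_s.
\]
In vector form, for any $(\vu,\vv)\in\bbH^r\times\bbH^r$ satisfying \eqref{eq:dyson-equation-complex}, one has $\Im(\vv) = \oM(\vu)\Im(\vu)$.

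For the forward direction, I would assume $\vv \in \bbH^r$ with $\vu^* = \vu(\vv)$. Lemma~\ref{lem:oM-psd} directly yields $\oM(\vu^*) \succ 0$. The identity above with $\vu = \vu^*$ then gives $\oM(\vu^*)\Im(\vu^*) = \Im(\vv) \succ \vzero$ since $\vv\in\bbH^r$.

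For the backward direction, I would construct $\vv$ explicitly. Given $\vu^*\in\bbH^r$ satisfying both conditions, define
\[
    v_s \equiv -\fr{\lambda_s}{u^*_s} - \sum_{s'\in\sS}\xi''_{s,s'}u^*_{s'},
\]
which is just the right-hand side of \eqref{eq:dyson-equation-complex} evaluated at $\vu^*$. Then $(\vv,\vu^*)$ solves \eqref{eq:dyson-equation-complex} by construction. Applying the imaginary-part identity, $\Im(\vv) = \oM(\vu^*)\Im(\vu^*) \succ \vzero$ by hypothesis, so $\vv\in\bbH^r$. By the uniqueness assertion of Lemma~\ref{lem:dyson-solution-unique}, the unique solution in $\bbH^r$ to \eqref{eq:dyson-equation-complex} at this $\vv$ must be $\vu^*$, so $\vu(\vv)=\vu^*$.

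There is no real obstacle here: the entire argument is a computation of imaginary parts combined with Lemma~\ref{lem:oM-psd} and the uniqueness from Lemma~\ref{lem:dyson-solution-unique}. I note that the backward direction actually uses only the second condition $\oM(\vu^*)\Im(\vu^*)\succ\vzero$; the $\oM(\vu^*)\succ 0$ condition is automatic in that case via Lemma~\ref{lem:oM-psd} applied to the constructed $\vv$, so listing both conditions in the statement is just making explicit the full content of the forward direction.
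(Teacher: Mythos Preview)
Your proof is correct and follows essentially the same approach as the paper: both directions hinge on the imaginary-part identity $\Im(\vv) = \oM(\vu)\Im(\vu)$, combined with Lemma~\ref{lem:oM-psd} for the forward direction and Lemma~\ref{lem:dyson-solution-unique} for the backward direction. Your closing observation that only the second condition is needed in the backward direction matches the paper's Remark~\ref{rmk:redundancy}.
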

\begin{remark}
    \label{rmk:redundancy}
    In the setting of this lemma $\Im(\vu^*) \succ \vzero$, so $\oM(\vu^*) \Im(\vu^*) \succ \vzero$ implies $\oM(\vu^*) \succ 0$ by Lemma~\ref{lem:diagonally-signed-hs23}.
    We have written the lemma in this form for consistency with Theorem~\ref{thm:feasible-region}.
\end{remark}

\begin{proof}
    If $\vu^* = \vu(\vv)$ for some $\vv \in \bbH^r$, Lemma~\ref{lem:oM-psd} shows $\oM(\vu^*) \succ 0$, and the proof of that lemma shows $\oM(\vu^*) \Im(\vu^*) \succ \vzero$.
    % Taking imaginary parts of \eqref{eq:dyson-equation-complex} yields
    % \begin{equation}
    %     \label{eq:dyson-imaginary}
    %     \fr{\lambda_s}{|u_s|^2} \Im(u_s)
    %     - \sum_{s'\in\sS} \xi''_{s,s'} \Im(u_{s'})
    %     = \Im(v_s).
    % \end{equation}
    % Since $\Im(v_s) > 0$, we have $\oM(\vu^*) \Im(\vu^*) \succ 0$.
    % This implies $\oM(\vu^*) \succ 0$ by Remark~\ref{rmk:redundancy}.
    Conversely, suppose $\oM(\vu^*) \Im(\vu^*) \succ \vzero$.
    Define $\vv = \vv(\vu^*)$.
    Then
    \[
        v_s
        = -\fr{\lambda_s}{|u^*_s|^2} \bar u^*_s -\sum_{s'\in\sS} \xi''_{s,s'} u^*_{s'},
    \]
    so $\Im(\vv) = \oM(\vu^*) \Im(\vu^*) \succ \vzero$.
    Thus $\vv \in \bbH^r$.
    By Lemma~\ref{lem:dyson-solution-unique}, $\vu^* = \vu(\vv)$.
\end{proof}

\begin{proof}[Proof of Theorem~\ref{thm:feasible-region}]
    We first prove the forward directions of both parts.
    For part \ref{itm:feasible-interior}, suppose $\vu^* = \vu(\vv)$ for some $\vv \in \obbH^r$.
    By Lemma~\ref{lem:feasible-interior} and continuity of $\vu$, $\vu^*$ lies in the closure of the set defined by $\oM(\vu^*) \succ 0$ and $\oM(\vu^*) \Im(\vu^*) \succ \vzero$, which implies the conclusion.
    For part \ref{itm:feasible-boundary}, suppose $\vu^* = \vu(\vv)$ for some $\vv \in \bbR^r$.
    Taking imaginary parts of \eqref{eq:dyson-equation-complex} yields \eqref{eq:dyson-imaginary}, which implies $\oM(\vu^*) \Im(\vu^*) = \vzero$.
    Since $\oM(\vu^*)$ is diagonally signed this implies $\Im(\vu^*) = \vzero$ or $\Im(\vu^*) \succ \vzero$, i.e. $\vu^* \in \bbR^r$ or $\vu^* \in \bbH^r$.
    By part \ref{itm:feasible-interior} we also have $\oM(\vu^*) \succeq 0$.
    If $\vu^* \in \bbR^r$, then $M(\vu^*) = \oM(\vu^*) \succeq 0$, so conclusion \ref{itm:feasible-boundary-real} holds.
    If $\vu^* \in \bbH^r$, conclusion \ref{itm:feasible-boundary-imag} holds.

    We turn to the converses, beginning with part \ref{itm:feasible-interior}.
    Suppose $\oM(\vu^*) \succeq 0$ and $\oM(\vu^*) \Im(\vu^*) \succeq \vzero$.
    Let $\vv^* = \vv(\vu^*)$; we will show that $\vu^* = \vu(\vv^*)$.

    Similarly to above, $\oM(\vu^*) \Im(\vu^*) \succeq \vzero$ implies $\vu^* \in \bbR^r$ or $\vu^* \in \bbH^r$.
    Suppose first that $\vu^* \in \bbR^r$, and further assume $\oM(\vu^*) \succ 0$.
    Recall from \eqref{eq:vv-deriv} that $\vv$ has Jacobian $\nabla \vv(\vu) = M(\vu)$.
    % Differentiating \eqref{eq:dyson-equation-complex} yields
    % \[
    %     \nabla \vv(\vu) = M(\vu).
    % \]
    Because $\vu^* \in \bbR^r$, we have $M(\vu^*) = \oM(\vu^*) \succ 0$.
    So, $\nabla \vv$ is invertible in a neighborhood of $\vu^*$.
    By the inverse function theorem, there is a local inverse $\vv^{-1}$ of $\vv$ satisfying
    \[
        \nabla \vv^{-1} (\vv^*) = \oM(\vu^*)^{-1}.
    \]
    By Lemma~\ref{lem:psd-rotate}\ref{itm:psd-inv-positive-entries}, $\oM(\vu^*)^{-1}$ has all positive entries.
    For small $\eps>0$ define $\vv^\eps = \vv^* + i \eps \vone$ and note that
    \[
        \fr{\de}{\de \eps} \vv^{-1}(\vv^\eps) \big|_{\eps=0} = i \oM(\vu^*)^{-1} \vone \in \bbH^r.
    \]
    Define $\vu^\eps = \vv^{-1}(\vv^\eps)$.
    Then, for small $\eps>0$ we have $\vu^\eps, \vv^\eps \in \bbH^r$ and $\vv^\eps = \vv(\vu^\eps)$.
    By Lemma~\ref{lem:dyson-solution-unique}, $\vu^\eps = \vu(\vv^\eps)$.
    Taking $\eps \to 0$, continuity of $\vu$ implies $\vu^* = \vu(\vv^*)$.

    Next suppose $\vu^* \in \bbR^r$ and $\oM(\vu^*) \succeq 0$ is singular.
    For small $\eps>0$ define $\vu^{(\eps)} = (1-\eps) \vu^*$ and $\vv^{(\eps)} = \vv(\vu^{(\eps)})$.
    Since $\oM(\vu^{(\eps)}) \succ \oM(\vu^*) \succeq 0$, we have just shown $\vu^{(\eps)} = \vu(\vv^{(\eps)})$.
    Continuity of $\vu$ implies $\vu^* = \vu(\vv^*)$.

    Finally, suppose $\vu^* \in \bbH^r$.
    As above, for small $\eps>0$, $\oM(\vu^{(\eps)}) \succ 0$.
    Moreover, for any $s\in \sS$,
    \[
        \fr{\lambda_s}{|u^{(\eps)}_s|^2} \Im(u^{(\eps)}_s)
        - \sum_{s'\in \sS} \xi''_{s,s'} \Im(u^{(\eps)}_{s'})
        > \fr{\lambda_s}{|u^*_s|^2} \Im(u^*)
        - \sum_{s'\in \sS} \xi''_{s,s'} \Im(u^*_{s'})
        \ge 0,
    \]
    so $\oM(\vu^{(\eps)}) \Im(\vu^{(\eps)}) \succ \vzero$.
    Lemma~\ref{lem:feasible-interior} implies $\vu^{(\eps)} = \vu(\vv^{(\eps)})$.
    Continuity of $\vu$ implies $\vu^* = \vu(\vv^*)$.
    This proves the converse to part \ref{itm:feasible-interior}.

    Finally, we turn to the converse to part \ref{itm:feasible-boundary}.
    If either of \ref{itm:feasible-boundary-real}, \ref{itm:feasible-boundary-imag} holds, then $\oM(\vu^*) \succeq 0$ and $\oM(\vu^*) \Im(\vu^*) \succeq \vzero$.
    We have just shown that $\vu^* = \vu(\vv^*)$, where $\vv^* = \vv(\vu^*)$.
    We easily verify that under \ref{itm:feasible-boundary-real} or \ref{itm:feasible-boundary-imag}, $\vv^* \in \bbR^r$, completing the proof.
\end{proof}

\begin{proof}[Proof of Corollary~\ref{cor:feasible-region-M-inv}]
    Theorem~\ref{thm:feasible-region} implies $\oM(\vu) \succeq 0$, so the result follows from Lemma~\ref{lem:oM-psd-to-M-invertible}.
\end{proof}

\subsubsection{Proof of Proposition~\ref{prop:edge-vs-cusp}}

Recall the notation \eqref{eq:wt-mu-def}, which will be frequently used below.
We begin with the first assertion of Proposition~\ref{prop:edge-vs-cusp}, namely that singularity of $M$ always corresponds to either an edge or cusp.

\begin{lemma}
\label{lem:M-singular-edge-cusp}
    Fix $\vchi\in\bbR^r_{>0}$ with $\|\vchi\|_1=1$.
    Then $M(\vu(\vv))$ is singular if and only if $0$ is either an edge or cusp for $\wt\mu_{\vchi}(\vv)$.
\end{lemma}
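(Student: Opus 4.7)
The plan is to characterize when $M(\vu(\vv))$ becomes singular by differentiating the Dyson equation along the line $\vv+\gamma\vchi$. First observe that by Corollary~\ref{cor:feasible-region-M-inv}, singularity of $M(\vu(\vv))$ forces $\vu(\vv)\in\bbR^r$ (equivalently, $\gamma=0$ lies on the boundary of or outside the imaginary region along the line). So throughout we may assume $\vu:=\vu(\vv)\in\bbR^r$ and $M(\vu)\succeq 0$ by Theorem~\ref{thm:feasible-region}\ref{itm:feasible-boundary}\ref{itm:feasible-boundary-real}.

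For the ``$\Rightarrow$'' direction, I would argue that if $M(\vu)$ is singular then $\vu$ cannot be continued as a real-valued solution through $\gamma=0$ along the line $\vv+\gamma\vchi$. Differentiating \eqref{eq:dyson-equation-complex} in $\gamma$ at a putative real solution gives the first-order relation $\vchi=M(\vu(\vv+\gamma\vchi))\,\partial_\gamma \vu$. Since $M(\vu)$ is diagonally signed and PSD, Lemma~\ref{lem:diagonally-signed-hs23} gives a kernel vector $\vw\succ\vzero$, and since $\vchi\succ\vzero$ we have $\la\vchi,\vw\ra>0$, so $\vchi\notin\mathrm{Im}\,M(\vu)$. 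Hence no real $\partial_\gamma\vu$ solves this equation, i.e.\ $\vu(\cdot)$ cannot remain in $\bbR^r$ on both sides of $\vv$. By Theorem~\ref{thm:continuity}, $\vu$ is continuous, so on at least one side of $\gamma=0$ we must have $\vu(\vv+\gamma\vchi)\in\bbH^r$ for arbitrarily small $|\gamma|>0$. Therefore $0$ is in the closure of $\{\gamma:\vu(\vv+\gamma\vchi)\in\bbH^r\}=\supp\wt\mu_{\vchi}(\vv)$, while $\vu(\vv)\in\bbR^r$, so $0$ is an edge or cusp.

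For the ``$\Leftarrow$'' direction, suppose $M(\vu)$ is invertible. I will use the inverse function theorem as in the converse of Theorem~\ref{thm:feasible-region}\ref{itm:feasible-boundary}\ref{itm:feasible-boundary-real}. The map $\vu\mapsto\vv(\vu)$ has Jacobian $M(\vu)$ (see \eqref{eq:vv-deriv}), which is invertible at $\vu=\vu(\vv)$, so there is a smooth local inverse $\vv\mapsto\tilde\vu(\vv)$ from a real neighborhood of $\vv$ into $\bbR^r$ satisfying $\vv(\tilde\vu(\vv))=\vv$. Since $M(\vu)\succ 0$, Theorem~\ref{thm:feasible-region}\ref{itm:feasible-boundary}\ref{itm:feasible-boundary-real} (applied in that same neighborhood, using continuity of $M$) gives $\tilde\vu(\vv')=\vu(\vv')$ for all $\vv'$ in a real neighborhood of $\vv$. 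In particular $\vu(\vv+\gamma\vchi)\in\bbR^r$ for all sufficiently small $|\gamma|$, so $0\notin\supp\wt\mu_{\vchi}(\vv)$, ruling out edge and cusp.

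The main obstacle I foresee is rigorously comparing the locally constructed real branch $\tilde\vu$ with the globally defined boundary-extended map $\vu(\cdot)$ from Theorem~\ref{thm:continuity}. Once the identification $\tilde\vu=\vu$ in a real neighborhood is secured (from the converse argument already used inside the proof of Theorem~\ref{thm:feasible-region}), the remaining steps are routine, including the fact that at a singular $M$, the positive-entry kernel direction obstructs the linearized equation $M\partial_\gamma\vu=\vchi$ for any coordinate-positive $\vchi$.
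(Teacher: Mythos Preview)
Your proposal is essentially correct and shares the key mechanism with the paper---namely, that the strictly positive kernel vector $\vw$ of the diagonally signed matrix $M(\vu)$ obstructs $\vchi\succ\vzero$ from lying in its image. There is, however, one gap in your ``$\Rightarrow$'' argument, and your ``$\Leftarrow$'' argument takes a genuinely different route from the paper's.

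\textbf{The gap in ``$\Rightarrow$''.} You differentiate the Dyson equation to obtain $M(\vu)\,\partial_\gamma\vu=\vchi$, but this presupposes that $\gamma\mapsto\vu(\vv+\gamma\vchi)$ is differentiable at $\gamma=0$. When $M(\vu)$ is singular, Lemma~\ref{lem:vu-derivative} does not apply, and mere $1/3$-H\"older continuity from Theorem~\ref{thm:continuity} is not enough. The paper fills exactly this hole by arguing by contradiction: if $0\notin\supp\,\wt\mu_{\vchi}(\vv)$, then each $u_s(\,\cdot\,;\vv)$ is the Stieltjes transform of a measure supported away from $0$, hence smooth near $0$; then the first-order Taylor expansion yields $M(\vu)(\vu^{\gamma}_{\vchi}-\vu)/\gamma\to\vchi$, and pairing with $\vw$ gives the contradiction. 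Your phrase ``at a putative real solution'' gestures at this, but the Stieltjes-smoothness step is the actual reason the linearization is legitimate.

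\textbf{A different approach for ``$\Leftarrow$''.} The paper handles this direction by invoking \cite[Theorem~2.6]{ajanki2017singularities}: at an edge or cusp the density has a square-root or cubic-root singularity, so $\vu$ is not locally Lipschitz there, and the contrapositive of Lemma~\ref{lem:vu-derivative} forces $M(\vu)$ to be singular. Your contrapositive argument---using the inverse function theorem on $\vv(\cdot)$ to produce a real branch $\tilde\vu$, and then Theorem~\ref{thm:feasible-region}\ref{itm:feasible-boundary}\ref{itm:feasible-boundary-real} to identify $\tilde\vu=\vu$ on a real neighborhood---is correct and more self-contained, avoiding the external singularity classification. One small point: your opening reduction to $\vu\in\bbR^r$ is only argued from the singularity side; for completeness note that if $\vu(\vv)\in\bbH^r$ then the density at $0$ is positive, so $0$ is a regular bulk point (neither edge nor cusp) and $M(\vu)$ is invertible by Corollary~\ref{cor:feasible-region-M-inv}, so the equivalence is immediate in that case.
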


\begin{proof}
    First if $0$ is an edge or cusp, then \cite[Theorem 2.6]{ajanki2017singularities} makes it clear that $\vu$ is not locally Lipschitz in $\vv$, hence the contrapositive of Lemma~\ref{lem:vu-derivative} shows $M(\vu)$ is singular.

    In the other direction, we have seen that singularity of $M(\vu)$, for $\vu=\vu(\vv)$, implies $\vu\in\bbR^r$.
    Moreover since $M(\vu)$ is diagonally signed, its singularity implies by Lemma~\ref{lem:diagonally-signed-hs23} that there exists $\vw\succ \vzero\in\bbR^r$ with $M(\vu)\vw=0$.
    Suppose for sake of contradiction that $0\notin \supp\, \wt\mu_{\vchi}(\vv)$.
    Then by Lemma~\ref{lem:Dyson-weakly-continuous}, and the Stieltjes transform definition of $\vu$, it follows that $\gamma\mapsto \vu^{\gamma}_{\vchi}(\vv)$ is Lipschitz for $\gamma$ in a neighborhood of $0$ (since $\log(x)$ is Lipschitz away from $0$).
    A first order Taylor expansion of \eqref{eq:dyson-equation-complex} (similarly to Lemma~\ref{lem:u-derivative}) then implies
    \begin{align*}
    \lim_{\gamma\downarrow 0}
    M(\vu)\big(\vu_{\vchi}^{\gamma}-\vu\big)/\gamma
    =
    \lim_{\gamma\downarrow 0}
    \big(\vv+\gamma\vchi-\vv\big)/\gamma
    =
    \vchi
    .
    \end{align*}
    However the left-hand side above is orthogonal to $\vw$ for all $\gamma\neq 0$, while $\la \vw,\vchi\ra>0$ since both have strictly positive entries.
    This is a contradiction and completes the proof.
\end{proof}

Given absolutely continuous $\mu\in\cP(\bbR)$ and $q\in (0,1)$, let
\[
    \blambda_{(q)}(\mu)=\sup\{\lambda\in\bbR~:~\mu((-\infty,\lambda]\leq q) \}
\]
be its $q$-th quantile.

\begin{lemma}
\label{lem:quantile-continuous}
    Suppose $\diag(\vchi^{-1})(\vv-\vv')\in [a,b]^r$.
    Then for all $q\in (0,1)$, we have
    \[
        \blambda_{(q)}(\wt\mu_{\vchi}(\vv'))-\blambda_{(q)}(
        \wt\mu_{\vchi}(\vv))\in [a,b].
    \]
\end{lemma}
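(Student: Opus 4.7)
The plan is to realize $\wt\mu_\vchi(\vv)$ as the almost-sure limit of the empirical spectral measure of the random matrix $\wt M_N(\tilde\vx)$ defined in \eqref{eq:def-M_N-tilted}, where $\tilde x_s = v_s\sqrt{\lambda_s}/\chi_s$ per \eqref{eq:dyson-change-of-variables}. The key structural observation is that $\vv$ enters $\wt M_N$ only through its diagonal, so varying $\vv$ with the Gaussian block $\wt\bW$ held fixed produces a deterministic diagonal perturbation whose spectral effect can be read off directly.

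First I will couple $\wt M_N(\tilde\vx)$ and $\wt M_N(\tilde\vx')$ to share the same $\wt\bW$. Writing $\vz = \diag(\vchi^{-1})(\vv-\vv')\in[a,b]^r$, a direct calculation at position $(i,i)$ for $i\in\cI_s\cap\cT$ gives $(\wt M_N(\tilde\vx')-\wt M_N(\tilde\vx))_{i,i}=(v_s-v_s')/\chi_s=z_s$, so
\[
\wt M_N(\tilde\vx') - \wt M_N(\tilde\vx) = \diag(\vz\diamond\bone_\cT),
\]
a deterministic diagonal matrix whose eigenvalues all lie in $[a,b]$. By Weyl's inequality, for every $k\in[N-r]$ and every realization of $\wt\bW$,
\[
\blambda_k\big(\wt M_N(\tilde\vx')\big) - \blambda_k\big(\wt M_N(\tilde\vx)\big)\in [a,b].
\]
Translating to the $q$-th quantiles of the empirical spectral measures $\wh\mu_N := \wh\mu_{\wt M_N(\tilde\vx)}$ and $\wh\mu_N' := \wh\mu_{\wt M_N(\tilde\vx')}$, this yields the pointwise bound $\blambda_{(q)}(\wh\mu_N') - \blambda_{(q)}(\wh\mu_N) \in [a,b]$ for every $q\in(0,1)$.

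To finish, I will pass to the limit $N\to\infty$. Proposition~\ref{prop:MDE-basic}\ref{it:Dyson-solves-RMT} provides concentration of $\wh\mu_N$ and $\wh\mu_N'$ (in $\bbW_2$) around the deterministic limits $\wt\mu_\vchi(\vv)$ and $\wt\mu_\vchi(\vv')$ with probability $1-e^{-cN}$, and Proposition~\ref{prop:MDE-basic}\ref{it:Dyson-continuous-density} ensures these limits are absolutely continuous with H{\"o}lder-continuous density. Hence their CDFs are continuous and their sup-defined quantile functions are non-decreasing and right-continuous. At any $q$ at which both limit quantile functions are continuous, the empirical quantiles converge pointwise to the limit quantiles, and the desired inequality transfers.

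The only remaining subtlety is that the limit quantile functions may have jumps at the (countably many) $q$-values corresponding to gaps in their supports. I will handle this with a standard approximation: choosing a sequence $q_n\downarrow q$ of common continuity points of both $\blambda_{(\cdot)}(\wt\mu_\vchi(\vv))$ and $\blambda_{(\cdot)}(\wt\mu_\vchi(\vv'))$, the inequality holds at each $q_n$ in the limit, and letting $n\to\infty$ combined with right-continuity of $\blambda_{(\cdot)}$ transfers it to $q$. This approximation is the only technical point; the substance of the proof is Weyl's inequality applied to a diagonal perturbation, which encodes the fact that translating the Dyson equation by $\vchi$-scaled shifts in $\vv$ produces only linear shifts in the spectrum.
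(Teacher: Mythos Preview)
Your proof is correct and follows essentially the same approach as the paper: couple the random matrices $\wt M_N(\tilde\vx)$ and $\wt M_N(\tilde\vx')$ on the same Gaussian part, observe that the difference is a diagonal matrix with entries in $[a,b]$, apply Weyl's inequality to the eigenvalues, and pass to the limit. The paper's proof is a one-liner (``Immediate by the Weyl inequalities\ldots''), whereas you have carefully spelled out the limit passage, including the handling of possible jump points of the quantile functions via right-continuity---details the paper leaves implicit.
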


\begin{proof}
    Immediate by the Weyl inequalities applied to the eigenvalues of the $N\times N$ random matrices \eqref{eq:def-M_N-tilted} whose spectra tend to $\wt\mu_{\vchi}(\vv'),\wt\mu_{\vchi}(\vv)$.
    Indeed in this context, the shift from $\vv$ to $\vv'$ is equivalent to adding a diagonal matrix with all entries in $[a,b]$.
\end{proof}

\begin{lemma}
\label{lem:edge-characterization}
    Given any $\vu(\vv)\in \bbR^r$ with $M(\vu)$ singular, the following are equivalent:
    \begin{enumerate}[label=(\arabic*)]
        \item
        \label{it:all-small-shift-all-good}
        For all $\eps_0>0$ sufficiently small and all $\veps\in (0,\eps_0]^r$,
        \[
        \vu(\vv-\veps)\in \bbR^r.
        \]
        \item
        \label{it:exists-small-shift-all-good}
        For all $\eps_0>0$ sufficiently small, there exists $\veps\in (0,\eps_0]^r$ such that
        \[
        \vu(\vv-\veps)\in \bbR^r.
        \]
        \item
        \label{it:all-left-edge}
        For all $\vchi\in \bbR^r_{>0}$, the density $\wt\mu_{\vchi}(\vv)$ has $0$ as a left edge.
        \item
        \label{it:exists-left-edge}
        There exists $\vchi\in \bbR^r_{>0}$ such that the density $\wt\mu_{\vchi}(\vv)$ has $0$ as a left edge.
    \end{enumerate}
\end{lemma}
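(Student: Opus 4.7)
The implications $(1) \Rightarrow (2)$ and $(3) \Rightarrow (4)$ are trivial. For $(4) \Rightarrow (2)$: if $0$ is a left edge of $\wt\mu_{\vchi_0}(\vv)$ for some $\vchi_0 \in \bbR^r_{>0}$, then $\vu(\vv - t\vchi_0) \in \bbR^r$ for all small $t > 0$, so for any $\eps_0 > 0$, the choice $\veps := t\vchi_0$ with $t$ sufficiently small witnesses $(2)$. For $(1) \Rightarrow (3)$: fix $\vchi \in \bbR^r_{>0}$; for small $t > 0$, $t\vchi \in (0,\eps_0]^r$, so $(1)$ gives $\vu(\vv - t\vchi) \in \bbR^r$. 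This means $(-c, 0)$ has empty intersection with the open set $\{\gamma : \vu(\vv + \gamma\vchi) \in \bbH^r\}$, whose closure is $\supp \wt\mu_{\vchi}(\vv)$. By Lemma~\ref{lem:M-singular-edge-cusp}, $0 \in \supp \wt\mu_{\vchi}(\vv)$, and by Proposition~\ref{prop:MDE-basic}\ref{it:Dyson-continuous-density} the support is a finite union of positive-length intervals with disjoint interiors. Together these force $0$ to be a left endpoint of one such interval, which is precisely the left-edge condition.

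The crux is $(2) \Rightarrow (1)$, which I prove via tangent-cone analysis of $R := \{\wt\vv \in \bbR^r : \vu(\wt\vv) \in \bbR^r\}$. This set is closed in $\bbR^r$ by continuity of $\vu$ (Theorem~\ref{thm:continuity}). Let $\vu^* := \vu(\vv)$ and $M^* := M(\vu^*)$; by Lemma~\ref{lem:diagonally-signed-hs23}, $\ker M^*$ is spanned by a unique (up to scaling) vector $\vw \succ 0$. A Taylor expansion of $\vv(\cdot)$ at $\vu^*$ yields $v_s(\vu^* + \vec\delta) - v_s = \sum_{s'} M^*_{s,s'}\delta_{s'} - \lambda_s \delta_s^2/(u_s^*)^3 + O(\|\vec\delta\|^3)$. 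Writing $\vec\delta = s\vw + \vec\delta_\perp$ with $\vec\delta_\perp \perp \vw$ and using $M^*\vw = \vzero$ together with $\range M^* = \vw^\perp$, one sees that near $\vv^* := \vv$ the image of $\{\vu \in \bbR^r : M(\vu) \succeq 0\}$ under $\vv(\cdot)$ is a half-space bounded by a smooth hypersurface through $\vv^*$, with tangent hyperplane $\vw^\perp$ and normal proportional to $\vec q$, where $q_s := \lambda_s w_s^2/(u_s^*)^3$. Consequently $T_{\vv^*}R$ equals one of the closed half-spaces $\{\vec v : \la\vw,\vec v\ra \leq 0\}$ or $\{\vec v : \la\vw,\vec v\ra \geq 0\}$, according to the sign of $Q := \la\vw,\vec q\ra = \sum_s \lambda_s w_s^3/(u_s^*)^3$.

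Given $(2)$, extract a sequence $\veps_n \in (0,\infty)^r$ with $\|\veps_n\|_\infty \to 0$ and $\vv - \veps_n \in R$, and pass to a subsequence so $\vchi_n := \veps_n/\|\veps_n\|_\infty$ converges to some $\vchi_\infty \in \bbR^r_{\geq 0}$ with $\|\vchi_\infty\|_\infty = 1$. Then $-\vchi_\infty \in T_{\vv^*}R$; since $\vw \succ 0$ and $\vchi_\infty \succeq 0$, $\vchi_\infty \neq 0$, we have $\la\vw,-\vchi_\infty\ra < 0$, which rules out the half-space $\{\la\vw,\vec v\ra \geq 0\}$ and forces $T_{\vv^*}R = \{\vec v : \la\vw,\vec v\ra \leq 0\}$. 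By the local half-space description of $R$, for sufficiently small $\eps_0 > 0$ and every $\veps \in (0,\eps_0]^r$, we have $\la \vw, -\veps \ra < 0$ and hence $\vv - \veps \in R$, establishing $(1)$.

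The main technical obstacle is to rigorously justify the half-space structure of $R$ near $\vv^*$, and in particular that $T_{\vv^*}R$ is a genuine half-space rather than a lower-dimensional subspace or other degenerate cone. When $Q \neq 0$, this follows from the quadratic fold of $\vv(\cdot)$ along $\ker M^*$ combined with a transverse implicit-function-theorem argument in directions orthogonal to $\vw$. The degenerate case $Q = 0$ requires examining higher-order Taylor terms of $\vv(\cdot)$ along the $\vw$-direction, but the same half-space conclusion persists.
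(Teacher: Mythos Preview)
Your reductions $(1)\Rightarrow(3)$ and $(4)\Rightarrow(2)$ are fine, so everything hinges on $(2)\Rightarrow(1)$. There the argument has a real gap. You assert that $R=\{\wt\vv:\vu(\wt\vv)\in\bbR^r\}$ is, near $\vv$, a half-space bounded by a smooth hypersurface tangent to $\vw^\perp$, and you acknowledge this is only justified when $Q=\sum_s \lambda_s w_s^3/(u_s^*)^3\neq 0$. The claim that ``the same half-space conclusion persists'' when $Q=0$ is not merely unproven but is in fact false in general: at a cusp (case~\ref{it:cusp} of Proposition~\ref{prop:edge-vs-cusp}), for every $\vchi\in\bbR^r_{>0}$ both $\vv\pm t\vchi$ lie outside $R$ for small $t>0$, in particular $\vv\pm t\vw\notin R$; this is incompatible with $R$ being locally $\{g\le 0\}$ with $\nabla g(\vv)\parallel\vw$. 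So to run your argument you would first have to prove that hypothesis~(2) forces $Q\neq 0$ (equivalently, rules out the cusp scenario). That is plausible but is itself a nontrivial statement about the higher-order structure of the fold, and you have not supplied it. Even granting $Q\neq 0$, the passage from the second-order Taylor expansion to ``$R$ is locally a smooth half-space'' deserves a more careful implicit-function-theorem argument than the sketch given.

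The paper takes a completely different and much shorter route that sidesteps all of this local analysis. It proves $(4)\Rightarrow(1)$ and $\neg(3)\Rightarrow\neg(2)$ directly from Lemma~\ref{lem:quantile-continuous}, which uses the Weyl inequalities on the underlying random matrix models to show that every quantile of $\wt\mu_\vchi(\cdot)$ shifts monotonically (and by a controlled amount) under coordinatewise shifts of $\vv$. For $(4)\Rightarrow(1)$: if $0$ is a left edge for some $\vchi$, then for small $\veps\succ\vzero$ the entire spectrum of $\wt\mu_\vchi(\vv-\veps)$ is pushed away from $0$, forcing $\vu(\vv-\veps)\in\bbR^r$. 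For $\neg(3)\Rightarrow\neg(2)$: if $0$ is a right edge or cusp for some $\vchi$, the quantile shift forces $0$ into the bulk of $\wt\mu_\vchi(\vv-\veps)$ for every small $\veps\succ\vzero$. This argument is uniform over all $\veps\in(0,\eps_0]^r$ without any case analysis on $Q$, and it treats edges and cusps on equal footing.
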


\begin{proof}
    We will show that point~\ref{it:exists-left-edge} implies point~\ref{it:all-small-shift-all-good}, and that point~\ref{it:exists-small-shift-all-good} implies point~\ref{it:all-left-edge}, which suffices.
    In the first direction, if $0$ is a left edge for some $\vchi$, then Lemma~\ref{lem:quantile-continuous} immediately implies point~\ref{it:all-small-shift-all-good}.

    In the other direction, suppose point~\ref{it:all-left-edge} does not hold.
    Singularity of $M(\vu)$ implies via Lemma~\ref{lem:M-singular-edge-cusp} that $0$ is an edge or cusp of $\wt\mu_{\vchi}(\vv)$.
    Hence if $0$ is not a left edge for some $\vchi$, it must be a right edge or a cusp. In either case, Lemma~\ref{lem:quantile-continuous} then implies that point~\ref{it:exists-small-shift-all-good} does not hold.
    This completes the proof.
\end{proof}

\begin{proof}[Proof of Proposition~\ref{prop:edge-vs-cusp}]
    Note that parts \ref{it:all-small-shift-all-good} and \ref{it:exists-small-shift-all-good} of Lemma~\ref{lem:edge-characterization} are both independent of $\vchi$. It follows that $0$ being a left edge, right edge, or cusp for $\wt\mu_{\vchi}(\vv)$ are also each independent of $\vchi$.
    Moreover the left and analogous right edge characterizations in parts \ref{it:all-small-shift-all-good}, \ref{it:exists-small-shift-all-good} of Lemma~\ref{lem:edge-characterization} directly correspond to case~\ref{itm:M-singular-real} of Proposition~\ref{prop:edge-vs-cusp}. This correspondence implies the result.
\end{proof}

\subsection{Exponential Growth Rate of Random Determinant}

This subsection is devoted to the proof of Theorem~\ref{thm:logdet}.
We adopt the same notation as in Subsection~\ref{subsec:stationarity}, setting $\oPsi(\vv) = \Psi(\vx)$ where $\vv = \Lambda^{1/2}\vx \in \bbR^r$.

\begin{lemma}
    \label{lem:oPsi-derivative}
    $\oPsi : \bbR^r \to \bbR$ is continuously differentiable, with $\nabla \oPsi(\vv) = -\Re(\vu(\vv))$.
\end{lemma}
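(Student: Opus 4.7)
My plan is to relate $\oPsi$ to a Stieltjes-transform potential, carry out all differentiations on $\bbH^r$ (where $\vu$ is holomorphic), and only at the end pass to the real axis. Concretely, for $\vv \in \bbR^r$ and $z \in \bbH$, introduce the regularization
\[
L(\vv, z) := \int \log|\gamma - z| \, d\omu(\vv)(\gamma),
\]
and observe that $L(\vv, i\eta) \downarrow \oPsi(\vv)$ as $\eta \downarrow 0$ by monotone convergence: the integrand $\tfrac{1}{2}\log(\gamma^2+\eta^2)$ decreases monotonically to $\log|\gamma|$ and is dominated by $\tfrac{1}{2}\log(\gamma^2+1)$, which is integrable against $\omu(\vv)$ since the support is uniformly bounded by Proposition~\ref{prop:MDE-basic}\ref{it:Dyson-compact-supports}. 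The main intermediate goal will be to prove that for each $s' \in \sS$,
\[
\partial_{v_{s'}} L(\vv, z) = -\Re u_{s'}(\vv + z\vlambda), \qquad z \in \bbH.
\]

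The key input is the distributional identity (in the variable $\gamma$)
\[
\partial_{v_{s'}} \omu(\vv) = \partial_\gamma \mu_{s'}(\vv),
\]
which encodes the symmetry of the Jacobian $M(\vu)^{-1}$ from Lemma~\ref{lem:vu-derivative}. I would verify it by matching Stieltjes transforms on $\bbH$: by the identification \eqref{eq:u-identification}, differentiating $u(z;\vv) = \sum_s \lambda_s u_s(\vv + z\vlambda)$ in $v_{s'}$ gives $\sum_s \lambda_s (M(\vu(\vv + z\vlambda))^{-1})_{s,s'}$, while after an integration by parts the Stieltjes transform of $\partial_\gamma \mu_{s'}(\vv)$ at $z$ equals $\partial_z u_{s'}(\vv + z\vlambda) = \sum_s \lambda_s (M(\vu(\vv + z\vlambda))^{-1})_{s',s}$. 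These coincide by symmetry of $M^{-1}$, and injectivity of the Cauchy transform on tempered signed measures then yields the distributional equality.

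Pairing this identity with the test function $\log|\gamma - z|$ (smooth and bounded on the uniformly compact support of $\omu(\vv)$ for any $z \in \bbH$) and integrating by parts in $\gamma$ (with no boundary contribution since $\mu_{s'}(\vv)$ has compact support) yields
\[
\partial_{v_{s'}} L(\vv, z) = \int \log|\gamma - z| \, d(\partial_\gamma \mu_{s'}(\vv))(\gamma) = -\int \Re\tfrac{1}{\gamma - z} \, d\mu_{s'}(\vv)(\gamma) = -\Re u_{s'}(\vv + z\vlambda).
\]
Setting $z = i\eta$ and sending $\eta \downarrow 0$, Theorem~\ref{thm:continuity} gives uniform convergence $-\Re u_{s'}(\vv + i\eta\vlambda) \to -\Re u_{s'}(\vv)$ on compact subsets of $\bbR^r$. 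Combined with the pointwise convergence $L(\vv, i\eta) \to \oPsi(\vv)$, the standard theorem on $C^1$ limits (pointwise convergence plus locally uniform convergence of derivatives) will give that $\oPsi$ is continuously differentiable on $\bbR^r$ with $\nabla \oPsi(\vv) = -\Re \vu(\vv)$.

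The main obstacle I anticipate is that at boundary points $\vv \in \bbR^r$ where $M(\vu(\vv))$ is singular---the edges and cusps of the support of $\omu(\vv)$ by Proposition~\ref{prop:edge-vs-cusp}---the function $\vu$ is only $1/3$-Hölder by Theorem~\ref{thm:continuity} rather than $C^1$, so Lemma~\ref{lem:vu-derivative} does not directly apply. The Stieltjes-transform strategy is designed precisely to sidestep this: all differentiations above are performed at $\vv + z\vlambda \in \bbH^r$, where $M$ is always invertible (Lemma~\ref{lem:oM-psd-to-M-invertible}) and $\vu$ is holomorphic, and the exceptional real points are accessed only after the computation is complete through the continuity supplied by Theorem~\ref{thm:continuity}.
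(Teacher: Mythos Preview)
Your overall strategy---regularize, exploit the symmetry of $M(\vu)^{-1}$, then let $\eta\downarrow 0$ via Theorem~\ref{thm:continuity} and the $C^1$-limit theorem---matches the paper's, and the target formula $\partial_{v_{s'}}L(\vv,z)=-\Re u_{s'}(\vv+z\vlambda)$ for $z\in\bbH$ is correct.

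The gap is in how you derive that formula. Matching Stieltjes transforms shows only that $\partial_{v_{s'}}\int\frac{1}{\gamma-w}\,d\omu(\vv)(\gamma)$ exists and equals $\partial_w u_{s'}(\vv+w\vlambda)$ for each $w\in\bbH$; it does not by itself establish that $\partial_{v_{s'}}\omu(\vv)$ exists as a distribution. ``Injectivity of the Cauchy transform on tempered signed measures'' compares two objects already known to be measures---it cannot manufacture the existence of the left-hand side. Since the density of $\omu(\vv)$ on $\bbR$ is only $1/3$-H\"older in $\vv$ at edges and cusps, direct differentiation of $L(\vv,z)=\int\log|\gamma-z|\,d\omu(\vv)$ under the integral is not available either. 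In short, the passage from ``Cauchy transforms agree'' to ``pair the distributional identity with $\log|\cdot - z|$'' presupposes exactly the differentiability you are trying to prove.

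The paper sidesteps this by a different regularization: it integrates $\log|\gamma+i\eta|$ against $\Im u_{s'}(\gamma+i\eta;\vv)$, i.e.\ the Poisson extension of the density, rather than the real-axis density. Because the argument of $\vu$ then stays in $\bbH^r$, the integrand is genuinely smooth in $\vv$; differentiation under the integral is justified by the decay bounds \eqref{eq:general-MDE-bound}, the symmetry of $M^{-1}$ converts $\partial_{v_s}$ to $\partial_\gamma$, and a residue computation gives $-\Re u_s(\vv+2i\eta\vlambda)$. Your route can be repaired---for instance, write $L(\vv,z)-L(\vv,iT)=-\Re\int_{iT}^{z}u(\vv+w\vlambda)\,dw$, differentiate this path integral in $\vv$ (the integrand lives entirely in $\bbH^r$, where $\vu$ is holomorphic), use symmetry to evaluate it as $-\Re[u_{s'}(\vv+z\vlambda)-u_{s'}(\vv+iT\vlambda)]$, and send $T\to\infty$---but as written the distributional-identity step is incomplete.
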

The following non-rigorous calculation, which we carefully justify below, yields this formula.
Due to the identification \eqref{eq:u-identification}, we may freely switch between the notations $\vu(\,\cdot\,;\,\cdot\,)$ and $\vu(\cdot)$ in what follows.
\begin{equation}
\label{eq:non-rigorous}
\begin{aligned}
    \fr{\de}{\de v_s} \oPsi(\vv)
    &= \fr{1}{\pi} \int_\bbR \log|\gamma| \sum_{s'\in \sS} \lambda_{s'} \Im \lt(\fr{\de u_{s'}(\gamma; \vv)}{\de v_s}\rt) ~\de \gamma \\
    &= \fr{1}{\pi} \int_\bbR \log|\gamma| \sum_{s'\in \sS} \lambda_{s'} \Im \lt(\fr{\de u_{s'}(\vv + \gamma \vlambda)}{\de v_s}\rt) ~\de \gamma \\
    &\stackrel{(\ast)}{=} \fr{1}{\pi} \int_\bbR \log|\gamma| \sum_{s'\in \sS} \lambda_{s'} \Im \lt(\fr{\de u_s(\vv + \gamma \vlambda)}{\de v_{s'}}\rt) ~\de \gamma \\
    &= \fr{1}{\pi} \int_\bbR \log|\gamma| \Im \lt(\fr{\de u_s(\vv + \gamma \vlambda)}{\de \gamma}\rt) ~\de \gamma \\
    &\stackrel{(\diamond)}{=} - \fr{1}{\pi} \int_\bbR \fr{1}{\gamma} \Im \lt(u_s(\vv + \gamma \vlambda)\rt) ~\de \gamma \\
    &\stackrel{(\star)}{=} - \Re\lt(u_s(\vv)\rt)\,.
\end{aligned}
\end{equation}
Step $(\ast)$ uses that $\nabla u(\vv) = M(\vu(\vv))^{-1}$ (recall Lemma~\ref{lem:vu-derivative}) is a symmetric matrix; step $(\diamond)$ integrates by parts and step $(\star)$ is a contour integral.
However this is not a rigorous calculation, primarily because $M(\vu(\vv))^{-1}$ may be singular for $\vv \in \bbR^r$.
% and because the expressions may not be integrable at infinity or the $\log 0$ singularity.

To make this calculation rigorous, we first work on the line $\bbR+i\eta$ for $\eta>0$, and then send $\eta\downarrow 0$ (see \cite[Proposition 4.9]{arous2021landscape} for a similar computation).
Recall from Proposition~\ref{prop:MDE-basic} that the probability densities $\mu_s$ solving the Dyson equation are uniformly compactly supported for $\vv\in \bbR^r$ with $\|\vv\|_\infty \leq R$.
It follows that for such $\vv$,
\begin{equation}
    \label{eq:general-MDE-bound}
    \begin{aligned}
    \Im \lt(u_s(\gamma+i\eta,\vv)\rt)
    &\leq
    \fr{C(R,\eta,\xi'',\vlambda)}{1+\gamma^2},
    \\
    \lt| \Im \lt(
    \frac{\de}{\de v_s}
    u_s(\gamma+i\eta,\vv)
    \rt) \rt|
    &\leq
    \fr{C(R,\eta,\xi'',\vlambda)}{1+\gamma^2}.
    \end{aligned}
\end{equation}
Let $\gamma\in\bbR$ and $\eta>0$.
By Lemma~\ref{lem:vu-derivative} and \eqref{eq:u-identification}, $\nabla \vu(\gamma+i\eta;\vv) = M(\vv+(\gamma+i\eta)\vlambda)^{-1}$ (which is invertible by Corollary~\ref{cor:M-invertible}).
Moreover this matrix is symmetric, so
\begin{equation}
    \label{eq:rigorous-eta}
    \begin{aligned}
    &\fr{1}{\pi} \int_\bbR \log|\gamma+i\eta| \sum_{s'\in \sS} \lambda_{s'} \Im \lt(\fr{\de u_{s'}(\gamma + i\eta; \vv)}{\de v_s}\rt) ~\de \gamma \\
    &=
    \fr{1}{\pi} \int_\bbR \log|\gamma+i\eta| \sum_{s'\in \sS} \lambda_{s'} \Im \lt(\fr{\de u_{s}(\gamma + i\eta; \vv)}{\de v_{s'}}\rt) ~\de \gamma
    \\
    &= \fr{1}{\pi}
    \int_\bbR
    \log|\gamma+i\eta|
    ~
    \Im \lt(\fr{\de u_s(\gamma + i \eta ; \vv)}{\de \gamma}\rt)
    ~\de \gamma
    \\
    &\stackrel{(\dagger)}{=} - \fr{1}{\pi} \int_\bbR  \Re\lt(\fr{1}{\gamma+i\eta}\rt) \Im \lt(u_s(\gamma + i\eta; \vv)\rt) ~\de \gamma,
\end{aligned}
\end{equation}
Step $(\dagger)$ is an integration by parts which is valid by \eqref{eq:general-MDE-bound}.
We use the residue theorem to evaluate the last integral. Note that
\[
\int_\bbR  \Re\lt(\fr{1}{\gamma+i\eta}\rt) \Im \lt(u_s(\gamma + i\eta; \vv)\rt) ~\de \gamma
=
\frac{1}{2}\Im \int_\bbR  \lt(\fr{1}{\gamma+i\eta}+\fr{1}{\gamma-i\eta}\rt) u_s(\gamma + i\eta; \vv) ~\de \gamma.
\]
The latter integral can be evaluated by completing the contour via a radius $R$ semicircle in $\bbH$; the contribution of this semicircle decays to $0$ with $R$ since the uniformly compact support of $\mu_s$ implies that $|u_s(z;\vv)|\leq O(1/|z|)$ for large $z$ and fixed $\vv$. Hence applying the residue theorem, we complete the calculation \eqref{eq:rigorous-eta} and obtain:
\begin{equation}
\label{eq:rigorous-eta-2}
\begin{aligned}
\fr{1}{\pi} \int_\bbR \log|\gamma+i\eta| \sum_{s'\in \sS} \lambda_{s'} \Im \lt(\fr{\de u_{s'}(\gamma + i\eta; \vv)}{\de v_s}\rt) ~\de \gamma
&= - \Re\lt(u_s(2i\eta;\vv)\rt)
\\
&= - \Re\lt(u_s(\vv + 2i\eta \vlambda)\rt).
\end{aligned}
\end{equation}

We complete the proof of Lemma~\ref{lem:oPsi-derivative} by taking $\eta\downarrow 0$ in \eqref{eq:rigorous-eta-2}.
The right-hand side poses no issue since $\vu$ is $1/3$-H{\"o}lder continuous by Theorem~\ref{thm:continuity}.
For the left-hand side, we differentiate under the integral sign (which is justified using e.g. compact support of $\mu_s$):
\begin{align*}
    &\fr{1}{\pi} \int_\bbR \log|\gamma+i\eta| \sum_{s'\in \sS} \lambda_{s'} \Im \lt(\fr{\de u_{s'}(\gamma + i\eta; \vv)}{\de v_s}\rt) ~\de \gamma \\
    &=
    \frac{\de}{\de v_s}
    \fr{1}{\pi} \int_\bbR \log|\gamma+i\eta| \sum_{s'\in \sS} \lambda_{s'} \Im \lt( u_{s'}(\gamma + i\eta; \vv)\rt) ~\de \gamma
    .
\end{align*}
We then take the $\eta\downarrow 0$ limit for the latter integrand.

\begin{proposition}
    Locally uniformly over $\vv\in\bbR^r$,
    \[
    \lim_{\eta\downarrow 0}
    \int_\bbR \log|\gamma+i\eta| \sum_{s'\in \sS} \lambda_{s'} \Im \lt( u_{s'}(\gamma + i\eta; \vv)\rt) ~\de \gamma
    =
    \int_\bbR \log|\gamma| \sum_{s'\in \sS} \lambda_{s'} \Im \lt( u_{s'}(\gamma; \vv)\rt) ~\de \gamma
    \]
\end{proposition}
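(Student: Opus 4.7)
The plan is to split the integration over $\gamma\in\bbR$ into three pieces: a small neighborhood of $\gamma=0$ where $\log|\gamma+i\eta|$ is singular, a compact intermediate interval on which both factors of the integrand are well-behaved, and a tail where $\Im u_{s'}(\gamma+i\eta;\vv)$ is small. Each piece is controlled uniformly in $\vv$ over an arbitrary fixed compact set $B\subset\bbR^r$.

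First I would collect the uniform inputs. By Proposition~\ref{prop:MDE-basic}\ref{it:Dyson-compact-supports}\ref{it:Dyson-continuous-density}, there exists $C=C(B)$ such that $\supp\mu_{s'}(\vv)\subseteq[-C,C]$ and the density satisfies $\|\rho_{s'}(\cdot;\vv)\|_\infty\leq C$, for all $s'\in\sS$ and $\vv\in B$. By Theorem~\ref{thm:continuity}, the map $(z,\vv)\mapsto\vu(z;\vv)$ is uniformly $1/3$-H\"older on $\overline{\bbH}\times B$, which yields uniform convergence $\Im u_{s'}(\gamma+i\eta;\vv)\to\pi\rho_{s'}(\gamma;\vv)$ as $\eta\downarrow 0$ for $(\gamma,\vv)$ in any compact subset of $\bbR\times B$. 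From the Stieltjes representation $u_{s'}(z;\vv)=\int(\gamma'-z)^{-1}\mu_{s'}(\de\gamma';\vv)$ one obtains
\[
\Im u_{s'}(\gamma+i\eta;\vv)=\int\frac{\eta\,\mu_{s'}(\de\gamma';\vv)}{(\gamma-\gamma')^2+\eta^2},
\]
from which two additional uniform bounds follow: $\Im u_{s'}(\gamma+i\eta;\vv)\leq\pi C$ for all $\gamma\in\bbR$ and $\eta>0$, and $\Im u_{s'}(\gamma+i\eta;\vv)\leq 4\eta/\gamma^2$ whenever $|\gamma|\geq 2C$.

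Next I would execute the split, fixing $\delta\in(0,1/\sqrt{2})$ and $R>2C$. On the tail $|\gamma|>R$, the right-hand side integrand vanishes identically since $\gamma\notin\supp\mu_{s'}(\vv)$, while the left-hand side contribution is bounded by $O\bigl((\log R+1)\int_R^\infty 4\eta/\gamma^2\,\de\gamma\bigr)=O(\eta\log R/R)$, which vanishes as $\eta\downarrow 0$ for each fixed $R$. On the bulk $\delta\leq|\gamma|\leq R$, the factor $\log|\gamma+i\eta|$ is uniformly bounded in $\eta\in[0,1]$ by $\max(|\log\delta|,\log(R+1))$, while $\Im u_{s'}(\gamma+i\eta;\vv)\to\pi\rho_{s'}(\gamma;\vv)$ uniformly in $(\gamma,\vv)\in[\delta,R]\times B$ (and similarly on $[-R,-\delta]\times B$); hence dominated convergence yields convergence of the bulk piece uniformly in $\vv\in B$. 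On the central region $|\gamma|<\delta$ (with $\eta<\delta$), the inequality $|\gamma+i\eta|\geq|\gamma|$ combined with both logarithms being negative gives $|\log|\gamma+i\eta||\leq|\log|\gamma||$, so using $\Im u_{s'}\leq\pi C$ we obtain
\[
\biggl|\int_{-\delta}^{\delta}\log|\gamma+i\eta|\sum_{s'\in\sS}\lambda_{s'}\Im u_{s'}(\gamma+i\eta;\vv)\,\de\gamma\biggr|\leq\pi C\cdot 2\delta(1+|\log\delta|),
\]
uniformly in $\eta\in(0,\delta)$ and $\vv\in B$; the same estimate controls the right-hand side integrand on $|\gamma|<\delta$.

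Taking $\eta\downarrow 0$ first (with $\delta,R$ fixed), then $R\to\infty$, then $\delta\downarrow 0$, with all error bounds uniform over $\vv\in B$, yields the claimed locally uniform convergence. The main obstacle is to handle simultaneously the logarithmic singularity of $\log|\gamma|$ at $\gamma=0$ and the boundary behavior of $\Im u_{s'}$ as $\eta\downarrow 0$; both are tamed by the uniform $L^\infty$-bound on $\rho_{s'}$ (which lifts to a uniform $L^\infty$-bound on $\Im u_{s'}(\gamma+i\eta;\vv)$ via the Poisson-kernel representation) together with the elementary $L^1$-estimate $\int_{-\delta}^{\delta}|\log|\gamma||\,\de\gamma=2\delta(1+|\log\delta|)$.
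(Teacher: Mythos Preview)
Your proof is correct and follows essentially the same route as the paper's, which simply invokes dominated convergence, noting that the $\log 0$ singularity is integrable and that the tail is controlled by the Stieltjes-transform decay of $\Im u_{s'}$. Your three-region split is just a more explicit implementation of that argument. One minor slip: in the tail estimate you write the intermediate bound as $O\bigl((\log R+1)\int_R^\infty 4\eta/\gamma^2\,\de\gamma\bigr)$, which would require $\log|\gamma+i\eta|\le\log R+1$ on $|\gamma|>R$ --- this is false since the logarithm grows. However your final estimate $O(\eta(1+\log R)/R)$ is nonetheless correct, via $\int_R^\infty\gamma^{-2}\log\gamma\,\de\gamma=(1+\log R)/R$.
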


\begin{proof}
    This follows directly by dominated convergence.
    The large $\gamma$ contributions are controlled by \eqref{eq:general-MDE-bound}, while the $\log 0$ singularity is integrable hence causes no issues.
\end{proof}

\begin{proof}[Proof of Lemma~\ref{lem:oPsi-derivative}]
    Define
    \[
    f_s(\eta;\vv)
    =
    \int_\bbR \log|\gamma+i\eta| \sum_{s'\in \sS} \lambda_{s'} \Im \lt( u_{s'}(\gamma + i\eta; \vv)\rt) ~\de \gamma,\quad\eta\geq 0.
    \]
    We have shown above that:
    \begin{enumerate}[label=(\arabic*)]
        \item $\lim_{\eta\downarrow 0} f_s(\eta;\vv)=f_s(0;\vv)$ holds locally uniformly in $\vv$.
        \item For $\eta>0$, we have $\frac{\de}{\de v_s} f_s(\eta;\vv)=-\Re(u_s(\vv + 2i\eta \vlambda))$.
        \item $\Re(u_s(\vv + 2i\eta \vlambda))$ is continuous on $\eta\geq 0$, locally uniformly in $\vv$.
    \end{enumerate}
    Recall from e.g. \cite[Theorem 7.17]{rudin1976principles} that if a family of functions and their derivatives each converge uniformly, then the derivative of the limiting function is the limit of the derivatives.
    This shows $\fr{\de}{\de v_s} \oPsi(\vv) = -\Re(u_s(\vv))$.
    Finally $\vu(\cdot)$ is continuous by Theorem~\ref{thm:continuity}, concluding the proof.
\end{proof}

\begin{proof}[Proof of Theorem~\ref{thm:logdet}]
    We claim that
    \[
        G(\vv)=\oPsi(\vv)
        - \fr12
        \Re\lt(\la \vu(\vv),\xi'' \vu(\vv)\ra\rt)
        +
        \sum_{s\in\sS}\lambda_s \log |u_s(\vv)|
    \]
    vanishes identically on $\vv\in\bbR^r$.
    We first check that $|G(\vv)|\to 0$ as $\min_s |v_s|\to \infty$. In this limit, \eqref{eq:mu-vx-approx} implies that
    \[
    \bbW_{\infty}\Big(
    \omu(\vv),\sum_{s\in\sS}\lambda_s \delta_{-v_s/\lambda_s}
    \Big)
    \]
    is bounded independently of $\vv$. Hence $\oPsi(\vv)-\sum_{s\in\sS} \lambda_s \log(|v_s|/\lambda_s)$ tends to $0$ as $\min_s v_s\to \infty$.
    Furthermore $\vu(\vv) = \vu(0;\vv)\to 0$ in this limit, so \eqref{eq:dyson-equation-complex} implies that in fact $u_s v_s \to -\lambda_s$. Thus $\oPsi(\vv)+\sum_{s\in\sS} \lambda_s \log |u_s|$ indeed tends to $0$ with $\min_s |v_s|$.

    Next let $\cT\subseteq\bbR^r$ denote the set of points at which $\det M(\vu(\vv))=0$. Since $\det M(\vu(\vv))$ is continuous, for any $\vv\notin \cT$, we may differentiate $G(\vv)$ using Lemma~\ref{lem:vu-derivative} to obtain $\nabla G(\vv) = \vzero$.
    Indeed, letting $\vu = \vu(\vv)$, one directly verifies
    \begin{align*}
    &\nabla_{\vv\in\bbR^r}
    \lt(
        -\fr12 \Re\la \vu(\vv),\xi'' \vu(\vv)\ra
        + \sum_{s\in\sS}\lambda_s \log |u_s(\vv)|
    \rt) \\
    &=
    \Re\lt(\nabla_{\vv\in\obbH^r}
    \lt(
        -\fr12 \la \vu(\vv),\xi'' \vu(\vv)\ra
        + \sum_{s\in\sS}\lambda_s \log u_s(\vv)
    \rt)
    \rt)
    \\
    &=
    \Re\lt(
    M(\vu)^{-1}
    \nabla_{\vu\in\obbH^r}
    \lt(
        -\fr12 \la \vu,\xi'' \vu\ra
        +\sum_{s\in\sS}\lambda_s \log u_s
    \rt)
    \rt)
    \\
    &=
    \Re\lt(
        M(\vu)^{-1}
        \lt((\lambda_s / u_s)_{s\in \sS} -\xi'' \vu\rt)
    \rt)
    \\
    &= \Re(\vu)
    \\
    &
    = -\nabla_{\vv \in \bbR^r}\oPsi(\vv).
    \end{align*}
    The gradient subscripts indicate whether we consider the expression as a gradient of a smooth function defined on $\bbR^r$ or of a holomorphic function on $\obbH^r$.

    It follows that $G(\vv)$ is locally constant on $\bbR^r\backslash\cT$.
    Moreover Proposition~\ref{prop:Psi-continuous} implies that $G$ is continuous.
    Finally, Proposition~\ref{prop:MDE-basic}\ref{it:Dyson-continuous-density} implies that each line $\vv(t)=\vv(0)+t\vlambda$ intersects $\cT$ at only finitely many points.
    Combining the above implies $G(\vv)=0$ for all $\vv\in\bbR^r$ as desired.
\end{proof}

\end{document}